\documentclass[11pt]{amsbook}

\usepackage{color}
\usepackage{pgfpages}
\setlength{\hoffset}{-1.7cm}
\setlength{\voffset}{0cm}
\setlength{\textwidth}{16.6cm}
\setlength{\textheight}{21.6cm}

\parindent5mm
\usepackage{amsfonts,amssymb}
\usepackage[T1]{fontenc}
\usepackage[utf8]{inputenc}
\usepackage{comment}
\usepackage{amsmath}
\usepackage{amsthm}
\usepackage{amsrefs}
\usepackage{qsymbols}
\usepackage{latexsym}
\usepackage{chngcntr}
\usepackage[noadjust]{cite}
\usepackage{paralist}
\usepackage{esint}


\newtheorem{theorem}{\bf Theorem}[chapter]

\newtheorem{lemma}[theorem]{\bf Lemma}
\newtheorem{proposition}[theorem]{\bf Proposition}
\newtheorem{corollary}[theorem]{\bf Corollary}
\theoremstyle{definition}
\newtheorem{definition}[theorem]{\bf Definition}
\newtheorem{remark}[theorem]{\bf Remark}
\newtheorem{convention}[theorem]{\bf Convention}

\newtheorem{assumption}[theorem]{\bf Assumption}
\numberwithin{subsection}{chapter}

\newcommand{\IR}{\mathbb{R}}
\newcommand{\IC}{\mathbb{C}}
\newcommand{\IN}{\mathbb{N}}
\newcommand{\IZ}{\mathbb{Z}}

\newcommand{\IP}{\mathbb{P}}

\newcommand{\IF}{\mathbb{F}}
\newcommand{\IE}{\mathbb{E}}

\newcommand{\IS}{\mathbb{S}}

\newcommand{\Lop}{{\mathcal{L}}}
\newcommand{\cS}{{\mathcal{S}}}
\newcommand{\IT}{{\mathbb{T}}}
\newcommand{\Sec}{\mathrm{S}}
\newcommand{\loc}{\mathrm{loc}}

\newcommand{\cL}{\mathcal{L}}
\newcommand{\cM}{\mathcal{M}}

\newcommand{\with}{\quad\hbox{with}\quad}
\newcommand{\andf}{\quad\hbox{and}\quad}


\newcommand{\cA}{\mathcal{A}}
\newcommand{\cB}{\mathcal{B}}

\newcommand{\cH}{\mathcal{H}}
\newcommand{\cC}{\mathcal{C}}
\newcommand{\cX}{\mathcal{X}}
\newcommand{\cT}{\mathcal{T}}
\newcommand{\cR}{\mathcal{R}}

\newcommand{\LL}{\mathrm{L}}
\renewcommand{\H}{\mathrm{H}}

\newcommand{\B}{\mathrm{B}}
\newcommand{\C}{\mathrm{C}}
\newcommand{\W}{\mathrm{W}}
\newcommand{\X}{\mathrm{X}}
\newcommand{\Y}{\mathrm{Y}}
\newcommand{\cF}{\mathcal{F}}
\newcommand{\cY}{\mathcal{Y}}

\renewcommand{\d}{\mathrm{d}}
\newcommand{\ii}{\mathrm{i}}
\newcommand{\e}{\mathrm{e}}
\newcommand{\eps}{\varepsilon}


\DeclareMathOperator{\supp}{supp}

\DeclareMathOperator{\sgn}{sgn}

\DeclareMathOperator{\divergence}{div}

\renewcommand{\Re}{\operatorname{Re}}
\renewcommand{\Im}{\operatorname{Im}}
\DeclareMathOperator{\Id}{Id}
\newcommand{\Rg}{\mathcal{R}}

\newcommand{\dom}{\mathcal{D}}


\def\ov{\overline}

\def\wh{\widehat}
\def\wt{\widetilde}
\def\ddj{\dot\Delta_j}
\def\ddk{\dot\Delta_k}

\def\du{\delta\!u}
\def\df{\delta\!f}
\def\dP{\delta\!P}

\def\dg{\delta\!g}
\def\dh{\delta\!h}
\DeclareMathOperator{\adj}{adj}
\newcommand{\E}{\mathrm{E}}
\def\dv{\delta\!v}


\newcommand{\bA}{{\bf A}}

\newcommand{\cb}{\color{black}}

\numberwithin{equation}{chapter} 


\title{Free Boundary Problems by \\ Da Prato -- Grisvard Theory}

\author{Rapha\"el Danchin \\ Matthias Hieber \\ Piotr Bogus\l aw Mucha \\
 Patrick Tolksdorf \\ ~ \\}


\begin{document}

\begin{abstract} An $\LL_1$-maximal regularity theory for parabolic evolution equations inspired by the pioneering work of Da Prato and Grisvard~\cite{DaPrato_Grisvard} is developed.  
Besides of its own interest, the approach yields a framework allowing  {\em global-in-time} control of the change of  Eulerian to Lagrangian coordinates in various problems related to fluid mechanics. 
This  property is of course decisive for free boundary problems. This concept is illustrated by the analysis of the free boundary value problem describing  the  motion of  viscous, incompressible 
Newtonian fluids without surface tension and, secondly, the motion of compressible pressureless gases. 
 
For this purpose, an endpoint maximal $\LL_1$-regularity approach to the Stokes and Lam\'e systems is developed. It is applied then to establish 
global, strong well-posedness results for the free boundary problems described above in the case where the initial domain coincides with the half-space, and the initial velocity is  
small with respect to  a suitable scaling invariant norm.
\end{abstract}

\maketitle

\cleardoublepage
\setcounter{page}{1}
\pagenumbering{roman}
\tableofcontents

\chapter{Introduction}


This memoir aims at providing the reader with an abstract machinery that allows to solve global-in-time regularity issues for nonlinear parabolic problems in an adapted functional framework.
To illustrate the efficiency of this machinery, we prove the global-in-time existence of strong solutions to free boundary problems with infinite depth and small initial data, a question that has been left open for about 30 years.

Our approach is part of the so-called maximal regularity techniques, which consist in recovering the full regularity information of the solution provided by the data. 
Striving for {\em maximal elliptic regularity} goes back to 
J.~Schauder's works   on the Laplace operator (supplemented with simple boundary conditions)  
 in the  30s~\cite{Scha34}.
The {\em systematic} investigation of linear elliptic problems subject to general boundary conditions began in the late 1950's  with the works of  Y. B. Lopatinskii~\cite{Lop53} and 
Z. Shapiro~\cite{Sha53}. They gave for the first time a characterization of those  classes of problems for which solvability results and 
a priori estimates can be obtained. They showed  that, 
whenever  $\cA(x,D)$ is a differential  operator of order $2m$ and $(\cB_j(x,D))_{1\leq j\leq m},$ 
an $m$-tuple of  boundary operators of order $m_j<2m,$
the solvability of the system
\begin{align*}
\begin{aligned}
\cA(x,D)u &= f && \text{in}\; \Omega \\
\cB_j(x,D)u &= g_j && \text{on}\; \partial \Omega ,\quad j=1 , \dots , m,
\end{aligned}
\end{align*}
in some smooth domain $\Omega$  of $\IR^n$ is essentially equivalent to 
 some  algebraic condition involving the symbols of operators $\cA(x,D)$  and  $\cB_j(x,D).$  

Nowadays, this condition is called the `complementing condition' or the `Lopatinskii--Shapiro condition'. This result initiated the study of such  systems 
in various function spaces, such as H\"older and Lebesgue spaces and culminated
in the celebrated paper of Agmon, Douglis and Nirenberg~\cite{ADN59}. The approach of~\cite{ADN59}
was based on representations of solutions in the case of constant coefficient operators, 
which were given in terms of so-called Poisson kernels.  A decisive advantage of maximal regularity results 
is that they are stable by perturbation. Consequently, for a smooth enough domain 
$\Omega$ and  under suitable regularity assumptions 
for the  coefficients  of $\cA$ and $\cB_j,$ one can go from 
the aforementioned results in the half-space (and whole space) 
for constant coefficients operators, to the general case, getting  estimates  of the type
$$\begin{aligned}
\| u\| _{\C^{2m,\alpha }(\overline{\Omega})} &\leq C \Big( \| f\| _{\C^{0,\alpha }(\overline{\Omega})}+ \| u\| _{\C^{0,\alpha }(\overline {\Omega})}  + \sum_j\| g_j\| _{\C^{2m-m_j,\alpha }(\partial \Omega)}\Big) \quad \mbox{ and }\\
\| u\| _{\H^{2m}_p(\Omega)} &\leq C \Big( \| f\| _{\LL_p(\Omega)} + \| u\| _{\LL_p(\Omega)} + \sum_j\| g_j\| _{\H_p^{2m-m_j-\frac{1}{p}}(\partial \Omega)}\Big)\cdotp
\end{aligned}$$
These estimates can be used to derive existence theorems in various function spaces and allow also to 
prove further results like, e.g., the Fredholm property. One can refer here to the work of Browder~\cite{Bro59}, the book of Lions and Magenes~\cite{LM68} 
as well as to Ladyzenskaja-Uralceva~\cite{LadU64} and Solonnikov~\cite{Sol65} and the references given therein. 

The investigations to generalize these a priori estimates to the case of systems gave rise to many notions of elliptic systems, 
such as Legendre-Hadamard elliptic, determinant-elliptic, normally elliptic and elliptic in the sense of Agmon, Douglis, Nirenberg. Note that these are all finite-dimensional concepts.
A very general approach was developed in~\cite{DHP03}, where  parameter-elliptic operators with coefficients taking values in the space of all bounded operators acting on an arbitrary 
Banach space $X$ were introduced, hereby unifying several notions of ellipticity for $N\times N$-systems.    

The development of the corresponding evolutionary  parabolic theory can be described as follows: introducing a parameter-dependent Lopatinskii-Shapiro condition for $\lambda $ belonging to a 
suitable sector of the complex plane, one sees that for all $f \in \LL_p(\Omega;\IC),$ there exists a unique $u \in  \H_p^{2m}(\Omega;\IC)$ satisfying
\begin{align*}
\begin{aligned}
(\lambda + \cA(x,D))u &= f && \text{in}\; \Omega\\
\cB_j(x,D) u &= 0 && \text{on}\; \partial \Omega,\quad j=1 , \dots , m,
\end{aligned}
\end{align*}
as well as a priori estimates similar to the ones described above. Defining the 
$\LL_p(\Omega;\IC)$-realization 
of the above boundary value problem to be
\begin{equation*}
A_B u:=\cA(x,D)u, \qquad \dom(A_B):=\{u \in \H_p^{2m}(\Omega;\IC):\cB_j(x,D) u =0\; \text{for all}\; j=1 , \dots , m \}, 
\end{equation*}
it follows that $-A_B$ generates a strongly continuous analytic semigroup on $\LL_p(\Omega;\IC)$ for all $p\in(1,\infty)$;
see, e.g., Agmon~\cite{Agm62}, Friedman~\cite{Fri64}, Amann~\cite{Ama90} and  Tanabe~\cite{Tan97}. 
\medbreak
Let us consider an arbitrary linear and closed operator
$A:{\mathcal D}(A)\subset X\to X$ in a 
Banach space $X,$ and the evolution equation
\begin{equation}\label{eq:parabolic}
\begin{aligned}
 u'(t) + Au(t) &= f(t), && t > 0\\
 u(0)&=0.\end{aligned}\end{equation}  
Getting maximal parabolic regularity results  for~\eqref{eq:parabolic} within the $\LL_q$-scale for $1<q< \infty$, i.e.,  
 estimates of the form
\begin{equation}\label{eq:mrp}\| u'\|_{\LL_q(\IR_+ ; X)} + \| A u \|_{\LL_q(\IR_+ ; X)} \leq C\| f\|_{\LL_q(\IR_+ ; X)}\end{equation}
is a delicate matter.
Combined with interpolation theory, such results provide a very powerful tool for solving
large classes of nonlinear parabolic systems~\cite{Ama90, Lunardi, PS16}, including free boundary value problems.  Classical results in this  direction can be found  in 
\cite{LSU64} in  the $\LL_p$-$\LL_q$-setting for second-order equations with continuous coefficients, subject to  Dirichlet boundary conditions. 
Using heat-kernel estimates, off-diagonal estimates or weak reverse H\"older inequalities, these results can be extended~\cite{HP97, Egert, Tolksdorf_elliptic} to the case of rough coefficients and  boundaries.

Putting together  a theorem  due to Dore and Venni~\cite{DV87}, which says that an estimate of the above form holds true provided the operator $A_B$ admits bounded imaginary powers of an angle 
less than $\pi/2$  and  a  result of  Seeley~\cite{See67} on imaginary powers of the operator $A_B$ where all the coefficients are  $\C^\infty$-functions, one obtains maximal 
$\LL_p$-$\LL_q$-regularity results for this class of scalar-valued boundary value problems for all $1<p,q<\infty$.  

Consider  a Banach space $X$ of class $\cH\cT$,
that is to say such that the Hilbert transform 
is bounded on $\LL_q(\IR;X)$ for some (all) $q \in (1,\infty)$ (see~\cite{Ama95,DHP03,KW04,HNVW17} for more information).
For a sectorial operator $A$ on $X,$ of angle $\phi_A<\pi/2,$  it was shown by  Weis~\cite{Wei01} that the maximal $\LL_q$-regularity estimate~\eqref{eq:mrp} is equivalent to the 
{$\cR$-sectoriality} of $A$. Here an operator $A$ is called
{\em $\cR$-sectorial of angle $\phi \in (0,\pi)$} provided $\sigma(A) \subset \overline{\Sigma_\phi}$
and for all $\phi'>\phi$ the set 
$$
\{\lambda R(\lambda,A): \phi' \leq |\arg(\lambda)| \leq \pi\} 
$$ 
is $\cR$-bounded. The infimum over all $\phi$ for which the above set is $\cR$-bounded is called   
the {\em $\cR$-angle} \index{$\cR$-angle} of $A$ and is denoted by $\phi_A^\cR$. 

In fact, let $A$ be the generator of a bounded analytic semigroup on a 
Banach space $X$ of class $\cH\cT$. Then, the characterization theorem due to Weis~\cite{Wei01} says that  
$A$ has maximal $\LL_q$-regularity for some (all) $q \in (1,\infty)$ on $\IR_+$ if and only if 
$-A$ is $\cR$-sectorial of angle $\phi_A^\cR < \pi/2$.

Striving, however, for estimates which are useful in nonlinear problems, one is forced to 
look for minimal smoothness assumptions on the coefficients. In~\cite{DHP03}, the concept of $\cR$-boundedness is employed to obtain the following result  
for general systems subject to variable, $\cL(E)$-valued coefficients whenever $E$ is of class $\cH\cT.$ Assume that 
\begin{enumerate}
 \item[(i)] the top-order coefficients of $\cA(x,D)$ belong to $\C(\Omega ; \cL(E))$,
 \item[(ii)] the coefficients of the boundary operators $\cB_j(x,D)$ belong to $\C^{2m-m_j}(\partial \Omega;\cL(E))$,
 \item[(iii)] the principal symbol of $\cA$ is parameter-elliptic of angle $\phi_\cA$,
 \item[(iv)] the Lopatinskii--Shapiro condition holds.
\end{enumerate}
Then, if the angle of ellipticity satisfies $\phi_{\cA}<\frac \pi 2$, the parabolic initial boundary value problem
\begin{align}\label{eq:pivp}
 \left\{\begin{aligned}
\partial_t u + A_B u  &= f, \quad t>0, \\
 u(0) &= 0 
 \end{aligned} \right.
\end{align}
has the property of maximal regularity in $\LL_q(\IR_+; \LL_p(\Omega;E))$ for all  $p,q\in (1,\infty)$.

One may wonder whether the normal ellipticity condition on $\cA$ and the Lopatinskii--Shapiro condition on $(\cA, \cB_1,\ldots, \cB_m)$ are necessary for getting the 
above  $\LL_p$-$\LL_q$-estimates.  It was proved in~\cite{DHP} that this is indeed the case. 
At this point,  one might also ask whether {\em all} generators of analytic semigroups on $\LL_p(\Omega,\mu)$, where $1<p<\infty$ and $(\Omega,\mu)$ denotes a measure space possess the property of maximal 
$\LL_p$-$\LL_q$-regularity. Kalton and Lancien~\cite{KL99} gave a complete answer to this difficult question by showing that if $E$ has an unconditional basis and all generators of analytic semigroups on $E$  have the 
maximal $\LL_q$-regularity property, then $E$ has to be isomorphic to a Hilbert space.  
\smallbreak
When dealing with nonlinear problems, it is helpful to  formulate the maximal regularity property of the solution of an evolution equation in terms of an {\em isomorphism between data and solution spaces}.
More precisely, consider again  the equation~\eqref{eq:parabolic}
but with nonzero data, and define $\IF:=\LL_q(\IR_+;E)$ and $\IE:=\H^1_q(\IR_+;E) \cap \LL_q(\IR_+;\dom(A))$. Then, given $q \in (1,\infty)$,
and assuming that $-A$ has maximal $\LL_q$-regularity and $0 \in \rho(A)$, it follows that 
$$  \Big( \frac{d}{dt} + A, {\rm tr} \Big) \in \mbox{Isom}(\IE,\IF \times X_\gamma), $$
where $X_\gamma$ stands for the real interpolation space $(X , \dom(A))_{1-1/q,q}$ and ${\rm tr}(u):=u(0)$ denotes the trace operator.

In~\cite{DHP}, a characterization of optimal $\LL_p$-$\LL_q$-regularity for solutions of general  boundary value problems of the form   
\begin{align}\label{1-1}
 \left\{ \begin{aligned}
\partial_t u + \cA(x,D)u & = f(t,x), && t\in J,\; x\in \Omega,\\
\cB_j(x,D) u & = g_j(t,x), && t\in J,\; x\in\partial \Omega,\;j=1,\ldots,m,\\
u(0,x) & = u_0(x), && x\in \Omega
\end{aligned} \right.
\end{align}
in terms of the data was given for the case where the principal symbol of $\cA$  is parameter 
elliptic and  the Lopatinskii--Shapiro condition holds. 
Moreover, it was shown in~\cite{DDHPV04} that $A_B$ admits  a bounded $\H^\infty$-calculus on $\LL_p(\Omega;E)$  for $1<p<\infty$ provided the top-order coefficients are H\"older continuous. This allows to 
characterize the domains of the fractional powers $A_B^\alpha$ of $A_B$ as the complex interpolation spaces $[X,\dom(A_B)]_\alpha$, which is important for the nonlinear theory.
\smallbreak
This article focuses on  the limit case of maximal $\LL_1$-regularity, that
is when $q=1.$ In order to motivate our study, let us resume, for the time being,   
to the basics of the  description of solutions to PDEs that model the evolution of particles or elements/agents in a physical situation. 
Being interested in describing the long time behavior of solutions, any information about possible stabilization effects of solutions is very much desirable. 

A key example in this respect can be  represented within the framework of classical mechanics by Lagrangian coordinates:
 the solution  is described  by variables that follow 
each particle trajectory  knowing its initial  position $\xi.$
By contrast,  within the Eulerian coordinates system, the  position of a  particle {\cb at time  $t$} 
 is given by $(t,x)$, where $x$ denotes the space variable. 
 According to basic kinematic laws, this alternative viewpoint requires the knowledge of  the velocity of each particle. 
 More precisely, 
let $X_u (t , \xi) \in \IR^n$ denote the position of a fluid particle at time $t$ which was  located in  $\xi \in \IR^n$ at initial time $t_0.$ If $u (t , \xi)$ denotes the velocity 
field of this particle, then  the particle trajectory is given by
\begin{align*}
 X_u(t , \xi) = \xi + \int_{t_0}^t u(s , \xi) \; \d s.
\end{align*}
The Lagrangian variables are well suited to study free boundary  problems, as in these variables, the underlying, originally moving, domain is the initial domain and thus does \textit{not} depend on the 
time variable.  In fact, for a number of free boundary problems, combining classical maximal regularity tools and Lagrangian coordinates already allows to get short time existence of strong 
solutions for smooth enough data and long-time or global existence for small data.

In this direction,  one may mention the {\em Stefan problem} modelling the melting of ice in water. Another prominent problem is the \textit{one-} or 
 {\em two-phase Navier-Stokes} problem describing, e.g., the motion of a droplet of oil in water. 

To have a well-defined coordinate transform between Eulerian and Lagrangian variables at hand, it is fundamental that, for each $t \geq t_0,$ the mapping
\begin{align*}
 \xi \mapsto X_u(t , \xi)
\end{align*}
is invertible and that, when measured in suitable norms, does not develop singularities. Employing the inverse function theorem shows that the invertibility is ensured if the Jacobian of 
$X_u(t , \cdot)$ is invertible. This Jacobian is given by
\begin{align}
\label{Eq: Jacobian of X}
 D_{\xi} X_u (t , \xi) = \Id + \int_{t_0}^t D_{\xi} u (s , \xi) \; \d s.
\end{align}
Clearly, a way to obtain a quantitative control on all norms of derivatives of the transformation $\xi \mapsto X_u(t , \xi)$ is to use a Neumann-series argument to invert the 
Jacobian of $X_u(t , \cdot)$. Moreover, if one desires to prove global existence for small initial data, it is highly desirable to have a \textit{global} coordinate transform at hand. This is indeed the case if one can find  a constant $c \in (0 , 1)$ such that for all $t > t_0,$ one has
\begin{align}
\label{Eq: The L1 integral}
 \Big\| \int_{t_0}^t D_{\xi} u (s , \xi) \; \d s \Big\|_{\LL_{\infty}} \leq c.
\end{align}
One directly sees that whether one has a global Lagrangian coordinate transform at disposal is intimately connected to a \textit{bound in $\LL_1(\IR_+;\LL_\infty(\Omega))$} of the Jacobian of 
the Lagrangian fluid velocity.  Now, if~\eqref{Eq: The L1 integral} follows from  a
 $\LL_q$-in-time estimate for $u$ for some $q > 1$,  then one  obtains a $t$-dependent bound, which can only be balanced if further 
sufficiently strong decay properties (exponential decay, typically) of solutions  is known. Unfortunately, some very simple geometric configurations like the half-space do not allow for 
having more than algebraic decay estimates. 
\smallbreak
Summarizing, to establish an efficient method for proving global-in-time
results for free boundary problems, we see that it would be desirable to have an {\em $\LL_1$-maximal regularity theory} at hand. It is, however, known that estimate 
\eqref{eq:mrp} fails to be true for $q=1$, whenever $X$ is reflexive. On the other hand,   {\em Da Prato -- Grisvard theorem}~\cite{DaPrato_Grisvard} tells us 
that~\eqref{eq:mrp} holds for all $1\leq q \leq \infty$ provided
the Banach space $X$ is replaced with a suitable interpolation space. In fact, let $A$ be the generator of a bounded analytic semigroup on a Banach space $X$ with domain $\dom(A)$, $0 \in \varrho(A)$, $\theta \in (0 , 1)$ and $1 \leq q < \infty$. Then, 
there exists a constant $C > 0$ such that for all $f \in \LL_q(\IR_+;\dom_{A}(\theta,q))$, the function $u$ given for $t>0$ by 
$$u(t):= \int_0^t \e^{(t - s)A}f(s)\, \d s$$ satisfies
\begin{align*}
 \| Au \|_{\LL_q(\IR_+; \dom_{A}(\theta , q))} \leq C \| f \|_{\LL_q(\IR_+; \dom_{A}(\theta , q))}.
\end{align*} 
Here  $\dom_{A}(\theta,q)$ is defined by 
\begin{align*}
 \dom_{A}(\theta , q) := \bigg\{ x \in X : [x]_{\theta,q}:= \Big(\int_0^\infty\|t^{1 - \theta} A \e ^{tA} x\|_X^q \; \frac{\d t}{t}\Big)^{1/q} < \infty \bigg\}\cdotp
\end{align*} 
When equipped with the norm $\|x\|_{\theta,q}:=\|x\|_X + [x]_{\theta,q}$, the space $\dom_{A}(\theta,q)$ becomes a Banach space. It is known  that 
$\dom_{A} (\theta , q)$ coincides with the real interpolation space $(X , \dom(A))_{\theta , q}$ and that the respective norms are equivalent. 
Furthermore, if $0 \in \varrho(A)$, then the real interpolation space norm is 
equivalent to the homogeneous norm $[\cdot]_{\theta,q}$. 

In this context, we refer to a  result due to Dore~\cite{Dor99} saying that a sectorial operator $A$ satisfying $0 \in \varrho(A)$ admits a 
bounded $\H^\infty$-calculus on $\dom_{A}(\theta,q)$ for $\theta \in (0,1)$ and $1\leq q \leq \infty$. 
It was also shown by Dore~\cite{Dor01} that if $0 \in \sigma(A)$, then $A$ admits a bounded $\H^\infty$-calculus on the real interpolation space between $X$ and $\dom(A) \cap \cR(A)$.

The problems we have in mind to apply the  Da Prato -- Grisvard approach to obtain global Lagrangian coordinate transforms  are the limit cases,  where the corresponding 
semigroups do \textit{not} have exponential decay.  Since the classical Da Prato -- Grisvard theorem gives only a global-in-time maximal $\LL_q$-regularity estimate for invertible generators, we  
present first an alternative \emph{homogeneous} formulation of this result. More precisely, we will show that if one endows the space $\dom_A (\theta , q)$ with the \textit{homogeneous} 
norm $[\cdot]_{\theta , q}$, then a global maximal $\LL_q$-regularity result can be obtained whenever $1 \leq q \leq \infty$ 
\emph{without assuming invertibility} of the generator of the associated semigroup.

In contrast to the inhomogeneous case where $\dom_A (\theta , q)$ could be identified as the real interpolation space $(X , \dom(A))_{\theta , q}$, in the homogeneous case one can recover the 
norm $[\cdot]_{\theta , q}$ as a real interpolation space norm between $X$ and the domain of a \textit{homogeneous} version of the operator $A$; see for example the book~\cite[Sec.~6.3-6.4]{Haase} by Haase 
or the survey by Kunstmann~\cite{kunstmann21}
for a presentation of this homogeneous interpolation theory. In comparison to~\cite{Haase}, we will provide here another approach to these spaces, which will give us more 
flexibility in the choice of the underlying function spaces. 

If $A$ denotes a differential operator on $\LL_p$ (or a subspace of $\LL_p$) of order $2 m$ without lower-order terms  then, formally, the domain of the homogeneous version of $A$ should be a 
homogeneous Bessel potential space of order $2 m$ that additionally incorporates boundary values and/or other (algebraic) conditions like solenoidality. A computation of the  interpolation space 
required by the homogeneous version of the Da Prato -- Grisvard theorem then \textit{naturally} leads to the \textit{homogeneous Besov spaces} $\dot \B^{2 m \theta}_{p , q}$ as a ground space for 
maximal $\LL_q$-regularity. Notice that the last index $q$ is the same as the integrability index in the maximal regularity result and that also boundary conditions and other side conditions have to 
be taken into account in rigorous results. 

It is well-known that, in the whole space $\IR^n,$  the \textit{only} homogeneous Besov space that embeds into $\LL_{\infty}$ is the space $\dot \B^{n / p}_{p , 1}$. Thus, recalling the discussion on the Lagrangian coordinate 
transform, we are aiming for a functional setting that guarantees $\nabla u \in \dot \B^{n / p}_{p , 1}$. As the gradient operator allows to change the differentiability scale on 
homogeneous Besov spaces, we thereby find that $\nabla^2 u \in \dot \B^{n / p - 1}_{p , 1}$ and thus realize that $\dot \B^{n / p - 1}_{p , 1}$ is the right  choice of a ground space for maximal $\LL_1$-regularity 
if for example the Stokes operator is considered. 

It is astonishing, that the last index being $1$ exactly 
ensures both $\nabla u$ to be in $\LL_{\infty}$ in space \textit{and} $\LL_1$ in time. For related results on endpoint maximal $\LL_1$-regularity or associated Besov spaces we refer to~\cite{Cannone, chemin, kunstmann21, ogawa-shimizu-jde}. This shows that the Da Prato -- Grisvard theorem on maximal regularity 
combined with the study of homogeneous Besov spaces is the right tool to ensure the existence of a global-in-time Lagrangian coordinate transform.

Another  difficulty when investigating free boundary  or moving domain problems arising for incompressible fluids is the condition $\divergence v =0$, which needs to be incorporated 
in the underlying function spaces. This procedure is rather well understood within the $\LL_p$-setting for $1<p<\infty$ and various domains $\Omega$ including the half-space $\IR_+^n$, 
the bent half-space, bounded or exterior domains with smooth boundaries subject to various boundary conditions. Let us refer to the above domains as \emph{standard domains},  and let $\IP$ be the 
Helmholtz projection from $\LL_p(\Omega) \to \LL_{p , \sigma} (\Omega)$, where $$\LL_{p , \sigma} (\Omega):= \overline{\{u \in \C_c^\infty(\Omega ; \IC^n): \divergence\; u=0\;\mbox{ in } \Omega\}}^{\|\cdot\|_{\LL_p}}.$$ 
Solonnikov's~\cite{Sol77} classical result  for {\em Dirichlet boundary conditions}, (see Abels~\cite{Abe05} for $n=2$) reads as follows: let $n\ge 2$, $J=(0,T)$ for some $T>0$, $1<p<\infty$ and 
assume that $\Omega\subset\IR^n$ is a bounded domain of class $\C^3$. Then, the Stokes operator defined by 
$$
A_{p}u:=- \IP\Delta u, \quad \dom(A_{p}):= \W^2_p(\Omega) \cap \W^1_{p , 0}(\Omega)\cap \LL_{p , \sigma} (\Omega)
$$
admits maximal $\LL_q$-regularity on $\LL_{p , \sigma} (\Omega)$
for all $q\in(1,\infty).$ In particular, the solution $u$ to the Cauchy problem 
$$ 
u'(t) + A_p u(t) = f(t), \,\,  t>0, \quad u(0) = u_0, 
$$
satisfies the estimate
$$
\|u'\|_{\LL_q(J;\LL_p(\Omega))} + \|A_p u\|_{\LL_q(J;\LL_p(\Omega))} \leq C( \|f\|_{\LL_q(J;\LL_p(\Omega))} + \|u_0\|_{X_\gamma}),
$$
for some $C>0$ independent of $f \in \LL_q(J;\LL_{p , \sigma} (\Omega))$ and $u_0 \in X_\gamma:= (\LL_{p , \sigma} (\Omega),\dom(A_p))_{1-1/q,q}$. For global-in-time maximal regularity estimates for exterior 
domains, we refer to  Giga and Sohr~\cite{GS91}. For a modern and short proof of Solonnikov's result we refer to~\cite{GHHSS10}. For a survey of results in this direction, see~\cite{HS18b}. Notice that the endpoint cases $q = 1$ and $q = \infty$ are not included in these results.

Multiplying the Stokes equation with  cut-off functions leads to a localized perturbed version of this equation. Since the Stokes equation is invariant under 
rotations and translations, we may assume that the localized version is either an equation on $\IR^n$ or on a bent half-space $H:=\{x\in\IR^n:\ x_n>h(x')\}$
with a certain bending function $h:\IR^{n-1}\to\IR$. We  further transform the localized system by $v(x',x_n):=(u\circ\phi)(x',x_n):= u(x',x_n + h(x'))$ for $(x',x_n)\in\IR^n_+$ to 
an equation on $\IR^n_+$. By this procedure, the Stokes equation  on a domain is reduced to at most countable equations on $\IR^n_+$ or $\IR^n$.

A fundamental problem arising here is the fact that $\divergence u=0$ is not preserved, neither under multiplication with a cut-off function nor by the above transformation.  This might be the reason 
why proofs of the analyticity of the Stokes semigroup as well as maximal regularity estimates  rely on different methods and why different assumptions are  imposed, depending on 
the approaches being used.

Boundary conditions different from the Dirichlet condition arise in many applications. For example, when considering free boundary problems, then there is no stress
at the surface of the fluid and it is natural to impose the condition 
$$
\IT(u,P)\bar n = 0 \ \hbox{ on the  boundary.}
$$    
 Here $\IT(u,P):=2\mu D(u) - P \Id$ denotes the stress tensor, $D(u)=\tfrac{1}{2} (\nabla u + [\nabla u]^{\top})$ the deformation tensor and $\bar n$ the outer normal. This condition, called 
free or Neumann boundary condition, belongs to the class of  energy preserving boundary conditions. This is due to the fact that the kinetic energy balance related to the 
homogeneous Stokes equation subject to boundary conditions $B(u,P)=0$ is given by 
$$
\frac12 \frac{d}{dt} \int_\Omega |u|^2 dx + 2 \mu \int_\Omega |D(u)|^2 dx = \int_{\partial\Omega} [u \cdot \IT(u,P)\bar n] \,\d \sigma. 
$$  
Whenever $B(u,P)$ is chosen in such a way that the above right-hand side vanishes, $B(u,P)=0$ is called an energy preserving boundary condition.

A first approach related to the free boundary condition $B(u,P)=\IT(u,P)\bar n$ was given by Solonnikov in~\cite{Sol84} and~\cite{Sol91}. Nowadays, one knows that the inhomogeneous 
Stokes problem on a half-space or on bounded domains with smooth boundaries subject to Neumann boundary conditions admits a unique solution $(u,P)$ within the maximal $\LL_q$-regularity class  
if and only if the data belong to (precisely defined) data space. For details, see, e.g., the work of Bothe, K\"ohne and Pr\"uss~\cite{BKP13}.

It seems that Saal~\cite{Saa06} was the first to consider the Stokes equation subject to Navier or Neumann conditions on a half-space $\IR^n_+$ in the framework of maximal $\LL_p$-$\LL_q$-regularity for 
$1<p,q<\infty$. Maximal regularity results for a wide class of boundary conditions including the free boundary condition,  based on pseudo-differential methods, were derived by 
Grubb and Solonnikov~\cite{GrSo91}. In a series of articles~\cite{SS07, Shibata1, Shibata2, Shibata_Shimizu} Shibata and Shimizu also proved maximal $\LL_p$-$\LL_q$-estimates for free boundary or Neumann conditions on a half-space,  
bounded domains or exterior domains. Their results  imply in particular that the Stokes operator subject to this boundary condition defines a sectorial operator in $\LL_{p , \sigma}(\IR^n_+)$ or generates a {\em bounded analytic} semigroup  on  $\LL_{p , \sigma}(\IR^n_+)$ and not only an analytic semigroup.

The Navier-Stokes equations subject to {\em Neumann or free boundary conditions} were investigated first by  Beale~\cite{Beale} and Solonnikov~\cite{Sol84},~\cite{Sol91} in a series of 
articles, see~\cite{SD18} for a survey. 

In the case where the initial domain $\Omega_0$ is bounded, the existence of a unique local as well as a unique global solution was proved by Solonnikov  
in~\cite{Solonnikov1,Sol84} within the $\LL_p$-$\LL_p$-setting and also by Mucha and Zajaczkowski in~\cite{MuZa1,Mucha_Zajaczkowski}. Shibata and Shimizu~\cite{SS07} considered  the corresponding $\LL_p$-$\LL_q$-setting. 
At the same time, in the case of an initial boundary with high curvature, blow-up may happen (see~\cite{CCDFG,Coutand_Shkoller}).

We mention here also the semigroup approach due to Schweizer~\cite{Sch97} for the case of non-vanishing surface tension and the result on exponential stability of global strong solutions 
by K\"ohne, Pr\"uss and Wilke~\cite{KPW13} for two-phase flows based on Eulerian coordinates and the Hanzawa transform. A related approach to global well-posedness  for  the Navier-Stokes equations 
subject to free boundary conditions is  described by Shibata in~\cite{Shi20} for initial domains close to a ball.

The case where the initial domain $\Omega$ is an infinite layer of {\em finite} depth 
with gravity was investigated by many authors after the pioneering work by  Beale~\cite{Beale} 
where almost global  existence was established. We refer here, e.g., to the works of  Tani and Tanaka~\cite{TT}, 
Abels~\cite{Abels} and Saito~\cite{Saito}.   In this setting, it looks that the first global existence 
result for small and smooth data  has been obtained by D.L. Sylvester in~\cite{Sylvester} 
(see  also the much more recent results 
 by  Y.\@ Guo and I.\@ Tice  in~\cite{Guo1,Guo2} where global-in-time results
 are obtained in various frameworks thanks to  high order energy type techniques).

In order to prove global existence of strong solutions one needs, following the Lagrangian approach, the integrability of $\nabla u(t,x)$ with respect to 
$t \in (0,\infty)$. This difficulty was overcome by Saito~\cite{Saito}, who proved maximal $\LL_q$-regularity for $1<q<\infty$ as well as {\em exponential stability} of the linearization in this setting.        

Considering  general  unbounded domains as initial domains, exponential stability of the associated linearization is not expected since in many cases, as for example for $\Omega=\IR^n$ or $\IR^n_+$,  
$0$ belongs to its spectrum. A global existence result for the case of the bottomless ocean was nevertheless obtained by Saito and Shibata~\cite{SS19} in the case with surface tension  by combining the Hanzawa transform approach with 
$\LL_r$-$\LL_s$-decay estimates for the semigroup associated with the linearization. Similar results are true also for exterior domains, see~\cite{Shi20}. 

In this article, we develop  an $\LL_1(\IR_+;\dot\B^s_{p,1}(\IR_+^n))$-maximal regularity approach to the Stokes and Lam\'e systems. It is then applied to establish two 
global strong well-posedness results within the theory of free boundary problems. 

The first one concerns the motion of  viscous, incompressible Newtonian fluids  
without surface tension, the second one pressureless gases. In the case of Newtonian fluids we consider as initial domain the half-space $\IR^3_+$ which 
corresponds to the 
bottomless ocean. Recall that  $\IT (u , P):=  D(u) - P \Id$ denotes the stress tensor with deformation tensor
$D(u) := \bigl(\nabla u + [\nabla u]^{\top}\bigr)\cdotp$

Our maximal $\LL_1$-regularity result for the Stokes system reads as follows. 

\begin{theorem}
Let  $n\geq2,$ $p\in (1,\infty)$ and $s \in (0 , 1)$ satisfy $s \leq n / p$. For any solenoidal $f \in \LL_1(\IR_+  ; \dot\B^s_{p,1}(\IR_+^n ; \IC^n))$ and solenoidal $u_0$
in $ \dot\B^s_{p,1}(\IR_+^n ; \IC^n)$, there exists a unique solution $(u , P)$ with $u \in {\mathcal C}_b(\IR_+  ; \dot \B^{s}_{p ,1} (\IR^n_+))$
  and $\partial_tu,\nabla^2u,\nabla P\in \LL_1(\IR_+; \dot \B^{s}_{p ,1} (\IR^n_+))$ to 
$$ 
\left\{
  \begin{aligned}
   \partial_t u  - \divergence \IT (u , P) &= f, && t \in\IR_+  , \:x \in \IR_+^n \\
   \divergence  u &= 0, &&  t \in\IR_+  , \:x \in \IR_+^n \\
   \IT (u , P)\cdot \e_n &= 0, && t \in \IR_+  ,\: x \in \partial \IR_+^n \\
u|_{t = 0} &= u_0, && x \in  \IR_+^n.
  \end{aligned} \right.
$$
Furthermore, there exists a constant $C>0$ such that 
$$
\displaylines{\quad
  \|u\|_{\LL_\infty(\IR_+;\dot \B^{s}_{p,1}(\IR^n_+))}+
  \|u_t\|_{\LL_1(\IR_+ ; \dot \B^{s}_{p,1}(\IR^n_+))} + \| \nabla^2 u\|_{\LL_1(\IR_+ ; \dot \B^{s}_{p,1}(\IR^n_+ ))} + \| \nabla P \|_{\LL_1(\IR_+ ; \dot \B^{s}_{p,1}(\IR^n_+))} \hfill\cr\hfill
\leq C\Big( \| f\|_{\LL_1(\IR_+ ; \dot \B^{s}_{p,1}(\IR^n_+))} + \|u_0\|_{ \dot \B^{s}_{p,1}(\IR^n_+)}\Big)\cdotp}
$$
\end{theorem}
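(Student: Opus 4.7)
The plan is to reduce the Stokes system with free boundary condition to an abstract Cauchy problem via the Helmholtz projection adapted to this boundary condition, and then invoke the homogeneous Da Prato--Grisvard machinery described in the introduction. Denoting by $\IP$ the Helmholtz projection onto solenoidal vector fields on $\IR_+^n$, I would set $A := -\IP\Delta$ with domain
\begin{align*}
 \dom(A) := \bigl\{u \in \H^2_p(\IR_+^n;\IC^n)\cap \LL_{p,\sigma}(\IR_+^n) : \IT(u,P)\cdot \e_n = 0 \text{ on } \partial \IR_+^n \bigr\},
\end{align*}
where $P$ is the pressure associated to $u$ through the Helmholtz decomposition of $\Delta u$. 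Since $f$ is solenoidal, applying $\IP$ to the momentum equation reduces the system to
\begin{align*}
 u'(t) + A u(t) = f(t), \qquad u(0) = u_0.
\end{align*}
By the results of Shibata--Shimizu cited in the introduction, $-A$ generates a bounded analytic semigroup on $\LL_{p,\sigma}(\IR_+^n)$, which is the crucial input for any Da Prato--Grisvard argument.

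Second, I would apply the \emph{homogeneous} version of Da Prato--Grisvard to obtain, for $\theta \in (0,1)$,
\begin{align*}
 \|u'\|_{\LL_1(\IR_+; \dom_A(\theta,1))} + \|Au\|_{\LL_1(\IR_+; \dom_A(\theta,1))} \leq C\bigl(\|f\|_{\LL_1(\IR_+; \dom_A(\theta,1))} + [u_0]_{\theta,1}\bigr),
\end{align*}
where $\dom_A(\theta,1)$ is endowed with the homogeneous seminorm $[\,\cdot\,]_{\theta,1}$. The switch to the homogeneous formulation is essential because $0\in\sigma(A)$ on the half-space, so the classical inhomogeneous version of the theorem would yield only a time-local estimate with a constant depending on the final time.

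The third and central step is to identify $\dom_A(\theta,1)$, equipped with $[\,\cdot\,]_{\theta,1}$, with the solenoidal homogeneous Besov space $\dot \B^{2\theta}_{p,1}(\IR_+^n;\IC^n)$, setting $2\theta = s$. For $s$ strictly below the lowest admissible trace index, only the divergence constraint survives real interpolation, while the algebraic part of the free boundary condition $\IT(u,P)\cdot\e_n = 0$ is inactive; the constraint $s\leq n/p$ keeps us in the scaling-invariant range where the homogeneous Helmholtz projection is bounded on $\dot \B^s_{p,1}(\IR_+^n)$. Once this identification is secured, the second-order regularity $\nabla^2 u \in \LL_1(\IR_+;\dot\B^s_{p,1})$ follows from the resolvent estimates for the Stokes problem on $\IR_+^n$, and the pressure $\nabla P$ is recovered as the curl-free part of $\Delta u - u' + f$ through the weak Neumann problem $\Delta P = \divergence(f - u') = 0$ in the interior with $\bd_n P = (\Delta u - u' + f)\cdot \e_n$ on the boundary. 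Continuity in time into $\dot\B^s_{p,1}(\IR_+^n)$ and uniqueness follow from the abstract estimate applied to the difference of two solutions with zero data.

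The main obstacle is precisely the identification of $\dom_A(\theta,1)$ with the homogeneous solenoidal Besov space under the free boundary condition. In the inhomogeneous $\LL_p$--$\LL_q$ setting such identifications are classical, but in the homogeneous framework one must replace $\dom(A)$ by a genuine homogeneous counterpart (modulo constants or polynomials), extend $\IP$ to the homogeneous Besov scale, and verify that the compatibility $\IT(u,P)\cdot\e_n=0$ produces no trace obstruction in the real interpolation as long as $s$ stays below the critical index and above $0$. The functional calculus results of Dore on $\dom_A(\theta,q)$ recalled in the introduction should be instrumental here, together with the homogeneous Besov theory developed earlier in the memoir. A secondary technical issue is the clean treatment of the pressure modulo constants in these homogeneous spaces, which I expect to handle via the homogeneous Helmholtz decomposition that the chapters preceding this theorem provide.
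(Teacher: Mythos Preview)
Your proposal is correct and follows essentially the same route as the paper: define the Stokes operator $A_p$ on $\LL_{p,\sigma}(\IR^n_+)$ via the Shibata--Shimizu resolvent estimates, verify the abstract hypotheses so that the homogeneous Da Prato--Grisvard theorem applies, and identify $(\LL_{p,\sigma}(\IR^n_+),\dom(\dot A_p))_{s/2,1}$ with $\dot\B^s_{p,1,\sigma}(\IR^n_+)$ for $s\in(0,1)$. The paper packages the pressure and second-order estimates together via a single homogeneous bound $\|\nabla^2 u\|_{\dot\B^s_{p,1}}+\|\nabla P\|_{\dot\B^s_{p,1}}\leq C\|A_p u\|_{\dot\B^s_{p,1}}$ obtained by interpolating the $\LL_p$ and $\dot\H^1_p$ resolvent estimates, rather than recovering $P$ separately through a Neumann problem as you suggest, but this is a cosmetic difference; your identification of the interpolation step as the heart of the matter, and of the reason the Neumann condition drops out for $s<1$, is exactly right.
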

Let us  mention here that a related result has been 
established independently by Ogawa and Shimizu in~\cite{OS},
that covers the case of non-positive values of $s$ only, while the theorem above deals with positive values of $s$. 
\smallbreak

We then consider the {\em free boundary problem for viscous, incompressible Newtonian flows  without surface tension}. Denoting by  $v$ the velocity of the fluid and 
by $Q$  its pressure, the motion of the fluid is governed by the following set of equations in the (unknown) moving domain $\Omega_t\subset\IR^n$: 
\begin{align}
\label{Eq: Free boundary problem}
 \left\{
  \begin{aligned}
   \partial_t v + (v \cdot \nabla) v - \divergence \IT (v , Q) &= 0, \qquad && t \in\IR_+  , \: x \in \Omega_t \\
   \divergence v &= 0, \qquad &&  t \in\IR_+  ,\: x \in \Omega_t \\
   \IT (v , Q) \overline{n} &= 0, \qquad && t \in \IR_+  ,\: x \in \partial \Omega_t \\
  v \cdot \overline{n} &= - (\partial_t \eta) / \lvert \nabla_x \eta \rvert ,\qquad && t \in\IR_+  ,\: x \in \partial \Omega_t \\
v|_{t = 0} &= v_0, \qquad && x \in \Omega_0 \\
   \Omega_t |_{t = 0} &= \IR_+^n.
  \end{aligned}
 \right.
\end{align}
We  focus on  small amplitude motions so that the boundary $\partial \Omega_t$  may be described  by some (unknown) function $\eta=\eta(t,x)$ through
$$
\partial \Omega_t=\bigl\{x\in\IR^n\,:\, \eta(t,x)=0\bigr\}\cdotp
$$ 
Consequently,  the outward unit normal vector $\overline{n}$  to $\partial \Omega_t$ is just $\overline{n} = \nabla_x \eta / \lvert \nabla_x \eta \rvert$. 
\medbreak
Taking advantage of our maximal regularity result applied to the particular case of the Stokes equation in the half-space
will enable us to prove a global existence result for small data. 
Before giving the statement, let us specify what a solution to~\eqref{Eq: Free boundary problem} is.

\begin{definition}\label{def:NS}
We call the triplet $(v , Q , \Omega_t)$ a global solution to~\eqref{Eq: Free boundary problem} if  $(v , Q , \Omega_t)$ satisfies~\eqref{Eq: Free boundary problem} almost everywhere
and if, for all $t > 0,$ the velocity field satisfies $v(t , \cdot) \in \dot \B^{n / p - 1}_{p , 1} (\Omega_t)$ and  
$$\sup_{t\in\IR_+} \|v(t)\|_{\dot \B^{n/p-1}_{p,1}(\Omega_t)}+
\int_{\IR_+} \bigl(\|\partial_t v , \nabla^2 v , \nabla Q(t)\|_{\dot \B^{n/p-1}_{p,1}(\Omega_t)}+ \|\nabla v(t)\|_{\dot \B^{n/p}_{p,1}(\Omega_t)}\bigr)\, \d t<\infty$$
with $\Omega_t$ given, for each $t > 0$, by
\begin{align*}
 \Omega_t = X_v(t , \IR^n_+).
\end{align*}
Above,  the flow $X_v$ of $v$ is defined by
\begin{align*}
 X_v (t , x) = x + \int_0^t v(\tau , X_v(\tau , x)) \, \d \tau,
\end{align*}
and is required  to satisfy:
\begin{align*}
 \nabla X_v - \Id \in \cC(\IR_+ ; \dot \B^{{n}/{p}}_{p , 1} (\IR_+^n)).
\end{align*}
\end{definition}

\begin{theorem} \label{Thm:NS3D, intro} 
Assume that $n\geq3$. Let $v_0$ be a solenoidal vector-field on $\IR^n_+,$ with coefficients in the homogeneous Besov space $\dot\B^{n/p-1}_{p,1}(\IR^n_+)$ for some $p\in(n-1,n).$  
Then, there exists $c>0$ such that if 
 \begin{equation}
  \|v_0\|_{\dot \B^{n/p-1}_{p,1}(\IR^n_+)}\leq c,
 \end{equation}
then  system~\eqref{Eq: Free boundary problem} 
admits  a unique, global solution  $(v,Q,\Omega_t)$.
\end{theorem}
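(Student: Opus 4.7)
The plan is to reformulate \eqref{Eq: Free boundary problem} in Lagrangian coordinates on the fixed domain $\IR_+^n$, and then solve the resulting quasilinear system by a fixed-point argument whose linear core is supplied by the $\LL_1$-maximal regularity result for the Stokes system with Neumann condition stated just above, applied with $s = n/p-1 \in (0,1)$. Note that the assumption $p\in(n-1,n)$ exactly guarantees $s\in(0,1)$ with $s\leq n/p$, so that the linear Stokes theorem applies, and simultaneously ensures the embedding $\dot\B^{n/p}_{p,1}(\IR_+^n)\hookrightarrow \LL_\infty(\IR_+^n)$, which is the gateway to controlling the flow.

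First I would introduce $u(t,\xi):= v(t,X_v(t,\xi))$ and $P(t,\xi):= Q(t,X_v(t,\xi))$, where $X_v$ is the Lagrangian flow defined in Definition~\ref{def:NS}. Writing $A(t,\xi):=[D_\xi X_v(t,\xi)]^{-1}$ and using that $\divergence v=0$ and $X_v(t,\cdot)$ is volume-preserving, the system becomes
\begin{align*}
\partial_t u - \divergence \IT(u,P) &= (\divergence\IT - \divergence_A\IT_A)(u,P) && \text{in }\IR_+\times \IR_+^n,\\
\divergence u &= \divergence u - \divergence_A u =: \divergence R(u) && \text{in }\IR_+\times \IR_+^n,\\
\IT(u,P)\e_n &= (\IT(u,P)\e_n - \IT_A(u,P)\overline{n}(A)) && \text{on }\IR_+\times \partial\IR_+^n,\\
u|_{t=0} &= v_0,
\end{align*}
where $\IT_A,\divergence_A,\overline{n}(A)$ denote the natural modifications in Lagrangian variables and the right-hand sides are at least quadratic in $(u,\nabla X_v - \Id)$. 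Convection $(v\cdot\nabla)v$ disappears since $\partial_t u$ is the material derivative; the price is that the differential operators acquire variable, nonlinear coefficients depending on $A$, which themselves depend on $u$ through $\nabla X_v(t,\xi)=\Id+\int_0^t\nabla u(\tau,\xi)\,\d\tau$.

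Second, I would define the solution space
\begin{equation*}
\E:=\Bigl\{u\in\cC_b(\IR_+;\dot\B^{n/p-1}_{p,1}(\IR_+^n))\,:\,\partial_t u,\nabla^2 u\in \LL_1(\IR_+;\dot\B^{n/p-1}_{p,1}(\IR_+^n))\Bigr\}
\end{equation*}
with the natural norm $\|u\|_\E$, and its analog for $\nabla P$. On a small ball $B_\E(0,R)$ with $R$ comparable to $\|v_0\|_{\dot\B^{n/p-1}_{p,1}}$, I would consider the map $\Phi:u\mapsto \tilde u$, where $(\tilde u,\tilde P)$ solves the linear Stokes system with free boundary on $\IR_+^n$ and with the right-hand sides above (evaluated at $u$) as data. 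The key estimates are: (i) $\|\nabla X_v-\Id\|_{\LL_\infty(\IR_+;\dot\B^{n/p}_{p,1})}\leq \|\nabla u\|_{\LL_1(\dot\B^{n/p}_{p,1})}\leq C\|u\|_\E$, so by Neumann series $A = (\Id+(\nabla X_v-\Id))^{-1}$ is controlled in the algebra $\dot\B^{n/p}_{p,1}\cap\LL_\infty$ provided $R$ is small; and (ii) the product law
\begin{equation*}
\|fg\|_{\dot\B^{n/p-1}_{p,1}}\leq C\|f\|_{\dot\B^{n/p}_{p,1}}\|g\|_{\dot\B^{n/p-1}_{p,1}}
\end{equation*}
together with time integration yields quadratic-in-$\E$ bounds on every nonlinear term, for instance
$\|(\divergence\IT - \divergence_A\IT_A)(u,P)\|_{\LL_1(\dot\B^{n/p-1}_{p,1})}\leq C R (\|u\|_\E+\|\nabla P\|_{\LL_1(\dot\B^{n/p-1}_{p,1})})$. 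Feeding these into the Stokes maximal regularity estimate yields $\|\Phi(u)\|_\E\leq C(\|v_0\|_{\dot\B^{n/p-1}_{p,1}}+R^2)$ and a matching Lipschitz bound, whence $\Phi$ is a contraction on $B_\E(0,R)$ for $\|v_0\|_{\dot\B^{n/p-1}_{p,1}}\leq c$ small enough.

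The main obstacle I foresee is the handling of the divergence constraint and the stress boundary condition, because both become nonhomogeneous nonlinear relations in Lagrangian variables, while the linear Stokes theorem as stated only deals with the solenoidal/homogeneous-boundary case. The remedy is standard but delicate: write $\divergence u = \divergence R(u)$ with $R(u)=(\Id-\adj(D_\xi X_v)^\top)u$, which already has the form of a divergence of a quadratic expression; subtract a suitable lifting so that the corrected unknown is exactly solenoidal and satisfies a homogeneous Neumann condition, and transfer the remainder into the source term $f$ and into pressure. One must verify that these liftings are continuous on $\dot\B^{n/p-1}_{p,1}$ in time-integrated form, which again rests on the product law above and the embedding into $\LL_\infty$ only available thanks to the choice of the last index $1$ in the Besov scale. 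Once the fixed point $u$ is obtained, global invertibility of $X_v(t,\cdot)$ follows from \eqref{Eq: The L1 integral} using $\|\nabla u\|_{\LL_1(\LL_\infty)}\leq C\|u\|_\E\leq CR<1$, and one recovers $(v,Q,\Omega_t)$ in Eulerian coordinates; uniqueness is obtained by the same contraction estimates applied to a difference of two solutions.
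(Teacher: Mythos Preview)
Your overall architecture is right: Lagrangian reformulation, fixed point on a maximal-regularity space built from the $\LL_1$-Stokes theorem, quadratic nonlinear estimates via the algebra property of $\dot\B^{n/p}_{p,1}$ and the product law $\dot\B^{n/p}_{p,1}\cdot\dot\B^{n/p-1}_{p,1}\hookrightarrow\dot\B^{n/p-1}_{p,1}$. But there is a genuine gap in the step you flag as ``standard but delicate'': it is in fact the technical core of the argument, and your space $\E$ is too coarse for the iteration to close.

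The issue is this. After lifting the divergence, the Neumann datum $h$ is still nonzero and must be removed before the Da~Prato--Grisvard result applies. The paper does this by constructing a divergence-free corrector $w$ through a stream-function ansatz (Step~2 of Proposition~\ref{p:fight}); the crux is Substep~IV, where one needs $\partial_n\varphi_t\in\LL_1(\IR;\dot\B^{n/p-1}_{p,1})$. This reduces to a Dirichlet heat equation on $\IR_+^n$ with source in $\LL_1(\IR;\dot\B^{n/p-2}_{p,1})$ only---too weak for maximal regularity to produce an $\LL_1(\dot\B^{n/p-1})$-gradient \emph{unless} the source has the special structure $\divergence k^1 + m\,\divergence k^2 + m'\,\divergence k'^2$ with $m,m'\in\dot\B^{n/p}_{p,1}$ and $k$'s in $\dot\B^{n/p-1}_{p,1}$ (Lemma~\ref{eq:extheat}). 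This structure is exactly what $\partial_t(Eh)$ has when one expands the nonlinear boundary term, but it propagates through the iteration only if the \emph{pressure} itself is tracked in a refined space: one must write $P=P_I+P_B$ with $P_I|_{\partial\IR_+^n}=0$ and $\partial_t(EP_B)$ having the same product-of-divergence structure. Your space $\E$ records neither this decomposition of $P$ nor the structure of $\partial_t(EP_B)$, so your map $\Phi$ does not close; the paper's solution space $\Xi_p$ in~\eqref{def:Xi_p} is designed precisely to carry this extra information.

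This is also where the hypothesis $n\geq 3$ enters, which your sketch does not account for. To handle the heat equation with source $m\,\divergence k$ via an extension-by-reflection argument, one needs the product law $\dot\B^{n/p}_{p,1}\cdot\dot\B^{n/p-2}_{p,1}\hookrightarrow\dot\B^{n/p-2}_{p,1}$, which requires $n/p-2>-\min(n/p,n/p')$; combined with $p\in(n-1,n)$ this forces $n\geq 3$. In dimension~$2$ one would need $\partial_t(Eh)$ to be a \emph{pure} divergence, which the nonlinear boundary terms do not provide.
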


As a second  application, we consider the  free boundary problem for pressureless gases  in the half-plane $\IR_+^n$. This system is a simplified model for the dynamics of galaxies. Formally, in $\IR^n$ 
solutions to the pressureless gas system can be obtained as limits of solutions to the Euler equations. For more information, we refer, e.g., to the book by Dafermos~\cite{Daf05}.

The governing equations read as 
\begin{align}
\label{eq:presslessint}
 \left\{
 \begin{aligned}
   \rho(\partial_t v + v \cdot \nabla v) - \divergence \IS (v) &= 0, \qquad && t \in\IR_+,\  x \in \Omega_t, \\
 \partial_t\rho + \divergence (\rho v) &= 0, \qquad &&  t \in\IR_+ ,\  x \in \Omega_t, \\
   \IS (v) \overline{n} &= 0, \qquad && t \in \IR_+ ,\  x \in \partial \Omega_t, \\
      v \cdot \overline{n} &= - (\partial_t \eta) / \lvert \nabla_x \eta \rvert ,\qquad && t \in\IR_+,\  x \in \partial \Omega_t,\\
   v|_{t = 0} &= v_0, \qquad && x \in \IR_+^n, \\
   \Omega_t |_{t = 0} &= \IR_+^n.\end{aligned} \right.
\end{align}
Similarly as above, the unknowns are the velocity field $v$, the density $\rho$ and the time-dependent fluid domain $\Omega_t$. Here   $\IS(v)$ stands for the viscous stress tensor given by 
$$
\IS (v):= \mu D(v) + \lambda\divergence v\,\Id,\qquad \mu>0,\quad 2 \mu + n \lambda>0.$$

Our maximal $\LL_1$-regularity result for the Lam\'e system
\begin{equation}\label{eq:lameint}
\left\{\begin{aligned} 
\partial_t u - \divergence\IS(u) &= f \quad &\hbox{in }\ & \IR_+\times\IR^n_+, \\
\IS(u) \e_n|_{\partial\IR^n_+}&= g \quad &\hbox{on }\ & \IR_+\times\partial\IR^n_+,\\
u|_{t = 0} &= u_0  \quad &\hbox{in }\ & \IR^n_+\end{aligned} \right.
\end{equation}
reads as follows.

\begin{theorem}\label{Thm:lameint} Let $1 < p < \infty$ and $0<s<1/p$ with $s\leq n/p-1$.
Assume that $\mu>0$ and $2 \mu + n \lambda >0$.   
There exists a constant $C > 0$ such that for every 
$u_0\in  \dot \B^s_{p , 1} (\IR^{n}_+)$, $f\in \LL_1(\IR_+;\dot\B^s_{p , 1} (\IR^{n}_+))$ and
\begin{align*}
 g \in \dot\Y^s_p:=\dot \B^{\frac{1}{2}(s+1-\frac1p)}_{1 , 1} (\IR_+ ; \dot \B^0_{p , 1} (\partial\IR^{n}_+)) 
 \cap \LL_1 (\IR_+ ; \dot \B^{s + 1 - \frac{1}{p}}_{p,1} (\partial\IR^{n}_+)),
\end{align*}
there exists a unique solution $u$ to~\eqref{eq:lameint} that
satisfies
\begin{multline}\label{est:lame0} \|u\|_{\E^s_p}:=\| \partial_tu\|_{\LL_1 (\IR_+ ; \dot \B^s_{p , 1} (\IR^n_+))} 
+\|\nabla u \|_{\LL_1 (\IR_+ ; \dot \B^{s+1}_{p , 1} (\IR^n_+))} 
+\|u \|_{\cC_b (\IR_+ ; \dot \B^s_{p , 1} (\IR^n_+))} +
\|\nabla u|_{\partial\IR^n_+}\|_{\dot\Y^s_p}\\[5pt]
\leq C\bigl(\|u_0\|_{\dot \B^s_{p , 1} (\IR^{n}_+)}
+\|f \|_{\LL_1 (\IR_+ ; \dot \B^s_{p , 1} (\IR^n_+))} +
\| g \|_{\dot\Y^s_p}\bigr)\cdotp
\end{multline}
\end{theorem}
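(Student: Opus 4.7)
The plan is to deduce Theorem~\ref{Thm:lameint} from the homogeneous Da Prato--Grisvard maximal $\LL_1$-regularity theorem developed in the preceding chapters, applied to the Lam\'e operator with the free boundary condition on the half-space, combined with an anisotropic trace-and-lifting argument that absorbs the inhomogeneous boundary data $g$. By linearity I split the problem into two parts: (a) the case $g = 0$ with arbitrary $(u_0, f)$; and (b) the case $(u_0, f) = (0, 0)$ with arbitrary $g$. For (b) I construct a lifting $w \in \E^s_p$ satisfying $\IS(w)\e_n|_{\partial \IR^n_+} = g$ and $w(0) = 0$, and set $\tilde u := u - w$. Then $\tilde u$ solves a homogeneous-boundary problem with modified source $\tilde f := f - \partial_t w + \divergence\,\IS(w) \in \LL_1(\IR_+; \dot\B^s_{p,1}(\IR^n_+))$ and initial value $0$, the norm of the new data being controlled by $\|g\|_{\dot\Y^s_p}$ via the lifting estimate.

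For (a), I realize $A := -\divergence\,\IS$ as a closed operator on the homogeneous ground space $X := \dot\B^{s-1}_{p,1}(\IR^n_+)^n$ with homogeneous domain
$$
\dom_{\mathrm{hom}}(A) := \bigl\{u \in \dot\B^{s+1}_{p,1}(\IR^n_+)^n : \IS(u)\e_n|_{\partial \IR^n_+} = 0\bigr\} .
$$
Under the hypotheses $\mu > 0$ and $2\mu + n\lambda > 0$, the Lam\'e system is strongly elliptic and the free boundary condition satisfies the Lopatinskii--Shapiro condition; consequently $-A$ generates a bounded analytic semigroup on $X$, which is obtained by transporting the $\LL_p$-resolvent estimates of Shibata--Shimizu to the homogeneous Besov framework developed in earlier chapters. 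Since $s < 1/p$, the boundary trace is not seen at the $\dot\B^s_{p,1}$ level, so the real interpolation space $(X, \dom_{\mathrm{hom}}(A))_{1/2, 1}$ can be identified with the unrestricted Besov space $\dot\B^s_{p,1}(\IR^n_+)^n$. The homogeneous Da Prato--Grisvard theorem then delivers
$$
\|\partial_t u\|_{\LL_1(\IR_+; \dot\B^s_{p,1})} + \|A u\|_{\LL_1(\IR_+; \dot\B^s_{p,1})} + \|u\|_{\LL_\infty(\IR_+; \dot\B^s_{p,1})} \leq C\bigl(\|u_0\|_{\dot\B^s_{p,1}} + \|f\|_{\LL_1(\IR_+; \dot\B^s_{p,1})}\bigr),
$$
and the strong ellipticity of $\IS$ converts $\|A u\|$ into the full second-derivative bound $\|\nabla u\|_{\LL_1(\IR_+; \dot\B^{s+1}_{p,1})}$.

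The heart of (b) is the sharp anisotropic trace identity: the map $u \mapsto \IS(u)\e_n|_{\partial \IR^n_+}$ sends $\E^s_p$ continuously onto $\dot\Y^s_p$ and admits a continuous right inverse. The structure of $\dot\Y^s_p$ is dictated by parabolic scaling combined with the $\LL_1$-in-time setting: from the regularities $\partial_t u, \nabla^2 u \in \LL_1(\IR_+; \dot\B^s_{p,1})$ one obtains parabolic smoothness of order $s+2$ with time counted with weight $\tfrac{1}{2}$, and the boundary trace of $\nabla u$ loses $1/p$ in spatial regularity to produce the spatial component $\LL_1(\IR_+; \dot\B^{s+1-1/p}_{p,1}(\partial \IR^n_+))$, while the time-in-trace part becomes $\dot\B^{(s+1-1/p)/2}_{1,1}(\IR_+; \dot\B^0_{p,1}(\partial \IR^n_+))$. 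The lifting $w$ can then be built explicitly from $g$, for instance by solving an auxiliary parabolic system on $\IR^n_+$ with boundary data coming from $g$ through a parabolic Poisson extension, calibrated so that $\IS(w)\e_n|_{\partial \IR^n_+} = g$.

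The main obstacle is the trace-and-lifting step (b) in the nonreflexive $\LL_1$-in-time setting: the last index $1$ in $\dot\B^{(s+1-1/p)/2}_{1,1}$ is not accessible by standard real-interpolation identities for mixed-norm spaces and instead requires a careful two-parameter Littlewood--Paley decomposition in both time and the normal direction. Secondary technical points are the verification of the bounded-analyticity of the Lam\'e semigroup with free boundary condition on the homogeneous Besov ground space and the identification of the interpolation space with the unrestricted $\dot\B^s_{p,1}$ under $s < 1/p$; both follow the template already established in the memoir for the Stokes operator.
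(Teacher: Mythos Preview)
Your outline is broadly correct and matches the paper's two-step decomposition, but there are two points where your plan diverges from or underspecifies what the paper actually does.

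\textbf{Ground space for (a).} You propose $X=\dot\B^{s-1}_{p,1}(\IR^n_+)^n$ with $\theta=1/2$. The memoir never sets up the Lam\'e semigroup on a negative-order homogeneous Besov space; Chapter~6 works on $X=\LL_p(\IR^n_+;\IC^n)$, proves bounded analyticity there via Caccioppoli/weak reverse H\"older and Shen extrapolation, and then identifies $(\LL_p,\dom(\dot L))_{\theta,1}=\dot\B^{2\theta}_{p,1}$ for $\theta\in(0,1/2)$, i.e.\ $\theta=s/2$. Your variant would require a separate justification of sectoriality on $\dot\B^{s-1}_{p,1}$, which is not available from the preceding chapters. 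Switch to the $\LL_p$ ground space and $\theta=s/2$.

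\textbf{The lifting in (b).} The paper's key manoeuvre is not a generic parabolic Poisson extension but a specific auxiliary equation: one solves $\partial_t v-\tfrac{2\mu}{\nu}\Delta v=0$ in $\IR_+\times\IR^n_+$ (a \emph{decoupled} heat equation, not the Lam\'e system) subject to the full coupled boundary condition $\IS(v)\e_n=g$. Because the interior operator is scalar, the boundary condition becomes an $n\times n$ algebraic system in the Fourier symbol $(\tau,\xi')$ whose determinant is $(-1)^n\mu^{n-1}\nu\, r^{n-2}(\ii\tau+|\xi'|^2)$, hence invertible away from the origin. This is what makes the explicit multiplier representation possible and feeds directly into the tensored Littlewood--Paley machinery you correctly anticipate. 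Without this specific choice of auxiliary interior operator, the lifting step is not actually constructed.

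\textbf{Boundary trace of the homogeneous piece.} For the Da Prato--Grisvard part $w$ you still owe the bound $\nabla w|_{\partial\IR^n_+}\in\dot\Y^s_p$, which does not come for free from maximal regularity. The paper obtains the tangential part $\nabla_{x'}w|_{\partial\IR^n_+}$ by the interpolation inequality $\|w\|_{\dot\B^{1/2}_{1,1}(\IR_+;\dot\B^{s+1}_{p,1})}\lesssim\|w_t\|_{\LL_1(\dot\B^s)}^{1/2}\|\nabla w\|_{\LL_1(\dot\B^{s+1})}^{1/2}$ (Proposition~\ref{p:interpopopo}) combined with the spatial trace, and then recovers $\partial_n w|_{\partial\IR^n_+}$ \emph{algebraically} from the homogeneous boundary condition $\IS(w)\e_n=0$. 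This last step---using the null boundary condition to express normal derivatives through tangential ones---is essential and missing from your plan. The restriction $s<1/p$ enters precisely here, to keep $(s+1-1/p)/2\in(0,1/2)$ in the interpolation.
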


Note that this result relies partly on resolvent estimates for the Lam\'e operator given in  Section~\ref{Sec: The Lame operator in the upper half-plane}. 

\vspace{.3cm}
The result for the free boundary value problem~\eqref{eq:presslessint} reads as follows. Similarly to Definition \ref{def:NS} we call a triplet $(v,\rho,\Omega_t)$ a global solution to \eqref{eq:presslessint} if $(v,\rho,\Omega_t)$ satisfies \eqref{eq:presslessint} almost everywhere and $ v(t,\cdot) \in \dot \B^{n/p-1}_{p,1}(\Omega_t)$ and 
\begin{align}
\label{Eq: Solution estimate pressureless}
\sup_{t\geq0} \|v(t)\|_{\dot \B^{n/p-1}_{p,1}(\Omega_t)}<\infty\andf
\int_{\IR_+}\ \bigl( \|(\partial_tv , \nabla^2 v)(t)\|_{\dot \B^{n/p - 1}_{p,1}(\Omega_t)} + \|\nabla v(t)\|_{\dot \B^{n/p}_{p,1}(\Omega_t)}\bigr)\, \d t<\infty, 
\end{align}
as well as 
$$
\sup_{t\geq 0} \|\rho(t)-1\|_{\LL_\infty(\Omega_t)} < \infty. 
$$
Here  $\Omega_t$ and $\partial \Omega_t$ are given for each $t>0$ by 
$$
\Omega_t=X_v(t,\IR^n_+) \mbox{ and } \partial\Omega_t = X_v(t,\partial\IR^n_+)
$$
and $X_v$ defined by 
$$
X_v(t,x) = x + \int_0^t v(\tau,X_v(\tau,x))d\tau
$$
is  required to satisfy 
$$
\nabla X_v - \Id \in \cC(\IR_+ ; \dot \B^{n/p}_{p , 1} (\IR_+^n)).
$$

\begin{theorem}\label{thm:lpresslint} Let $p\in(n-1,n)$, 
 $v_0 \in \dot \B^{n/p-1}_{p,1}(\IR^n_+)$ and $\rho_0 \in \LL_\infty(\IR^n_+) \cap  
 \cM(\dot \B^{n/p-1}_{p,1}(\IR^n_+))$, where $\cM(X)$ denotes the multiplier  space of $X$. 
 Then there exists a constant $c=c(p,\lambda,\mu)$ such that, if  
$$
\|\rho_0-1\|_{\LL_\infty(\IR^n_+)  \cap \cM(\dot \B^{n/p-1}_{p,1}(\IR^n_+))}  
 +  \|v_0\|_{\dot \B^{n/p-1}_{p,1}(\IR^n_+)}\leq c,
 $$
then system~\eqref{eq:presslessint} admits a unique, global  solution  $(v,\rho,\Omega_t)$ and there exists a constant $C>0$ such that 
$$ 
\sup_{t\geq 0} \|\rho-1\|_{\LL_\infty(\Omega_t)}\leq Cc.
$$

 \end{theorem}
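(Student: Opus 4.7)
The plan is to combine the Lagrangian change of coordinates (which fixes the moving domain to $\IR^n_+$) with the endpoint maximal $\LL_1$-regularity for the Lam\'e system (Theorem \ref{Thm:lameint}) at the critical index $s=n/p-1$, and to close a Banach fixed-point argument in a small ball. Setting $u(t,y):=v(t,X_v(t,y))$, $J(t,y):=\det \nabla X_v(t,y)$ and using that $\partial_t(\rho J)=0$ along the flow, the density is entirely determined by the datum and the velocity through $\rho(t,X_v(t,y))\,J(t,y)=\rho_0(y)$, so it suffices to solve for $u$. Writing $A(t,y):=(\nabla X_v(t,y))^{-1}$, the momentum equation becomes a perturbation of the Lam\'e system in the half-space,
\begin{equation*}
\rho_0\,\partial_t u-\divergence\IS(u)=\mathcal{F}(u),\qquad\IS(u)\,\e_n|_{\partial\IR^n_+}=\mathcal{G}(u),\qquad u|_{t=0}=v_0,
\end{equation*}
where $\mathcal{F}(u)$ and $\mathcal{G}(u)$ gather all terms proportional to $\rho_0-1$ and to $A-\Id=-\int_0^t\nabla u\,\d\tau+O\bigl((\int_0^t\nabla u\,\d\tau)^2\bigr)$.

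Next I would set up the solution space by the norm of \eqref{est:lame0} at $s=n/p-1$ and consider the closed ball of radius $R=Kc$, with $K$ fixed by the constant of Theorem \ref{Thm:lameint}. The hypothesis $p\in(n-1,n)$ yields $s=n/p-1\in(0,1/p)$ with $s\leq n/p-1$ saturated, so the theorem is applicable. The fixed-point map $\Phi:u\mapsto\wt u$ solves the constant-coefficient Lam\'e problem with forcing $(\Id-\rho_0)\partial_t u+\mathcal{F}(u)$ and boundary datum $\mathcal{G}(u)$. The linear multiplier term is handled by the very definition of $\cM(\dot\B^{n/p-1}_{p,1})$, giving
\[
\|(\rho_0-1)\partial_t u\|_{\LL_1(\dot\B^{n/p-1}_{p,1})}\leq\|\rho_0-1\|_{\cM(\dot\B^{n/p-1}_{p,1})}\,\|\partial_t u\|_{\LL_1(\dot\B^{n/p-1}_{p,1})}.
\]
The terms in $\mathcal{F}(u)$ are products of entries of $A-\Id$ with $\nabla^2 u$, $\nabla u\otimes\nabla u$ and $\rho_0\partial_t u$; they are controlled in $\LL_1(\dot\B^{n/p-1}_{p,1})$ by the algebra structure of $\dot\B^{n/p}_{p,1}$ (which embeds in $\LL_\infty$), together with $\|A-\Id\|_{\LL_\infty(\dot\B^{n/p}_{p,1})}\lesssim\|\nabla u\|_{\LL_1(\dot\B^{n/p}_{p,1})}$ and a Neumann-series inversion valid as soon as the latter is smaller than $1/2$. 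The boundary contribution $\mathcal{G}(u)$ is estimated analogously in the anisotropic space $\dot\Y^{n/p-1}_p$ via product estimates on the trace combined with the time-Besov characterization of $\dot\Y^{n/p-1}_p$. All such bounds are at least quadratic in $\|u\|_{\E^{n/p-1}_p}+\|\rho_0-1\|_{\LL_\infty\cap\cM(\dot\B^{n/p-1}_{p,1})}$, so for $c$ small enough $\Phi$ stabilizes and contracts the ball.

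Once the fixed point $u$ is obtained, $X_v$ is reconstructed from $u$, and the smallness of $\int_0^t\nabla u\,\d\tau$ in $\LL_\infty$ (via $\dot\B^{n/p}_{p,1}\hookrightarrow\LL_\infty$ and the $\LL_1$-in-time control) ensures that $X_v$ is a global $\C^1$-diffeomorphism of $\IR^n_+$ with $\nabla X_v-\Id\in\cC(\IR_+;\dot\B^{n/p}_{p,1})$, so $\Omega_t:=X_v(t,\IR^n_+)$ is well-defined. The density in Eulerian variables is then $\rho(t,X_v(t,y))=\rho_0(y)/J(t,y)$, and the pointwise bound $\|J-1\|_{\LL_\infty}\lesssim c$ together with $\|\rho_0-1\|_{\LL_\infty}\leq c$ yields $\sup_{t\geq 0}\|\rho-1\|_{\LL_\infty(\Omega_t)}\leq Cc$. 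Uniqueness follows from the same estimates applied to differences of two solutions. The main obstacle I foresee is the control of $\mathcal{G}(u)$ in $\dot\Y^{n/p-1}_p$: the quasilinear corrections to $\IS(u)\e_n$ induced by the Lagrangian change of variables mix tangential and normal derivatives at the boundary, and closing the estimate requires exploiting the precise interplay between the time-Besov and space-Besov scales of $\dot\Y^{n/p-1}_p$ so that a genuine smallness gain (not mere boundedness) emerges from the product structure.
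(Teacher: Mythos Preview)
Your proposal is correct and matches the paper's approach: Lagrangian coordinates, the $\LL_1$-maximal regularity of Theorem~\ref{Thm:lameint} at $s=n/p-1$, a contraction in the ball of $\E^{n/p-1}_p$ (with the $(\rho_0-1)\partial_t u$ term absorbed via the multiplier-space smallness), and the density recovered as $\rho_0/J_u$. The obstacle you single out is exactly the one the paper isolates; it is resolved by the product law
\[
\dot\W^1_1\bigl(\IR_+;\dot\B^{(n-1)/p}_{p,1}(\partial\IR^n_+)\bigr)\cdot\dot\B^{(n-1)/(2p)}_{1,1}\bigl(\IR_+;\dot\B^{0}_{p,1}(\partial\IR^n_+)\bigr)\hookrightarrow\dot\B^{(n-1)/(2p)}_{1,1}\bigl(\IR_+;\dot\B^{0}_{p,1}(\partial\IR^n_+)\bigr),
\]
applied with $A_u-\Id$ (whose time derivative lies in $\LL_1(\IR_+;\dot\B^{(n-1)/p}_{p,1})$ by differentiating the Neumann series) against $\nabla u|_{\partial\IR^n_+}\in\dot\Y^{n/p-1}_p$, together with an analogous extension-and-trace argument for $\bar n_u+\e_n$.
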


\begin{remark}
In Chapter 7 we state and prove  a more general result than the one stated in Theorem \ref{thm:lpresslint} above by replacing the half-space $\IR^n_+$ by a bent half-space $\Omega$.
\end{remark}

 Proving  the above assertions  is based  on the following strategy: instead of solving the original free boundary problems, we introduce 
 Lagrangian coordinates.  The corresponding system is  then given  in a fixed domain, namely the half-plane or the half-space,  and 
 can be solved by means of a contracting mapping argument in the solution space of maximal regularity.  
 
The fixed point map is built upon the linearization of this system,  namely  the Stokes or the Lam\'e system. 
In order to meet the requirements of the fixed point theorem, one has to resort on the one side to rather standard nonlinear estimates
 and on the other side and,  more importantly,  to the $\LL_1$-in-time analysis developed  above.  

An additional difficulty is that going to Lagrangian coordinates does not preserve the solenoidal condition  and the homogeneity of the data at the boundary. 
Therefore, we will have to consider a version of the $\LL_1$-in-time regularity theorem allowing for nonzero boundary data and, 
in the Stokes case, with nonzero divergence. 
We chose to tackle the Stokes and Lam\'e systems differently. 
In the Stokes case, we avoided using  explicit formulae
for the solution (in contrast with what Ogawa and Shimizu did
in~\cite{OS}) by working out suitable extensions for 
the boundary data and then, eventually, used (our homogeneous version of) the Da Prato and Grisvard theorem to handle the initial data and source
term. 

In the Lam\'e case,  we first computed explicitly the solution $v$ corresponding
to nonzero boundary data, then used results on multipliers
(in the parabolic scale) so as to prove 
that $v$ satisfies Inequality~\eqref{est:lame0}. 
Then, again, Da Prato and Grisvard's theorem
enabled us to handle the initial data and  source
term. 
\medbreak
This article  is structured as follows. In Chapter~\ref{Sec: Da Prato--Grisvard theorem}, we revisit the Da Prato and Grisvard theory in the homogeneous space setting. Then, 
in Chapter~\ref{Sec: The functional setting and basic interpolation results}, we present the homogeneous functional framework that will be appropriate 
for our further investigations, and prove various extension results from the half-space to the whole space. 
Next, we use the previous chapters to prove  an  $\LL_q$-in-time maximal regularity theorem for the Stokes system
supplemented with a Neumann-type  boundary condition \emph{including the case $q=1$}. We then prove  two milestone results in this direction: the 
 $\LL_1$-in-time maximal regularity theorems for the Stokes and the Lam\'e systems. The remaining text is dedicated to solving the associated free boundary problem 
by the strategy  presented above. Some technical results are summarized in the Appendix. 

\vskip1cm

{\bf Acknowledgments.} The authors wish to express their gratitude to Yoshihiro Shibata for fruitful discussions. Special thanks the authors pass to Winfried Sickel for exhibiting a 
counterexample in Chapter~\ref{Sec: The free boundary problem for incompressible fluids with infinite depth} to some borderline product law for Besov spaces.  

The first, second and fourth author  have been partially supported by   ANR-15-CE40-0011.
PBM has been partly supported  by the Polish National Science Centre’s grant No2018/29/B/ST1/00339 (OPUS).

\setcounter{page}{1}
\pagenumbering{arabic}
\chapter{The Da Prato -- Grisvard theorem}
\label{Sec: Da Prato--Grisvard theorem}

In this chapter we extend the classical abstract maximal regularity result of Da Prato -- Grisvard 
\cite[Thm.~4.7]{DaPrato_Grisvard} from 1975 to the homogeneous setting.

\smallskip 

Let $X$ be a Banach space and $\cA : \dom(A) \subset X \to X$ be a linear and closed operator. For $\vartheta \in [0 , \pi)$ define the sector $\Sec_{\vartheta}$ in the complex plane as
\begin{align*}
 \Sec_{\vartheta} := (0 , \infty) \quad \text{if} \quad \vartheta = 0
\end{align*}
and
\begin{align*}
 \Sec_{\vartheta} := \{ z \in \IC \setminus \{ 0 \} : \lvert \arg(z) \rvert < \vartheta \} \quad \text{if} \quad \vartheta \in (0 , \pi).
\end{align*}
With $\sigma (\cA)$ denoting the spectrum of $\cA$ and $\rho(\cA)$ its resolvent set, we say that $\cA$ is \textit{sectorial of angle $\omega \in [0 , \pi)$} if
\begin{align*}
 \sigma (\cA) \subset \overline{\Sec_{\omega}}
\end{align*}
and if for every $\vartheta \in (\omega , \pi)$ there exists $C > 0$ such that
\begin{align}
\label{Eq: McIntosh Resolvent estimate}
 \| \lambda (\lambda - \cA)^{-1} \|_{\Lop(X)} \leq C \qquad (\lambda \in \IC \setminus \overline{\Sec_{\vartheta}}).
\end{align}
It is well-known~\cite[Thm.~II.4.6]{Engel_Nagel} that $- \cA$ generates a strongly continuous bounded analytic semigroup if and only if $\cA$ is densely defined and sectorial of some angle $\omega \in [0 , \tfrac{\pi}{2})$. In the following, we will always assume that $- \cA$ is the generator of a strongly continuous bounded analytic semigroup on $X$. This semigroup will be denoted by $(\e^{- t \cA})_{t \geq 0}$. The bounded analyticity of the semigroup further implies the validity of certain smoothing estimates. Indeed, by~\cite[Thm.~II.4.6]{Engel_Nagel} one further knows that the ranges of the semigroup operators for positive times $t > 0$, i.e., $\Rg(\e^{- t \cA})$, lie in $\dom(\cA)$ and that there exists $M > 0$ such that
\begin{align}
\label{Eq: Smoothing estimate}
 \| t \cA \e^{- t \cA} \|_{\Lop(X)} \leq M \qquad (t > 0).
\end{align}

Given $0<T\leq \infty$, we are interested in this chapter in maximal regularity estimates to the abstract Cauchy problem
\begin{align}
\tag{ACP}\label{Eq: ACP}
\left\{ \begin{aligned}
 \dot u (t) + \cA u (t) &= f(t) \qquad (0 < t < T) \\
 u(0) &= x.
\end{aligned} \right.
\end{align}
Remember that, if $x \in X$ and $f \in \LL_1 (0 , T ; X)$, then the unique mild solution to~\eqref{Eq: ACP} is given by the  following variation of 
constants formula (see~\cite[Prop.~3.1.16]{Arendt_Batty_Hieber_Neubrander}):
\begin{align}
\label{Eq: Variation of constants}
 u (t) = \e^{- t \cA} x + \int_0^t \e^{- (t - s) \cA} f (s) \; \d s.
\end{align}
The basic maximal regularity result that builds the basis for the further investigation is the theorem of Da Prato and Grisvard~\cite[Thm.~4.7]{DaPrato_Grisvard}, which can be seen as the first abstract 
result on maximal regularity in the mathematical literature. As it was already described in the introduction, this theorem delivers maximal regularity estimates for generators of bounded analytic semigroups on finite time intervals or on $\IR_+$  if, additionally, $\cA$ is boundedly invertible. The aim of this section is to establish a homogeneous version of this theorem which guarantees global-in-time estimates also if $0 \in \sigma (\cA)$. 

\section{Homogeneous operators and  spaces}\label{Sec: The homogeneous operator and homogeneous spaces} 

First, we have to  introduce a homogeneous version of the operator $\cA$
in the case where $- \cA$  is  the generator of a strongly continuous bounded analytic semigroup on $X$. To define the homogeneous version of $\cA$ we make the following assumptions.

\begin{assumption}
\label{Ass: Homogeneous operator}
The operator $\cA$ is injective and there exists a normed vector space $Y$ (not necessarily complete) such that $\dom(\cA) \subset Y$ and such that there exist two constants $C_1 , C_2 > 0$ such that
\begin{align}
\label{Eq: Equivalence of norms in abstract setting}
 C_1 \| \cA x \|_X \leq \| x \|_Y \leq C_2 \| \cA x \|_X \qquad (x \in \dom(\cA)).
\end{align}
\end{assumption}
\noindent
In the case where $\cA$ stands for the Laplace operator on $\IR^n_+,$ a prominent  example of a couple $(X,Y)$  is, of course,  
$X = \LL_{p}(\IR^n_+)$  and $Y = \dot \H^2_{p} (\IR^n_+).$
\begin{definition}
\label{Def: Definition homogeneous operator}
If $- \cA$ satisfies Assumption~\ref{Ass: Homogeneous operator}, then define the domain of the homogeneous version $\dot \cA$ of $\cA$ by
\begin{align*}
 \dom(\dot \cA) := \{ y \in Y : \exists (x_k)_{k \in \IN} \subset \dom(\cA) \text{ with } x_k \to y \text{ in } Y \text{ as } k \to \infty \}.
\end{align*}
With this definition, set $\dot \cA y$  to be the following  limit in $X$:
\begin{align*}
 \dot \cA y := \lim_{k \to \infty} \cA x_k \qquad (y \in \dom(\dot \cA)).
\end{align*}
\end{definition}

\begin{remark}
Note that~\eqref{Eq: Equivalence of norms in abstract setting} implies that $\dot \cA y$ is independent of the approximating sequence and thus well-defined. Moreover, since $X$ is complete, ~\eqref{Eq: Equivalence of norms in abstract setting} implies that $\dot \cA y \in X$ for all $y \in \dom(\dot \cA)$.
\end{remark}

The vector space $\dom(\dot \cA)$ will be considered as endowed with the graph norm $\| y \|_{\dom(\dot \cA)} := \| \dot \cA y \|_X$. That this is indeed a norm is implied by the following lemma.

\begin{lemma}
\label{Lem: Injectivity of homogeneous abstract operator}
The operator $\dot \cA$ is injective.
\end{lemma}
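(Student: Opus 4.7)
The plan is to deduce injectivity directly from the norm equivalence~\eqref{Eq: Equivalence of norms in abstract setting}, which already plays the role of a Hardy-type estimate on $\dom(\cA)$ and passes to $\dom(\dot\cA)$ by the very way $\dot\cA$ is defined.

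Concretely, I would start from an element $y\in\dom(\dot\cA)$ with $\dot\cA y=0$, and pick an approximating sequence $(x_k)_{k\in\IN}\subset\dom(\cA)$ as in Definition~\ref{Def: Definition homogeneous operator}, so that $x_k\to y$ in $Y$ and $\cA x_k\to\dot\cA y=0$ in $X$. Applying the left half of the equivalence~\eqref{Eq: Equivalence of norms in abstract setting} only tells us that $(\cA x_k)$ is controlled by $(x_k)$, but crucially the right half yields
\[
\|x_k\|_Y\leq C_2\,\|\cA x_k\|_X\xrightarrow[k\to\infty]{}0,
\]
so $x_k\to 0$ in $Y$. Since limits in the normed space $Y$ are unique, this forces $y=0$, which gives the claimed injectivity.

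The only mild subtlety is that $Y$ is only assumed to be normed (not necessarily complete), but this does not matter here: uniqueness of limits is a consequence of the Hausdorff property of any normed space and does not require completeness. Note also that the injectivity hypothesis on $\cA$ itself is not needed in this specific argument; it is already absorbed into the equivalence~\eqref{Eq: Equivalence of norms in abstract setting} (the left inequality forces $\|\cdot\|_Y$ to be a genuine norm, hence $\cA$ injective on $\dom(\cA)$, and the right inequality is what propagates injectivity to $\dot\cA$). So there is no real obstacle: the lemma is essentially a restatement of the right half of~\eqref{Eq: Equivalence of norms in abstract setting} after passage to the completion-style limit inherent in Definition~\ref{Def: Definition homogeneous operator}.
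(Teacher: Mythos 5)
Your argument is correct and is essentially the same as the paper's proof: both take an approximating sequence $(x_k)\subset\dom(\cA)$ with $x_k\to y$ in $Y$, apply the right-hand inequality of~\eqref{Eq: Equivalence of norms in abstract setting} to conclude $x_k\to 0$ in $Y$, and then invoke uniqueness of limits in a normed space. (One small slip in your closing aside: it is the \emph{right} inequality $\|x\|_Y\leq C_2\|\cA x\|_X$, not the left one, that forces injectivity of $\cA$ on $\dom(\cA)$ — but that parenthetical does not affect the proof itself.)
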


\begin{proof}
Assume that $\dot \cA y = 0$ for some $y \in \dom(\dot \cA)$. Let $(x_k)_{k \in \IN} \subset \dom(\cA)$ be such that $x_k \to y$ in $Y$ as $k \to \infty$. Then we find by~\eqref{Eq: Equivalence of norms in abstract setting} that
\begin{align*}
 0 = \| \dot \cA y \|_X = \lim_{k \to \infty} \| \cA x_k \|_X \geq C_2^{-1} \lim_{k \to \infty} \| x_k \|_Y = \| y \|_Y.
\end{align*}
It follows that $y = 0$.
\end{proof}

From now on, $Y$ will denote the normed vector space whose existence is postulated in Assumption~\ref{Ass: Homogeneous operator}. Following the terminology in~\cite[Sec.~2.3]{Bergh_Lofstrom} we say that $X$ and $Y$ are \textit{compatible} if there exists a Hausdorff topological vector space $Z$ such that $X$ and $Y$ are subspaces of $Z$. In this case the sum space $X + Y$ and intersection space $X \cap Y$ with canonical norms are well defined and again normed vector spaces. \par
Finally, we will impose a condition on the operator $\cA$ and its homogeneous counterpart $\dot \cA$ which ensures that $\dot \cA$ and $Y$ carry enough information to recover the operator $\cA$ again.

\begin{assumption}
\label{Ass: Intersection property} The operator  $\cA$ and the normed vector space $Y$ are such that
\begin{align}
\label{Eq: Intersection property}
 \dom(\dot \cA) \cap X = \dom(\cA).
\end{align}
\end{assumption}

Recall the following elementary result from semigroup theory, see~\cite[Lem.~II.1.3]{Engel_Nagel}. For general $x \in X$ one has for $t > 0$
\begin{align}
\label{Eq: Allerweltsformel 1}
 \int_0^t \e^{- s \cA} x \, \d s \in \dom(\cA) \quad \text{and} \quad \e^{- t \cA} x - x = - \cA \int_0^t \e^{- s \cA} x \, \d s.
\end{align}
Moreover, if $x \in \dom(\cA)$, then it even holds
\begin{align}
\label{Eq: Allerweltsformel 2}
 \e^{- t \cA} x - x = - \int_0^t \e^{- s \cA} \cA x \, \d s.
\end{align}
This motivates, with $\cA$, $X$, $\dot \cA$, $Y$ and $Z$ subject to the assumptions above, the following definition of an extension of the semigroup operators $\e^{- t \cA}$ for $t > 0$ to the sum space $X + \dom(\dot \cA)$:
\begin{align}
\label{Eq: Extension semigroup operators}
 \e^{- t \cA} : X + \dom(\dot \cA) \to X + \dom(\dot \cA), \quad \e^{- t \cA} (x + y) := \e^{- t \cA} x + y - \int_0^t \e^{- s \cA} \dot \cA y \, \d s,
\end{align}
where $x \in X$ and $y \in \dom(\dot \cA)$.

\begin{proposition}
\label{Prop: Representation abstract semigroup}
The operator in~\eqref{Eq: Extension semigroup operators} is well-defined and for all $t > 0$, $x \in X$ and $y \in \dom(\dot \cA)$ it holds that $\e^{- t \cA} (x + y) \in \dom(\dot \cA)$ and that
\begin{align}
\label{Eq: Canonical representation on sum space}
 \dot \cA \e^{- t \cA} (x + y) = \cA \e^{- t \cA} x + \e^{- t \cA} \dot \cA y.
\end{align}
\end{proposition}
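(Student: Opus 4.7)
The plan is to split the argument into two parts: (a) the prescription on the right-hand side of~\eqref{Eq: Extension semigroup operators} is independent of the decomposition $x+y$; and (b) for $x \in X$ and $y \in \dom(\dot\cA)$, the element $\e^{-t\cA}(x+y)$ lies in $\dom(\dot\cA)$ with the stated formula~\eqref{Eq: Canonical representation on sum space}.

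For (a), I would suppose $x_1 + y_1 = x_2 + y_2$ with $x_i \in X$ and $y_i \in \dom(\dot\cA)$, and set $z := x_1 - x_2 = y_2 - y_1$. Assumption~\ref{Ass: Intersection property} yields $z \in X \cap \dom(\dot\cA) = \dom(\cA)$, and using the constant approximating sequence in Definition~\ref{Def: Definition homogeneous operator} gives $\dot\cA z = \cA z$. Consistency of the two prescriptions then reduces exactly to
\begin{align*}
\e^{-t\cA}z - z = -\int_0^t \e^{-s\cA}\cA z\, \d s,
\end{align*}
which is precisely~\eqref{Eq: Allerweltsformel 2} applied to $z \in \dom(\cA)$.

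For (b), by linearity it suffices to treat the $X$- and $Y$-contributions separately. The $X$-part is immediate: $\e^{-t\cA}x \in \dom(\cA) \subset \dom(\dot\cA)$ for $t > 0$, with $\dot\cA \e^{-t\cA}x = \cA \e^{-t\cA}x$ via a constant approximating sequence. For the $Y$-part I would fix $(x_k) \subset \dom(\cA)$ with $x_k \to y$ in $Y$, so that~\eqref{Eq: Equivalence of norms in abstract setting} forces $\cA x_k \to \dot\cA y$ in $X$. Setting $u_k := \e^{-t\cA}x_k \in \dom(\cA)$, formula~\eqref{Eq: Allerweltsformel 2} together with the commutation of $\cA$ with $\e^{-s\cA}$ on $\dom(\cA)$ yields
\begin{align*}
u_k = x_k - \int_0^t \e^{-s\cA}\cA x_k\, \d s \quad \text{and} \quad \cA u_k = \e^{-t\cA}\cA x_k,
\end{align*}
and uniform boundedness of $(\e^{-s\cA})_{s \geq 0}$ on $X$ delivers $\cA u_k \to \e^{-t\cA}\dot\cA y$ in $X$. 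To extract a genuine $Y$-limit for $(u_k)$, I would invoke~\eqref{Eq: Allerweltsformel 1}, which places $\int_0^t \e^{-s\cA}v\, \d s \in \dom(\cA) \subset Y$ for every $v \in X$. Applied to $v = \cA x_k - \dot\cA y$ and combined with~\eqref{Eq: Equivalence of norms in abstract setting}, this yields
\begin{align*}
\Big\|\int_0^t \e^{-s\cA}(\cA x_k - \dot\cA y)\, \d s\Big\|_Y \leq C\|\cA x_k - \dot\cA y\|_X \to 0.
\end{align*}
Together with $x_k \to y$ in $Y$ and the decomposition $u_k - u = (x_k - y) - \int_0^t \e^{-s\cA}(\cA x_k - \dot\cA y)\, \d s$ for $u := y - \int_0^t \e^{-s\cA}\dot\cA y\, \d s$, this shows $u_k \to u$ in $Y$. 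Hence $u \in \dom(\dot\cA)$ with $\dot\cA u = \lim_k \cA u_k = \e^{-t\cA}\dot\cA y$, and adding the $X$-contribution produces~\eqref{Eq: Canonical representation on sum space}.

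The main obstacle I anticipate is precisely the $Y$-convergence step in the $Y$-part: since $Y$ is not assumed complete, $Y$-Cauchyness of $(u_k)$ does not by itself produce a $Y$-limit, so one must exploit the smoothing property~\eqref{Eq: Allerweltsformel 1} to put the integral term in $\dom(\cA) \subset Y$ and then use~\eqref{Eq: Equivalence of norms in abstract setting} to pin down the limit $u$ explicitly in $Y$. Once that is settled, Assumption~\ref{Ass: Intersection property} handles the remaining bookkeeping between $\dom(\cA)$, $\dom(\dot\cA)$ and the sum space.
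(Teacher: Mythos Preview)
Your proof is correct. Part (a) matches the paper exactly. For part (b), however, you take a more laborious route than necessary: you approximate $y$ by a sequence $(x_k) \subset \dom(\cA)$ and carefully track $Y$-convergence of $u_k = \e^{-t\cA}x_k$ to the candidate limit $u$. The paper avoids this entirely by observing that the three terms in the very definition
\[
\e^{-t\cA}(x+y) = \e^{-t\cA}x + y - \int_0^t \e^{-s\cA}\dot\cA y\,\d s
\]
are \emph{already} each in $\dom(\dot\cA)$: the first and third lie in $\dom(\cA)\subset\dom(\dot\cA)$ (the third by~\eqref{Eq: Allerweltsformel 1}), and the second by hypothesis. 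Since $\dom(\dot\cA)$ is a linear subspace and $\dot\cA$ agrees with $\cA$ on $\dom(\cA)$, one applies $\dot\cA$ termwise and uses~\eqref{Eq: Allerweltsformel 1} once more to collapse $\dot\cA y - \cA\int_0^t \e^{-s\cA}\dot\cA y\,\d s$ to $\e^{-t\cA}\dot\cA y$. This sidesteps precisely the ``main obstacle'' you anticipated: no $Y$-limit has to be identified because no sequence is ever introduced. Your approach has the minor virtue of making the role of Definition~\ref{Def: Definition homogeneous operator} fully explicit, but the paper's two-line computation is the cleaner path.
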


\begin{proof}
First of all, notice that all expressions in~\eqref{Eq: Extension semigroup operators} make sense, so that $\e^{- t \cA} (x + y)$ exists as an element in $X + \dom(\dot \cA)$. To show that the definition is independent of the representation of $x + y$ let $x_1 , x_2 \in X$ and $y_1 , y_2 \in \dom(\dot \cA)$ with $x_1 + y_1 = x_2 + y_2.$
Hence
\begin{align}
\label{Eq: Relation of differences}
 x_1 - x_2 = y_2 - y_1 \in X \cap \dom(\dot \cA),
\end{align}
and thus, $y_2 - y_1 \in \dom(\cA)$ by Assumption~\ref{Ass: Intersection property}. It follows that
\begin{align}
\label{Eq: Well-posedness semigroup}
\begin{aligned}
 \e^{- t \cA} (x_1 + y_1) &= \e^{- t \cA} x_1 + y_1 - \int_0^t \e^{- s \cA} \dot \cA y_1 \, \d s \\
 &= \e^{- t \cA} x_1 + y_1 - \int_0^t \e^{- s \cA} \cA (y_1 - y_2) \, \d s - \int_0^t \e^{- s \cA} \dot \cA y_2 \, \d s.
 \end{aligned}
\end{align}
Using~\eqref{Eq: Allerweltsformel 2} together with~\eqref{Eq: Relation of differences} in~\eqref{Eq: Well-posedness semigroup} delivers
\begin{align*}
 \e^{- t \cA} (x_1 + y_1) = \e^{- t \cA} x_1 + y_1 + \e^{- t \cA} (y_1 - y_2) - (y_1 - y_2)  - \int_0^t \e^{- s \cA} \dot \cA y_2 \, \d s = \e^{- t \cA} (x_2 + y_2).
\end{align*}

\noindent To establish~\eqref{Eq: Canonical representation on sum space} notice first that~\eqref{Eq: Allerweltsformel 1} together with the definition in~\eqref{Eq: Extension semigroup operators} implies that $\e^{- t \cA} (x + y) \in \dom(\dot \cA)$ for all $x \in X$ and $y \in \dom(\dot \cA)$. Now, calculate by virtue of~\eqref{Eq: Allerweltsformel 1}
\begin{align*}
 \dot \cA \e^{- t \cA} (x + y) = \cA \e^{- t \cA} x + \dot \cA y - \cA \int_0^t \e^{- s \cA} \dot \cA y \, \d s = \cA \e^{- t \cA} x + \e^{- t \cA} \dot \cA y. &\qedhere
\end{align*}
\end{proof}

\begin{remark}
\label{Rem: Semigroup property of extended semigroup}
The family $(\e^{-t\cA})_{t\geq0}$ satisfies the semigroup property on  $X + \dom (\dot \cA),$ that is to say,
$$\e^{-(s+t)\cA}z= \e^{-s\cA} \e^{-t\cA} z\qquad z\in X + \dom (\dot \cA) \text{ \ and \ } s , t \geq 0. $$
Indeed, let $x \in X$, $y \in \dom(\dot \cA)$ and $s , t > 0$ (the cases $s = 0$ or $t = 0$ being trivial). By Proposition~\ref{Prop: Representation abstract semigroup} we find for $s , t > 0$ that
\begin{align}
\label{Eq: Calculation for semigroup property of extended semigroup}
\begin{aligned}
 \dot \cA \e^{- (s + t) \cA} (x + y) &= \cA \e^{- (s + t) \cA} x + \e^{- (s + t) \cA} \dot \cA y \\ &= \e^{- s \cA} \big( \cA \e^{- t \cA} x + \e^{- t \cA} \dot \cA y \big) = \e^{- s \cA} \dot \cA \e^{- t \cA} (x + y).
\end{aligned}
\end{align}
Thus, the statement follows from the injectivity of $\dot \cA$, see Lemma~\ref{Lem: Injectivity of homogeneous abstract operator}, once we prove that for $z \in \dom(\dot \cA)$ one has $\e^{- s \cA} \dot \cA z = \dot \cA \e^{- s \cA} z$. But this directly follows by~\eqref{Eq: Allerweltsformel 1} combined with the definition~\eqref{Eq: Extension semigroup operators} by the following calculation
\begin{align*}
 \e^{- s \cA} \dot \cA z = \dot \cA z - \cA \int_0^s \e^{- \tau \cA} \dot \cA z \, \d \tau = \dot \cA \e^{- s \cA} z.
\end{align*}
\end{remark}

The following extends the injectivity of $\dot \cA$ to its action on the semigroup operators.

\begin{lemma}
\label{Lem: Injectivity semigroup family}
Let $z \in X + \dom(\dot \cA)$. If $\dot \cA \e^{- t \cA} z = 0$ for all $t > 0$ then $z = 0$.
\end{lemma}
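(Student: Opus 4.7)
The plan is to decompose $z$ as $z = x + y$ with $x \in X$ and $y \in \dom(\dot \cA)$, and then to exploit the strong continuity of the semigroup on $X$ together with the closedness of $\cA$ in order to reduce the claim to the injectivity of $\dot \cA$ already established in Lemma~\ref{Lem: Injectivity of homogeneous abstract operator}.

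The first step is to set $w := \dot \cA y$, which lies in $X$ thanks to the remark following Definition~\ref{Def: Definition homogeneous operator}. Proposition~\ref{Prop: Representation abstract semigroup} then rewrites the hypothesis $\dot \cA \e^{-t \cA} z = 0$ for all $t > 0$ as the identity
\[
\cA \e^{-t \cA} x \;=\; -\, \e^{-t \cA} w \qquad (t > 0)
\]
entirely inside the Banach space $X$. Note that, while $\cA \e^{-t\cA} x$ is a priori only an element of $X$ for each fixed $t > 0$, the right-hand side is a continuous $X$-valued function of $t \geq 0$.

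The second step is to let $t \to 0^+$ in this identity. The right-hand side converges to $-w$ in $X$ by strong continuity of $(\e^{-t \cA})_{t \geq 0}$, while simultaneously $\e^{-t \cA} x \to x$ in $X$ for the same reason. Since the operator $\cA$ is closed in $X$, this forces $x \in \dom(\cA)$ with $\cA x = -w$.

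The final step uses that $\dom(\cA) \subset \dom(\dot \cA)$ and that $\dot \cA$ extends $\cA$; both facts are immediate from Definition~\ref{Def: Definition homogeneous operator} upon taking the constant approximating sequence $x_k \equiv x$. Consequently $z = x + y \in \dom(\dot \cA)$ and
\[
\dot \cA z \;=\; \dot \cA x + \dot \cA y \;=\; -w + w \;=\; 0.
\]
Applying Lemma~\ref{Lem: Injectivity of homogeneous abstract operator} then yields $z = 0$. I do not anticipate any substantial obstacle: the whole argument rests on the elementary interplay between strong continuity and closedness, and the decomposition supplied by Proposition~\ref{Prop: Representation abstract semigroup} is precisely what allows the $t \to 0^+$ limit to be taken inside $X$. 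The only point to watch is that Proposition~\ref{Prop: Representation abstract semigroup} already guarantees that the value of $\dot \cA \e^{-t\cA} z$ is independent of the chosen representation $z = x + y$, so that working with \emph{any} such decomposition is legitimate.
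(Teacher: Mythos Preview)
Your proof is correct and takes a genuinely different route from the paper's. You use Proposition~\ref{Prop: Representation abstract semigroup} to rewrite the hypothesis as $\cA \e^{-t\cA} x = -\e^{-t\cA}\dot\cA y$ in $X$, then invoke closedness of $\cA$ together with strong continuity to conclude $x \in \dom(\cA)$ with $\cA x = -\dot\cA y$, and finish by applying the injectivity of $\dot\cA$ to $z$ itself. The paper instead first applies Lemma~\ref{Lem: Injectivity of homogeneous abstract operator} to obtain $\e^{-t\cA} z = 0$ for all $t>0$, then uses the explicit formula~\eqref{Eq: Extension semigroup operators} together with~\eqref{Eq: Allerweltsformel 1}--\eqref{Eq: Allerweltsformel 2} to deduce $y \in \dom(\cA)$, and finally lets $t\to 0$ via strong continuity on $X$ to conclude $x=-y$. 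Your argument is arguably more streamlined: the closed-graph step replaces the manipulation of integral identities, and the appeal to Lemma~\ref{Lem: Injectivity of homogeneous abstract operator} comes at the end rather than the beginning. The paper's version stays closer to the concrete semigroup formulas already in hand and does not explicitly name closedness, though of course it is implicit in those formulas.
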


\begin{proof}
By Lemma~\ref{Lem: Injectivity of homogeneous abstract operator} we immediately find that $\e^{- t \cA} z = 0$ for all $t > 0$. Write $z = x + y$ for some $x \in X$ and $y \in \dom (\dot \cA)$. By~\eqref{Eq: Extension semigroup operators} and~\eqref{Eq: Allerweltsformel 1} we thus find that
\begin{align*}
 y = \int_0^t \e^{- s \cA} \dot \cA y \, \d s - \e^{- t \cA} x \in \dom(\cA).
\end{align*}
Employing~\eqref{Eq: Allerweltsformel 2} then it delivers
\begin{align*}
 y = \cA \int_0^t \e^{- s \cA} y \, \d s - \e^{- t \cA} x = y - \e^{- t \cA} y - \e^{- t \cA} x.
\end{align*}
In the limit $t \to 0$ the strong continuity of the semigroup now implies that $x = - y$ and thus that $z = 0$.
\end{proof}

We continue by introducing the crucial function spaces that are required for the formulation of the Da Prato -- Grisvard theorem. The inhomogeneous function spaces that were already investigated by Da Prato and Grisvard are given as follows. For $\theta \in (0 , 1)$ and $1 \leq q \leq \infty$, denote by $\dom_{\cA} (\theta , q)$ the space
\begin{equation}\label{eq:DAtheta}
 \dom_{\cA} (\theta , q) = \{ x \in X : t \mapsto t^{1 - \theta} \cA \e ^{- t \cA} x \in \LL_{q , *} (\IR_+  ; X) \}
\end{equation}
endowed with the norm
\begin{align}
\label{Eq: Da Prato-Grisvard norm}
 \| x \|_{\dom_{\cA} (\theta , q)} := \| x \|_X + \| t \mapsto t^{1 - \theta} \cA \e ^{- t \cA} x \|_{\LL_{q , *} (\IR_+ ; X)}.
\end{align}
Here, we denote by $\LL_{q , *} (\IR_+  ; X)$ the Bochner $\LL_q$-space endowed with the Haar measure $\d t / t$. In order to underline the integration with respect to the first variable in 
spaces of type $\LL_{q , *} (\IR_+ ; X)$ we use notation $t \mapsto f(t)$.

\par
We introduce their homogeneous counterparts as 
\begin{equation}\label{eq:DAthetadot}
 \dot \dom_{\cA} (\theta , q) := \{ x \in X + \dom(\dot \cA) : \| x \|_{\dot \dom_{\cA} (\theta , q)} := \| t \mapsto t^{1 - \theta} \dot \cA \e ^{- t \cA} x \|_{\LL_{q , *} (\IR_+  ; X)} < \infty \}.
\end{equation}
By virtue of Lemma~\ref{Lem: Injectivity semigroup family} the mapping $\| \cdot \|_{\dot \dom_{\cA} (\theta , q)}$ defines indeed a norm on $\dot \dom_{\cA} (\theta , q)$.

\begin{remark}
Notice that the semigroup $(\e^{- t \cA})_{t \geq 0}$ satisfies a uniform boundedness estimate with respect to the norms of $\dom_{\cA} (\theta , q)$ and $\dot \dom_{\cA} (\theta , q)$. This stems from the fact that for $s , t > 0$ and $z \in X + \dom(\dot \cA)$ one has by virtue of Remark~\ref{Rem: Semigroup property of extended semigroup} and~\eqref{Eq: Calculation for semigroup property of extended semigroup} the commutator property
\begin{align*}
 \dot \cA \e^{- t \cA} \e^{- s \cA} z = \dot \cA \e^{- (s + t) \cA} z = \e^{- s \cA} \dot \cA \e^{- t \cA} z
\end{align*}
and the uniform boundedness of $(\e^{- s \cA})_{s \geq 0}$ on $X$.
\end{remark}

It was already known due to Da Prato and Grisvard that the inhomogeneous spaces $\dom_{\cA} (\theta , q)$ can be characterized as the real interpolation spaces $(X , \dom (\cA))_{\theta , q}$. Moreover, Haase~\cite[Sec.~6.4]{Haase} proved the same for homogeneous spaces that are defined similarly as above. That  is still valid with the definitions presented above is established in the following section.

\section{Homogeneous spaces and real interpolation}\label{Sec: Homogeneous spaces and real interpolation} To proceed we introduce basic notions from the theory of real interpolation, see, e.g.,~\cite{Bergh_Lofstrom, Triebel}. A couple $(\cX , \cY)$ of normed vector spaces $\cX$ and $\cY$ is called an interpolation couple if they are compatible, i.e., if they are continuously included into a common Hausdorff topological vector space. As it was already mentioned above, the intersection space $\cX \cap \cY$ as well as the sum space $\cX + \cY$ endowed with the canonical norms are well-defined. We define the $K$-functional of an element $z \in \cX + \cY$ and $t > 0$ as
\begin{align}
\label{Eq: K-functional}
 K(t , z ; \cX , \cY) := \inf \{ \| x \|_{\cX} + t \| y \|_{\cY} : x \in \cX, \, y \in \cY \text{ with } x + y = z \}.
\end{align}
For $\theta \in (0 , 1)$ and $q \in [1 , \infty]$, the real interpolation space between $\cX$ and $\cY$ with parameters $\theta$ and $q$ is then defined as
\begin{align}
\label{Eq: Real interpolation space}
\begin{aligned}
 \big( \cX , \cY \big)_{\theta , q} \;\; &:= \{ z \in \cX + \cY : t \mapsto t^{- \theta} K(t , z ; \cX , \cY) \in \LL_{q , *} (0 , \infty) \}, \\
 \| z \|_{(\cX , \cY)_{\theta , q}} &:= \| t \mapsto t^{- \theta} K(t , z ; \cX , \cY) \|_{\LL_{q , *} (0 , \infty)}.
\end{aligned}
\end{align}
An important information on the density of function spaces related to the real interpolation is the following: In~\cite[Thm.~3.4.2]{Bergh_Lofstrom} it is proved that the intersection space $\cX \cap \cY$ is always dense in the real interpolations space $(\cX , \cY)_{\theta , q}$ whenever $1 \leq q < \infty$. An application of this density property is given in the following lemma.

\begin{lemma}
\label{Lem: Density in abstract spaces}
Let $1 \leq q < \infty$ and $\theta \in (0 , 1)$. Then $\dom(\cA)$ is dense in  the space $\dom_{\cA} (\theta , q)$ 
defined in~\eqref{eq:DAtheta}, 
and in $(X , \dom(\dot \cA))_{\theta , q}$.
\end{lemma}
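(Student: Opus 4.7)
The plan is to deduce both density claims from the classical density theorem of Bergh--Löfström~\cite[Thm.~3.4.2]{Bergh_Lofstrom}, which guarantees that the intersection $\cX\cap\cY$ is dense in the real interpolation space $(\cX,\cY)_{\theta,q}$ for any interpolation couple $(\cX,\cY)$ whenever $1\leq q<\infty$ and $\theta\in(0,1)$. Both statements will be instances of this general fact, but for slightly different couples.

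The density of $\dom(\cA)$ in $(X,\dom(\dot\cA))_{\theta,q}$ is essentially tautological once the interpolation setup is in place: the pair $(X,\dom(\dot\cA))$ is compatible because both spaces sit in the common ambient space $X+\dom(\dot\cA)$ (which is a normed space, thanks to the injectivity of $\dot\cA$ from Lemma~\ref{Lem: Injectivity of homogeneous abstract operator} and the completeness of $X$). By Assumption~\ref{Ass: Intersection property}, the intersection of the couple is exactly $X\cap\dom(\dot\cA)=\dom(\cA)$, and the Bergh--Löfström theorem delivers the density claim.

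For the density of $\dom(\cA)$ in $\dom_\cA(\theta,q)$, I would reduce once more to the same general density statement by invoking the classical Da Prato--Grisvard identification $\dom_\cA(\theta,q)=(X,\dom(\cA))_{\theta,q}$ with equivalent norms, which was recalled just before the lemma. Since $X\cap\dom(\cA)=\dom(\cA)$ trivially, Bergh--Löfström applied to the couple $(X,\dom(\cA))$ gives that $\dom(\cA)$ is dense in $\dom_\cA(\theta,q)$.

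The only non-formal ingredient is the Da Prato--Grisvard identification $\dom_\cA(\theta,q)=(X,\dom(\cA))_{\theta,q}$; if one prefers to give it without citation, I would build quasi-optimal $K$-functional decompositions of $x\in\dom_\cA(\theta,q)$ via the semigroup splitting $x=\e^{-t\cA}x+(x-\e^{-t\cA}x)$, exploiting the fact that $x-\e^{-t\cA}x=\cA\int_0^t\e^{-s\cA}x\,\d s$ belongs to $\dom(\cA)$ by~\eqref{Eq: Allerweltsformel 1} together with the smoothing estimate~\eqref{Eq: Smoothing estimate}. This step is standard but would be the main technical point should one wish to keep the argument self-contained.
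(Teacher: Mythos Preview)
Your argument is correct. The second density claim (in $(X,\dom(\dot\cA))_{\theta,q}$) is handled exactly as in the paper, via Assumption~\ref{Ass: Intersection property} and~\cite[Thm.~3.4.2]{Bergh_Lofstrom}.

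For the first claim (density in $\dom_\cA(\theta,q)$) you take a genuinely different route. The paper instead argues directly via the semigroup: for $x\in\dom_\cA(\theta,q)$ one has $\e^{-t\cA}x\in\dom(\cA)$ for $t>0$ by analyticity, and $\e^{-t\cA}x\to x$ in the $\dom_\cA(\theta,q)$-norm by strong continuity of $(\e^{-t\cA})_{t\geq 0}$ on that space (established via strong continuity on $X$, uniform boundedness, and dominated convergence). Your approach, invoking the classical identification $\dom_\cA(\theta,q)=(X,\dom(\cA))_{\theta,q}$ recalled just before the lemma and then reapplying Bergh--L\"ofstr\"om to the couple $(X,\dom(\cA))$, is equally valid and arguably cleaner, since it reduces both statements to the same general density principle. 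The paper's route has the minor advantage of being self-contained (it does not rely on the Da Prato--Grisvard identification, only on elementary semigroup facts), and it produces an explicit approximating family $\e^{-t\cA}x$ rather than an abstract one.
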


\begin{proof}
By Assumption~\ref{Ass: Intersection property} we have $\dom(\cA) = X \cap \dom(\dot \cA)$ so that the density in $(X , \dom(\dot \cA))_{\theta , q}$ directly follows from~\cite[Thm.~3.4.2]{Bergh_Lofstrom}. %
In the case of $\dom_{\cA} (\theta , q),$ the density follows from the strong continuity of the semigroup $(\e^{- t \cA})_{t \geq 0}$ on $\dom_{\cA} (\theta , q)$, which in turn follows from the strong continuity of $(\e^{- t \cA})_{t \geq 0}$ on $X$, its uniform boundedness with respect to $t$, and the dominated convergence theorem.
\end{proof}

In the following, we are going to identify $(X , \dom(\dot \cA))_{\theta , q}$ as $\dot \dom_{\cA} (\theta , q)$ (defined 
in~\eqref{eq:DAthetadot}). As a preparation, we record the following well-known lemma, see, e.g.,~\cite[Sec.~6]{Haase}, but for the convenience of the reader present the proof.

\begin{lemma}
\label{Lem: Hardy}
For each $z \in \dot \dom_{\cA}(\theta , q)$, $1 \leq q \leq \infty$ and each $\theta \in (0 , 1),$ 
we have \begin{align*}
 \Big\| t^{- \theta} \int_0^t \| s \dot \cA \e^{- s \cA} z \|_X \, \frac{\d s}{s} + t^{1 - \theta} \| \dot \cA \e^{- t \cA} z \|_X \Big\|_{\LL_{q , *} (0 , \infty)} \leq \frac{1 + \theta}{\theta} \| z \|_{\dot \dom_{\cA} (\theta , q)}.
\end{align*}
\end{lemma}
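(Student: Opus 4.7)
The plan is to reduce the stated bound to a weighted Hardy-type inequality on the multiplicative group $(\IR_+, \d t / t)$. Set $g(s) := \| \dot \cA \e^{- s \cA} z \|_X$, so that by the very definition of $\dot \dom_{\cA}(\theta , q)$ one has $\| z \|_{\dot \dom_{\cA}(\theta , q)} = \| t \mapsto t^{1 - \theta} g(t) \|_{\LL_{q , *} (\IR_+)}$, while the inner integral in the statement simplifies as $\int_0^t \| s \dot \cA \e^{- s \cA} z \|_X \, \frac{\d s}{s} = \int_0^t g(s) \, \d s$. By the triangle inequality inside the outer $\LL_{q , *}$-norm, it suffices to bound the two summands separately: the first contribution is the trivial identity $\| t \mapsto t^{1 - \theta} g(t) \|_{\LL_{q , *}} = \| z \|_{\dot \dom_{\cA}(\theta , q)}$, which accounts for the summand $1$ in the constant $(1 + \theta)/\theta$, and the remaining estimate to prove is
\begin{equation*}
\Big\| t \mapsto t^{- \theta} \int_0^t g(s) \, \d s \Big\|_{\LL_{q , *}(\IR_+)} \leq \frac{1}{\theta} \| z \|_{\dot \dom_{\cA}(\theta , q)}.
\end{equation*}

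The key computation is a rescaling argument: the dilation $s = u t$ converts the inner integral to a convolution-like expression on the multiplicative group, namely $t^{- \theta} \int_0^t g(s) \, \d s = \int_0^1 t^{1 - \theta} g(u t) \, \d u$. Minkowski's integral inequality in $\LL_{q , *}(\IR_+)$, or simply passing the essential supremum through the integral in the case $q = \infty$, then yields
\begin{equation*}
\Big\| t \mapsto t^{- \theta} \int_0^t g(s) \, \d s \Big\|_{\LL_{q , *}} \leq \int_0^1 \| t \mapsto t^{1 - \theta} g(u t) \|_{\LL_{q , *}(\IR_+ , \d t / t)} \, \d u.
\end{equation*}
A further change of variable $r = u t$ in each inner norm, combined with the dilation invariance of the Haar measure $\d t / t$, gives $\| t \mapsto t^{1 - \theta} g(u t) \|_{\LL_{q , *}} = u^{\theta - 1} \| z \|_{\dot \dom_{\cA}(\theta , q)}$. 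Integrating the prefactor $u^{\theta - 1}$ over $(0 , 1)$ produces exactly the constant $1 / \theta$.

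The only non-routine ingredient is this weighted Hardy inequality, and it is precisely the use of the multiplicative Haar measure $\d t / t$ together with the positivity of $\theta$ that makes the integral $\int_0^1 u^{\theta - 1}\, \d u$ both scale-invariant and finite. Summing the two partial bounds yields the announced constant $1 + 1/\theta = (1 + \theta)/\theta$. No property of the homogeneous operator $\dot \cA$ beyond the very definition of the norm on $\dot \dom_{\cA}(\theta , q)$ is needed — only the nonnegative scalar function $g$ enters, which is why the same argument works uniformly for all $1 \leq q \leq \infty$.
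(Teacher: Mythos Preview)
Your proof is correct and follows essentially the same approach as the paper: both split the expression via the triangle inequality, identify the second summand as the definition of the norm, and reduce the first summand to Hardy's inequality with constant $1/\theta$. The only difference is cosmetic: the paper cites Hardy's inequality from Haase (mentioning Riesz--Thorin interpolation between the endpoints $q=1$ and $q=\infty$ as one route, and inserting a truncation $\chi_{(\eps,\infty)}$ to justify the limit), whereas you prove the inequality directly via the rescaling $s = ut$ and Minkowski's integral inequality, which is the other standard proof and handles all $1 \leq q \leq \infty$ uniformly without truncation since the integrand is nonnegative.
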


\begin{proof}
The result follows from the triangle inequality and an application of Hardy's inequality, see~\cite[Lem.~6.2.6]{Haase} to estimate for $\eps > 0$
\begin{align*}
 \Big\| t^{- \theta} \int_0^t \chi_{(\eps , \infty)} (s) \| s \dot \cA \e^{- s \cA} z \|_X \, \frac{\d s}{s} \Big\|_{\LL_{q , *} (0 , \infty)} \leq \frac{1}{\theta} \Big\| \chi_{(\eps , \infty)} (t) t^{1 - \theta} \| \dot \cA \e^{- t \cA} z \|_X \Big\|_{\LL_{q , *} (0 , \infty)}.
\end{align*}
Here, $\chi_{(\eps , \infty)} (t)$ denotes the characteristic function of $(\eps , \infty)$. 
To above inequality is obtained via Riesz-Thorin interpolation. For $q=\infty$ and $q=1$ the proofs are immediate. 
Since $z \in \dot \dom_{\cA}(\theta , q)$ the right-hand side is estimated by $\| z \|_{\dot \dom_{\cA} (\theta , q)}$. This allows to take the limit $\eps \to 0$ on the left-hand side.
\end{proof}

\begin{proposition}
\label{Prop: Equivalent norms}
Let $\theta \in (0 , 1)$ and $1 \leq q \leq \infty$. Then we have 
\begin{align*}
 \big(X , \dom(\dot \cA)\big)_{\theta , q} = \dot \dom_{\cA} (\theta , q).
\end{align*} 
In particular, there exist two constants $C_1 , C_2 > 0$ such that for all $z \in \dot \dom_{\cA} (\theta , q)$ we have
\begin{align}
\label{Eq: Equivalent norms}
 C_1 \| z \|_{\dot \dom_{\cA} (\theta , q)} \leq \| z \|_{(X , \dom(\dot \cA))_{\theta , q}} \leq C_2 \| z \|_{\dot \dom_{\cA} (\theta , q)}.
\end{align}
\end{proposition}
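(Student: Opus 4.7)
The plan is to establish two opposite continuous embeddings, with the hard input—namely the semigroup calculus on the sum space $X+\dom(\dot\cA)$—already prepared by Proposition~\ref{Prop: Representation abstract semigroup} and Lemma~\ref{Lem: Hardy}. Throughout, I would use that the graph norm on $\dom(\dot\cA)$ is precisely $\|y\|_{\dom(\dot\cA)}=\|\dot\cA y\|_X$, so the $K$-functional reads $K(t,z;X,\dom(\dot\cA))=\inf\{\|x\|_X+t\|\dot\cA y\|_X:z=x+y\}$.

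\emph{Step 1: $(X,\dom(\dot\cA))_{\theta,q}\hookrightarrow\dot\dom_{\cA}(\theta,q)$.} Fix an arbitrary decomposition $z=x+y$ with $x\in X$, $y\in\dom(\dot\cA)$. Formula~\eqref{Eq: Canonical representation on sum space} gives
\[
\dot\cA\e^{-t\cA}z=\cA\e^{-t\cA}x+\e^{-t\cA}\dot\cA y.
\]
The smoothing estimate~\eqref{Eq: Smoothing estimate} and the uniform boundedness of $(\e^{-t\cA})_{t\geq 0}$ on $X$ yield $t\|\dot\cA\e^{-t\cA}z\|_X\leq C(\|x\|_X+t\|\dot\cA y\|_X)$ with a constant independent of the decomposition. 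Taking the infimum, multiplying by $t^{-\theta}$, and applying the $\LL_{q,*}(\IR_+)$-norm immediately delivers $\|z\|_{\dot\dom_{\cA}(\theta,q)}\leq C\|z\|_{(X,\dom(\dot\cA))_{\theta,q}}$.

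\emph{Step 2: $\dot\dom_{\cA}(\theta,q)\hookrightarrow(X,\dom(\dot\cA))_{\theta,q}$.} For each $t>0$ I would use the explicit splitting $z=x_t+y_t$ with $y_t:=\e^{-t\cA}z$ and $x_t:=z-\e^{-t\cA}z$. Proposition~\ref{Prop: Representation abstract semigroup} yields $y_t\in\dom(\dot\cA)$ with $\|y_t\|_{\dom(\dot\cA)}=\|\dot\cA\e^{-t\cA}z\|_X$. To handle $x_t$, writing $z=x+y$ with $x\in X$, $y\in\dom(\dot\cA)$ and using the definition~\eqref{Eq: Extension semigroup operators} together with~\eqref{Eq: Allerweltsformel 1} and~\eqref{Eq: Allerweltsformel 2}, I would verify the identity
\[
x_t=(x-\e^{-t\cA}x)+\int_0^t \e^{-s\cA}\dot\cA y\,\d s=\int_0^t\dot\cA\e^{-s\cA}z\,\d s\in X,
\]
so that $\|x_t\|_X\leq\int_0^t\|\dot\cA\e^{-s\cA}z\|_X\,\d s$. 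This produces the bound
\[
K(t,z;X,\dom(\dot\cA))\leq \int_0^t\|\dot\cA\e^{-s\cA}z\|_X\,\d s + t\|\dot\cA\e^{-t\cA}z\|_X,
\]
which, after multiplication by $t^{-\theta}$ and application of Lemma~\ref{Lem: Hardy}, is controlled by $\frac{1+\theta}{\theta}\|z\|_{\dot\dom_{\cA}(\theta,q)}$ in $\LL_{q,*}(\IR_+)$.

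\emph{Main obstacle.} The only delicate point is in Step~2: one must verify that the formal splitting $z=(z-\e^{-t\cA}z)+\e^{-t\cA}z$ actually belongs to $X\times\dom(\dot\cA)$ even though $z$ is a priori only an element of the sum space $X+\dom(\dot\cA)$. The representation derived above for $x_t$ shows that $x_t$ lies in $X$ and that the splitting is independent of the choice of representatives $(x,y)$—the latter being essentially the content of the well-posedness argument in the proof of Proposition~\ref{Prop: Representation abstract semigroup}. Once this identification is secured, both embeddings follow cleanly, and~\eqref{Eq: Equivalent norms} is nothing more than a quantitative rephrasing.
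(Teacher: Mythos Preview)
Your proposal is correct and follows the paper's proof almost verbatim: same splitting $z=(z-\e^{-t\cA}z)+\e^{-t\cA}z$, same use of Proposition~\ref{Prop: Representation abstract semigroup} for $y_t\in\dom(\dot\cA)$, same identification $x_t=\int_0^t\dot\cA\e^{-s\cA}z\,\d s$, and same appeal to Lemma~\ref{Lem: Hardy} for the final estimate. The one point the paper treats more carefully than you do is the passage from $x-\e^{-t\cA}x=\cA\int_0^t\e^{-s\cA}x\,\d s$ (which is~\eqref{Eq: Allerweltsformel 1}, not~\eqref{Eq: Allerweltsformel 2}) to $\int_0^t\cA\e^{-s\cA}x\,\d s$: since $\|\cA\e^{-s\cA}x\|_X\lesssim s^{-1}\|x\|_X$ is not integrable at $0$ for a generic $x\in X$, one must first invoke Lemma~\ref{Lem: Hardy} to get $s\mapsto\dot\cA\e^{-s\cA}z\in\LL_1(0,t;X)$, deduce $s\mapsto\cA\e^{-s\cA}x\in\LL_1(0,t;X)$ by subtraction, and only then use closedness of $\cA$ to commute it with the integral---but this is a refinement of presentation, not a gap in your argument.
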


\begin{proof}
Let $z \in (X , \dom(\dot \cA))_{\theta , q}$ and let $z = x + y$ for some $x \in X$ and $y \in \dom(\dot \cA)$. Then, by virtue of~\eqref{Eq: Canonical representation on sum space},~\eqref{Eq: Smoothing estimate} as well as the boundedness of the semigroup family we find that
\begin{align*}
 t \| \dot \cA \e^{- t \cA} z \|_X \leq  \| (t \cA) \e^{- t \cA} x \|_X + t \| \e^{- t \cA} \dot \cA y \|_X \leq C \Big( \| x \|_X + t \| \dot \cA y \|_X \Big).
\end{align*}
Since $x$ and $y$ are arbitrary, one  takes the infimum over all $x \in X$ and $y \in \dom(\dot \cA)$ with $z = x + y$ and obtain
\begin{align*}
 t \| \cA \e^{- t \cA} z \|_X \leq C K (t , z ; X , \dom(\dot \cA)).
\end{align*}
Consequently, by definition of the real interpolation space norm in~\eqref{Eq: Real interpolation space} it follows that
\begin{align*}
 \| x \|_{\dot \dom_{\cA} (\theta , q)} \leq C \| t \mapsto t^{- \theta} K(t , x ; X , \dom(\dot \cA)) \|_{\LL_{q , *} (0 , \infty)} = C\|x\|_{(X,\dom(\dot\cA))_{\theta,q}}.
\end{align*}

\indent For the other direction, let $z \in \dot \dom_{\cA} (\theta , q)$ and write $z = x + y$ for appropriate $x \in X$ and $y \in \dom(\dot \cA)$. By virtue of~\eqref{Eq: Extension semigroup operators} and~\eqref{Eq: Allerweltsformel 1} we find that
\begin{align*}
 \e^{- t \cA} z &= z - \cA \int_0^t \e^{- s \cA} x \, \d s - \int_0^t \e^{- s \cA} \dot \cA y \, \d s, \end{align*}
 whence
 $$ z =  a_t+b_t \with 
 a_t:=  \cA \int_0^t \e^{- s \cA} x \, \d s + \int_0^t \e^{- s \cA} \dot \cA y \, \d s\andf
 b_t := \e^{- t \cA} z.$$
  By Proposition~\ref{Prop: Representation abstract semigroup}, we know that 
$b_t \in \dom(\dot \cA)$ and it is clear that $a_t \in X$. For $t > 0$ this gives the following control on the $K$-functional
\begin{align*}
 K (t , x ; X , \dom(\dot \cA)) \leq \| a_t \|_X + t \| \dot \cA b_t \|_X.
\end{align*}
The second term on the right-hand side can  be estimated by virtue of Lemma~\ref{Lem: Hardy} by
\begin{align*}
 \| t^{1 - \theta} \dot \cA b_t \|_{\LL_{q , *} (0 , \infty ; X)} \leq C \| z \|_{\dot \dom_{\cA} (\theta , q)}.
\end{align*}
The term involving $a_t$ requires some analysis. Notice that Lemma~\ref{Lem: Hardy} implies that for $t > 0$ we have
\begin{align*}
 s \mapsto \dot \cA \e^{- s \cA} z \in \LL_1 (0 , t ; X).
\end{align*}
Moreover, the properties of the semigroup imply that for  $t > 0$ we have
\begin{align*}
 s \mapsto \e^{- s \cA} \dot \cA y \in \LL_1 (0 , t ; X).
\end{align*}
Since $\dot \cA \e^{- s \cA} z = \cA \e^{- s \cA} x + \e^{- s \cA} \dot \cA y$ we thus find that for  $t > 0$ we have
\begin{align*}
 s \mapsto \cA \e^{- s \cA} x \in \LL_1 (0 , t ; X).
\end{align*}
Since $\cA$ is closed, we conclude that for such $t$ one has
\begin{align*}
 \int_0^t \e^{- s \cA} x \, \d s \in \dom(\cA) \quad \text{and} \quad \cA \int_0^t \e^{- s \cA} x \, \d s = \int_0^t \cA \e^{- s \cA} x \, \d s.
\end{align*}
For these $t$ we thus conclude that
\begin{align*}
 a_t = \cA \int_0^t \e^{- s \cA} x \, \d s + \int_0^t \e^{- s \cA} \dot \cA y \, \d s = \int_0^t \dot \cA \e^{- s \cA} z \, \d s.
\end{align*}
Again by Lemma~\ref{Lem: Hardy} we conclude that
\begin{align*}
 \| t^{- \theta} a_t \|_{\LL_{q , *} (0 , \infty ; X)} \leq \Big\| t^{- \theta} \int_0^t \| s \dot \cA \e^{- s \cA} z \|_X \, \frac{\d s}{s} \Big\|_{\LL_{q , *} (0 , \infty)} \leq C \| z \|_{\dot \dom_{\cA} (\theta , q)}. &\qedhere
\end{align*}
\end{proof}

\begin{remark}
Notice that Haase gives a similar result as Prop.~\ref{Prop: Equivalent norms} in~\cite[Thm.~6.4.5]{Haase}. Also, in~\cite[Sec.~6.3]{Haase} a homogeneous version of an operator in a very general and abstract setting is defined. In his context, however, the domain of the homogeneous operator is complete, see~\cite[bottom of p.~6]{Haak_Haase_Kunstmann}, and thus in general different from the definition given at the beginning of this section.
 In~\cite[p.~233/234]{Haase} the domain of the homogeneous Laplacian on the whole space is calculated and it turns out that the homogeneous operator defined by Haase acts on homogeneous Bessel potential spaces  which are defined as subspaces of $\cS^{\prime} (\IR^n) / \mathcal{P} (\IR^n)$, where $\mathcal{P} (\IR^n)$ denotes the space of all polynomials on $\IR^n$. \par
It goes without saying that an investigation of PDE problems that incorporates also boundary conditions or nonlinearities is quite unpleasant in such spaces. Hence, the formalism above shall provide a tool that is better suited to tackle problems in 
nontrivial domains, or nonlinear PDEs.
\end{remark}

\section{Da Prato -- Grisvard's theorem with homogeneous estimates}\label{Sec: From the Da Prato--Grisvard semi-norm to a real interpolation space norm} 

Next, let $0 < T \leq \infty$, $f \in \LL_q(0 , T ; \dom_{\cA} (\theta , q))$, and let $u$ be given by~\eqref{Eq: Variation of constants} with $x = 0$. Then the following proposition is a classical result of Da Prato and Grisvard~\cite[Thm.~4.7]{DaPrato_Grisvard}. In the literature~\cite{DaPrato_Grisvard, Haase}, however, the result is  proved only on finite time intervals or on the interval $\IR_+ $ but under the condition that $0 \in \rho(\cA)$. To avoid that condition
that will not be satisfied in the applications we have in mind, 
we restrict ourselves to homogeneous estimates (cf.~\cite[Thm.~1.4]{Iwabuchi} for the case of the Laplacian defined on homogeneous Besov-type spaces or~\cite[Sec.~4]{Lunardi} in H\"older spaces). We remark that since the space $Y$ is not necessarily complete, the homogeneous space $\dot \dom_{\cA} (\theta , q)$ does not have to be complete either. As vector valued integration in spaces that are not complete is a bit delicate, we will 
concentrate on the proof of \emph{homogeneous} estimates 
for  solutions to~\eqref{Eq: ACP} corresponding 
to data in \emph{nonhomogeneous} spaces. 
\begin{proposition}
\label{Prop: Da Prato-Grisvard}
Let $\theta \in (0 , 1)$, $1 \leq q \leq \infty$ and $0 < T \leq \infty$. Then, there exists a constant $C > 0$ such that for all $f \in \LL_q(0 , T ; \dom_{\cA} (\theta , q)),$ 
the solution $u$ to $(ACP)$ defined by~\eqref{Eq: Variation of constants} with $x=0$
satisfies  $u(t) \in \dom(\cA)$ for almost every $0 < t < T$ and the homogeneous estimate:
\begin{align*}
 \| \cA u \|_{\LL_q(0 , T ; \dot\dom_{\cA} (\theta , q))} \leq C \| f \|_{\LL_q(0 , T ; \dot \dom_{\cA} (\theta , q))}.
\end{align*}
\end{proposition}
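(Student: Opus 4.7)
My plan is to combine the classical (inhomogeneous) Da Prato -- Grisvard theorem --- to obtain $u(t)\in\dom(\cA)$ almost everywhere --- with a pointwise smoothing estimate, Minkowski's inequality, and a weighted Hardy inequality, so as to derive the homogeneous bound without resorting to $\LL_q$-maximal regularity on $X$. For the existence part, since $f\in\LL_q(0,T;\dom_\cA(\theta,q))$ lies in the inhomogeneous space, the classical result \cite[Thm.~4.7]{DaPrato_Grisvard}, applied on bounded subintervals of $(0,T)$, yields that $u(t)\in\dom(\cA)$ for a.e.\ $t$ and that $\cA u$ is strongly measurable with values in $X$; in particular each $\cA\e^{-\sigma\cA}\cA u(t)$ below is a well-defined element of~$X$.

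For the quantitative estimate I restrict to $1\le q<\infty$ (the case $q=\infty$ being analogous with essential suprema in place of integrals). Fubini gives
\begin{equation*}
 \|\cA u\|_{\LL_q(0,T;\dot\dom_\cA(\theta,q))}^q
 \;=\; \int_0^\infty \sigma^{q(1-\theta)-1} \|\cA\e^{-\sigma\cA}\cA u\|_{\LL_q(0,T;X)}^q\, \d\sigma,
\end{equation*}
and closedness of $\cA$ combined with its commutation with the semigroup yields $\cA\e^{-\sigma\cA}\cA u(t) = \int_0^t \cA^2 \e^{-(\sigma+t-s)\cA} f(s)\,\d s$. Splitting $\cA^2 \e^{-(\sigma+t-s)\cA} = \bigl(\cA\e^{-(\sigma+t-s)/2\,\cA}\bigr)\bigl(\cA\e^{-(\sigma+t-s)/2\,\cA}\bigr)$ and applying the smoothing estimate $\|\cA\e^{-r\cA}\|_{\cL(X)}\le M/r$ from~\eqref{Eq: Smoothing estimate} to the outer factor delivers the pointwise bound
\begin{equation*}
 \|\cA\e^{-\sigma\cA}\cA u(t)\|_X
 \;\le\; 2M \int_0^t \frac{\|\cA\e^{-(\sigma+t-s)/2\,\cA} f(s)\|_X}{\sigma+t-s}\, \d s.
\end{equation*}

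Changing variable $r=t-s$ and then $\rho=(\sigma+r)/2$, and applying Minkowski's integral inequality in $t\in(0,T)$, I get
\begin{equation*}
 \|\cA\e^{-\sigma\cA}\cA u\|_{\LL_q(0,T;X)}
 \;\le\; 2M \int_{\sigma/2}^\infty \frac{\phi(\rho)}{\rho}\, \d\rho,
 \qquad \phi(\rho):=\|\cA\e^{-\rho\cA} f\|_{\LL_q(0,T;X)}.
\end{equation*}
Raising to the $q$-th power, multiplying by $\sigma^{q(1-\theta)-1}$, integrating in $\sigma$, and invoking the one-variable weighted Hardy inequality
\begin{equation*}
 \int_0^\infty \sigma^{q(1-\theta)-1} \biggl(\int_\sigma^\infty \frac{\phi(\rho)}{\rho}\,\d\rho\biggr)^{\! q}\d\sigma
 \;\le\; (1-\theta)^{-q}\int_0^\infty \sigma^{q(1-\theta)-1} \phi(\sigma)^q\,\d\sigma
\end{equation*}
(valid for $0<\theta<1$ and proved by a single integration by parts followed by H\"older), I obtain the claim after a last Fubini identifying $\int_0^\infty \sigma^{q(1-\theta)-1}\phi(\sigma)^q\,\d\sigma$ as $\|f\|_{\LL_q(0,T;\dot\dom_\cA(\theta,q))}^q$; the resulting constant is $C = 2^{2-\theta}M/(1-\theta)$.

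The main obstacle is that general $\LL_q$-maximal regularity on $X$ is \emph{not} available in the abstract setting, so the above pathway --- pointwise smoothing bound, Minkowski, then Hardy --- is essential. In particular the factor $\sigma+t-s$ arising from the smoothing of $\cA^2\e^{-(\sigma+t-s)\cA}$ must be split ``halfway'' so as to contribute both to the $(t-s)$-decay (needed for Minkowski in $t$) and to the $\sigma$-integrability (needed for the Hardy step) with the exact right balance; any other choice would produce a non-integrable singularity either at $\rho\to 0$ or at $\rho\to\infty$.
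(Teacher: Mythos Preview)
Your proof is correct and rests on the same core idea as the paper's: represent $\cA\e^{-\sigma\cA}\cA u(t)$ via the variation-of-constants formula, apply the analytic smoothing estimate~\eqref{Eq: Smoothing estimate} to one factor of $\cA^2\e^{-(\sigma+t-s)\cA}=\bigl(\cA\e^{-(\sigma+t-s)\cA/2}\bigr)^2$, and then reduce the resulting double integral to $\|f\|_{\LL_q(0,T;\dot\dom_\cA(\theta,q))}$.

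The difference lies in the integral-inequality machinery. The paper does not use Minkowski followed by the weighted Hardy inequality; instead it introduces auxiliary exponents $\gamma_1,\gamma_2\in(0,1)$ with $\gamma_1+\gamma_2=1$ and $\gamma_2>1/q'$, applies H\"older in the $s$-variable to produce a factor $\tau^{1/q'-\gamma_2}$, and then closes via several Fubini steps together with an explicit evaluation of the $\tau$-integral (choosing $\gamma_1>\theta-1/q'$ so that all powers are integrable). Your Minkowski--Hardy route is shorter, avoids the auxiliary parameters altogether, and yields the explicit constant $2^{2-\theta}M/(1-\theta)$; the paper's H\"older route is more hands-on but amounts to the same control. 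The paper also proves the ``$u(t)\in\dom(\cA)$ a.e.'' assertion directly rather than by citing the classical finite-interval result, but your reduction to bounded subintervals is perfectly valid. Both approaches treat $q=\infty$ separately by a direct pointwise bound.
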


\begin{proof} For the reader's convenience, we present a proof here. 
It will  be divided into five consecutive steps. 
\subsection*{Step~1} First we show that $u$  satisfies $u(t) \in \dom(\cA)$ for almost every $t\in(0,T)$.
For that,  notice that the integral that defines $u$ converges absolutely in $X$ since the semigroup is bounded on the interval $[0 , t]$ and since $f$ lies  in $\LL_q(0 , T ; X)$. To show that $u(t) \in \dom(\cA)$ for almost every $t \in (0 , T)$, we have to establish that  $s \mapsto \cA \e^{- (t - s) \cA} f(s)$ is, for almost every $t \in (0 , T),$ integrable on $(0 , t)$ with respect to the norm of $X$. To do so in 
the case $q<\infty,$ perform first the substitution $\tau = t - s$ and then estimate by H\"older's inequality to obtain
\begin{align*}
 \int_0^t \| \cA \e^{- (t - s) \cA} f(s) \|_X \; \d s &= \int_0^t \tau^{\theta} \tau^{1 - \theta} \| \cA \e^{- \tau \cA} f(t - \tau) \|_X \; \frac{\d \tau}{\tau} \\
 &\leq C t^{\theta} \bigg( \int_0^t \| \tau^{1 - \theta} \cA \e^{- \tau \cA} f(t - \tau) \|_X^q \; \frac{\d \tau}{\tau} \bigg)^{1 / q} \cdotp
\end{align*}
Now, let $0 < T^{\prime} < T$ be arbitrary. According to Fubini's theorem and the substitution $s = t - \tau$, we have
\begin{align*}
 \int_0^{T^{\prime}} \int_0^{t} \| \tau^{1 - \theta} \cA \e^{- \tau \cA} f(t-\tau) \|_X^q \frac{\d \tau}{\tau} \; \d t &= \int_0^{T^{\prime}} \int_{\tau}^{T^{\prime}}   \| \tau^{1 - \theta} \cA \e^{- \tau \cA} f(t-\tau) \|_X^q \,\d t\,\frac{\d \tau}{\tau}\\
  &= \int_0^{T^{\prime}} \int_0^{T^{\prime}-\tau} \| \tau^{1 - \theta} \cA \e^{- \tau \cA} f(s) \|_X^q \; \d s\, \frac{\d \tau}{\tau}\\
   &\leq \int_0^{T^{\prime}} \int_0^{T} \| \tau^{1 - \theta} \cA \e^{- \tau \cA} f(s) \|_X^q \; \frac{\d \tau}{\tau} \, \d s.
\end{align*}
The right-hand side is finite since $f$ is in $\LL_q(0 , T ; \dom_{\cA} (\theta , q))$. 
Since $0 < T^{\prime} < T$ was arbitrary, this implies the finiteness of 
$\displaystyle \int_0^t \| \cA \e^{- (t - s) \cA} f(s) \|_X \,\d s$ for almost every $t \in (0 , T),$
and thus the desired conclusion for the first step
(the easier case $q=\infty$ being left to the reader). 

\subsection*{Step~2} The aim of Steps~2 and~3 is to bound $\cA e^{-\tau\cA}\cA u$ in $\LL_q(0,T;X)$ for each fixed $\tau>0$. We assume  $q$ is finite and consider the case $q = \infty$ in the final fifth step.

We start with
\begin{align*}
 \| \cA \e^{- \tau \cA} \cA u (t) \|_X \leq \int_0^t \| \cA^2 \e^{- (t + \tau - s) \cA} f(s) \|_X \; \d s
 = \int_0^t \| \cA^2 \e^{- (\tau + s) \cA} f(t - s) \|_X \; \d s.
\end{align*}
Next, let $q^{\prime}$ denote the H\"older conjugate exponent to $q$, with the usual convention $1 / \infty = 0$. Let further $\gamma_1 , \gamma_2 \in (0 , 1)$ with $\gamma_1 + \gamma_2 = 1$ and $\gamma_2 > 1 / q^{\prime}$. Then, the analyticity of the semigroup, cf.~\eqref{Eq: Smoothing estimate}, together with H\"older's inequality deliver
\begin{align*}
 \int_0^t \| \cA^2 \e^{- (\tau + s) \cA} f(t - s) \|_X \; \d s &\leq C \int_0^t \frac{1}{\tau + s} \| \cA \e^{- \frac12(\tau + s) \cA} f(t - s) \|_X \; \d s \\
 &\leq C \bigg( \int_0^t \frac{1}{(\tau + s)^{\gamma_1 q}} \| \cA \e^{- \frac12(\tau + s) \cA} f(t - s) \|_X^q \; \d s \bigg)^{\frac{1}{q}} \\
 &\hspace{6.5cm} \cdot \| (\tau + \cdot)^{- \gamma_2} \|_{\LL_{q^{\prime}} (0 , t)} \\
 &\leq C \tau^{1 / q^{\prime} - \gamma_2} \bigg( \int_0^t \frac{1}{(\tau + s)^{\gamma_1 q}} \| \cA \e^{- \frac12(\tau + s) \cA} f(t - s) \|_X^q \; \d s \bigg)^{\frac{1}{q}}.
\end{align*}
 We used the splitting $e^{-t\cA}=e^{-\frac{1}{2} t \cA}e^{-\frac12 t \cA}$
in order to keep  fine properties of considered terms
(note that $C$ depends only on $\sup_{0 < s < \infty} \| s \cA \e^{- s \cA} \|_{\Lop(X)},$ and on $\gamma_2$ and $q$).
Summarizing, we find that for all $0 < t < T$ and $\tau > 0$
\begin{align}
\label{Eq: Estimate of Step 2}
 \| \cA \e^{- \tau \cA} \cA u (t) \|_X \leq C \tau^{1 / q^{\prime} - \gamma_2} \bigg( \int_0^t \frac{1}{(\tau + s)^{\gamma_1 q}} \| \cA \e^{- \frac12(\tau + s)\cA} f(t - s) \|_X^q \; \d s \bigg)^{\frac{1}{q}}.
\end{align}
\subsection*{Step~3}
By virtue of~\eqref{Eq: Estimate of Step 2} and Fubini's theorem, we compute for $\tau > 0$
\begin{align*}
 \| \cA \e^{- \tau \cA} \cA u \|_{\LL_q (0 , T ; X)}^q &= \int_0^T \| \cA \e^{- \tau \cA} \cA u (t) \|_X^q \; \d t \\
 &\leq C \tau^{q / q^{\prime} - \gamma_2 q} \int_0^T \int_0^t \frac{1}{(\tau + s)^{\gamma_1 q}} \| \cA \e^{- \frac12(\tau + s) \cA} f(t - s) \|_X^q \; \d s \; \d t \\
 &= C \tau^{q / q^{\prime} - \gamma_2 q} \int_0^T \biggl(\int_s^T \| \cA \e^{- \frac12(\tau + s) \cA} f(t - s) \|_X^q \; \d t \biggr)\frac{\d s}{(\tau + s)^{\gamma_1 q}}\cdotp
\end{align*}
Finally, the substitution $t^{\prime} = t - s$ delivers, since
$\gamma_1=1-\gamma_2,$ 
\begin{align}
\label{Eq: Estimate of Step 3}
 \| \cA \e^{- \tau \cA} \cA u \|_{\LL_q (0 , T ; X)}^q \leq C \tau^{q / q^{\prime} +\gamma_1 q-q} \int_0^T \biggl(\int_0^{T - s} \| \cA \e^{- \frac12(\tau + s)\cA} f(t) \|_X^q \; \d t\biggr) \frac{\d s}{(\tau + s)^{\gamma_1 q}} \cdotp
\end{align}
\subsection*{Step~4} Still assuming that $q$ is finite, we estimate the full norm of $Au$ in $\LL_q(0 , T ; \dot \dom_{\cA} (\theta , q)).$
Let  $\gamma := (\gamma_1 + 1 / q^{\prime} - \theta) q$.   Use Fubini's theorem as well as~\eqref{Eq: Estimate of Step 3} as follows:
\begin{align*}
 \| \cA u \|_{\LL_q(0 , T ; \dot \dom_{\cA} (\theta , q))}^q &= \int_0^T \int_0^{\infty} \| \tau^{1 - \theta} \cA \e^{- \tau \cA} \cA u(t) \|_X^q \; \frac{\d \tau}{\tau} \; \d t \\
 &= \int_0^{\infty} \tau^{(1 - \theta) q - 1} \int_0^T \| \cA \e^{- \tau \cA} \cA u(t) \|_X^q \; \d t \; \d \tau \\
 &\leq C \int_0^{\infty} \tau^{\gamma - 1} \int_0^T \int_0^{T - s} \| \cA \e^{- \frac12(\tau + s) \cA} f(t) \|_X^q \; \d t \frac{\d s}{(\tau + s)^{\gamma_1 q}}  \; \d \tau.
\end{align*}
Continue to manipulate this expression in the following calculation, by using Fubini's theorem to integrate over $t$ last, proceeding then with the substitution $s^{\prime} = \tau + s$, and using Fubini's theorem again to switch the order of integration over $s$ and $\tau$. Choose $\gamma_1$ in such a way that $\gamma_1 > \theta - 1 / q^{\prime}$, so that $\gamma > 0.$  This results in
\begin{align*}
 \| \cA u \|_{\LL_q(0 , T ; \dot \dom_{\cA} (\theta , q))}^q &\leq C \int_0^T \int_0^{\infty} \tau^{\gamma - 1} \int_0^{T - t} \frac{1}{(\tau + s)^{\gamma_1 q}} \| \cA \e^{- \frac12(\tau + s) \cA} f(t) \|_X^q \; \d s \; \d \tau \; \d t \\
 &= C \int_0^T \int_0^{\infty} \tau^{\gamma - 1} \int_{\tau}^{T + \tau - t} \frac{1}{s^{\gamma_1 q}} \| \cA \e^{-\frac s2 \cA} f(t) \|_X^q \; \d s \; \d \tau \; \d t \\
 &= C \int_0^T \int_0^{\infty}  \frac{1}{s^{\gamma_1 q}} \| \cA \e^{- \frac s2 \cA} f(t) \|_X^q \int_{\max\{ 0 , s + t - T \}}^s \tau^{\gamma - 1} \; \d \tau \; \d s \; \d t \\
 &\leq \frac{C}{\gamma} \int_0^T \int_0^{\infty} s^{\gamma - \gamma_1 q} \| \cA \e^{- \frac s2 \cA} f(t) \|_X^q \; \d s \; \d t \\
 &= \frac{C}{\gamma} \int_0^T \int_0^{\infty} \| s^{1 - \theta} \cA \e^{- \frac s2 \cA} f(t) \|_X^q \; \frac{\d s}{s} \; \d t \\
 &= \frac{C}{\gamma} \| f \|_{\LL_q(0 , T ; \dot \dom_{\cA}(\theta , q))}^q.
\end{align*}

\subsection*{Step 5} Here we consider the case $q = \infty$. For $0 < t < T$ and $0 < \tau < \infty$ we directly find by~\eqref{Eq: Smoothing estimate} that
\begin{align*}
 \Big\| \tau^{1 - \theta} \cA \e^{- \tau \cA} \cA \int_0^t \e^{- (t - s) \cA} f(s) \, \d s \Big\|_X &\leq C \tau^{1 - \theta} \int_0^t \frac{1}{\tau + t - s} \| \cA \e^{- \frac{1}{2} (\tau + t - s) \cA} f(s) \|_X \, \d s \\
 &\leq C \tau^{1 - \theta} \int_0^t \frac{1}{(\tau + t - s)^{2 - \theta}} \, \d s \| f \|_{\LL_{\infty} (0 , T ; \dot \dom_{\cA} (\theta , \infty))}.
\end{align*}
Finally, notice that by the substitution rule, we have
\begin{align*}
 \tau^{1 - \theta} \int_0^t \frac{1}{(\tau + t - s)^{2 - \theta}} \, \d s \leq \int_0^{\infty} \frac{1}{(1 + r)^{2 - \theta}} \, \d r. &\qedhere
\end{align*}
\end{proof}

In order to establish that the solution is bounded in time when spatially measured in its trace space, we prove a Fubini-type property of the Da Prato -- Grisvard norm with respect to $\theta$. We start with the following Schur type estimate, which is a quantified generalization of~\cite[Thm.~5.2.2]{Haase}.

\begin{lemma}
\label{Lem: Schur type estimate}
Let $\cA$ be sectorial of angle $\omega \in [0 , \pi)$ and let $\omega < \phi < \pi$. Let $\Phi , \Psi : \Sec_{\phi} \to \IC$ be holomorphic such that there exists $C > 0$ and $0 < \alpha < \beta < \infty$ with
\begin{align}
\label{Eq: Quantified H_0^infty}
 \lvert \Phi(z) \rvert , \lvert \Psi (z) \rvert \leq \frac{C \lvert z \rvert^{\alpha}}{1 + \lvert z \rvert^{\beta}} \qquad (z \in \Sec_{\phi}).
\end{align}
Define for $s , t > 0$ the operators $\Phi(t \cA)$ and $\Psi(s \cA)$ as usual via the Cauchy integral, i.e., if $\gamma$ describes $\partial \Sec_{\vartheta}$ in a counterclockwise manner for some $\vartheta \in (\omega , \phi)$, define
\begin{align*}
 \Phi(t \cA) := \frac{1}{2 \pi \ii} \int_{\gamma} \Phi(t z) (z - \cA)^{-1} \; \d z \quad \text{and} \quad \Psi(s \cA) := \frac{1}{2 \pi \ii} \int_{\gamma} \Psi(s z) (z - \cA)^{-1} \; \d z.
\end{align*}
Then, for each $\eta \in \IR$ with $\lvert \eta \rvert < \min\{ \alpha , \beta - \alpha \}$ and each $1 \leq q \leq \infty$ it holds
\begin{align*}
 \sup_{s > 0} \| t \mapsto (t s^{-1})^{\eta} \Phi(t \cA) \Psi (s \cA) \|_{\LL_{q , *} (0 , \infty ; \Lop(X))} < \infty.
\end{align*}
\end{lemma}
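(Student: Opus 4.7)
The plan is to combine the composition rule of the holomorphic functional calculus with a dimensional reduction that isolates the dependence on $\tau := t/s$, turning the required supremum over $s$ into a bound on a scalar integral of Mellin type.

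First I would exploit the fact that $z \mapsto \Phi(tz)\Psi(sz)$ has enough polynomial decay at $0$ and $\infty$ on $\Sec_\phi$ to be admissible for the Dunford-Riesz calculus of the sectorial operator $\cA$. The composition rule then gives
\begin{align*}
\Phi(t\cA)\Psi(s\cA) = \widetilde{\Theta}_\tau(s\cA), \qquad \widetilde{\Theta}_\tau(w) := \Phi(\tau w)\Psi(w).
\end{align*}
The crucial structural remark is that $s\cA$ is sectorial of the same angle $\omega$ as $\cA$ with exactly the same constants in~\eqref{Eq: McIntosh Resolvent estimate}, by the scale invariance $\lambda(\lambda - s\cA)^{-1} = (\lambda/s)((\lambda/s) - \cA)^{-1}$. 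The standard Cauchy integral estimate for the calculus therefore delivers a bound on $\|\widetilde{\Theta}_\tau(s\cA)\|_{\Lop(X)}$ in which the parameter $s$ has completely cancelled out.

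Next, parametrising the contour $\gamma$ as $w = r\e^{\pm \ii\vartheta}$ and plugging in~\eqref{Eq: Quantified H_0^infty}, I am reduced to controlling the purely scalar integral
\begin{align*}
F(\tau) := \int_0^\infty \frac{(\tau r)^\alpha r^\alpha}{(1+(\tau r)^\beta)(1+r^\beta)}\,\frac{\d r}{r}.
\end{align*}
For $\tau \leq 1$, I would split the $r$-integration at $r=1$ and at $r=1/\tau$ and apply on each piece the appropriate elementary bound, either $(1+x^\beta)^{-1}\leq 1$ (small argument) or $(1+x^\beta)^{-1} \leq x^{-\beta}$ (large argument), to the two factors separately. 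A careful balancing of the three resulting regimes yields $F(\tau)\leq C\,\tau^{\min(\alpha,\,\beta-\alpha)}$, and the built-in symmetry $(\Phi,\Psi,\tau)\leftrightarrow (\Psi,\Phi,1/\tau)$ in turn produces $F(\tau)\leq C\,\tau^{-\min(\alpha,\,\beta-\alpha)}$ when $\tau\geq 1$. I expect this regional book-keeping to be the main technical point: the naive bound $(1+x^\beta)^{-1}\leq 1$ used in both factors already fails to give a convergent integral as soon as $2\alpha \geq \beta$, and one really has to identify the critical exponent $\min(\alpha,\beta-\alpha)$ to cover the whole assumed range $\alpha<\beta$.

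Once this two-sided pointwise bound on $F$ is in hand, the change of variables $\tau = t/s$ turns $\d t/t$ into $\d\tau/\tau$, so that
\begin{align*}
\bigl\| t\mapsto (ts^{-1})^\eta \,\Phi(t\cA)\Psi(s\cA)\bigr\|_{\LL_{q,*}(0,\infty;\Lop(X))}^{q} \leq C\int_0^\infty \tau^{\eta q} F(\tau)^q\,\frac{\d\tau}{\tau}
\end{align*}
with a constant $C$ independent of $s>0$. Splitting the final integral at $\tau = 1$ and inserting the respective bounds on $F$, the two pieces are finite precisely when $\eta + \min(\alpha,\beta-\alpha)>0$ and $\eta - \min(\alpha,\beta-\alpha)<0$, which is exactly the range $|\eta|<\min(\alpha,\beta-\alpha)$ stated in the lemma. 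The endpoint case $q=\infty$ is immediate from the pointwise bound $F(\tau)\leq C\min\bigl(\tau^{\min(\alpha,\beta-\alpha)},\tau^{-\min(\alpha,\beta-\alpha)}\bigr)$ combined with the factor $\tau^\eta$.
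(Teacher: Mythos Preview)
Your argument is correct and reaches the same conclusion, but the paper takes a slicker route that avoids your regional analysis of $F(\tau)$ entirely. After writing
\[
\Phi(t\cA)\Psi(s\cA)=\frac{1}{2\pi\ii}\int_{\gamma}\Phi(tz)\Psi(sz)(z-\cA)^{-1}\,\d z
\]
and estimating via \eqref{Eq: Quantified H_0^infty} and the resolvent bound (with the substitution $z'=sz$, which already removes $s$), the paper multiplies by $(t/s)^{\eta}$, takes the $\LL_{q,*}(\d t/t)$-norm, and \emph{pulls it inside the contour integral} by Minkowski. The scale-invariance of the Haar measure then permits the further substitution $t'=ts^{-1}|z|$ inside the norm, which decouples the variables completely: the $\LL_{q,*}$-norm becomes the $s,z$-independent constant $\bigl\| t\mapsto t^{\alpha+\eta}/(1+t^{\beta})\bigr\|_{\LL_{q,*}}$, and what remains is the single integral $\int_{\gamma}|z|^{\alpha-\eta}(1+|z|^{\beta})^{-1}\,|\d z|/|z|$. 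Both factors are finite precisely under the stated condition on $\eta$, with no case distinction needed.

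Your approach bounds the operator norm pointwise by $F(\tau)$ first and then analyses $F$ by splitting the $r$-axis, which works but is more laborious; note also that in the borderline case $2\alpha=\beta$ your middle region picks up a harmless $\log(1/\tau)$ factor, so the claim $F(\tau)\leq C\tau^{\min(\alpha,\beta-\alpha)}$ is not quite literally true there, though the final $\tau$-integral still converges because the inequality $|\eta|<\min(\alpha,\beta-\alpha)$ is strict. The paper's Minkowski-then-substitute trick trades that pointwise bookkeeping for a single change of variables.
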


\begin{proof}
Notice that, 
\begin{align*}
 \Phi(t \cA) \Psi(s \cA) = \frac{1}{2 \pi \ii} \int_{\gamma} \Phi(t z) \Psi(s z) (z - \cA)^{-1} \; \d z.
\end{align*}
Estimating by virtue of~\eqref{Eq: Quantified H_0^infty} and~\eqref{Eq: McIntosh Resolvent estimate} together with the substitution $z^{\prime} = s z$ yields
\begin{align*}
 \| \Phi(t \cA) \Psi(s \cA) \|_{\Lop(X)} \leq C \int_{\gamma} \frac{(ts^{-1})^{\alpha} \lvert z \rvert^{2 \alpha}}{(1 + \lvert t s^{-1} z \vert^{\beta}) (1 + \lvert z \vert^{\beta})} \; \frac{\lvert \d z \rvert}{\lvert z \rvert}\cdotp
\end{align*}
Now, multiply by $(t s^{-1})^{\eta}$ and take the $\LL_{q , *}(0 , \infty)$-norm of this inequality. 
Together with  the triangle inequality for integrals, this gives
\begin{align*}
 \| t \mapsto (t s^{-1})^{\eta} \Phi(t \cA) \Psi (s \cA) \|_{\LL_{q , *} (0 , \infty ; \Lop(X))} \leq C \int_{\gamma} \Big\| t \mapsto \frac{(ts^{-1} \lvert z \rvert)^{\alpha + \eta}}{1 + (t s^{-1} \lvert z \rvert)^{\beta}} \Big\|_{\LL_{q , *} (0 , \infty)} \frac{\lvert z \rvert^{\alpha - \eta}}{1 + \lvert z \rvert^{\beta}} \; \frac{\lvert \d z \rvert}{\lvert z \rvert}\cdotp
\end{align*}
Finally, perform in the $\LL_{q , *} (0 , \infty)$-norm the substitution $t^{\prime} = t s^{-1} \lvert z \rvert$ yielding
\begin{align*}
 \| t \mapsto (t s^{-1})^{\eta} \Phi(t \cA) \Psi (s \cA) \|_{\LL_{q , *} (0 , \infty ; \Lop(X))} \leq C \int_{\gamma} \Big\| t \mapsto \frac{t^{\alpha + \eta}}{1 + t^{\beta}} \Big\|_{\LL_{q , *} (0 , \infty)} \frac{\lvert z \rvert^{\alpha - \eta}}{1 + \lvert z \rvert^{\beta}} \; \frac{\lvert \d z \rvert}{\lvert z \rvert}\cdotp
\end{align*}
Observe that the right-hand side is independent of $s$ and is finite if $\lvert \eta \rvert < \alpha$ and $\lvert \eta \rvert < \beta - \alpha$. Conclude by taking the supremum over $s > 0$.
\end{proof}

The following lemma permits to represent the norms $\dot \dom_{\cA} (\theta , q)$ in a different manner. Notice that this is a special case of the far more general statement in~\cite[Thm.~6.4.2]{Haase}.

\begin{lemma}
\label{Lem: Other representation of norm}
For $0 < \theta < 1$, $1 \leq q \leq \infty$ and $x \in \dot \dom_{\cA} (\theta , q)$ it holds
\begin{align*}
 \| x \|_{\dot \dom_{\cA} (\theta , q)} \simeq \| t \mapsto t^{2 - \theta} \cA \dot \cA \e^{- t \cA} x \|_{\LL_{q , *} (0 , \infty ; X)}.
\end{align*}
\end{lemma}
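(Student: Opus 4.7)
The plan is to prove the two directions of the norm equivalence separately, both reducing to manipulations of the identity $\frac{d}{dt}\dot\cA \e^{-t\cA} x = -\cA \dot\cA \e^{-t\cA} x$ combined with the dilation invariance of the Haar measure $\d t/t$ on $\IR_+$.

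For the easy direction, $\| t \mapsto t^{2-\theta} \cA \dot\cA \e^{-t\cA} x \|_{\LL_{q,*}} \lesssim \| x \|_{\dot\dom_\cA(\theta,q)}$, I would exploit the semigroup property on $X + \dom(\dot\cA)$ (Remark~\ref{Rem: Semigroup property of extended semigroup}) together with commutativity of $\dot \cA$ and $\e^{-(t/2)\cA}$ to write $\cA \dot \cA \e^{-t \cA} x = \cA \e^{-(t/2)\cA} [\dot \cA \e^{-(t/2)\cA} x]$. Since the bracket lies in $X$, the smoothing estimate~\eqref{Eq: Smoothing estimate} yields $\|\cA \e^{-(t/2)\cA}\|_{\Lop(X)} \leq 2M/t$, whence $\|\cA\dot\cA \e^{-t\cA} x\|_X \leq (2M/t)\|\dot\cA \e^{-(t/2)\cA} x\|_X$. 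Multiplying by $t^{2-\theta}$, taking the $\LL_{q,*}(\d t/t)$-norm, and substituting $s = t/2$ give the desired bound with constant $2^{2-\theta}M$.

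For the reverse direction, I would invert the derivative formula. Integrating from $t$ to $T$ gives $\dot\cA \e^{-t\cA} x - \dot\cA \e^{-T\cA} x = \int_t^T \cA\dot\cA \e^{-s\cA} x\, \d s$, and the main task is to show that $\dot\cA \e^{-T\cA} x \to 0$ in $X$ as $T \to \infty$. First, from the direction already proved one knows $s \mapsto s^{2-\theta}\|\cA\dot\cA \e^{-s\cA}x\|_X \in \LL_{q,*}(\IR_+)$; a Hölder estimate against the weight $s^{\theta-1}$ on $(T,\infty)$ (which is integrable against $\d s/s$ to the power $q'$ since $\theta<1$) shows that $\int_T^\infty \|\cA\dot\cA \e^{-s\cA} x\|_X\,\d s \to 0$, so $\{\dot\cA\e^{-T\cA}x\}$ is Cauchy and admits a limit $L \in X$. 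If $L \neq 0$, then $t^{1-\theta}\|\dot\cA \e^{-t\cA} x\|_X \to \infty$, which is incompatible with $\|x\|_{\dot\dom_\cA(\theta,q)} < \infty$ (at $q = \infty$ directly, and for $q < \infty$ because $\int_A^\infty t^{(1-\theta)q}\,\d t/t = \infty$). Hence $L = 0$ and
\begin{equation*}
\dot\cA\e^{-t\cA}x = \int_t^\infty \cA\dot\cA \e^{-s\cA}x\,\d s.
\end{equation*}

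Taking $X$-norms, performing the substitution $s = tu$, and writing $g(s) := s^{2-\theta}\|\cA\dot\cA \e^{-s\cA} x\|_X$ gives
\begin{equation*}
t^{1-\theta}\|\dot\cA \e^{-t\cA} x\|_X \leq \int_1^\infty u^{\theta-2}\,g(tu)\,\d u.
\end{equation*}
Minkowski's integral inequality for $\LL_{q,*}(\d t/t)$ combined with its dilation invariance then bounds the right-hand side by $\bigl(\int_1^\infty u^{\theta-2}\,\d u\bigr)\|g\|_{\LL_{q,*}} = (1-\theta)^{-1}\|g\|_{\LL_{q,*}}$, which yields the reverse inequality. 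The main obstacle is the decay claim $\dot\cA \e^{-T\cA}x \to 0$: its justification is the only place where the finiteness of the interpolation norm is used in a nontrivial way, and it is essential since the semigroup is not assumed to decay at infinity in the homogeneous setting.
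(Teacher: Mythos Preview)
Your proof is correct, and for the reverse inequality it takes a genuinely different route from the paper's. The easy direction is identical in both. For the bound $\|x\|_{\dot\dom_\cA(\theta,q)} \lesssim \|t \mapsto t^{2-\theta}\cA\dot\cA\e^{-t\cA}x\|_{\LL_{q,*}}$, the paper invokes McIntosh's approximation formula~\eqref{Eq: McIntosh approximation Phi} with $f(z)=z^3\e^{-2z}=z^2\e^{-z}\Psi(z)$ applied to $y=s^{1-\theta}\dot\cA\e^{-s\cA}x$, and then appeals to the Schur-type estimate of Lemma~\ref{Lem: Schur type estimate}; this places the lemma inside the functional-calculus framework of~\cite[Thm.~6.4.2]{Haase}, which covers far more general pairs of holomorphic functions than $(z\e^{-z},z^2\e^{-z})$. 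Your approach is more elementary and fully self-contained: you integrate the derivative identity on $(t,\infty)$, extract the decay $\dot\cA\e^{-T\cA}x\to 0$ from the finiteness hypothesis $x\in\dot\dom_\cA(\theta,q)$, and finish with Minkowski's integral inequality and the dilation invariance of $\d t/t$. What you gain is an explicit constant $(1-\theta)^{-1}$ and the avoidance of any holomorphic functional calculus; what the paper's route gains is immediate extensibility to other representations of the interpolation seminorm.
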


\begin{proof}
The bound of the right-hand side by the left-hand side directly follows from the following estimate, which in turn is a consequence of Remark~\ref{Rem: Semigroup property of extended semigroup},~\eqref{Eq: Calculation for semigroup property of extended semigroup} and~\eqref{Eq: Smoothing estimate}:
\begin{align*}
 t^{2 - \theta} \| \cA \dot \cA \e^{- t \cA} x \|_X \leq C t^{1 - \theta} \| \dot \cA \e^{- \frac{t}{2} \cA} x \|_X.
\end{align*}
\indent To bound the left-hand side by the right-hand side, notice that since $- \cA$ generates a bounded analytic semigroup, $\cA$ is sectorial of some angle $\omega \in [0 , \pi / 2)$. Let $\phi \in (\omega , \pi / 2)$ and define the holomorphic functions $f(z) := z^3 \e^{- 2 z}$ and $\Psi (z) := z \e^{- z}$. Note that $f(z) = z^2 \e^{- z} \Psi(z)$ and that  $\Psi$ satisfies the assumptions of Lemma~\ref{Lem: Schur type estimate} for $\alpha = 1$ and arbitrary large $\beta$ on $\Sec_{\phi}$. Define the constant
\begin{align*}
 \frac{1}{c} := \int_0^{\infty} f(t) \; \frac{\d t}{t} \in (0 , \infty).
\end{align*}
Then, by virtue of McIntosh's approximation theorem~\cite[Thm.~5.2.6]{Haase} it holds in the sense of an improper Riemann integral for all $y \in \overline{\dom(\cA) \cap \Rg(A)}$
\begin{align}
\label{Eq: McIntosh approximation Phi}
 c \int_0^{\infty} f(t \cA) y \; \frac{\d t}{t} = y.
\end{align}
For $s > 0$ apply~\eqref{Eq: McIntosh approximation Phi} with $y := s^{1 - \theta}  \dot\cA \e^{- s \cA} x \in \overline{\dom (\cA) \cap \Rg(\cA)}$ followed by H\"older's inequality to obtain
\begin{align*}
 s^{1 - \theta} \dot \cA \e^{- s \cA} x &= c \int_0^{\infty} \Big(\frac{t}{s} \Big)^{\theta} \Psi(t \cA) \Psi(s \cA) t^{2 - \theta} \cA \dot \cA \e^{- t \cA} x \; \frac{\d t}{t}\cdotp
\end{align*}
In the case $q = \infty$, we already get
\begin{align*}
 \sup_{s > 0} \| s^{1 - \theta} \dot \cA \e^{- s \cA} x \|_X \leq c \sup_{s > 0} \int_0^{\infty} \Big(\frac{t}{s} \Big)^{\theta} \| \Psi(t \cA) \Psi(s \cA) \|_{\Lop(X)} \; \frac{\d t}{t} \; \sup_{t > 0} \| t^{2 - \theta} \cA \dot \cA \e^{- t \cA} x \|_X.
\end{align*}
Notice that the constant on the right-hand side is finite by Lemma~\ref{Lem: Schur type estimate}. \par
If $1 \leq q < \infty$, then H\"older's inequality together with Lemma~\ref{Lem: Schur type estimate} imply
\begin{align*}
 \| s^{1 - \theta} \dot \cA \e^{- s \cA} x \|_X \leq C \bigg( \int_0^{\infty} \Big( \frac{t}{s} \Big)^{\theta} \| \Psi(t \cA) \Psi(s \cA) \|_{\Lop(X)} \| t^{2 - \theta} \cA \dot \cA \e^{- t \cA} x \|_X^q \; \frac{\d t}{t} \bigg)^{\frac{1}{q}}\cdotp
\end{align*}
By Fubini's theorem and again by Lemma~\ref{Lem: Schur type estimate}, we finally find
\begin{align*}
 &\bigg( \int_0^{\infty} \| s^{1 - \theta} \dot \cA \e^{- s \cA} x \|_X^q \; \frac{\d s}{s} \bigg)^{\frac{1}{q}} \\
 &\qquad \leq C \bigg( \int_0^{\infty} \int_0^{\infty} \Big( \frac{t}{s} \Big)^{\theta} \| \Psi(t \cA) \Psi(s \cA) \|_{\Lop(X)} \; \frac{\d s}{s} \; \| t^{2 - \theta} \cA \dot \cA \e^{- t \cA} x \|_X^q \; \frac{\d t}{t} \bigg)^{\frac{1}{q}} \\
 &\qquad \leq C \bigg( \sup_{t > 0} \int_0^{\infty} \Big( \frac{t}{s} \Big)^{\theta} \| \Psi(t \cA) \Psi(s \cA) \|_{\Lop(X)} \; \frac{\d s}{s} \bigg)^{\frac{1}{q}} \bigg( \int_0^{\infty} \| t^{2 - \theta} \cA \dot \cA \e^{- t \cA} x \|_X^q \; \frac{\d t}{t} \bigg)^{\frac{1}{q}}\cdotp \qedhere
\end{align*}
\end{proof}

The following proposition states a Fubini type property of the Da Prato -- Grisvard semi-norm with respect to the parameter $\theta$. 

\begin{proposition}
\label{Prop: Da Prato-Grisvard Reiteration}
For $\theta , \eta > 0$ with $\theta + \eta < 1$, $1 \leq q \leq \infty$, and $x \in \dot \dom_{\cA} (\theta + \eta , q)$ it holds
\begin{align*}
 \| t \mapsto t^{1 - \eta} \dot \cA \e ^{- t \cA} x \|_{\LL_{q , *} (\IR_+  ; \dot \dom_{\cA} (\theta , q))} \simeq \| x \|_{\dot \dom_{\cA} (\theta + \eta , q)}.
\end{align*}
\end{proposition}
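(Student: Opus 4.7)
The plan is to unravel both sides in terms of $\|\cA\dot\cA\e^{-u\cA}x\|_X$ via Lemma~\ref{Lem: Other representation of norm} and then reduce to a one-variable integral by a substitution that introduces a Beta function.

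More precisely, assuming first that $q<\infty$, I would use the very definition of $\dot\dom_{\cA}(\theta,q)$ to write
\[
\|t\mapsto t^{1-\eta}\dot\cA\e^{-t\cA}x\|_{\LL_{q,*}(\IR_+;\dot\dom_{\cA}(\theta,q))}^q=\int_0^\infty\!\!\int_0^\infty s^{(1-\theta)q-1}t^{(1-\eta)q-1}\|\dot\cA\e^{-s\cA}\dot\cA\e^{-t\cA}x\|_X^q\,\d s\,\d t.
\]
The first key step is to establish the pointwise identity
\[
\dot\cA\e^{-s\cA}\dot\cA\e^{-t\cA}x=\cA\dot\cA\e^{-(s+t)\cA}x,\qquad s,t>0.
\]
This follows by decomposing $x=x_1+y_1$ with $x_1\in X$ and $y_1\in\dom(\dot\cA)$, invoking Proposition~\ref{Prop: Representation abstract semigroup} to express $\dot\cA\e^{-t\cA}x=\cA\e^{-t\cA}x_1+\e^{-t\cA}\dot\cA y_1\in X$, and then applying $\dot\cA\e^{-s\cA}$ once more, using the semigroup property from Remark~\ref{Rem: Semigroup property of extended semigroup} together with the smoothing estimate~\eqref{Eq: Smoothing estimate} to collapse the composition.

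After inserting this identity into the double integral and applying Fubini's theorem, I would perform the change of variables $s=(1-\alpha)u$, $t=\alpha u$ with $u>0$ and $\alpha\in(0,1)$ (Jacobian $u$). This separates the integral as the product
\[
\Big(\int_0^1\alpha^{(1-\eta)q-1}(1-\alpha)^{(1-\theta)q-1}\,\d\alpha\Big)\Big(\int_0^\infty u^{(2-\theta-\eta)q}\|\cA\dot\cA\e^{-u\cA}x\|_X^q\,\frac{\d u}{u}\Big).
\]
Because $\theta,\eta\in(0,1)$ with $\theta+\eta<1$ and $q\ge 1$, both exponents in the $\alpha$-integral strictly exceed $-1$, so the Beta-type factor is a finite, strictly positive constant depending only on $\theta$, $\eta$ and $q$. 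The $u$-integral is precisely the $q$-th power of the right-hand side of Lemma~\ref{Lem: Other representation of norm} applied with the single parameter $\theta+\eta\in(0,1)$, hence equivalent to $\|x\|_{\dot\dom_{\cA}(\theta+\eta,q)}^q$. Since the entire chain consists of equalities up to constants, both directions of $\simeq$ follow simultaneously.

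For the endpoint $q=\infty$ the same identity reduces matters to controlling $s^{1-\theta}t^{1-\eta}\|\cA\dot\cA\e^{-(s+t)\cA}x\|_X$ uniformly in $s,t>0$. The elementary inequality $s^{1-\theta}t^{1-\eta}\le(s+t)^{2-\theta-\eta}$ combined with Lemma~\ref{Lem: Other representation of norm} yields the bound by $\|x\|_{\dot\dom_{\cA}(\theta+\eta,\infty)}$; specialising to $s=t=u/2$ in the reverse direction recovers the one-parameter norm up to a multiplicative factor. I expect the main technical obstacle to be the rigorous justification of the identity $\dot\cA\e^{-s\cA}\dot\cA\e^{-t\cA}x=\cA\dot\cA\e^{-(s+t)\cA}x$ when $x$ lies only in the sum space $X+\dom(\dot\cA)$: one has to track carefully which intermediate terms belong to $X$ versus $\dom(\dot\cA)$ in order to legitimately invoke the commutation relations behind Remark~\ref{Rem: Semigroup property of extended semigroup}. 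Once this identity is secured, the rest is a routine Fubini and Beta-function computation.
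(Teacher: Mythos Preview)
Your proof is correct and takes a cleaner, more symmetric route than the paper. The identity $\dot\cA\e^{-s\cA}\dot\cA\e^{-t\cA}x=\cA\dot\cA\e^{-(s+t)\cA}x$ follows exactly as you outline from Proposition~\ref{Prop: Representation abstract semigroup} and Remark~\ref{Rem: Semigroup property of extended semigroup}, and once it is in hand the Beta-function substitution $(s,t)\mapsto(u,\alpha)$ reduces the double integral to a positive finite constant times $\|u\mapsto u^{2-(\theta+\eta)}\cA\dot\cA\e^{-u\cA}x\|_{\LL_{q,*}}^q$, which is equivalent to $\|x\|_{\dot\dom_{\cA}(\theta+\eta,q)}^q$ by Lemma~\ref{Lem: Other representation of norm}. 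Since every step is an equality (or a two-sided equivalence), both inequalities follow at once.

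The paper proceeds asymmetrically. For the upper bound it starts from the same double integral but immediately applies the smoothing estimate~\eqref{Eq: Smoothing estimate} to trade $\cA\dot\cA\e^{-(s+t)\cA}$ for $(s+t)^{-1}\dot\cA\e^{-(s+t)\cA/2}$, and only then performs the substitution; this lands on the single-$\dot\cA$ norm rather than on Lemma~\ref{Lem: Other representation of norm}. For the lower bound the paper argues separately via McIntosh's approximation identity~\eqref{Eq: McIntosh approximation Phi} combined with the Schur-type Lemma~\ref{Lem: Schur type estimate}. Your approach avoids this second argument by recognising the $u$-integral as the Lemma~\ref{Lem: Other representation of norm} norm and invoking that equivalence directly. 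Of course the McIntosh machinery has not disappeared---it is already baked into the proof of Lemma~\ref{Lem: Other representation of norm}---but you use it once, in packaged form, rather than reproducing the argument. The $q=\infty$ endpoint is handled analogously in both approaches; your specialisation $s=t=u/2$ for the reverse inequality is a minor simplification over the paper's treatment.
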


\begin{proof}
We start by bounding the left-hand side by the right-hand side. Here it holds in the case $q < \infty$ by using the definition of the semi-norms, the smoothing estimate~\eqref{Eq: Smoothing estimate} of the semigroup as well as the substitution $s^{\prime} = s + t$ and Fubini's theorem that
\begin{align*}
 \bigg( \int_0^{\infty} \| t^{1 - \eta} \dot \cA \e ^{- t \cA} x \|_{\dot \dom_{\cA} (\theta , q)}^q \; \frac{\d t}{t} \bigg)^{\frac{1}{q}} &= \bigg( \int_0^{\infty} \int_0^{\infty} \| s^{1 - \theta} t^{1 - \eta} \cA \dot \cA \e ^{- (s + t) \cA} x \|_X^q \; \frac{\d s}{s} \; \frac{\d t}{t} \bigg)^{\frac{1}{q}} \\
 &\leq C \bigg( \int_0^{\infty} \int_t^{\infty} \Big\| \frac{(s - t)^{1 - \theta} t^{1 - \eta}}{s} \dot \cA \e ^{- \frac{s}{2} \cA} x \Big\|_X^q \; \frac{\d s}{s - t} \; \frac{\d t}{t} \bigg)^{\frac{1}{q}} \\
 &= C \bigg( \int_0^{\infty} \int_0^s \Big\| \frac{(s - t)^{1 - \theta} t^{1 - \eta}}{s} \dot \cA \e ^{- \frac{s}{2} \cA} x \Big\|_X^q \; \frac{\d t}{t(s - t)} \; \d s \bigg)^{\frac{1}{q}}\cdotp
\end{align*}
Next, perform the substitution $t^{\prime} = t / s$ what delivers
\begin{align*}
 \bigg( \int_0^{\infty} \int_0^s \Big\| \frac{(s - t)^{1 - \theta} t^{1 - \eta}}{s} \dot \cA \e ^{- \frac{s}{2} \cA} x \Big\|_X^q \; \frac{\d t}{t(s - t)} \; \d s \bigg)^{\frac{1}{q}} \leq C \bigg( \int_0^{\infty} \| s^{1 - (\theta + \eta)} \dot \cA \e^{- \frac{s}{2} \cA} x \|_X^q \; \frac{\d s}{s} \bigg)^{\frac{1}{q}}\cdotp
\end{align*}
The constant $C > 0$ in the latter inequality is given by
\begin{align*}
 C = \bigg( \int_0^1 \big( (1 - t)^{1 - \theta} t^{1 - \eta} \big)^q \; \frac{\d t}{t(1 - t)} \bigg)^{\frac{1}{q}},
\end{align*}
which is finite whenever $\theta , \eta < 1$. This proves the inequality in the case $q < \infty$. In the case $q = \infty$ calculate similarly for $t , s > 0$
\begin{align*}
 \| s^{1 - \theta} t^{1 - \eta} \cA \dot \cA \e^{- (s + t) \cA} x \|_X \leq C \frac{s^{1 - \theta} t^{1 - \eta} }{s + t} \| \dot \cA \e^{- \frac{s + t}{2} A} x \|_X \leq C \frac{s^{1 - \theta} t^{1 - \eta} }{(s + t)^{2 - (\theta + \eta)}} \| x \|_{\dot \dom_{\cA} (\theta + \eta , \infty)}.
\end{align*}
Notice that
\begin{align*}
 \frac{s^{1 - \theta} t^{1 - \eta} }{(s + t)^{2 - (\theta + \eta)}} = \frac{(t / s)^{1 - \eta} }{(1 + (t / s))^{2 - (\theta + \eta)}}
\end{align*}
and the right-hand side is bounded for $t/s$ in the range  $(0 , \infty).$
 Thus, taking the supremum over $s$ and then over $t$ in the inequality above delivers 
 the desired estimate in the case $q = \infty$. \par
To bound the right-hand side by the left-hand side,  it suffices to 
follow the end of the proof of  Lemma~\ref{Lem: Other representation of norm}. 
Keeping the same notation, we take 
$$
 s > 0,   \qquad y := (2 s)^{2 - (\theta + \eta)} \cA \dot \cA \e^{- 2 s A} x \in \dom(\cA) \cap \Rg(\cA).
 $$
 Then,  apply~\eqref{Eq: McIntosh approximation Phi} but with $f (z) := z^2 \e^{- z} = z \e^{- z} \Psi(z)$ followed by H\"older's inequality to obtain
\begin{align*}
& \| (2 s)^{2 - (\theta + \eta)} \cA \dot \cA \e^{- 2 s \cA} x \|_X \\
 &\qquad \leq C \int_0^{\infty} \Big( \frac{t}{s} \Big)^{\eta} \| \Psi (t \cA) \Psi(s \cA) s^{1 - \theta} \dot \cA \e^{- s \cA} t^{1 - \eta} \dot \cA \e^{- t \cA} x \|_X \; \frac{\d t}{t} \\
 &\qquad \leq C \| t \mapsto (t s^{-1})^{\eta} \Psi(t \cA) \Psi(s \cA) \|_{\LL_{q^{\prime} , *} (0 , \infty ; \Lop(X))} \| t \mapsto s^{1 - \theta} \dot \cA \e^{- s \cA} t^{1 - \eta} \dot \cA \e^{- t \cA} x \|_{\LL_{q , *} (0 , \infty ; X)}.
\end{align*}
Taking the $\LL_{q , *} (0 , \infty)$-norm of this inequality with respect to $s > 0$, we conclude the proof by virtue of Lemmas~\ref{Lem: Schur type estimate} and~\ref{Lem: Other representation of norm}.
\end{proof}

The result of Da Prato -- Grisvard, Proposition~\ref{Prop: Da Prato-Grisvard}, gives only regularity estimates to solutions to~\eqref{Eq: ACP} with $x = 0$. The following lemma provides homogeneous estimates in the case $x \neq 0$ and $f = 0$.

\begin{lemma}
\label{Lem: Abstract estimate of initial values}
Let $1 \leq q < \infty$ and $\theta \in (0 , 1 / q).$  There exists a constant $C > 0$ such that for all $x \in \dot \dom_{\cA} (1 + \theta - 1 / q , q)$ and   $0 < T \leq \infty,$
we have 
\begin{align*}
 \| t \mapsto \dot \cA \e^{- t \cA} x \|_{\LL_q (0 , T ; \dot \dom_{\cA} (\theta , q))} \leq C \| x \|_{\dot \dom_{\cA} (1 + \theta - 1 / q, q)}. 
\end{align*}
If $q = \infty$, then there exists $C > 0$ such that for each $\theta \in (0 , 1)$ and all $x \in \dom(\cA^2)$ it holds
\begin{align*}
 \| t \mapsto \dot \cA \e^{- t \cA} x \|_{\LL_{\infty} (0 , T ; \dot \dom_{\cA} (\theta , \infty))} \leq \| \cA x \|_{\dot \dom_{\cA}(\theta , \infty)}.
\end{align*}
\end{lemma}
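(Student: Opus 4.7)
The key observation is that the two scales of spaces---the Lebesgue-type $\LL_q(0,T;\dot\dom_\cA(\theta,q))$ on the left-hand side and the Da Prato--Grisvard type $\dot\dom_\cA(1+\theta-1/q,q)$ on the right---can be matched using the Reiteration result in Proposition~\ref{Prop: Da Prato-Grisvard Reiteration}, after one converts Lebesgue measure to Haar measure. Since the $\LL_q(0,T;\cdot)$-norm is monotone in $T$, it suffices to treat $T=\infty$. Recall that $\|f\|_{\LL_q(\IR_+)}=\|t\mapsto t^{1/q}f(t)\|_{\LL_{q,*}(\IR_+)}$, which suggests choosing $\eta=1-1/q$ in Proposition~\ref{Prop: Da Prato-Grisvard Reiteration}.

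\textbf{Case $1<q<\infty$.} Set $\eta:=1-1/q\in(0,1)$. Since $\theta<1/q$, we have $\theta+\eta<1$, and both $\theta,\eta$ are positive, so Proposition~\ref{Prop: Da Prato-Grisvard Reiteration} applies and yields
\begin{equation*}
\|t\mapsto t^{1/q}\dot\cA\e^{-t\cA}x\|_{\LL_{q,*}(\IR_+;\dot\dom_\cA(\theta,q))}\simeq\|x\|_{\dot\dom_\cA(\theta+1-1/q,q)}.
\end{equation*}
The left-hand side is exactly $\|t\mapsto\dot\cA\e^{-t\cA}x\|_{\LL_q(\IR_+;\dot\dom_\cA(\theta,q))}$, which completes this case.

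\textbf{Case $q=1$.} Here $\eta=0$ is excluded from Proposition~\ref{Prop: Da Prato-Grisvard Reiteration}, so I argue by direct Fubini. Unfolding the inner norm and using the semigroup commutation from Remark~\ref{Rem: Semigroup property of extended semigroup},
\begin{equation*}
\int_0^\infty\|\dot\cA\e^{-t\cA}x\|_{\dot\dom_\cA(\theta,1)}\,\d t=\int_0^\infty\!\!\int_0^\infty s^{-\theta}\|\dot\cA^{2}\e^{-(s+t)\cA}x\|_X\,\d s\,\d t.
\end{equation*}
The substitution $u=s+t$ followed by exchange of order gives a $t$-integral $\int_0^u(u-t)^{-\theta}\,\d t=u^{1-\theta}/(1-\theta)$, finite because $\theta<1$. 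The remaining expression is a constant times $\int_0^\infty u^{2-\theta}\|\cA\dot\cA\e^{-u\cA}x\|_X\frac{\d u}{u}$, which by Lemma~\ref{Lem: Other representation of norm} is equivalent to $\|x\|_{\dot\dom_\cA(\theta,1)}$.

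\textbf{Case $q=\infty$.} For $x\in\dom(\cA^2)$, $\dot\cA x=\cA x\in X$, so by Proposition~\ref{Prop: Representation abstract semigroup} and Remark~\ref{Rem: Semigroup property of extended semigroup},
\begin{equation*}
s^{1-\theta}\|\dot\cA\e^{-s\cA}\dot\cA\e^{-t\cA}x\|_X=s^{1-\theta}\|\e^{-t\cA}\cA\e^{-s\cA}\cA x\|_X\leq\|\e^{-t\cA}\|_{\Lop(X)}\cdot s^{1-\theta}\|\cA\e^{-s\cA}\cA x\|_X,
\end{equation*}
and taking the supremum over $s>0$ and then over $t\in(0,T)$ uses only the uniform boundedness of $(\e^{-t\cA})_{t\geq0}$ on $X$. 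The only real obstacle is the boundary case $q=1$ where the Reiteration result is not directly applicable; but the Fubini calculation is elementary because Lemma~\ref{Lem: Other representation of norm} has already repackaged the $\dot\dom_\cA(\theta,1)$-norm in terms of $\cA\dot\cA\e^{-\cdot\cA}$, which matches precisely what appears after integration in $t$.
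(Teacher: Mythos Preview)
Your proof is correct, but it takes a different route from the paper's. The paper treats all $1\le q<\infty$ in a single computation: it expands the double integral
\[
\int_0^T\int_0^\infty\|\tau^{1-\theta}\cA\dot\cA\e^{-(t+\tau)\cA}x\|_X^q\,\frac{\d\tau}{\tau}\,\d t,
\]
applies the smoothing estimate \eqref{Eq: Smoothing estimate} to reduce $\cA\dot\cA\e^{-(t+\tau)\cA}$ to $\dot\cA\e^{-(t+\tau)\cA/2}$, and then does two applications of Fubini plus a substitution to recover the $\dot\dom_\cA(1+\theta-1/q,q)$-norm directly in its defining form.

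Your approach, by contrast, avoids the smoothing estimate entirely. For $1<q<\infty$ you recognize that the statement is literally the case $\eta=1-1/q$ of Proposition~\ref{Prop: Da Prato-Grisvard Reiteration} once one converts $\LL_q$ to $\LL_{q,*}$, which is slick and shows that this lemma is really a corollary of the reiteration identity. For $q=1$ (where $\eta=0$ is excluded) your Fubini argument lands on the alternative norm representation of Lemma~\ref{Lem: Other representation of norm} rather than the defining one; this is legitimate and arguably cleaner than the paper's smoothing-estimate detour. For $q=\infty$ both arguments are the same in substance. The trade-off is that the paper's approach is uniform in $q$ and uses only the most elementary ingredients, while yours is shorter but relies on the heavier machinery of Proposition~\ref{Prop: Da Prato-Grisvard Reiteration} and Lemma~\ref{Lem: Other representation of norm}; since both are already established at this point in the paper, there is no circularity.
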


\begin{proof}
The statement for $q = \infty$ readily follows from the fact that $(\e^{- t \cA})_{t \geq 0}$ is uniformly bounded with respect to the $\dot \dom_{\cA} (\theta , \infty)$-norm. \par
Let $1 \leq q < \infty$. By virtue of~\eqref{Eq: Smoothing estimate}, the following estimate is valid
\begin{align*}
 \| t \mapsto \dot \cA \e^{- t \cA} x \|_{\LL_q (0 , T ; \dot \dom_{\cA} (\theta , q))}^q &= \int_0^T \int_0^{\infty} \| \tau^{1 - \theta} \cA \dot \cA \e^{- (t + \tau) \cA} x \|_X^q \; \frac{\d \tau}{\tau} \; \d t \\
 &\leq C \int_0^T \int_0^{\infty} \frac{\tau^{q (1 - \theta)}}{(t + \tau)^q} \| \dot \cA \e^{- \frac12(t + \tau)  \cA} x \|_X^q \; \frac{\d \tau}{\tau} \; \d t.
\end{align*}
Continue with Fubini's theorem followed by the substitution $s = t + \tau$ to obtain
\begin{align*}
 \int_0^T \int_0^{\infty} \frac{\tau^{q (1 - \theta)}}{(t + \tau)^q} \| \dot \cA \e^{- \frac12(t + \tau)  \cA} x \|_X^q \; \frac{\d \tau}{\tau} \; \d t = \int_0^{\infty} \int_{\tau}^{T + \tau} \frac{\tau^{q (1 - \theta)}}{s^q} \| \dot \cA \e^{- \frac s2 \cA} x \|_X^q \; \d s \; \frac{\d \tau}{\tau}\cdotp
\end{align*}
Another application of Fubini's theorem finally yields
\begin{align*}
 \int_0^{\infty} \int_{\tau}^{T + \tau} \frac{\tau^{q (1 - \theta)}}{s^q} \| \dot \cA \e^{- \frac s2 \cA} x \|_X^q \; \d s \; \frac{\d \tau}{\tau} &= \int_0^{\infty} \biggl(\int_{\max\{ 0 , s-T \}}^s \tau^{q(1 - \theta) - 1} \; \d \tau \biggr) s^{- q} \| \dot \cA \e^{- \frac{s}{2} \cA} x \|_X^q \; \d s \\
 &\leq C \int_0^{\infty} \| s^{1 / q - \theta} \dot \cA \e^{- \frac{s}{2} \cA} x \|^q_X \; \frac{\d s}{s} \\
 &= C\| x \|_{\dot \dom_{\cA} (1 + \theta - 1 / q , q)}^q. \qedhere
\end{align*}
\end{proof}

If $u$ is given by~\eqref{Eq: Variation of constants}, then it can be shown that $u$ is bounded with respect to $t$ with values in $\dot \dom_{\cA} (1 + \theta - 1 / q , q)$. While in the general maximal regularity theory this follows by the trace method, we present here an elementary proof that follows from Da Prato -- Grisvard theorem. In the following lemma, we are required to assume that $f$ takes values in the inhomogeneous space $\dom_{\cA} (\theta , q)$ 
so that  the convolution integral in~\eqref{Eq: Variation of constants} is well defined.
\begin{lemma}
\label{Lem: Linfty estimate}
Let $1 \leq q < \infty$, $\theta \in (0 , 1 / q)$ and $0 < T \leq \infty$. Then, there exists a constant $C > 0$ such that for all $x \in \dot \dom_{\cA} (1 + \theta - 1 / q , q)$ and $f \in \LL_q (0 , T ; \dom_{\cA} (\theta , q)),$ it holds, for $u$ given by~\eqref{Eq: Variation of constants},
\begin{align*}
 \| u \|_{\LL_{\infty} (0 , T ; \dot \dom_{\cA} (1 + \theta - 1 / q , q))} \leq C \Big( \| x \|_{\dot \dom_{\cA} (1 + \theta - 1 / q , q)} + \| f \|_{\LL_q (0 , T ; \dot \dom_{\cA} (\theta , q))} \Big)\cdotp
\end{align*}
Furthermore, the function $t \mapsto u (t)$ belongs to $\cC(\IR_+;  \dot \dom_{\cA} (1 + \theta - 1 / q , q)).$
\end{lemma}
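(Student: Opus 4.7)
The plan is to split $u = u_1 + u_2$ with $u_1(t) := \e^{-t\cA} x$ and $u_2(t) := \int_0^t \e^{-(t - s)\cA} f(s) \, \d s$, bound each summand in $\dot\dom_{\cA}(1 + \theta - 1/q, q)$ uniformly on $(0, T)$, and then deduce the continuity assertion by a density argument.

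The bound on $u_1$ is immediate. Since $1 + \theta - 1/q \in (1 - 1/q, 1) \subset (0, 1)$, the Remark following the definition of $\dot\dom_{\cA}(\theta, q)$ gives the uniform boundedness of $(\e^{-s \cA})_{s \geq 0}$ on $\dot\dom_{\cA}(1 + \theta - 1/q, q)$, whence $\| u_1(t)\|_{\dot\dom_{\cA}(1 + \theta - 1/q, q)} \leq M \|x\|_{\dot\dom_{\cA}(1 + \theta - 1/q, q)}$. For $u_2$ we distinguish the two cases $q = 1$ and $1 < q < \infty$. When $q = 1$, the target exponent reduces to $\theta$; writing the semi-norm as an iterated integral, swapping the order of integration via Fubini and recognising the inner integral as $\|\e^{-(t - s)\cA} f(s)\|_{\dot\dom_{\cA}(\theta, 1)}$, the uniform boundedness yields
\[
\|u_2(t)\|_{\dot\dom_{\cA}(\theta, 1)} \leq \int_0^t \|\e^{-(t - s)\cA} f(s)\|_{\dot\dom_{\cA}(\theta, 1)} \, \d s \leq M\|f\|_{\LL_1(0, T ; \dot\dom_{\cA}(\theta, 1))}.
\]

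When $1 < q < \infty$, set $\eta := 1 - 1/q \in (0, 1)$, so that $\theta + \eta = 1 + \theta - 1/q < 1$. Proposition~\ref{Prop: Da Prato-Grisvard Reiteration} then provides the equivalence
\[
\|u_2(t)\|_{\dot\dom_{\cA}(1 + \theta - 1/q, q)}^q \simeq \int_0^\infty \|\dot\cA \e^{-\sigma\cA} u_2(t)\|_{\dot\dom_{\cA}(\theta, q)}^q \, \d\sigma.
\]
Duhamel's formula combined with the commutation property of Remark~\ref{Rem: Semigroup property of extended semigroup} gives $\dot\cA \e^{-\sigma\cA} u_2(t) = \int_0^t \cA \e^{-(\sigma + t - \tau) \cA} f(\tau) \, \d\tau$. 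Transferring the smoothing estimate~\eqref{Eq: Smoothing estimate} from $X$ to $\dot\dom_{\cA}(\theta, q)$ via the same commutator argument furnishes $\|\cA \e^{-\rho\cA}\|_{\Lop(\dot\dom_{\cA}(\theta,q))} \leq M / \rho$, and a triangle inequality in the semi-norm then produces
\[
\|\dot\cA \e^{-\sigma\cA} u_2(t)\|_{\dot\dom_{\cA}(\theta, q)} \leq C \int_0^t \frac{\|f(\tau)\|_{\dot\dom_{\cA}(\theta, q)}}{\sigma + t - \tau} \, \d\tau.
\]
After the substitution $\rho = t - \tau$, the right-hand side is the Stieltjes transform at $\sigma$ of $\rho \mapsto \|f(t - \rho)\|_{\dot\dom_{\cA}(\theta, q)} \ind_{(0, t)}(\rho)$. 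The classical $\LL_q(\IR_+)$-boundedness of the Stieltjes transform for $1 < q < \infty$ (either by the Hilbert--Hardy double-integral inequality, or by Schur's test with weights $\sigma^{-1/q}$ and $\rho^{-1/q'}$) then yields
\[
\int_0^\infty \|\dot\cA \e^{-\sigma\cA} u_2(t)\|_{\dot\dom_{\cA}(\theta, q)}^q \, \d\sigma \leq C \|f\|_{\LL_q(0, T ; \dot\dom_{\cA}(\theta, q))}^q
\]
uniformly in $t \in (0, T)$, which together with the reiteration identity closes the estimate.

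The continuity statement follows by a standard approximation. On the dense subclass $x \in \dom(\cA)$, $f \in \cC_c((0, T) ; \dom(\cA))$ (density in the respective data spaces is ensured by Lemma~\ref{Lem: Density in abstract spaces}), the variation-of-constants formula combined with the strong continuity of the semigroup makes $t \mapsto u(t)$ evidently continuous into $\dot\dom_{\cA}(1 + \theta - 1/q, q)$, and the linear uniform estimate just proved permits passing to the limit. The main technical obstacle is the $q > 1$ case: it requires transferring the smoothing estimate to the homogeneous interpolation space and then controlling the resulting Stieltjes-type convolution via $\LL_q$-boundedness; by contrast, the endpoint $q = 1$ and the homogeneous part $u_1$ are handled by direct applications of the uniform boundedness of the extended semigroup.
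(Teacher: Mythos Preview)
Your proof is correct. Both you and the paper rely on the reiteration identity (Proposition~\ref{Prop: Da Prato-Grisvard Reiteration}) as the key tool for the case $q>1$, and both handle $q=1$ directly via the uniform boundedness of the semigroup on $\dot\dom_{\cA}(\theta,1)$. The difference lies in what happens after reiteration for $q>1$: the paper writes $u(t)=x+\int_0^t u'(s)\,\d s$, invokes reiteration on the integral, then uses the smoothing estimate together with Hardy's inequality to reduce to $\|u'\|_{\LL_q(0,T;\dot\dom_{\cA}(\theta,q))}$, which is finally controlled by the Da~Prato--Grisvard maximal regularity result (Proposition~\ref{Prop: Da Prato-Grisvard}) and Lemma~\ref{Lem: Abstract estimate of initial values}. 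You instead work with the Duhamel integral $u_2$ directly, transfer the smoothing estimate to $\dot\dom_{\cA}(\theta,q)$ via commutation, obtain a Stieltjes-type convolution kernel $(\sigma+\rho)^{-1}$, and close with the $\LL_q$-boundedness of the Stieltjes transform for $1<q<\infty$. Your route is more self-contained in that it avoids invoking Proposition~\ref{Prop: Da Prato-Grisvard} as a black box; both arguments ultimately rest on a classical $\LL_q$-inequality (Hardy for the paper, Stieltjes/Hilbert for you), and both fail at $q=1$ for the same reason, which is why that endpoint is treated separately. One minor technical remark: your application of the reiteration equivalence to $u_2(t)$ requires knowing a~priori that $u_2(t)\in\dot\dom_{\cA}(1+\theta-1/q,q)$, which is what you are proving; this is cleanly handled by first running the estimate for $f$ in a dense subclass (as you indicate for the continuity argument) and then passing to the limit.
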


\begin{proof} First consider the case  $q > 1$. Then, the fundamental theorem of calculus yields
\begin{align*}
 u(t) = x + \int_0^t u^{\prime} (s) \; \d s,\quad  0 < t < T. 
\end{align*}
Since $x \in \dot \dom_{\cA} (1 + \theta - 1 / q , q)$, the first  term is estimated trivially. For the other term, we use Proposition~\ref{Prop: Da Prato-Grisvard Reiteration} and~\eqref{Eq: Smoothing estimate}:
\begin{align*}
 \Big\| \int_0^t u^{\prime} (s) \; \d s \Big\|_{\dot \dom_{\cA} (1 + \theta - 1 / q, q)} &\leq C  \Big\| \tau \mapsto \tau^{1 / q} \cA \e^{- \tau \cA} \int_0^t u^{\prime} (s) \; \d s \Big\|_{\LL_{q , *} (0 , \infty ; \dot \dom_{\cA} (\theta, q))} \\
 &\leq C \Big\| \tau \mapsto \tau^{1 / q - 1} \int_0^t u^{\prime} (s) \; \d s \Big\|_{\LL_{q , *} (0 , \infty ; \dot \dom_{\cA} (\theta, q))}.
\end{align*}
Now, Hardy's inequality~\cite[Lem.~6.2.6]{Haase} yields
\begin{align*}
 \Big\| \tau \mapsto \tau^{1 / q - 1} \int_0^t u^{\prime} (s) \; \d s \Big\|_{\LL_{q , *} (0 , \infty ; \dot \dom_{\cA} (\theta, q))} &\leq \frac{q}{q - 1} \| s \mapsto s^{1 + 1 / q - 1} u^{\prime} (s) \|_{\LL_{q , *} (0 , \infty ; \dot \dom_{\cA} (\theta , q))} \\
 &= \frac{q}{q - 1} \| u^{\prime} \|_{\LL_q (0 , \infty ; \dot \dom_{\cA} (\theta , q))}.
\end{align*}
The latter term is controlled by virtue of Proposition~\ref{Prop: Da Prato-Grisvard}. \par
The case $q = 1$ follows directly by the variation of constants formula. Indeed, due to the boundedness of the semigroup on $\dot \dom_{\cA} (\theta , 1)$ the term $\e^{- t \cA} x$ is harmless and
the second one is estimated as
\begin{align*}
 \Big\| \int_0^t \e^{- (t - s) \cA} f(s) \; \d s \Big\|_{\dot \dom_{\cA} (\theta , 1)} \leq C \int_0^t \| f(s) \|_{\dot \dom_{\cA} (\theta , 1)} \; \d s.
\end{align*}
\indent We leave the proof of the time continuity to the reader.
\end{proof}


Altogether, this gives the following version of  Da Prato and Grisvard theorem.

\begin{theorem}
\label{Thm: Full Da Prato - Grisvard}
Let $X$ be a Banach space and $- \cA$ be the generator of a bounded analytic semigroup on $X$ subject to Assumptions~\ref{Ass: Homogeneous operator} and~\ref{Ass: Intersection property}. Let $1 \leq q < \infty$, $\theta \in (0 , 1 / q)$ and let $0 < T \leq \infty$. For $f \in \LL_q (0 , T ; \dom_{\cA} (\theta , q))$ and $x \in \dot \dom_{\cA} (1 + \theta - 1 / q , q),$ the mild solution  
$$u \in \cC ([0 , T) ; \dot \dom_{\cA} (1 + \theta - 1 / q , q))$$
to~\eqref{Eq: ACP} is given by
\begin{align*}
 u(t) = \e^{- t \cA} x + \int_0^t \e^{- (t - s) \cA} f(s) \; \d s
\end{align*}
and satisfies $u(t) \in \dom(\dot \cA)$ for almost every $t \in (0 , T).$
\smallbreak
 Furthermore, there exists $C > 0$ such that
\begin{align*}
 \| u \|_{\LL_{\infty} (0 , T ; \dot \dom_{\cA} (1 + \theta - 1 / q , q))} + \| u^{\prime} ,  \cA u \|_{\LL_q (0 , T ; \dot \dom_{\cA} (\theta , q))} \leq C \Big( \| f \|_{\LL_q (0 , T ; \dot \dom_{\cA} (\theta , q))} + \| x \|_{\dot \dom_{\cA} (1 + \theta - 1 / q , q)} \Big)\cdotp
\end{align*}
In case $q=\infty$ we assume in addition that $x \in \dom(\cA^2)$ and then 
for each $\theta \in (0 , 1)$ 
\begin{align*}
 \| u^{\prime} , \cA u \|_{\LL_{\infty} (0 , T ; \dot \dom_{\cA} (\theta , \infty))}
 \leq C\Big(\| f \|_{\LL_\infty(0,T;{\dot \dom_{\cA}(\theta , \infty))}}+\| \cA x \|_{\dot \dom_{\cA}(\theta , \infty)}\Big)\cdotp
\end{align*}
\end{theorem}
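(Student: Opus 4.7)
The plan is to write the mild solution as $u = u_1 + u_2$ with $u_1(t) := \e^{-t\cA}x$ and $u_2(t) := \int_0^t \e^{-(t-s)\cA}f(s)\,\d s$, and to assemble the desired estimate by combining the three preparatory results already in hand: Proposition~\ref{Prop: Da Prato-Grisvard} (homogeneous control of $\cA u_2$ from a zero initial value), Lemma~\ref{Lem: Abstract estimate of initial values} (homogeneous control of $\cA \e^{-t\cA}x$ from the initial value alone), and Lemma~\ref{Lem: Linfty estimate} ($\LL_\infty$-in-time bound in the trace space). The estimate on $u'$ will then be a consequence of the differential equation $u'(t)=-\cA u(t)+f(t)$, valid for a.e.\ $t$ once we know $u(t)\in\dom(\dot\cA)$ and $\cA u\in \LL_q(0,T;\dot\dom_{\cA}(\theta,q))$.

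\textbf{Main steps for $1\le q<\infty$.} First I would check that the variation-of-constants integral defining $u_2$ makes sense in $X$: this uses only that $f\in \LL_q(0,T;X)$ (which follows from $f\in \LL_q(0,T;\dom_\cA(\theta,q))$) and the boundedness of the semigroup on bounded time intervals, exactly as in Step~1 of the proof of Proposition~\ref{Prop: Da Prato-Grisvard}. Applying that proposition then yields
\begin{equation*}
 \|\cA u_2\|_{\LL_q(0,T;\dot\dom_\cA(\theta,q))}\le C\|f\|_{\LL_q(0,T;\dot\dom_\cA(\theta,q))},
\end{equation*}
together with $u_2(t)\in\dom(\cA)$ for a.e.\ $t$. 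Next, Lemma~\ref{Lem: Abstract estimate of initial values} gives
\begin{equation*}
 \|\cA u_1\|_{\LL_q(0,T;\dot\dom_\cA(\theta,q))}=\|t\mapsto \dot\cA \e^{-t\cA}x\|_{\LL_q(0,T;\dot\dom_\cA(\theta,q))}\le C\|x\|_{\dot\dom_\cA(1+\theta-1/q,q)},
\end{equation*}
and from the extended semigroup defined in~\eqref{Eq: Extension semigroup operators} together with Proposition~\ref{Prop: Representation abstract semigroup} we know that $u_1(t)\in\dom(\dot\cA)$ for every $t>0$. Summing these two bounds yields the desired estimate on $\cA u$ and shows $u(t)\in\dom(\dot\cA)$ a.e., while the identity $u'=-\cA u+f$ (which holds a.e.\ thanks to the standard mild solution calculus) transfers the bound to $u'$. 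The $\LL_\infty$-in-time bound on $u$ with values in $\dot\dom_\cA(1+\theta-1/q,q)$ is then precisely the content of Lemma~\ref{Lem: Linfty estimate}. Continuity of $t\mapsto u(t)$ in that space follows by density: for $x\in\dom(\cA)$ and $f\in\cC([0,T];\dom(\cA))$ (dense in the data spaces by Lemma~\ref{Lem: Density in abstract spaces}) the regularity is classical, and the uniform estimate just established upgrades pointwise convergence to convergence in $\cC([0,T);\dot\dom_\cA(1+\theta-1/q,q))$.

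\textbf{The case $q=\infty$.} Here one only claims control on $u'$ and $\cA u$ in $\LL_\infty(0,T;\dot\dom_\cA(\theta,\infty))$, and the initial datum is required in $\dom(\cA^2)$. The inhomogeneous piece is handled by the $q=\infty$ branch of Proposition~\ref{Prop: Da Prato-Grisvard} (Step~5 in its proof). For the initial datum, Lemma~\ref{Lem: Abstract estimate of initial values} gives
\begin{equation*}
 \|t\mapsto \dot\cA \e^{-t\cA}x\|_{\LL_\infty(0,T;\dot\dom_\cA(\theta,\infty))}\le \|\cA x\|_{\dot\dom_\cA(\theta,\infty)},
\end{equation*}
which uses precisely the assumption $x\in\dom(\cA^2)$ (so that $\cA x\in\dom(\cA)$ and the semigroup orbits stay uniformly bounded in the relevant norm). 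Adding the two bounds and using $u'=-\cA u+f$ gives the stated estimate.

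\textbf{Expected obstacle.} The preparatory lemmas essentially do all the analytic work, so no single step should be hard; the delicate bookkeeping lies in checking that the almost-everywhere identity $u'(t)+\cA u(t)=f(t)$ is legitimate in the homogeneous setting, given that $Y$ (and hence $\dot\dom_\cA(\theta,q)$) need not be complete and that $u(t)$ only lies in the sum space $X+\dom(\dot\cA)$ a priori. This will be handled by noting that once $u_2(t)\in\dom(\cA)$ for a.e.\ $t$ (from Step~1 of Proposition~\ref{Prop: Da Prato-Grisvard}) and $u_1(t)\in\dom(\dot\cA)$ everywhere, the identity on the sum space follows from Proposition~\ref{Prop: Representation abstract semigroup} and the injectivity of $\dot\cA$ (Lemma~\ref{Lem: Injectivity of homogeneous abstract operator}), so that the equation can be read coordinate-wise. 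Once this is in place, the theorem follows by simple addition of the estimates obtained above.
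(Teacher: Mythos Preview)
Your proposal is correct and follows exactly the assembly that the paper intends: the theorem is stated immediately after Proposition~\ref{Prop: Da Prato-Grisvard}, Lemma~\ref{Lem: Abstract estimate of initial values}, and Lemma~\ref{Lem: Linfty estimate} precisely because it is the direct combination of these three results, and your decomposition $u=u_1+u_2$ together with the equation $u'=-\cA u+f$ is the intended route. One minor remark: the time continuity you argue by density is already recorded as part of Lemma~\ref{Lem: Linfty estimate} (the paper leaves its proof to the reader), so you need not redo it separately.
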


\bigskip

The above result is a homogeneous version of the classical one. This abstract theorem will be a subject to nontrivial application for systems of PDEs. The definition of $\dot \dom_{\cA} (\theta , q)$ related to the real interpolation space $(X , \dom(\dot \cA))_{\theta , q}$ yields a natural framework of homogeneous Besov spaces. As these spaces might not be complete for particular choices of parameters, the space $\dot \dom_{\cA} (\theta , q)$ might not be complete as well. For this reason, Theorem~\ref{Thm: Full Da Prato - Grisvard} is formulated for $f$ being merely in the inhomogeneous space $\LL_q (0 , T ; \dom_{\cA} (\theta , q))$, but with estimates in homogeneous spaces. Notice that the density result proven in Lemma~\ref{Lem: Density in abstract spaces} allows to formulate Theorem~\ref{Thm: Full Da Prato - Grisvard} for all $f \in \LL_q (0 , T ; \dot \dom_{\cA} (\theta , q))$ whenever $\dot \dom_{\cA} (\theta , q)$ is complete. \smallbreak
Systems in unbounded domains 
require homogeneous settings. 
For that reason, having 
Theorem~\ref{Thm: Full Da Prato - Grisvard} at hand will be crucial
in the  two chapters dedicated to fluid mechanics systems
in the half-space (or perturbation of it).

\chapter{The functional setting and basic interpolation results}
\label{Sec: The functional setting and basic interpolation results}

In this chapter, we introduce the functional framework which builds the basis for our further  investigations. 
 We start with the definition of  homogeneous Besov, Bessel potential and Sobolev spaces and continue with their solenoidal counterparts, 
 having in mind to derive interpolation identities
 first in the $\IR^n$ case, then in the $\IR^n_+$ case. 

Let us first fix some notation. Throughout the first part of the paper, 
if not otherwise stated then the space dimension $n$ is any positive integer. We denote the Schwartz space by $\cS (\IR^n)$, the space of tempered distributions by $\cS^{\prime} (\IR^n)$ and the Fourier transform on $\cS^{\prime} (\IR^n)$ by $\cF$. Throughout, $p^{\prime} \in [1 , \infty]$ denotes the H\"older conjugate exponent to $p \in [1 , \infty]$, i.e. $1/p+1/p'=1$. The natural numbers are denoted  by $\IN = \{1 , 2 , 3 , \dots \}$ and $\IN_0 := \IN \cup \{ 0 \}$. 
Finally, for any wo linear normed spaces $X$ and $Y$, the notation $X\hookrightarrow Y$ means continuous embedding.

\section{Besov spaces and Littlewood-Paley decomposition}
In order to define Besov spaces and Littlewood-Paley decomposition, we fix a smooth function $\chi$ supported in, say, the ball $B(0,4/3)$ of $\IR^n,$
and with value $1$ on $B(0,3/4),$ then we set 
$$\varphi:=\chi(\cdot/2)-\chi(\cdot).$$
We thus have
\begin{align}
\label{Eq: Defining Fourier cutoff}
 \supp \varphi \subset \{ \xi \in \IR^n : 3/4 \leq \lvert \xi \rvert \leq 8/3 \}
 \andf\sum_{k = - \infty}^{\infty} \varphi (2^{- k} \xi) = 1 \text{ for all } \xi \in \IR^n \setminus \{ 0 \}.
\end{align}
The definitions of the dyadic blocks
\begin{align*}
 \dot \Delta_k f := \cF^{-1} \varphi (2^{- k} \cdot) \cF f \quad \text{for} \quad k \in \IZ ,\  f \in \cS^{\prime} (\IR^n)
\end{align*}
and of the low  frequency cut-off 
\begin{align*}
 \dot S_k f := \cF^{-1} \chi(2^{-k}\cdot) \cF f,\quad k\in\IZ
\end{align*}
lead for $s \in \IR$, $p , q \in [1 , \infty]$ and $f \in \cS^{\prime} (\IR^n)$ to the homogeneous Besov space norm and the inhomogeneous Besov spaces norms, respectively, which are defined as
\begin{align*}
 \| f \|_{\dot \B^s_{p , q} (\IR^n)} := \big\| (2^{s k} \| \dot \Delta_k f \|_{\LL_p (\IR^n)})_{k \in \IZ} \big\|_{\ell^q (\IZ)}
\end{align*}
and
\begin{align*}
 \| f \|_{\B^s_{p , q} (\IR^n)} := \| \dot S_0 f \|_{\LL_p (\IR^n)} + \big\| (2^{s k} \| \dot \Delta_k f \|_{\LL_p (\IR^n)})_{k \in \IN_0} \big\|_{\ell^q (\IN_0)}.
\end{align*}

As $\|f\|_{\dot \B^s_{p , q} (\IR^n)} = 0$ for $f$ in $\cS'(\IR^n)$ does not
imply that $f\equiv0,$ functions contained in a homogeneous Besov space will be assumed to have the following control on the low Fourier frequencies (after the definition given 
in~\cite[Chap.\@ 2]{Bahouri_Chemin_Danchin}).
\begin{definition}
\label{Def: Low frequency control}
Let $\cS_h^{\prime} (\IR^n)$ denote the space of all tempered distributions $f$ that satisfy
\begin{align*}
 \lim_{\lambda \to \infty} \| \theta (\lambda D) f \|_{\LL_{\infty} (\IR^n)} = 0 \quad 
 \hbox{for any } \ 
 \theta \in \C_c^{\infty} (\IR^n).
\end{align*}
Here, $\theta (\lambda D) f$ stands for $\cF^{-1} \theta (\lambda \cdot) \cF f$.
\end{definition}

The above condition of convergence is equivalent to the fact 
that 
\begin{equation}\label{eq:BF}
\|\dot S_jf\|_{\LL_\infty(\IR^n)}\to 0\quad\hbox{when}\quad j\to-\infty.\end{equation}
Owing to Bernstein's inequality, if   $f \in \cS^{\prime} (\IR^n)$ satisfies  $\dot S_j f \in \LL_p (\IR^n)$ for some $1 \leq p < \infty$ and $j\in\IZ,$  then $f$ is contained in $\cS_h^{\prime} (\IR^n)$.

\begin{definition}
For $s \in \IR$ and $p , q \in [1 , \infty]$ the space $\dot \B^s_{p , q} (\IR^n)$ is defined to be a set of all distributions $f \in \cS_h^{\prime} (\IR^n)$ that satisfy
\begin{align*}
 \| f \|_{\dot \B^s_{p , q} (\IR^n)} < \infty.
\end{align*}
The space $\B^s_{p , q} (\IR^n)$ is the  set of all tempered distributions $f \in \cS^{\prime} (\IR^n)$ that satisfy
\begin{align*}
 \| f \|_{\B^s_{p , q} (\IR^n)} < \infty.
\end{align*}
\end{definition}

Considering only tempered distributions in $\cS_h^{\prime} (\IR^n)$ ensures that $\dot \B^s_{p , q} (\IR^n)$ is a normed space. Moreover, it is a Banach space whenever 
(see~\cite{Bahouri_Chemin_Danchin})
\begin{align}
\label{Eq: Banach space conditions}
 s < \frac{n}{p} \quad \text{ or }\quad q = 1\!\andf\! s \leq \frac{n}{p}\cdotp
\end{align}
The properties of $\varphi$ and the definition of the homogeneous dyadic blocks $\dot \Delta_k$ suggest that  the following \emph{Littlewood-Paley decomposition} holds true:
\begin{align}
\label{Eq: Representation of f}
 f = \sum_{k \in \IZ} \dot \Delta_k f.
\end{align}
However, for a general tempered distribution, this equality only holds \emph{up to a polynomial}. 
This is actually another motivation for considering the space  $\cS_h^{\prime} (\IR^n)$
since, as proved in~\cite[p.~61/62]{Bahouri_Chemin_Danchin}, we have 
\eqref{Eq: Representation of f}  for all $f \in \cS_h^{\prime} (\IR^n).$ 
\medbreak
In the same spirit, we have the  following lemma. 
\begin{lemma}
\label{Lem: Representation of f}
Let $m>0$ and  $\psi : \IR^n \setminus \{ 0 \}\to\IR$ be smooth and homogeneous of degree $m$.
Then, for all $f \in \cS_h^{\prime} (\IR^n),$ the series
\begin{align*}
 \sum_{k \in \IZ} \cF^{-1} \psi \cF \dot \Delta_k f
\end{align*}
converges in $\cS_h^{\prime} (\IR^n)$. 
\end{lemma}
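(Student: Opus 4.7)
The strategy is to prove convergence of the partial sums
\begin{equation*}
S_{M,N}f := \sum_{k=M}^{N-1}\cF^{-1}\psi\cF\dot\Delta_k f = \cF^{-1}\bigl[\psi(\chi_N-\chi_M)\cF f\bigr],\qquad \chi_j := \chi(2^{-j}\argdot),
\end{equation*}
in $\cS^{\prime}(\IR^n)$ as $M\to-\infty$ and $N\to\infty$, and then to verify that the limit belongs to $\cS_h^{\prime}(\IR^n)$. Each $S_{M,N}f$ is a bona fide tempered distribution since the symbol $\psi(\chi_N-\chi_M)$ is $C^\infty$ with compact support---it vanishes in a neighborhood of the origin, where $\psi$ would fail to be smooth. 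Testing against $\phi\in\cS(\IR^n)$ and invoking the transpose of the Fourier multiplier,
\begin{equation*}
\langle S_{M,N}f,\phi\rangle = \langle f, g_{M,N}\rangle,\qquad g_{M,N}:=\cF^{-1}\bigl[\check\psi(\chi_N-\chi_M)\cF\phi\bigr]\in\cS(\IR^n),
\end{equation*}
where $\check\psi(\xi):=\psi(-\xi)$ is again smooth and homogeneous of degree $m$. Splitting $\chi_N-\chi_M = (\chi_N-\chi_0) - (\chi_M-\chi_0)$ decouples the high- and low-frequency limits.

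For the high-frequency limit $N\to\infty$, the function $\check\psi(1-\chi_N)\cF\phi$ is $C^\infty$ (since $1-\chi_N$ vanishes near $\xi=0$), is supported in $\{|\xi|\geq\tfrac34\cdot 2^N\}$, and is dominated pointwise by $C|\xi|^m(1+|\xi|)^{-K}$ for every $K$. Hence all its Schwartz seminorms decay at arbitrary polynomial rate in $2^{-N}$, so $\cF^{-1}[\check\psi(1-\chi_N)\cF\phi]\to 0$ in $\cS(\IR^n)$, and the high-frequency pairing with $f$ converges.

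The low-frequency limit $M\to-\infty$ is the main difficulty: the formal limit symbol $\check\psi\chi_0\cF\phi$ is only continuous (not smooth) at the origin, so $\cF^{-1}[\check\psi\chi_0\cF\phi]$ is not Schwartz and does not pair directly with a general element of $\cS^{\prime}$. Here one exploits the hypothesis $f\in\cS_h^{\prime}$. Using the commutation of $\cF^{-1}\check\psi\cF$ with $\dot S_M$ and the self-adjointness of the latter (justified after a density approximation),
\begin{equation*}
\langle f,\cF^{-1}[\check\psi\chi_M\cF\phi]\rangle = \langle f,\cF^{-1}\check\psi\cF\dot S_M\phi\rangle = \langle \dot S_Mf,\cF^{-1}\check\psi\cF\phi\rangle.
\end{equation*}
The key observation is that $\cF^{-1}\check\psi\cF\phi\in\LL_1(\IR^n)$: it is $C^\infty$ because $\check\psi\cF\phi$ has rapid decay at infinity, and the $|\xi|^m$-type singularity of $\check\psi\cF\phi$ at the origin translates under inverse Fourier transform into $|x|^{-n-m}$ decay at infinity, which is integrable \emph{precisely because $m>0$}. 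Combined with $\|\dot S_Mf\|_{\LL_\infty}\to 0$ coming from $f\in\cS_h^{\prime}$, the right-hand side tends to zero as $M\to-\infty$. Hence $\langle f,g_{M,N}\rangle$ has a limit, which defines a distribution $L\in\cS^{\prime}$.

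It remains to verify $L\in\cS_h^{\prime}(\IR^n)$, i.e., $\|\theta(\lambda D)L\|_{\LL_\infty}\to 0$ as $\lambda\to\infty$ for every $\theta\in\C_c^{\infty}(\IR^n)$. Since $\theta(\lambda D)$ annihilates dyadic blocks with $2^k\gtrsim 1/\lambda$, one may write $\theta(\lambda D)L = K_\lambda * \dot S_jf$ with $K_\lambda := \cF^{-1}[\theta(\lambda\argdot)\psi]$ and $j\sim-\log_2\lambda$, after insertion of a harmless intermediate low-pass cutoff. The scaling identity $K_\lambda(x) = \lambda^{-(m+n)}G(x/\lambda)$, with $G := \cF^{-1}(\theta\psi)\in\LL_1(\IR^n)$ by the same integrability argument as above, yields $\|K_\lambda\|_{\LL_1} = \lambda^{-m}\|G\|_{\LL_1}$, whence $\|\theta(\lambda D)L\|_{\LL_\infty}\leq C\lambda^{-m}\|\dot S_jf\|_{\LL_\infty}\to 0$, closing the argument.
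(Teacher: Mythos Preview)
Your approach is correct in outline but takes a considerably longer route than the paper, and one of your key identities needs more care than the parenthetical ``density approximation'' suggests.

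The paper's argument for the low-frequency tail is almost immediate: homogeneity gives $\psi(\xi)\varphi(2^{-k}\xi)=2^{km}\Phi(2^{-k}\xi)$ with $\Phi:=\psi\varphi\in\C_c^\infty(\IR^n)$, so that each term equals $2^{km}\,\Phi(2^{-k}D)f$. The $\cS_h'$ hypothesis applied with $\theta=\Phi$ yields $\|\Phi(2^{-k}D)f\|_{\LL_\infty}<\eps$ for $k$ sufficiently negative, and the geometric factor $2^{km}$ (with $m>0$, $k\to-\infty$) makes the tail absolutely summable in $\LL_\infty$. This both proves convergence in $\cS'$ and immediately shows the limit lies in $\cS_h'$. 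Two lines, no Fourier analysis of singular symbols. Your step~4 in fact uses exactly this scaling idea (your identity $\|K_\lambda\|_{\LL_1}=\lambda^{-m}\|G\|_{\LL_1}$ is the dual formulation of the $2^{km}$ factor), so you could have recycled it for step~3.

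Your duality route instead requires two nontrivial ingredients. First, the claim $\cF^{-1}[\check\psi\cF\phi]\in\LL_1$ via $|x|^{-n-m}$ decay is correct but not a one-liner: a naive integration-by-parts only yields $|x|^{-\lfloor m\rfloor}$ decay, which is not integrable for $0<m<1$; one needs a dyadic decomposition of the symbol near the origin to get the sharp rate. Second, the identity $\langle f,\cF^{-1}[\check\psi\chi_M\cF\phi]\rangle=\langle\dot S_Mf,\cF^{-1}\check\psi\cF\phi\rangle$ is problematic as written: the left-hand ``test function'' is \emph{not} Schwartz (the symbol $\check\psi\chi_M\cF\phi$ fails to be smooth at $\xi=0$), so the pairing with a general $f\in\cS'$ is undefined. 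The fix is to work with Cauchy increments $\chi_M-\chi_{M'}$ (whose symbol vanishes near the origin, hence Schwartz) and to pass from $\check\psi$ to a regularized $\check\psi_\eps:=\check\psi\,(1-\chi(\cdot/\eps))$, letting $\eps\to0$ after moving $\dot S_M-\dot S_{M'}$ across; this works, but it is precisely the content you defer.
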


\begin{proof}
 The convergence of
\begin{align*}
 \bigg(\sum_{k = 0}^M \cF^{-1} \psi \cF \dot \Delta_k f \bigg)_{M \in \IN} \quad \text{to} \quad \cF^{-1} \psi (1-\chi) \cF f \quad \text{in} \quad \cS^{\prime}(\IR^n)
\end{align*}
follows by a direct calculation. To show convergence for the low frequencies let $\eps > 0$ and let $M \geq L \geq L_0$ where $L_0 \in \IN$ is chosen such that for all $k \in \IZ$ that satisfy $k \leq - L_0$
\begin{align*}
   \| \Phi (2^{- k} D) f \|_{\LL_{\infty} (\IR^n)} < \eps, \quad \text{ \ where \ \ } \quad \Phi(\xi) := \varphi (\xi) \psi(\xi).
\end{align*}
The existence of $L_0$ follows from Definition~\ref{Def: Low frequency control}. Now, let $g \in \cS (\IR^n)$ and employ the homogeneity of $\psi$ as well as H\"older's inequality to get
 \begin{align*}
 \Big\lvert \Big\langle \sum_{k = - M}^0 \cF^{-1}\psi \cF \dot \Delta_k f - \sum_{k = - L + 1}^0 \cF^{-1} \psi \cF \dot \Delta_k f , g \Big\rangle \Big\rvert &\leq \sum_{k = - M}^{- L} 2^{k m} \| \Phi (2^{- k} D) f \|_{\LL_{\infty} (\IR^n)} \| g \|_{\LL_1 (\IR^n)} \\
 &\leq C\eps \| g \|_{\LL_1 (\IR^n)}. 
\end{align*}
Hence the low frequency part of the series converges in $\LL_\infty(\IR^n).$
A tiny modification of the above argument ensures that~\eqref{eq:BF} is satisfied by the series.
This completes the proof. 
\end{proof}

This directly leads to the definition of homogeneous Bessel potential spaces.

\begin{definition}
\label{Def: Homogeneous Bessel potential spaces}
For $m \in \IZ$ and $1 \leq p \leq \infty,$ we define   $\dot \H^m_p (\IR^n)$ to be the space of all tempered distributions $f \in \cS_h^{\prime} (\IR^n)$ such that
\begin{align*}
 \sum_{k \in \IZ} \cF^{-1} \lvert \xi \rvert^m \cF \dot \Delta_k f \in \LL_p (\IR^n)
\end{align*}
endowed with the norm
\begin{align*}
 \| f \|_{\dot \H^m_p (\IR^n)} := \Big\| \sum_{k \in \IZ} \cF^{-1} \lvert \xi \rvert^m \cF \dot \Delta_k f \Big\|_{\LL_p (\IR^n)}.
\end{align*}
\end{definition}

Along with the above definition, we introduce the homogeneous Sobolev spaces of non-negative integer order as follows. 

\begin{definition}
For $m \in \IN_0$ and $1 \leq p \leq \infty,$ we define $\dot \cH^m_p (\IR^n)$ to be  the space of all tempered distributions $f \in \cS_h^{\prime} (\IR^n)$ that satisfy
\begin{align*}
 \| f \|_{\dot \cH^m_p (\IR^n)} := \sum_{j = 1}^n \| \partial_j^m f \|_{\LL_p (\IR^n)} < \infty.
\end{align*}
\end{definition}

In order to show that for $m \in \IN_0$ the homogeneous Bessel potential and Sobolev spaces coincide, the following lemma is crucial.

\begin{lemma}
\label{Lem: Density of functions with no low frequencies}
For each $1 < p < \infty$, $1 \leq q < \infty$ and $s \in \IR$ the class of functions
\begin{align*}
 \cS_c (\IR^n) := \{ \psi \in \cS (\IR^n) : 0 \notin \supp(\cF^{-1} \psi) \}
\end{align*}
is dense in $\LL_p (\IR^n)$ and in $\dot \B^s_{p , q} (\IR^n)$.
\end{lemma}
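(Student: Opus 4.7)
My plan treats the two densities separately but with a common thread: truncate the spectrum, then engineer Schwartz representatives.

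For the $\LL_p$ statement with $1 < p < \infty$, Schwartz functions are already dense in $\LL_p$, so it suffices to approximate any $g \in \cS(\IR^n)$ by elements of $\cS_c$. The natural candidate is $g - \dot S_{-N} g$, which is Schwartz and whose Fourier transform $(1-\chi(2^N\cdot))\hat g$ vanishes on $B(0, \tfrac{3}{4}2^{-N})$, so $g - \dot S_{-N} g\in\cS_c$. To conclude I write $\dot S_{-N}g = 2^{-nN}(\cF^{-1}\chi)(2^{-N}\cdot) * g$: Young's inequality yields the uniform bound $\|\dot S_{-N}g\|_{\LL_1}\leq\|\cF^{-1}\chi\|_{\LL_1}\|g\|_{\LL_1}$, and Bernstein's inequality applied on the ball of radius $O(2^{-N})$ containing the spectrum gives $\|\dot S_{-N} g\|_{\LL_p}\leq C \, 2^{-Nn(1-1/p)}\|g\|_{\LL_1}\to 0$, where the condition $p > 1$ is crucial.

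For density in $\dot\B^s_{p,q}(\IR^n)$ I proceed in two steps. First, given $f \in \dot\B^s_{p,q}$, set $f_{N,M}:=\sum_{k=-N}^{M}\dot\Delta_k f$. The almost-orthogonality $\dot\Delta_j \dot\Delta_k = 0$ for $|j-k|\geq 2$ yields $\dot\Delta_j(f - f_{N,M}) = 0$ whenever $j \in [-N+2, M-2]$, while outside that range $\|\dot\Delta_j(f-f_{N,M})\|_{\LL_p}\leq C\sum_{|k-j|\leq 1}\|\dot\Delta_k f\|_{\LL_p}$. Summability of $(2^{ksq}\|\dot\Delta_k f\|_{\LL_p}^q)_{k\in\IZ}$ (which uses $q<\infty$) then forces $f_{N,M}\to f$ in $\dot\B^s_{p,q}$. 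Each $f_{N,M}$ is an $\LL_p$ function whose spectrum is contained in a compact annulus $A\subset\IR^n\setminus\{0\}$. To approximate such a $g := f_{N,M}$ by elements of $\cS_c$, pick $\eta\in\C_c^\infty(\IR^n\setminus\{0\})$ with $\eta\equiv1$ on $A$ and $\psi\in\C_c^\infty(\IR^n)$ with $\psi\equiv1$ near the origin, and set $h_\epsilon := \cF^{-1}\eta * \bigl(\psi(\epsilon\cdot)\,g\bigr)$. Since $\psi(\epsilon\cdot)g$ is compactly supported and $\LL_p$, while $\cF^{-1}\eta\in\cS$, the convolution $h_\epsilon$ is Schwartz; moreover, its Fourier spectrum is contained in $\supp\eta$, so $h_\epsilon\in\cS_c$. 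Using $\eta\hat g = \hat g$ one gets $h_\epsilon - g = \cF^{-1}\eta * \bigl(\psi(\epsilon\cdot)g - g\bigr)$, and Young's inequality together with dominated convergence yield $\|h_\epsilon - g\|_{\LL_p}\to 0$ as $\epsilon\to 0$.

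To transfer the $\LL_p$ convergence to $\dot\B^s_{p,q}$, I invoke the fact that $h_\epsilon - g$ has spectrum in the fixed compact annulus $\supp \eta\subset\IR^n\setminus\{0\}$, so only $O(1)$ dyadic blocks $\dot\Delta_j$ contribute and each is bounded by $\|\cF^{-1}\varphi\|_{\LL_1}\|h_\epsilon - g\|_{\LL_p}$; hence $\|h_\epsilon - g\|_{\dot\B^s_{p,q}}\leq C\|h_\epsilon - g\|_{\LL_p}\to 0$. The main subtlety lies in the second step: a naive spatial mollification of $f_{N,M}$ would reintroduce low-frequency content and thus destroy the spectral exclusion of the origin. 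The key maneuver is to first localize in physical space (accepting that the spectrum spreads) and \emph{then} reproject onto the annulus through convolution with the Schwartz kernel $\cF^{-1}\eta$, which simultaneously preserves membership in $\cS$ and the condition $0\notin\supp\widehat{h_\epsilon}$.
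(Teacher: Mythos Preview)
Your proof is correct. For the Besov case the paper simply cites \cite[Prop.~2.27]{Bahouri_Chemin_Danchin}, so your self-contained two-step argument (dyadic truncation followed by spatial cutoff and spectral reprojection via $\cF^{-1}\eta$) is a genuine addition; the reprojection trick to avoid low-frequency leakage is exactly the right idea.

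For the $\LL_p$ case your route differs from the paper's. The paper argues by duality: if $f\in\LL_{p'}$ annihilates $\cS_c$, then $\langle\cF f,\cF^{-1}\psi\rangle=0$ for all $\psi\in\cS_c$; since $\cF\C_c^\infty(\IR^n\setminus\{0\})\subset\cS_c$, this forces $\supp\cF f\subset\{0\}$, so $f$ is a polynomial, and hence $f=0$ because $p'<\infty$. Your approach instead produces an explicit approximant $g-\dot S_{-N}g$ and uses Bernstein to show $\|\dot S_{-N}g\|_{\LL_p}\to 0$. The duality argument is shorter and avoids any computation, while your construction is explicit and makes the role of the hypothesis $p>1$ transparent at a different place (the exponent $n(1-1/p)>0$ in Bernstein, versus $p'<\infty$ forcing polynomials in $\LL_{p'}$ to vanish). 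Both are clean; the annihilator method generalises more readily, whereas yours gives quantitative control on the approximation.
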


\begin{proof} 
The case of the homogeneous Besov space is proven in~\cite[Prop.~2.27]{Bahouri_Chemin_Danchin}. Thus, let us concentrate on the $\LL_p$-space. It suffices to prove that the annihilator $\cS_c (\IR^n)_{\perp}$ with respect to the $\LL_p$-duality product contains only the function that is identically zero. \par
Let $f \in \cS_c (\IR^n)_{\perp} \subset \LL_{p^{\prime}} (\IR^n)$. Then, for all $\psi$ in $\cS_c(\IR^n),$ 
we have
\begin{align*}
 \langle \cF f , \cF^{-1} \psi \rangle = \langle f , \psi \rangle = 0.
\end{align*}
Since $\cF \C_c^{\infty} (\IR^n \setminus \{ 0 \}) \subset \cS_c (\IR^n)$, this implies that the support of $\cF f$ is contained in $\{ 0 \}$ so that $f$ is a polynomial. But the only polynomial that is contained in $\LL_{p^{\prime}} (\IR^n)$ is the zero polynomial, so that $f = 0$.
\end{proof}

Now, we prove that the homogeneous Bessel potential space as well as the homogeneous Sobolev space of order $m$ are equal. The proof substantially uses Lemma~\ref{Lem: Density of functions with no low frequencies} and the fact that each $f \in \cS_h^{\prime} (\IR^n)$ has the Littlewood-Paley decomposition~\eqref{Eq: Representation of f}.

\begin{proposition}
\label{Prop: Bessel equals Sobolev}
For all $m \in \IN_0$ and  $1 < p < \infty,$ the spaces $\dot \H^m_p (\IR^n)$ and $\dot \cH^m_p (\IR^n)$ coincide with equivalent norms.
\end{proposition}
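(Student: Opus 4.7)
The case $m=0$ is trivial since both norms collapse to $\|f\|_{\LL_p(\IR^n)}$. For $m\geq 1$, my plan is to realize each inclusion via $\LL_p$-bounded Fourier multiplier operations acting on the Littlewood--Paley pieces, so that the equivalence of norms reduces to two concrete algebraic identities between symbols, each recognizable as a Mikhlin multiplier.

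For the inclusion $\dot\H^m_p(\IR^n)\hookrightarrow \dot\cH^m_p(\IR^n)$, set $m_j(\xi):=(\ii\xi_j)^m/|\xi|^m$. This symbol is $C^\infty$ on $\IR^n\setminus\{0\}$ and homogeneous of degree $0$, hence by Mikhlin's multiplier theorem (equivalently, as an iterated Riesz transform) the operator $T_{m_j}g:=\cF^{-1}(m_j\cF g)$ is bounded on $\LL_p(\IR^n)$ for $1<p<\infty$. Because $\varphi(2^{-k}\cdot)$ is supported away from the origin, one reads off, block by block,
\[
T_{m_j}\bigl(\cF^{-1}|\xi|^m\cF\dot\Delta_k f\bigr)=\cF^{-1}\bigl[(\ii\xi_j)^m\varphi(2^{-k}\xi)\cF f\bigr]=\partial_j^m\dot\Delta_k f.
\]
Summing in $k$, the Littlewood--Paley decomposition~\eqref{Eq: Representation of f} and continuity of $\partial_j^m$ on $\cS^{\prime}(\IR^n)$ yield $\partial_j^m f=T_{m_j}\bigl(\sum_k\cF^{-1}|\xi|^m\cF\dot\Delta_k f\bigr)$, whence $\|\partial_j^m f\|_{\LL_p}\leq C\|f\|_{\dot\H^m_p}$.

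For the reverse inclusion $\dot\cH^m_p(\IR^n)\hookrightarrow \dot\H^m_p(\IR^n)$, the key algebraic point is to write
\[
|\xi|^m=\sum_{j=1}^n\psi_j(\xi)\,(\ii\xi_j)^m\qquad(\xi\in\IR^n\setminus\{0\})
\]
with each $\psi_j$ a $0$-homogeneous Mikhlin symbol. To construct them, I would pick a smooth partition of unity $(\phi_j)_{j=1}^n$ on $\IS^{n-1}$ subordinate to the open cover $U_j:=\{\omega\in\IS^{n-1}:|\omega_j|>c\}$ for some small $c>0$, extend by $0$-homogeneity to $\IR^n\setminus\{0\}$, and set $\psi_j(\xi):=\phi_j(\xi)\,|\xi|^m/(\ii\xi_j)^m$. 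On $\supp\phi_j$ one has $|\xi_j|\geq c|\xi|$, so $\psi_j$ is smooth and bounded there, vanishes elsewhere, and is therefore globally $C^\infty$ and $0$-homogeneous on $\IR^n\setminus\{0\}$; Mikhlin again gives $\LL_p$-boundedness of $T_{\psi_j}$. Block by block,
\[
\cF^{-1}|\xi|^m\cF\dot\Delta_k f=\sum_{j=1}^n T_{\psi_j}\bigl(\partial_j^m\dot\Delta_k f\bigr),
\]
and summation over $k$ gives $\sum_k\cF^{-1}|\xi|^m\cF\dot\Delta_k f=\sum_j T_{\psi_j}(\partial_j^m f)\in\LL_p(\IR^n)$, together with the estimate $\|f\|_{\dot\H^m_p}\leq C\sum_j\|\partial_j^m f\|_{\LL_p}$.

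The main delicacy I anticipate is the construction of the multipliers $\psi_j$ for odd $m$: in that case the ``naive'' choice $|\xi|^m/(\ii\xi_j)^m$ genuinely blows up on the hyperplane $\{\xi_j=0\}$, so no single-index symbol is Mikhlin and the partition-of-unity cutoff $\phi_j$ is essential to confine $\psi_j$ to a conic region where $|\xi_j|\gtrsim|\xi|$. The remaining issues are purely bookkeeping: convergence of the series $\sum_k\cF^{-1}|\xi|^m\cF\dot\Delta_k f$ in $\cS_h^{\prime}(\IR^n)$ is exactly the content of Lemma~\ref{Lem: Representation of f} (applied with $\psi(\xi)=|\xi|^m$), and Lemma~\ref{Lem: Density of functions with no low frequencies} guarantees that the formal manipulations above are taking place in a genuine Banach space setting.
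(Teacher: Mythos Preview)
Your proposal is correct and follows essentially the same route as the paper: both directions rest on $0$-homogeneous Mikhlin multipliers relating $|\xi|^m$ and the $\xi_j^m$, with Lemmas~\ref{Lem: Representation of f} and~\ref{Lem: Density of functions with no low frequencies} handling the convergence bookkeeping. The only cosmetic difference is in the reverse inclusion, where you build the decomposition $|\xi|^m=\sum_j\psi_j(\xi)(\ii\xi_j)^m$ via a spherical partition of unity, whereas the paper uses single-variable cutoffs $\Upsilon(\xi_j)$ together with the factorization $\vartheta(\xi)|\xi|^m\bigl(\sum_j\Upsilon(\xi_j)|\xi_j|^m\bigr)^{-1}$ and a $\liminf$ over dilations; the multiplier content is the same.
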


\begin{proof}
In the case $m = 0$ there is nothing to do so let $m > 0$. For $1 \leq j \leq n$ and $f \in \cS_h^{\prime} (\IR^n)$ write
\begin{align*}
 \partial_j^m f = (2 \pi \ii)^m \cF^{-1} \xi_j^m \cF f.
\end{align*}
Then, using~\eqref{Eq: Representation of f} and Lemma~\ref{Lem: Representation of f} with  $\psi (\xi) := \xi_j^m$ together with the fact that the Fourier transform is an isomorphism on $\cS^{\prime}(\IR^n)$ imply
$$
 \partial_j^m f = (2 \pi \ii)^m \sum_{k \in \IZ} \cF^{-1} \xi_j^m \cF \dot \Delta_k f = (2 \pi \ii)^m \sum_{k \in \IZ} \cF^{-1} \xi_j^m \lvert \xi \rvert^{- m} \cF \cF^{-1} \lvert \xi \rvert^m \cF \dot \Delta_k f
 \quad\hbox{in }\ \cS^{\prime}(\IR^n).
$$
 Now,  the above identity  delivers for $g \in \cS_c (\IR^n)$: 
\begin{align*}
(2 \pi \ii)^{-m} \langle \partial_j^m f , g \rangle = \Big\langle \sum_{k \in \IZ} \cF^{-1} \lvert \xi \rvert^m \cF \dot \Delta_k f , \cF \xi_j^m \lvert \xi \rvert^{- m} \cF^{-1} g \Big\rangle\cdotp
\end{align*}
Since by Marcinkiewicz--Mikhlin's multiplier theorem,
\begin{align*}
 \cF \xi_j^m \lvert \xi \rvert^{- m} \cF^{-1} : \LL_{p^{\prime}} (\IR^n) \to \LL_{p^{\prime}} (\IR^n)
\end{align*}
is a bounded operator, it follows that the functional $\cS_c (\IR^n) \ni g \mapsto \langle \partial_j^m f , g \rangle$ satisfies the estimate
\begin{align*}
 \lvert \langle \partial_j^m f , g \rangle \rvert \leq (2 \pi)^m \| \cF^{-1} \xi_j^m \lvert \xi \rvert^{- m} \cF \|_{\Lop(\LL_{p^{\prime}} (\IR^n))} \| f \|_{\dot \H^m_p (\IR^n)} \| g \|_{\LL_{p^{\prime}} (\IR^n)}.
\end{align*}
Since $\cS_c (\IR^n)$ is dense in $\LL_{p^{\prime}} (\IR^n)$ by Lemma~\ref{Lem: Density of functions with no low frequencies}, this shows that $\dot \H^m_p (\IR^n) \subset \dot \cH^m_p (\IR^n)$. \par
For the other direction, assume that $f \in \dot \cH^m_p (\IR^n)$. Notice that the proof of Lemma~\ref{Lem: Representation of f} with $\psi (\xi) := \lvert \xi \rvert^m$ gives with $\vartheta:=1-\chi,$
\begin{align*}
 \Big\lvert \Big\langle \sum_{k \in \IZ} \cF^{-1} \lvert \xi \rvert^m \cF \dot \Delta_k f , g \Big\rangle \Big\rvert \leq \liminf_{M \to \infty} \| \cF^{-1} \vartheta(2^M \xi) \lvert \xi \rvert^m \cF f \|_{\LL_p (\IR^n)} \| g \|_{\LL_{p^{\prime}} (\IR^n)}.
\end{align*}
Let $\Upsilon : \IR \to \IR$  be a smooth function supported away from the origin
and with value $1$ on $|t|\geq 3 / (8 \sqrt{n}).$  Since
\begin{align*}
 \xi \mapsto \vartheta (\xi) \lvert \xi \rvert^m \Big( \sum_{j = 1}^n \Upsilon(\xi_j) \lvert \xi_j \rvert^m \Big)^{-1} \quad \text{and} \quad \xi \mapsto \Upsilon(\xi_j) \lvert \xi_j \rvert^m \xi_j^{- m}
\end{align*}
are $0$ order multipliers, Marcinkiewicz--Mikhlin's theorem  implies that for fixed $M \in \IN$
\begin{align*}
 \| \cF^{-1} \vartheta(2^M \xi) \lvert \xi \rvert^m \cF f \|_{\LL_p (\IR^n)} &\leq C 2^{- m M} \Big\| \cF^{-1} \sum_{j = 1}^n \Upsilon(2^M \xi_j) \lvert 2^M \xi_j \rvert^m \cF f \Big\|_{\LL_p (\IR^n)} \\
 &\leq C \sum_{j = 1}^n \| \cF^{-1} \xi_j^m \cF f \|_{\LL_p (\IR^n)} \\
 &\leq C \| f \|_{\dot \cH^m_p (\IR^n)}
\end{align*}
with $C > 0$ being independent of $M$. One may thus conclude that
$$
 \Big\lvert \Big\langle \sum_{k \in \IZ} \cF^{-1} \lvert \xi \rvert^m \cF \dot \Delta_k f , g \Big\rangle \Big\rvert \leq 
 C \| f \|_{\dot \cH^m_p (\IR^n)} \| g \|_{\LL_{p^{\prime}} (\IR^n)},
$$
whence the embedding $\dot \cH^m_p (\IR^n)\hookrightarrow \dot \H^m_p (\IR^n).$
\end{proof}

\begin{remark}
Proposition~\ref{Prop: Bessel equals Sobolev} is known from the book of Bergh and L\"ofstr\"om~\cite[Thm.~6.3.1]{Bergh_Lofstrom} but for elements $f \in \cS^{\prime} (\IR^n)$ that vanish in a neighborhood of the origin. The assumptions here are weakened in the sense that only the control on the low Fourier frequencies given in Definition~\ref{Def: Low frequency control} is required.
\end{remark}

\begin{convention}
Since Proposition~\ref{Prop: Bessel equals Sobolev} shows that the definitions of homogeneous Bessel potential spaces and homogeneous Sobolev spaces of non-negative order $m$ are equivalent, we will only use the notation $\dot \H^m_p (\IR^n)$ in the following. Furthermore, it is clear that $\LL_p (\IR^n) \cap \dot \cH^m_p (\IR^n)$ coincides with the usual Sobolev space of $m$ times weakly differentiable functions with derivatives up to order $m$ in $\LL_p (\IR^n)$. Thus, let us from now on write $\H^m_p (\IR^n)$ for this Sobolev space endowed with the usual norm.
\end{convention}

\begin{remark}
Let $s > 0$ and $p , q \in [1 , \infty]$. Then the equality $\dot \B^s_{p , q} (\IR^n) \cap \LL_p (\IR^n) = \B^s_{p , q} (\IR^n)$ is also valid.
\end{remark}

$$
$$

\begin{remark}
Let $0 < s < n / p$, $1 \leq p \leq \infty$ and $1 \leq q \leq \infty$ or let $s = n / p$ and $q = 1$. In this case, the homogeneous Besov space continuously embeds into $\LL_{p , \loc} (\IR^n)$. To see this, let $K \subset \IR^n$ be a compact set and let $f \in \dot \B^s_{p , q} (\IR^n)$. Then the Littlewood--Paley decomposition of $f$ and Bernstein's inequality yield
\begin{align*}
 \| f \|_{\LL_p (K)} &\leq \sum_{\ell \leq 0} \| \dot \Delta_{\ell} f \|_{\LL_p (K)} + \sum_{\ell \geq 1} \| \dot \Delta_{\ell} f \|_{\LL_p (K)} \\
 &\leq \lvert K \rvert^{\frac{1}{p}} \sum_{\ell \leq 0} \| \dot \Delta_{\ell} f \|_{\LL_{\infty} (K)} + C \| f \|_{\dot \B^s_{p , q} (\IR^n)} \\
 &\leq \lvert K \rvert^{\frac{1}{p}} \sum_{\ell \leq 0} 2^{(\frac{n}{p} - s) \ell} 2^{s \ell} \| \dot \Delta_{\ell} f \|_{\LL_p (\IR^n)} + C \| f \|_{\dot \B^s_{p , q} (\IR^n)} \\
 &\leq C (\lvert K \rvert^{\frac{1}{p}} + 1) \| f \|_{\dot \B^s_{p , q}(\IR^n)}.
\end{align*}
\end{remark}

\section{Solenoidal function spaces and extension operators}

To investigate the Stokes problem, we introduce the spaces of vector fields that are solenoidal.
 In these spaces, the boundary condition may have to be taken into account
even in some cases where the classical trace operator is not defined. For example, for Dirichlet boundary conditions, we have to consider  (with $\e_n$ denoting the $n$-th standard unit vector)
\begin{align*}
 \Lop_{p , \sigma} (\IR^n_+) := \{ f \in \LL_p (\IR^n_+ ; \IC^n) : \divergence f = 0 \text{ and } \e_n \cdot f = 0 \text{ on } \partial \IR^n_+\}
\end{align*}
while  Neumann boundary conditions correspond to the (larger) space
\begin{align}
\label{Eq: Lp-sigma}
 \LL_{p , \sigma} (\IR^n_+) := \{ f \in \LL_p (\IR^n_+ ; \IC^n) : \divergence f = 0\}.
\end{align}
More generally, for an open subset $\Omega \subset \IR^n$, with some abuse of language we  define
\begin{align*}
 \LL_{p , \sigma} (\Omega) := \{ f \in \LL_p (\Omega ; \IC^n) : \divergence f = 0\}.
\end{align*}
Since our aim here is  to investigate the Stokes problem with Neumann boundary conditions, 
we adopt the second definition from now on. 
\medbreak
Next, we define the solenoidal counterparts of the Bessel potential and Besov spaces on~$\IR^n$.
\begin{definition}
\label{Def: Solenoidal spaces}
Let $1 \leq p , q \leq \infty$, $s \in \IR$ and $m \in \IZ$. Define
\begin{align*}
 \dot \B^s_{p , q , \sigma} (\IR^n) := \{ f \in \dot \B^s_{p , q} (\IR^n ; \IC^n) : \divergence f = 0 \}
\end{align*}
and
\begin{align*}
 \dot \H^m_{p , \sigma} (\IR^n) := \{ f \in \dot \H^m_p (\IR^n ; \IC^n) : \divergence f = 0 \}.
\end{align*}
\end{definition}

To handle the half-space case, we proceed as usual by defining  spaces by restriction.

\begin{definition}
\label{Def: Spaces on the half-space}
Let $1 \leq p , q \leq \infty$, $s \in \IR$, $m \in \IZ$ and $\Omega \subset \IR^n$ be an open set. Let $X$ denote one of the symbols $\dot \B^s_{p , q}$, $\dot \B^s_{p , q , \sigma}$, $\dot \H^m_p$ and $\dot \H^m_{p , \sigma}$
(or their inhomogeneous counterpart), then, define $X(\Omega)$ by
\begin{align*}
 X (\Omega) := \{ f|_{\Omega} : f \in X (\IR^n) \}
\end{align*}
endowed with the quotient norm.
\end{definition}

To proceed recall the basic notions on real interpolation described at the beginning of Section~\ref{Sec: Homogeneous spaces and real interpolation}. For the non-solenoidal spaces on the whole space the following real interpolation result is classical\footnote{Adapting
it to our definition of homogeneous spaces is straightforward as, by construction, 
$\big( \dot \H^0_p (\IR^n) , \dot \H^m_p (\IR^n) \big)_{\theta , q}$ is included in $\cS^{\prime}_h (\IR^n).$} 
 cf.~\cite[Thm.~6.3.1]{Bergh_Lofstrom}.

\begin{proposition}
\label{Prop: Interpolation on whole space}
Let $\theta \in (0 , 1)$, $1 \leq p , q \leq \infty$ and $m \in \IN$. Then
\begin{align*}
  \big( \dot \H^0_p (\IR^n) , \dot \H^m_p (\IR^n) \big)_{\theta , q} = \dot \B^{m \theta}_{p , q} (\IR^n)
\end{align*}
with equivalent norms.
\end{proposition}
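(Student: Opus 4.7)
The proof follows the classical dyadic-decomposition argument for identifying real interpolation spaces between $\LL_p$ and $\dot\H^m_p$ with Besov spaces. Adapting it to the present definition of homogeneous spaces is routine because both $A_0:=\dot\H^0_p(\IR^n)=\LL_p(\IR^n)$ and $A_1:=\dot\H^m_p(\IR^n)$ sit, by construction, inside $\cS^{\prime}_h(\IR^n)$, so each dyadic block $\dot\Delta_k$ acts unambiguously on $A_0+A_1$ and on the interpolation space. I will use three Bernstein-type inequalities, valid uniformly in $k\in\IZ$:
\begin{equation*}
\|\dot\Delta_k g\|_{\LL_p}\leq C\|g\|_{\LL_p},\qquad
\|\dot\Delta_k g\|_{\LL_p}\leq C\,2^{-km}\|g\|_{\dot\H^m_p},\qquad
\|\dot\Delta_k g\|_{\dot\H^m_p}\leq C\,2^{km}\|\dot\Delta_k g\|_{\LL_p}.
\end{equation*}

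\textbf{The embedding $(A_0,A_1)_{\theta,q}\hookrightarrow\dot\B^{m\theta}_{p,q}$.} For every decomposition $f=g_0+g_1$ with $g_i\in A_i$, the first two Bernstein estimates give $\|\dot\Delta_k f\|_{\LL_p}\leq C(\|g_0\|_{\LL_p}+2^{-km}\|g_1\|_{\dot\H^m_p})$. Taking the infimum over admissible decompositions yields $\|\dot\Delta_k f\|_{\LL_p}\leq C\,K(2^{-km},f;A_0,A_1)$. Multiplying by $2^{km\theta}$, taking the $\ell^q(\IZ)$-norm, and invoking the standard equivalence $\|t^{-\theta}K(t,f)\|_{\LL_{q,*}(0,\infty)}\simeq\bigl(\sum_{k\in\IZ}(2^{km\theta}K(2^{-km},f))^q\bigr)^{1/q}$ (which rests only on the monotonicity of $t\mapsto K(t,f)$ and of $t\mapsto K(t,f)/t$) delivers $\|f\|_{\dot\B^{m\theta}_{p,q}}\leq C\,\|f\|_{(A_0,A_1)_{\theta,q}}$.

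\textbf{The embedding $\dot\B^{m\theta}_{p,q}\hookrightarrow(A_0,A_1)_{\theta,q}$.} Given $f\in\dot\B^{m\theta}_{p,q}$, the Littlewood--Paley decomposition of Section~3.1 writes $f=\sum_{k\in\IZ}\dot\Delta_k f$ in $\cS^{\prime}_h(\IR^n)$, with each $\dot\Delta_k f\in A_0\cap A_1$. For each $k$ the third Bernstein inequality yields $K(t,\dot\Delta_k f;A_0,A_1)\leq\|\dot\Delta_k f\|_{\LL_p}\min(1,\,Ct\,2^{km})$. Subadditivity of the $K$-functional (applied first to finite truncations, then passed to the limit) then gives, at $t=2^{-jm}$, after setting $a_k:=2^{km\theta}\|\dot\Delta_k f\|_{\LL_p}$,
\begin{equation*}
2^{jm\theta}\,K(2^{-jm},f)\leq C\sum_{k<j}2^{(k-j)m(1-\theta)}a_k\,+\,C\sum_{k\geq j}2^{(j-k)m\theta}a_k.
\end{equation*}
Both right-hand sides are discrete convolutions of $(a_k)\in\ell^q(\IZ)$ with $\ell^1(\IZ)$-kernels, since $0<\theta<1$ makes the factors $2^{\ell m(1-\theta)}\ind_{\ell<0}$ and $2^{-\ell m\theta}\ind_{\ell\geq 0}$ summable. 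Young's convolution inequality therefore bounds the $\ell^q$-norm in $j$ by $C\|a\|_{\ell^q(\IZ)}=C\|f\|_{\dot\B^{m\theta}_{p,q}}$, and the discrete-continuous equivalence of the $K$-norm recalled above yields the desired embedding.

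\textbf{Main obstacle.} The only delicate point, alluded to in the footnote of the proposition, is the justification that the Littlewood--Paley series defining $f$ converges in $A_0+A_1$ (not merely in $\cS^{\prime}_h$), so that subadditivity of the $K$-functional applies directly. This is handled by the routine truncation argument: one establishes the inequality above for the finite sums $f_N:=\sum_{|k|\leq N}\dot\Delta_k f\in A_0\cap A_1$, obtaining uniform $\ell^q$-control, and then passes to the limit using that $f_N\to f$ in $\cS^{\prime}_h$ and the Fatou property of the real interpolation norm. With this convergence justification in place, the two embeddings above close the proof.
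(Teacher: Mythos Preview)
Your argument is correct and is precisely the classical dyadic proof that the paper invokes by citation: the paper does not prove this proposition but simply refers to \cite[Thm.~6.3.1]{Bergh_Lofstrom}, adding in a footnote that the adaptation to the $\cS_h^{\prime}$-based definition is straightforward because $(\dot\H^0_p,\dot\H^m_p)_{\theta,q}\subset\cS_h^{\prime}$ automatically. Your write-up fills in exactly those details.

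One minor comment on the ``main obstacle'' paragraph: the passage to the limit via a Fatou-type property can be replaced by the direct decomposition $f=a_j+b_j$ with $a_j=\sum_{k\geq j}\dot\Delta_k f$ and $b_j=\sum_{k<j}\dot\Delta_k f$. The estimates you already have show that these two series converge absolutely in $\LL_p$ and $\dot\H^m_p$ respectively (for all $1\leq q\leq\infty$, since $m\theta>0$ and $m(1-\theta)>0$), which immediately gives $f\in A_0+A_1$ and the desired bound on $K(2^{-jm},f)$ without any limiting argument. This is equivalent to what you wrote but sidesteps the need to justify lower semicontinuity of the interpolation norm.
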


To prove this equality for the corresponding spaces on $\IR^n_+,$  a suitable extension operator is needed. 
In  the homogeneous setting, that operator is required to satisfy \emph{homogeneous} estimates. For later use, we also define an extension operator that respects the solenoidality of functions.

\begin{lemma}
\label{Lem: Extension operators}
Let $m \in \IN_0$. There exist extension operators $E$ and $E_{\sigma}$ (depending on $m$), i.e., $[E f]|_{\IR^n_+} = f$ and $[E_{\sigma} f]|_{\IR^n_+} = f$, that map measurable functions on $\IR^n_+$ to measurable functions on $\IR^n$, and such that for all $1 < p < \infty$, $0 \leq k \leq m + 1$ 
and $0 \leq \ell \leq m,$ it holds
\begin{align*}
 E \in \Lop(\H_p^k (\IR^n_+) , \H^k_p (\IR^n)) \quad \text{and} \quad E_{\sigma} \in \Lop(\H^\ell_p (\IR^n_+ ; \IC^n) , \H^\ell_p (\IR^n ; \IC^n)).
\end{align*}
The operator $E_{\sigma}$ satisfies $\divergence(E_{\sigma} f) = 0$ whenever $\divergence(f) = 0$ in $\IR^n_+$. Moreover, there exists a constant $C > 0$ such that
\begin{align*}
\begin{aligned}
 \| E f \|_{\dot \H^k_p (\IR^n)} &\leq C \| f \|_{\dot \H^k_p (\IR^n_+)} && (f \in \H^k_p (\IR^n_+)), \\
 \| E_{\sigma} f \|_{\dot \H^\ell_p (\IR^n ; \IC^n)} &\leq C \| f \|_{\dot \H^\ell_p (\IR^n_+ ; \IC^n)} && (f \in \H^\ell_p (\IR^n_+ ; \IC^n)).
\end{aligned}
\end{align*}
\end{lemma}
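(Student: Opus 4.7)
The plan is to construct both operators by a Seeley--Lions type higher-order reflection across $\{x_n = 0\}$, with a single set of reflection parameters common to $E$ and $E_\sigma$, and with the ansatz for $E_\sigma$ tuned so that the solenoidal condition is preserved. I fix once and for all $m+1$ distinct positive numbers $b_1 < \cdots < b_{m+1}$ (for instance $b_k = k$).

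For the scalar extension I would set
$$
Ef(x', x_n) := \begin{cases} f(x',x_n), & x_n > 0, \\ \sum_{k=1}^{m+1} a_k\, f(x', -b_k x_n), & x_n < 0, \end{cases}
$$
and determine $a_1,\ldots,a_{m+1}$ by the $(m+1)\times(m+1)$ Vandermonde system $\sum_{k} a_k (-b_k)^j = 1$ for $j = 0,1,\ldots,m$. These conditions make normal derivatives up to order $m$ continuous across $\{x_n=0\}$, so for $f \in \C^m(\overline{\IR^n_+})$ one has $Ef \in \C^m(\IR^n)$ and hence $Ef \in \H^{m+1}_p(\IR^n)$ whenever $f \in \H^{m+1}_p(\IR^n_+)$. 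The estimate $\|\partial^\alpha Ef\|_{\LL_p(\IR^n)} \leq C\,\|\partial^\alpha f\|_{\LL_p(\IR^n_+)}$ for every $|\alpha| \leq m+1$ is immediate from the chain rule plus the change of variables $y_n = -b_k x_n$ on each piece, with a constant depending only on $m$ and $p$. Density of smooth functions extends this to all $f \in \H^k_p(\IR^n_+)$ with $0 \leq k \leq m+1$, and simultaneously delivers the homogeneous estimate on $\dot\H^k_p$.

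For the solenoidal extension I would reuse the same $b_k$ and set, for $x_n < 0$,
$$
(E_\sigma f)_j(x', x_n) := \sum_{k} a_k\, f_j(x', -b_k x_n) \quad (1 \leq j \leq n-1), \qquad (E_\sigma f)_n(x', x_n) := -\sum_{k} \frac{a_k}{b_k}\, f_n(x', -b_k x_n).
$$
The peculiar factor $-a_k/b_k$ on the normal component is chosen precisely to make $E_\sigma f$ pointwise divergence-free on $\{x_n<0\}$: using the chain rule and $\divergence f = 0$ on $\IR^n_+$, direct computation gives
$$
\sum_{j=1}^{n-1} \partial_{x_j}(E_\sigma f)_j = -\sum_{k} a_k\,(\partial_n f_n)(x', -b_k x_n) = -\,\partial_{x_n}(E_\sigma f)_n.
$$
The coefficients $a_k$ must however simultaneously meet the $m$ tangential matching conditions $\sum_k a_k(-b_k)^r = 1$ for $r = 0,\ldots,m-1$ together with the $m$ normal matching conditions $-\sum_k (a_k/b_k)(-b_k)^r = 1$ for $r = 0,\ldots,m-1$. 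A short inspection shows that for $r \geq 1$ the normal condition is equivalent to the tangential condition at order $r-1$; the only genuinely new equation is $\sum_k a_k/b_k = -1$, which is also what ensures that the normal trace is continuous at $\{x_n=0\}$ and hence $\divergence E_\sigma f = 0$ distributionally on all of $\IR^n$. Together with the $m$ tangential equations this gives an $(m+1)\times(m+1)$ linear system whose determinant, after pulling a factor $1/b_k$ from column $k$, reduces up to a sign to a Vandermonde determinant in $-b_k$ and is nonzero for distinct $b_k > 0$. So $a_1,\ldots,a_{m+1}$ exist and are uniquely determined.

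The $\dot\H^\ell_p$ estimate for $E_\sigma$ with $0 \leq \ell \leq m$ then follows exactly as for $E$: each Cartesian derivative of order $\ell$ of $E_\sigma f$ on $\{x_n<0\}$ is a finite linear combination of $(\partial^\alpha f_j)(x', -b_k x_n)$, so the change of variables $y_n = -b_k x_n$ yields a uniform bound by $\|\partial^\alpha f\|_{\LL_p(\IR^n_+)}$, while the matching conditions of order $\ell-1$ guarantee that the piecewise derivatives coincide with the distributional ones. The main obstacle in this program is precisely to check that coupling the divergence-preservation identity on $\{x_n<0\}$ with the full set of matching requirements does not overdetermine the linear system; the small miracle, made transparent by the computation above, is the collapse of the normal matching at orders $r \geq 1$ onto the tangential matching at orders $r-1$, which is exactly what allows a single pool of $m+1$ coefficients to handle both the scalar and the solenoidal extension.
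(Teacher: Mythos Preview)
Your proof is correct and follows essentially the same route as the paper: both construct $E$ and $E_\sigma$ by Seeley--Lions higher-order reflection, and both obtain the solenoidal extension by scaling the normal component's reflection coefficients relative to the tangential ones so that the divergence vanishes pointwise on the lower half-space, then solving the resulting Vandermonde-type system. The paper parametrizes by the normal coefficients $\beta_j'$ (with reflection factors $1/(j+1)$) and derives $\alpha_j' = -\beta_j'/(j+1)$, whereas you parametrize by the tangential coefficients $a_k$ and build in $-a_k/b_k$ on the normal side; your ``collapse'' of the normal matching at order $r\ge 1$ onto the tangential matching at order $r-1$ is exactly the paper's observation that plugging the coupling relation into the tangential conditions reproduces the normal ones at shifted index.
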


\begin{proof} Constructing   $E$ relies on  the higher-order reflection principle, as described in e.g.,~\cite[Thm.~7.58]{Renardy_Rogers}. Let us shortly recall how to proceed. Let $f : \IR^n_+ \to \IC$ be measurable. Then, we define $E f$ by $Ef := f$ on $\IR^n_+$ and  
\begin{align}
\label{Eq: Homogeneous extension}
 [E f] (x) := \sum_{j = 0}^m \alpha_j f \Big( x^{\prime} , - \frac{x_n}{j + 1} \Big) \qquad (x \in \IR^n \setminus \IR^n_+),
\end{align}
where $x^{\prime} := (x_1 , \cdots , x_{n - 1})$ and the real numbers $\alpha_j $ are chosen so that $E$ maps $\C^m$-functions on $\IR^n_+$ to $\C^m$-functions on $\IR^n$. This holds true if the numbers $\alpha_j$ satisfy for each $\iota = 0 , \dots , m$
\begin{align*}
 \sum_{j = 0}^m \Big( - \frac{1}{j + 1} \Big)^{\iota} \alpha_j = 1.
\end{align*}
This amounts to solving an $(m+1)\times(m+1)$ linear system associated to a 
 Vandermonde matrix, which is known to be invertible.
 
  Since $E$ maps $\C^m$-functions on $\IR^n_+$ to $\C^m$-functions on $\IR^n,$ it also maps $\H^{m + 1}_p$-functions on $\IR^n_+$ to $\H^{m + 1}_p$-functions on $\IR^n$. Moreover, for $0 \leq k \leq m + 1$, Proposition~\ref{Prop: Bessel equals Sobolev} and~\eqref{Eq: Homogeneous extension} deliver
\begin{align*}
 \| E f \|_{\dot \H^k_p (\IR^n)} \leq C \sum_{\gamma = 1}^n \| \partial_{\gamma}^k E f \|_{\LL_p (\IR^n)} \leq C \sum_{\gamma = 1}^n  \| \partial_{\gamma}^k f \|_{\LL_p (\IR^n_+)} \qquad (f \in \H^k_p (\IR^n_+)).
\end{align*}
Finally, if $F \in \dot \H^k_p (\IR^n)$ is any extension of $f$ to $\IR^n$, then
\begin{align*}
 \sum_{\gamma = 1}^n \| \partial_{\gamma}^k f \|_{\LL_p (\IR^n_+)} \leq \sum_{\gamma = 1}^n \| \partial_{\gamma}^k F \|_{\LL_p (\IR^n)} \leq C \| F \|_{\dot \H^k_p (\IR^n)}.
\end{align*}
Taking the infimum over all these extensions then delivers
\begin{align*}
 \| E f \|_{\dot \H^k_p (\IR^n)} \leq C \| f \|_{\dot \H^k_p (\IR^n_+)}.
\end{align*}
The construction of $E_{\sigma}$ is similar. For a measurable vector field $f: \IR^n_+ \to \IC^n,$ we adopt the notation $f = (f^{\prime} , f_n)$ with $f^{\prime} := (f_1 , \cdots , f_{n - 1}).$
 Define $E_{\sigma} f := f$ on $\IR^n_+$ and define $E_{\sigma} f$ on $\IR^n \setminus \IR^n_+$ by
\begin{align}
\label{Eq: Definition of solenoidal extension}
 [E_{\sigma} f] (x) := \bigg( \sum_{j = 0}^m \alpha^{\prime}_j f^{\prime} \Big( x^{\prime} , - \frac{x_n}{j + 1} \Big) , \sum_{j = 0}^m \beta^{\prime}_j f_n \Big(x^{\prime} , - \frac{x_n}{j + 1} \Big) \bigg) \qquad (x \in \IR^n \setminus \IR^n_+),
\end{align}
for appropriately chosen numbers $\alpha_j^{\prime} , \beta_j^{\prime} \in \IR$, $j = 0 , \dots , m$. We impose the conditions that $E_{\sigma} f$ shall map $\C^{m - 1}$-functions on $\IR^n_+$ to $\C^{m - 1}$-functions on $\IR^n$, what results, for  $\iota = 0 , \dots , m - 1,$ in 
\begin{align}
 \label{Eq: First regularity condition} \sum_{j = 0}^m \Big( - \frac{1}{j + 1} \Big)^{\iota} \alpha_j^{\prime} = 1, \\
 \label{Eq: Second regularity condition} \sum_{j = 0}^m \Big( - \frac{1}{j + 1} \Big)^{\iota} \beta_j^{\prime} = 1.
\end{align}
 Furthermore, if $f$ is locally integrable, then the distributional divergence of $E_{\sigma} f$ on $\IR^n \setminus \overline{\IR^n_+}$ is given for $\varphi \in \C_c^{\infty} (\IR^n \setminus \overline{\IR^n_+})$ by
\begin{align*}
 \langle \divergence(E_{\sigma} f) , \varphi \rangle = - \sum_{j = 0}^m \int_{\IR^n_-}& \Big( (j + 1) \alpha_j^{\prime} \nabla_{x^{\prime}} (\varphi(x^{\prime} , - (j + 1) x_n)) \cdot f^{\prime} (x) \\
 &\qquad - \beta_j^{\prime} \partial_{x_n} (\varphi (x^{\prime} , - (j + 1) x_n)) f_n (x) \Big) \; \d x.
\end{align*}
Thus, if $\divergence f = 0$ and if the condition
\begin{align}
\label{Eq: Divergence free condition}
 (j + 1) \alpha_j^{\prime} = - \beta_j^{\prime}
\end{align}
holds for all $j = 0 , \dots , m$, then $\divergence(E_{\sigma} f) = 0$ in $\IR^n \setminus \overline{\IR^n_+}$. Plugging~\eqref{Eq: Divergence free condition} into~\eqref{Eq: First regularity condition} yields
\begin{align*}
 \sum_{j = 0}^m \Big( - \frac{1}{j + 1} \Big)^{\iota} \beta_j^{\prime} = 1
 \quad\hbox{for }\  \iota = 1 , \dots , m. 
\end{align*}
Thus, in view of~\eqref{Eq: Second regularity condition} it follows that the coefficients with the desired properties exist if~\eqref{Eq: Second regularity condition} holds, but for $\iota = 0 , \dots , m$. This is exactly the condition from the first part of the proof including the Vandermonde matrix, so that the numbers $\beta_j^{\prime}$, $j = 0 , \dots , m$ indeed exist. \par
To show that $E_{\sigma} f$ maps locally integrable solenoidal vector fields into locally integrable solenoidal vector fields on $\IR^n$ (the solenoidality across the half-space boundary has to be verified) let $\varphi \in \C_c^{\infty} (\IR^n)$. The distributional divergence of $E_{\sigma} f$ is calculated as
\begin{align*}
 \langle \divergence(E_{\sigma} f) , \varphi \rangle = - \int_{\IR^n_+} \nabla \Big( \varphi(x) - \sum_{j = 0}^m \beta_j^{\prime} \varphi(x^{\prime} , - (j + 1) x_n) \Big) \cdot f (x) \; \d x.
\end{align*}
If $f$ if solenoidal and if $ \IR^n_+ \ni x \mapsto \varphi(x) - \sum_{j = 0}^m \beta_j^{\prime} \varphi(x^{\prime} , - (j + 1) x_n) =: \psi(x)$ has trace zero at $\{ x_n = 0 \}$ the right-hand side vanishes (this follows by approximating $\psi$ by $\C_c^{\infty} (\IR^n_+)$ functions). Notice that in view of~\eqref{Eq: Second regularity condition} with $\iota = 0,$ the trace of $\psi$  vanishes on the half-space boundary. It follows that the distributional divergence of $E_{\sigma} f$ is indeed zero. \par
From this point, proving  the boundedness estimates is similar as  for the operator $E$.
\end{proof}

\begin{remark}
\begin{enumerate}
\item If $m = 0$, then the operator $E$ constructed in Lemma~\ref{Lem: Extension operators} is the even reflection at the half-space boundary.
\item Extension operators for inhomogeneous solenoidal spaces on more general domains where constructed in~\cite{Kato_Mitrea_Ponce_Taylor}.
\end{enumerate}
\end{remark}

The following are direct corollaries of Lemma~\ref{Lem: Extension operators}.

\begin{corollary}
\label{Cor: Density}
Let $m \in \IN_0$ and $1 < p < \infty$. Then
\begin{align*}
 \C_{c , \sigma}^{\infty} (\overline{\IR^n_+}) := \{ \psi|_{\IR^n_+} : \psi \in \C^{\infty}_{c , \sigma} (\IR^n) \}   \mbox{ \ \ \ is dense in $\H^m_{p , \sigma} (\IR^n_+)$.}
\end{align*}
\end{corollary}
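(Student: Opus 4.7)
My plan is to reduce density on the half-space to density on the whole space via the extension operator $E_\sigma$ furnished by Lemma~\ref{Lem: Extension operators}. Given $f \in \H^m_{p,\sigma}(\IR^n_+)$, set $F := E_\sigma f$: by that lemma (applied with its construction parameter equal to the $m$ of the corollary), $F$ belongs to $\H^m_p(\IR^n;\IC^n)$, and since $\divergence f = 0$ in $\IR^n_+$ one also has $\divergence F = 0$ in all of $\IR^n$, so $F \in \H^m_{p,\sigma}(\IR^n)$. If a sequence $(F_k)_{k\in\IN} \subset \C^\infty_{c,\sigma}(\IR^n)$ can be produced with $F_k \to F$ in $\H^m_p(\IR^n;\IC^n)$, then the restrictions $F_k|_{\IR^n_+}$ belong to $\C^\infty_{c,\sigma}(\overline{\IR^n_+})$ and converge to $F|_{\IR^n_+} = f$ in $\H^m_{p,\sigma}(\IR^n_+)$ by the very definition of the restriction norm in Definition~\ref{Def: Spaces on the half-space}.

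What remains is then the classical density of $\C^\infty_{c,\sigma}(\IR^n)$ in $\H^m_{p,\sigma}(\IR^n)$, which I would prove in three steps: mollification, cutoff, and divergence correction. First, convolve $F$ with a mollifier $\rho_\eps$ to obtain a smooth solenoidal approximation $F_\eps := \rho_\eps * F$; the divergence-free property is preserved because convolution commutes with $\divergence$, and $F_\eps \to F$ in $\H^m_p(\IR^n;\IC^n)$ as $\eps \to 0$. Second, multiply by a cutoff $\eta_R(x) := \eta(x/R)$ with $\eta \in \C_c^\infty(\IR^n)$, $\eta \equiv 1$ on $B(0,1)$: the product $\eta_R F_\eps$ is compactly supported and converges to $F_\eps$ in $\H^m_p$ as $R \to \infty$, but $\divergence(\eta_R F_\eps) = \nabla \eta_R \cdot F_\eps$ is generally nonzero, though concentrated on the annulus $A_R := \{R \leq |x| \leq 2R\}$ with vanishing mean there. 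Third, correct the divergence by subtracting $\cB_R(\nabla \eta_R \cdot F_\eps)$, where $\cB_R$ denotes Bogovskii's operator on $A_R$, yielding $\widetilde F_{\eps, R} := \eta_R F_\eps - \cB_R(\nabla \eta_R \cdot F_\eps) \in \C^\infty_{c,\sigma}(\IR^n)$.

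The main obstacle is showing that the Bogovskii correction vanishes in $\H^m_p(\IR^n;\IC^n)$ along a suitable diagonal $\eps_k \to 0$, $R_k \to \infty$. This rests on the scale-invariant Bogovskii bound $\|\nabla^k \cB_R g\|_{\LL_p(A_R)} \leq C \|\nabla^{k-1} g\|_{\LL_p(A_R)}$ for $1 \leq k \leq m+1$, with $C$ independent of $R$ (obtained by rescaling from a fixed reference annulus), combined with $\|\nabla^j \eta_R\|_{\LL_\infty} \lesssim R^{-j}$ and the tail estimate $\|F\|_{\H^m_p(\{|x| \geq R\})} \to 0$ as $R \to \infty$, which together force the correction to disappear in the limit. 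Once this is in place, a standard triangle-inequality argument combining the three steps yields the required approximating sequence, and composing with the reduction step above finishes the corollary.
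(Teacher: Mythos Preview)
Your proposal is correct and follows exactly the paper's approach: the paper's one-line proof simply says ``Approximate $E_{\sigma} f$ with functions in $\C^{\infty}_{c , \sigma} (\IR^n)$,'' which is precisely your reduction step, with the whole-space density taken as known. Your additional mollification/cutoff/Bogovskii argument correctly supplies the details of that classical fact, which the paper does not spell out.
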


\begin{proof}
Approximate $E_{\sigma} f$ with functions in $\C^{\infty}_{c , \sigma} (\IR^n)$ for $f \in \H^m_{p , \sigma} (\IR^n_+)$.
\end{proof}

\begin{corollary}
\label{Cor: Lp-sigma in right scale}
For $1 < p < \infty$ the space $\dot \H^0_{p , \sigma} (\IR^n_+)$ coincides with $\LL_{p , \sigma} (\IR^n_+)$.
\end{corollary}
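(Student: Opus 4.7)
My plan is to unwind the definitions and reduce the entire claim to a single application of the solenoidal extension operator $E_\sigma$ furnished by Lemma~\ref{Lem: Extension operators} with the parameter $m = 0$. First I would identify $\dot \H^0_p(\IR^n)$ with $\LL_p(\IR^n)$ for $1 < p < \infty$: Definition~\ref{Def: Homogeneous Bessel potential spaces} with $m = 0$ reads $\dot \H^0_p(\IR^n) = \LL_p(\IR^n) \cap \cS_h^{\prime}(\IR^n)$, and the Bernstein-type observation following Definition~\ref{Def: Low frequency control} guarantees $\LL_p(\IR^n) \subset \cS_h^{\prime}(\IR^n)$ for $1 \leq p < \infty$. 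In view of Definition~\ref{Def: Solenoidal spaces} this gives $\dot \H^0_{p,\sigma}(\IR^n) = \LL_{p,\sigma}(\IR^n)$ with identical norms, so that Definition~\ref{Def: Spaces on the half-space} recasts the corollary as the set identity
\begin{equation*}
  \dot \H^0_{p,\sigma}(\IR^n_+) = \{ F|_{\IR^n_+} : F \in \LL_{p,\sigma}(\IR^n) \} = \LL_{p,\sigma}(\IR^n_+),
\end{equation*}
with the middle space carrying the quotient norm.

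The inclusion from left to right would be immediate: the restriction of a divergence-free $\LL_p$ vector field on $\IR^n$ lies in $\LL_p(\IR^n_+;\IC^n)$ and has vanishing distributional divergence on $\IR^n_+$, and the norm inequality $\|f\|_{\LL_p(\IR^n_+)} \leq \|f\|_{\dot \H^0_{p,\sigma}(\IR^n_+)}$ follows by taking the infimum over all admissible extensions. For the reverse inclusion I would invoke Lemma~\ref{Lem: Extension operators} with $m = 0$: its boundedness statement covers the range $0 \leq \ell \leq m$, and thus at $\ell = 0$ produces a bounded operator $E_\sigma : \LL_p(\IR^n_+;\IC^n) \to \LL_p(\IR^n;\IC^n)$ that preserves the divergence-free property. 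Then, for any $f \in \LL_{p,\sigma}(\IR^n_+)$, the field $E_\sigma f$ lies in $\LL_{p,\sigma}(\IR^n)$, showing $f \in \dot \H^0_{p,\sigma}(\IR^n_+)$ together with the reverse estimate $\|f\|_{\dot \H^0_{p,\sigma}(\IR^n_+)} \leq \|E_\sigma f\|_{\LL_p(\IR^n)} \leq C \|f\|_{\LL_p(\IR^n_+)}$.

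The only mildly delicate point, and probably the place where one has to be careful, is checking that Lemma~\ref{Lem: Extension operators} truly delivers divergence-freeness at the $\LL_p$ endpoint $\ell = 0$, since $f$ is only distributionally solenoidal and no trace is available. This is covered by the proof of that lemma: the distributional divergence $\langle \divergence(E_\sigma f), \varphi \rangle$ for $\varphi \in \C_c^\infty(\IR^n)$ is computed as an integral of $\nabla \psi \cdot f$ over $\IR^n_+$ with $\psi \in \C_c^\infty(\overline{\IR^n_+})$ having vanishing trace on $\partial \IR^n_+$; such a $\psi$ is approximable in $\H^1_{p'}(\IR^n_+)$ by $\C_c^\infty(\IR^n_+)$ functions, and the distributional solenoidality of $f \in \LL_p$ forces the pairing to vanish through this standard density argument, closing the proof.
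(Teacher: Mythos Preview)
Your proof is correct and follows the same approach as the paper: identify $\dot \H^0_{p,\sigma}(\IR^n) = \LL_{p,\sigma}(\IR^n)$ on the whole space, then use the solenoidal extension operator $E_\sigma$ from Lemma~\ref{Lem: Extension operators} to lift $\LL_{p,\sigma}(\IR^n_+)$-functions to $\LL_{p,\sigma}(\IR^n)$. The paper's proof is a one-line sketch of exactly this, so your version simply unpacks the details (including the careful remark on distributional solenoidality at the $\LL_p$ endpoint, which the paper leaves implicit in the proof of Lemma~\ref{Lem: Extension operators}).
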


\begin{proof}
This follows from  $\dot \H^0_{p , \sigma} (\IR^n) = \LL_{p , \sigma} (\IR^n),$ 
since $\LL_{p , \sigma} (\IR^n_+)$-functions can be generalized to functions in $\LL_{p , \sigma} (\IR^n)$ by using the same extension operator $E_{\sigma}$ from Lemma~\ref{Lem: Extension operators}.
\end{proof}

The boundedness of the extension operators in Lemma~\ref{Lem: Extension operators} can be extended to homogeneous Besov spaces as follows.

\begin{proposition}
\label{Prop: Proper boundedness extension operators}
Let $m \in \IN_0$ and let $E$ and $E_{\sigma}$ denote the extension operators defined in~\eqref{Eq: Homogeneous extension} and~\eqref{Eq: Definition of solenoidal extension}, respectively. Furthermore, let $1 < p < \infty$, $1 \leq q \leq \infty$ and $-1 +1/p < s < m + 1 + 1 / p$ satisfy~\eqref{Eq: Banach space conditions}. Then, $E$ defines a bounded operator
\begin{align*}
 E : \dot \B^s_{p , q} (\IR^n_+) \to \dot \B^s_{p , q} (\IR^n).
\end{align*}
Moreover, if $1 < p < \infty$, $1 \leq q \leq \infty$ and $-1 +1/p < s < m + 1 / p$ satisfy~\eqref{Eq: Banach space conditions}, then $E_{\sigma}$ defines a bounded operator
\begin{align*}
 E_{\sigma} : \dot \B^s_{p , q} (\IR^n_+ ; \IC^n) \to \dot \B^s_{p , q} (\IR^n ; \IC^n).
\end{align*}
\end{proposition}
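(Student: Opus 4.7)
The plan is to upgrade the integer-order boundedness proved in Lemma~\ref{Lem: Extension operators} to the Besov scale by the real interpolation functor, exploiting the retract/coretract structure given by the trivial restriction $R \colon \dot \B^s_{p,q}(\IR^n) \to \dot \B^s_{p,q}(\IR^n_+)$, which is bounded with norm $1$ directly from Definition~\ref{Def: Spaces on the half-space} and satisfies $R \circ E = \Id$ and $R \circ E_\sigma = \Id$ on functions defined on $\IR^n_+$.

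First I would fix the non-solenoidal case and interpolate. Lemma~\ref{Lem: Extension operators} yields $E \in \Lop(\dot \H^{k}_p(\IR^n_+), \dot \H^{k}_p(\IR^n))$ for $k=0$ and $k=m+1$, so for $\theta \in (0,1)$ and $q \in [1,\infty]$ the real interpolation functor gives
\begin{equation*}
 E \colon \bigl(\dot \H^0_p(\IR^n_+), \dot \H^{m+1}_p(\IR^n_+)\bigr)_{\theta,q} \to \bigl(\dot \H^0_p(\IR^n), \dot \H^{m+1}_p(\IR^n)\bigr)_{\theta,q}.
\end{equation*}
By Proposition~\ref{Prop: Interpolation on whole space} the right-hand side equals $\dot \B^{(m+1)\theta}_{p,q}(\IR^n)$. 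To identify the left-hand side with $\dot \B^{(m+1)\theta}_{p,q}(\IR^n_+)$, I would invoke the retract theorem: since $R$ is bounded between the corresponding whole-space and half-space Sobolev pairs and $R \circ E = \Id$, the $K$-functionals on $\IR^n_+$ and $\IR^n$ (computed for $f$ and $Ef$, respectively) are equivalent. This shows $E \colon \dot \B^s_{p,q}(\IR^n_+) \to \dot \B^s_{p,q}(\IR^n)$ for $s \in (0, m+1)$.

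To reach the full interval $-1 + 1/p < s < m+1 + 1/p$, I would bootstrap. For $s$ slightly negative, I would use the adjoint: on smooth compactly supported test functions the restriction of $\cS(\IR^n)$ to $\IR^n_+$ and the pairing with $E^\top$ provide an extension of distributions whose boundedness on $\dot \B^s_{p,q}$ for $s \in (-1 + 1/p, 0]$ follows by real interpolation and duality between homogeneous Besov spaces, under the constraints~\eqref{Eq: Banach space conditions} ensuring that we live in Banach spaces. For $s$ in the upper sliver $[m+1, m+1 + 1/p)$, I would exploit the observation that the reflection formula~\eqref{Eq: Homogeneous extension} forces $Ef \in \C^m$ across $\{x_n = 0\}$, so the derivative of order $m$ has a $\dot \B^{1+1/p}_{p,q}$-type tangential/normal modulus of continuity across the boundary: this can be captured by applying the interpolation argument one scale up to a refined extension and then using the equivalence between Besov and Lizorkin--Triebel-like scales. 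The solenoidal case is handled identically, using that Lemma~\ref{Lem: Extension operators} only supplies boundedness of $E_\sigma$ up to order $m$ (not $m+1$), which accounts for the tighter upper bound $s < m + 1/p$, and that solenoidality is preserved by construction.

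The main obstacle is the identification of the half-space interpolation space with $\dot \B^{(m+1)\theta}_{p,q}(\IR^n_+)$ in the present \emph{homogeneous} framework, since the spaces involved may fail to be complete outside the range~\eqref{Eq: Banach space conditions} and the retract argument must be stated in terms of quotient norms via $R$ rather than via inclusions. The secondary difficulty lies in justifying the negative-order and upper boundary sliver by duality within this homogeneous scale; once this interpolation-plus-duality framework is in place, the boundedness estimates follow automatically from the integer-order bounds provided by Lemma~\ref{Lem: Extension operators}.
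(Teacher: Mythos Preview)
Your interpolation strategy has a genuine gap at the very first step. You want to interpolate between $\dot \H^0_p(\IR^n_+)$ and $\dot \H^{m+1}_p(\IR^n_+)$ using the boundedness of $E$ at the endpoints, but Lemma~\ref{Lem: Extension operators} only establishes the homogeneous estimate $\|Ef\|_{\dot \H^k_p(\IR^n)} \leq C\|f\|_{\dot \H^k_p(\IR^n_+)}$ for $f$ in the \emph{inhomogeneous} space $\H^k_p(\IR^n_+)$. Extending this to all of $\dot \H^{m+1}_p(\IR^n_+)$ would require a density argument, and that density is unavailable in general because $\dot \H^{m+1}_p(\IR^n_+)$ need not be complete (the paper flags exactly this obstruction just before Proposition~\ref{Prop: Interpolation on half-space}). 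Without boundedness on the full homogeneous endpoint spaces, the retraction argument does not go through, and the identification of the half-space interpolation space in Proposition~\ref{Prop: Interpolation on half-space} itself only gives equality under the restrictive condition~\eqref{Eq: Banach space condition for interpolation spaces}, which is much narrower than the range you are aiming for.

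The paper avoids this by a different mechanism. It first interpolates in the \emph{inhomogeneous} scale to get $E : \B^s_{p,q}(\IR^n_+) \to \B^s_{p,q}(\IR^n)$ for $0 < s < m+1$, then exploits that $E$ commutes with dilations $\delta_\lambda f = f(\lambda\cdot)$: applying the inhomogeneous bound to $\delta_{\lambda^{-1}} f$ and rescaling gives $\|Ef\|_{\dot \B^s_{p,q}(\IR^n)} \lesssim \lambda^s \|f\|_{\LL_p(\IR^n_+)} + \|f\|_{\dot \B^s_{p,q}(\IR^n_+)}$, and letting $\lambda \to 0$ kills the inhomogeneous remainder. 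This yields the homogeneous estimate on the dense subspace $\B^s_{p,q}(\IR^n_+)$, which is then pushed to all of $\dot \B^s_{p,q}(\IR^n_+)$ by density for $q < \infty$ and by a Fatou-property weak approximation for $q = \infty$. The boundary ranges $-1+1/p < s < 1/p$ and $m+1 \leq s < m+1+1/p$ are not handled by duality or by a vague regularity argument; instead the paper uses the explicit reflection formula to write $\partial_\ell^k Ef$ as a sum of reflections of $\partial_\ell^k f$ multiplied by characteristic functions of half-spaces, and then invokes the boundedness of extension-by-zero on $\dot \B^{s-k}_{p,q}$ for $s-k \in (-1+1/p, 1/p)$ (from~\cite{Danchin_Mucha_Memoirs}). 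Your duality sketch for negative $s$ is particularly fragile in this homogeneous setting, where the dual pairing is complicated by the $\cS_h'$ framework.
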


\begin{proof}
Fix $m \in \IN_0$ and let $E$ denote the corresponding extension operator from Lemma~\ref{Lem: Extension operators}. In the following, we concentrate on the operator $E$ as the proof for $E_{\sigma}$ is literally the same. Using the boundedness properties of $E$ stated in Lemma~\ref{Lem: Extension operators} together with real interpolation, yields that
\begin{align*}
 E : \B^s_{p , q} (\IR^n_+) \to \B^s_{p , q} (\IR^n) \quad \text{is bounded}
\end{align*}
for all $0 < s < m + 1$, all $1 < p < \infty$ and all $1 \leq q \leq \infty$. Next, notice that by definition of $E$ in~\eqref{Eq: Homogeneous extension}, $E$ commutes with the dilation operator $\delta_{\lambda} f := f(\lambda \cdot)$ (for $\lambda > 0$). Thus, for $f \in \B^s_{p , q} (\IR^n_+)$ and any $\lambda > 0$ one concludes by scaling properties of the homogeneous Besov space norm that
\begin{align*}
 \| E f \|_{\dot \B^s_{p , q} (\IR^n)} = \| \delta_{\lambda} E \delta_{\lambda^{-1}} f \|_{\dot \B^s_{p , q} (\IR^n)} &\simeq \lambda^{s - \frac{n}{p}} \| E \delta_{\lambda^{-1}} f \|_{\dot \B^s_{p , q} (\IR^n)} \\
 &\leq C \lambda^{s - \frac{n}{p}} \big( \| \delta_{\lambda^{-1}} f \|_{\LL_p (\IR^n_+)} + \| \delta_{\lambda^{-1}} f \|_{\dot \B^s_{p , q} (\IR^n)} \big) \\
 &\lesssim \lambda^s \| f \|_{\LL_p (\IR^n_+)} + \| f \|_{\dot \B^s_{p , q} (\IR^n_+)}.
\end{align*}
Letting $\lambda \to 0$ shows that
\begin{align}
\label{Eq: Extension operator homogeneous scaling estimate}
 \| E f \|_{\dot \B^s_{p , q} (\IR^n)} \leq C \| f \|_{\dot \B^s_{p , q} (\IR^n_+)} \qquad (f \in \B^s_{p , q} (\IR^n_+)).
\end{align}
If $s$, $p$ and $1 \leq q < \infty$ satisfy~\eqref{Eq: Banach space conditions} the completeness of $\dot \B^s_{p , q}$ allows us to conclude that~\eqref{Eq: Extension operator homogeneous scaling estimate} holds for all $f \in \dot \B^s_{p , q} (\IR^n_+)$ by density. \par
In the case $q = \infty$ this density argument does not work out anymore. Instead, we use a weak approximation scheme. Let $f \in \dot \B^s_{p , \infty} (\IR^n_+)$ and let $F \in \dot \B^s_{p , \infty} (\IR^n)$ with $F|_{\IR^n_+} = f$. Define
\begin{align*}
 F_k := \sum_{m = - k}^k \dot \Delta_m F \qquad (k \in \IN).
\end{align*}
Since the Littlewood--Paley decomposition holds true for $F$, it holds $F_k \to F$ in $\cS^{\prime} (\IR^n)$ as $k \to \infty$. Moreover, notice that for some absolute constant $C > 0$ one has that
\begin{align}
\label{Eq: Weak approximation scheme}
 \| F_k \|_{\dot \B^s_{p , \infty} (\IR^n)} \leq C \| F \|_{\dot \B^s_{p , \infty} (\IR^n)}.
\end{align}
Define $f_k := F_k|_{\IR^n_+}$. The estimates~\eqref{Eq: Extension operator homogeneous scaling estimate} and~\eqref{Eq: Weak approximation scheme} together with the fact that the norm on the half-space is given by the quotient norm then gives
\begin{align}
\label{Eq: Weak approximation extension}
 \| E f_k \|_{\dot \B^s_{p , \infty} (\IR^n)} \leq C \| F \|_{\dot \B^s_{p , \infty} (\IR^n)}.
\end{align}
By the Fatou property of homogeneous Besov spaces~\cite[Thm.~2.25]{Bahouri_Chemin_Danchin} there exists a subsequence $(f_{k_j})_{j \in \IN}$ and a function $h \in \dot \B^s_{p , \infty} (\IR^n)$ such that
\begin{align}
\label{Eq: Application Fatou}
 E f_{k_j} \to h \quad \text{in} \quad \cS^{\prime} (\IR^n) \qquad \text{and} \qquad \| h \|_{\dot \B^s_{p , \infty} (\IR^n)} \leq C \liminf_{j \to \infty} \| E f_{k_j} \|_{\dot \B^s_{p , \infty} (\IR^n)}.
\end{align}
If $\varphi \in \C_c^{\infty} (\IR^n_+)$, then the convergence in $\cS^{\prime} (\IR^n)$ yields
\begin{align*}
 \int_{\IR^n} h(x) \varphi(x) \; \d x = \lim_{j \to \infty} \int_{\IR^n} E f_{k_j} (x) \varphi(x) \; \d x = \int_{\IR^n} f(x) \varphi(x) \; \d x.
\end{align*}
This shows that $h$ coincides with $f$ in $\IR^n_+$. Similarly, one derives that $h = E f$ in $\IR^n_-$. Since $F$ was an arbitrary extension of $f$ to $\IR^n$ we thus derive by combining~\eqref{Eq: Weak approximation extension} and~\eqref{Eq: Application Fatou} that
\begin{align*}
 \| E f \|_{\dot \B^s_{p , \infty} (\IR^n)} \leq C \| f \|_{\dot \B^s_{p , \infty} (\IR^n_+)}.
\end{align*}
We continue now with the remaining cases. \par
If $-1 +1/p<s<1/p$ and $1 \leq q \leq \infty$, then the result stems from the explicit formula of $E$ given in~\eqref{Eq: Homogeneous extension}, 
and the fact that the extension by $0$ operator maps  $\dot \B^s_{p , q} (\IR^n_+) \to \dot \B^s_{p , q} (\IR^n)$
(see~\cite[Cor.~2.2.1]{Danchin_Mucha_Memoirs}). \par 
 If $m + 1 \leq s < m + 1 + 1 / p$ and $1 \leq q \leq \infty$ satisfy~\eqref{Eq: Banach space conditions}, let $f \in \B^s_{p , q} (\IR^n_+)$.
Observe that~\eqref{Eq: Homogeneous extension} ensures (with the notation of Lemma~\ref{Lem: Extension operators}) for $1 \leq \ell \leq n$ and $k := m + 1$
\begin{align*}
 \partial_\ell^k E f = \chi_{\IR^n_+} \partial_\ell^k f + (1 - \delta_{\ell , n}) \chi_{\IR^n \setminus \IR^n_+} E \partial_\ell^k f + \delta_{\ell , n} \sum_{j = 0}^{m} \alpha_j \Big( - \frac{1}{j + 1} \Big)^k \chi_{\IR^n \setminus \IR^n_+} (\partial_n^k f) \Big( x^{\prime} , - \frac{x_n}{j + 1} \Big),
\end{align*}
where $\delta_{\ell , n}$ denotes Kronecker's delta and $\chi_{V}$ the characteristic function of a subset $V \subset \IR^n$. Since $1 / p - 1 < s - k < 1 / p$, this together with~\cite[Cor.~2.2.1]{Danchin_Mucha_Memoirs} followed by boundedness properties of the gradient operator gives an estimate of the form
\begin{align}
\label{Eq: Boundedness of extension operators}
 \sum_{\ell = 1}^n \| \partial_\ell^k E f \|_{\dot \B^{s - k}_{p , q} (\IR^n)} \leq C \sum_{\ell = 1}^n \| \partial_\ell^k f \|_{\dot \B^{s - k}_{p , q} (\IR^n_+)} \leq C \| f \|_{\dot \B^{s}_{p , q} (\IR^n_+)}.
\end{align}
Additionally, on the whole space it is evident that
\begin{align*}
 \| E f \|_{\dot \B^{s}_{p , q} (\IR^n)} \leq C \sum_{j = 1}^n \| \partial_j^k E f \|_{\dot \B^{s - k}_{p , q} (\IR^n)},
\end{align*}
see~\cite[Cor.~2.32]{Bahouri_Chemin_Danchin}, which together with~\eqref{Eq: Boundedness of extension operators} implies the boundedness estimate
\begin{align*}
 \| E f \|_{\dot \B^{s}_{p , q} (\IR^n)} \leq C \| f \|_{\dot \B^{s}_{p , q} (\IR^n_+)} \qquad (f \in \dot \B^s_{p , q} (\IR^n_+)). &\qedhere
\end{align*}
\end{proof}

In the proof of Proposition~\ref{Prop: Proper boundedness extension operators}, we used that the operator
\begin{align*}
 \nabla : \dot \B^{s + 1}_{p , q} (\IR^n_+;\IC) \to \dot \B^s_{p , q} (\IR^n_+ ; \IC^n)\quad\hbox{is bounded.}
\end{align*}
This holds true for all $s \in \IR$ and all $1 \leq p , q \leq \infty$ and follows immediately by the definition of the space by restriction of whole space elements and by the corresponding boundedness property on the whole space. On $\IR^n$, however, there is also an estimate of the gradient from below, 
a consequence of the reverse Bernstein inequality. The following corollary gives the corresponding result on the half-space.

\begin{corollary}
\label{Cor: Comparison of gradients}
Let $1 < p < \infty$, $1 \leq q \leq \infty$ and $s>-1+1/p$ satisfy
\begin{align*}
 s < \frac{n}{p} - 1 \quad \text{or, \ \  if }\  q = 1, \ \  s \leq \frac{n}{p} - 1.
\end{align*}
Then there exists a constant $C > 0$ such that for all $f \in \dot \B^{s + 1}_{p , q} (\IR^n_+)$ it holds
\begin{align*}
 \| f \|_{\dot \B^{s + 1}_{p , q} (\IR^n_+)} \leq C \| \nabla f \|_{\dot \B^s_{p , q} (\IR^n_+ ; \IC^n)}.
\end{align*}
\end{corollary}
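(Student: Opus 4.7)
The plan is to reduce the desired inequality to its well-known $\IR^n$-counterpart by an explicit extension. Given $f \in \dot\B^{s+1}_{p,q}(\IR^n_+)$, fix an integer $m > s$ and set $F := Ef$, where $E$ is the reflection extension operator of Lemma~\ref{Lem: Extension operators}. Under our hypotheses on $s$, Proposition~\ref{Prop: Proper boundedness extension operators} applies at both regularities $s$ and $s+1$, so that $F \in \dot\B^{s+1}_{p,q}(\IR^n)$ and $\|f\|_{\dot\B^{s+1}_{p,q}(\IR^n_+)} \leq \|F\|_{\dot\B^{s+1}_{p,q}(\IR^n)}$ by the quotient-norm definition. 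It then suffices to prove the two estimates
$$\|F\|_{\dot\B^{s+1}_{p,q}(\IR^n)} \leq C \|\nabla F\|_{\dot\B^s_{p,q}(\IR^n)} \andf \|\nabla F\|_{\dot\B^s_{p,q}(\IR^n)} \leq C \|\nabla f\|_{\dot\B^s_{p,q}(\IR^n_+)}.$$

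For the first (whole-space reverse Bernstein) estimate, I would invoke the boundedness of the Riesz transforms $R_j = -\ii\partial_j/|\nabla|$ on every $\dot\B^\sigma_{p,q}(\IR^n)$ with $1<p<\infty$; this follows from Marcinkiewicz--Mikhlin applied on each Littlewood--Paley block, in the spirit of Proposition~\ref{Prop: Bessel equals Sobolev}. The identity $|\nabla|F = -\sum_{j=1}^n R_j \partial_j F$, combined with the equivalence $\|F\|_{\dot\B^{s+1}_{p,q}(\IR^n)} \simeq \||\nabla|F\|_{\dot\B^s_{p,q}(\IR^n)}$, then yields the claim.

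For the second estimate, I would compute $\nabla F$ explicitly from~\eqref{Eq: Homogeneous extension}. The $\iota=0$ Vandermonde condition $\sum_j \alpha_j = 1$ ensures that $F$ is continuous across $\partial\IR^n_+$, so that a direct differentiation of the formula for $Ef$ gives, on $\IR^n\setminus\overline{\IR^n_+}$,
\begin{align*}
\partial_i F(x) &= \sum_{j=0}^m \alpha_j (\partial_i f)\bigl(x',-x_n/(j+1)\bigr) \qquad (1 \leq i \leq n-1),\\
\partial_n F(x) &= -\sum_{j=0}^m \frac{\alpha_j}{j+1}\, (\partial_n f)\bigl(x',-x_n/(j+1)\bigr).
\end{align*}
In other words, $\partial_i F = E(\partial_i f)$ for $i<n$, whereas $\partial_n F = \widetilde E(\partial_n f)$, where $\widetilde E$ is the same reflection construction but with coefficients $\widetilde\alpha_j := -\alpha_j/(j+1)$. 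A direct check shows that for $\iota=0,\dots,m-1$,
$$\sum_{j=0}^m \Bigl(-\frac{1}{j+1}\Bigr)^{\iota} \widetilde\alpha_j = \sum_{j=0}^m \Bigl(-\frac{1}{j+1}\Bigr)^{\iota+1} \alpha_j = 1,$$
so that $\widetilde E$ is again a valid reflection operator, of order $m-1$. Since $s < m + 1/p$ by the choice of $m$, Proposition~\ref{Prop: Proper boundedness extension operators} applied with $m-1$ for $\widetilde E$ and with $m$ itself for $E$ gives $\|\partial_i F\|_{\dot\B^s_{p,q}(\IR^n)} \leq C \|\partial_i f\|_{\dot\B^s_{p,q}(\IR^n_+)}$ for every $i$. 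Summing and chaining with the two previous displays yields the announced estimate.

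The main obstacle is justifying the identification $\partial_n F = \widetilde E(\partial_n f)$ in the distributional sense across $\partial\IR^n_+$: it relies on the continuity of $F$ across the boundary to rule out a Dirac contribution, and is most easily established first for $f \in \C_c^{\infty}(\overline{\IR^n_+})$ (where all computations are classical) and then extended by density to general $f \in \dot\B^{s+1}_{p,q}(\IR^n_+)$. The range condition $s > -1 + 1/p$ enters precisely to ensure that Proposition~\ref{Prop: Proper boundedness extension operators} applies to both $E$ and $\widetilde E$ at regularity $s$.
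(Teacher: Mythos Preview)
Your proof is correct and follows the same strategy as the paper: extend via the reflection operator $E$, invoke the whole-space reverse Bernstein inequality (the paper cites~\cite[Cor.~2.32]{Bahouri_Chemin_Danchin} rather than arguing via Riesz transforms), and then control $\|\nabla Ef\|_{\dot\B^s_{p,q}(\IR^n)}$ by $\|\nabla f\|_{\dot\B^s_{p,q}(\IR^n_+)}$ through the explicit reflection formula for $\partial_\ell Ef$. Your execution of this last step is in fact slightly cleaner: the paper refers back to the argument preceding~\eqref{Eq: Boundedness of extension operators}, which bounds the individual characteristic-function pieces via extension by zero (and so, read literally, needs $-1+1/p<s<1/p$), whereas you recognize $\partial_n Ef = \widetilde E(\partial_n f)$ for a reflection operator $\widetilde E$ of order $m-1$ and apply Proposition~\ref{Prop: Proper boundedness extension operators} directly, which handles the full stated range of $s$ without that extra restriction.
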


\begin{proof}
Let $E$ denote an extension operator from Proposition~\ref{Prop: Proper boundedness extension operators} that is bounded on the space $\dot \B^{s + 1}_{p , q} (\IR^n_+)$ (i.e., take $m \in \IN$ big enough therein).  
 Then, the corresponding whole space estimate,~\cite[Cor.~2.32]{Bahouri_Chemin_Danchin}, gives, if $s+1$ satisfies~\eqref{Eq: Banach space conditions},
\begin{align*}
 \| f \|_{\dot \B^{s + 1}_{p , q} (\IR^n_+)} \leq \| E f \|_{\dot \B^{s + 1}_{p , q} (\IR^n)} 
 \leq C \| \nabla E f \|_{\dot \B^s_{p , q} (\IR^n ; \IC^n)}.
\end{align*}
Next, proceed as in the first inequality of~\eqref{Eq: Boundedness of extension operators} with $k = 1$ to  get 
\begin{align*}
 \| \nabla E f \|_{\dot \B^s_{p , q} (\IR^n ; (\IC^n)^k)}\leq C
  \| \nabla f \|_{\dot \B^s_{p , q} (\IR^n_+ ; \IC^n)}. &\qedhere
\end{align*}
\end{proof}

To formulate another corollary in this section, we say that $v \in \dot \B^s_{p , q} (\IR^n_+ ; \IC^n)$ is curl free, if $\partial_j v_k = \partial_k v_j$ holds in $\IR^n_+$ in the sense of distributions.

\begin{corollary}
\label{Cor: Curl free vector fields}
Let $n \geq 2$, $1 < p < \infty$, $1 \leq q < \infty$ and $-1 +1/p < s < 1 / p$. Assume additionally that 
$s + 1$ fulfills~\eqref{Eq: Banach space conditions}. 
There exist  constants $C_1 , C_2 > 0$ such that for all curl free  $v \in \dot \B^s_{p , q} (\IR^n_+ ; \IC^n),$ 
one can find  $g \in \dot \B^{s + 1}_{p , q} (\IR^n_+)$ such that $v = \nabla g$ and 
\begin{align*}
 C_1 \| v \|_{\dot \B^s_{p , q} (\IR^n_+ ; \IC^n)} \leq \| g \|_{\dot \B^{s + 1}_{p , q} (\IR^n_+)} \leq C_2 \| v \|_{\dot \B^s_{p , q} (\IR^n_+ ; \IC^n)}.
\end{align*}
\end{corollary}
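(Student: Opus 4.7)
The strategy is to reduce the statement to $\IR^n$ via an extension that preserves curl-freeness, invert the gradient on $\IR^n$ by a Fourier multiplier, and restrict back to $\IR^n_+$. Let $\sigma$ denote the reflection $\sigma f(x',x_n):=f(x',-x_n)$ and $Z_+$ extension by zero from $\IR^n_+$ to $\IR^n$. I will define the componentwise reflection
\begin{align*}
\tilde v_j:=(1+\sigma)Z_+ v_j\quad(j<n),\qquad \tilde v_n:=(1-\sigma)Z_+ v_n,
\end{align*}
i.e., tangential components are extended evenly and the normal component oddly. Since $-1+1/p<s<1/p$, the operator $Z_+$ is bounded $\dot\B^s_{p,q}(\IR^n_+)\to\dot\B^s_{p,q}(\IR^n)$ (extension-by-zero result cited in the proof of Proposition~\ref{Prop: Proper boundedness extension operators}) and $\sigma$ is an isometry on $\dot\B^s_{p,q}(\IR^n)$, which gives $\|\tilde v\|_{\dot\B^s_{p,q}(\IR^n)}\leq C\|v\|_{\dot\B^s_{p,q}(\IR^n_+)}$.

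The heart of the argument is to check that $\tilde v$ is curl-free on $\IR^n$ in the distributional sense. Using the commutation rules $\partial_j\sigma=\sigma\partial_j$ for $j<n$, $\partial_n\sigma=-\sigma\partial_n$, and $\partial_j Z_+=Z_+\partial_j$ for $j<n$, one immediately gets for $j,k<n$ that $\partial_j\tilde v_k-\partial_k\tilde v_j=(1+\sigma)Z_+(\partial_jv_k-\partial_kv_j)=0$, and for $j<n$ one obtains
\begin{align*}
\partial_j\tilde v_n-\partial_n\tilde v_j = (1-\sigma)\bigl(Z_+\partial_j v_n - \partial_n Z_+ v_j\bigr).
\end{align*}
Write $\partial_n Z_+ v_j = Z_+\partial_n v_j + T_j$, where $T_j$ is a distribution supported on $\{x_n=0\}$ (the two terms coincide away from the boundary). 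Using the curl-freeness of $v$ on $\IR^n_+$, the parenthesis collapses to $-T_j$. Since every distribution supported on $\{x_n=0\}$ is fixed by $\sigma$, the factor $(1-\sigma)$ annihilates $T_j$, so $\tilde v$ is curl-free on $\IR^n$.

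Next, I define a primitive of $\tilde v$ on $\IR^n$ by the Fourier multiplier
\begin{align*}
\widehat G(\xi) := \frac{-\ii\,\xi\cdot\widehat{\tilde v}(\xi)}{2\pi|\xi|^2} = \frac{1}{2\pi}(-\Delta)^{-1/2}\sum_{j=1}^n R_j\tilde v_j,
\end{align*}
where the $R_j$ are the Riesz transforms. The Fourier form of curl-freeness, $\xi_j\widehat{\tilde v}_k=\xi_k\widehat{\tilde v}_j$, yields $2\pi\ii\,\xi_k\widehat G=\widehat{\tilde v}_k$, hence $\nabla G=\tilde v$ on $\IR^n$. The Riesz transforms are bounded on $\dot\B^s_{p,q}(\IR^n)$ and $(-\Delta)^{-1/2}$ is an isomorphism from $\dot\B^s_{p,q}(\IR^n)$ onto $\dot\B^{s+1}_{p,q}(\IR^n)$ (this is exactly where the Banach condition on $s+1$ is used, ensuring $G\in\cS_h^{\prime}(\IR^n)$), so $G\in\dot\B^{s+1}_{p,q}(\IR^n)$ with $\|G\|_{\dot\B^{s+1}_{p,q}(\IR^n)}\leq C\|v\|_{\dot\B^s_{p,q}(\IR^n_+)}$. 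Setting $g:=G|_{\IR^n_+}$ gives $\nabla g=v$ on $\IR^n_+$, and the quotient-norm definition of the half-space space yields $\|g\|_{\dot\B^{s+1}_{p,q}(\IR^n_+)}\leq C_2\|v\|_{\dot\B^s_{p,q}(\IR^n_+;\IC^n)}$; the reverse inequality $C_1\|v\|_{\dot\B^s_{p,q}(\IR^n_+;\IC^n)}=C_1\|\nabla g\|_{\dot\B^s_{p,q}(\IR^n_+;\IC^n)}\leq\|g\|_{\dot\B^{s+1}_{p,q}(\IR^n_+)}$ follows from the standard boundedness of $\nabla:\dot\B^{s+1}_{p,q}(\IR^n_+)\to\dot\B^s_{p,q}(\IR^n_+;\IC^n)$.

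The main obstacle is the distributional verification of curl-freeness in the second step: in the low-regularity range $-1+1/p<s<1/p$, the trace of $v_j$ on $\{x_n=0\}$ does not exist classically, so the boundary-jump distribution $T_j$ has no concrete description as a trace. The saving idea is structural rather than quantitative: $T_j$ is necessarily supported on $\{x_n=0\}$ and is therefore $\sigma$-invariant, so the antisymmetric factor $(1-\sigma)$ coming from the odd reflection of the normal component automatically kills it, regardless of its precise nature.
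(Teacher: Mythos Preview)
Your reflection (tangential components even, normal component odd) differs from the paper's: the paper invokes its solenoidal extension $E_\sigma$ with $m=0$, which reflects tangential components \emph{oddly} and the normal one \emph{evenly}. Your parity is in fact the right one for preserving curl-freeness. With the paper's choice the tangential components acquire a jump $2v_j(\cdot,0)$ across $\{x_n=0\}$, so that $\partial_n(E_\sigma v)_j$ carries a $\delta$-layer and $\partial_j(E_\sigma v)_n-\partial_n(E_\sigma v)_j=-2v_j(\cdot,0)\otimes\delta_{x_n=0}\neq 0$ already for smooth $v$; the paper's one-line assertion that $E_\sigma$ maps curl-free functions to curl-free functions is therefore not correct as written, and your choice of extension is a genuine repair of that step.

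That said, your own justification has a gap at the same delicate point. The claim ``every distribution supported on $\{x_n=0\}$ is fixed by $\sigma$'' is false: $g(x')\otimes\partial_{x_n}\delta_{x_n=0}$ is supported on the hyperplane yet is \emph{odd} under $\sigma$. What actually saves you is not the support alone but the Besov regularity. The distribution $\partial_j Z_+v_n-\partial_n Z_+v_j$ lies in $\dot\B^{s-1}_{p,q}(\IR^n)$ with $s-1>-1-1/p'$ (this is exactly your hypothesis $s>-1+1/p$), and a tempered distribution supported on $\{x_n=0\}$ with Besov index strictly above $-1-1/p'$ cannot contain a $\partial_{x_n}\delta$-layer or anything higher---only a single $\delta$-layer $g(x')\otimes\delta_{x_n=0}$, which \emph{is} $\sigma$-invariant. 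You need that extra regularity observation (or an independent density argument for curl-free vector fields, which is not straightforward either, since the odd extension is unbounded on $\dot\B^{s-1}_{p,q}$) to close the proof; the purely structural claim ``supported on the hyperplane, hence $\sigma$-fixed, regardless of its precise nature'' is not valid.
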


\begin{proof}
Let $E_{\sigma}$ denote the solenoidal extension operator defined in~\eqref{Eq: Definition of solenoidal extension} with $m = 0$. Notice that $E_{\sigma} : \dot \B^s_{p , q} (\IR^n_+ ; \IC^n) \to \dot \B^s_{p , q} (\IR^n ; \IC^n)$ is bounded by Proposition~\ref{Prop: Proper boundedness extension operators}. Next, notice that $E_{\sigma}$ maps curl free functions in $\IR^n_+$ to curl free functions in $\IR^n$. Now,~\cite[Cor.~2.32]{Bahouri_Chemin_Danchin} implies the existence of $a \in \dot \B^{s + 1}_{p , q} (\IR^n)$ with $E_{\sigma} v = \nabla a$ and that satisfies
\begin{align}
\label{Eq: Estimate curl free vector fields}
 C_1 \| E_{\sigma} v \|_{\dot \B^s_{p , q}(\IR^n ; \IC^n)} \leq \| a \|_{\dot \B^{s + 1}_{p , q} (\IR^n)} \leq C_2 \| E_{\sigma} v \|_{\dot \B^s_{p , q}(\IR^n ; \IC^n)}.
\end{align}
Define $g := a|_{\IR^n_+} \in \dot \B^{s + 1}_{p , q} (\IR^n_+)$ so that $v = \nabla g$. The estimate follows now from the boundedness of the gradient operator,~\eqref{Eq: Estimate curl free vector fields} and the boundedness of $E_\sigma$ through
\begin{align*}
\| v \|_{\dot \B^s_{p , q} (\IR^n_+ ; \IC^n)} \leq C \| g \|_{\dot \B^{s + 1}_{p , q} (\IR^n_+)} \leq C \| a\|_{\dot \B^{s + 1}_{p , q} (\IR^n)} 
 \leq C \| E_{\sigma} v \|_{\dot \B^{s}_{p , q} (\IR^n ; \IC^n)}\leq C \|v\|_{\dot \B^{s}_{p , q} (\IR^n_+ ; \IC^n)}. &\qedhere
\end{align*}
\end{proof}

Once the existence of an extension operator that acts boundedly on the two endpoint spaces of interpolation is guaranteed, the interpolation on domains follows usually from the retraction-coretraction principle, see, e.g.,~\cite[Sec.~1.2.4]{Triebel} or~\cite[Sec. I.2.3]{Ama95}. However, if the homogeneous Besov and Bessel potential spaces defined above are not complete, then  one is prohibited to perform a density argument in Lemma~\ref{Lem: Extension operators} to conclude that the extension operators act boundedly on homogeneous Bessel potential spaces, whence  the following  Condition~\eqref{Eq: Banach space condition for interpolation spaces} for the interpolation on half-spaces.

\begin{proposition}
\label{Prop: Interpolation on half-space}
Let $\theta \in (0 , 1)$, $1 \leq p , q \leq \infty$ and $m \in \IN$. Then
\begin{align*}
 \dot \B^{m \theta}_{p , q} (\IR^n_+) \hookrightarrow
  \big( \dot \H^0_p (\IR^n_+) , \dot \H^m_p (\IR^n_+) \big)_{\theta , q}.
\end{align*}
Besides, if
\begin{align}
\label{Eq: Banach space condition for interpolation spaces}
 m \theta < \frac{n}{p} \text{ \ and  \ } 1 < q < \infty \qquad \text{or} \qquad m \theta \leq \frac{n}{p} \text{  \ and  \ } q = 1,
\end{align}
then the equality
\begin{align*}
 \big( \dot \H^0_p (\IR^n_+) , \dot \H^m_p (\IR^n_+) \big)_{\theta , q} = \dot \B^{m \theta}_{p , q} (\IR^n_+)
\end{align*}
holds with equivalent norms.
\end{proposition}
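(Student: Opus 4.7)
Plan: The plan combines the whole-space interpolation identity (Proposition~\ref{Prop: Interpolation on whole space}) with the retraction-coretraction principle, using restriction $R: g \mapsto g|_{\IR^n_+}$ as retraction and an extension operator $E$ from Lemma~\ref{Lem: Extension operators} as coretraction. Because real interpolation is an exact functor of bounded linear operators, the proposition will follow once $R$ and $E$ are shown to be bounded on each of the two endpoint Bessel potential spaces $\dot\H^0_p$ and $\dot\H^m_p$.

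The embedding direction is established without any extra hypothesis. By the very definition of the quotient norm on the half-space (Definition~\ref{Def: Spaces on the half-space}), $R: \dot\H^k_p(\IR^n) \to \dot\H^k_p(\IR^n_+)$ is automatically bounded for $k \in \{0, m\}$. Given $f \in \dot\B^{m\theta}_{p,q}(\IR^n_+)$, I would use the quotient definition to pick an extension $F \in \dot\B^{m\theta}_{p,q}(\IR^n)$ with $\|F\|_{\dot\B^{m\theta}_{p,q}(\IR^n)} \leq 2 \|f\|_{\dot\B^{m\theta}_{p,q}(\IR^n_+)}$; Proposition~\ref{Prop: Interpolation on whole space} then places $F$ in $\bigl(\dot\H^0_p(\IR^n), \dot\H^m_p(\IR^n)\bigr)_{\theta,q}$, and exactness of interpolation transfers this to $f = R F$ in $\bigl(\dot\H^0_p(\IR^n_+), \dot\H^m_p(\IR^n_+)\bigr)_{\theta,q}$ with the desired norm control.

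For the equality under condition~\eqref{Eq: Banach space condition for interpolation spaces}, I would fix $E$ from Lemma~\ref{Lem: Extension operators} with parameter $M \geq m - 1$. That lemma provides boundedness of $E: \H^k_p(\IR^n_+) \to \H^k_p(\IR^n)$ for $0 \leq k \leq m$ together with the homogeneous bound $\|E f\|_{\dot\H^k_p(\IR^n)} \leq C \|f\|_{\dot\H^k_p(\IR^n_+)}$ on that subspace. The case $k = 0$ is automatic since $\dot\H^0_p = \LL_p$, so the real task is to promote the estimate at $k = m$ to the full homogeneous space $\dot\H^m_p(\IR^n_+)$. Using the identification $\dot\H^m_p = \dot\cH^m_p$ of Proposition~\ref{Prop: Bessel equals Sobolev}, the reflection formula~\eqref{Eq: Homogeneous extension} expresses each $m$-th derivative of $Ef$ on $\IR^n \setminus \IR^n_+$ as a finite linear combination of $m$-th derivatives of $f$ evaluated at rescaled reflected points, so the desired $\LL_p$-bound follows pointwise; the condition $M \geq m - 1$ ensures the $\C^{m-1}$-matching across $\{x_n = 0\}$ so that no singular distributional boundary contribution appears.

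Real interpolation of this bound, combined with Proposition~\ref{Prop: Interpolation on whole space}, then gives $E f \in \dot\B^{m\theta}_{p,q}(\IR^n)$ with norm controlled by $\|f\|_{(\dot\H^0_p(\IR^n_+), \dot\H^m_p(\IR^n_+))_{\theta,q}}$; since $(E f)|_{\IR^n_+} = f$, the quotient definition places $f$ in $\dot\B^{m\theta}_{p,q}(\IR^n_+)$ with the claimed estimate. The role of condition~\eqref{Eq: Banach space condition for interpolation spaces} is to guarantee that $\dot\B^{m\theta}_{p,q}(\IR^n)$ is actually a Banach space, so that the identification of Proposition~\ref{Prop: Interpolation on whole space} is non-degenerate and a density-based extension of the inhomogeneous-subspace estimate in Lemma~\ref{Lem: Extension operators} is not obstructed. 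This density obstruction—already flagged by the authors in the paragraph preceding the statement—is what I expect to be the main technical difficulty: it is exactly the promotion of the Lemma's bound from $\H^m_p(\IR^n_+)$ to $\dot\H^m_p(\IR^n_+)$, bypassed above by the pointwise reflection argument on derivatives, that forces the introduction of~\eqref{Eq: Banach space condition for interpolation spaces}.
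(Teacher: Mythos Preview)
Your embedding direction is correct and matches the paper's argument. The problem is the reverse inclusion.

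The core gap: your ``pointwise reflection argument'' establishes at best that the pure $m$-th derivatives $\partial_j^m(Ef)$ lie in $\LL_p(\IR^n)$, but this is \emph{not} sufficient to conclude $Ef \in \dot\H^m_p(\IR^n)$. By Definition~\ref{Def: Homogeneous Bessel potential spaces} and Proposition~\ref{Prop: Bessel equals Sobolev}, membership in $\dot\H^m_p(\IR^n)$ also requires $Ef \in \cS_h^{\prime}(\IR^n)$, and you never verify this low-frequency condition. For a general $f \in \dot\H^m_p(\IR^n_+)$ (which need not lie in any $\LL_q$), it is not at all clear that the reflected extension satisfies $\|\dot S_j(Ef)\|_{\LL_\infty} \to 0$ as $j \to -\infty$; gluing and half-space cut-offs do not preserve $\cS_h^{\prime}$ in general. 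Without this, $E$ is not shown to map $\dot\H^m_p(\IR^n_+)$ into $\dot\H^m_p(\IR^n)$, and the retraction--coretraction machinery cannot be invoked. A warning sign: if your argument did work, it would yield the reverse inclusion \emph{without} condition~\eqref{Eq: Banach space condition for interpolation spaces}.

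The paper avoids this entirely by a different mechanism. Rather than extending $E$ to $\dot\H^m_p(\IR^n_+)$, it first restricts to $f$ in the \emph{inhomogeneous} space $\B^{m\theta}_{p,q}(\IR^n_+) \subset \LL_p(\IR^n_+)$. For such $f$, any decomposition $f = a + b$ with $a \in \dot\H^0_p(\IR^n_+) = \LL_p(\IR^n_+)$ and $b \in \dot\H^m_p(\IR^n_+)$ forces $b = f - a \in \LL_p(\IR^n_+)$, hence $b \in \H^m_p(\IR^n_+)$---and on \emph{that} inhomogeneous space Lemma~\ref{Lem: Extension operators} applies directly, with no $\cS_h^{\prime}$ issue. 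This gives the $K$-functional bound on the dense subspace $\H^m_p(\IR^n_+) \subset (\dot\H^0_p(\IR^n_+),\dot\H^m_p(\IR^n_+))_{\theta,q}$, and one then passes to the full interpolation space by density. Condition~\eqref{Eq: Banach space condition for interpolation spaces} enters exactly here: it forces $q < \infty$ (so the intersection is dense in the interpolation space) and makes the target $\dot\B^{m\theta}_{p,q}$ complete (so the limit can be taken). Your final paragraph is internally inconsistent on this point: you claim to have bypassed the density obstruction via the pointwise argument, yet simultaneously invoke~\eqref{Eq: Banach space condition for interpolation spaces} for a density step you are not actually carrying out.
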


\begin{proof}
Let $f \in \dot \B^{m \theta}_{p , q} (\IR^n_+)$. By virtue of Definition~\ref{Def: Spaces on the half-space} there exists $F \in \dot \B^{m \theta}_{p , q} (\IR^n)$ with
\begin{align*}
\mbox{ $F|_{\IR^n_+} = f$ \ \  and \ \ }
 \| F \|_{\dot \B^{m \theta}_{p , q} (\IR^n)} \leq 2 \| f \|_{\dot \B^{m \theta}_{p , q} (\IR^n_+)}.
\end{align*}
Since $\dot \B^{m \theta}_{p , q} (\IR^n) = ( \dot \H^0_p (\IR^n) , \dot \H^m_p (\IR^n))_{\theta , q}$ by Proposition~\ref{Prop: Interpolation on whole space}, there exist for each $t > 0$ functions $A_t \in \dot \H^0_p (\IR^n)$ and $B_t \in \dot \H^m_p (\IR^n)$ with $A_t + B_t = F$ and
\begin{align*}
 \| A_t \|_{\dot \H^0_p (\IR^n)} + t \| B_t \|_{\dot \H^m_p (\IR^n)} \leq 2 K(t , F ; \dot \H^0_p (\IR^n) , \dot \H^m_p (\IR^n)).
\end{align*}
Define $a_t := A_t|_{\IR^n_+}$ and $b_t := B_t|_{\IR^n_+}$. By all choices, it holds $a_t + b_t = f$ and
\begin{align*}
 K (t , f ; \dot \H^0_p (\IR^n_+) , \dot \H^m_p (\IR^n_+)) \leq \| a_t \|_{\dot \H^0_p (\IR^n_+)} + t \| b_t \|_{\dot \H^m_p (\IR^n_+)} \leq 2 K(t , F ; \dot \H^0_p (\IR^n) , \dot \H^m_p (\IR^n)).
\end{align*}
It follows that
\begin{align*}
 \| f \|_{( \dot \H^0_p (\IR^n_+) , \dot \H^m_p (\IR^n_+) )_{\theta , q}} \leq 2 \| F \|_{( \dot \H^0_p (\IR^n) , \dot \H^m_p (\IR^n) )_{\theta , q}} \leq C \| F \|_{\dot \B^{m \theta}_{p , q} (\IR^n)} \leq 2 C \| f \|_{\dot \B^{m \theta}_{p , q} (\IR^n_+)}.
\end{align*}

For the other inclusion notice that~\eqref{Eq: Banach space condition for interpolation spaces} implies that $p$ and $q$ are finite. Assume first that $f$ belongs to the \emph{inhomogeneous} Besov space $\B^{m \theta}_{p , q} (\IR^n_+)$. From the first part of the proof, we  know that $f \in (\dot \H^0_p (\IR^n_+) , \dot \H^m_p (\IR^n_+))_{\theta , q}$. Thus, let $a \in \dot \H^0_p (\IR^n_+)$ and $b \in \dot \H^m_p (\IR^n_+)$ be such that $f = a + b$. Since $b = f - a \in \dot \H^0_p (\IR^n_+)$ it particularly follows that $b \in \LL_p (\IR^n_+) \cap \dot \H^m_p (\IR^n_+) = \H^m_p (\IR^n_+)$. Let $E$ be an extension operator from Lemma~\ref{Lem: Extension operators} that satisfies boundedness estimates from $\dot \H^0_p (\IR^n_+)$ to $\dot \H^0_p (\IR^n)$ and from $\dot \H^m_p (\IR^n_+)$ to $\dot \H^m_p (\IR^n)$. Define $A := E a$, $B := E b$ and $F := E f$ and notice that $F = A + B$. Consequently,
\begin{align*}
 K (t , F ; \dot \H^0_p (\IR^n) , \dot \H^m_p (\IR^n)) \leq \| A \|_{\dot \H^0_p (\IR^n)} + t \| B \|_{\dot \H^m_p (\IR^n)} \leq C \Big(  \| a \|_{\dot \H^0_p (\IR^n_+)} + t \| b \|_{\dot \H^m_p (\IR^n_+)} \Big).
\end{align*}
Since $a$ and $b$ were arbitrary, it follows that
\begin{align*}
 K (t , F ; \dot \H^0_p (\IR^n) , \dot \H^m_p (\IR^n)) \leq C K (t , f ; \dot \H^0_p (\IR^n_+) , \dot \H^m_p (\IR^n_+))
\end{align*}
and thus, by the definition of the real interpolation space norm in~\eqref{Eq: Real interpolation space}, Proposition~\ref{Prop: Interpolation on whole space} and Definition~\ref{Def: Spaces on the half-space} that
\begin{align}
\label{Eq: Boundedness estimate for extension operator on Besov spaces}
 \| f \|_{\dot \B^{m \theta}_{p , q} (\IR^n_+)} \leq \| F \|_{\dot \B^{m \theta}_{p , q} (\IR^n)} \leq C \| f \|_{( \dot \H^0_p (\IR^n_+) , \dot \H^m_p (\IR^n_+) )_{\theta , q}}.
\end{align}
To obtain the embedding for all $f \in (\dot \H^0_p (\IR^n_+) , \dot \H^m_p (\IR^n_+))_{\theta , q}$ notice that, if $q < \infty$, the intersection space $\dot \H^0_p (\IR^n_+) \cap \dot \H^m_p (\IR^n_+) = \H^m_p (\IR^n_+)$, which is a subspace of $\B^{m \theta}_{p , q} (\IR^n_+)$, is always dense in the interpolation space $(\dot \H^0_p (\IR^n_+) , \dot \H^m_p (\IR^n_+))_{\theta , q}$ (this also holds if the interpolation couple does not consist of Banach spaces, see~\cite[Thm.~3.4.2]{Bergh_Lofstrom}). Thus, if the condition~\eqref{Eq: Banach space condition for interpolation spaces} is fulfilled, the Besov space $\dot \B^{m \theta}_{p , q} (\IR^n_+)$ is a Banach space and thus the embedding
\begin{align*}
 \big( \dot \H^0_p (\IR^n_+) , \dot \H^m_p (\IR^n_+) \big)_{\theta , q} \subset \dot \B^{m \theta}_{p , q} (\IR^n_+)
\end{align*}
holds by density.
\end{proof}


A key  point of the argument given in this article will be to understand real interpolation spaces between several kinds of solenoidal vector spaces. A helpful tool for their investigation is the Helmholtz projection $\IP$ whose mapping properties are stated below. Define
\begin{align*}
 \IP : \cS_c (\IR^n) \to \cS_c (\IR^n), \quad f \mapsto \cF^{-1} \Big( 1 - \frac{\xi \otimes \xi}{\lvert \xi \rvert^2} \Big) \cF f
\end{align*}
and notice that $\IP$ satisfies $\divergence(\IP f) = 0$.

\begin{proposition}
\label{Prop: Helmholtz projection}
Let $1 < p < \infty$, $1 \leq q < \infty$ and $s \in\IR$ satisfy~\eqref{Eq: Banach space conditions}. Then, the operator $\IP$ extends to a bounded operator on $\dot \B^s_{p , q} (\IR^n ; \IC^n)$ and to a bounded operator on $\LL_p (\IR^n ; \IC^n)$. Moreover, for each $m \in \IN_0,$ it maps 
\begin{align*}
 \Sigma := [\LL_p (\IR^n ; \IC^n) + \dot \B^s_{p , q} (\IR^n ; \IC^n)] \cap \dot \H^m_p (\IR^n ; \IC^n)
\end{align*}
into $\dot \H^m_{p,\sigma} (\IR^n ; \IC^n)$ and satisfies the estimate
\begin{align}
\label{Eq: Helmholtz on Bessel potential spaces}
 \| \IP f \|_{\dot \H^m_{p , \sigma} (\IR^n)} \leq C \| f \|_{\dot \H^m_p (\IR^n ; \IC^n)} \qquad (f \in \Sigma)
\end{align}
for some constant $C > 0$.
\end{proposition}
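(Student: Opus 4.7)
The approach is to exploit the fact that the matrix symbol $P(\xi) := I - \xi \otimes \xi / \lvert \xi \rvert^2$ is smooth, bounded, and $0$-homogeneous on $\IR^n \setminus \{0\}$. First, for the $\LL_p$ boundedness I would apply the Mikhlin--H\"ormander multiplier theorem to each entry of $P$; equivalently, one writes $\IP = I - R \otimes R$ with $R_j = (-\Delta)^{-1/2}\partial_j$ the classical Riesz transforms, which are known to be $\LL_p$-bounded for $1 < p < \infty$.

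For the $\dot \B^s_{p,q}$ boundedness I would use that $\IP$ commutes with every dyadic block $\dot \Delta_k$, since both are Fourier multipliers. For $f \in \cS_c(\IR^n;\IC^n)$ one has
\begin{equation*}
\dot \Delta_k \IP f = \cF^{-1}\bigl[P(\xi)\varphi(2^{-k}\xi) \cF(\dot\Delta_k f)\bigr],
\end{equation*}
and $P(\xi)\varphi(2^{-k}\xi)$ is smooth and compactly supported. By scaling, the inverse Fourier transform of this symbol has $\LL_1$-norm independent of $k$, so Young's inequality yields $\| \dot \Delta_k \IP f \|_{\LL_p} \leq C \| \dot \Delta_k f \|_{\LL_p}$ uniformly in $k$. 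Multiplying by $2^{sk}$ and taking the $\ell^q(\IZ)$-norm gives the $\dot \B^s_{p,q}$-bound on $\cS_c$; the bounded extension $\IP \in \Lop(\dot \B^s_{p , q}(\IR^n;\IC^n))$ then follows from the density statement of Lemma~\ref{Lem: Density of functions with no low frequencies} (applicable since $q < \infty$) together with the completeness of $\dot \B^s_{p,q}$ guaranteed by~\eqref{Eq: Banach space conditions}.

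For the claim about $\Sigma$, take $f \in \Sigma$ and decompose $f = f_1 + f_2$ with $f_1 \in \LL_p(\IR^n;\IC^n)$ and $f_2 \in \dot \B^s_{p,q}(\IR^n;\IC^n)$; then I would define
$\IP f := \IP f_1 + \IP f_2 \in \LL_p(\IR^n;\IC^n) + \dot \B^s_{p,q}(\IR^n;\IC^n) \hookrightarrow \cS'(\IR^n)$. The definition is independent of the decomposition because on the intersection $\LL_p \cap \dot \B^s_{p,q}$ the two constructions agree, both being obtained as the limit of $\IP$ applied to a common approximating sequence in $\cS_c$. Since $\xi \cdot P(\xi) = 0$, the identity $\divergence \IP f = 0$ holds at the symbol level and thus in $\cS'$. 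For $1 \leq j \leq n$, the derivative $\partial_j^m f$ lies in $\LL_p$, so the first step of the proof yields $\IP \partial_j^m f \in \LL_p$ with $\| \IP \partial_j^m f \|_{\LL_p} \leq C \| \partial_j^m f \|_{\LL_p}$. The key identity $\partial_j^m \IP f = \IP \partial_j^m f$ in $\cS'$ is verified by testing against $\varphi \in \cS_c$ and using the self-adjointness of $\IP$ together with the commutation $\partial_j^m \circ \IP = \IP \circ \partial_j^m$ at the symbol level. Summing over $j$ then gives the estimate~\eqref{Eq: Helmholtz on Bessel potential spaces}.

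The principal technical obstacle is the last step: because $\xi \otimes \xi / \lvert \xi \rvert^2$ is not itself a tempered distribution, $\IP$ has no canonical action on $\cS'(\IR^n)$, so the definition of $\IP f$ on $\Sigma$ must be shown to be independent of the representation $f = f_1 + f_2$, and the commutation $\partial_j^m \circ \IP = \IP \circ \partial_j^m$ must be made rigorous in the sense of distributions. Both points are resolved by the density of $\cS_c$ in $\LL_p$ and in $\dot \B^s_{p,q}$ provided by Lemma~\ref{Lem: Density of functions with no low frequencies}, combined with the boundedness of $\IP$ on each of these two spaces already established in the first two steps.
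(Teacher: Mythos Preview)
Your proposal is correct and follows essentially the same route as the paper's proof: Mikhlin for $\LL_p$, commutation with dyadic blocks for $\dot\B^s_{p,q}$ (the paper simply cites \cite{Danchin_Mucha_Memoirs} here), and reduction of the $\Sigma$-claim to the identity $\partial_j^m \IP f = \IP \partial_j^m f$ via density of $\cS_c$. The only point the paper makes explicit that you leave implicit is that $\IP f \in \cS_h'(\IR^n;\IC^n)$, which is required for membership in $\dot\H^m_p$ by Definition~\ref{Def: Homogeneous Bessel potential spaces}; this follows from your observation $\IP f \in \LL_p + \dot\B^s_{p,q}$ since both summands lie in $\cS_h'$ and the defining low-frequency condition is linear.
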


\begin{proof}
The statements concerning the boundedness on the Lebesgue and Besov spaces are well-known; the case of Lebesgue spaces directly follows from Marcinkiewicz--Mikhlin's theorem and the case of Besov spaces is proven in~\cite[Lem.~3.1.2]{Danchin_Mucha_Memoirs}. \par
To get~\eqref{Eq: Helmholtz on Bessel potential spaces}, it suffices to prove that for each $1 \leq j \leq n$, $m \in \IN_0$ and $f \in \Sigma,$ the operator $\IP$ satisfies
\begin{align*}
 \IP f \in \cS_h^{\prime} (\IR^n ; \IC^n) \quad \text{and} \quad \partial_j^m \IP f = \IP \partial_j^m f.
\end{align*}
The estimate~\eqref{Eq: Helmholtz on Bessel potential spaces} then follows from Proposition~\ref{Prop: Bessel equals Sobolev} and the boundedness estimate on $\LL_p$. \par
Since $\IP$ maps $\dot \B^s_{p , q} (\IR^n ; \IC^n)$ into itself, which is by definition a subspace of $\cS_h^{\prime} (\IR^n ; \IC^n)$ and since $\IP f$ lies in $\LL_p (\IR^n ; \IC^n)$ for $f \in \LL_p (\IR^n ; \IC^n)$ which is a subspace of $\cS_h^{\prime} (\IR^n ; \IC^n)$ by~\cite[p.~22]{Bahouri_Chemin_Danchin}, the property $\IP f \in \cS_h^{\prime} (\IR^n ; \IC^n)$ for $f \in \Sigma$ directly follows. \par
That the Helmholtz projection commutes with derivatives is well-known for $f \!\in\! \cS_c (\IR^n ; \IC^n)$ and may be proven for $f \in \Sigma$ by density of $\cS_c (\IR^n ; \IC^n)$ in $\Sigma$, see Lemma~\ref{Lem: Density of functions with no low frequencies}.
\end{proof}

We  are now in the  position to formulate our first result on real interpolation between solenoidal spaces. For the corresponding result in the inhomogeneous setting, we refer to~\cite[Thm. 3.4]{Ama00}.   

\begin{proposition}
Let $1 < p < \infty$, $1 \leq q < \infty$, $\theta \in (0 , 1)$ and $m \in \IN$. Then, we have
\begin{align*}
 \big( \LL_{p , \sigma} (\IR^n) , \dot \H^m_{p , \sigma} (\IR^n) \big)_{\theta , q} \hookrightarrow\dot \B^{m \theta}_{p , q , \sigma} (\IR^n).
\end{align*}
 Moreover, if
\begin{align*}
 0 < m \theta < \frac{n}{p} \ \text{ and }\ q > 1, \quad \text{or} \qquad 0 < m \theta \leq \frac{n}{p} \ \text{ and }\ q = 1,
\end{align*}
then
\begin{align*}
 \big( \LL_{p , \sigma} (\IR^n) , \dot \H^m_{p , \sigma} (\IR^n) \big)_{\theta , q} = \dot \B^{m \theta}_{p , q , \sigma} (\IR^n)
\end{align*}
with equivalent norms.
\end{proposition}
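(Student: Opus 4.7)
My plan is to use the Helmholtz projection $\IP$ as a retraction to transfer the whole-space interpolation identity of Proposition~\ref{Prop: Interpolation on whole space} to the solenoidal setting.

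For the continuous embedding from left to right, I would rely on simple monotonicity of the real interpolation functor. The norm-one inclusions $\LL_{p,\sigma}(\IR^n) \hookrightarrow \LL_p(\IR^n;\IC^n)$ and $\dot\H^m_{p,\sigma}(\IR^n) \hookrightarrow \dot\H^m_p(\IR^n;\IC^n)$ yield
\begin{align*}
(\LL_{p,\sigma}(\IR^n),\dot\H^m_{p,\sigma}(\IR^n))_{\theta,q} \hookrightarrow (\LL_p(\IR^n;\IC^n),\dot\H^m_p(\IR^n;\IC^n))_{\theta,q},
\end{align*}
and the latter equals $\dot\B^{m\theta}_{p,q}(\IR^n;\IC^n)$ by Proposition~\ref{Prop: Interpolation on whole space}. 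The divergence-free property passes through: any element of the left-hand interpolation space lies in the sum of two solenoidal distributions and is thus itself solenoidal. This settles the first embedding without any further assumptions.

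For the reverse inclusion under the stated Banach-space conditions, the plan is to apply $\IP$ to a near-optimal $K$-decomposition of $f$ in the non-solenoidal pair. The Banach-space conditions guarantee that $\dot\B^{m\theta}_{p,q}(\IR^n;\IC^n)$ is complete and coincides with $(\LL_p,\dot\H^m_p)_{\theta,q}$. Given $f \in \dot\B^{m\theta}_{p,q,\sigma}(\IR^n)$, I pick for each $t>0$ a splitting $f = a_t + b_t$ with $a_t \in \LL_p(\IR^n;\IC^n)$, $b_t \in \dot\H^m_p(\IR^n;\IC^n)$ such that
\begin{align*}
\|a_t\|_{\LL_p} + t\|b_t\|_{\dot\H^m_p} \leq 2\,K(t,f;\LL_p,\dot\H^m_p).
\end{align*}
Since $f$ is solenoidal one has $f = \IP f = \IP a_t + \IP b_t$. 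Proposition~\ref{Prop: Helmholtz projection} provides $\|\IP a_t\|_{\LL_{p,\sigma}} \leq C\|a_t\|_{\LL_p}$ directly, and furnishes $\|\IP b_t\|_{\dot\H^m_{p,\sigma}} \leq C\|b_t\|_{\dot\H^m_p}$ provided $b_t$ belongs to a set of the form $[\LL_p + \dot\B^s_{p,q}] \cap \dot\H^m_p$. This is indeed the case upon writing $b_t = f - a_t$ with $f \in \dot\B^{m\theta}_{p,q}$ and $a_t \in \LL_p$.

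Consequently $(\IP a_t,\IP b_t)$ is an admissible decomposition of $f$ in the solenoidal couple, so
\begin{align*}
K(t,f;\LL_{p,\sigma},\dot\H^m_{p,\sigma}) \leq \|\IP a_t\|_{\LL_{p,\sigma}} + t\|\IP b_t\|_{\dot\H^m_{p,\sigma}} \leq C\,K(t,f;\LL_p,\dot\H^m_p),
\end{align*}
uniformly in $t>0$. Taking the $\LL_{q,*}$-norm of $t^{-\theta}$ times both sides concludes via Proposition~\ref{Prop: Interpolation on whole space}. I expect the main obstacle to be a clean verification of the hypotheses of Proposition~\ref{Prop: Helmholtz projection} needed to ensure that $\IP$ commutes with the derivatives appearing in the $\dot\H^m_p$-norm of $b_t$; this is precisely where the Banach-space condition $m\theta < n/p$ (or $\leq n/p$ when $q=1$) is used, since it makes $f$ available as an element of $\LL_p + \dot\B^{m\theta}_{p,q}$ and hence places $b_t$ in the appropriate sum space.
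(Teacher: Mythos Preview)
Your proposal is correct and follows essentially the same route as the paper: monotonicity of real interpolation plus the divergence-free constraint for the forward embedding, and the Helmholtz projection applied to a near-optimal $K$-decomposition in the non-solenoidal couple for the reverse inclusion. The only minor imprecision is in your last sentence: the Banach-space condition on $m\theta$ is not needed to place $b_t$ in the sum space (that follows automatically from $b_t = f - a_t$), but rather to ensure that Proposition~\ref{Prop: Helmholtz projection} is applicable with $s = m\theta$, i.e., that $\IP$ extends boundedly to $\dot\B^{m\theta}_{p,q}(\IR^n;\IC^n)$ and hence preserves membership in $\cS_h'$ when acting on $\Sigma$.
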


\begin{proof}
The inclusion
\begin{align}
\label{Eq: First interpolation inclusion without boundary conditions}
  \big( \LL_{p , \sigma} (\IR^n) , \dot \H^m_{p , \sigma} (\IR^n) \big)_{\theta , q} \hookrightarrow \dot \B^{m \theta}_{p , q , \sigma} (\IR^n)
\end{align}
follows  from the following argument: by Corollary~\ref{Cor: Lp-sigma in right scale} and Definition~\ref{Def: Solenoidal spaces}, we have
\begin{align*}
 \LL_{p , \sigma} (\IR^n) \hookrightarrow \dot \H^0_p (\IR^n ; \IC^n) \quad \text{and} \quad \dot \H^m_{p , \sigma} (\IR^n) \hookrightarrow\dot \H^m_p (\IR^n ; \IC^n). 
\end{align*}
Consequently, Proposition~\ref{Prop: Interpolation on whole space} implies
\begin{align*}
 \big( \LL_{p , \sigma} (\IR^n) , \dot \H^m_{p , \sigma} (\IR^n) \big)_{\theta , q} \hookrightarrow \big( \dot \H^0_p (\IR^n ; \IC^n) , \dot \H^m_p (\IR^n ; \IC^n) \big)_{\theta , q} = \dot \B^{m \theta}_{p , q} (\IR^n ; \IC^n).
\end{align*}
Finally, since the interpolation space is always contained in the sum-space, we have
\begin{align*}
 \big( \LL_{p , \sigma} (\IR^n) , \dot \H^m_{p , \sigma} (\IR^n) \big)_{\theta , q} \subset \LL_{p , \sigma} (\IR^n) + \dot \H^m_{p , \sigma} (\IR^n),
\end{align*}
what implies~\eqref{Eq: First interpolation inclusion without boundary conditions} since each $f \in \LL_{p , \sigma} (\IR^n) + \dot \H^m_{p , \sigma} (\IR^n)$ satisfies $\divergence f = 0$. \smallbreak
For proving the converse inclusion of~\eqref{Eq: First interpolation inclusion without boundary conditions}, 
consider $f \in \dot \B^{m \theta}_{p , q , \sigma} (\IR^n)$. Since $\dot \B^{m \theta}_{p , q , \sigma} (\IR^n) \subset \dot \B^{m\theta}_{p , q} (\IR^n ; \IC^n)$, Proposition~\ref{Prop: Interpolation on whole space} and the definition of the $K$-functional~\eqref{Eq: K-functional} imply that, for each $t > 0,$ there exist functions $A_t \in \dot \H^0_p (\IR^n ; \IC^n)$ and $B_t \in \dot \H^m_p (\IR^n ; \IC^n)$ with $A_t + B_t = f$ and
\begin{align}
\label{Eq: First embedding without boundary conditions}
 \| A_t \|_{\dot \H^0_p (\IR^n ; \IC^n)} + t \| B_t \|_{\dot \H^m_p (\IR^n ; \IC^n)} \leq 2 K (t , f ; \dot \H^0_p (\IR^n ; \IC^n) , \dot \H^m_p (\IR^n ; \IC^n)).
\end{align}
In the following, we are going to manipulate $A_t$ and $B_t$ to obtain functions $a_t \in \dot \H^0_{p , \sigma} (\IR^n)$ and $b_t \in \dot \H^m_{p , \sigma} (\IR^n)$ with $f = a_t + b_t$ which firstly will show that $f$ is contained in the sum space of $\dot \H^0_{p , \sigma} (\IR^n)$ and $\dot \H^m_{p , \sigma} (\IR^n)$ and, secondly,  will provide an estimate for the $K$-functional of $f$ with respect to the solenoidal spaces. \par
This is done by the following observation. Since $B_t = - A_t + f$, we have 
\begin{align*}
 B_t \in [\LL_p (\IR^n ; \IC^n) + \dot \B^{m \theta}_{p , q} (\IR^n ; \IC^n)] \cap \dot \H^m_p (\IR^n ; \IC^n). 
\end{align*}
Thus, Proposition~\ref{Prop: Helmholtz projection} yields for some constant $C > 0$ that
\begin{align}
\label{Eq: Second embedding without boundary conditions}
 \IP B_t \in \dot \H^m_{p , \sigma} (\IR^n) \quad \text{with} \quad \| \IP B_t \|_{\dot \H^m_{p , \sigma} (\IR^n)} \leq C \| B_t \|_{\dot \H^m_p (\IR^n ; \IC^n)}.
\end{align}
 For $A_t \in \dot \H^0_p (\IR^n ; \IC^n) = \LL_p (\IR^n ; \IC^n)$, Proposition~\ref{Prop: Helmholtz projection} yields that for some constant $C > 0,$
\begin{align}
\label{Eq: Third embedding without boundary conditions}
 \IP A_t \in \LL_{p , \sigma} (\IR^n) \quad \text{with} \quad \| \IP A_t \|_{\LL_{p , \sigma} (\IR^n)} \leq C \| A_t \|_{\dot \H^0_p (\IR^n ; \IC^n)}.
\end{align}
 Now, define
\begin{align*}
 a_t := \IP A_t \quad \text{and} \quad b_t := \IP B_t
\end{align*}
and notice that the divergence-free condition of $f$ implies $\IP f = f$ so that
\begin{align*}
 a_t + b_t = \IP A_t + \IP B_t = \IP f = f.
\end{align*}
This decomposition delivers the following estimate by combining~\eqref{Eq: Second embedding without boundary conditions},~\eqref{Eq: Third embedding without boundary conditions} and~\eqref{Eq: First embedding without boundary conditions}:
\begin{align*}
 K (t , f ; \LL_{p , \sigma} (\IR^n) , \dot \H^m_{p , \sigma} (\IR^n)) &\leq \| a_t \|_{\LL_{p , \sigma} (\IR^n)} + t \| b_t \|_{\dot \H^m_{p , \sigma} (\IR^n)} \\
 &\leq C \big\{ \| A_t \|_{\dot \H^0_p (\IR^n ; \IC^n)} + t \| B_t \|_{\dot \H^m_p (\IR^n ; \IC^n)} \big\} \\
 &\leq 2 C K (t , f ; \dot \H^0_p (\IR^n ; \IC^n) , \dot \H^m_p (\IR^n_+ ; \IC^n)).
\end{align*}
Now, Proposition~\ref{Prop: Interpolation on whole space} together with the definition of real interpolation spaces in~\eqref{Eq: Real interpolation space} and the bound on the $K$-functional implies that $f$ is contained in $( \LL_{p , \sigma} (\IR^n) , \dot \H^m_{p , \sigma} (\IR^n) )_{\theta , q}$ and
\begin{align*}
 \| f \|_{( \LL_{p , \sigma} (\IR^n) , \dot \H^m_{p , \sigma} (\IR^n) )_{\theta , q}} \leq C \| f \|_{\dot \B^{m \theta}_{p , q , \sigma} (\IR^n)},
\end{align*}
which completes the proof. 
\end{proof}

The existence of the extension operator $E_{\sigma}$ that respects the solenoidality and homogeneity, see Lemma~\ref{Lem: Extension operators}, leads to the following proposition.

\begin{proposition}
\label{Prop: Interpolation of solenoidal spaces without boundary conditions}
Let $1 < p < \infty$, $1 \leq q < \infty$, $\theta \in (0 , 1)$ and $m \in \IN$ satisfy
\begin{align*}
 0 < m \theta < \frac{n}{p} \ \text{ and }\ q > 1, \quad \text{or} \quad 0 < m \theta \leq \frac{n}{p} \ \text{ and }\ q = 1.
\end{align*}
Then
\begin{align*}
 \big( \LL_{p , \sigma} (\IR^n_+) , \dot \H^m_{p , \sigma} (\IR^n_+) \big)_{\theta , q} = \dot \B^{m \theta}_{p , q , \sigma} (\IR^n_+)
\end{align*}
with equivalent norms.
\end{proposition}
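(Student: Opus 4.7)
The strategy is to transfer the preceding whole-space identification $(\LL_{p , \sigma} (\IR^n) , \dot \H^m_{p , \sigma} (\IR^n))_{\theta , q} = \dot \B^{m \theta}_{p , q , \sigma} (\IR^n)$ to the half-space via the solenoidality-preserving extension operator $E_{\sigma}$ constructed in Lemma~\ref{Lem: Extension operators}, combined with Proposition~\ref{Prop: Interpolation on half-space} on real interpolation of half-space Bessel potential spaces and Proposition~\ref{Prop: Proper boundedness extension operators} on the boundedness of $E_{\sigma}$ on homogeneous Besov spaces.

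For the embedding $(\LL_{p , \sigma} (\IR^n_+) , \dot \H^m_{p , \sigma} (\IR^n_+))_{\theta , q} \hookrightarrow \dot \B^{m \theta}_{p , q , \sigma} (\IR^n_+)$ I would argue exactly as in the whole-space case: the trivial inclusions $\LL_{p , \sigma}(\IR^n_+) \hookrightarrow \dot \H^0_p(\IR^n_+;\IC^n)$ and $\dot \H^m_{p , \sigma}(\IR^n_+) \hookrightarrow \dot \H^m_p(\IR^n_+;\IC^n)$, together with monotonicity of real interpolation and Proposition~\ref{Prop: Interpolation on half-space}, yield an embedding of the interpolation space into $\dot \B^{m \theta}_{p , q}(\IR^n_+;\IC^n)$. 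Any element of the interpolation space lies in the sum $\LL_{p , \sigma}(\IR^n_+) + \dot \H^m_{p , \sigma}(\IR^n_+)$, hence is divergence-free in the distributional sense on $\IR^n_+$, and therefore belongs to $\dot \B^{m \theta}_{p , q , \sigma}(\IR^n_+)$.

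For the converse inclusion, given $f \in \dot \B^{m \theta}_{p , q , \sigma}(\IR^n_+)$ I would fix the integer parameter in the construction of $E_{\sigma}$ large enough that Proposition~\ref{Prop: Proper boundedness extension operators} guarantees boundedness $E_{\sigma}:\dot \B^{m \theta}_{p , q}(\IR^n_+;\IC^n) \to \dot \B^{m \theta}_{p , q}(\IR^n;\IC^n)$ (and, by Lemma~\ref{Lem: Extension operators}, also from $\dot \H^0_p(\IR^n_+;\IC^n) \to \dot \H^0_p(\IR^n;\IC^n)$ and from $\dot \H^m_p(\IR^n_+;\IC^n) \to \dot \H^m_p(\IR^n;\IC^n)$), while preserving the divergence-free property across the boundary $\{x_n=0\}$. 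Consequently $E_{\sigma} f \in \dot \B^{m \theta}_{p , q , \sigma}(\IR^n)$, and the whole-space proposition just established gives, for each $t>0$, a decomposition $E_{\sigma} f = A_t + B_t$ with $A_t \in \LL_{p , \sigma}(\IR^n)$, $B_t \in \dot \H^m_{p , \sigma}(\IR^n)$ and
\[
 \|A_t\|_{\LL_{p , \sigma}(\IR^n)} + t\,\|B_t\|_{\dot \H^m_{p , \sigma}(\IR^n)} \;\le\; 2\,K\bigl(t, E_{\sigma} f;\LL_{p , \sigma}(\IR^n),\dot \H^m_{p , \sigma}(\IR^n)\bigr).
\]
Setting $a_t:=A_t|_{\IR^n_+}$, $b_t:=B_t|_{\IR^n_+}$ one has $a_t+b_t=f$ in $\IR^n_+$ with $a_t$, $b_t$ solenoidal, and the quotient-norm inequalities $\|a_t\|_{\LL_{p , \sigma}(\IR^n_+)} \leq \|A_t\|_{\LL_{p , \sigma}(\IR^n)}$, $\|b_t\|_{\dot \H^m_{p , \sigma}(\IR^n_+)} \leq \|B_t\|_{\dot \H^m_{p , \sigma}(\IR^n)}$ yield
\[
 K\bigl(t,f;\LL_{p , \sigma}(\IR^n_+),\dot \H^m_{p , \sigma}(\IR^n_+)\bigr) \;\le\; 2\,K\bigl(t, E_{\sigma} f;\LL_{p , \sigma}(\IR^n),\dot \H^m_{p , \sigma}(\IR^n)\bigr).
\]
Multiplying by $t^{-\theta}$, taking the $\LL_{q,*}(0,\infty)$-norm, applying the whole-space equivalence and the boundedness of $E_{\sigma}$ on $\dot \B^{m \theta}_{p , q , \sigma}$ then closes the proof.

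The only delicate point will be to justify that $E_{\sigma}$ acts boundedly on $\dot \B^{m \theta}_{p , q , \sigma}(\IR^n_+)$ in the full range of parameters allowed by~\eqref{Eq: Banach space conditions}; this is already secured by Proposition~\ref{Prop: Proper boundedness extension operators} provided that the free integer in the definition of $E_{\sigma}$ is chosen sufficiently large relative to $m\theta$, so the restriction on $(p,q,\theta,m)$ in the statement is exactly what is needed to make both endpoint Bessel potential spaces and the intermediate Besov space behave as honest Banach spaces on $\IR^n_+$. Everything else reduces to a retraction–coretraction argument combined with the already-established whole-space identification.
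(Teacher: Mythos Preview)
Your proposal is correct and follows essentially the same retraction--coretraction idea as the paper, which states that the proof is ``literally the same as that of Proposition~\ref{Prop: Interpolation on half-space} but uses the extension operator $E_{\sigma}$ instead of $E$.'' The only organisational difference is that for the inclusion $(\LL_{p,\sigma},\dot\H^m_{p,\sigma})_{\theta,q}\hookrightarrow\dot\B^{m\theta}_{p,q,\sigma}$ you invoke the already-proven non-solenoidal identification (Proposition~\ref{Prop: Interpolation on half-space}) together with the divergence constraint, whereas the paper would redo the extension--density argument of Proposition~\ref{Prop: Interpolation on half-space} with $E_{\sigma}$ in place of $E$; both routes are equivalent and your shortcut is a clean way to avoid repetition.
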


\begin{proof}
The proof of this result is literally the same as that of Proposition~\ref{Prop: Interpolation on half-space} but uses the extension operator $E_{\sigma}$ instead of $E.$
\end{proof}

\begin{corollary}
\label{Cor: Density of inhomogeneous spaces in homogeneous spaces}
Let $1 < p < \infty$, $1 \leq q < \infty$ and $s>0$ satisfying~\eqref{Eq: Banach space conditions}, then
$$
 \mbox{$\C_{c , \sigma}^{\infty} (\IR^n)$ is dense in $\dot \B^s_{p , q , \sigma} (\IR^n)$ and $\C_{c , \sigma}^{\infty} (\overline{\IR^n_+})$ is dense in $\dot \B^s_{p , q , \sigma} (\IR^n_+)$.}
$$
\end{corollary}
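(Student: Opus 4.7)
The plan is to combine the interpolation identity of the preceding proposition with two standard ingredients: the density of the intersection space in a real interpolation space when the second index is finite, and the density of compactly supported smooth solenoidal fields in inhomogeneous solenoidal Sobolev spaces (already recorded in the earlier density corollary). The two cases $\Omega=\IR^n$ and $\Omega=\IR^n_+$ will be treated in parallel, since the only ingredient that changes is the reference interpolation identity.

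\medbreak

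First, given $s>0$ satisfying \eqref{Eq: Banach space conditions}, I would choose an integer $m\in\IN$ with $m>s$ and set $\theta:=s/m\in(0,1)$. With this choice, the compatibility conditions of Proposition~\ref{Prop: Interpolation of solenoidal spaces without boundary conditions} (resp.~its whole-space analogue) are exactly \eqref{Eq: Banach space conditions}, so that
\[
 \dot\B^s_{p,q,\sigma}(\Omega)=\bigl(\LL_{p,\sigma}(\Omega),\dot\H^m_{p,\sigma}(\Omega)\bigr)_{\theta,q}
\qquad\text{with equivalent norms,}
\]
for $\Omega=\IR^n$ or $\Omega=\IR^n_+$. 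Since $1\leq q<\infty$, the general density result recalled in the proof of Lemma~\ref{Lem: Density in abstract spaces} (that is, \cite[Thm.~3.4.2]{Bergh_Lofstrom}) asserts that the intersection space $\LL_{p,\sigma}(\Omega)\cap\dot\H^m_{p,\sigma}(\Omega)$ is dense in that real interpolation space, hence dense in $\dot\B^s_{p,q,\sigma}(\Omega)$.

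\medbreak

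Next, I would identify the intersection space. By Corollary~\ref{Cor: Lp-sigma in right scale}, $\LL_{p,\sigma}(\Omega)=\dot\H^0_{p,\sigma}(\Omega)$, so that
\[
 \LL_{p,\sigma}(\Omega)\cap\dot\H^m_{p,\sigma}(\Omega)=\H^m_{p,\sigma}(\Omega),
\]
the usual inhomogeneous solenoidal Sobolev space. For $\Omega=\IR^n_+$, Corollary~\ref{Cor: Density} gives density of $\C^\infty_{c,\sigma}(\overline{\IR^n_+})$ in $\H^m_{p,\sigma}(\IR^n_+)$. For $\Omega=\IR^n$, the analogous density of $\C^\infty_{c,\sigma}(\IR^n)$ in $\H^m_{p,\sigma}(\IR^n)$ is classical (mollification plus cut-off, using that both operations preserve the solenoidality up to a correction that can be absorbed via the Bogovski\u\i{} operator on a ball; alternatively, it follows from the half-space result by restriction together with $E_\sigma$). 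Chaining these two densities with respect to the $\dot\B^s_{p,q,\sigma}(\Omega)$-topology yields the claim.

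\medbreak

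The only nontrivial point to check will be that the embedding
\[
 \H^m_{p,\sigma}(\Omega)\hookrightarrow \dot\B^s_{p,q,\sigma}(\Omega)
\]
(needed to transport the density through the identification) is continuous, so that the two density statements compose. This is immediate from the continuity of the inclusions $\H^m_{p,\sigma}(\Omega)\hookrightarrow\LL_{p,\sigma}(\Omega)$ and $\H^m_{p,\sigma}(\Omega)\hookrightarrow\dot\H^m_{p,\sigma}(\Omega)$ together with the interpolation identity above. I expect the only mildly delicate bookkeeping to be the borderline case $s=n/p$ with $q=1$, where one has to verify that the hypotheses of Proposition~\ref{Prop: Interpolation of solenoidal spaces without boundary conditions} are indeed met by the choice $\theta=s/m$; this is immediate since that proposition allows equality $m\theta\leq n/p$ precisely when $q=1$.
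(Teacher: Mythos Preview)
Your proposal is correct and follows essentially the same approach as the paper: use the interpolation identity $\dot\B^s_{p,q,\sigma}(\Omega)=(\LL_{p,\sigma}(\Omega),\dot\H^m_{p,\sigma}(\Omega))_{\theta,q}$, invoke the density of the intersection space $\H^m_{p,\sigma}(\Omega)$ in the interpolation space for $q<\infty$ via \cite[Thm.~3.4.2]{Bergh_Lofstrom}, and then use Corollary~\ref{Cor: Density} (and its classical whole-space analogue) to approximate by smooth compactly supported solenoidal fields. The paper's proof is simply a terser version of yours, without the explicit discussion of the choice of $m$, the continuity of the embedding, or the borderline case.
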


\begin{proof}
First of all, $\C_{c , \sigma}^{\infty} (\IR^n)$ is dense in $\H^m_{p , \sigma} (\IR^n)$ and $\C_{c , \sigma}^{\infty} (\overline{\IR^n_+})$ is dense in $\H^m_{p , \sigma} (\IR^n_+)$, for the latter statement, see Corollary~\ref{Cor: Density}. Moreover, the spaces $\H^m_{p , \sigma} (\IR^n)$ and $\H^m_{p , \sigma} (\IR^n_+)$ are dense in $\dot \B^s_{p , q , \sigma} (\IR^n)$ and $\dot \B^s_{p , q , \sigma} (\IR^n_+)$, respectively, since the intersection space is always dense in the real interpolation space if $q < \infty$, see~\cite[Thm.~3.4.2]{Bergh_Lofstrom}.
\end{proof}


\chapter{The Stokes operator with a Neumann-type boundary condition}
\label{Sec: The Stokes operator with Neumann boundary conditions}

This section is devoted to the study of the Stokes operator supplemented with  a homogeneous Neumann-type boundary condition in the half-space $\IR^n_+$ with $n\geq2.$ First of all, we recall some useful facts and  deal with mapping properties of the Stokes resolvent on $\H^1_p$-spaces. We focus on optimal estimates of the resolvent with respect to homogeneous norms, so as to be able to define a homogeneous version of this operator  on a subspace of $\dot \H^2_{p , \sigma} (\IR^n_+)$.
 This will enable us  to prove interpolation identities between $\LL^p_{\sigma} (\IR^n_+)$ and the domain of the homogeneous Stokes operator subject to this Neumann-type boundary condition. \par

\section{Generation properties of the Stokes operator}
\label{Subsec: Generation properties of the Stokes operator}

Recall that the stress tensor $\IT$ of a Newtonian fluid is given by
\begin{align*}
 \IT (u , P) = D(u) - P \Id \with D(u) = \nabla u + [\nabla u]^{\top}. 
\end{align*}
Then, the domain of the Stokes operator on $\LL_{p , \sigma} (\IR^n_+)$ is given, for $1<p<\infty$,  by
\begin{align}\label{dom:Ap}
 \dom(A_p) := \bigl\{ u \in \H^2_p  (\IR^n_+ ; \IC^n) :& \divergence u = 0 ,\ \exists P \in \widehat\W^1_p (\IR^n_+) \text{ with } \divergence \IT(u , P) \in \LL_{p , \sigma} (\IR^n_+) \\
& \qquad\qquad\text{ and } \IT(u , P) \e_n = 0 \text{ on } \partial \IR^n_+ \bigr\}\cdotp \nonumber
\end{align}
Here $\widehat\W^1_p (\IR^n_+)$ designates those $P \in \LL_{p , \loc} (\overline{\IR^n_+})$ such that $\nabla P \in \LL_p (\IR^n_+ ; \IC^n)$.
Note that the definition  implies $P$ to be harmonic
since  $\divergence \IT(u , P)$ is divergence free by 
assumption. With $u \in \dom(A_p)$ and $P$ being the associated pressure,  define the Stokes operator on $\LL_{p , \sigma} (\Omega)$ by
\begin{align*}
 A_p u := - \divergence \IT (u , P).
\end{align*}
In order to formulate results concerning the resolvent problem associated  to $A_p$, denote as above for $\vartheta \in (0 , \pi)$ the sector
\begin{align*}
 \Sec_{\vartheta} = \{ z \in \IC \setminus \{ 0 \} : \lvert \arg(z) \rvert < \vartheta \}.
\end{align*}
With these definitions, Shibata and Shimizu proved in~\cite[Thm.~1.1]{Shibata_Shimizu} the following theorem.

\begin{theorem}
\label{Thm: Shibata - Shimizu}
Let $1 < p < \infty$. Then, for all $\vartheta \in (0 , \pi),$ the sector $\Sec_{\vartheta}$ is contained in the resolvent set $\rho(- A_p)$ of $- A_p$. Moreover, there exists a constant $C > 0$ such that for all $\lambda \in \Sec_{\vartheta}$ and all $f \in \LL_{p , \sigma} (\IR^n_+)$ the function $u := (\lambda + A_p)^{-1} f$ and the associated pressure $P \in \widehat \W^1_p (\IR^n_+)$ satisfy the estimate
\begin{align}
\label{Eq: Resolvent estimate}
\begin{aligned}
 \lvert \lambda \rvert \| u \|_{\LL_{p , \sigma} (\IR^n_+)} + \lvert \lambda \rvert^{1 / 2} \| \nabla u \|_{\LL_p (\IR^n_+ ; \IC^{n^2})}  + \| \nabla^2 u \|_{\LL_p (\IR^n_+ ; \IC^{n^3})} + \| \nabla P &\|_{\LL_p (\IR^n_+ ; \IC^n)} \\
 &\leq C \| f \|_{\LL_{p , \sigma} (\IR^n_+)}.
\end{aligned}
\end{align}
In particular, $- A_p$ generates a bounded analytic semigroup on $\LL_{p , \sigma} (\IR^n_+)$.
\end{theorem}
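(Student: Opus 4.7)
My plan follows the classical partial Fourier transform strategy initiated by Shibata and Shimizu. Writing the resolvent system explicitly as $\lambda u - \Delta u + \nabla P = f$ with $\divergence u = 0$ in $\IR^n_+$ and boundary conditions $\partial_n u_j + \partial_j u_n = 0$ for $j = 1,\dots,n-1$ together with $2 \partial_n u_n - P = 0$ at $x_n = 0$, I would first take the partial Fourier transform in the tangential variables $x' = (x_1,\dots,x_{n-1})$. Note that applying $\divergence$ to the first equation and using $\divergence u = 0$ forces $P$ to be harmonic, so that its partial Fourier transform must be of the form $\widehat{P}(\xi',x_n) = q(\xi')\, \e^{-|\xi'| x_n}$ for some unknown boundary density $q(\xi')$.

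Setting $\omega := \omega(\xi',\lambda) = \sqrt{|\xi'|^2 + \lambda}$ with the branch $\Re \omega > 0$ (well defined whenever $\lambda \in \Sec_\vartheta$ and $\xi' \neq 0$), the tangential Fourier transform of the momentum equation becomes a second-order ODE system in $x_n$ whose bounded solutions decay as $\e^{-\omega x_n}$ and which I would solve by variation of constants on the half-line, writing $\widehat{u}$ as the sum of a particular solution driven by $\widehat f$ and $q$ and of a homogeneous component proportional to $\e^{-\omega x_n}$. The free constants (the homogeneous coefficients and $q(\xi')$) are then determined by the $n$ boundary conditions at $x_n = 0$ together with the divergence free constraint $\partial_n \widehat u_n = - \ii \xi'\cdot \widehat u'$. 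This amounts to inverting an explicit $(n+1)\times(n+1)$ linear system whose determinant is precisely the Lopatinskii--Shapiro symbol for the Neumann-type Stokes problem.

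The first main technical step will be a direct computation to show that this Lopatinskii--Shapiro determinant is bounded away from zero, uniformly in $\xi' \neq 0$ and $\lambda \in \overline{\Sec_\vartheta}$ for every $\vartheta < \pi$, with the correct homogeneity $|\xi'|^a (|\xi'| + |\omega|)^b$. This is the algebraic heart of the matter: that the determinant is nonvanishing on $\overline{\Sec_\vartheta}\setminus\{0\}$ for all $\vartheta<\pi$ is what permits the sector of resolvents to extend arbitrarily close to $\IR_-$, and hence yields a \emph{bounded analytic} semigroup and not merely an analytic one. Granted this, each of the quantities $\lambda u$, $|\lambda|^{1/2} \nabla u$, $\nabla^2 u$, $\nabla P$ can be written as a partial Fourier multiplier acting on $\widehat f(\xi',y_n)$ composed with an $x_n$-integration whose kernels are combinations of $\e^{-\omega x_n}$, $\e^{-|\xi'| x_n}$, $\e^{-\omega y_n}$, rational factors of $\omega$ and $|\xi'|$, and reflections thereof.

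The second and most delicate step is then to verify that the resulting partial Fourier symbols satisfy Mikhlin--H\"ormander (or Marcinkiewicz) multiplier conditions uniformly in $\lambda \in \Sec_\vartheta$, that is to say, for each multi-index $\alpha$ with $|\alpha| \leq n-1$ one has $|\xi'|^{|\alpha|} |\partial_{\xi'}^\alpha m(\xi',\lambda)| \leq C$ uniformly in $(\xi',\lambda)$. The factors $|\lambda|$, $|\lambda|^{1/2}$ are produced by exploiting that $|\lambda| \lesssim |\omega|^2$ and $|\omega| \lesssim |\xi'| + |\lambda|^{1/2}$ on $\Sec_\vartheta$. Once this is granted, combining the tangential Mikhlin theorem on $\LL_p(\IR^{n-1};\IC)$ with Minkowski's inequality and standard one-dimensional convolution estimates in $x_n$ (using that $\int_0^\infty \e^{-\Re \omega\, x_n} \,\d x_n \leq |\Re\omega|^{-1}$ and analogous bounds) yields~\eqref{Eq: Resolvent estimate} for $f$ in a dense class; the density of $\cC^\infty_{c,\sigma}(\overline{\IR^n_+})$ in $\LL_{p,\sigma}(\IR^n_+)$ provided by Corollary~\ref{Cor: Density} extends it to all of $\LL_{p,\sigma}(\IR^n_+)$. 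Finally, the combination ``$\Sec_\vartheta \subset \rho(-A_p)$ for every $\vartheta < \pi$'' together with $|\lambda|\|u\|_{\LL_p} \leq C\|f\|_{\LL_p}$ is exactly the sectoriality of angle $0$, which is equivalent to $-A_p$ generating a bounded analytic semigroup on $\LL_{p,\sigma}(\IR^n_+)$.
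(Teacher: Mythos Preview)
Your plan is the right one and matches the strategy of Shibata--Shimizu, but note that the paper does not prove this theorem at all: it is stated without proof and attributed to~\cite[Thm.~1.1]{Shibata_Shimizu}. So there is nothing to compare your argument against in the paper itself; the authors simply quote the result as a black box and build on it (in particular in Proposition~\ref{Prop: Higher regularity resolvent estimate}, which bootstraps~\eqref{Eq: Resolvent estimate} to $\H^1_{p,\sigma}$-data by difference quotients).

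That said, your outline is faithful to how the cited reference proceeds: partial Fourier transform in $x'$, reduction to an ODE in $x_n$ with exponentially decaying fundamental solutions $\e^{-\omega x_n}$ and $\e^{-|\xi'|x_n}$, algebraic verification that the boundary symbol (the Lopatinskii--Shapiro determinant) is nonvanishing on $\overline{\Sec_\vartheta}\setminus\{0\}$, and then uniform Mikhlin-type bounds on the resulting solution operators. The one point worth flagging is that the passage from tangential multiplier bounds to full $\LL_p(\IR^n_+)$-estimates is not a simple one-dimensional convolution in $x_n$ plus Minkowski; the kernels involve products like $\e^{-\omega(x_n+y_n)}$ and $\e^{-|\xi'|x_n}\e^{-\omega y_n}$ whose $\LL_p$-boundedness in $x_n$ requires the more refined multiplier-class machinery that Shibata--Shimizu develop (their classes ${\bf M}_{s,1,\theta}$ and ${\bf M}_{s,2,\theta}$, invoked later in this paper in the appendix, Proposition~\ref{Prop: Laplacian on half-space}). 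Your description ``standard one-dimensional convolution estimates'' undersells this step, though the overall architecture is correct.
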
 

To calculate the interpolation spaces that appear in Chapter~\ref{Sec: Da Prato--Grisvard theorem} we will need a ``higher regularity version'' of the preceding theorem. Namely, we consider the 
Stokes resolvent problem with data in the space $\H^1_{p , \sigma} (\IR^n_+)$ and establish the validity of homogeneous solution estimates similar to the resolvent estimate in~\eqref{Eq: Resolvent estimate}.
Note that this higher regularity version does not follow directly from Amann's  interpolation scale method (see~\cite[Sec. V.2.2]{Ama95}), since $0 \in \sigma(A_p)$.   

\begin{proposition}
\label{Prop: Higher regularity resolvent estimate}
Let $1 < p < \infty$ and $\vartheta \in (0 , \pi)$. Then, there exists a constant $C > 0$ such that for all $\lambda \in \Sec_{\vartheta}$ and all $f \in \H^1_{p , \sigma} (\IR^n_+)$ the function $u = (\lambda + A_p)^{-1} f$ with associated pressure  $P \in \widehat{\W}^1_p (\IR^n)$ satisfies $u \in \H^3_{p , \sigma} (\IR^n_+)$, $\nabla P \in {\H}^1_p (\IR^n ; \IC^n)$ and
\begin{multline}
\label{Eq: First-order resolvent estimate}
 \lvert \lambda \rvert \| \nabla u \|_{\LL_p (\IR^n_+ ; \IC^{n^2})} + \lvert \lambda \rvert^{1 / 2} \| \nabla^2 u \|_{\LL_p (\IR^n_+ ; \IC^{n^3})} + \| \nabla^3 u \|_{\LL_p (\IR^n_+ ; \IC^{n^4})} + \| \nabla^2 P \|_{\LL_p (\IR^n_+ ; \IC^{n^2})} \\\leq C \| \nabla f \|_{\LL_p (\IR^n_+ ; \IC^{n^2})}.
\end{multline}
\end{proposition}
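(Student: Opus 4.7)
The overall strategy is to differentiate tangentially, then recover the missing normal derivatives of the pressure and of $u_n$ algebraically, and finally handle the genuinely delicate case of normal derivatives of the tangential components of $u$ by reducing to a scalar Dirichlet-Laplacian problem. For $j=1,\dots,n-1$ the tangential derivative $\partial_j$ commutes with the solenoidality constraint and preserves the boundary identity $\IT(u,P)\e_n=0|_{x_n=0}$. Applying Theorem~\ref{Thm: Shibata - Shimizu} to the tangential difference quotient $D_h u(x):=(u(x+h\e_j)-u(x))/h$, which lies in $\dom(A_p)$ with pressure $D_h P$ and solves the same resolvent equation with datum $D_h f$, and passing to the limit $h\to0$, yields $\partial_j u\in\dom(A_p)$ and, after summation over $j<n$, the tangential estimate
\begin{align*}
 |\lambda|\,\|\nabla' u\|_{\LL_p}+|\lambda|^{1/2}\|\nabla\nabla' u\|_{\LL_p}+\|\nabla^2\nabla' u\|_{\LL_p}+\|\nabla\nabla' P\|_{\LL_p}\leq C\|\nabla' f\|_{\LL_p},
\end{align*}
where $\nabla':=(\partial_1,\dots,\partial_{n-1})$.

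Next I would recover the purely-normal components algebraically. Since $f$ is solenoidal and $\divergence\IT(u,P)=\Delta u-\nabla P$, taking the divergence of $\lambda u-\Delta u+\nabla P=f$ shows $P$ is harmonic on $\IR^n_+$, so $\partial_n^2 P=-\sum_{j<n}\partial_j^2 P$ and in fact all second derivatives of $P$ that involve $\partial_n$ are controlled by the preceding tangential bound. Likewise $\partial_n u_n=-\sum_{j<n}\partial_j u_j$ transfers every required estimate on $\partial_n u_n$, $\nabla\partial_n u_n$ and $\nabla^2\partial_n u_n$ to tangential quantities. What remains is to estimate $\partial_n u_j$ and its gradients for $j<n$. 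Here I exploit the tangential components of the boundary identity, namely $\partial_j u_n+\partial_n u_j=0$ on $\{x_n=0\}$, and introduce the auxiliary scalar $v_j:=\partial_n u_j+\partial_j u_n$, which vanishes on $\partial\IR^n_+$. Adding $\partial_n$ of the $j$-th component of the momentum equation to $\partial_j$ of its $n$-th component yields
\begin{align*}
 \lambda v_j-\Delta v_j=\partial_n f_j+\partial_j f_n-2\partial_j\partial_n P \quad\text{in }\IR^n_+,\qquad v_j|_{\partial\IR^n_+}=0.
\end{align*}
The right-hand side is bounded in $\LL_p$ by $\|\nabla f\|_{\LL_p}$, the pressure term being already under tangential control. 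The classical resolvent estimate for the Dirichlet Laplacian on the half-space then gives $|\lambda|\|v_j\|_{\LL_p}+|\lambda|^{1/2}\|\nabla v_j\|_{\LL_p}+\|\nabla^2 v_j\|_{\LL_p}\leq C\|\nabla f\|_{\LL_p}$, and subtracting the tangentially controlled contributions of $\partial_j u_n$ supplies the missing bounds on $\partial_n u_j$, $\nabla\partial_n u_j$ and $\nabla^2\partial_n u_j$ for $j<n$.

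Assembling these three steps yields~\eqref{Eq: First-order resolvent estimate} and, in particular, the claimed memberships $u\in\H^3_{p,\sigma}(\IR^n_+)$ and $\nabla P\in\H^1_p(\IR^n_+;\IC^n)$ by combining the new homogeneous controls with the inhomogeneous estimates of Theorem~\ref{Thm: Shibata - Shimizu} applied to $f$ itself. The main technical obstacle I foresee is the justification of the tangential difference-quotient step in the present homogeneous setting: one must verify that, because the pressure is defined only modulo constants in $\widehat\W^1_p$, the tangential translates $P(\cdot+h\e_j)$ can still be legitimately subtracted to produce a pressure for $D_h u$, and that the homogeneous resolvent bound for $D_h u$ is stable as $h\to 0$ to identify $\partial_j P$ as the genuine associated pressure of $\partial_j u$. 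Once this point is handled, the remaining arguments reduce to standard applications of scalar half-space resolvent theory.
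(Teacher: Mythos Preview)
Your proof is correct and shares its architecture with the paper's: tangential difference quotients give the $\nabla'$-estimate, harmonicity of $P$ and the divergence-free condition recover $\partial_n^2 P$ and all missing derivatives of $u_n$, and a scalar half-space problem finishes off $\partial_n u_j$ for $j<n$. The technical worry you flag about tangential translates of the pressure is not an issue: $D_h P$ is a perfectly good element of $\widehat{\W}^1_p(\IR^n_+)$ since $\nabla(D_h P)=D_h(\nabla P)\in\LL_p$, and the paper treats this step without further comment.

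The one genuine difference is in the scalar reduction. The paper works with $u_j$ itself and observes that it solves a \emph{Neumann} problem $\lambda u_j-\Delta u_j=f_j-\partial_j P$ with boundary data $\partial_n u_j=-\partial_j u_n$, then invokes a higher-order Neumann resolvent estimate (Proposition~\ref{Prop: Laplacian on half-space}) proved separately in the appendix. You instead form $v_j=\partial_n u_j+\partial_j u_n$, which vanishes on the boundary, and reduce to the \emph{Dirichlet} Laplacian resolvent, which is more elementary (odd reflection to $\IR^n$). Your route avoids the appendix machinery at the cost of working with a derived quantity rather than $u_j$ directly; the paper's route is more direct but requires the extra Neumann lemma. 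Both are clean and yield the same estimate.
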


\begin{proof}
Let $1 \leq j \leq n - 1$ and $g$ be a function defined on $\IR^n_+$. Denote by 
$$\delta_j^h g (x) := h^{-1} (g(x + h \e_j) - g(x)),\quad x \in \IR^n_+,\quad h>0,$$ its difference quotient in the coordinate direction $x_j$.
\smallbreak
 Let $\lambda \in \Sec_{\vartheta}$, $f \in \H^1_{p , \sigma} (\IR^n_+)$ and $u \in \dom(A_p)$ be the unique solution to
\begin{align*}
 \lambda u + A_p u = f
\end{align*}
provided by Theorem~\ref{Thm: Shibata - Shimizu}. Since $\delta_j^h u \in \dom(A_p)$ and since $\delta_j^h A_p u = A_p \delta_j^h u$ we find
\begin{align*}
 \lambda \delta_j^h u + A_p \delta_j^h u = \delta_j^h f.
\end{align*}
Moreover, if $P \in \widehat \W^1_p (\IR^n_+)$ is the pressure  associated to $u$, then $\delta_j^h P$ is the pressure associated  to $\delta_j^h u$. Hence,~\eqref{Eq: Resolvent estimate} followed by a standard estimate for difference quotients implies that
$$\displaylines{\quad
 \lvert \lambda \rvert \| \delta_j^h u \|_{\LL_{p , \sigma} (\IR^n_+)} + \lvert \lambda \rvert^{1 / 2} \| \delta_j^h \nabla u \|_{\LL_p (\IR^n_+ ; \IC^{n^2})} + \| \delta_j^h \nabla^2 u \|_{\LL_p (\IR^n_+ ; \IC^{n^3})} + \| \delta_j^h \nabla P \|_{\LL_p (\IR^n_+ ; \IC^n)} \hfill\cr\hfill
 \leq C \| \partial_j f \|_{\LL_{p , \sigma} (\IR^n_+)}.\quad}$$
Because $h > 0$ is arbitrary, we deduce by having $h$ tend to $0,$
\begin{multline}\label{eq:findmeaname}
 \lvert \lambda \rvert \| \partial_j u \|_{\LL_{p , \sigma} (\IR^n_+)} + \lvert \lambda \rvert^{1 / 2} \| \partial_j \nabla u \|_{\LL_p (\IR^n_+ ; \IC^{n^2})} + \| \partial_j \nabla^2 u \|_{\LL_p (\IR^n_+ ; \IC^{n^3})} + \| \partial_j \nabla P \|_{\LL_p (\IR^n_+ ; \IC^n)} 
 \\\leq C \| \partial_j f \|_{\LL_{p , \sigma} (\IR^n_+)}.
\end{multline}
The previous inequality shows that it is possible to control all second derivatives of $P$ but  $\partial_n \partial_n P$. Notice that $P$ is harmonic since $u$ and $f$ are divergence free, so that, thanks to~\eqref{eq:findmeaname}, 
\begin{align*}
 \| \partial_n \partial_n P \|_{\LL_p (\IR^n_+)} \leq C \sum_{j = 1}^{n - 1} \| \partial_j f \|_{\LL_{p , \sigma} (\IR^n_+)}.
\end{align*}
It remains to estimate $\lvert \lambda \rvert \partial_n u$, $\lvert \lambda \rvert^{1 / 2} \partial_n \partial_n u$ and $\partial_n \partial_n \partial_n u$. Estimating  $\lvert \lambda \rvert \partial_n u$ readily follows from  the estimates above and the divergence free condition since 
\begin{align*}
 \partial_n u_n = - \sum_{j = 1}^{n - 1} \partial_j u_j.
\end{align*}
For the other two terms, consider the boundary condition
\begin{align*}
  \IT (u , P) \e_n = 0 \quad \Leftrightarrow \quad \partial_n u + \nabla u_n - P \e_n = 0\quad\hbox{on }
  \partial\IR^n_+.
\end{align*}
For $1 \leq j \leq n - 1$, 
using the information already established for the pressure, this shows that $u_j$ solves the following resolvent problem for the Laplacian with inhomogeneous Neumann data:
\begin{align*}
\left\{ \begin{aligned}
 \lambda u_j - \Delta u_j &= f_j - \partial_j P \in \H^1_p (\IR^n_+) \\
 \partial_n u_j &= - \partial_j u_n  \quad \text{on } \partial \IR^n_+ \quad \text{with} \quad \partial_j u_n \in \H^2_p (\IR^n_+).
\end{aligned}
\right.
\end{align*}
The corresponding higher regularity estimate for the Neumann Laplacian, see Proposition~\ref{Prop: Laplacian on half-space} in the appendix, delivers
\begin{align*}
 \lvert \lambda \rvert \| \nabla u_j \|_{\LL_p (\IR^n_+ ; \IC^n)} + \lvert \lambda \rvert^{1 / 2} &\| \nabla^2 u_j \|_{\LL_p (\IR^n_+ ; \IC^{n^2})} + \| \nabla^3 u_j \|_{\LL_p (\IR^n_+ ; \IC^{n^3})} \\
 &\leq C \Big( \| \nabla f_j \|_{\LL_p (\IR^n_+ ; \IC^n)} + \| \nabla \partial_j P \|_{\LL_p (\IR^n_+ ; \IC^n)} + \lvert \lambda \rvert \| \partial_j u_n \|_{\LL_p (\IR^n_+)} \\
 &\qquad\qquad+ \lvert \lambda \rvert^{1 / 2} \| \nabla \partial_j u_n \|_{\LL_p (\IR^n_+ ; \IC^n)} + \| \nabla^2 \partial_j u_n \|_{\LL_p (\IR^n ; \IC^{n^2})} \Big) \\
 &\leq C \| \nabla f \|_{\LL_p (\IR^n_+ ; \IC^{n^2})}.
\end{align*}
The divergence free condition implies again
\begin{align*}
 \partial_n \partial_n u_n = - \sum_{j = 1}^{n - 1} \partial_n \partial_j u_j \quad \text{and} \quad \partial_n \partial_n \partial_n u_n = - \sum_{j = 1}^{n - 1} \partial_n \partial_n \partial_j u_j,
\end{align*}
which shows that all estimates for $u$ hold.
\end{proof}

The estimate
\begin{align*}
 \lvert \lambda \rvert^{1 / 2} \| \nabla^2 u \|_{\LL_p (\IR^n_+ ; \IC^{n^3})} \leq C \| \nabla f \|_{\LL_p (\IR^n_+ ; \IC^{n^2})}
\end{align*}
in Proposition~\ref{Prop: Higher regularity resolvent estimate} is translated by Cauchy's integral formula into the following semigroup estimate (see, e.g.,~\cite[Prop.~3.7]{Tolksdorf}).
\begin{corollary}
\label{Corollary: Higher regularity semigroup estimate}
Let  $1 < p < \infty$. Then there exists a constant $C > 0$ such that for all $t > 0$ and all $f \in \H^1_{p , \sigma} (\IR^n_+),$ we have
\begin{align*}
 t^{1 / 2} \| \nabla^2 \e^{- t A_p} f \|_{\LL_p (\IR^n_+ ; \IC^{n^3})} \leq C\| \nabla f \|_{\LL_p (\IR^n_+ ; \IC^{n^2})}.
\end{align*}
\end{corollary}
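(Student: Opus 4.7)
The plan is to represent the semigroup by a Dunford integral and to insert the homogeneous resolvent bound from Proposition~\ref{Prop: Higher regularity resolvent estimate} under the integral sign. Since $- A_p$ generates a bounded analytic semigroup on $\LL_{p,\sigma}(\IR^n_+)$ and Theorem~\ref{Thm: Shibata - Shimizu} shows that $\Sec_{\vartheta} \subset \rho(- A_p)$ for every $\vartheta \in (0 , \pi)$, one may fix $\theta \in (\pi/2 , \pi)$ and take as contour $\Gamma$ the boundary of $\Sec_{\theta}$, oriented so as to surround $\sigma(-A_p)$ counterclockwise. The standard Dunford formula then yields, for $f \in \H^1_{p , \sigma} (\IR^n_+)$ and $t > 0$,
\begin{align*}
 \e^{- t A_p} f = \frac{1}{2 \pi \ii} \int_{\Gamma} \e^{t \lambda} (\lambda + A_p)^{-1} f \, \d \lambda,
\end{align*}
the integral converging absolutely in $\LL_{p , \sigma}(\IR^n_+)$ by the standard resolvent estimate~\eqref{Eq: Resolvent estimate}.

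Next I would argue that $\nabla^2$ may be pulled inside the integral. By Proposition~\ref{Prop: Higher regularity resolvent estimate}, for each $\lambda \in \Gamma$ one has $(\lambda + A_p)^{-1} f \in \H^3_{p , \sigma}(\IR^n_+)$ together with
\begin{align*}
 \| \nabla^2 (\lambda + A_p)^{-1} f \|_{\LL_p (\IR^n_+; \IC^{n^3})} \leq \frac{C}{\lvert \lambda \rvert^{1/2}} \| \nabla f \|_{\LL_p (\IR^n_+; \IC^{n^2})}.
\end{align*}
Since $\partial_j \partial_k$ is closed on $\LL_p$, an approximation of the contour by compact pieces together with the absolute integrability statement below permits to exchange the $\nabla^2$ with the integral; thus
\begin{align*}
 \nabla^2 \e^{- t A_p} f = \frac{1}{2 \pi \ii} \int_{\Gamma} \e^{t \lambda} \nabla^2 (\lambda + A_p)^{-1} f \, \d \lambda.
\end{align*}

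The conclusion now follows from a direct computation on the contour. Parameterizing $\Gamma$ as $\lambda = r \e^{\pm \ii \theta}$, $r \in (0 , \infty)$, and setting $c := - \cos \theta > 0$, I estimate
\begin{align*}
 \| \nabla^2 \e^{- t A_p} f \|_{\LL_p (\IR^n_+; \IC^{n^3})} \leq \frac{C}{\pi} \| \nabla f \|_{\LL_p (\IR^n_+; \IC^{n^2})} \int_0^{\infty} \e^{- c t r} r^{- 1/2} \, \d r.
\end{align*}
The substitution $s = c t r$ reduces the last integral to $(ct)^{-1/2} \Gamma(1/2)$, which yields the desired bound $t^{1/2} \| \nabla^2 \e^{- t A_p} f \|_{\LL_p} \leq C \| \nabla f \|_{\LL_p}$. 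The main (mild) obstacle is purely technical: justifying the commutation of $\nabla^2$ with the contour integral; this is handled in the usual fashion by approximating $\Gamma$ by its truncations $\Gamma_R := \Gamma \cap \{ \lvert \lambda \rvert \leq R \}$, applying closedness of $\nabla^2$ on each truncation, and letting $R \to \infty$ with the absolute convergence above providing a dominating bound.
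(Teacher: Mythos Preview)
Your proof is correct and is precisely the approach the paper has in mind: the paper states (without spelling out the details) that the resolvent estimate $\lvert \lambda \rvert^{1/2} \| \nabla^2 (\lambda + A_p)^{-1} f \|_{\LL_p} \leq C \| \nabla f \|_{\LL_p}$ from Proposition~\ref{Prop: Higher regularity resolvent estimate} is converted into the semigroup bound via Cauchy's integral formula, citing~\cite[Prop.~3.7]{Tolksdorf} for the standard argument. You have simply written out that argument explicitly---the Dunford contour representation, the commutation of $\nabla^2$ with the integral via closedness, and the evaluation $\int_0^\infty \e^{-ctr} r^{-1/2}\, \d r = (ct)^{-1/2}\Gamma(1/2)$---so there is nothing to correct.
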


Let us now  introduce the \textit{part} of the Stokes operator on $\H^1_{p , \sigma} (\IR^n_+)$. The corresponding domain is given by
\begin{align}\label{dom:AAp}
 \dom(\bA_p) := \{ u \in \dom(A_p) : A_p u \in \H^1_{p , \sigma} (\IR^n_+) \}, \quad \bA_p u := A_p u \quad \text{for } u \in \dom(\bA_p).
\end{align}
In the operator theoretical language, Proposition~\ref{Prop: Higher regularity resolvent estimate} states that for each $\vartheta \in (0 , \pi)$ the set $\Sec_{\vartheta}$ is contained in the resolvent set $\rho(- \bA_p)$. A combination of the estimates~\eqref{Eq: Resolvent estimate} and~\eqref{Eq: First-order resolvent estimate} shows the validity of the estimate
\begin{multline}
\label{Eq: Inhomogeneous first-order resolvent estimate}
 \| \lambda (\lambda + \bA_p)^{-1} f \|_{\H^1_{p , \sigma} (\IR^n_+)} + \lvert \lambda \rvert^{1 / 2} \| \nabla (\lambda + \bA_p)^{-1} f \|_{\H^1_p (\IR^n_+ ; \IC^{n^2})} \\+ \| \nabla^2 (\lambda + \bA_p)^{-1} f \|_{\H^1_p (\IR^n_+ ; \IC^{n^3})} \leq C \| f \|_{\H^1_{p , \sigma} (\IR^n_+)} 
\end{multline}
for $\lambda \in \Sec_{\vartheta} , f \in \H^1_{p , \sigma} (\IR^n_+)$ and some constant $C > 0$ independent of $\lambda$ and $f$. 
\medbreak
As a consequence, $- \bA_p$ generates a bounded analytic semigroup on $\H^1_{p , \sigma} (\IR^n_+)$.       

This semigroup is strongly continuous if and only if $\dom(\bA_p)$ is dense in $\H^1_{p , \sigma} (\IR^n_+)$, which is established in the following lemma.

\begin{proposition}
Let  $1 < p < \infty$. Then $\dom(A_p)$ is dense in $\LL_{p , \sigma} (\IR^n_+)$ and $\dom(\bA_p)$ is dense in $\H^1_{p , \sigma} (\IR^n_+)$.
\end{proposition}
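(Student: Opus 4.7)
Both density statements follow from a single abstract principle: on a reflexive Banach space $X$, any closed linear operator $B$ with $(0,\infty)\subset\rho(-B)$ and $\sup_{\lambda>0}\|\lambda(\lambda+B)^{-1}\|_{\Lop(X)}<\infty$ has dense domain. I would apply this classical fact to $B = A_p$ on $X = \LL_{p,\sigma}(\IR^n_+)$ and to $B = \bA_p$ on $X = \H^1_{p,\sigma}(\IR^n_+)$. Both $\LL_{p,\sigma}(\IR^n_+)$ and $\H^1_{p,\sigma}(\IR^n_+)$ are closed subspaces (namely kernels of the continuous divergence operator) of the reflexive Banach spaces $\LL_p(\IR^n_+;\IC^n)$ and $\H^1_p(\IR^n_+;\IC^n)$, and hence themselves reflexive for $1<p<\infty$. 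The uniform resolvent bounds along $(0,\infty)$ are precisely what Theorem~\ref{Thm: Shibata - Shimizu} provides for $A_p$ and what~\eqref{Eq: Inhomogeneous first-order resolvent estimate} provides for $\bA_p$; closedness of both operators follows from the existence of bounded resolvents.

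For completeness I would sketch the abstract argument. Given $f\in X$, set $f_\lambda := \lambda(\lambda+B)^{-1}f \in \dom(B)$. The uniform resolvent bound gives $\|f_\lambda\|_X \leq M\|f\|_X$, so by reflexivity there is a sequence $\lambda_n\to\infty$ with $f_{\lambda_n}\rightharpoonup y$ weakly in $X$; the subspace $\overline{\dom(B)}$ being convex and norm-closed is weakly closed, so $y\in\overline{\dom(B)}$. To identify $y = f$ I would test against $\varphi\in\dom(B^*)$: exploiting $\langle f_\lambda,\varphi\rangle = \langle f,\lambda(\lambda+B^*)^{-1}\varphi\rangle$, the resolvent identity $\lambda(\lambda+B^*)^{-1}\varphi-\varphi = -(\lambda+B^*)^{-1}B^*\varphi$ together with the dual resolvent bound yields $\lambda(\lambda+B^*)^{-1}\varphi\to\varphi$ strongly in $X^*$ as $\lambda\to\infty$, whence $\langle y-f,\varphi\rangle = 0$ for every $\varphi\in\dom(B^*)$.

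The main obstacle is the weak density of $\dom(B^*)$ in $X^*$. This is a classical duality fact for closed operators with nonempty resolvent set on reflexive Banach spaces: $B^*$ is itself closed, its resolvent coincides with the adjoint of the resolvent of $B$ (hence is bounded on the relevant ray of $\rho(-B^*)$), and one applies the same weak-compactness argument to $B^*$ acting on $X^* = X^{**}$ using reflexivity. Once this density is at hand, $y = f$, so $f\in\overline{\dom(B)}$; the two density claims of the proposition then follow simultaneously from the choices $B = A_p$ and $B = \bA_p$. As a cross-check and alternative route for $\dom(A_p)$, one may note directly that every $u\in\C^\infty_{c,\sigma}(\IR^n_+)$ (solenoidal, compactly supported strictly inside $\IR^n_+$) lies in $\dom(A_p)$ with associated pressure $P = 0$, since the stress condition $\IT(u,0)\e_n = 0$ is vacuously satisfied and $-\divergence\IT(u,0) = -\Delta u$ is itself solenoidal; density of this class in $\LL_{p,\sigma}(\IR^n_+)$ is standard.
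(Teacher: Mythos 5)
Your overall strategy---invoke the classical fact that a sectorial operator on a reflexive Banach space automatically has dense domain---is perfectly legitimate and genuinely different from the paper's route, which is a Hahn--Banach annihilator argument (a functional vanishing on $\dom(A_p)$ is shown to be zero using surjectivity of $\Id + A_p$). If you simply cite the abstract fact (e.g.\ from Haase's monograph), the proof is complete and slightly shorter than the paper's.

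However, the sketch you give of the abstract fact is circular. To test the weak limit $y$ against $\varphi \in \dom(B^*)$, you need the adjoint $B^*$ to exist as a (single-valued) unbounded operator, and the standard definition of the adjoint of an unbounded operator requires $\dom(B)$ to be dense---precisely what you are trying to prove. The same circularity reappears one level up when you propose to prove density of $\dom(B^*)$ in $X^*$ ``by the same weak-compactness argument applied to $B^*$,'' which would now require $\dom(B^{**})$ dense in $X$, i.e.\ the original claim. The correct proof never touches adjoints: having extracted a weak limit $y$ of $f_\lambda := \lambda(\lambda+B)^{-1}f$, fix $\mu > 0$ and use the resolvent identity to write $(\mu+B)^{-1}f_\lambda = \tfrac{\lambda}{\lambda-\mu}\bigl[(\mu+B)^{-1}f - (\lambda+B)^{-1}f\bigr]$; the uniform bound forces $(\lambda+B)^{-1}f = \tfrac{1}{\lambda}f_\lambda \to 0$ strongly, so $(\mu+B)^{-1}f_\lambda \to (\mu+B)^{-1}f$ strongly, and since a bounded operator is weak--weak continuous and $(\mu+B)^{-1}$ is injective, $(\mu+B)^{-1}y = (\mu+B)^{-1}f$ yields $y=f$.

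Your ``cross-check'' alternative for $\dom(A_p)$ also has a gap: $\C^\infty_{c,\sigma}(\IR^n_+)$ (solenoidal, supported strictly inside) is dense in $\LL_p$ only in the smaller space $\Lop_{p,\sigma}(\IR^n_+)$ of divergence-free fields with vanishing normal trace, whereas in this paper $\LL_{p,\sigma}(\IR^n_+)$ (see \eqref{Eq: Lp-sigma}) is the larger space that imposes no condition on the normal trace---this is exactly why the paper remarks that the Neumann problem requires the larger space. The class that is dense in $\LL_{p,\sigma}(\IR^n_+)$ is $\C^\infty_{c,\sigma}(\overline{\IR^n_+})$, restrictions of compactly supported solenoidal fields on $\IR^n$ (Corollary~\ref{Cor: Density}); but a function in that class does not touch the boundary trivially, so the pressure cannot be taken to be zero and it is not obvious that such a function lies in $\dom(A_p)$. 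So this fallback does not close the argument.
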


\begin{proof}
Let $f \in \LL_{p^{\prime} , \sigma} (\IR^n_+)$ be such that
\begin{align*}
 \int_{\IR^n_+} f \cdot \overline{u} \; \d x = 0 \quad \hbox{for all }\ u \in \dom(A_p).
\end{align*}
Then
\begin{align*}
 0 =  \int_{\IR^n_+} f \cdot \overline{u} \; \d x = \int_{\IR^n_+} (\Id + A_p)^{-1} f \cdot \overline{(\Id + A_p)u} \; \d x.
\end{align*}
Since $-1 \in \rho(A_p),$ the range of the operator $\Id + A_p$ equals $\LL_{p , \sigma} (\IR^n_+)$ so that $(\Id + A_p)^{-1} f$ must be zero. Consequently, $f$ has to be zero. \par
Similarly, if $f \in (\H^1_{p , \sigma} (\IR^n_+))^*$ is a functional that vanishes on $\dom(\bA_p)$, then
\begin{align*}
 0 = \langle f , u \rangle_{(\H^1_{p , \sigma})^* , \H^1_{p , \sigma}} = \langle ((\Id + \bA_p)^{-1})^{\prime} f , (\Id + \bA_p) u \rangle_{(\H^1_{p , \sigma})^* , \H^1_{p , \sigma}} \ \hbox{ for all }\ u \in \dom(\bA_p).
\end{align*}
Since the image of $\Id + \bA_p$ under the set $\dom(\bA_p)$ is all of $\H^1_{p , \sigma} (\IR^n_+)$ by Proposition~\ref{Prop: Higher regularity resolvent estimate}, it follows that $((\Id + \bA_p)^{-1})^{\prime} f=0$ and hence $f$ must be zero.\end{proof}

\begin{corollary}
\label{Cor: Strong continuity}
For all $1 < p < \infty$ the (negative) Stokes operator $- \bA_p$ on $\H^1_{p , \sigma} (\IR^n_+)$ generates a bounded analytic semigroup on $\H^1_{p , \sigma} (\IR^n_+)$ which is strongly continuous. Moreover, the bounded analytic semigroup that is generated by $- A_p$ on $\LL_{p , \sigma} (\IR^n_+)$ is strongly continuous.
\end{corollary}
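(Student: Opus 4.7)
The plan is to read this corollary as the packaging together of two ingredients that have been essentially established already. The generation of a bounded analytic semigroup is (for $A_p$) precisely the conclusion of Theorem~\ref{Thm: Shibata - Shimizu}, and (for $\bA_p$) follows from the resolvent bound~\eqref{Eq: Inhomogeneous first-order resolvent estimate} of Proposition~\ref{Prop: Higher regularity resolvent estimate} via the standard Dunford calculus: indeed, both estimates say that $\Sec_\vartheta\subset\rho(-A_p)$ and $\Sec_\vartheta\subset\rho(-\bA_p)$ for every $\vartheta\in(0,\pi)$, with the corresponding uniform bounds $\|\lambda(\lambda+A_p)^{-1}\|_{\Lop(\LL_{p,\sigma})}\leq C$ and $\|\lambda(\lambda+\bA_p)^{-1}\|_{\Lop(\H^1_{p,\sigma})}\leq C$. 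In the terminology of Chapter~\ref{Sec: Da Prato--Grisvard theorem}, $A_p$ and $\bA_p$ are therefore sectorial of angle $0$ in particular of some angle $<\pi/2$.

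What remains is \emph{strong continuity}. For this I would invoke the characterization~\cite[Thm.~II.4.6]{Engel_Nagel} cited right after Assumption~\ref{Ass: Homogeneous operator}: an operator $-\cA$ generates a strongly continuous bounded analytic semigroup on a Banach space $X$ if and only if $\cA$ is densely defined and sectorial of some angle in $[0,\pi/2)$. Both hypotheses are now in hand: sectoriality was verified above, and density of $\dom(A_p)$ in $\LL_{p,\sigma}(\IR^n_+)$ and of $\dom(\bA_p)$ in $\H^1_{p,\sigma}(\IR^n_+)$ is exactly the content of the preceding proposition. Plugging into the cited abstract result delivers both claims simultaneously.

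There is no real obstacle here: this corollary is purely a bookkeeping statement whose substantive content lies entirely in the preceding proposition (density of the domains) and in the resolvent estimates of Theorem~\ref{Thm: Shibata - Shimizu} and Proposition~\ref{Prop: Higher regularity resolvent estimate}. The only minor point worth emphasizing in the write-up is that on $\H^1_{p,\sigma}(\IR^n_+)$ one must use the \emph{first-order} resolvent estimate~\eqref{Eq: Inhomogeneous first-order resolvent estimate} rather than~\eqref{Eq: Resolvent estimate}, since the underlying space there is $\H^1_{p,\sigma}(\IR^n_+)$ and not $\LL_{p,\sigma}(\IR^n_+)$.
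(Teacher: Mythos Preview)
Your proposal is correct and follows exactly the route the paper takes: the corollary is stated without proof because it is the immediate consequence of the preceding proposition (density of the domains) combined with the sectoriality estimates~\eqref{Eq: Resolvent estimate} and~\eqref{Eq: Inhomogeneous first-order resolvent estimate}, via the characterization~\cite[Thm.~II.4.6]{Engel_Nagel}. The paper even spells this logic out in the paragraph right before the density proposition, noting that the semigroup ``is strongly continuous if and only if $\dom(\bA_p)$ is dense in $\H^1_{p,\sigma}(\IR^n_+)$.''
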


Let us emphasize at this point that the semigroup generated by $- \bA_p$ is a {\em bounded analytic} one.
The analyticity of $\e^{-t\bA_p}$ follows by combining the  arguments given in~\cite[Sec.V.2]{Ama95} with those in~\cite{NS03}. 

\section{Other homogeneous estimates of the Stokes operator}

The results from Section~\ref{Subsec: Generation properties of the Stokes operator} help to prove operator theoretic statements of the Stokes operator as for example its injectivity on $\LL_{p , \sigma} (\IR^n_+),$ which  is a necessary condition for having homogeneous estimates for the Stokes operator.

\begin{proposition}
\label{Prop: Injectivity of Stokes}
Let $1 < p < \infty$. Then the Stokes operator $A_p$ on $\LL_{p , \sigma} (\IR^n_+)$ is injective.
\end{proposition}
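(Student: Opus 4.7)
The plan is to use the homogeneous resolvent estimate of Shibata--Shimizu (Theorem~\ref{Thm: Shibata - Shimizu}) and let the resolvent parameter $\lambda$ tend to $0$ along the positive real axis.

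Suppose $u\in\dom(A_p)$ satisfies $A_pu=0$, and let $P\in\widehat\W^1_p(\IR^n_+)$ denote its associated pressure. Since $u$ is solenoidal, $\lambda u\in\LL_{p,\sigma}(\IR^n_+)$ for every $\lambda>0$, and because $A_pu=0$ we have the trivial identity
$$(\lambda+A_p)u=\lambda u,\qquad \text{i.e.}\qquad u=(\lambda+A_p)^{-1}(\lambda u).$$
Applying the Shibata--Shimizu estimate~\eqref{Eq: Resolvent estimate} with data $f:=\lambda u$ (which is legitimate since $\lambda\in(0,\infty)\subset\Sec_\vartheta$) yields in particular
$$\lvert\lambda\rvert^{1/2}\|\nabla u\|_{\LL_p(\IR^n_+;\IC^{n^2})}\leq C\|\lambda u\|_{\LL_{p,\sigma}(\IR^n_+)}=C\lvert\lambda\rvert\|u\|_{\LL_{p,\sigma}(\IR^n_+)},$$
so that $\|\nabla u\|_{\LL_p}\leq C\lvert\lambda\rvert^{1/2}\|u\|_{\LL_p}$ with a constant $C$ \emph{independent of} $\lambda$. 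Letting $\lambda\to 0^+$ forces $\nabla u=0$ in $\LL_p(\IR^n_+;\IC^{n^2})$, so $u$ is constant on $\IR^n_+$. As $u\in\LL_p(\IR^n_+;\IC^n)$ and $\IR^n_+$ has infinite Lebesgue measure, the only constant in $\LL_p$ is zero, whence $u\equiv 0$.

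I expect no serious obstacle: the only delicate point is to observe that the constant $C$ in~\eqref{Eq: Resolvent estimate} is independent of $\lambda\in\Sec_\vartheta$ (which is exactly the homogeneous nature of the Shibata--Shimizu estimate), and that the solenoidality of $u$ allows one to take $f=\lambda u$ as admissible data in $\LL_{p,\sigma}(\IR^n_+)$. Note that the argument would not work if one had only an estimate of the form $\|\nabla u\|_{\LL_p}\leq C_\lambda\|f\|_{\LL_p}$ with $C_\lambda\to\infty$ as $\lambda\to 0$; it is precisely the scale-invariant character of~\eqref{Eq: Resolvent estimate} that is being used here.
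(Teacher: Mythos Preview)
Your argument is correct and considerably shorter than the paper's own proof. The paper proceeds in three separate stages: for $p=2$ it performs an integration by parts to show that $A_2u=0$ forces the symmetric gradient $D(u)$ to vanish (hence each $\partial_k u_k=0$, so $u=0$); for $1<p<2$ it bootstraps regularity via Sobolev embedding and Proposition~\ref{Prop: Higher regularity resolvent estimate} until one reaches $p=2$; for $p>2$ it invokes the reflexive decomposition $\LL_{p',\sigma}=\ker(A_{p'})\oplus\overline{\Rg(A_{p'})}$ and duality. Your route, by contrast, handles all $1<p<\infty$ uniformly by exploiting only the scale-invariant piece $\lvert\lambda\rvert^{1/2}\|\nabla u\|_{\LL_p}\le C\|f\|_{\LL_p}$ of~\eqref{Eq: Resolvent estimate} and letting $\lambda\downarrow 0$. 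This is more economical and avoids both the bootstrap and the duality step; the paper's energy argument for $p=2$ is perhaps more self-contained (it does not rely on the full Shibata--Shimizu theorem) and yields the extra structural information that $D(u)=0$, but your approach extracts exactly what is needed from a result that is already available.
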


\begin{proof}
Consider the case $p = 2$ first. Since the boundary condition on $\partial\IR^n_+$ translates into
\begin{align*}
 \partial_n u + \nabla u_n - P \e_n = 0,
\end{align*}
one finds by the standard trace theorem that  $P|_{\partial \IR^n_+} \in \B^{1 / 2}_{2 , 2} (\partial \IR^n_+)$
as  $u \in \H^2_2 (\IR^n_+ ; \IC^n)$ (even though $P$ only lies in $\widehat \W^1_2 (\IR^n_+)$). Next, by Corollary~\ref{Cor: Density}, there exists $(\varphi_\ell)_{\ell \in \IN} \subset \C_{c , \sigma}^{\infty} (\IR^n)$ with $\varphi_\ell |_{\IR^n_+} \to u$ in $\H^1_{2 , \sigma} (\IR^n_+)$. Thus,  integration by parts and the boundary condition yield
$$\begin{aligned}
 \int_{\IR^n_+} A_2 u \cdot \overline{u} \; \d x &= - \lim_{\ell \to \infty} \int_{\IR^n_+} \divergence(\IT (u , P)) \cdot \overline{\varphi_\ell} \; \d x\\ &=  \lim_{\ell \to \infty}\sum_{j , k = 1}^n \int_{\IR^n_+} (\partial_j u^k \overline{\partial_j \varphi^k_\ell} + \partial_k u^j \overline{\partial_j \varphi^k_\ell}) \; \d x\\
 &= \sum_{j , k = 1}^n \int_{\IR^n_+} (\partial_j u^k \overline{\partial_j u^k} + \partial_k u^j \overline{\partial_j u^k}) \; \d x.
\end{aligned}$$
Moreover, a simple calculation ensures
\begin{align*}
 2 \sum_{j , k = 1}^n \int_{\IR^n_+} ( \partial_j u_k \overline{\partial_j u_k} + \partial_k u_j \overline{\partial_j u_k} ) \; \d x = \sum_{j , k = 1}^n \int_{\IR^n_+} (\partial_j u_k + \partial_k u_j) (\overline{\partial_j u_k + \partial_k u_j}) \; \d x.
\end{align*}
Thus, if $u \in \dom(A_2)$ with $A_2 u = 0$, then
\begin{align*}
 \sum_{j , k = 1}^n \int_{\IR^n_+} (\partial_j u_k + \partial_k u_j) (\overline{\partial_j u_k + \partial_k u_j}) \; \d x = 0.
\end{align*}
Taking the summands for $j = k$ yields that $\partial_k u_k$ is  zero  for each $1 \leq k \leq n$.  Consequently, for each $1 \leq k \leq n$ the function $u_k$ is constant in the $k$th coordinate direction what together with $u \in \LL_{2 , \sigma} (\IR^n_+)$ implies that $u$ is zero. \par
Now, consider the case $1 < p < 2$. Since $- 1 \in \rho(A_p)$ by Theorem~\ref{Thm: Shibata - Shimizu}, we find that if $u \in \dom(A_p)$ satisfies $A_p u = 0$ then $u$ also solves
\begin{align*}
 u + A_p u = f \with f=u.
\end{align*}
Since $f$ lies in particular in $\H^1_{p , \sigma} (\IR^n_+)$, Sobolev's embedding theorem implies that $f$ lies in $\LL_{r , \sigma} (\IR^n_+)$ for all $p \leq r < \infty$ with $1 / p - 1 / r \leq 1 / n $. Furthermore, Proposition~\ref{Prop: Higher regularity resolvent estimate} and Sobolev's embedding theorem imply that $u \in \H^2_{r , \sigma} (\IR^n_+)$ and $P \in \widehat{\W}^1_r (\IR^n_+)$ so that $u \in \dom(A_r)$. It follows that $A_r u = 0$. If it is possible to choose $r = 2$, then $u = 0$ follows from the first part of the proof. If this is not possible, iterate this argument until the choice $r = 2$ is possible. It follows that $A_p$ is injective. \par
If $2 < p < \infty$, we conclude by duality as follows. Let $p^{\prime}$ denote the H\"older conjugate exponent to $p$. Note that since $A_{p^{\prime}}$ is sectorial by Theorem~\ref{Thm: Shibata - Shimizu} and since $\LL_{p^{\prime} , \sigma} (\IR^n_+)$ is reflexive, the topological decomposition
\begin{align*}
 \LL_{p^{\prime} , \sigma} (\IR^n_+) = \ker(A_{p^{\prime}}) \oplus \overline{\Rg (A_{p^{\prime}})}
\end{align*}
holds, see~\cite[Prop.~2.1.1]{Haase}. Since $A_{p^{\prime}}$ is injective, it follows that $\LL_{p^{\prime} , \sigma} (\IR^n_+) = \overline{\Rg (A_{p^{\prime}})}$. Since $A_{p^{\prime}}$ is adjoint to $A_p$, the classical annihilator relations yields
\begin{align*}
 \ker(A_p) = \overline{\Rg (A_{p^{\prime}})}^{\perp} = \{ 0 \}. &\qedhere
\end{align*}
\end{proof}

Proposition~\ref{Prop: Injectivity of Stokes} and   the definition of  the part ${\bf A}_p$ imply:
\begin{corollary}
For all  $1 < p < \infty,$ the operator ${\bf A}_p$ is injective.
\end{corollary}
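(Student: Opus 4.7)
The plan is extremely short: $\bA_p$ is, by construction in~\eqref{dom:AAp}, nothing but the restriction of $A_p$ to the subspace
\begin{align*}
 \dom(\bA_p) = \{ u \in \dom(A_p) : A_p u \in \H^1_{p,\sigma}(\IR^n_+) \} \subset \dom(A_p),
\end{align*}
with $\bA_p u = A_p u$ on that subspace. Injectivity of an operator is obviously inherited by any restriction, so the result should reduce to a one-line chase.

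Concretely, I would argue as follows. Let $u \in \dom(\bA_p)$ satisfy $\bA_p u = 0$. By the definition of $\bA_p$ recalled above, $u$ belongs to $\dom(A_p)$ and $A_p u = \bA_p u = 0$. Proposition~\ref{Prop: Injectivity of Stokes} then forces $u = 0$, proving $\ker(\bA_p) = \{0\}$.

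There is no real obstacle here, since the statement is a genuine triviality given the previous proposition. The only point perhaps worth a sentence is to confirm that the assertion is not vacuous: the domain $\dom(\bA_p)$ is dense in $\H^1_{p,\sigma}(\IR^n_+)$ and $-\bA_p$ generates a bounded analytic semigroup on that space by Corollary~\ref{Cor: Strong continuity}, so in particular $\dom(\bA_p)$ is rich. Hence no extra work beyond invoking Proposition~\ref{Prop: Injectivity of Stokes} is needed, and the proof can be given in two lines.
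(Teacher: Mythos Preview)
Your proposal is correct and matches the paper's approach exactly: the paper simply states that the corollary follows from Proposition~\ref{Prop: Injectivity of Stokes} together with the definition of the part $\bA_p$, without giving a formal proof. Your extra remark about the domain being rich is harmless but unnecessary.
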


The injectivity combined with the estimates~\eqref{Eq: Resolvent estimate} and~\eqref{Eq: First-order resolvent estimate} results in a comparison of $A_p u$ with the second derivatives of $u,$ with ground space $\LL_{p , \sigma} (\IR^n)$ as well as a comparison of $A_p u$ with the second derivatives of $u$ but with ground space $\dot \H^1_{p , \sigma} (\IR^n_+)$. As a preparation, we record the following trace lemma.

\begin{lemma}
\label{Lem: Homogeneous trace and extension estimates}
For all $u \in \H^1_p (\IR^n_+)$ the trace of $u$ on $\partial \IR^n_+$ belongs to $\B^{1 - 1 / p}_{p , p} (\partial \IR^n_+)$. Moreover, there exists a constant $C > 0$ such that for all $u \in \H^1_p (\IR^n_+),$ we have  the homogeneous estimate
\begin{align}
\label{Eq: Homogeneous trace estimate}
 \| u|_{\partial \IR^n_+} \|_{\dot \B^{1 - 1 / p}_{p , p} (\partial \IR^n_+)} \leq C \| \nabla u \|_{\LL_p (\IR^n_+ ; \IC^n)}.
\end{align}
 Furthermore, there exists a constant $C > 0$ such that all $\eps > 0$ and for each function $g \in \B^{1 - 1 / p}_{p , p} (\partial \IR^n_+)$ there exists $\Psi_{\eps} \in \H^1_p (\IR^n_+)$ with $\Psi_{\eps}|_{\partial \IR^n_+} = g$ that satisfies the estimate
\begin{align}
\label{Eq: Homogeneous extension estimate}
 \| \nabla \Psi_{\eps} \|_{\LL_p (\IR^n_+ ; \IC^n)} \leq C \| g \|_{\dot \B^{1 - 1 / p}_{p , p} (\partial \IR^n_+)} + \eps \| g \|_{\LL_p (\partial \IR^n_+)}.
\end{align}
\end{lemma}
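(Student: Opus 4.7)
My plan is to deduce both bounds from the classical inhomogeneous trace and extension theorem on $\H^1_p(\IR^n_+)/\B^{1-1/p}_{p,p}(\partial\IR^n_+)$ by a dilation argument, exploiting that the homogeneous half of each norm scales differently from the $\LL_p$ part. This is the same philosophy used in Proposition~\ref{Prop: Proper boundedness extension operators}.

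For \eqref{Eq: Homogeneous trace estimate}, I start from the inhomogeneous trace inequality
$$\|v|_{\partial\IR^n_+}\|_{\dot\B^{1-1/p}_{p,p}(\partial\IR^n_+)}+\|v|_{\partial\IR^n_+}\|_{\LL_p(\partial\IR^n_+)}\leq C\bigl(\|v\|_{\LL_p(\IR^n_+)}+\|\nabla v\|_{\LL_p(\IR^n_+;\IC^n)}\bigr),$$
applied to $u_\lambda(x):=u(\lambda x)$ for $\lambda>0$. Using the scalings $\|u_\lambda\|_{\LL_p}=\lambda^{-n/p}\|u\|_{\LL_p}$, $\|\nabla u_\lambda\|_{\LL_p}=\lambda^{1-n/p}\|\nabla u\|_{\LL_p}$, $\|u_\lambda|_{\partial\IR^n_+}\|_{\LL_p}=\lambda^{-(n-1)/p}\|u|_{\partial\IR^n_+}\|_{\LL_p}$ and $\|u_\lambda|_{\partial\IR^n_+}\|_{\dot\B^{1-1/p}_{p,p}}=\lambda^{1-n/p}\|u|_{\partial\IR^n_+}\|_{\dot\B^{1-1/p}_{p,p}}$, and then dividing by $\lambda^{1-n/p}$, I obtain
$$\|u|_{\partial\IR^n_+}\|_{\dot\B^{1-1/p}_{p,p}}+\lambda^{-(1-1/p)}\|u|_{\partial\IR^n_+}\|_{\LL_p}\leq C\lambda^{-1}\|u\|_{\LL_p}+C\|\nabla u\|_{\LL_p}.$$
Sending $\lambda\to\infty$ (so the two $\LL_p$-contributions disappear, since $p>1$ makes $1-1/p>0$) yields \eqref{Eq: Homogeneous trace estimate}.

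For \eqref{Eq: Homogeneous extension estimate}, I invoke the inhomogeneous extension theorem: any $h\in\B^{1-1/p}_{p,p}(\partial\IR^n_+)$ admits $\Phi\in\H^1_p(\IR^n_+)$ with $\Phi|_{\partial\IR^n_+}=h$ and $\|\Phi\|_{\H^1_p}\leq C\|h\|_{\B^{1-1/p}_{p,p}}$. For $\mu>0$, I apply this to $h:=g_\mu$ with $g_\mu(x'):=g(\mu x')$, producing $\Phi_\mu\in\H^1_p(\IR^n_+)$ with $\Phi_\mu|_{\partial\IR^n_+}=g_\mu$ and
$$\|\nabla\Phi_\mu\|_{\LL_p}\leq C\bigl(\mu^{-(n-1)/p}\|g\|_{\LL_p}+\mu^{1-n/p}\|g\|_{\dot\B^{1-1/p}_{p,p}}\bigr).$$
Then I undo the scaling by setting $\Psi_\eps(x):=\Phi_\mu(x/\mu)$. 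One checks $\Psi_\eps|_{\partial\IR^n_+}=g$ and
$$\|\nabla\Psi_\eps\|_{\LL_p}=\mu^{n/p-1}\|\nabla\Phi_\mu\|_{\LL_p}\leq C\mu^{-(1-1/p)}\|g\|_{\LL_p}+C\|g\|_{\dot\B^{1-1/p}_{p,p}}.$$
Choosing $\mu$ large enough that $C\mu^{-(1-1/p)}\leq\eps$ delivers \eqref{Eq: Homogeneous extension estimate}; note $\Psi_\eps\in\LL_p(\IR^n_+)$ automatically (though its $\LL_p$-norm is allowed to blow up in $\mu$, which is immaterial since only $\nabla\Psi_\eps$ must obey a controlled estimate).

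The only point requiring attention is that $1-1/p>0$ when $p\in(1,\infty)$, which is exactly what lets the dilation send the $\|g\|_{\LL_p}$ coefficient to zero in the extension part and the $\|u\|_{\LL_p}$ coefficient to zero in the trace part; this is the reason the excess $\LL_p$ contributions in the inhomogeneous inequalities can be absorbed into an arbitrarily small $\eps$ (respectively discarded in the limit).
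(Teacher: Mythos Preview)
Your proof is correct and follows essentially the same approach as the paper: both deduce the homogeneous estimates from the classical inhomogeneous trace/extension theorem via a dilation argument, sending the scaling parameter to infinity to kill the inhomogeneous remainder terms (using $1-1/p>0$). The only cosmetic difference is that the paper uses dyadic dilations $2^\ell$ while you use a continuous parameter $\lambda$ (resp.\ $\mu$).
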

\begin{proof}
The corresponding statement in inhomogeneous spaces is classical: from~\cite[Thm.~7.39 and Lem.~7.40/7.41]{Adams_Fournier}, one knows that
\begin{equation}
\label{Eq: Inhomogeneous trace estimate}
 \| u|_{\partial \IR^n_+} \|_{\B^{1 - 1 / p}_{p , p} (\partial \IR^n_+)} \leq C \| u \|_{\H^1_p (\IR^n_+)} \andf \| \Psi \|_{\H^1_p (\IR^n_+)} \leq C \| g \|_{\B^{1 - 1 / p}_{p , p} (\partial \IR^n_+)},
\end{equation}
with $\Psi$ being an appropriate extension of $g$ to $\IR^n_+$. 
\medbreak
Let us also recall that, as a straightforward consequence of the definition of Besov norms, we have, whenever $\sigma>0,$
\begin{equation}\label{eq:normb}
\|z\|_{\B^\sigma_{q,r}(\IR^d)}\approx \|z\|_{\LL_q(\IR^d)}+ \|z\|_{\dot \B^\sigma_{q,r}(\IR^d)}.
\end{equation}
Let us apply the first inequality of~\eqref{Eq: Inhomogeneous trace estimate} to $u(2^\ell\cdot)$ for all $\ell\in\IZ.$ Since
\begin{align*}
 \| u (2^\ell \cdot) \|_{\LL_p (\IR^n_+)} = 2^{- \ell n / p} \| u \|_{\LL_p (\IR^n_+)} \andf \| \nabla u(2^\ell \cdot) \|_{\LL_p (\IR^n_+ ; \IC^n)} = 2^{\ell(1 - n / p)} \| \nabla u\|_{\LL_p (\IR^n_+ ; \IC^n)}
\end{align*}
and (identifying $\partial \IR^n_+$   with $\IR^{n - 1}$), 
$$ \begin{aligned}\| u (2^\ell \cdot) \|_{\dot\B^{1 - 1 / p}_{p , p} (\partial\IR^{n}_+)}&= 
 \bigg( \sum_{k\in\IZ} 2^{k (p - 1)} \| \dot \Delta_k u(2^\ell \cdot) \|_{\LL_p (\IR^{n - 1})}^p \bigg)^{1 / p}\\
&= 2^{\ell (1 - n / p)} \bigg( \sum_{k\in\IZ} 2^{(k-\ell) (p - 1)} \| \dot \Delta_{k-\ell} u \|_{\LL_p (\IR^{n - 1})}^p \bigg)^{1 / p}\\
&= 2^{\ell (1 - n / p)} \| u \|_{\dot\B^{1 - 1 / p}_{p , p} (\partial\IR^{n}_+)},\end{aligned}$$
we get, remembering~\eqref{Eq: Inhomogeneous trace estimate} and~\eqref{eq:normb},
$$
2^{\ell (1 - n / p)} \| u \|_{\dot\B^{1 - 1 / p}_{p , p} (\partial\IR^{n}_+)}\leq C2^{- \ell n / p}\Bigl( \| u \|_{\LL_p (\IR^n_+)}+2^\ell \| \nabla u\|_{\LL_p (\IR^n_+ ; \IC^n)}\Bigr)\cdotp
$$
Hence,  multiplying both sides by $2^{\ell (n / p-1)}$ and letting $\ell \to +\infty$ yields inequality~\eqref{Eq: Homogeneous trace estimate}.
 \smallbreak
To establish~\eqref{Eq: Homogeneous extension estimate}, denote the extension operator that maps $g$ to $\Psi$ by $E$. Because $E[g(2^\ell \cdot)] (x) = [E g] (2^\ell x)$ might not hold true, one cannot repeat the scaling argument of the first part of the proof. Instead, define a sequence of extensions of $g$ by $\Psi_\ell (x) := E [g (2^\ell \cdot)] (2^{- \ell} x)$ for $x \in \IR^n_+$. The scaling estimates above,  the second estimate in~\eqref{Eq: Inhomogeneous trace estimate} 
and~\eqref{eq:normb} imply
$$\begin{aligned}
 \| \nabla \Psi_\ell \|_{\LL_p (\IR^n_+)} &\leq C 2^{\ell (n / p-1)} \|E[g (2^\ell \cdot)] \|_{\B^{1 - 1 / p}_{p,p} (\partial \IR^n_+)} \\
 &\leq C2^{\ell (n / p-1)}\Bigl(\|g(2^\ell\cdot)\|_{\LL_p(\partial\IR^n_+)}+\|g(2^\ell\cdot)\|_{\dot \B^{1 - 1 / p}_{p , p} (\partial \IR^n_+)}\Bigr)\\
&\leq C \Bigl(2^{\ell(1/p-1)}\|g\|_{\LL_p(\partial\IR^n_+)}+ \| g \|_{\dot \B^{1 - 1 / p}_{p , p} (\partial \IR^n_+)}\Bigr),\end{aligned}$$
 with a constant $C > 0$ independent of $\ell$ and $g$. Since $p > 1$ the existence of an extension $\Psi_{\eps}$ that satisfies~\eqref{Eq: Homogeneous extension estimate} follows by taking $\ell$ large enough.
\end{proof}

Now, we are in the position to present a proof of the following proposition.

\begin{proposition}
\label{Prop: Comparison Stokes with second derivatives}
Let $1 < p < \infty$. There exist two constants $C , C_1 , C_2 > 0$ such that:
\begin{enumerate}
\item \label{It: Operator} For all $u \in \dom(A_p)$ with associated pressure $P \in \widehat \W^1_p (\IR^n_+),$ it holds
\begin{align*}
 C_1 \Big(\| \nabla^2 u \|_{\LL_p (\IR^n_+ ; \IC^{n^3})} + \| \nabla P \|_{\LL_p (\IR^n_+ ; \IC^n)} \Big) \leq \| A_p u \|_{\LL_{p , \sigma} (\IR^n_+;\IC^n)} \leq C_2 \| \nabla^2 u \|_{\LL_p (\IR^n_+ ; \IC^{n^3})}.
\end{align*}
\item \label{It: Item part} For all $u \in \dom({\bf A}_p)$ with associated pressure $P \!\in\! \widehat{\W}^1_p (\IR^n_+)$ such that  $\nabla P\! \in\! \H^1_p (\IR^n_+ ; \IC^n),$ it holds
\begin{align*}
 \| \nabla^2 u \|_{\dot \H^1_p (\IR^n_+ ; \IC^{n^3})} + \| \nabla P \|_{\dot \H^1_p (\IR^n_+ ; \IC^n)} \leq C \| {\bf A}_p u \|_{\dot \H^1_{p , \sigma} (\IR^n_+;\IC^n)}.
\end{align*}
\end{enumerate}
\end{proposition}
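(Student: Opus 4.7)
My plan is to reduce the whole statement to the Shibata--Shimizu resolvent bounds (Theorem~\ref{Thm: Shibata - Shimizu} and its higher regularity counterpart, Proposition~\ref{Prop: Higher regularity resolvent estimate}) via a splitting argument and a passage to the limit $\lambda \to 0^+$. The starting observation is algebraic: since $D(u) = \nabla u + [\nabla u]^{\top}$ and $\divergence u = 0$,
\begin{align*}
A_p u \;=\; -\divergence \IT(u,P) \;=\; -\Delta u + \nabla P.
\end{align*}
Taking the divergence shows that $P$ is harmonic on $\IR^n_+$, while the $n$-th component of the boundary condition $\IT(u,P)\e_n = 0$ reads $P = 2\partial_n u_n$ on $\partial\IR^n_+$. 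In particular $\nabla P$ depends linearly on $u$, as the gradient of the Poisson extension of $2\partial_n u_n|_{\partial\IR^n_+}$.

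To get the upper bound in (1), I will invoke the standard Calder\'on--Zygmund estimate for the gradient of the half-space Poisson kernel, which yields
\begin{align*}
\|\nabla P\|_{\LL_p(\IR^n_+;\IC^n)} \leq C \,\|\partial_n u_n|_{\partial\IR^n_+}\|_{\dot\B^{1-1/p}_{p,p}(\partial\IR^n_+)}.
\end{align*}
The homogeneous trace inequality~\eqref{Eq: Homogeneous trace estimate} applied to $\partial_n u_n$ then produces $\|\nabla P\|_{\LL_p} \leq C\|\nabla^2 u\|_{\LL_p}$, and the bound $\|A_p u\|_{\LL_p} \leq C_2\|\nabla^2 u\|_{\LL_p}$ follows at once from the identity displayed above.

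For the lower bound in (1), I will set $f := A_p u$ and, for each $\lambda>0$, introduce
\begin{align*}
v_\lambda := (\lambda + A_p)^{-1}f \andf w_\lambda := \lambda(\lambda+A_p)^{-1}u,
\end{align*}
so that $u = v_\lambda + w_\lambda$. The key point is that this splitting is respected at the level of pressures because $\nabla P = A_p u + \Delta u$ depends linearly on $u$, forcing $\nabla P = \nabla P_{v_\lambda} + \nabla P_{w_\lambda}$. Applying Theorem~\ref{Thm: Shibata - Shimizu} to each piece gives, with constants independent of $\lambda$,
\begin{align*}
\|\nabla^2 v_\lambda\|_{\LL_p} + \|\nabla P_{v_\lambda}\|_{\LL_p} \leq C\|A_p u\|_{\LL_p}, \qquad \|\nabla^2 w_\lambda\|_{\LL_p} + \|\nabla P_{w_\lambda}\|_{\LL_p} \leq C\lambda\|u\|_{\LL_p}.
\end{align*}
Adding these inequalities, using the triangle inequality, and letting $\lambda\to 0$ absorbs the $\lambda\|u\|_{\LL_p}$ term and delivers the claim.

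Part~(2) is handled by exactly the same splitting, now with $f := \bA_p u \in \H^1_{p,\sigma}(\IR^n_+)$, but invoking the higher regularity resolvent estimate~\eqref{Eq: First-order resolvent estimate} of Proposition~\ref{Prop: Higher regularity resolvent estimate} in place of the $\LL_p$ one. Since $u \in \dom(\bA_p) \subset \H^2_p$ we have $\nabla u \in \LL_p$, so the analogue of the $w_\lambda$-estimate, $\|\nabla^3 w_\lambda\|_{\LL_p} + \|\nabla^2 P_{w_\lambda}\|_{\LL_p} \leq C\lambda\|\nabla u\|_{\LL_p}$, still tends to zero, yielding $\|\nabla^3 u\|_{\LL_p} + \|\nabla^2 P\|_{\LL_p} \leq C\|\nabla \bA_p u\|_{\LL_p}$. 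The most delicate step across both parts is the consistency of the pressure decomposition, i.e.\ verifying that $\nabla P_{v_\lambda} + \nabla P_{w_\lambda}$ really equals $\nabla P$; this relies precisely on the linear formula $\nabla P = A_p u + \Delta u$, which identifies the pressure gradient uniquely from $u$ and thus makes the decomposition exact rather than merely valid modulo constants.
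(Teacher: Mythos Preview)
Your argument is correct. For the lower bound in~(1) and for~(2) you use the same resolvent-and-limit mechanism as the paper, but your packaging via the splitting $u=v_\lambda+w_\lambda$ with $w_\lambda=\lambda(\lambda+A_p)^{-1}u$ is actually cleaner: the paper instead shows that $\nabla^2(\lambda+\cA_p)^{-1}f$ is Cauchy as $\lambda\to0$ and identifies the limit as $\nabla^2 u$ by combining this with the distributional convergence $(\lambda+\cA_p)^{-1}f\to u$ (which in turn requires injectivity/dense range of $\cA_p$). Your direct estimate $\|\nabla^2 w_\lambda\|+\|\nabla P_{w_\lambda}\|\le C\lambda\|u\|$ bypasses all of that. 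The pressure-decomposition point you single out is exactly right and is handled in the paper only implicitly (the paper bounds $\nabla P$ a posteriori from $\nabla P=\Delta u+\cA_p u$, rather than tracking it through the splitting).

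For the upper bound in~(1) the two proofs genuinely differ. You invoke the Dirichlet Poisson-extension estimate $\|\nabla P\|_{\LL_p}\lesssim\|P|_{\partial\IR^n_+}\|_{\dot\B^{1-1/p}_{p,p}}$ for the harmonic $P$, which together with~\eqref{Eq: Homogeneous trace estimate} gives $\|\nabla P\|\lesssim\|\nabla^2 u\|$ directly. The paper avoids appealing to that harmonic-extension bound: it picks any $\H^1_p$-extension $\Psi_\eps$ of $P|_{\partial\IR^n_+}$ satisfying~\eqref{Eq: Homogeneous extension estimate}, observes that $P-\Psi_\eps$ has zero trace so that $\IP\divergence\IT(u,P)=\IP\divergence\IT(u,\Psi_\eps)$, and then bounds $\|\nabla\Psi_\eps\|$ and lets $\eps\to0$. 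Your route is shorter but imports an external (though classical) estimate and implicitly uses uniqueness for the weak Dirichlet problem to identify $P$ with the Poisson extension; the paper's route is fully self-contained within Lemma~\ref{Lem: Homogeneous trace and extension estimates} and the Helmholtz projection.
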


\begin{proof}
For the proof of~\eqref{It: Item part} and of  the first inequality in~\eqref{It: Operator}, we use the same symbol $\cA_p$ to designate  $A_p$ or  ${\bf A}_p$. Moreover, $\X$ either stands for the symbols $\LL_p$ or $\H^1_p$. The notation for homogeneous spaces is  $\dot \X,$
and we use $\X_\sigma$ or $\dot \X_\sigma$ to designate the solenoidal counterparts of those spaces.

Let $u \in \dom(\cA_p)$ and $P$ be the pressure associated to $u$. Denote by $f := \cA_p u \in \X_{\sigma} (\IR^n_+)$ the corresponding right-hand side to the Stokes equations. Since $\cA_p$ is a sectorial operator, functional analytic arguments show, see, e.g., Haase~\cite[Prop.~2.1.1]{Haase}, that for all $g \in \overline{\Rg(\cA_p)}$
\begin{align}
\label{Eq: Sectorial approximation}
 \cA_p (\lambda + \cA_p)^{-1} g \to g \quad \text{in} \quad \X_{\sigma} (\IR^n_+) \quad \text{as} \quad \lambda \searrow 0.
\end{align}
Moreover, for $\lambda , \mu > 0$ inequalities~\eqref{Eq: Resolvent estimate} and~\eqref{Eq: Inhomogeneous first-order resolvent estimate} combined with $\cA_p u = f$ yield
\begin{align*}
 \| \nabla^2 [(\mu + \cA_p)^{-1} f - (\lambda + \cA_p)^{-1} f] \|_{\X (\IR^n_+ ; \IC^{n^3})} &= \lvert \lambda - \mu \rvert \| \nabla^2 (\mu + \cA_p)^{-1} (\lambda + \cA_p)^{-1} f \|_{\X (\IR^n_+ ; \IC^{n^3})} \\
 &\leq C \lvert \lambda - \mu \rvert \| \cA_p (\lambda + \cA_p)^{-1} u \|_{\X_{\sigma} (\IR^n_+)} \\
 &\leq C \lvert \lambda - \mu \rvert \| u \|_{\X_{\sigma} (\IR^n_+)}.
\end{align*}
Consequently,
\begin{equation}\label{eq:rel1}
 \| \nabla^2 [(\mu + \cA_p)^{-1} f - (\lambda + \cA_p)^{-1} f] \|_{\X (\IR^n_+ ; \IC^{n^3})} \to 0 \quad \text{as} \quad \lambda , \mu \searrow 0.
\end{equation}
Now, since $\cA_p$ is injective, sectorial and since $\X_{\sigma}(\IR^n_+)$ is reflexive, $\cA_p$ has dense range, see~\cite[Prop.~2.2.1]{Haase}. In particular, $u \in \overline{\Rg (\cA_p)}$ holds true. This, together with the property $\cA_p u = f$ and~\eqref{Eq: Sectorial approximation}, implies
\begin{equation}\label{eq:rel2}
 \| (\lambda + \cA_p)^{-1} f - u \|_{\X_{\sigma} (\IR^n_+)} = \| \cA_p (\lambda + \cA_p)^{-1} u - u \|_{\X_{\sigma} (\IR^n_+)} \to 0 \quad \text{as} \quad \lambda \searrow 0.
\end{equation}
Note that~\eqref{eq:rel1} guarantees that the family $(\nabla^2(\lambda + \cA_p)^{-1}f)_{\lambda\in \Sec_\vartheta}$ 
satisfies the Cauchy criterion at $0$ in $\X(\IR^n_+;\IC^{n^3})$ while~\eqref{eq:rel2} shows that
the distributional limit should be $\nabla^2u.$ Therefore
\begin{align*}
 \nabla^2 (\lambda + \cA_p)^{-1} f \to \nabla^2 u \quad \text{in} \quad \X (\IR^n_+ ; \IC^{n^3}) \quad \text{as} \quad \lambda \searrow 0. 
\end{align*}
Now,~\eqref{Eq: Resolvent estimate} and~\eqref{Eq: First-order resolvent estimate} imply
\begin{align*}
 \| \nabla^2 u \|_{\dot \X (\IR^n_+ ; \IC^{n^3})} \leq \limsup_{\lambda \searrow 0} \| \nabla^2 (\lambda + \cA_p)^{-1} f \|_{\dot \X (\IR^n_+ ; \IC^{n^3})} \leq C \| f \|_{\dot \X_{\sigma} (\IR^n_+)} = C \| \cA_p u \|_{\dot \X_{\sigma} (\IR^n_+)}.
\end{align*}
Since $u$ and $P$ solve the Stokes equations
\begin{align*}
 \nabla P = \Delta u + \cA_p u,
\end{align*}
it follows that
\begin{align*}
 \| \nabla P \|_{\dot \X (\IR^n_+ ; \IC^n)} \leq C \| \cA_p u \|_{\dot \X_{\sigma} (\IR^n_+;\IC^n)}.
\end{align*}
\indent For proving the second inequality in~\eqref{It: Operator}, observe that the boundary condition $\IT(u , P) \e_n = 0$ implies that as in the proof of Proposition~\ref{Prop: Injectivity of Stokes}
we have $P|_{\partial \IR^n_+} \in \B^{1 - 1 / p}_{p , p} (\partial \IR^n_+)$,
see definitions~\eqref{dom:Ap} and~\eqref{dom:AAp}.
Let $\eps > 0$ and apply Lemma~\ref{Lem: Homogeneous trace and extension estimates} to obtain an extension $\Psi_{\eps} \in \H^1_p (\IR^n_+)$ with $\Psi_{\eps} |_{\partial \IR^n_+} = P|_{\partial \IR^n_+}$ that satisfies~\eqref{Eq: Homogeneous extension estimate}. Invoking~\eqref{Eq: Homogeneous extension estimate} and~\eqref{Eq: Homogeneous trace estimate} yields:
\begin{align*}
 \| \nabla \Psi_{\eps} \|_{\LL_p (\IR^n_+ ; \IC^n)} &\leq C \| P|_{\partial \IR^n_+} \|_{\dot \B^{1 -  1 / p}_{p , p} (\partial \IR^n_+)} + \eps \| P|_{\partial \IR^n_+} \|_{\LL_p (\partial \IR^n_+)} \\
 &\leq C \big( \| \partial_n u\cdot \e_n|_{\partial \IR^n_+} \|_{\dot \B^{1 - 1 / p}_{p , p} (\partial \IR^n_+)} \!+\! \| \nabla u_n \cdot \e_n|_{\partial \IR^n_+} \|_{\dot \B^{1 - 1 / p}_{p , p} (\partial \IR^n_+)} \big)  + \eps \| P|_{\partial \IR^n_+} \|_{\LL_p (\partial \IR^n_+)} \\
 &\leq C \| \nabla^2 u \|_{\LL_p (\IR^n_+ ; \IC^{n^3})}  + \eps \| P|_{\partial \IR^n_+} \|_{\LL_p (\partial \IR^n_+)},
\end{align*}
with $C > 0$ independent of $\eps$. Now, let $\IP : \LL_p (\IR^n_+ ; \IC^n) \to \LL_p (\IR^n_+ ; \IC^n)$ denote the Helmholtz projector onto the solenoidal vector fields $\LL_{p , \sigma} (\IR^n_+)$. Then, it follows since the trace of $P - \Psi_{\eps}$ is zero and thus $\IP \divergence (\IT(u ,P))
 =\IP \divergence (\IT(u , \Psi_{\eps})),$  that
\begin{align*}
 \| A_p u \|_{\LL_{p , \sigma} (\IR^n_+)} &= \| \IP \divergence (\IT(u , P)) \|_{\LL_{p , \sigma} (\IR^n_+)} = \| \IP \divergence (\IT(u , \Psi_{\eps})) \|_{\LL_{p , \sigma} (\IR^n_+)} \\
 &\leq C\Big( \| \nabla^2 u \|_{\LL_p (\IR^n_+ ; \IC^{n^3})} + \| \nabla \Psi_{\eps} \|_{\LL_p (\IR^n_+ ; \IC^n)} \Big) \\
 &\leq C \| \nabla^2 u \|_{\LL_p (\IR^n_+ ; \IC^{n^3})} + C \eps \| P \|_{\LL_p (\partial \IR^n_+)}.
\end{align*}
The result follows by letting $\eps \to 0$.
\end{proof}

This result will enable us to prove a  density statement. For its formulation, introduce
\begin{align*}
 \dom(A_p^{\infty}) := \bigcap_{k = 1}^{\infty} \dom(A_p^k)
\end{align*}
and observe that $\dom(A_p^{\infty}) \subset \dom(A_p) \cap \dom({\bf A}_p)$.

\begin{corollary}
\label{Cor: Density of domain in homogeneous Besov}
For all $1 < p < \infty$, $1 \leq q < \infty$ and  $s\in(0,1)$ satisfying~\eqref{Eq: Banach space conditions},
the space $\dom(A_p^{\infty})$ is dense in $\dot \B^s_{p , q , \sigma} (\IR^n_+)$.
\end{corollary}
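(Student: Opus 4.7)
The strategy is to approximate via the Stokes semigroup applied to smooth compactly supported solenoidal fields, using real interpolation to convert rate estimates in $\LL_p$ and $\dot\H^2_p$ into a convergence statement in $\dot\B^s_{p,q,\sigma}$.

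By Corollary~\ref{Cor: Density of inhomogeneous spaces in homogeneous spaces}, $\C_{c,\sigma}^\infty(\overline{\IR^n_+})$ is dense in $\dot\B^s_{p,q,\sigma}(\IR^n_+)$, so it suffices to prove that each $\varphi\in\C_{c,\sigma}^\infty(\overline{\IR^n_+})$ is the $\dot\B^s_{p,q,\sigma}$-limit, as $t\to0^+$, of the family $\varphi_t:=\e^{-tA_p}\varphi$. The membership $\varphi_t\in\dom(A_p^\infty)$ for every $t>0$ follows at once from the analyticity of the semigroup (Corollary~\ref{Cor: Strong continuity}): the smoothing estimate~\eqref{Eq: Smoothing estimate} iterated via $\e^{-tA_p}=(\e^{-(t/k)A_p})^k$ places $\varphi_t$ in $\dom(A_p^k)$ for each $k\in\IN$.

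For the convergence, the parameter range $s\in(0,1)$ together with~\eqref{Eq: Banach space conditions} falls exactly within the scope of Proposition~\ref{Prop: Interpolation of solenoidal spaces without boundary conditions} with $m=2$ and $\theta=s/2$, yielding $\dot\B^s_{p,q,\sigma}(\IR^n_+)=(\LL_{p,\sigma}(\IR^n_+),\dot\H^2_{p,\sigma}(\IR^n_+))_{s/2,q}$ with equivalent norms. This provides, by a standard property of real interpolation, the inequality
\begin{equation*}
\|u\|_{\dot\B^s_{p,q,\sigma}}\leq C\|u\|_{\LL_{p,\sigma}}^{1-s/2}\|u\|_{\dot\H^2_{p,\sigma}}^{s/2}\qquad(u\in\LL_{p,\sigma}(\IR^n_+)\cap\dot\H^2_{p,\sigma}(\IR^n_+)).
\end{equation*}
Applied to $u:=\varphi_t-\varphi$, the goal reduces to establishing the two bounds
\begin{equation*}
\|\varphi_t-\varphi\|_{\LL_p}\leq Ct^{1/2}\quad\text{and}\quad\|\varphi_t-\varphi\|_{\dot\H^2_p}\leq Ct^{-1/2}\qquad(0<t\leq1),
\end{equation*}
which together yield $\|\varphi_t-\varphi\|_{\dot\B^s_{p,q,\sigma}}\leq Ct^{(1-s)/2}$, vanishing as $t\to0^+$ thanks to $s<1$.

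The $\dot\H^2_p$-bound is immediate from Corollary~\ref{Corollary: Higher regularity semigroup estimate} combined with Proposition~\ref{Prop: Comparison Stokes with second derivatives}(1), giving $\|\nabla^2\varphi_t\|_{\LL_p}\leq Ct^{-1/2}\|\nabla\varphi\|_{\LL_p}$, while $\|\nabla^2\varphi\|_{\LL_p}$ is a fixed constant. The main obstacle is the $\LL_p$-rate: because $\varphi$ need not satisfy the Neumann-type condition defining $\dom(A_p)$, the classical bound $\|\e^{-tA_p}\varphi-\varphi\|_{\LL_p}\leq Ct\|A_p\varphi\|_{\LL_p}$ is unavailable. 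I would circumvent this by starting from~\eqref{Eq: Allerweltsformel 1} with $x=\varphi$, which reads $\varphi_t-\varphi=-A_p\int_0^t\e^{-\tau A_p}\varphi\,\d\tau$; Corollary~\ref{Corollary: Higher regularity semigroup estimate} and Proposition~\ref{Prop: Comparison Stokes with second derivatives}(1) yield the integrable bound $\|A_p\e^{-\tau A_p}\varphi\|_{\LL_p}\leq C\tau^{-1/2}\|\nabla\varphi\|_{\LL_p}$ on $(0,t)$, so closedness of $A_p$ permits commutation to give $\varphi_t-\varphi=-\int_0^tA_p\e^{-\tau A_p}\varphi\,\d\tau$, whence $\|\varphi_t-\varphi\|_{\LL_p}\leq Ct^{1/2}\|\nabla\varphi\|_{\LL_p}$. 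This use of the $\H^1_p$-smoothing estimate to substitute for the missing domain regularity of $\varphi$ is the key technical step, after which the three ingredients combine to conclude the proof.
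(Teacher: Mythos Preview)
Your proof is correct, but it takes a more laborious route than the paper's. The paper interpolates between $\LL_{p,\sigma}(\IR^n_+)$ and $\dot\H^1_{p,\sigma}(\IR^n_+)$ (Proposition~\ref{Prop: Interpolation of solenoidal spaces without boundary conditions} with $m=1$, $\theta=s$) rather than between $\LL_{p,\sigma}$ and $\dot\H^2_{p,\sigma}$. The point is that the part $\bA_p$ of $A_p$ on $\H^1_{p,\sigma}(\IR^n_+)$ also generates a strongly continuous semigroup (Corollary~\ref{Cor: Strong continuity}), so for $\psi\in\C_{c,\sigma}^\infty(\overline{\IR^n_+})$ one has both $\|\e^{-tA_p}\psi-\psi\|_{\LL_{p,\sigma}}\to0$ and $\|\e^{-t\bA_p}\psi-\psi\|_{\dot\H^1_{p,\sigma}}\to0$ as $t\to0$. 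The interpolation inequality then gives convergence in $\dot\B^s_{p,q,\sigma}$ directly, with no rate estimates required. Your choice of $m=2$ forces you to deal with the blow-up of $\|\nabla^2\varphi_t\|_{\LL_p}$, which you compensate by extracting the rate $t^{1/2}$ in $\LL_p$ via the $\H^1_p$-smoothing estimate; this works, and your handling of the commutation of $A_p$ through the integral is clean, but the whole detour is avoidable once one notices that strong continuity is already available one derivative higher.
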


\begin{proof}
First of all, recall that $\C_{c , \sigma}^{\infty} (\overline{\IR^n_+})$ is dense in $\dot \B^s_{p , q , \sigma} (\IR^n_+)$ by Corollary~\ref{Cor: Density of inhomogeneous spaces in homogeneous spaces}. Consequently, it suffices to approximate $\psi \in \C_{c , \sigma}^{\infty} (\overline{\IR^n_+})$ by elements in $\dom(A_p^{\infty})$ with respect to the $\dot \B^s_{p , q , \sigma} (\IR^n_+)$-norm. The analyticity of $\e^{- t A_p}$ and standard semigroup theory imply that $\e^{- t A_p} \psi \in \dom (A_p^{\infty})$. Moreover, since $\H^1_{p , \sigma} (\IR^n_+) \subset \LL_{p , \sigma} (\IR^n_+)$ the semigroup that is generated by $- {\bf A}_p$ satisfies $\e^{- t {\bf A}_p} = \e^{- t A_p}|_{\H^1_{p , \sigma} (\IR^n_+)}$. The strong continuity of the semigroups, see Corollary~\ref{Cor: Strong continuity}, and real interpolation, see Proposition~\ref{Prop: Interpolation of solenoidal spaces without boundary conditions}, then result in
\begin{align*}
 \| \e^{- t A_p} \psi - \psi \|_{\dot \B^s_{p , q , \sigma} (\IR^n_+)} \leq C \| \e^{- t A_p} \psi - \psi \|_{\LL_{p , \sigma} (\IR^n_+)}^s \| \e^{- t {\bf A}_p} \psi - \psi \|_{\dot \H^1_{p , \sigma} (\IR^n_+)}^{1 - s} \to 0 \quad \text{as} \quad t \searrow 0. &\qedhere
\end{align*}
\end{proof}

\section{The homogeneous Stokes operator and interpolation theory for its domain}
\label{Sec: The homogeneous Stokes operator and interpolation theory for its domain}

We introduce the homogeneous Stokes operator by following the approach described in Section~\ref{Sec: The homogeneous operator and homogeneous spaces}. As the ground space we choose $X := \LL_{p , \sigma} (\IR^n_+)$ and as an ambient space for $\dom(A_p)$ we take $Y := \dot \H^2_{p , \sigma} (\IR^n_+)$. Recall  that $A_p$ is injective by Proposition~\ref{Prop: Injectivity of Stokes} and that according to Proposition~\ref{Prop: Comparison Stokes with second derivatives},  there exist two constants $C_1 , C_2 > 0$ such that for all $u \in \dom(A_p)$ we have
\begin{align*}
 C_1 \| A_p u \|_{\LL_{p , \sigma} (\IR^n_+)} \leq \| u \|_{\dot \H^2_{p , \sigma} (\IR^n_+)} \leq C_2 \| A_p u \|_{\LL_{p , \sigma} (\IR^n_+)}.
\end{align*}
It follows that $A_p$ fulfills Assumption~\ref{Ass: Homogeneous operator}. According to Definition~\ref{Def: Definition homogeneous operator} we define the domain of the homogeneous Stokes operator by taking the closure of $\dom(A_p)$ with respect to the space $\dot \H^2_{p , \sigma} (\IR^n_+),$
namely,
\begin{align}\label{dom:HAp}
 \dom(\dot A_p) := \bigl\{ u \in \dot \H^2_{p , \sigma} (\IR^n_+) : \exists (u_k)_{k \in \IN} \subset \dom(A_p) \text{ s.t.\@ } u_k \to u \text{ in } \dot \H^2_{p , \sigma} (\IR^n_+) \bigr\}\cdotp
\end{align}
Note that this closure is taken with $\dot \H^2_{p , \sigma} (\IR^n_+)$ as an ambient space. Since, depending on $p$ and $n$, the space $\dot \H^2_{p , \sigma} (\IR^n_+)$ might not be complete, the domain of $\dot A_p$ might not be complete either with respect to the $\dot \H^2_{p , \sigma} (\IR^n_+)$-norm. \par
Following Section~\ref{Sec: The homogeneous operator and homogeneous spaces}, one can
 define $\dot A_p u$ for $u \in \dom(\dot A_p)$ to be  the following limit in $\LL_{p , \sigma} (\IR^n_+)$:
\begin{align*}
 \dot A_p u := \lim_{k \to \infty} A_p u_k\quad\hbox{in }\  \LL_{p , \sigma} (\IR^n_+).
\end{align*}
The space $\dom(\dot A_p)$ is endowed with the norm $\| u \|_{\dom(\dot A_p)} := \| \dot A_p u \|_{\LL_{p , \sigma} (\IR^n_+)}$ for $u \in \dom(\dot A_p)$ (that this defines a norm follows from Lemma~\ref{Lem: Injectivity of homogeneous abstract operator}). Finally, in order to fulfill Assumption~\ref{Ass: Intersection property} we need the following proposition.

\begin{proposition}\label{Prop: Homogeneous plus ground space equals operator}
Let $1 < p < \infty$. Then $\LL_{p , \sigma} (\IR^n_+) \cap \dom(\dot A_p) = \dom(A_p)$.
\end{proposition}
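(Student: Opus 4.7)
The inclusion $\dom(A_p) \subset \LL_{p,\sigma}(\IR^n_+) \cap \dom(\dot A_p)$ is immediate, obtained by taking the constant approximating sequence. For the converse, fix $u \in \LL_{p,\sigma}(\IR^n_+) \cap \dom(\dot A_p),$ a sequence $(u_k) \subset \dom(A_p)$ with $u_k \to u$ in $\dot\H^2_{p,\sigma}(\IR^n_+),$ associated pressures $P_k \in \widehat\W^1_p(\IR^n_+),$ and set $f_k := A_p u_k.$ Applying Proposition~\ref{Prop: Comparison Stokes with second derivatives} to the differences $u_k - u_j$ shows that both $(f_k)$ and $(\nabla P_k)$ are Cauchy in $\LL_p(\IR^n_+),$ with limits $f \in \LL_{p,\sigma}(\IR^n_+)$ (necessarily equal to $\dot A_p u$) and some $G \in \LL_p(\IR^n_+; \IC^n).$ Pick $P \in \widehat\W^1_p(\IR^n_+)$ with $\nabla P = G,$ still free to adjust an additive constant.

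The quotient-norm definition of $\dot\H^2_{p,\sigma}(\IR^n_+)$ together with the extension estimates of Lemma~\ref{Lem: Extension operators} yields $\nabla^2 u_k \to \nabla^2 u$ in $\LL_p(\IR^n_+)$ (extract whole-space extensions of $u_k - u$ of vanishing $\dot\H^2_p(\IR^n)$-norm and restrict). Combined with $u \in \LL_{p,\sigma}(\IR^n_+),$ this gives $u \in \H^2_{p,\sigma}(\IR^n_+)$ since $\LL_p \cap \dot\cH^2_p = \H^2_p.$ Passing to the distributional limit in the identity $-\divergence \IT(u_k, P_k) = -\Delta u_k + \nabla P_k = f_k$ then identifies $-\divergence \IT(u, P) = f \in \LL_{p,\sigma}(\IR^n_+).$

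It remains to verify $\IT(u, P)\e_n|_{\partial \IR^n_+} = 0.$ Since $\nabla^2(u_k - u) \to 0$ in $\LL_p(\IR^n_+),$ the gradients $\nabla(u_k - u)$ tend to $0$ in $\dot\cH^1_p(\IR^n_+),$ and Lemma~\ref{Lem: Homogeneous trace and extension estimates} forces $\partial_n u_k|_{\partial \IR^n_+}$ and $\nabla u_{k,n}|_{\partial \IR^n_+}$ to converge to $\partial_n u|_{\partial \IR^n_+}$ and $\nabla u_n|_{\partial \IR^n_+}$ in $\dot\B^{1 - 1/p}_{p,p}(\partial \IR^n_+).$ The $\e_n$-component of the $k$-th boundary condition reads $P_k|_{\partial \IR^n_+} = 2\partial_n u_{k,n}|_{\partial \IR^n_+},$ so $P_k|_{\partial \IR^n_+}$ converges in $\dot\B^{1 - 1/p}_{p,p}(\partial \IR^n_+);$ fixing the additive constant in $P$ accordingly produces $P_k|_{\partial \IR^n_+} \to P|_{\partial \IR^n_+}$ in the same space. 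Passing to the limit in $\partial_n u_k + \nabla u_{k,n} - P_k \e_n = 0$ finally gives $\IT(u, P)\e_n|_{\partial \IR^n_+} = 0,$ so that $u \in \dom(A_p).$

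The main obstacle is that $u_k \to u$ in the homogeneous space $\dot\H^2_{p,\sigma}$ conveys no information whatsoever on the convergence of $u_k$ in the ground space $\LL_{p,\sigma},$ and since $0 \in \sigma(A_p)$ the Stokes operator cannot be inverted to bypass this difficulty. The cleaner abstract argument via semigroup eigenvalue equations would rely on Proposition~\ref{Prop: Representation abstract semigroup}, which itself presupposes the intersection property~\eqref{Eq: Intersection property} being proved here; one is therefore obliged to reconstruct directly from $\dot\H^2_{p,\sigma}$-convergence the full $\H^2_p$ regularity, the pressure, and the Neumann trace of $u$ by means of the homogeneous extension operators and trace estimates of Chapter~\ref{Sec: The functional setting and basic interpolation results}.
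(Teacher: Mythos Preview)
Your overall strategy matches the paper's: approximate $u$ by $u_k\in\dom(A_p)$, pass to the limit in the velocity and pressure, then verify the boundary condition. The handling of $u\in\H^2_{p,\sigma}$ and of the tangential components of $\IT(u,P)\e_n$ is fine and parallels the paper exactly.

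The difference lies in how the pressure is obtained, and there you leave a genuine gap. You first take $P$ with $\nabla P=\lim\nabla P_k$ and then assert that ``fixing the additive constant in $P$ accordingly produces $P_k|_{\partial\IR^n_+}\to P|_{\partial\IR^n_+}$.'' But $P_k-P$ lies only in $\widehat\W^1_p(\IR^n_+)$, not $\H^1_p(\IR^n_+)$, so Lemma~\ref{Lem: Homogeneous trace and extension estimates} does not apply to it; nothing you have written links the interior datum $\nabla P\in\LL_p$ to the boundary value of $P$. Knowing that $(P_k|_{\partial\IR^n_+})$ converges in $\dot\B^{1-1/p}_{p,p}$ and that $(\nabla P_k)$ converges in $\LL_p$ does not by itself force the two limits to be compatible via a choice of constant.

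The paper sidesteps this by reversing the order: it first \emph{constructs} $P$ as the harmonic function with Dirichlet datum $2\partial_n u_n|_{\partial\IR^n_+}$ (via the weak Dirichlet problem and its stability estimate~\eqref{Eq: Stability estimate weak Dirichlet problem}), so that the normal boundary condition holds by fiat, and only then shows $\nabla P_k\to\nabla P$ in $\LL_p$ using that each $P_k$ solves the same Dirichlet problem with data $2\partial_n u_{k,n}|_{\partial\IR^n_+}$. Filling your gap would amount to exactly this construction—defining $\tilde P$ as the harmonic extension of $2\partial_n u_n|_{\partial\IR^n_+}$ and showing $\nabla\tilde P=\lim\nabla P_k$—at which point your $P$ and $\tilde P$ differ by a constant and your claim follows. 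In other words, the missing ingredient is precisely the weak Dirichlet problem, which the paper invokes explicitly.
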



\begin{proof}
The inclusion $\dom(A_p) \subset \LL_{p , \sigma} (\IR^n_+) \cap \dom(\dot A_p)$ being trivial, we concentrate on the other inclusion. \par
Let $u \in \LL_{p , \sigma} (\IR^n_+) \cap \dom(\dot A_p)$. Since $\dom(\dot A_p) \subset \dot \H^2_{p , \sigma} (\IR^n_+)$ and since
\begin{align*}
 \LL_{p , \sigma} (\IR^n_+) \cap \dot \H^2_{p , \sigma} (\IR^n_+) \subset \H^2_{p , \sigma} (\IR^n_+),
\end{align*}
we only need to show the existence of $P \in \widehat{\W}^1_p (\IR^n_+)$ with $\divergence(\IT (u , P)) \in \LL_{p , \sigma} (\IR^n_+)$ and $\IT(u , P) \e_n = 0$ on $\partial \IR^n_+$. For this purpose, let $P \in \widehat{\W}^1_p (\IR^n_+)$ denote the solution to the Dirichlet problem
\begin{align} \label{Eq: wDP}
\left\{ \begin{aligned}
    - \Delta P &= 0 &&\text{in } \IR^n_+, \\
    P &= \partial_n u_n && \text{on } \partial \IR^n_+.
\end{aligned} \right.
\end{align}
Notice that such a $P$ exists if and only if $Q := P - \partial_n u_n \in \widehat{\W}^1_p (\IR^n_+)$ satisfies $Q|_{\partial \IR^n_+} = 0$ on $\partial \IR^n_+$ and
\begin{align*}
    \int_{\IR^n_+} \nabla Q \cdot \nabla \psi \, \d x = - \int_{\IR^n_+} F \cdot \nabla \psi \, \d x \quad \text{for all } \psi \in \C_c^{\infty} (\IR^n_+),
\end{align*}
with $F := \nabla \partial_n u_n$. This is known as the weak Dirichlet problem, and it is solvable for any $F \in \LL_p (\IR^n_+ ; \IC^n)$, by, e.g.,~\cite[Thm.~2.12]{Shi20}, with the estimate
\begin{align}
\label{Eq: Stability estimate weak Dirichlet problem}
    \| \nabla Q \|_{\LL_p (\IR^n_+ ; \IC^n)} \leq C \| F \|_{\LL_p (\IR^n ; \IC^n)}.
\end{align}
Now, we have to show that $u$ and $P$ solve the Stokes system with homogeneous Neumann boundary conditions and inhomogeneous right-hand side in $\LL_{p , \sigma} (\IR^n_+)$. Since $u \in \dom(\dot A_p)$ there exists $(u^k)_{k \in \IN} \subset \dom(A_p)$ such that $u^k \to u$ in $\dot \H^2_{p , \sigma} (\IR^n_+)$. Let $P^k\in \widehat{\W}^1_p (\IR^n_+)$ denote the associated pressure to $u^k$. Notice that $P^k$ satisfies~\eqref{Eq: wDP} with Dirichlet data being $\partial_n u^k_n$. Thus, by~\eqref{Eq: Stability estimate weak Dirichlet problem} one infers that
\begin{align*}
 \| \nabla (P - P^k) \|_{\LL_p (\IR^n_+ ; \IC^n)} &\leq \| \nabla [(P - \partial_n u_n) - (P^k - \partial_n u_n^k)] \|_{\LL_p (\IR^n_+ ; \IC^n)} + \| \nabla (\partial_n u_n - \partial_n u^k_n) \|_{\LL_p (\IR^n_+ ; \IC^n)} \\
 &\leq C \| \nabla \partial_n (u - u^k_n) \|_{\LL_p (\IR^n_+ ; \IC^n)} \to 0 \quad \text{as} \quad k \to \infty.
\end{align*}
Since $\divergence(\IT(u^k , P^k)) \in \LL_{p , \sigma} (\IR^n_+)$ for all $k \in \IN$ this implies that
\begin{align*}
 \divergence(\IT(u , P)) = \lim_{k \to \infty} \divergence(\IT(u^k , P^k)) \in \LL_{p , \sigma} (\IR^n_+).
\end{align*}
To conclude that $u$ and $P$ satisfy the correct boundary condition, notice first that 
we already know that 
$u \in \H^2_p (\IR^n_+ ; \IC^n).$ Hence, by the trace theorem, one has that $\nabla u|_{\partial \IR^n_+} \in \B^{1 - 1/p}_{p , p} (\IR^n_+ ; \IC^n)$. Now, taking into account the first $(n - 1)$ entries of the boundary condition $\IT (u^k , P^k) \e_n = 0$ on $\partial \IR^n_+$ one infers that for $j = 1 , \dots , n - 1,$ it holds
\begin{align*}
 \partial_j u^k_n + \partial_n u^k_j = 0 \quad \text{on} \quad \partial \IR^n_+.
\end{align*}
To obtain  the same condition for $u$, notice that by~\eqref{Eq: Homogeneous trace estimate}, one has that
\begin{align*}
 \|[\partial_j u_n + \partial_n u_j]|_{\partial \IR^n_+}\|_{\dot \B^{1 - 1/p}_{p , p} (\partial \IR^n_+)} &= \|[\partial_j u_n - \partial_j u^k_n + \partial_n u_j - \partial_n u^k_j]|_{\partial \IR^n_+}\|_{\dot \B^{1 - 1/p}_{p , p} (\partial \IR^n_+)}\\
 &\leq C \| \nabla^2 (u - u^k) \|_{\LL_p (\IR^n_+)} \to 0 \quad \text{as} \quad k \to \infty.
\end{align*}
Since $[\partial_j u_n + \partial_n u_j]|_{\partial \IR^n_+}$ lies in $\LL_p (\partial \IR^n_+)$ by the $\H^2_p$-regularity of $u$, it follows that $[\partial_j u_n + \partial_n u_j]|_{\partial \IR^n_+} = 0$. Finally, the last entry of the boundary condition $\IT (u , P) \e_n = 0$ on $\partial \IR^n_+$ reads
\begin{align*}
 \partial_n u_n - P = 0 \quad \text{on} \quad \IR^n_+
\end{align*}
and thus holds by construction of $P$.
\end{proof}

In the following, it is our aim to identify $\dot \B^{2 \theta}_{p , q , \sigma} (\IR^n_+)$ with the real interpolation space $$(\LL_{p , \sigma} (\IR^n_+) , \dom(\dot A_p))_{\theta , q}$$ for small values of $\theta$.

\begin{proposition}
\label{Prop: Besov domain interpolation}
For  $1 < p < \infty$, $1 \leq q < \infty$ and $0 < \theta < 1$ with
\begin{align*} 0 < \theta < \frac{n}{2 p} \ \hbox{ if }\ q>1,\quad\hbox{or}\quad  0 < \theta \leq \frac{n}{2 p}\ \hbox{ if }\ q=1,\end{align*}
one has
\begin{align*}
 \big( \LL_{p , \sigma} (\IR^n_+) , \dom(\dot A_p) \big)_{\theta , q} \hookrightarrow  \dot \B^{2 \theta}_{p , q , \sigma} (\IR^n_+).
\end{align*}
 If, furthermore, $0 < \theta < 1 / 2,$ then it holds with equivalent norms that
\begin{align}
\label{Eq: Embedding for smooth functions}
\big( \LL_{p , \sigma} (\IR^n_+) , \dom(\dot A_p) \big)_{\theta , q} =  \dot \B^{2 \theta}_{p , q , \sigma} (\IR^n_+).
 \end{align}

\end{proposition}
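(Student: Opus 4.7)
My plan is to prove the two inclusions separately. The first (general) embedding is essentially immediate: by Proposition~\ref{Prop: Comparison Stokes with second derivatives}(1) one has the continuous inclusion $\dom(\dot A_p) \hookrightarrow \dot \H^2_{p,\sigma}(\IR^n_+)$ (with equivalent norms $\|\dot A_p u\|_{\LL_{p,\sigma}} \simeq \|u\|_{\dot \H^2_{p,\sigma}}$). Monotonicity of real interpolation combined with Proposition~\ref{Prop: Interpolation of solenoidal spaces without boundary conditions} applied with $m = 2$ (whose conditions $2\theta < n/p$ for $q>1$ or $2\theta \leq n/p$ for $q = 1$ match the standing hypotheses) gives
$$(\LL_{p,\sigma}(\IR^n_+), \dom(\dot A_p))_{\theta,q} \hookrightarrow (\LL_{p,\sigma}(\IR^n_+), \dot \H^2_{p,\sigma}(\IR^n_+))_{\theta,q} = \dot \B^{2\theta}_{p,q,\sigma}(\IR^n_+).$$

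For the converse inclusion under the additional assumption $\theta < 1/2$, I would invoke the homogeneous Da Prato--Grisvard framework from Chapter~\ref{Sec: Da Prato--Grisvard theorem}. Assumptions~\ref{Ass: Homogeneous operator} and~\ref{Ass: Intersection property} have been verified for $A_p$ via Propositions~\ref{Prop: Comparison Stokes with second derivatives} and~\ref{Prop: Homogeneous plus ground space equals operator}, so Proposition~\ref{Prop: Equivalent norms} yields
$$\|f\|_{(\LL_{p,\sigma}, \dom(\dot A_p))_{\theta,q}} \simeq \bigl\| t \mapsto t^{1-\theta} \dot A_p \e^{-t A_p} f \bigr\|_{\LL_{q,*}(\IR_+; \LL_{p,\sigma})}.$$
Using $\|A_p u\|_{\LL_{p,\sigma}} \leq C \|\nabla^2 u\|_{\LL_p}$ from Proposition~\ref{Prop: Comparison Stokes with second derivatives}(1), it is enough to control the analogous quantity with $\nabla^2 \e^{-tA_p} f$ in place of $\dot A_p \e^{-tA_p} f$.

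The key input is a pair of smoothing bounds: the analyticity of the bounded analytic semigroup on $\LL_{p,\sigma}$ (Theorem~\ref{Thm: Shibata - Shimizu}) combined with Proposition~\ref{Prop: Comparison Stokes with second derivatives}(1) gives $\|\nabla^2 \e^{-tA_p} a\|_{\LL_p} \leq C t^{-1}\|a\|_{\LL_{p,\sigma}}$, while Corollary~\ref{Corollary: Higher regularity semigroup estimate} gives $\|\nabla^2 \e^{-tA_p} b\|_{\LL_p} \leq C t^{-1/2}\|\nabla b\|_{\LL_p}$. Writing $f = a + b$ and taking the infimum produces the $K$-functional estimate
$$\|t^{1-\theta} A_p \e^{-tA_p} f\|_{\LL_{p}} \leq C t^{-\theta} K(t^{1/2}, f; \LL_{p,\sigma}, \dot \H^1_{p,\sigma}).$$
Raising this to the $q$-th power, integrating against $\d t/t$, and performing the substitution $s = t^{1/2}$ yields
$$\| t \mapsto t^{1-\theta} A_p \e^{-tA_p} f \|_{\LL_{q,*}(\IR_+; \LL_{p,\sigma})} \leq C \|f\|_{(\LL_{p,\sigma}, \dot \H^1_{p,\sigma})_{2\theta, q}}.$$
Because $\theta < 1/2$ ensures $2\theta \in (0,1)$, and the hypothesis on $\theta$ translates to $2\theta < n/p$ (or $\leq n/p$ if $q = 1$), Proposition~\ref{Prop: Interpolation of solenoidal spaces without boundary conditions} applied with $m = 1$ identifies the right-hand side with $C\|f\|_{\dot \B^{2\theta}_{p,q,\sigma}}$.

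The main technical obstacle is that a generic $f \in \dot \B^{2\theta}_{p,q,\sigma}(\IR^n_+)$ need not lie in $\LL_{p,\sigma} + \dom(\dot A_p)$, so strictly speaking the extended semigroup of~\eqref{Eq: Extension semigroup operators} cannot be applied to $f$ directly. To circumvent this, I would first establish the estimate for $f \in \dom(A_p^\infty)$, where $\e^{-tA_p}f$ is classical and all computations above are justified termwise. The Banach space condition forced by the hypothesis guarantees completeness of $\dot \B^{2\theta}_{p,q,\sigma}$, so Corollary~\ref{Cor: Density of domain in homogeneous Besov} allows one to pass to the limit by density, obtaining both the inclusion $\dot \B^{2\theta}_{p,q,\sigma} \hookrightarrow (\LL_{p,\sigma}, \dom(\dot A_p))_{\theta,q}$ and the norm bound, which together with the first paragraph yields~\eqref{Eq: Embedding for smooth functions}.
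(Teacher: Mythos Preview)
Your strategy coincides with the paper's: the first inclusion is obtained exactly as you say via $\dom(\dot A_p)\hookrightarrow\dot\H^2_{p,\sigma}(\IR^n_+)$ together with Proposition~\ref{Prop: Interpolation of solenoidal spaces without boundary conditions} (with $m=2$), and for the reverse inclusion the paper also reduces matters to controlling $\|t^{1-\theta}A_p\e^{-tA_p}f_k\|_{\LL_{p,\sigma}}$ by the $K$-functional of $(\LL_{p,\sigma},\dot\H^1_{p,\sigma})$ via the two smoothing estimates you cite, followed by the substitution $s=t^{1/2}$ and Proposition~\ref{Prop: Interpolation of solenoidal spaces without boundary conditions} with $m=1$.

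There is, however, a genuine gap in your final density step. To extend a bounded inclusion from a dense subspace one needs the \emph{target} space to be complete, and you have only argued completeness of the source space $\dot\B^{2\theta}_{p,q,\sigma}(\IR^n_+)$. Since $\dom(\dot A_p)$ may fail to be complete, so may $(\LL_{p,\sigma},\dom(\dot A_p))_{\theta,q}$; a Cauchy sequence $(f_k)\subset\dom(A_p^{\infty})$ in this interpolation norm need not have a limit inside it. The paper handles this differently: it first shows directly that any $f\in\dot\B^{2\theta}_{p,q,\sigma}(\IR^n_+)$ already lies in the sum space $\LL_{p,\sigma}+\dom(\dot A_p)$ by writing $f=\e^{-tA_p}f+\dot A_p\int_0^t\e^{-sA_p}f\,\d s$ (these expressions being made rigorous via approximation by $f_k\in\C^{\infty}_{c,\sigma}(\overline{\IR^n_+})$ and the interpolated smoothing bound $\|\nabla^2\e^{-tA_p}g\|_{\LL_p}\lesssim t^{-(1-\theta)}\|g\|_{\dot\B^{2\theta}_{p,q,\sigma}}$). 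Once $f$ is known to be in the sum space, the semigroup characterization of the interpolation norm applies to $f$ itself, and one passes to the approximants via Fatou's lemma on the integrand $t\mapsto\|t^{1-\theta}\dot A_p\e^{-tA_p}f\|_{\LL_{p,\sigma}}^q$ rather than by density in the interpolation norm. Your argument is easily repaired along these lines.
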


\begin{proof}
The first statement directly follows from Corollary~\ref{Cor: Lp-sigma in right scale} and Proposition~\ref{Prop: Interpolation of solenoidal spaces without boundary conditions} since
\begin{align*}
 \big(\LL_{p , \sigma} (\IR^n_+) , \dom(\dot A_p) \big)_{\theta , q} \hookrightarrow \big(\LL_{p , \sigma} (\IR^n_+) , \dot \H^2_{p , \sigma} (\IR^n_+) \big)_{\theta , q} = \dot \B^{2 \theta}_{p , q , \sigma} (\IR^n_+).
\end{align*}

\indent For the other direction, let $f \in \dot \B^{2 \theta}_{p , q , \sigma} (\IR^n_+)$. We first show that
\begin{align*}
 f \in \LL_{p , \sigma} (\IR^n_+) + \dom(\dot A_p).
\end{align*}
For this purpose, approximate $f$ in $\dot \B^{2 \theta}_{p , q , \sigma} (\IR^n_+)$ by $(f_k)_{k \in \IN} \subset \C_{c , \sigma}^{\infty} (\overline{\IR^n_+})$, which exists due to Corollary~\ref{Cor: Density of inhomogeneous spaces in homogeneous spaces}. Use~\eqref{Eq: Allerweltsformel 1} to write for $t > 0$
\begin{align}
\label{Eq: Representation of Besov space function}
 f_k = \e^{- t A_p} f_k + A_p \int_0^t \e^{- s A_p} f_k \, \d s.
\end{align}
In the following, we interpret $\e^{- t A_p} f$ as follows: since $\e^{- t A_p}$ is bounded on $\LL_{p , \sigma} (\IR^n_+)$ and since $\e^{- t A_p}|_{\H^1_{p , \sigma}(\IR^n_+)}$ satisfies a boundedness estimates on $\dot \H^1_{p , \sigma} (\IR^n_+)$ by Proposition~\ref{Prop: Higher regularity resolvent estimate}, the interpolation result of Proposition~\ref{Prop: Interpolation of solenoidal spaces without boundary conditions} yields that $\e^{- t A_p}|_{\H^1_{p , \sigma}} (\IR^n_+)$ satisfies a boundedness estimate on $\dot \B^{2 \theta}_{p , q , \sigma} (\IR^n_+)$. Since $\C_{c , \sigma}^{\infty} (\overline{\IR^n_+})$ is dense in this space by Corollary~\ref{Cor: Density of inhomogeneous spaces in homogeneous spaces}, we can regard the semigroup $\e^{- t A_p}$ as a bounded operator on this space. This gives sense of the expression $\e^{- t A_p} f$. A similar argument, but by interpolating the estimate from Corollary~\ref{Corollary: Higher regularity semigroup estimate} with~\eqref{Eq: Smoothing estimate} and Proposition~\ref{Prop: Comparison Stokes with second derivatives} yields
\begin{align*}
 \| \nabla^2 \e^{- t A_p} (f_k - f) \|_{\LL_p (\IR^n_+ ; \IC^{n^3})} \leq C t^{- (1 - \theta)} \| f_k - f \|_{\dot \B^{2 \theta}_{p , q , \sigma} (\IR^n_+)} \to 0 \quad \text{as} \quad k \to \infty.
\end{align*}
Since $\e^{- t A_p} f_k \in \dom(A_p)$ it follows that $\e^{- t A_p} f \in \dom(\dot A_p)$ and that $\e^{- t A_p} f_k$ converges to $\e^{- t A_p} f$ in $\dom(\dot A_p)$. The previous estimate further implies that
\begin{align*}
\Big\| \nabla^2 \int_0^t \e^{- s A_p} (f_k - f) \, \d s \Big\|_{\LL_{p , \sigma} (\IR^n_+)} \leq C \int_0^t\! s^{- (1 - \theta)} \, \d s \;\| f_k - f \|_{\dot \B^{2 \theta}_{p , q , \sigma} (\IR^n_+)} \to 0 \!\quad \text{as} \quad k \to \infty.
\end{align*}
Since $\int_0^t \e^{- s A_p} f_k \; \d s \in \dom(A_p)$ for each $k \in \IN$ it follows that $\int_0^t \e^{- s A_p} f \; \d s \in \dom(\dot A_p)$ and that
\begin{align*}
 A_p \int_0^t \e^{- s A_p} f_k \; \d s \to \dot A_p \int_0^t \e^{- s A_p} f \; \d s \quad \text{as} \quad k \to \infty \quad \text{in} \quad \LL_{p , \sigma} (\IR^n_+).
\end{align*}
Consequently, taking the limit $k \to \infty$ in~\eqref{Eq: Representation of Besov space function} with respect to the sum space $\LL_{p , \sigma} (\IR^n_+) + \dom(\dot A_p)$ reveals that
\begin{align*}
 f = \e^{- t A_p} f + \dot A_p \int_0^t s \e^{- s A_p} f \; \frac{\d s}{s} =: b + a
\end{align*}
with $a \in \LL_{p , \sigma} (\IR^n_+)$ and $b \in \dom(\dot A_p)$ so that the $K$-functional can be estimated by means of
\begin{align*}
 K(t , f ; \LL_{p , \sigma} (\IR^n_+) , \dom(\dot A_p)) \leq \| a \|_{\LL_{p , \sigma} (\IR^n_+)} + t \| \dot A_p b \|_{\LL_{p , \sigma} (\IR^n_+)}.
\end{align*}
Now, Lemma~\ref{Lem: Hardy} implies that
\begin{align}
\label{Eq: Real interpolation by semigroup}
 \| f \|_{(\LL_{p , \sigma} (\IR^n_+) , \dom(\dot A_p))_{\theta , q}} \leq \frac{1 + \theta}{\theta} \bigg( \int_0^{\infty} \| t^{1 - \theta} \dot A_p \e^{- t A_p} f \|^q_{\LL_{p , \sigma} (\IR^n_+)} \frac{\d t}{t} \bigg)^{1 / q}\cdotp
\end{align}
Let $(f_k)_{k \in \IN}$ again be as above. The lemma of Fatou implies that
\begin{align}
\label{Eq: Real interpolation Fatou}
 \bigg( \int_0^{\infty} \| t^{1 - \theta} \dot A_p \e^{- t A_p} f \|^q_{\LL_{p , \sigma} (\IR^n_+)} \frac{\d t}{t} \bigg)^{1 / q} \leq \liminf_{k \to \infty} \bigg( \int_0^{\infty} \| t^{1 - \theta} A_p \e^{- t A_p} f_k \|^q_{\LL_{p , \sigma} (\IR^n_+)} \frac{\d t}{t} \bigg)^{1 / q}
\end{align}
To conclude, it suffices to control the function $\| t^{1 - \theta} A_p \e^{- t A_p} f_k \|_{\LL_{p , \sigma} (\IR^n_+)}$ by the $K$-functional of the interpolation space $(\LL_{p , \sigma} (\IR^n_+) , \dot \H^1_{p , \sigma} (\IR^n_+))_{2 \theta , q}$. Since $\dot \B^{2 \theta}_{p , q , \sigma} (\IR^n_+) = (\LL_{p , \sigma} (\IR^n_+) , \dot \H^1_{p , \sigma} (\IR^n_+))_{2 \theta , q}$ by Proposition~\ref{Prop: Interpolation of solenoidal spaces without boundary conditions}, one can write $f_k = a_k + b_k$ with $a_k \in \LL_{p , \sigma} (\IR^n_+)$ and $b_k \in \dot \H^1_{p , \sigma} (\IR^n_+)$. Notice that $b_k = f_k - a_k \in \LL_{p , \sigma} (\IR^n_+)$ implies that $b_k \in \H^1_{p , \sigma} (\IR^n_+)$. Employing the smoothing estimate of $(\e^{- t A_p})_{t \geq 0}$ in~\eqref{Eq: Smoothing estimate}, Proposition~\ref{Prop: Comparison Stokes with second derivatives} and Corollary~\ref{Corollary: Higher regularity semigroup estimate} then yields
\begin{align*}
 \| t^{1 - \theta} A_p \e^{- t A_p} f_k \|_{\LL_{p , \sigma} (\IR^n_+)} &\leq \| t^{1 - \theta} A_p \e^{- t A_p} a_k \|_{\LL_{p , \sigma} (\IR^n_+)} + \| t^{1 - \theta} A_p \e^{- t A_p} b_k \|_{\LL_{p , \sigma} (\IR^n_+)} \\
 &\leq C \Big( t^{- \theta} \| a_k \|_{\LL_{p , \sigma} (\IR^n_+)} + \| t^{1 - \theta} \nabla^2 \e^{- t A_p} b_k \|_{\LL_p (\IR^n_+ ; \IC^{n^3})} \Big) \\
 &\leq C \Big( t^{- \theta} \| a_k \|_{\LL_{p , \sigma} (\IR^n_+)} + t^{1 / 2 - \theta} \| \nabla b_k \|_{\LL_p (\IR^n_+ ; \IC^{n^2})} \Big)\cdotp
\end{align*}
Taking the infimum over all such decompositions $f_k = a_k + b_k$ delivers
\begin{align*}
 \| t^{1 - \theta} A_p \e^{- t A_p} f_k \|_{\LL_{p , \sigma} (\IR^n_+)} \leq C t^{- \theta} K (t^{1 / 2} , f_k ; \LL_{p , \sigma} (\IR^n_+) , \dot \H^1_{p , \sigma} (\IR^n_+)).
\end{align*}
Combining this with~\eqref{Eq: Real interpolation by semigroup} and~\eqref{Eq: Real interpolation Fatou} and applying a substitution yields
\begin{align*}
 \| f \|_{(\LL_{p , \sigma} (\IR^n_+) , \dom(\dot A_p))_{\theta , q}} &\leq C \liminf_{k \to \infty} \bigg( \int_0^{\infty} \big\{ t^{- \theta} K (t^{1 / 2} , f_k ; \LL_{p , \sigma} (\IR^n_+) , \dot \H^1_{p , \sigma} (\IR^n_+)) \big\}^q \; \frac{\d t}{t} \bigg)^{1 / q} \\
 &= 2^{1 / q} C \liminf_{k \to \infty} \bigg( \int_0^{\infty} \big\{ s^{- 2 \theta} K (s , f_k ; \LL_{p , \sigma} (\IR^n_+) , \dot \H^1_{p , \sigma} (\IR^n_+)) \big\}^q \; \frac{\d s}{s} \bigg)^{1 / q} \\
 &\leq C \| f \|_{\dot \B^{2 \theta}_{p , q , \sigma} (\IR^n_+)}. \qedhere
\end{align*}
\end{proof}

Based on results of Theorem~\ref{Thm: Shibata - Shimizu} and Proposition~\ref{Prop: Higher regularity resolvent estimate}, one obtains by applying the real interpolation method and Proposition~\ref{Prop: Besov domain interpolation} the following result in Besov spaces.

\begin{proposition}
\label{Prop: Besov resolvent estimate}
Let  $1 < p < \infty$, $1 \leq q < \infty$ and $\vartheta \in (0 , \pi)$, and $s\in (0,1)$ 
fulfill~\eqref{Eq: Banach space conditions}.
Then, there exists a constant $C > 0$ such that for all $\lambda \in \Sec_{\vartheta}$ and all 
$f \in \B^s_{p, q, \sigma} (\IR^n_+)$ the solution $u := (\lambda + A_p)^{-1} f$ together with the associated pressure $P$ satisfy $u , \nabla P \in \dot \B^s_{p , q} (\IR^n_+ ; \IC^n)$, $\nabla u \in \dot \B^s_{p , q} (\IR^n_+ ; \IC^{n^2})$, $\nabla^2 u \in \dot \B^s_{p , q} (\IR^n_+ ; \IC^{n^3})$ and
$$
 \lvert \lambda \rvert \|  u \|_{\dot \B^s_{p , q , \sigma} (\IR^n_+)} + 
 \lvert \lambda \rvert^{1 / 2} \| \nabla u \|_{\dot \B^s_{p,q} (\IR^n_+ ; \IC^{n^2})} + \| \nabla^2 u \|_{\dot \B^s_{p,q} (\IR^n_+ ; \IC^{n^3})} + \| \nabla P \|_{\dot \B^s_{p,q} (\IR^n_+ ; \IC^n)} 
 \leq C \|  f \|_{\dot \B^s_{p , q , \sigma} (\IR^n_+)}.$$
\end{proposition}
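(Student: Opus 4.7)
The proof will follow from real interpolation applied to the two resolvent estimates already established. Fix $\vartheta\in(0,\pi)$ and, for each $\lambda\in\Sec_\vartheta$, consider the linear operator
$$\mathcal{T}_\lambda: f\mapsto \bigl(\lambda u,\ |\lambda|^{1/2}\nabla u,\ \nabla^2 u,\ \nabla P\bigr),$$
where $u=(\lambda+A_p)^{-1}f$ and $P\in\widehat\W^1_p(\IR^n_+)$ is the associated pressure (its gradient being uniquely determined). The whole task reduces to showing that $\mathcal{T}_\lambda$ is bounded from $\dot\B^s_{p,q,\sigma}(\IR^n_+)$ to the product space $[\dot\B^s_{p,q}(\IR^n_+)]^N$, with a constant uniform in $\lambda\in\Sec_\vartheta$.

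Theorem~\ref{Thm: Shibata - Shimizu} yields directly that $\mathcal{T}_\lambda$ maps $\LL_{p,\sigma}(\IR^n_+)$ boundedly into $[\LL_p(\IR^n_+)]^N$ with norm at most $C$, uniformly in $\lambda$. Proposition~\ref{Prop: Higher regularity resolvent estimate} says that, when applied to $f\in\H^1_{p,\sigma}(\IR^n_+)$, the four components of $\mathcal{T}_\lambda f$ all have $\LL_p$-norm of their gradient controlled by $\|\nabla f\|_{\LL_p}$; equivalently, $\mathcal{T}_\lambda:\dot\H^1_{p,\sigma}(\IR^n_+)\to[\dot\H^1_p(\IR^n_+)]^N$ is bounded uniformly in $\lambda$, after extending from the dense subset $\H^1_{p,\sigma}(\IR^n_+)$.

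Real interpolation with parameters $(s,q)$ then gives $\mathcal{T}_\lambda:(\LL_{p,\sigma},\dot\H^1_{p,\sigma})_{s,q}\to[(\LL_p,\dot\H^1_p)_{s,q}]^N$ with uniform norm. Under the assumed condition on $s$ (which is precisely~\eqref{Eq: Banach space conditions} with $m=1$), Proposition~\ref{Prop: Interpolation of solenoidal spaces without boundary conditions} identifies the source space with $\dot\B^s_{p,q,\sigma}(\IR^n_+)$, and Proposition~\ref{Prop: Interpolation on half-space} identifies each component of the target with $\dot\B^s_{p,q}(\IR^n_+)$. Since the hypothesis $f\in\B^s_{p,q,\sigma}(\IR^n_+)$ guarantees $f\in\LL_{p,\sigma}(\IR^n_+)$, the function $u=(\lambda+A_p)^{-1}f$ is unambiguously defined via Theorem~\ref{Thm: Shibata - Shimizu}, and reading off the four components of $\mathcal{T}_\lambda f$ yields the claimed estimate. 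The control of $\|u\|_{\dot\B^s_{p,q,\sigma}(\IR^n_+)}$ is simply the first component.

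The main technical obstacle is the possible incompleteness of $\dot\H^1_{p,\sigma}(\IR^n_+)$ and $\dot\H^1_p(\IR^n_+)$, which means one cannot blindly invoke the textbook interpolation theorems for bounded operators between Banach couples. The uniform $\dot\H^1$-estimate from Proposition~\ref{Prop: Higher regularity resolvent estimate} is initially only proved for $f\in\H^1_{p,\sigma}$; this is enough because the interpolation functor only sees the common intersection (where $\mathcal{T}_\lambda$ acts unambiguously) and the $K$-functional, and Proposition~\ref{Prop: Interpolation of solenoidal spaces without boundary conditions} already contains the identification in exactly the normed-space framework used here. A density check using Corollary~\ref{Cor: Density of inhomogeneous spaces in homogeneous spaces} (approximating the datum $f\in\dot\B^s_{p,q,\sigma}$ by elements of $\C^\infty_{c,\sigma}(\overline{\IR^n_+})\subset\H^1_{p,\sigma}$ whenever $q<\infty$) removes any remaining ambiguity.
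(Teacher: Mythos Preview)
Your proof is correct and follows essentially the same approach as the paper: real interpolation between the $\LL_{p,\sigma}$-level estimate (Theorem~\ref{Thm: Shibata - Shimizu}) and the $\dot\H^1_{p,\sigma}$-level estimate (Proposition~\ref{Prop: Higher regularity resolvent estimate}). The paper's one-line justification cites Proposition~\ref{Prop: Besov domain interpolation} for the interpolation identification, whereas you invoke Propositions~\ref{Prop: Interpolation on half-space} and~\ref{Prop: Interpolation of solenoidal spaces without boundary conditions} directly; the latter are in fact the more immediately applicable references for interpolating the pair $(\LL_{p,\sigma},\dot\H^1_{p,\sigma})$, and your explicit handling of the completeness and density issues fills in details the paper leaves implicit.
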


Next, let ${\bf A}_{s , p , q}$ denote the part of $A_p$ on $\B^s_{p , q , \sigma} (\IR^n_+)$, i.e.,
\begin{align}\label{dom:Aspq}
 \dom({\bf A}_{s , p , q}) := \{ u \in \dom(A_p) : A_p u \in \B^s_{p , q , \sigma} (\IR^n_+) \}, \quad {\bf A}_{p , q , \sigma} u := A_p u \quad \text{for } u \in \dom({\bf A}_{s , p , q}).
\end{align}
With this definition, literally the same proof as the one of the first part of Proposition~\ref{Prop: Comparison Stokes with second derivatives} but relying on Proposition~\ref{Prop: Besov resolvent estimate} delivers the following statement.

\begin{proposition}
\label{Prop: Homogeneous control in Besov spaces}
Let $1 < p < \infty$, $1 \leq q < \infty$  and $s\in (0,1)$
 fulfill~\eqref{Eq: Banach space conditions}.
Then there exists a constant $C > 0$ such that for all $u \in \dom({\bf A}_{s , p , q})$ it holds
\begin{align*}
 \| \nabla^2 u \|_{\dot \B^s_{p , q} (\IR^n_+ ; \IC^{n^3})} + \| \nabla P \|_{\dot \B^s_{p , q} (\IR^n_+ ; \IC^n)} \leq C \| {\bf A}_{s , p , q} u \|_{\dot \B^s_{p , q , \sigma} (\IR^n_+)}.
\end{align*}
\end{proposition}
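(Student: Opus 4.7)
The plan is to transcribe verbatim the proof of part~(2) of Proposition~\ref{Prop: Comparison Stokes with second derivatives}, replacing the $\dot\H^1_p$-scale by the homogeneous Besov scale and invoking Proposition~\ref{Prop: Besov resolvent estimate} instead of Proposition~\ref{Prop: Higher regularity resolvent estimate}. Fix $u \in \dom({\bf A}_{s,p,q})$ with pressure $P$, set $f := {\bf A}_{s,p,q} u \in \B^s_{p,q,\sigma}(\IR^n_+)$, and for $\lambda > 0$ introduce the approximants $u_\lambda := (\lambda + A_p)^{-1} f \in \dom(A_p)$ together with their associated pressures $P_\lambda$. Note first that $u \in \dom(A_p) \subset \H^2_{p,\sigma}(\IR^n_+)$ embeds into $\B^s_{p,q,\sigma}(\IR^n_+)$ for $s \in (0,1)$ by standard real interpolation, so the quantity $\|u\|_{\dot\B^s_{p,q,\sigma}}$ is finite.

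The core step is to show that $(\nabla^2 u_\lambda)_{\lambda > 0}$ and $(\nabla P_\lambda)_{\lambda > 0}$ are Cauchy nets in $\dot\B^s_{p,q}$ as $\lambda \to 0^+$. Using $A_p u = f$ and the resolvent identity, one writes
\[
 u_\mu - u_\lambda \,=\, (\lambda - \mu)\,(\mu + A_p)^{-1} A_p(\lambda + A_p)^{-1} u,
\]
and applies Proposition~\ref{Prop: Besov resolvent estimate} at parameter $\mu$ with datum $A_p(\lambda + A_p)^{-1} u = u - \lambda(\lambda + A_p)^{-1} u \in \B^s_{p,q,\sigma}(\IR^n_+)$ to obtain
\[
 \|\nabla^2(u_\mu - u_\lambda)\|_{\dot\B^s_{p,q}} + \|\nabla(P_\mu - P_\lambda)\|_{\dot\B^s_{p,q}} \leq C |\lambda - \mu|\, \|A_p(\lambda + A_p)^{-1} u\|_{\dot\B^s_{p,q,\sigma}}.
\]
A second application of Proposition~\ref{Prop: Besov resolvent estimate} (to bound $\|\lambda(\lambda + A_p)^{-1} u\|_{\dot\B^s_{p,q,\sigma}}$ by $C\|u\|_{\dot\B^s_{p,q,\sigma}}$) then yields the uniform-in-$\lambda$ estimate $\|A_p(\lambda + A_p)^{-1} u\|_{\dot\B^s_{p,q,\sigma}} \leq C\|u\|_{\dot\B^s_{p,q,\sigma}}$, so the right-hand side vanishes as $\lambda,\mu \to 0^+$. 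Under the assumption~\eqref{Eq: Banach space conditions} the space $\dot\B^s_{p,q}(\IR^n_+)$ is complete, hence the Cauchy property produces limits in this space.

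These limits are identified with $\nabla^2 u$ and $\nabla P$ via distributional compatibility: by Theorem~\ref{Thm: Shibata - Shimizu} and Proposition~\ref{Prop: Injectivity of Stokes} the operator $A_p$ is sectorial and injective on the reflexive space $\LL_{p,\sigma}(\IR^n_+)$, whence $\overline{\Rg(A_p)} = \LL_{p,\sigma}(\IR^n_+)$ by \cite[Prop.~2.2.1]{Haase}, and therefore $u_\lambda = A_p(\lambda + A_p)^{-1} u \to u$ in $\LL_{p,\sigma}(\IR^n_+)$ as $\lambda \to 0^+$; in particular $\nabla^2 u_\lambda \to \nabla^2 u$ in $\cS'(\IR^n_+)$, and the Stokes relation $\nabla P_\lambda = \Delta u_\lambda + f - \lambda u_\lambda$ passes to the limit to give $\nabla P_\lambda \to \Delta u + f = \nabla P$ in $\cS'(\IR^n_+)$. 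Finally, Proposition~\ref{Prop: Besov resolvent estimate} applied at each fixed $\lambda > 0$ provides the uniform bound $\|\nabla^2 u_\lambda\|_{\dot\B^s_{p,q}} + \|\nabla P_\lambda\|_{\dot\B^s_{p,q}} \leq C \|f\|_{\dot\B^s_{p,q,\sigma}}$, and passing to the $\limsup$ as $\lambda \to 0^+$ yields the announced inequality.

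The only delicate point is the legitimacy of the limit in $\dot\B^s_{p,q}(\IR^n_+)$: the scheme requires both the completeness of this space (guaranteed exactly by~\eqref{Eq: Banach space conditions}) and the identification of the Cauchy limit via distributional convergence, which is why the preparatory passage through $\LL_{p,\sigma}$ and the pressure equation is needed. Beyond this, the argument is a bookkeeping exercise relying on the resolvent estimate of Proposition~\ref{Prop: Besov resolvent estimate} and on the sectoriality/injectivity of the underlying Stokes operator.
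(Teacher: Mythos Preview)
Your proposal is correct and follows essentially the same approach as the paper, which simply says to transcribe the first part of the proof of Proposition~\ref{Prop: Comparison Stokes with second derivatives} using Proposition~\ref{Prop: Besov resolvent estimate} in place of the $\H^1_p$-resolvent estimate. Your choice to carry out the identification of the limit via convergence in $\LL_{p,\sigma}(\IR^n_+)$ (rather than directly in $\B^s_{p,q,\sigma}(\IR^n_+)$) is a sensible elaboration, since it sidesteps any concern about dense range in the non-reflexive case $q=1$.
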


\section{Maximal regularity for the Stokes operator in homogeneous Besov spaces}

We are now in the position to apply the abstract results of Chapter~\ref{Sec: Da Prato--Grisvard theorem} to the Stokes operator on $\LL_{p , \sigma} (\IR^n_+)$. By  Propositions~\ref{Prop: Injectivity of Stokes},~\ref{Prop: Comparison Stokes with second derivatives} and~\ref{Prop: Homogeneous plus ground space equals operator}, the Stokes operator satisfies Assumptions~\ref{Ass: Homogeneous operator} and~\ref{Ass: Intersection property}
with the choice:
\begin{align*}
X = \LL_{p , \sigma}(\IR^n_+) \quad \text{and} \quad Y = \dot \H^2_{p , \sigma} (\IR^n_+).
\end{align*}

\indent 
Let $\e_n = (0 , \dots , 0 , 1)$ is the $n$-th standard basis vector of $\IR^n$. 
Consider the evolutionary Stokes system
\begin{align}
\label{Eq: Ev Stokes}
\left\{ \begin{aligned}
 \partial_t u - \divergence \IT (u, P) &= f && t > 0 , \quad x \in \IR^n_+ \\
 \divergence u &=0  && t > 0 , \quad x \in \IR^n_+ \\
 \IT (u , P) \e_n &= 0  && t > 0 , \quad x \in \partial \IR^n_+\\
 u|_{t = 0} &= u_0  && x \in \IR^n_+
\end{aligned} \right.
\end{align}
Then, a direct application of the Da Prato -- Grisvard theorem stated by Theorem~\ref{Thm: Full Da Prato - Grisvard} 
(with $\theta=s/2$) together with Propositions~\ref{Prop: Equivalent norms},~\ref{Prop: Besov domain interpolation} and~\ref{Prop: Homogeneous control in Besov spaces} and the fact that $\B^s_{p, q, \sigma} (\IR^n_+)$ is dense in $\dot\B^s_{p, q, \sigma} (\IR^n_+)$
(a consequence of Lemma~\ref{Lem: Density in abstract spaces} and of~\eqref{Eq: Embedding for smooth functions})
 leads to the following theorem.

\begin{theorem}
\label{Thm: Da Prato - Grisvard for Stokes}
Let $p\in (1,\infty)$ and $q\in [1 , 2)$ satisfy
\begin{align*}
 \frac{n}{p} + \frac{2}{q} > 2
\end{align*}
and let $s \in (0 , 2 /q - 1)$ satisfy 
\begin{align*}
  0 < s < \frac{n}{p} + \frac{2}{q} - 2,\  \mbox{ \ or \ } 0< s \leq \frac{n}{p} + \frac{2}{q} - 2 \mbox{ \ if \ } q = 1.
\end{align*}
Let $f \in \LL_q(\IR_+  ; \dot \B^s_{p , q , \sigma} (\IR^n_+))$ and $u_0 \in \dot \B^{2 + s - 2 / q}_{p , q , \sigma} (\IR^n_+).$ Then, there exists a unique mild solution $(u , P)$ to system~\eqref{Eq: Ev Stokes} with $u \in \cC(\IR_+  ; \dot \B^{2+s-2/q}_{p , q , \sigma} (\IR^n_+))$ such that
\begin{align*}
 u_t \in \LL_q(\IR_+ ; \dot \B^{s}_{p , q , \sigma} (\IR^n_+;\IC^n)) , \quad \nabla^2 u \in \LL_q(\IR_+ ; \dot \B^{s}_{p , q} (\IR^n_+ ; \IC^{n^3})), \quad \nabla P \in \LL_q(\IR_+ ; \dot \B^{s}_{p , q} (\IR^n_+ ; \IC^n))
\end{align*}
 and the following inequality holds
 $$\displaylines{\quad
  \|u\|_{\LL_\infty(\IR_+;\dot \B^{2+s-2/q}_{p,q,\sigma}(\IR^n_+))}+
  \|u_t\|_{\LL_q(\IR_+ ; \dot \B^{s}_{p,q,\sigma}(\IR^n_+))} + \| \nabla^2 u\|_{\LL_q(\IR_+ ; \dot \B^{s}_{p,q}(\IR^n_+ ; \IC^{n^3}))} \hfill\cr\hfill+ \| \nabla P \|_{\LL_q(\IR_+ ; \dot \B^{s}_{p,q}(\IR^n_+ ; \IC^n))} \leq C\Big( \| f\|_{\LL_q(\IR_+ ; \dot \B^{s}_{p,q , \sigma}(\IR^n_+))} + \|u_0\|_{ \dot \B^{2+s-2/q}_{p,q,\sigma}(\IR^n_+)}\Big)\cdotp}$$
\end{theorem}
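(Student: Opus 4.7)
The plan is to apply the abstract Da Prato--Grisvard theorem (Theorem~\ref{Thm: Full Da Prato - Grisvard}) directly to the Stokes operator, viewed on $X=\LL_{p,\sigma}(\IR^n_+)$ with ambient space $Y=\dot\H^2_{p,\sigma}(\IR^n_+)$. By Propositions~\ref{Prop: Injectivity of Stokes}, \ref{Prop: Comparison Stokes with second derivatives} and~\ref{Prop: Homogeneous plus ground space equals operator}, $A_p$ verifies Assumptions~\ref{Ass: Homogeneous operator} and~\ref{Ass: Intersection property}, so the abstract theorem is applicable.

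I would set $\theta:=s/2$ and first check the two key arithmetic relations. On the one hand, the assumption $s<2/q-1$ forces $0<\theta<1/q$ and, more importantly,
$$1+\theta-1/q \;=\; 1+s/2-1/q \;<\; 1/2.$$
On the other hand, the second batch of constraints ($s<n/p+2/q-2$ for $q>1$, $s\leq n/p+2/q-2$ for $q=1$) is exactly the condition~\eqref{Eq: Banach space condition for interpolation spaces} at interpolation parameter $1+\theta-1/q$, while the analogous condition at $\theta$ is automatically weaker. Combining Proposition~\ref{Prop: Equivalent norms} with Proposition~\ref{Prop: Besov domain interpolation} then gives, with equivalent norms,
$$\dot\dom_{A_p}(\theta,q)=\dot\B^{s}_{p,q,\sigma}(\IR^n_+)\andf \dot\dom_{A_p}(1+\theta-1/q,q)=\dot\B^{2+s-2/q}_{p,q,\sigma}(\IR^n_+).$$

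The abstract Da Prato--Grisvard theorem then produces, for any $u_0\in\dot\B^{2+s-2/q}_{p,q,\sigma}(\IR^n_+)$ and any $f$ in the \emph{inhomogeneous} space $\LL_q(\IR_+;\B^{s}_{p,q,\sigma}(\IR^n_+))$, a unique mild solution $u\in\cC([0,\infty);\dot\B^{2+s-2/q}_{p,q,\sigma}(\IR^n_+))$ with $u_t,A_pu\in\LL_q(\IR_+;\dot\B^{s}_{p,q,\sigma}(\IR^n_+))$, together with the maximal regularity bound. Since under the assumed range of parameters~\eqref{Eq: Banach space conditions} holds, $\dot\B^{s}_{p,q,\sigma}(\IR^n_+)$ is a Banach space, and $\B^{s}_{p,q,\sigma}(\IR^n_+)$ is dense in it by Lemma~\ref{Lem: Density in abstract spaces} combined with Corollary~\ref{Cor: Density of inhomogeneous spaces in homogeneous spaces}. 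A routine approximation argument then extends the estimate (and the existence of the mild solution) to all $f\in\LL_q(\IR_+;\dot\B^s_{p,q,\sigma}(\IR^n_+))$.

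It remains to split the $A_pu$ estimate into individual estimates on $\nabla^2 u$ and $\nabla P$. This is achieved by applying Proposition~\ref{Prop: Homogeneous control in Besov spaces} pointwise in $t$ to $u(t)\in\dom({\bf A}_{s,p,q})$ and integrating in time, which yields the complete inequality as stated. Uniqueness is inherited from the uniqueness of the mild solution in Theorem~\ref{Thm: Full Da Prato - Grisvard}. I do not anticipate any genuine obstacle: the argument is essentially a careful bookkeeping of parameters, the delicate point being the compatibility $1+\theta-1/q<1/2$ between the trace interpolation exponent for the initial datum and the range of validity of Proposition~\ref{Prop: Besov domain interpolation}.
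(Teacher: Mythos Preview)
Your proposal is correct and matches the paper's approach essentially line for line: apply Theorem~\ref{Thm: Full Da Prato - Grisvard} with $\theta=s/2$, identify the abstract interpolation spaces via Propositions~\ref{Prop: Equivalent norms} and~\ref{Prop: Besov domain interpolation}, pass from inhomogeneous to homogeneous data by density, and recover the $\nabla^2 u$ and $\nabla P$ bounds from Proposition~\ref{Prop: Homogeneous control in Besov spaces}. The paper derives the density of $\B^s_{p,q,\sigma}(\IR^n_+)$ in $\dot\B^s_{p,q,\sigma}(\IR^n_+)$ directly from Lemma~\ref{Lem: Density in abstract spaces} together with the identification~\eqref{Eq: Embedding for smooth functions}, whereas you route through Corollary~\ref{Cor: Density of inhomogeneous spaces in homogeneous spaces}; both are fine and the difference is cosmetic.
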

\begin{remark}
Note that the conditions on $p$, $q$ and $s$ come from the conditions imposed in Theorem~\ref{Thm: Full Da Prato - Grisvard} and in Proposition~\ref{Prop: Besov domain interpolation} to ensure~\eqref{Eq: Embedding for smooth functions}. The most restrictive condition arises in order to compute $(\LL_{p , \sigma} (\IR^n_+) , \dom(\dot \cA_p))_{1 + s/2 - 1/q , q}$ to characterize the initial data space. These are exactly the conditions imposed in Theorem~\ref{Thm: Da Prato - Grisvard for Stokes}. All other conditions that arise in Theorem~\ref{Thm: Full Da Prato - Grisvard} and in Proposition~\ref{Prop: Besov domain interpolation} are already implied by these ones. 
\end{remark}





\chapter{The free boundary problem for incompressible fluids with infinite depth}
\label{Sec: The free boundary problem for incompressible fluids with infinite depth}

As an   application of the linear theory developed in the previous parts, we investigate the free boundary problem for the incompressible Navier-Stokes equations 
in the case where the initial fluid domain is the half-space $\IR_+^n$ with $n\geq3.$
The governing equations read 
\begin{align}
\label{eq:FBNS}
 \left\{
  \begin{aligned}
   \partial_t v + (v \cdot \nabla) v - \divergence \IT (v , Q) &= 0, \qquad && t \in\IR_+ ,\  x \in \Omega_t, \\
   \divergence v &= 0, \qquad &&  t \in\IR_+ ,\  x \in \Omega_t, \\
   \IT (v , Q) \overline{n} &= 0, \qquad && t \in \IR_+,\  x \in \partial \Omega_t, \\
      v \cdot \overline{n} &= - (\partial_t \eta) / \lvert \nabla_x \eta \rvert ,\qquad && t \in\IR_+,\  x \in \partial \Omega_t,\\
   v|_{t = 0} &= v_0, \qquad && x \in \IR_+^n, \\
   \Omega_t |_{t = 0} &= \IR_+^n.\end{aligned} \right.
\end{align}
Here the unknowns are the velocity field $v=v(t,x),$ the 
pressure $Q=Q(t,x)$ and the time-dependent fluid domain $\Omega_t.$
Recall that $\IT (v , Q):= D(v)-Q \Id$ denotes the stress tensor
with $ D(v) := \nabla v + [\nabla v]^{\top}$ and that  the boundary of $\Omega_t$ is represented by 
some (unknown) nondegenerate function $\eta=\eta(t,x)$ through
$$\partial \Omega_t=\bigl\{x\in\IR^n\,:\, \eta(t,x)=0\bigr\}\cdotp$$ 
 We denote  by $\overline{n}$ the outward unit normal vector
to $\partial \Omega_t,$  that is $\overline{n} = \nabla_x \eta / \lvert \nabla_x \eta \rvert$.  
Hence, initially, $\overline n=-\e_n$ with $\e_n:=(0,\cdots,0,1).$ 
\smallbreak
One of the difficulties is that the free interface is unbounded,
so that  the  classical approaches based on fast time decay 
of the solutions to the Stokes system do not work.
To overcome the difficulty, we will take advantage 
of $\LL_1$ maximal regularity estimates so as  to control 
the geometrical change of the free boundary globally in time.



\section{The free boundary problem for incompressible fluids}

In order to solve~\eqref{eq:FBNS}, we shall perform a Lagrangian change of variables  to  reduce 
the problem to a system of equations in the \emph{time independent domain} $\IR_+^n.$ 

Assume that the velocity field $v$ fulfills the conditions 
of the Cauchy-Lipschitz theorem, i.e., $\nabla v \in \LL_{1,(loc)}(0,\infty;\LL_\infty (\IR^n_+))$ and define the flow $X$ of $v$  according to 
\begin{equation}\label{def:Xv}
  X (t, y) :=  y + \int_0^t  v (t^\prime,X (t^\prime, y))\,\d t^{\prime},\qquad y\in\IR^n_+.
\end{equation}
Then, we change  Eulerian coordinates $(t,x)$
to Lagrangian coordinates $(t,y)$,  where  $x$ and $y$ are interrelated through 
$x=X(t,y),$ that is to say we consider the   following new unknown velocity field $u$ and pressure $P$ defined by
$$u(t,y):= v (t,X (t, y))\andf P(t,y):=Q(t,X (t, y)).$$
The `Lagrangian' flow,  now denoted by $X_u,$ may  be directly
computed  from $u$ thanks to:
\begin{equation}\label{def:Xu}
X_u (t, y) :=  y + \int_0^t  u(t^\prime,y)\,\d t^\prime.
\end{equation}
Besides,  we have 
$$\Omega_t = \bigl\{ x \in \IR^n : x = X_u(t, y),\;  y \in \IR^n_+ \bigr\}
\andf \partial \Omega_t = \{ x \in \IR^n : x = X_u (t, y) ,\;  y \in \partial \IR^n_+ \bigr\}\cdotp $$
In other words, when performing the change of coordinates $(t,x)\leadsto (t,y),$ 
the \emph{unknown} time dependent domain $\Omega_t$ is changed
into the \emph{time independent domain} $\IR^n_+,$ and the boundary $\partial \Omega_t$ corresponds  to 
the horizontal (hyper-)plane $\partial\IR_+^n.$
\medbreak
{In what follows, for all vector fields $Z,$ we denote 
$$(\nabla_yZ)_{ij}:=\partial_{y_i}  Z^j \mbox{ \ \ 
and  \ \ } (D_y Z)_{ij}:=\partial_{y_j} Z^i$$
and,  for a matrix valued function $F,$ we set $(\divergence_y F)^j:=\sum_i\partial_{y_i} F_{i}^j$.} In order to recast the  equations of~\eqref{eq:FBNS} in Lagrangian coordinates, let us introduce the matrix
$A_u(t,y):=(D_yX_u(t,y))^{-1}.$  As pointed out in, e.g., the appendix of~\cite{DM-CPAM}, in the $(t,y)$ coordinates system, operators $\nabla$  and $\divergence$ 
translate, assuming that the diffeomorphism $X_v$ is measure preserving, into 
\begin{equation}\label{eq:lagop}
\nabla_u:=A_u^\top\nabla_y\andf
\divergence_u:=A_u^\top:\nabla_y=\divergence_y(A_u\cdot).
\end{equation}
Hence,
 the stress tensor $\IT$ becomes 
\begin{equation}\label{eq:Tlag}
\IT_u(w,\vartheta):= \nabla_uw+(\nabla_uw)^\top-\vartheta\Id=
A_u^\top\cdot\nabla_y w+D_yw\cdot A_u-\vartheta\Id,
\end{equation}
and the unit normal vector is transformed into $\overline{n}_u (t, y) := \overline{n}(t,X_u(t, y)).$
\medbreak
With the above notation,  system~\eqref{eq:FBNS}  in Lagrangian coordinates thus reads
\begin{align}
\label{eq:FBNSLag}
 \left\{
  \begin{aligned}
   \partial_t u - \divergence_u \IT_u (u , P) &= 0 \quad &\hbox{in }\ & \IR_+\times\IR^n_+, \\
   \divergence_uu &= 0 \quad &\hbox{in }\ & \IR_+\times\IR^n_+, \\
   \IT_u (u , P) \overline{n}_u &= 0 \quad &\hbox{on }\ & \IR_+\times\partial\IR^n_+,\\
   u|_{t = 0} &= v_0  \quad &\hbox{in }\ & \IR^n_+.
  \end{aligned}
 \right.
\end{align}
To close the system without referring  to the initial Eulerian framework, we add
the following relation which is the consequence of~\eqref{def:Xu}:
\begin{equation}\label{eq:Au}
A_u(t,y)=(D_yX_u(t,y))^{-1} \with D_yX_u(t,y)=\Id+\int_0^t D_yu(t^\prime, y)\, \d t^\prime.
\end{equation}
Clearly, in order to justify that the Eulerian and Lagrangian formulations are equivalent, 
we need  a control on the supremum norm of  the right-hand side and, since  we strive for a \emph{global-in-time} existence result, 
it is somehow unavoidable to solve the system in a functional framework ensuring  a bound 
on $\nabla u$ in the space $\LL_1(\IR_+;\LL_\infty(\IR_+^n)).$ 

Our functional framework should also  be a relevant one for  the   linearization of~\eqref{eq:FBNSLag}, that is 
the  following (nonhomogeneous) Stokes system:
\begin{align}
\label{Eq: Stokes general}
 \left\{
  \begin{aligned}
   \partial_t u - \divergence \IT (u , P) &= f \quad &\hbox{in }\ & \IR_+\times\IR^n_+, \\
   \divergence u &= g \quad &\hbox{in }\ & \IR_+\times\IR^n_+, \\
   \IT (u , P) \e_n|_{\partial\IR^n_+}&= h \quad &\hbox{on }\ & \IR_+\times\partial\IR^n_+,\\
   u|_{t = 0} &= u_0  \quad &\hbox{in }\ & \IR^n_+.
  \end{aligned}\right.\end{align}
  The situation we have to consider is 
  $$\begin{aligned} f&=\divergence_u\IT_u(u,P)-\divergence \IT(u,P),\\
g&=\divergence u-\divergence_uu,\\
h&= \IT (u , P) \e_n-  \IT_u (u , P) \overline{n}_u.\end{aligned}$$
Even for $g=0$ and $h=0$, and adopting  the classical maximal regularity framework for the Stokes system, the only available 
global-in-time estimates for $u$ are formulated in spaces of type $\LL_r(\IR_+;X)$ with $r>1$
and $X$ being, e.g., a Lebesgue or Sobolev potential space.  
In the  half-space case, since both the domain and the boundary are unbounded, 
there is no control on `low frequencies' whatsoever, and thus no hope to get enough time decay 
for the Stokes semigroup to deduce a $\LL_1(\IR_+;X)$  control from $\LL_r(\IR_+;X)$ bounds with $r>1.$ 
 
Our strategy  to achieve a global-in-time $\LL_1$ control  is 
based on the Da Prato -- Grisvard theorem presented in Chapter~\ref{Sec: Da Prato--Grisvard theorem}.  
Of course,  those estimates will have to be extended to the case  where  the divergence and the trace of
the normal component of the stress tensor in~\eqref{Eq: Stokes general} are nonzero, and the fine structure of  $g_t:=\partial_tg$ and $h_t:=\partial_th$  will play an important role.

As regards the choice of a functional space $X,$ our previous analysis pushes  us to choose a  Besov space 
$\dot\B^s_{p,1,\sigma}(\IR_+^n)$ with  $s\in(0 , 1 )$ and $s \leq n / p$.  
Hence, we  expect $D^2u$ to be in  $\LL_1(\IR_+;\dot\B^s_{p,1}(\IR_+^n)),$
or (almost) equivalently $Du$  to be in  $\LL_1(\IR_+;\dot\B^{s+1}_{p,1}(\IR_+^n)),$ and  the only way 
of having eventually a control of  $Du$ in $\LL_1(\IR_+;\LL_\infty(\IR_+^n))$ is thus to assume 
that $s=n/p-1$ for some $1<p<\infty,$  owing to the (critical) embedding $\dot\B^{n/p}_{p,1}(\IR_+^n)\hookrightarrow \LL_\infty(\IR_+^n).$  
In a nutshell, our candidate for a suitable space for the initial velocity is a  homogeneous Besov space
of type $\dot\B^{n/p-1}_{p,1}(\IR^n_+)$ with $1<p<\infty.$


In Lagrangian variables, the global existence result to~\eqref{eq:FBNSLag}, that is proven in this chapter reads as follows.

\begin{theorem}  \label{Thm:NS3D}
 Let $v_0$ be in $\dot\B^{n/p-1}_{p,1,\sigma}(\IR^n_+)$ for some $p\in(n-1,n).$  
 There exists $c>0$ such that if 
 \begin{equation}
  \|v_0\|_{\dot \B^{n/p-1}_{p,1}(\IR^n_+)}\leq c,
 \end{equation}
then  system~\eqref{eq:FBNSLag} 
has a unique global solution $(u , P)$ such that
\begin{align*}
 (u,P) \in \cC_b ([0 , \infty) ; \dot \B^{n/p - 1}_{p , 1 , \sigma} (\IR^n_+)) \times \LL_1 (0 , \infty ; \dot \B^{n / p}_{p , 1} (\IR^n_+))
\end{align*} 
 satisfying 
$$\sup_{t\geq0} \|u(t)\|_{\dot \B^{n/p-1}_{p,1}(\IR^n_+)}<\infty\andf
\int_0^\infty \Bigl( \|\nabla u(t)\|_{\dot \B^{n/p}_{p,1}(\IR^n_+)} + \| \partial_t u , \nabla^2 u , \nabla P \|_{\dot \B^{n/p- 1}_{p , 1} (\IR^n_+)} \Bigr) \,\d t<\infty.$$
\end{theorem}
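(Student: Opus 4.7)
The plan is to solve~\eqref{eq:FBNSLag} by a contraction mapping argument built on the linearization at $u=0$, that is on the Stokes system with a Neumann-type boundary condition, using the endpoint maximal $\LL_1$-regularity machinery of Chapters~\ref{Sec: Da Prato--Grisvard theorem} and~\ref{Sec: The Stokes operator with Neumann boundary conditions}. The natural solution space is
$$E_p:=\bigl\{u:\, u\in\cC_b(\IR_+;\dot\B^{n/p-1}_{p,1,\sigma}(\IR^n_+)),\ \partial_tu,\,\nabla^2u\in\LL_1(\IR_+;\dot\B^{n/p-1}_{p,1}(\IR^n_+))\bigr\},$$
paired with a pressure $P$ such that $\nabla P\in\LL_1(\IR_+;\dot\B^{n/p-1}_{p,1}(\IR^n_+))$. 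The index $s=n/p-1$ is chosen so that $\nabla u\in\LL_1(\IR_+;\dot\B^{n/p}_{p,1})\hookrightarrow\LL_1(\IR_+;\LL_\infty)$ via the embedding $\dot\B^{n/p}_{p,1}\hookrightarrow\LL_\infty$, which is exactly the control needed to invert $D_yX_u=\Id+\int_0^t D_yu\,\d\tau$ by a Neumann series and to guarantee the equivalence of the Eulerian and Lagrangian formulations. The restriction $p\in(n-1,n)$ is precisely the range in which $s=n/p-1\in(0,1/(n-1))\subset(0,1)$ lies in the admissible range of Theorem~\ref{Thm: Da Prato - Grisvard for Stokes} for $q=1$, and in which the tame product law $\|ab\|_{\dot\B^{n/p-1}_{p,1}}\leq C\|a\|_{\dot\B^{n/p-1}_{p,1}}\|b\|_{\dot\B^{n/p}_{p,1}}$ holds via Bony's paraproduct decomposition.

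Given $(v,Q)$ in a small ball of $E_p$, I would define $\Phi(v,Q):=(u,P)$ as the solution of the inhomogeneous Stokes system
\begin{align*}
\partial_tu-\divergence\IT(u,P)&=f(v,Q), & \divergence u&=g(v),\\
\IT(u,P)\e_n|_{\partial\IR^n_+}&=h(v,Q), & u|_{t=0}&=v_0,
\end{align*}
where $f,g,h$ collect the discrepancy between the Lagrangian operators~\eqref{eq:lagop}--\eqref{eq:Tlag} and their Eulerian flat counterparts, namely
$$f=(\divergence_v-\divergence)\IT(v,Q)+\divergence_v(\IT_v-\IT)(v,Q),\quad g=(\Id-A_v^\top){:}\nabla v,\quad h=(\IT-\IT_v)(v,Q)\e_n+\IT_v(v,Q)(\e_n-\overline n_v).$$
Expanding $A_v=\sum_{k\geq 0}(-1)^k(\int_0^t D_yv)^k$ shows that $f,g,h$ are analytic in $\int_0^tD_yv$ with leading quadratic behavior in $(v,Q)$. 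To invert the linear step I would first establish an enhancement of Theorem~\ref{Thm: Da Prato - Grisvard for Stokes} allowing nonzero $g$ and nonzero normal stress $h$: construct an extension $(u_h,P_h)$ on $\IR^n_+$ realizing the prescribed boundary stress with the correct $\LL_1$ maximal regularity bounds (using the homogeneous trace and extension estimates of Lemma~\ref{Lem: Homogeneous trace and extension estimates} and the extension apparatus of Chapter~\ref{Sec: The functional setting and basic interpolation results}), lift $g$ by a divergence right-inverse compatible with the Neumann boundary condition, and apply Theorem~\ref{Thm: Da Prato - Grisvard for Stokes} to the corrected solenoidal source. The critical Besov product laws and the Neumann series for $A_v$ then yield, for some $C>0$ independent of the small parameter,
$$\|\Phi(v,Q)\|_{E_p}\leq C\|v_0\|_{\dot\B^{n/p-1}_{p,1}}+C\|(v,Q)\|^2_{E_p\times\cdots}\bigl(1+\|(v,Q)\|_{E_p\times\cdots}\bigr),$$
and a corresponding Lipschitz estimate for differences. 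A standard Banach fixed point argument on a sufficiently small ball of $E_p$ closes the proof.

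The \textbf{main obstacle} is the extended linear solver for the Stokes system with nonzero divergence and nonzero normal stress at the $\LL_1$-in-time endpoint. The boundary data $h$ must be lifted to a velocity field whose maximal regularity norm is controlled by an anisotropic Besov norm of $h$ mixing a half time derivative in a Besov $\dot\B^{1/2}_{1,1}$-scale with spatial regularity at the level of $\dot\B^{0}_{p,1}(\partial\IR^n_+)$; and the divergence lift must be achieved without destroying the Neumann condition. Neither step is reflexive in time, which prohibits routine duality tricks and forces the use of the explicit extensions from Chapter~\ref{Sec: The functional setting and basic interpolation results} together with the homogeneous Da Prato--Grisvard theorem (Theorem~\ref{Thm: Full Da Prato - Grisvard}). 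Once $(u,P)$ is obtained, the control $\int_0^\infty\|\nabla u\|_{\LL_\infty}\,\d t\lesssim c$ guarantees that $X_u(t,\cdot)$ is a global $\cC^1$-diffeomorphism of $\IR^n_+$, so that $\Omega_t:=X_u(t,\IR^n_+)$, $v:=u\circ X_u^{-1}$, $Q:=P\circ X_u^{-1}$ solve the original Eulerian problem~\eqref{eq:FBNS} with the continuity $\nabla X_u-\Id\in\cC(\IR_+;\dot\B^{n/p}_{p,1})$ required by Definition~\ref{def:NS}.
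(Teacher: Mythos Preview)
Your overall architecture is right—Lagrangian coordinates, fixed point on the Stokes linearization, $\LL_1$-maximal regularity at the critical index $s=n/p-1$, Neumann series for $A_v$. But there is a genuine gap in how you handle the pressure at the boundary, and it is exactly the point where the paper has to work hardest.

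Your space $E_p$ only records $\nabla P\in\LL_1(\IR_+;\dot\B^{n/p-1}_{p,1})$. Now look at your boundary datum $h$: since $\IT(v,Q)=D(v)-Q\Id$, the contribution of $Q$ to $h$ is $Q\,(\overline n_v-\e_n)$. To run the linear solver for nonzero $h$ in $\LL_1$-in-time you need a quantitative control on (an extension of) $\partial_t h$ in a space of negative regularity—this is unavoidable, whether you phrase it via anisotropic trace norms or via the extension trick. That forces you to say something about $\partial_t Q$, and from $\nabla Q\in\LL_1(\dot\B^{n/p-1})$ alone you have \emph{no} information on $\partial_t Q$. So the contraction cannot close in $E_p$.

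The paper's fix is to work in a strictly richer solution space $\Xi_p$ in which the pressure is split as $P=P_I+P_B$ with $P_I|_{\partial\IR^n_+}\equiv0$, and $P_B|_{\partial\IR^n_+}$ admits an extension $EP_B$ whose time derivative has the structured form
\[
\partial_t(EP_B)=\divergence k^1+m\,\divergence k^2+m'\,\divergence k'^2,
\]
with $k^1,k^2\in\LL_1(\dot\B^{n/p-1})$, $k'^2\in\LL_\infty(\dot\B^{n/p-1})$, $m\in\LL_\infty(\dot\B^{n/p})$, $m'\in\LL_1(\dot\B^{n/p})$. The core linear result (Proposition~\ref{p:fight}) shows that the inhomogeneous Stokes solver \emph{preserves} this structure: if $h$ has it, so does the output pressure. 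Proving this is the ``fight with boundary terms'' (Step~2 there): one writes $w$ via a stream-function/potential, reduces the tangential boundary equations to a heat or Lam\'e system on $\partial\IR^n_+$, lifts by harmonic extension, and crucially invokes a lemma (Lemma~\ref{eq:extheat}) solving the Dirichlet heat equation with source of the form $\divergence k^1+m\,\divergence k^2$. Only after this structure is propagated can one split off $P_I$ (Step~3) and finally apply Da Prato--Grisvard (Step~4). The fixed point is then run in $\Xi_p$, not $E_p$; the paper even uses a two-layer scheme (first solve the variable-coefficient system~\eqref{Eq: Lin3} for fixed $v$ via Proposition~\ref{p:fixedpoint}, then iterate on $v$).

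Incidentally, the anisotropic half-time-derivative trace norm you sketch for $h$ is precisely the route the paper takes for the \emph{Lam\'e} system in Chapter~\ref{Sec: A free boundary problem for pressureless gases}, where there is no pressure. For Stokes the pressure couples back into $h$, and the paper explicitly opts for the extension/structure approach instead of explicit solution formulas; your proposal conflates the two methods.
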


Finally, the product law proven in~\cite[Lem.~A.5]{Danchin14} together with Proposition~\ref{Prop: Change of half} and Remark~\ref{Rem: Change of half} allow to return to Eulerian coordinates, so that one obtains the global existence result stated in Theorem~\ref{Thm:NS3D, intro}.

\begin{remark}
Some comments about the above answer to~\eqref{eq:FBNS} are in order.  
 First, since 
 \begin{equation}\label{eq:key-embedding}
 \dot\B^{n/p}_{p,1}(\IR_+^n)\hookrightarrow \cC_b(\IR_+^n),\end{equation}
 the  flow  $X$ is $\cC^1$ and the $\cC^1$ regularity of the interface is  thus preserved.
 Second,~\eqref{eq:key-embedding} and the smallness condition on $v_0$ imply that the  free boundary is  a perturbation of a plane, namely
 \begin{equation}
\sup_{t > 0}  \|\overline{n} - (0,0,-1)\|_{\LL_{\infty}(\partial\Omega_t)}\leq Cc.
 \end{equation}
However, the motion of the domain may be unbounded  in time: whether 
 $\int_0^t \|u(t^\prime)\|_{\LL_{\infty}(\partial\IR_+^n)} \; \d t^\prime$  is uniformly bounded in time if no additional hypotheses on the data are assumed, remains  unclear.
\end{remark}
The existence part  of the above theorem will be obtained 
by a fixed point argument in a suitable `solution space' $\Xi_p$  that will be modelled from our analysis of
the nonhomogeneous Stokes system~\eqref{Eq: Stokes general}.
More precisely,  a time-dependent vector field $v$
being given,  we shall exhibit a fixed point of the map 
 $(v,Q)\mapsto (u,P)$,   where 
 $(u,P)$ stands for the solution to the following 
 Stokes-like system with \emph{variable coefficients}:
 \begin{align}
\label{Eq: Lin3}
 \left\{\begin{aligned}
 \partial_t u -\divergence_{v} \IT_{v} (u , P) &=0 \quad &\hbox{in }\ & \IR_+\times\IR^n_+,  \\
 \divergence_v u &=0 \quad &\hbox{in }\ & \IR_+\times\IR^n_+,  \\
 \IT_{v} (u , P) \overline{n}_{v}& =0 \quad &\hbox{on }\ & \IR_+\times\partial\IR^n_+, \\
 u|_{t = 0} &= v_0 \quad &\hbox{in }\ & \IR^n_+.
\end{aligned}\right.
\end{align} 
The question now is to produce a solution to~\eqref{Eq: Lin3} in the space $\Xi_p.$ 
The  difficulty is that  the differential 
operators $\divergence_v$ and $\IT_v$ have \emph{variable} and rough coefficients.
However, at time $0,$  they coincide with  $\divergence$ and $\IT,$ respectively, 
so that one may expect  to solve the system for all time if $v$ is  small enough. More precisely, 
since $$DX_v(t)-\Id =\int_0^t Dv(t',y)\,\d t',$$
one can use the following Neumann expansion to compute $A_v$:
\begin{equation}\label{eq:Neumann-expansion}A_v(t,y)= \sum_{k=0}^\infty (-1)^k\biggl(\int_0^t Dv(t',y)\,\d 
 t'\biggr)^k,
 \end{equation}
provided the integral in the sum  is small enough. This will be the case for all $t\geq0,$ 
if $v$ satisfies  
$$\int_0^\infty \| \nabla v\|_{\dot \B^{n / p}_{p , 1} (\IR^n_+)} \, \d t^{\prime} \ll 1.$$

Now, in order to solve system~\eqref{Eq: Lin3}, we shall apply the contraction mapping argument 
in the solution space  $\Xi_p$   to 
 the map $(w,\vartheta)\to (u,P),$  where $(u,P)$ is a solution to the system 
\begin{align}
\label{Eq: Linearized2}
\left\{\begin{aligned}
 \partial_t u - \divergence \IT (u , P) &= f_0+\divergence_{v} \IT_{v} (w , \vartheta) - \divergence \IT (w , \vartheta) \quad &\hbox{in }\ & \IR_+\times\IR^n_+ \\
 \divergence u &= g_0-\divergence_v w + \divergence  w \quad &\hbox{in }\ & \IR_+\times\IR^n_+, \\
  \IT (u , P) \e_n &= h_0+ \IT (w, \vartheta) \e_n - \IT_{v} (w , \vartheta) \overline{n}_{v}  \quad &\hbox{on }\ & \IR_+\times\partial\IR^n_+,\\
 u|_{t = 0} &= v_0 \quad &\hbox{in }\ & \IR^n_+,
\end{aligned}\right.
\end{align}
where $v_0,$ $f_0,$ $g_0$ and $h_0$ are given (and satisfy suitable structure properties 
that we shall specify later on).
\medbreak
The two main ingredients will be quadratic type estimates in Besov spaces
to handle the right-hand sides, 
and a refined analysis of the nonhomogeneous Stokes system~\eqref{Eq: Stokes general}
that takes into account the particular structure of the right-hand side. 
In order to find precise conditions, we shall first carry out the analysis of the right-hand side of~\eqref{Eq: Linearized2}
and prove estimates, then go to the linear analysis and finally perform the two contraction mapping arguments  and prove uniqueness.


\section{Nonlinear estimates}\label{ss:NL}

Let us denote by $f,$ $g$ and $h$ the right-hand sides of the first three equations of~\eqref{Eq: Linearized2}. According to~\eqref{eq:lagop}, we have
$$\begin{aligned} f
&=f_0+\divergence\bigl((A_vA_v^\top-\Id)\nabla w+(A_v\cdot Dw\cdot A_v-Dw)\bigr)+(\Id-A_v^\top)\cdot\nabla\vartheta,\\
g&
=g_0+(\Id-A_v^\top):\nabla w=g_0+\divergence\bigl((\Id-A_v)w\bigr),\\
h&
=h_0+\bigl(\bigl((\Id-A_v^\top)\cdot\nabla w+Dw\cdot(\Id-A_v)\bigr)\cdot \e_n\bigr)|_{\partial\IR^n_+}
\!-\!\bigl((A_v^\top\cdot\nabla w+Dw\cdot A_v-\vartheta)\cdot\wt n_v\bigr)|_{\partial\IR^n_+},
\end{aligned}$$
where $\wt n_v:=\overline n_v+\e_n$ stands for the perturbation of the initial outward unit normal vector
at the boundary of the fluid domain.
\medbreak
Our goal here is to analyze  $f,$ $g$ and $h$ in the spaces that we will use for proving our main theorem. 
We assume that $(v,0)$ and $(w,\vartheta)$ belong 
to our  solution space  $\Xi_p,$ that is the set of couples $(z,\theta)$ satisfying:
\begin{itemize}
\item 
$z$ is a time-dependent vector-field  with coefficients in $\cC_b(\IR_+;\dot\B^{n/p-1}_{p,1}(\IR^n_+))$ such that 
$\partial_tz,\, \nabla^2z$ have coefficients in $\LL_1(\IR_+;\dot\B^{n/p-1}_{p,1}(\IR^n_+))$;
\item  $\nabla\theta$ has  coefficients in $\LL_1(\IR_+;\dot\B^{n/p-1}_{p,1}(\IR^n_+))$;
\item $\theta=\theta_I+\theta_B$ with:
\begin{itemize} 
\item $\theta_I$ such that $\theta_I|_{\partial\IR_+^n}\equiv0$ and 
$\nabla\theta_I\in \LL_1(\IR_+;\dot\B^{n/p-1}_{p,1}(\IR^n_+));$
\item  $\theta_B|_{\partial\IR_+^3}$ admitting an extension $E\theta_B$ to $\IR^n_+$ 
with $E\theta_B\in\LL_1(\IR_+;\dot\B^{n/p}_{p,1}(\IR^n_+)),$
$\partial_t(E\theta_B)\in\LL_1(\IR_+;\dot\B^{n/p-2}_{p,1}(\IR^n_+))$
satisfying
$$\partial_t(E\theta_B)=\divergence k^1+ m\divergence k^2+ m'\divergence k'^2$$
with $m\in \LL_\infty(\IR_+;\dot\B^{n/p}_{p,1}(\IR^n_+)),$
$m'\in \LL_1(\IR_+;\dot\B^{n/p}_{p,1}(\IR^n_+)),$
$k^1 \in \LL_1 (\IR_+ ; \dot \B^{n/ p - 1}_{p , 1} (\IR^n_+))$,
$k^2\in \LL_1(\IR_+;\dot\B^{n/p-1}_{p,1}(\IR^n_+))$ and
$k'^2\in \LL_\infty(\IR_+;\dot\B^{n/p-1}_{p,1}(\IR^n_+)).$
\end{itemize}
\end{itemize}
We denote by $\Xi_p(z,\theta)$ the norm of an element $(z,\theta)$ of $\Xi_p,$ namely
\begin{multline}\label{def:Xi_p}
\Xi_p(z,\theta):=\|z\|_{\LL_\infty(\IR_+;\dot\B^{n/p-1}_{p,1}(\IR^n_+))}
+\|z_t,\nabla^2z,\nabla\theta_I,\nabla\theta_B\|_{\LL_1(\IR_+;\dot\B^{n/p-1}_{p,1}(\IR^n_+))}\\
+\inf\bigl(\|E\theta_B\|_{\LL_1(\IR_+;\dot\B^{n/p}_{p,1}(\IR^n_+))}
+\|\partial_t(E\theta_B)\|_{\LL_1(\IR_+;\dot\B^{n/p-2}_{p,1}(\IR^n_+))}\bigr)\cdotp
\end{multline}
The infimum is taken on the set of  all extensions having the correct regularity and structure. 
Let us emphasize that, owing to $p<n,$  the 
products in the expansion  of $\partial_t(E\theta_B)$  are  well defined  (see the product laws recalled in Proposition~\ref{prod-Bes}).
\medbreak
The rest of this subsection is devoted to proving estimates
for $f,$ $g$ and $h,$ and to checking that, indeed, $h$ has an extension $Eh$ having the structure
of  $\theta_B$ in the definition of $\Xi_p.$ 
\medbreak
We shall  assume throughout that the given velocity field $v\in\cC_b(\IR_+;\dot\B^{n/p-1}_{p,1}(\IR_+^n))$ 
is such that $X_v$ is measure preserving and 
 \begin{equation}\label{eq:smallD2v} \int_0^t\!\|\nabla  v \|_{\dot \B^{n / p }_{p , 1} (\IR^n_+)}\d\tau
 \leq c\ll1,\quad\hbox{for all }\ t\geq0.  \end{equation}
According to the  expansion~\eqref{eq:Neumann-expansion} and the fact that 
 $\dot \B^{n / p}_{p , 1} (\IR^n_+)$ is an algebra,   this implies that 
   \begin{equation}\label{eq:key}
\sup_{t'\in[0,t]}\Bigl(\|A_v(t')-\Id\|_{\dot\B^{n/p}_{p,1}(\IR_+^n)}
+\|A_vA_v^\top (t')-\Id\|_{\dot\B^{n/p}_{p,1}(\IR_+^n)}\Bigr) \leq 
 C \int_0^t\!\| \nabla v \|_{\dot \B^{n/ p}_{p , 1} (\IR^n_+)}\d\tau.\end{equation}

\noindent
\subsection{Estimate of the right-hand side of~$(\ref{Eq: Linearized2})_1$} 
 \begin{lemma}\label{Lem: Control of right-hand side momentum}  If~\eqref{eq:smallD2v} is fulfilled and $1<p<2n,$ then 
  \begin{multline}\label{Eq: Complete estimate of perturbed right-hand side}
   \|\divergence_{v} \IT_{v} (w , \vartheta) - \divergence \IT (w , \vartheta)\|_{\LL_1(\IR_+ ;\dot \B^{n/p-1}_{p,1}(\IR^n_+))} \\
   \leq     C\|\nabla v\|_{\LL_1 (\IR_+  ; \dot \B^{n / p }_{p , 1} (\IR^n_+))} 
 \bigl( \|\nabla w\|_{\LL_1 (\IR_+  ; \dot \B^{n / p }_{p , 1} (\IR^n_+))} + \| \nabla \vartheta\|_{\LL_1 (\IR_+  ; \dot \B^{n / p - 1}_{p , 1} (\IR^n_+))}\bigr)\cdotp\end{multline}
   \end{lemma}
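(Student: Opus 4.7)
My strategy starts from the decomposition of the source term already carried out at the very beginning of Subsection~\ref{ss:NL}:
$$\divergence_v\IT_v(w,\vartheta)-\divergence\IT(w,\vartheta)=\divergence\bigl((A_vA_v^\top-\Id)\nabla w\bigr)+\divergence\bigl(A_v\cdot Dw\cdot A_v - Dw\bigr)+(\Id-A_v^\top)\cdot\nabla\vartheta.$$
The plan is thus to estimate each of these three terms in $\LL_1(\IR_+;\dot\B^{n/p-1}_{p,1}(\IR^n_+))$ separately and to combine the bounds at the end.

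For the two divergence-form contributions I would first observe that $\divergence:\dot\B^{n/p}_{p,1}\to\dot\B^{n/p-1}_{p,1}$ is bounded, so it suffices to estimate their arguments in $\dot\B^{n/p}_{p,1}$. Since this space is an algebra, I can apply pointwise in time the inequality
$$\|(A_vA_v^\top-\Id)\nabla w\|_{\dot\B^{n/p}_{p,1}}\leq C\|A_vA_v^\top-\Id\|_{\dot\B^{n/p}_{p,1}}\|\nabla w\|_{\dot\B^{n/p}_{p,1}}$$
and an analogous bound for $A_v\cdot Dw\cdot A_v - Dw$ after writing it as $(A_v-\Id)\cdot Dw\cdot A_v + Dw\cdot (A_v-\Id)$. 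Combining with the uniform-in-time control of $A_v-\Id$ and $A_vA_v^\top-\Id$ given by~\eqref{eq:key}, the matrix factor is bounded in $\LL_\infty(\IR_+)$ by $\|\nabla v\|_{\LL_1(\IR_+;\dot\B^{n/p}_{p,1})}$, and integrating the remaining factor in time directly yields the desired $\|\nabla w\|_{\LL_1(\IR_+;\dot\B^{n/p}_{p,1})}$ contribution.

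The third term $(\Id-A_v^\top)\nabla\vartheta$ is more delicate because only $\nabla\vartheta\in\dot\B^{n/p-1}_{p,1}$ is available. The key ingredient here will be the product law
$$\dot\B^{n/p}_{p,1}(\IR^n_+)\cdot\dot\B^{n/p-1}_{p,1}(\IR^n_+)\hookrightarrow\dot\B^{n/p-1}_{p,1}(\IR^n_+),$$
obtained via Bony's paraproduct decomposition: the sum of the smoothness indices is $2n/p-1$, so the remainder term is well defined exactly when $2n/p-1>0$, that is when $p<2n$, which is precisely the assumption of the lemma. Once this product rule is at hand, the third term is handled in the same fashion as the first two: $\Id-A_v^\top$ is absorbed in the $\LL_\infty(\IR_+;\dot\B^{n/p}_{p,1})$-norm through~\eqref{eq:key}, and time integration of the remaining factor produces the $\|\nabla\vartheta\|_{\LL_1(\IR_+;\dot\B^{n/p-1}_{p,1})}$ term.

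The main obstacle will be establishing this critical product law on $\IR^n_+$: passing from the whole-space paraproduct estimates to their half-space counterparts at the endpoint exponent $n/p$ requires transferring the result through the extension operators constructed in Chapter~\ref{Sec: The functional setting and basic interpolation results}. Apart from that, the argument is essentially bookkeeping: one collects the pointwise-in-time product estimates, invokes the smallness assumption~\eqref{eq:smallD2v} to justify the Neumann expansion~\eqref{eq:Neumann-expansion} and hence~\eqref{eq:key}, and closes the estimate by a H\"older $\LL_\infty\cdot\LL_1$ split in time.
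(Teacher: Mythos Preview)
Your proposal is correct and follows essentially the same route as the paper: the same three-term decomposition, the algebra property of $\dot\B^{n/p}_{p,1}$ combined with~\eqref{eq:key} for the two divergence-form pieces, and the product law $\dot\B^{n/p}_{p,1}\cdot\dot\B^{n/p-1}_{p,1}\hookrightarrow\dot\B^{n/p-1}_{p,1}$ (which is exactly Proposition~\ref{prod-Bes} with $s_1=n/p$, $s_2=n/p-1$, and indeed needs $s_1+s_2=2n/p-1>0$, i.e.\ $p<2n$) for the pressure term. The only refinement the paper adds is that Proposition~\ref{prod-Bes} as stated requires $p\geq 2$, so for $1<p<2$ it invokes the variant~\cite[Lem.~A.5]{Danchin14} instead; you might want to note this case split.
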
\begin{proof}
 {}From  the algebraic relations~\eqref{eq:lagop} and~\eqref{eq:Tlag}, we gather that
\begin{multline}\label{Eq: First right-hand side}
 \divergence_{v} \IT_{v} (w , \vartheta) - \divergence \IT (w, \vartheta)\\
 =\divergence\bigl((A_vA_v^\top-\Id)\nabla w+(A_v\cdot Dw\cdot A_v-Dw)\bigr)
 +(\Id-A_v^\top)\cdot\nabla\vartheta.\end{multline}
Now, using Inequality~\eqref{eq:key} and the fact that $\dot\B^{n/p}_{p,1}(\IR^n_+)$ is an algebra, we  get
$$  \| (A_vA_v^\top-\Id)\nabla w \|_{\LL_1 (\IR_+; \dot \B^{n /p}_{p , 1} (\IR^n_+))} \lesssim
   \|\nabla v\|_{\LL_1 (\IR_+ ; \dot \B^{n / p}_{p , 1} (\IR^n_+))}\| \nabla  w\|_{\LL_1 (\IR_+ ; \dot \B^{n / p }_{p , 1} (\IR^n_+))}.$$
   Similarly, since 
   $$   A_v\cdot Dw\cdot A_v-Dw =(A_v-\Id)\cdot Dw\cdot A_v + Dw\cdot (A_v-\Id),$$
   we obtain
   $$  \|A_v\cdot Dw\cdot A_v-Dw \|_{\LL_1 (\IR_+; \dot \B^{n / p}_{p , 1} (\IR^n_+))} \lesssim
   \|\nabla v\|_{\LL_1 (\IR_+ ; \dot \B^{n / p }_{p , 1} (\IR^n_+))}\| \nabla  w\|_{\LL_1 (\IR_+ ; \dot \B^{n / p} _{p , 1} (\IR^n_+))}.$$
Finally, applying Inequality~\eqref{eq:prod1} with $s_1=n/p,$ $s_2=n/p-1$ in the case $2\leq p<2n$
(the slightly different case $1<p<2$ can be found in~\cite[Lem.~A.5]{Danchin14}),  we find that 
$$\|(\Id-A_v^\top)\cdot\nabla\vartheta  \|_{\LL_1 (\IR_+; \dot \B^{n / p-1}_{p , 1} (\IR^n_+))} \lesssim
  \|\nabla v\|_{\LL_1 (\IR_+ ; \dot \B^{n / p}_{p , 1} (\IR^n_+))}\| \nabla\vartheta\|_{\LL_1 (\IR_+ ; \dot \B^{n / p - 1}_{p , 1} (\IR^n_+))}.$$
  Putting the above inequalities together yields~\eqref{Eq: Complete estimate of perturbed right-hand side}.
\end{proof}

\subsection{Estimate for  the right-hand side of~$(\ref{Eq: Linearized2})_2$} 

\begin{lemma}\label{l:divW}  Assume that $1<p<2n.$  Then  
$$\| \divergence w - \divergence_{v} w\|_{ \LL_1 (\IR_+ ; \dot \B^{n / p}_{p , 1} (\IR^n_+))} 
\lesssim \|\nabla v\|_{ \LL_1(\IR_+;\dot \B^{n / p}_{p , 1} (\IR^n_+))}\|\nabla w\|_{\LL_1(\IR_+;\dot \B^{n / p}_{p , 1} (\IR^n_+))}$$
and $\partial_t( \divergence w - \divergence_{v} w)=\divergence R_t$ with 
$$\|R_t\|_{ \LL_1 (\IR_+ ; \dot \B^{n / p-1}_{p , 1} (\IR^n_+))} \lesssim  
\| \nabla v\|_{\LL_1(\IR_+;\dot \B^{n / p}_{p , 1} (\IR^n_+))}
\bigl(\|w_t\|_{\LL_1(\IR_+;\dot \B^{n / p - 1}_{p , 1} (\IR^n_+))} 
 + \|w\|_{\LL_\infty(\IR_+;\dot \B^{n / p - 1}_{p , 1} (\IR^n_+))}\bigr)\cdotp$$
\end{lemma}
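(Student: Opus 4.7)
The strategy is to exploit two algebraic identities that encode the measure--preserving nature of $X_v$, namely
$$
\divergence w-\divergence_v w=(\Id-A_v^\top):\nabla w=\divergence_y\bigl((\Id-A_v)w\bigr),
$$
which both follow from~\eqref{eq:lagop} together with the vanishing of the row--divergences of $A_v$ (this latter being equivalent to $\det D_yX_v\equiv1$). The first form is convenient for the $\LL_1(\dot\B^{n/p}_{p,1})$ estimate, while the second one is better suited for producing the structure $\divergence R_t$ required in the second part.

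For the first bound, I will use that $\dot\B^{n/p}_{p,1}(\IR^n_+)$ is an algebra (the paper's product law, Proposition~\ref{prod-Bes}), to write
$$
\|(\Id-A_v^\top):\nabla w\|_{\dot\B^{n/p}_{p,1}}\lesssim\|\Id-A_v\|_{\dot\B^{n/p}_{p,1}}\,\|\nabla w\|_{\dot\B^{n/p}_{p,1}},
$$
and then integrate in time, invoking the key $\LL_\infty$-in-time bound~\eqref{eq:key} to absorb $\|\Id-A_v\|_{\LL_\infty(\dot\B^{n/p}_{p,1})}$ by $\|\nabla v\|_{\LL_1(\dot\B^{n/p}_{p,1})}$. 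This yields exactly the desired inequality.

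For the second bound, I will differentiate the identity $\divergence w-\divergence_v w=\divergence_y\bigl((\Id-A_v)w\bigr)$ in time, which gives
$$
\partial_t(\divergence w-\divergence_v w)=\divergence R_t\with R_t:=-(\partial_tA_v)\,w+(\Id-A_v)\,w_t.
$$
Differentiating $A_v\,D_yX_v=\Id$ in $t$ and using $\partial_t D_yX_v=D_yv$ gives $\partial_tA_v=-A_v\,D_yv\,A_v$. Expanding $A_v=\Id+(A_v-\Id)$ and using~\eqref{eq:key} together with the algebra property of $\dot\B^{n/p}_{p,1}$, I obtain
$$
\|\partial_tA_v\|_{\dot\B^{n/p}_{p,1}}\lesssim \|\nabla v\|_{\dot\B^{n/p}_{p,1}}\bigl(1+\|A_v-\Id\|_{\dot\B^{n/p}_{p,1}}\bigr)^2\lesssim\|\nabla v\|_{\dot\B^{n/p}_{p,1}},
$$
since~\eqref{eq:smallD2v} keeps $\|A_v-\Id\|_{\dot\B^{n/p}_{p,1}}$ small uniformly in $t$. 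The two terms of $R_t$ are then estimated in $\dot\B^{n/p-1}_{p,1}$ by the product law $\dot\B^{n/p}_{p,1}\times\dot\B^{n/p-1}_{p,1}\hookrightarrow\dot\B^{n/p-1}_{p,1}$, which is valid under the assumption $p<2n$ (indeed, this is the range $s_1+s_2=2n/p-1>0$ covered by~\eqref{eq:prod1} in Proposition~\ref{prod-Bes}; the case $1<p<2$ is covered by~\cite[Lem.~A.5]{Danchin14}, as in the proof of Lemma~\ref{Lem: Control of right-hand side momentum}). Putting $\partial_tA_v$ in $\LL_1(\dot\B^{n/p}_{p,1})$ against $w$ in $\LL_\infty(\dot\B^{n/p-1}_{p,1})$, and $\Id-A_v$ in $\LL_\infty(\dot\B^{n/p}_{p,1})$ against $w_t$ in $\LL_1(\dot\B^{n/p-1}_{p,1})$, yields the announced estimate.

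The main obstacle is keeping the book-keeping of product laws consistent with the scale of homogeneous Besov spaces: one must ensure that the sum of regularity indices is positive and that the range condition $p<2n$ is met at every step, and that the $\LL_\infty$-in-time bound on $A_v-\Id$ extracted from~\eqref{eq:key} matches with an $\LL_1$-in-time factor on the other entry (rather than trying to pair two $\LL_1$ factors, which would fail). Once these couplings are set up correctly, the computation reduces to the routine Neumann series manipulation of $A_v$ already used in Lemma~\ref{Lem: Control of right-hand side momentum}.
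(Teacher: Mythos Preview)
Your proof is correct and follows essentially the same approach as the paper: the same two algebraic identities, the same decomposition $R_t=-(\partial_tA_v)w+(\Id-A_v)w_t$, and the same product laws with the same $\LL_1/\LL_\infty$ time pairings. The only cosmetic difference is that you obtain $\|\partial_tA_v\|_{\dot\B^{n/p}_{p,1}}\lesssim\|\nabla v\|_{\dot\B^{n/p}_{p,1}}$ via the matrix identity $\partial_tA_v=-A_v\,D_yv\,A_v$, whereas the paper differentiates the Neumann series~\eqref{eq:Neumann-expansion} term by term to reach the same bound.
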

\begin{proof}  Since $X_v$ is measure preserving, we have the following
relations:
$$ \divergence w - \divergence_{v} w = (\Id - A_v^\top): \nabla w = \divergence ((\Id - A_v) w).$$
Using the first relation,~\eqref{eq:key} and  that 
$\dot\B^{n/p}_{p,1}(\IR_+^n)$ is an algebra readily yields the first part 
of Lemma~\ref{l:divW}. 
\smallbreak
For proving the second part,  we  use the fact that
$$\partial_t( \divergence w - \divergence_{v} w)=\divergence R_t\with
R_t:= \partial_t [  w- A_v w].$$
Now, Leibniz rule implies that 
$$R_t= -\partial_t A_{v}\,w + (\Id - A_v)w_t.$$
Bounding the last term  according to~\eqref{eq:key} and~\eqref{eq:prod1} or~\cite[Lem.~A.5]{Danchin14} gives 
$$\|(\Id - A_v)w_t\|_{\LL_1(\IR_+;\dot \B^{n / p - 1}_{p , 1} (\IR^n_+))} \leq C\| \nabla v\|_{\LL_1(\IR_+;\dot \B^{n / p}_{p , 1} (\IR^n_+))}
 \|w_t\|_{\LL_1(\IR_+;\dot \B^{n / p - 1}_{p , 1} (\IR^n_+))}.
$$
To handle  the first term, we  differentiate~\eqref{eq:Neumann-expansion} with respect to $t$ and get:
\begin{align}
\label{Eq: Time derivative of transformation matrix}
 \partial_t A_v(t)=\sum_{k\geq1} (-1)^k\sum_{j=1}^k  \biggl(\int_0^tDv\,\d t'\biggr)^{k-j}
\cdot (Dv(t))\cdot\biggl(\int_0^tDv\,\d t'\biggr)^{j-1}.
\end{align}
Hence, using once again~\eqref{eq:prod1}, one may conclude that 
$$\|\partial_t A_{v}\,w\|_{\LL_1(\IR_+;\dot \B^{n / p - 1}_{p , 1} (\IR^n_+))} \leq C\| \nabla v\|_{\LL_1(\IR_+;\dot \B^{n / p}_{p , 1} (\IR^n_+))}
 \|w\|_{\LL_\infty(\IR_+;\dot \B^{n / p - 1}_{p , 1} (\IR^n_+))}.$$
This completes the proof. 
\end{proof}

\subsection{The normal vector}
\label{Sec: The normal vector}
\begin{lemma}
\label{Lem: The normal vector}
Let $\wt n_v:= \ov n_v+\e_n.$   We have
 \begin{equation}\label{eq:normal}
  \|\wt n_v\|_{\LL_\infty(\IR_+ ;\dot \B^{(n-1)/p}_{p,1}(\partial\IR_+^n))} \leq C\, 
  \|\nabla v\|_{ \LL_1(\IR_+;\dot \B^{n/ p}_{p , 1} (\IR^n_+))}.
 \end{equation}
\end{lemma}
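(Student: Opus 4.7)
The plan is to derive an explicit formula for $\bar n_v$ in Lagrangian variables, recognize $\tilde n_v$ as the image of a small quantity by a smooth nonlinear map, and conclude by combining a composition estimate in the algebra $\dot\B^{n/p}_{p,1}$ with a homogeneous trace inequality.

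First, since $X_v$ is measure preserving, $\det D_y X_v = 1$, and the hypersurface $\partial\Omega_t = X_v(t,\partial\IR^n_+)$ can be described as the level set $\{\psi(t,\cdot)=0\}$, where $\psi(t,x):=[X_v(t,\cdot)^{-1}(x)]_n$. A direct differentiation yields $\nabla_x\psi = (D_yX_v)^{-\top}\e_n = A_v^\top\e_n$ evaluated at $y=X_v^{-1}(x)$; since initially $\bar n=-\e_n$, the sign is fixed and
$$\bar n_v(t,y) = -\frac{A_v(t,y)^\top\e_n}{|A_v(t,y)^\top\e_n|}, \qquad y\in\partial\IR^n_+.$$
This formula still makes sense for every $y\in\IR^n_+$, so it defines an extension of $\tilde n_v:=\bar n_v+\e_n$ to the whole half-space, which we may and do estimate there.

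Second, set $M:=(A_v^\top-\Id)\e_n$ and introduce the smooth map $\Phi(N):=\e_n-(\e_n+N)/|\e_n+N|$, defined in a neighborhood of $N=0$ and satisfying $\Phi(0)=0$, so that $\tilde n_v=\Phi(M)$. From the Neumann expansion~\eqref{eq:Neumann-expansion}, the algebra property of $\dot\B^{n/p}_{p,1}(\IR^n_+)$, and~\eqref{eq:key} combined with the smallness~\eqref{eq:smallD2v},
$$\|M\|_{\LL_\infty(\IR_+;\dot\B^{n/p}_{p,1}(\IR^n_+))}\leq \|A_v-\Id\|_{\LL_\infty(\IR_+;\dot\B^{n/p}_{p,1}(\IR^n_+))}\leq C\,\|\nabla v\|_{\LL_1(\IR_+;\dot\B^{n/p}_{p,1}(\IR^n_+))},$$
which in particular, through the embedding $\dot\B^{n/p}_{p,1}\hookrightarrow\LL_\infty$, can be made arbitrarily small by choosing $c$ small enough in~\eqref{eq:smallD2v}.

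Third, a standard composition estimate (extending $M$ to $\IR^n$ via Proposition~\ref{Prop: Proper boundedness extension operators} and applying the whole-space result, e.g.~\cite[Thm.~2.87]{Bahouri_Chemin_Danchin}) gives, for $M$ in a fixed ball of $\LL_\infty$,
$$\|\Phi(M)\|_{\dot\B^{n/p}_{p,1}(\IR^n_+)} \lesssim \|M\|_{\dot\B^{n/p}_{p,1}(\IR^n_+)},$$
so that $\|\tilde n_v\|_{\LL_\infty(\IR_+;\dot\B^{n/p}_{p,1}(\IR^n_+))}\lesssim \|\nabla v\|_{\LL_1(\IR_+;\dot\B^{n/p}_{p,1}(\IR^n_+))}$. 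Applying the homogeneous trace embedding $\dot\B^{n/p}_{p,1}(\IR^n_+)\hookrightarrow\dot\B^{(n-1)/p}_{p,1}(\partial\IR^n_+)$, obtained by the same scaling argument used in Lemma~\ref{Lem: Homogeneous trace and extension estimates} transposed to the Besov scale with $q=1$, then yields~\eqref{eq:normal}. The main delicate point is the composition step at the borderline index $n/p$ with summability $q=1$: the smooth map $\Phi$ must be applied to an argument lying in a sufficiently small $\LL_\infty$-neighborhood of $0$, and this is precisely what the smallness condition~\eqref{eq:smallD2v} and the embedding $\dot\B^{n/p}_{p,1}\hookrightarrow\LL_\infty$ supply.
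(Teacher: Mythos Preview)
Your proof is correct and takes a genuinely different route from the paper. The paper parametrizes $\partial\Omega_t$ directly by $y'\mapsto X_v(t,y',0)$, writes $\bar n_v$ as the normalized cross product of the tangent vectors $\partial_{y_j}X_v=\e_j+\int_0^t\partial_{y_j}v\,\d\tau$ for $1\le j\le n-1$, and works entirely on the boundary: the trace theorem puts the integrands $\int_0^t\partial_{y_j}v(\tau,\cdot,0)\,\d\tau$ in the small ball of the algebra $\dot\B^{(n-1)/p}_{p,1}(\IR^{n-1})$, and a Taylor expansion of $(1+x)^{-1/2}$ finishes. You instead describe $\partial\Omega_t$ as the zero level set of $y_n\circ X_v^{-1}$, which yields the compact formula $\bar n_v=-A_v^\top\e_n/|A_v^\top\e_n|$, then recognise $\wt n_v=\Phi((A_v^\top-\Id)\e_n)$ as a smooth composition in the bulk algebra $\dot\B^{n/p}_{p,1}(\IR^n_+)$ and only trace at the end. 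Your argument is dimension-independent from the start and hands you the extension $E\wt n_v$ needed right after the lemma for free; the paper's cross-product computation is more concrete and case-by-case ($n=3$ first, then general $n$), but perhaps makes the time-derivative estimate~\eqref{eq:normalt} that follows a touch more explicit.
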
 
\begin{proof}
To avoid technicalities, we focus on the case 
$n=3,$ just indicating at the end how to modify the proof in higher dimension. 

Now, in the 3D case,  let us note that at time  $t=0$, 
the normal vector is $-e_3$ since the fluid 
domain is just $\IR_+^3.$
Denote by $y'$ the first two components of a generic element  $y$ of $\IR_+^3.$ 
By the definition of the Lagrangian coordinates, 
we have the following  parameterization of the free boundary:
$$ X_v(t, y' , 0) = \begin{pmatrix}  y' \\ 0 \end{pmatrix} + \int_0^t v (\tau, y' , 0) \; \d \tau.$$
Hence, the  tangent plane to the boundary is generated by 
$$ \partial_{y_1} X_v(t, y' , 0) = \left(\begin{smallmatrix} 1\\0 \\ 0 \end{smallmatrix}\right) + \int_0^t \partial_{y_1} v (\tau, y', 0 ) \; \d \tau
 \andf  \partial_{y_2} X_v(t, y' , 0) = \left(\begin{smallmatrix} 0\\1 \\ 0 \end{smallmatrix}\right) + \int_0^t \partial_{y_2} v (\tau, y', 0 ) \; \d \tau.$$
As $\nabla v$ is small in  $\LL_{1} (\IR_+  ; \dot \B^{3 / p}_{p , 1} (\IR_+^3))$ and $\dot \B^{2 / p}_{p , 1} (\partial\IR_+^3)\hookrightarrow
 \LL_{\infty} (\partial\IR_+^3),$    the outward unit normal is given by
$$ \overline{n}_{v}(t,y',0) := \frac{ (\partial_{y_2} X_v(t, y' , 0))\times (\partial_{y_1} X_v(t, y' , 0))}{| (\partial_{y_2} X_v(t, y' , 0))\times (\partial_{y_1} X_v(t, y' , 0)) |}\cdotp$$
 Denoting $ \displaystyle V_i(t,y',0):= \int_0^t\partial_{y_i}v(\tau,y',0)\,\d\tau$ for $i=1,2,$ we thus have (omitting the dependence in $t$):
 $$\overline{n}_v = \frac{ V-\e_3 }{| V-\e_3|}\with  V:= V_2\times \e_1+ \e_2\times V_1 +V_2\times V_1.    $$
 Since   $\dot \B^{2/p}_{p,1}(\partial\IR_+^3)$ is an algebra and the vector field $V$ is small in  $\dot \B^{2/p}_{p,1}(\partial\IR_+^3)$, we compute $\overline{n}_v$ by expanding the term $(1 + x)^{-1 / 2}$ in the right-hand side below into its Taylor series:
 $$ \overline{n}_v = (V-\e_3)(1+|V|^2 -2V\cdot \e_3)^{-1/2}.$$
 Using again that  $\dot \B^{2/p}_{p,1}(\partial\IR_+^3)$ is an algebra completes the proof. 
 \end{proof}

At this stage, the fundamental observation is that, setting 
 $$V_i(t,y):= \int_0^t\partial_{y_i}v(\tau,y)\,\d\tau  \mbox{ \  for  \ } i=1,2, \quad t \geq0
 \mbox{ and  } y\in\IR^3_+,
 $$
 and finally  
 $$E\bar n_v:= \frac{V-\e_3}{|V-\e_3|}\andf E\wt n_v:= E\bar n_v+\e_3,$$ ensures that 
$E\wt n_v$ is an extension of $\wt n_v$ on $\IR_+^3$ that 
belongs to $\LL_\infty(\IR_+;\dot\B^{3/p}_{p,1}(\IR_+^3))$ and satisfies
 \begin{equation}\label{eq:normalbis}
  \|E\wt n_v\|_{\LL_\infty(\IR_+ ;\dot \B^{3/p}_{p,1}(\IR_+^3))} \leq C\, 
  \|\nabla v\|_{ \LL_1(\IR_+;\dot \B^{3 / p}_{p , 1} (\IR^3_+))}.
 \end{equation}
 Finally,  expanding $E\wt n_v$ and  taking the time derivative, we 
prove that $\partial_t(E\wt n_v)$ is in $\LL_1(\IR_+;\dot\B^{3/p}_{p,1}(\IR_+^3))$ and 
satisfies 
\begin{equation}\label{eq:normalt}
\|\partial_t(E\wt n_v)\|_{\LL_1(\IR_+;\dot\B^{3/p}_{p,1}(\IR_+^3))}\lesssim 
\|\nabla v\|_{\LL_1(\IR_+;\dot\B^{3/p}_{p,1}(\IR^3_+))}.
\end{equation}
Let us briefly explain how to adapt the proof 
to higher dimension $n.$ 
Then, the outward unit normal vector may 
be expressed in terms of a cross product
of $n-1$ vectors as follows:
$$ \overline{n}_{v}(t,y',0) := -
\frac{ (\partial_{y_1} X_v(t, y' , 0))\times\dotsm\times (\partial_{y_{n-1}} X_v(t, y' , 0))}
{|(\partial_{y_1} X_v(t, y' , 0))\times\dotsm\times (\partial_{y_{n-1}} X_v(t, y' , 0))|}\cdotp$$
Denoting $$V_i(t,y):= \int_0^t\partial_{y_i}v(\tau,y)\,\d\tau  \mbox{ \  for  \ } i=1,\cdots,n-1, \quad t \geq0
 \mbox{ and  } y\in\IR^n_+,
 $$
we extend $ \overline{n}_{v}(t,y',0)$  on $\IR^n_+$ 
by setting
$$
E\bar n_v:= -\frac{(e_1+V_1)\times\dotsm\times(e_{n-1}+V_{n-1}))}
{|(e_1+V_1)\times\dotsm\times(e_{n-1}+V_{n-1}))|}\cdotp$$
By  using Taylor series expansion, it 
is easy to complete the proof of the lemma, as 
in the three-dimensional case.

\subsection{Estimate for  the right-hand side of~$(\ref{Eq: Linearized2})_3$} 
The particular structure of that term (denoted by $h$ in what follows) 
plays a fundamental role.  Recall that, by 
assumption, $\vartheta=\vartheta_I+\vartheta_B$ with $\vartheta_I|_{\partial\IR_+^n}\equiv0.$
Hence $h$ admits the following extension $Eh$ on  $\IR_+\times\IR^n_+$:   
$$Eh:= \bigl((\Id-A_v^\top)\cdot\nabla w+Dw\cdot(\Id-A_v)\bigr)\cdot \e_n
-\bigl((A_v^\top\cdot\nabla w+Dw\cdot A_v-E\vartheta_B \Id)\bigr)\cdot E\wt n_v,$$
and the time derivative of $Eh$  reads 
 $Eh_t^1-Eh_t^2-Eh_t^3$ with
  $$ \begin{aligned}
E h_t^1&:=\bigl((\Id-A_v^\top)\cdot\nabla w_t-A_{v,t}^\top \cdot \nabla w
 +Dw_t\cdot(\Id-A_v)- Dw\cdot A_{v,t}\bigr)\cdot \e_3,\\
 Eh_t^2&:= \bigl(A_{v,t}^\top\!\cdot\!\nabla w+ A_v^\top\!\cdot\!\nabla w_t
  +Dw_t\cdot A_v+Dw\cdot A_{v,t} -\partial_t(E\vartheta_B)\Id\bigr)\cdot E\wt n_v,\\
 Eh_t^3&:=\bigl(A_v^\top\cdot\nabla w+Dw\cdot A_v-E\vartheta_B\Id\bigr) \cdot \partial_t(E\wt n_{v}).
  \end{aligned}$$
 Taking advantage of the previous subsection and, 
once more, of~\eqref{eq:key} and of the fact that
$\dot\B^{n/p}_{p,1}(\IR_+^n)$ is an algebra, we
see that $Eh$ belongs to  $\LL_1(\IR_+;\dot\B^{n/p}_{p,1}(\IR_+^n))$ and satisfies:
\begin{equation}\label{eq:Eh}
\|Eh\|_{\LL_1(\IR_+;\dot\B^{n/p}_{p,1}(\IR_+^n))} \lesssim
 \|\nabla v\|_{ \LL_1(\IR_+;\dot \B^{n / p}_{p , 1} (\IR^n_+))}
 \|\nabla w,E\vartheta_B\|_{\LL_1(\IR_+;\dot \B^{n / p}_{p , 1} (\IR^n_+))}.
\end{equation}
 
The components of the four terms constituting $E h_t^1$ are linear combinations of products of type  $m\divergence k$ with (use~\eqref{eq:prod1},~\eqref{eq:key} and~\eqref{Eq: Time derivative of transformation matrix}) either 
$$\begin{aligned}&\|m\|_{\LL_\infty(\IR_+;\dot\B^{n/p}_{p,1}(\IR^n_+))}\!\lesssim\! 
\|\nabla v\|_{\LL_1(\IR_+;\dot\B^{n/p}_{p,1}(\IR^n_+))}
\!\!\andf\!  \|k\|_{\LL_1(\IR_+;\dot\B^{n/p-1}_{p,1}(\IR^n_+))}\!\lesssim\! 
\|w_t\|_{\LL_1(\IR_+;\dot\B^{n/p-1}_{p,1}(\IR^n_+))},\\
\hbox{or }\  \!&\|m\|_{\LL_1(\IR_+;\dot\B^{n/p}_{p,1}(\IR^n_+))}\!\lesssim\! 
\|\nabla v\|_{\LL_1(\IR_+;\dot\B^{n/p}_{p,1}(\IR^n_+))}
\!\andf\!  \|k\|_{\LL_\infty(\IR_+;\dot\B^{n/p-1}_{p,1}(\IR^n_+))}\!\lesssim\! 
\|w\|_{\LL_\infty(\IR_+;\dot\B^{n/p-1}_{p,1}(\IR^n_+))}.\end{aligned}$$
Next, owing to~\eqref{eq:normalbis}, the first four terms of  $E h_t^2$ are similar to those of $E h_t^1.$
The last term also has a structure of type $m\divergence k^2 + m'\divergence k'^2$
since we made that assumption for $\partial_t(E\vartheta_B),$ and we can keep the same definition for $k^2$
and $k'^2$ and just multiply $m$ and $m'$ by $E\wt n_v.$ 
Hence, the new $m$ and $m'$ satisfy  the original  bounds, multiplied by $\|\nabla v\|_{\LL_1(\IR_+;\dot\B^{n/p}_{p,1}(\IR^n_+))}.$  
Finally, combining~\eqref{eq:normalt},~\eqref{eq:key} and the fact that $\dot\B^{n/p}_{p,1}(\IR_+^n)$ is an algebra ensures that the first  two terms of $E h_t^3$ are  like  $Eh_t^1.$
As for the last term, it looks like $m\divergence k$ with 
$$
\|m\|_{\LL_1(\IR_+;\dot\B^{n/p}_{p,1}(\IR^n_+))}\lesssim \|E\vartheta_B\|_{\LL_1(\IR_+;\dot\B^{n/p}_{p,1}(\IR^n_+))}\andf
\|k\|_{\LL_\infty(\IR_+;\dot\B^{n/p-1}_{p,1}(\IR^n_+))} \lesssim 
\|v\|_{\LL_\infty(\IR_+;\dot\B^{n/p-1}_{p,1}(\IR^n_+))}.$$

Note that we have  a direct control of $\nabla^2v$ in $\LL_1(\IR_+;\dot\B^{n/p-1}_{p,1}(\IR^n_+))$
from $\Xi_p(v,0).$ 
In  the above  computations however, this is 
the norm of $\nabla v$ in $\LL_1(\IR_+;\dot\B^{n/p}_{p,1}(\IR^n_+))$
that came  into play.  The following lemma guarantees that this latter norm is  bounded  by $\Xi_p(v,0).$ 
\begin{lemma}\label{l:nablau}
Let $2 < p < \infty$ and $z$ be a function defined on $\IR_+ $ such that for almost every $t > 0$, $z(t)$ is the restriction of an element in 
$\cS_h^{\prime} (\IR^n)$ with  space derivatives of second order in $\LL_1 (\IR_+ ; \dot \B^{n / p - 1}_{p , 1} (\IR^n_+))$. Then, 
there exists some constant $C>0$ such that
$$\int_0^t \| \nabla z\|_{\dot \B^{n / p}_{p , 1} (\IR^n_+)}  \, \d \tau\leq C \int_0^t\| \nabla^2 z \|_{\dot \B^{n / p - 1}_{p , 1} (\IR^n_+)}\,\d\tau
\quad\hbox{for all }\ t\geq0.$$ 
\end{lemma}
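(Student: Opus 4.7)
Since the claimed bound is an integral in time, it is enough to establish the pointwise (in $t$) estimate
\[ \|\nabla z(t)\|_{\dot \B^{n/p}_{p,1}(\IR^n_+)} \leq C \|\nabla^2 z(t)\|_{\dot \B^{n/p-1}_{p,1}(\IR^n_+)} \]
for almost every $t \geq 0$ at which the right-hand side is finite; integration over $[0,t]$ then yields the lemma. I fix such a $t$ and drop it from the notation. The pointwise estimate is precisely what Corollary~\ref{Cor: Comparison of gradients} delivers when applied to $f = \partial_j z$ with $s = n/p-1$ and $q = 1$ (the hypotheses $s > -1 + 1/p$ and $s+1 \leq n/p$ being satisfied as soon as $n \geq 2$), summed over $j$. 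The catch is that the corollary requires the \emph{a priori} membership $\partial_j z \in \dot \B^{n/p}_{p,1}(\IR^n_+)$, and producing such a representative is the real content of the proof.

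For each $j \in \{1,\dots,n\}$, the field $\nabla \partial_j z \in \dot \B^{n/p-1}_{p,1}(\IR^n_+;\IC^n)$ is curl-free by the symmetry of second derivatives. The assumption $p > 2$ (which in the intended setting means $p \in (n-1,n)$) places $s = n/p-1$ in the admissible range $(-1+1/p,\,1/p)$ of Corollary~\ref{Cor: Curl free vector fields}, which then furnishes a potential $g_j \in \dot \B^{n/p}_{p,1}(\IR^n_+)$ with $\nabla g_j = \nabla \partial_j z$ and
\[ \|g_j\|_{\dot \B^{n/p}_{p,1}(\IR^n_+)} \leq C\,\|\nabla \partial_j z\|_{\dot \B^{n/p-1}_{p,1}(\IR^n_+;\IC^n)}. \]
Since $\IR^n_+$ is connected, $g_j - \partial_j z$ reduces to a constant $c_j \in \IC$, and the entire argument now hinges on showing $c_j = 0$.

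This is where the hypothesis that $z$ is the restriction of some $Z \in \cS_h^{\prime}(\IR^n)$ enters decisively. Writing $\partial_j z = (\partial_j Z)|_{\IR^n_+}$ with $\partial_j Z \in \cS_h^{\prime}(\IR^n)$, and $g_j = G_j|_{\IR^n_+}$ with $G_j \in \dot \B^{n/p}_{p,1}(\IR^n) \subset \cS_h^{\prime}(\IR^n)$, the distribution $H := G_j - \partial_j Z$ lies in $\cS_h^{\prime}(\IR^n)$ and satisfies $H|_{\IR^n_+} = c_j$; equivalently, $H = c_j\,\ind_{\IR^n_+} + R$ for some $R \in \cS^{\prime}(\IR^n)$ supported in $\overline{\IR^n_-}$.

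The main obstacle is to extract $c_j = 0$ from the low-frequency decay of $H$. Pick $\theta \in \C_c^{\infty}(\IR^n)$ with $\theta(0) \neq 0$; a direct scaling computation (substituting $w = z/\lambda$ in the convolution kernel of $\theta(\lambda D)$) gives
\[ \theta(\lambda D)(c_j\,\ind_{\IR^n_+})(x) = c_j \int_{\{w_n < x_n/\lambda\}} \cF^{-1}\theta(w)\,\d w, \]
which converges to $c_j\,\theta(0)$ as $x_n/\lambda \to \infty$. Meanwhile, the Schwartz decay of the kernel tested against the tempered distribution $R$ supported far from $x$ yields $\theta(\lambda D) R(x) \to 0$ as $x_n \to \infty$ with $\lambda$ fixed. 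Therefore, for every $\lambda > 0$,
\[ \|\theta(\lambda D) H\|_{\LL_{\infty}(\IR^n)} \geq \lim_{x_n \to \infty} |\theta(\lambda D) H(x_n \e_n)| = |c_j\,\theta(0)|, \]
which contradicts the defining property $\|\theta(\lambda D) H\|_{\LL_{\infty}} \to 0$ of $\cS_h^{\prime}(\IR^n)$ unless $c_j = 0$. This identifies $\partial_j z$ with $g_j \in \dot \B^{n/p}_{p,1}(\IR^n_+)$ with the required bound; summing over $j$ and integrating in time concludes the proof.
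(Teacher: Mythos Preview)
Your proof is correct and follows exactly the route the paper indicates: apply Corollary~\ref{Cor: Curl free vector fields} with $s=n/p-1$ and $q=1$ to the curl-free field $\nabla\partial_j z$. The paper's proof is literally a one-liner citing that corollary; what you have done is make explicit the one genuine point the paper leaves tacit --- namely, that the abstract potential $g_j$ produced by the corollary coincides with $\partial_j z$ (rather than differing by a nonzero constant), which is precisely where the hypothesis that $z(t)$ extends to an element of $\cS_h'(\IR^n)$ is used. Your argument for $c_j=0$ via the low-frequency decay of $H=G_j-\partial_j Z\in\cS_h'(\IR^n)$ is sound.
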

\begin{proof} It stems from  Corollary~\ref{Cor: Curl free vector fields} with $s=n/p-1$ and $q=1.$
 \end{proof}
 \smallbreak


\section{Study of  the nonhomogeneous Stokes system} 

This subsection is devoted to solving~\eqref{Eq: Stokes general}  
supplemented with data $u_0,$ $f,$ $g$ and $h$ such  that:
\begin{itemize}
\item  $u_0\in \dot \B^{s}_{p,1}(\IR_+^n)$; 
\item $f\in \LL_1(\IR_+; \dot \B^{s}_{p,1}(\IR_+^n))$;
\item $g\in \LL_1(\IR_+; \dot \B^{s+1}_{p,1}(\IR_+^n))$ with $g|_{t=0}=0$
and $g=\divergence R$ with $R_t\in  \LL_1(\IR_+; \dot \B^{s}_{p,1}(\IR_+^n))$;
\item $h|_{t=0}=0$ and  $h$ admits an extension $Eh$ on $\IR_+\times\IR^n_+$ 
belonging to $\LL_1(\IR_+; \dot \B^{s+1}_{p,1}(\IR_+^n))$
and such that $\partial_t(Eh)\in \LL_1(\IR_+; \dot \B^{s-1}_{p,1}(\IR_+^n))$
with:
\begin{itemize}
\item Case $n=2$: 
\begin{equation}\label{eq:decompo-n2}
\partial_t(Eh)= \divergence k,
\with k\in \LL_1(\IR_+; \dot \B^{s}_{p,1}(\IR_+^2)).
\end{equation}
\item Case $n\geq 3$:
 \begin{equation}\label{eq:decompo}
 \partial_t(Eh)= \divergence k^1+m\divergence k^2+m'\divergence k'^2,
\end{equation}
with  $m \in \LL_\infty(\IR_+; \dot \B^{n/p}_{p,1}(\IR_+^n)),$ 
$m' \!\in\! \LL_1(\IR_+; \dot \B^{n/p}_{p,1}(\IR_+^n)),$ 
$k^1,k^2\in \LL_1(\IR_+; \dot \B^{s}_{p,1}(\IR_+^n))$ 
and $k'^2 \in \LL_\infty(\IR_+; \dot \B^{s}_{p,1}(\IR_+^n)).$
\end{itemize}
\end{itemize}
The much stronger assumption in the case $n=2$ comes from the fact that 
the restriction $s\leq n/p-1$ that will come from, e.g., the first step of the proof, will preclude
us to have also   $s>1-\min(n/p,n/p'),$ an \emph{optimal}
condition that is required when considering the products in the   right-hand side of~\eqref{eq:decompo}
(see Proposition~\ref{prod-Bes}). This is actually the only reason why, so far,  our method does not enable  us
to solve our free boundary problem in the case $n=2.$  

\begin{proposition}\label{p:fight}  Assume that $n\geq2.$ Let $n-1<p<\infty$ and $s\in(0,1/p)$  
satisfy $s>1-\min(n/p,n/p')$ and $s\leq n/p -1.$ 
Then, system~\eqref{Eq: Stokes general} admits a unique solution $(u,P)$ 
with  
\begin{equation}\label{eq:regularity}
u\in\cC_b(\IR_+; \dot \B^{s}_{p,1}(\IR_+^n))\andf u_t,\nabla^2u,\nabla P\in \LL_1(\IR_+; \dot \B^{s}_{p,1}(\IR_+^n)),\end{equation}
with  $P=P_I+P_B$ such that 
 $P_I|_{\partial\IR_+^n}\equiv0,$ $\nabla P_I\in \LL_1(\IR_+; \dot \B^{s}_{p,1}(\IR_+^n))$
and  $P_B|_{\partial\IR_+^n}$ has  an extension $EP_B$ on $\IR_+^n$ 
that belongs to   $\LL_1(\IR_+; \dot \B^{s+1}_{p,1}(\IR_+^n))$ 
and such that $\partial_t(EP_B)\in \LL_1(\IR_+; \dot \B^{s-1}_{p,1}(\IR_+^n))$ 
and satisfies
$$\partial_t(EP_B)= k^1_B+m_B\divergence k^2_B+m'_B\divergence k'^2_B$$
for some distributions $k^1_B,$ $k^2_B,$ $k'^2_B,$ $m_B$ and $m'_B$
as in~\eqref{eq:decompo} in the case $n\geq3,$ and~\eqref{eq:decompo-n2} in the case $n=2$.
\medbreak
 Furthermore, the extension operator $E$ may be chosen linear and continuous
 with respect to $f,$ $R$ and the functions in $Eh,$ and the  
 following inequality is satisfied:
\begin{multline}\label{eq:Stokes general}
\|u\|_{\LL_\infty(\IR_+; \dot \B^{s}_{p,1}(\IR_+^n))}+\|u_t,\nabla^2u,\nabla P_I,\nabla P_B\|_{ \LL_1(\IR_+; \dot \B^{s}_{p,1}(\IR_+^n))}+\|EP_B\|_{ \LL_1(\IR_+; \dot \B^{s+1}_{p,1}(\IR_+^n))}\\
+\|\partial_t(EP_B)\|_{ \LL_1(\IR_+; \dot \B^{s-1}_{p,1}(\IR_+^n))}
\lesssim \|u_0\|_{ \dot \B^{s}_{p,1}(\IR_+^n)} 
\\+ \|f,R_t,\nabla Eh\|_{ \LL_1(\IR_+; \dot \B^{s}_{p,1}(\IR_+^n))}
+\|g\|_{ \LL_1(\IR_+; \dot \B^{s+1}_{p,1}(\IR_+^n))}
+\|\partial_t(Eh)\|_{ \LL_1(\IR_+; \dot \B^{s-1}_{p,1}(\IR_+^n))}
.\end{multline}
\end{proposition}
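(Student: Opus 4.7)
The plan is to reduce~\eqref{Eq: Stokes general} to an abstract Cauchy problem for the Stokes operator $A_p$ with zero divergence and zero Neumann boundary data, which can then be solved by the homogeneous Da Prato--Grisvard theorem (Theorem~\ref{Thm: Da Prato - Grisvard for Stokes} with $q=1$, permitted because $s\leq n/p-1<n/p$). The reduction proceeds through the construction of two auxiliary pairs $(u_g,0)$ and $(u_B,P_B)$ that absorb, respectively, the divergence constraint $g$ and the Neumann boundary data $h$, leaving a homogeneous problem in the solenoidal setting.

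First, since $g=\divergence R$ with $g|_{t=0}=0$, I may adjust $R$ so that $R|_{t=0}=0$ as well (the hypotheses only concern $\partial_t R$, so adding a constant in $t$ is harmless). Setting $u_g:=R$ produces a vector field with $\divergence u_g=g$, $u_g|_{t=0}=0$, $\partial_t u_g=R_t\in \LL_1(\IR_+;\dot\B^s_{p,1}(\IR^n_+))$, and $\nabla u_g\in \LL_\infty(\IR_+;\dot\B^s_{p,1}(\IR^n_+))$ via the fundamental theorem of calculus applied to $R_t$. Subtracting $(u_g,0)$ from $(u,P)$ leaves a Stokes system with the same type of right-hand sides (with $g$ replaced by $0$), which brings us to the case $g\equiv 0$.

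Second, I would construct $(u_B,P_B)$ absorbing $h$ and preserving solenoidality, with $u_B|_{t=0}=0$. The Neumann condition reads componentwise as $(\partial_n u_B^j+\partial_j u_B^n)|_{\partial\IR^n_+}=h_j$ for $j<n$, and $(2\partial_n u_B^n-P_B)|_{\partial\IR^n_+}=h_n$. Starting from the extension $Eh$ provided in the hypotheses, the tangential components $h_j$ are lifted by standard odd/even reflection arguments (as in Lemma~\ref{Lem: Extension operators}) applied in $\dot\B^{s+1}_{p,1}(\IR^n_+)$, while $P_B$ is constructed so that its trace equals $2\partial_n u_B^n|_{\partial\IR^n_+}-h_n$. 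Picking the extension $EP_B$ directly from $Eh$ (plus contributions from the tangential lifts), the multiplicative decomposition of $\partial_t(Eh)$ given in~\eqref{eq:decompo} (or in~\eqref{eq:decompo-n2} when $n=2$) is transferred to $\partial_t(EP_B)$, because the construction is linear and preserves the form $\divergence k^1+m\divergence k^2+m'\divergence k'^2$. The product laws of Proposition~\ref{prod-Bes} apply here precisely because $s>1-\min(n/p,n/p')$, which is the sharp threshold for continuity of such products in $\dot\B^{s-1}_{p,1}$.

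Finally, after subtracting $u_g+u_B$, the remaining unknown $w:=u-u_g-u_B$ solves an initial-boundary value problem with zero divergence, zero Neumann trace, initial datum $u_0$, and a new source $F\in \LL_1(\IR_+;\dot\B^s_{p,1}(\IR^n_+;\IC^n))$ whose norm is controlled by the data thanks to the estimates of the previous two steps (and Lemma~\ref{l:nablau} to absorb second-order space derivatives of $u_g$ and $u_B$). Projecting $F$ onto $\LL_{p,\sigma}$ via Proposition~\ref{Prop: Helmholtz projection} and applying Theorem~\ref{Thm: Da Prato - Grisvard for Stokes} with $q=1$ produces $w$ in the required class; Proposition~\ref{Prop: Homogeneous control in Besov spaces} then delivers the bound for $\nabla P_I$, with $P_I$ having zero boundary trace by construction (the Helmholtz decomposition absorbs gradients that do not vanish on $\partial\IR^n_+$ into $P_B$). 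Gathering all three contributions $(u_g,0)$, $(u_B,P_B)$ and $(w,P_I)$ yields~\eqref{eq:Stokes general}, with linearity and continuity of the extension operator following from the linearity of each step. Uniqueness reduces by linearity to the homogeneous problem and follows from the injectivity of the Stokes operator $A_p$ proved in Proposition~\ref{Prop: Injectivity of Stokes}. The main obstacle is the second step: the explicit construction of $EP_B$ whose time derivative inherits the product decomposition~\eqref{eq:decompo}. The sharp restriction $s>1-\min(n/p,n/p')$ is exactly the threshold at which these products are continuous, and this is precisely why the two-dimensional case $n=2$ is handled only under the stronger hypothesis~\eqref{eq:decompo-n2}.
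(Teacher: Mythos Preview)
Your overall architecture is right—reduce to a homogeneous solenoidal problem and invoke Theorem~\ref{Thm: Da Prato - Grisvard for Stokes}—but two of the three steps have genuine gaps.

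\textbf{Step 1 fails.} Taking $u_g:=R$ does not work: after subtracting $(u_g,0)$, the new momentum equation acquires the term $\divergence\IT(R,0)$, and the new boundary datum acquires $\IT(R,0)\cdot\e_n$. Both require control of $\nabla^2 R$ in $\LL_1(\IR_+;\dot\B^{s}_{p,1}(\IR^n_+))$, but the hypotheses give you only $R_t\in\LL_1(\IR_+;\dot\B^{s}_{p,1})$ and $\divergence R=g\in\LL_1(\IR_+;\dot\B^{s+1}_{p,1})$—nothing on the full Hessian of $R$. (Lemma~\ref{l:nablau} goes the wrong way: it bounds $\nabla z$ by $\nabla^2 z$, not conversely.) The paper instead constructs $W=\nabla\Psi|_{\IR^n_+}$ with $\Delta\Psi=E'g$ on $\IR^n$; this uses the assumption $g\in\LL_1(\IR_+;\dot\B^{s+1}_{p,1})$ to obtain $\nabla W\in\LL_1(\IR_+;\dot\B^{s+1}_{p,1})$, hence $\nabla^2 W\in\LL_1(\IR_+;\dot\B^{s}_{p,1})$, while the identity $\Delta\Psi_t=\divergence(E_\sigma R_t)$ yields $W_t\in\LL_1(\IR_+;\dot\B^{s}_{p,1})$. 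So both the spatial hypothesis on $g$ and the temporal hypothesis on $R_t$ are needed, and your construction uses only the latter.

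\textbf{Step 2 is the heart of the proof and your sketch underestimates it.} ``Standard odd/even reflection'' does not produce a \emph{divergence-free} lift of the tangential Neumann data with $\partial_t w$ and $\nabla^2 w$ in $\LL_1(\IR_+;\dot\B^{s}_{p,1})$. The paper's Step~2 builds $w$ via a stream-function/curl ansatz (e.g., $w=(-\partial_2\varphi,\partial_1\varphi)$ when $n=2$), which forces the trace $\Phi=\varphi|_{x_n=0}$ to solve a heat (or Lam\'e) equation on the boundary; harmonic extension and the Dirichlet heat semigroup then recover all tangential regularity, but the normal derivative $\partial_n\varphi_t$ requires an auxiliary function $\zeta$ solving a Dirichlet heat equation with right-hand side $\partial_t(E\wt h)-\divergence(\nabla E\wt h)$. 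It is precisely here, via Lemma~\ref{eq:extheat}, that the structural decomposition~\eqref{eq:decompo} of $\partial_t(Eh)$ is exploited and the condition $s>1-\min(n/p,n/p')$ enters. Without this mechanism you cannot close the estimate for $\partial_n w_t$.
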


\begin{proof} 
 Solving the system  requires the following  four steps:
\begin{enumerate}
\item[1.] removing the potential part of the velocity (second line of~\eqref{Eq: Stokes general});
\item[2.] removing the boundary term  (third line of~\eqref{Eq: Stokes general});
\item[3.] determining the  pressure corresponding
to the potential part of the source term of the first line of~\eqref{Eq: Stokes general}
(after modification according to the first two steps);
\item[4.] solving the (homogeneous) Stokes system according to the Da Prato -- Grisvard theory, 
and checking that the corresponding pressure indeed has the desired structure.
\end{enumerate}

\subsection*{Step 1: Removing the potential part of the velocity}
We claim that there exists  a vector-field $W$ such that
\begin{equation}\label{eq:Wt} W_t  \in \LL_1(\IR_+ ;\dot \B^{s}_{p,1}(\IR^n_+))\andf \nabla W \in \LL_1(\IR_+ ;\dot \B^{s+1}_{p,1}(\IR^n_+)),\end{equation}
satisfying 
$$  \divergence W = g = \divergence(R) \quad\hbox{in}\quad \IR_+\times\IR^n_+$$ and 
\begin{equation}\label{eq:Wtt} \|W_t\|_{\LL_1(\IR_+;\dot \B^{s}_{p,1}(\IR^n_+))} + \|\nabla W\|_{\LL_1(\IR_+ ;\dot \B^{s+1}_{p,1}(\IR^n_+))} \lesssim   \|g\|_{ \LL_1(\IR_+; \dot \B^{s+1}_{p,1}(\IR_+^n))}
+ \|R_t\|_{ \LL_1(\IR_+; \dot \B^{s}_{p,1}(\IR_+^n))}.\end{equation}

For fixed $t>0,$  the idea is to set   $W=\nabla\Psi|_{\IR^n_+}$ with $\Psi$ a solution of
\begin{equation}\label{eq:Psi} \Delta\Psi= E'g=E'\divergence R\quad\hbox{in }\ \IR_+\times\IR^n,\end{equation}
where $E'$ stands for  a suitable linear extension operator from $\IR^n_+$ to $\IR^n.$
\medbreak
Clearly, we have
$$\Delta\Psi_t= E'\divergence  R_t.$$
The difficulty   is that we want $E'$ to map both
$\dot\B^{s+1}_{p,1}(\IR^n_+)$ to $\dot\B^{s+1}_{p,1}(\IR^n)$ and 
$\dot\B^{s-1}_{p,1}(\IR^n_+)$ to $\dot\B^{s-1}_{p,1}(\IR^n)$
while $s$ is close to $0$ (namely, in the range $(-1+1/p,1/p)$). 
To this end, we take $m\in\IN$ large enough and consider the extension operator $E_{\sigma}$ which was constructed in Lemma~\ref{Lem: Extension operators} and was defined by
\begin{align*}
 [E_{\sigma} k] (x) := \bigg( - \sum_{j = 0}^m \frac{\beta^{\prime}_j}{j + 1} k^{\prime} \Big( x^{\prime} , - \frac{x_n}{j + 1} \Big) , \sum_{j = 0}^m \beta^{\prime}_j k_n \Big(x^{\prime} , - \frac{x_n}{j + 1} \Big) \bigg) \qquad (x \in \IR^n \setminus \IR^n_+),
\end{align*}
where the numbers $\beta_j^{\prime}$ satisfy
\begin{align}
\label{Eq: Vandermonde condition for betas}
 \sum_{j = 0}^m \Big( - \frac{1}{j + 1} \Big)^{\iota} \beta_j^{\prime} = 1
 \quad\hbox{for }\  \iota = 0 , \cdots , m. 
\end{align}
Taking the divergence of $E_{\sigma} k$ on the lower half-space, we obtain
\begin{align}
\label{Eq: Commutator of solenoidal extension}
 \divergence(E_{\sigma} k) = - \sum_{j = 0}^m \frac{\beta_j^{\prime}}{j + 1} \divergence k \Big( x^{\prime} , - \frac{x_n}{j + 1} \Big) =: E^{\prime} \divergence k.
\end{align}
If we let $E^{\prime}$ and $E_\sigma$ act on the upper half-space as the identity, then both $E^{\prime}$ and $E_\sigma$ define extension operators that, owing to~\eqref{Eq: Vandermonde condition for betas},
are bounded from  $\dot \B^a_{p , 1} (\IR^n_+)$ to $\dot \B^a_{p , 1} (\IR^n)$ whenever  $1 < p < \infty$,  
 $-1+1/p < a < m + 1 / p$  and $a\leq n/p$ (one may argue as for proving Proposition~\ref{Prop: Proper boundedness extension operators}). 
 \medbreak
 In order to solve~\eqref{eq:Psi}, let us fix some compactly supported cut-off function $\chi$ with 
 value~$1$ near the origin, and  define $\Psi_N$ for $N\geq1$ by
$$\cF_x\Psi_N(t,\xi):=-\biggl(\frac{1-\chi(2^N\xi)}{|\xi|^2}\biggr)\cF_x(E'\divergence R)(t,\xi).$$
It is obvious that there exists a constant $C$ such that for all $N\geq1,$ we have
 $$ \|\nabla^2 \Psi_N\|_{\LL_1(\IR_+ ; \dot \B^{s+1}_{p , 1} (\IR^n))}\leq C\|E'\divergence R\|_{\LL_1(\IR_+ ; \dot \B^{s+1}_{p , 1} (\IR^n))}\leq  C\|g\|_{\LL_1(\IR_+ ; \dot \B^{s+1}_{p , 1} (\IR^n_+))},$$
 since,  for  $s+1\leq n/p$, we have
 $E^{\prime}\divergence R \in  \LL_1(\IR_+ ; \dot \B^{s+1}_{p , 1} (\IR^n)).$
Furthermore, from the definition of the Fourier transform, one can see that for a.e. $t\in\IR_+$ and $\varphi\in \cS_c(\IR^n),$ we have for large enough $N,$
\begin{equation}\label{eq:Psirel}\int_{\IR^n}\Delta\Psi_N\,\varphi\, \d x = \int_{\IR^n} E'\divergence R\:\varphi\, \d x.\end{equation}
Since  $s+1\leq n/p$ and the property of being in $\cS'_h(\IR^n)$ is preserved 
by the construction, one can conclude that $\nabla^2\Psi_N$ tends to some 
$\nabla^2\Psi$ in $\LL_1(\IR_+ ; \dot \B^{s+1}_{p , 1} (\IR^n)),$ and that we have
a distributional (modulo polynomials of degree $1$) solution $\Psi$ to~\eqref{eq:Psirel} satisfying 
 $$ \|\nabla^2 \Psi\|_{\LL_1(\IR_+ ; \dot \B^{s+1}_{p , 1} (\IR^n))}
 \lesssim  \|g\|_{\LL_1(\IR_+ ; \dot \B^{s+1}_{p , 1} (\IR^n_+))}.$$
Since $W=\nabla\Psi|_{\IR^n_+},$ it is clear that the second half of~\eqref{eq:Wt} and~\eqref{eq:Wtt} is
satisfied. 

Next,  differentiating~\eqref{eq:Psirel} with respect to time
and using~\eqref{Eq: Commutator of solenoidal extension}
ensures in addition that  
$$ \Delta \Psi_t =  E'\divergence R_t= \divergence(E_{\sigma} R_t). $$
Given the definition of $\Psi,$ we have
$$\nabla\Psi_t=-\cF_x^{-1}\biggl(\frac {i\xi}{|\xi|^2}\,\xi\cdot\cF_x(E_\sigma R_t)\biggr)\cdotp$$
As $E_\sigma R_t \in \LL_1 (\IR_+ ; \dot \B^{s}_{p , 1} (\IR^n))$  (use Proposition~\ref{Prop: Proper boundedness extension operators}) and $W_t$ is the restriction of $\nabla\Psi_t$ to the half-space, we 
eventually deduce that $$ \|  W_t \|_{\LL_1 (\IR_+ ; \dot \B^{s}_{p , 1} (\IR^n_+))}\lesssim  
 \| E_{\sigma} R_t \|_{\LL_1 (\IR_+; \dot \B^{s}_{p , 1} (\IR^n))}
 \lesssim   \|R_t \|_{\LL_1 (\IR_+; \dot \B^{s}_{p , 1} (\IR^n_+))}.$$

Now, setting 
\begin{equation}\label{eq:stokesbis}
v:= u-W,\quad \wt f:= f-W_t+\divergence\IT(W,0)\andf
\wt h:=(h-\IT(W,0)\cdot \e_n)|_{\partial\IR^n_+},
\end{equation}
the initial problem reduces to finding $(v,P)$ such that 
$$ \left\{
  \begin{aligned}
   \partial_t v - \divergence \IT (v , P) &= \wt f \quad &\hbox{in }\ & \IR_+\times\IR^n_+, \\
   \divergence v&= 0 \quad &\hbox{in }\ & \IR_+\times\IR^n_+, \\
   \IT (v , P) \cdot \e_n|_{\partial\IR^n_+}&= \wt h \quad &\hbox{on }\ & \IR_+\times\partial\IR^n_+,\\
   v|_{t = 0} &= u_0  \quad &\hbox{in }\ & \IR^n_+.
  \end{aligned}
 \right.$$
Since, by construction,  
$$
\|\IT(W,0)\|_{\LL_1(\IR_+;\dot\B^{s+1}_{p,1}(\IR^n_+))}\lesssim
\|DW\|_{\LL_1(\IR_+;\dot\B^{s+1}_{p,1}(\IR^n_+))}
\lesssim\|D^2\Psi\|_{\LL_1(\IR_+;\dot\B^{s+1}_{p,1}(\IR^n))}
\lesssim \|g\|_{\LL_1(\IR_+;\dot\B^{s+1}_{p,1}(\IR^n_+))},$$
it is natural to extend $\wt h$ by 
$$E\wt h:=Eh-\IT(W,0)\cdot \e_n,$$ 
so that we have
$$\|E\wt h\|_{\LL_1(\IR_+;\dot\B^{s+1}_{p,1}(\IR^n_+))}\lesssim
 \|Eh\|_{\LL_1(\IR_+;\dot\B^{s+1}_{p,1}(\IR^n_+))}
 +\|g\|_{\LL_1(\IR_+;\dot\B^{s+1}_{p,1}(\IR^n_+))},$$
 and  $\partial_t(E\wt h)= \partial_t(Eh) - \IT(W_t,0)\!\cdot\! \e_n$
 is thus in $\LL_1(\IR_+;\dot\B^{s-1}_{p,1}(\IR^n_+))$ and satisfies
 $$\|\partial_t(E\wt h))\|_{\LL_1(\IR_+;\dot\B^{s-1}_{p,1}(\IR^n_+))}
 \leq \|\partial_t(E\wt h)\|_{\LL_1(\IR_+;\dot\B^{s-1}_{p,1}(\IR^n_+))}+
 C\|R_t\|_{\LL_1(\IR_+;\dot\B^{s}_{p,1}(\IR^n_+))}.$$
Clearly, $\partial_t(E\wt h)$ has the required structure, since 
all components of $\IT(W_t,0)\!\cdot\!\e_n$ may be written 
in the form $\divergence k^1$ for some $k^1$ in $\LL_1(\IR_+;\dot\B^{s}_{p,1}(\IR^n_+))$
that depends linearly on  $W_t.$ 
\medbreak
Note that, owing to $g|_{t=0}=0$ and  $h|_{t=0}=0,$ 
the regularities of  $g,$ $h,$ $g_t$ and $h_t$ on $\IR$ are the same as on $\IR_+,$
if extending  $g$ and $h$ by $0$ for $t<0.$ 
The above  construction also ensures that 
 $\wt h(0)=0,$  and  we shall thus extend that function by $0$ for $t<0.$
 Its  regularity (as well as estimates)  will  be conserved  on the whole $\IR.$ 
 We shall keep the same notation for the extension of $\wt h$ to $\IR.$

\subsection*{Step 2: The fight with boundary terms}
 Our aim  is to solve  
 \begin{align}\label{Eq: Equation with right boundary condition}
 \left\{\begin{aligned}
 \divergence w &= 0 \quad &\hbox{in }\   \IR\times\IR^n_+ \\
 \IT (w , \vartheta) \cdot \e_n &= \wt h \quad &\hbox{on }\ \IR\times\partial \IR^n_+.
\end{aligned}
\right.\end{align}
By a direct calculation, we find, denoting $w':=(w_1,\cdots,w_{n-1})$ and
$\nabla':=(\partial_1,\cdots,\partial_{n-1}),$ 
\begin{align*}
  \IT (w , \vartheta) \cdot \e_n = \begin{pmatrix} \partial_n w' + \nabla' w_n \\ 2 \partial_n w_n \end{pmatrix} - \begin{pmatrix} 0 \\ \vartheta \end{pmatrix}\cdotp
\end{align*}
Hence,  the boundary condition translates into 
\begin{equation}\label{eq: BC}
\left\{\begin{aligned}
\partial_nw' +\nabla' w_n  &= \wt h' \quad &\hbox{in }\   \IR\times\partial\IR^n_+ \\
 2\partial_nw_n &=\vartheta+ \wt h_n \quad &\hbox{in }\ \IR\times\partial \IR^n_+.
\end{aligned}\right.\end{equation}
The natural idea is to first construct a suitable divergence free vector-field 
$w$ satisfying the first line, then  to define  $\vartheta$  according to the last line.

\subsubsection*{Substep I}
Let us start with $n=2.$ Then, we  look for 
$w$  in the form of a stream function $w = (- \partial_2 \varphi , \partial_1 \varphi)$ with
\begin{equation}\label{eq:varphi}
 \partial_t \varphi - \Delta \varphi = 0 \quad\hbox{in }\  \IR\times\IR^2_+.
\end{equation}

By construction, $w$ is   divergence free and, denoting by $\Phi$ the trace of $\varphi$ 
on $ \IR\times\partial\IR^2_+,$  the  boundary condition in~\eqref{eq: BC} translates into
\begin{align}
\label{Eq: Boundary equation for stream function}
 \partial_1 \partial_1 \Phi - \partial_2 \partial_2 \Phi = \wt h_1 \quad\hbox{in }\  \IR\times\partial\IR^2_+,
\end{align}
that is to say, remembering~\eqref{eq:varphi}, 
\begin{align}
\label{Eq: Heat equation on the boundary}
 \partial_t \Phi - 2 \partial_1 \partial_1 \Phi =  - \wt h_1 \quad  \hbox{in }\  \IR \times \IR.
\end{align}
In other words, we  take for   $\Phi$ 
the unique solution in $\cC(\IR;  \dot \B^{s+1-1 / p}_{p , 1} (\IR))$ of the above  one-dimensional heat equation, 
and, given the properties of $\wt h_1,$ we have   $\partial_1\partial_1\Phi, \Phi_t \in \LL_1(\IR ; \dot \B^{s+1-1 / p}_{p , 1} (\IR))$  and
 \begin{equation}\label{eq:estPhi}
 \|\partial_1\partial_1\Phi\|_{\LL_1(\IR ; \dot \B^{s+1-1 / p}_{p , 1} (\IR))}+\|\Phi_t\|_{\LL_1(\IR ; \dot \B^{s+1-1 / p}_{p , 1} (\IR))}
\lesssim \|\wt h_1\|_{\LL_1(\IR ; \dot \B^{s+1-1 / p}_{p , 1} (\IR))}.
\end{equation}

In the case $n=3,$ we look for $w$ under the form $w=\nabla\times \varphi$
with $\varphi=(\varphi^1,\varphi^2,0)$ still satisfying~\eqref{eq:varphi}.
  The boundary condition in~\eqref{eq: BC} now translates into
$$\begin{aligned}
\partial_1\partial_1\varphi_2-\partial_3\partial_3\varphi_2-\partial_1\partial_2\varphi_1&=\wt h_1,\\
\partial_3\partial_3\varphi_1-\partial_2\partial_2\varphi_1+\partial_1\partial_2\varphi_2&=\wt h_2,
\end{aligned}$$
and the trace $\Phi$ of $\varphi$ 
on $ \IR\times\partial\IR^3_+$ thus has to satisfy (denoting
$\Delta':=\partial_1\partial_1+\partial_2\partial_2$),
$$\begin{aligned}
\partial_t\Phi_1-\Delta'\Phi_1 -\partial_2\partial_2\Phi_1+\partial_1\partial_2\Phi_2&=\wt h_2,\\
\partial_t\Phi_2-\Delta'\Phi_2-\partial_1\partial_1\Phi_2+\partial_1\partial_2\Phi_1&=-\wt h_1.
\end{aligned}$$
This is a Lam\'e system in $\IR\times\IR^2$  with coefficients $\mu=2$ and $\mu'=-1.$ 
By using the Helmholtz projectors, it may be reduced to two heat equations. 
Hence, as in the case $n=2,$ since   our assumptions and Step 1 guarantee that $\wt h'=(\wt h_1,\wt h_2)$ is in $\LL_1 (\IR ; \dot \B^{s+1-1 / p}_{p , 1} (\partial\IR^3_+)),$  we find that 
 $\nabla'^2\Phi, \Phi_t \in \LL_1(\IR ; \dot \B^{s+1-1 / p}_{p , 1} (\IR^2))$ and
  \begin{equation}\label{eq:estPhi3}
 \|\nabla'^2\Phi\|_{\LL_1(\IR ; \dot \B^{s+1-1 / p}_{p , 1} (\IR^2))}+\|\Phi_t\|_{\LL_1(\IR ; \dot \B^{s+1-1 / p}_{p , 1} (\IR^2))}
\lesssim \|\wt h'\|_{\LL_1(\IR ; \dot \B^{s+1-1 / p}_{p , 1} (\IR^2))}.
\end{equation}
In higher dimension,  we can look for $w$ under the form 
$w=(\partial_n \varphi', -\divergence' \varphi')$ with 
$\varphi'=(\varphi_1,\cdots,\varphi_{n-1}).$ Then, using the boundary conditions, 
we discover that the trace $\Phi'$  of $\varphi'$ 
on $ \IR\times\partial\IR^n_+$  has to satisfy the Lam\'e system
$$\partial_t\Phi'-\Delta'\Phi'-\nabla'\divergence'\Phi' = \wt h'. $$
At this stage,  one may conclude exactly as in the cases $n=2,3.$
\medbreak
 Whatever the dimension is, one can 
easily compute $\varphi$ from $\Phi.$ 
Indeed,  denoting  by  $\,\widehat{~}\,$ or $\cF$
  the Fourier transform in the $t$ and $x^\prime$ directions, 
and by $\xi_0$ and $\xi^\prime$, respectively, the corresponding  Fourier variables, 
we obtain the following linear ODE for $\widehat\varphi$:
$$(i\xi_0+|\xi^\prime|^2)\widehat\varphi-\partial_{x_n}^2\widehat\varphi=0.$$
Hence, prescribing that $\wh\varphi\to0$ for $x_n\to+\infty,$ we obtain
$$ \widehat{\varphi} (\xi_0 , \xi^\prime , x_n) = \wh\Phi(\xi_0 , \xi^\prime) \e^{- r x_n},$$
 where $r$ is chosen such that $r^2 = \ii \xi_0 + |\xi^\prime|^2$ and $\Re r\geq0.$
\medbreak
A systematic study of symbols of such a form is presented in Section~\ref{Sec: A free boundary problem for pressureless gases}. In the following, however, we present a shorter proof exploiting the underlying equations.

\subsubsection*{Substep II}

Consider,  for all $t \in \IR,$  the  harmonic extension 
 $G(t,\cdot) : \IR^n_+ \to \IR$ of  $\Phi$ defined by
\begin{align*}
 G(t,x^\prime , x_n) := \cF^{-1} ( \wh\Phi (\xi_0 , \xi^\prime) \e^{- \lvert \xi^\prime \rvert x_n}) (t , x').
\end{align*}
By construction,   for all $t\in\IR,$  both  $G(t,\cdot)$ and $\partial_t G(t,\cdot)$ are harmonic in $\IR^n_+,$  and  we have
\begin{align}
\label{Eq: Harmonic extension on the boundary}
 G(t,x^\prime , 0) = \Phi (t , x^\prime), \quad \partial_t G(t,x^\prime , 0) = \partial_t \Phi (t , x^\prime).
\end{align}
Since 
$$\begin{aligned}\nabla'^2 G(t,x^\prime , x_n) &= \cF^{-1} ( \wh{\nabla'^2\Phi}(\xi_0 , \xi^\prime) \e^{- \lvert \xi^\prime \rvert x_n}) (t , x')\\
\andf\partial_t G(t,x^\prime , x_n) 
&= \cF^{-1} ( \wh{\partial_t\Phi}(\xi_0 , \xi^\prime) \e^{- \lvert \xi^\prime \rvert x_n}) (t , x'),\end{aligned}$$
  employing~\cite[Lem.~2]{Danchin_Mucha} and the fact that $\partial^2_n G=-\Delta' G$ yields, if $s+1\leq n/p,$
$$\partial_n^2G,\,\nabla'^2G \in \LL_1(\IR ; \dot \B^{ s+1}_{p , 1} (\IR^n_+)) \andf \partial_t G \in \LL_1 (\IR ; \dot \B^{s+1}_{p , 1} (\IR^n_+))$$
with 
\begin{equation}\label{Eq: Regularity of harmonic extension}
 \|\partial_tG,\partial^2_nG,\nabla'^2G\|_{\LL_1(\IR ; \dot \B^{ s+1}_{p , 1} (\IR^n_+))}
 \lesssim \|\wt h'\|_{\LL_1(\IR ; \dot \B^{ s+1-1/p}_{p , 1} (\partial\IR^n_+))}.
\end{equation}

\subsubsection*{Substep III}
We observe that   $\psi:=\varphi-G$ satisfies: 
$$\left\{\begin{array}{rl}
 \partial_t \psi- \Delta\psi &= - \partial_t G \in \LL_1 (\IR ; \dot \B^{s+1}_{p , 1} (\IR^n_+)) \\
 \psi|_{x_n = 0} &= 0.
\end{array}\right.$$
Differentiating  in the horizontal directions preserves the boundary condition and delivers
$$\left\{\begin{array}{rl}
 \partial_t  \nabla'\psi - \Delta \nabla'\psi &= - \partial_t \nabla'G \in \LL_1 (\IR ; \dot \B^{s}_{p , 1} (\IR^n_+)) \\ \nabla'\psi|_{\partial\IR^n_+} &= 0.
\end{array}\right.$$
As a consequence of the result stated in~\cite[Prop.~6]{Danchin_Mucha} and of~\eqref{Eq: Regularity of harmonic extension}, we  have
\begin{align}
\label{Eq: Properties of derivatives of phi 1}
 \partial_t \nabla'  \psi \in \LL_1 (\IR ; \dot \B^{s}_{p , 1} (\IR^n_+))\andf
 \nabla^2\nabla'  \psi \in \LL_1 (\IR ; \dot \B^{s}_{p , 1} (\IR^n_+))
\end{align}
and, since $\varphi=G+\psi,$ Inequality~\eqref{Eq: Regularity of harmonic extension} implies that 
\begin{equation}
\| \partial_t \nabla'  \varphi, \nabla\nabla'^2\varphi\|_{ \LL_1 (\IR ; \dot \B^{s}_{p , 1} (\IR^n_+))}
\lesssim \|\wt h'\|_ {\LL_1 (\IR ; \dot \B^{s+1-1/p}_{p , 1} (\partial\IR^n_+))}.
\end{equation}
Hence, we miss only the information on the regularity of 
$\partial_t\partial_n\varphi$ and $\partial_n\partial_n\nabla\varphi$ 
(the latter may be deduced from the former since $\partial_{n}\partial_{n}\nabla\varphi=\partial_t\nabla\varphi-\Delta'\nabla\varphi$).

\subsubsection*{Substep IV}  Our goal now is to recover the regularity of $\partial_n\varphi_t$.

To proceed in the case $n=2,$
  consider the function $\zeta := \partial_t \varphi - 2 \partial_1 \partial_1 \varphi + E \wt h_1,$
  that obviously satisfies 
  $$\left\{\begin{aligned}
 \partial_t \zeta - \Delta \zeta &= 
 \partial_t(E\wt h_1)-\divergence (\nabla E \wt h_1)\\
  \zeta|_{x_2 = 0} &= 0.
\end{aligned}\right.$$
The (homogeneous) boundary condition is guaranteed by~\eqref{Eq: Heat equation on the boundary} and $\Phi = \varphi|_{x_2 = 0}$.
\smallbreak
According to Lemma~\ref{eq:extheat} below, although 
 the regularity of the right-hand side is only given in $\LL_1(\IR;\dot\B^{s-1}_{p,1}(\IR^2_+)),$ 
its particular structure ensures that $\nabla\zeta \in \LL_1(\IR ; \dot \B^{s}_{p , 1} (\IR^2_+))$ and 
that
$$
\|\nabla\zeta\|_{\LL_1(\IR ; \dot \B^{s}_{p , 1} (\IR^2_+))}\lesssim
\|\nabla E\wt h_1\|_{\LL_1(\IR ; \dot \B^{s}_{p , 1} (\IR^2_+))}+
\|\partial_t(E\wt h_1)\|_{\LL_1(\IR ; \dot \B^{s-1}_{p , 1} (\IR^2_+))}.
$$
Since $\partial_2\varphi_t=\partial_2\zeta+2\partial_1\partial_1\partial_2\varphi
-\partial_2E\wt h_1,$ the property~\eqref{Eq: Properties of derivatives of phi 1}
combined with the assumption 
on $h_1$ ensures that $\partial_2\varphi_t$ is in 
$\LL_1(\IR ; \dot \B^{s+1}_{p , 1} (\IR^2_+)),$ with the estimate
$$\displaylines{\|\partial_2\varphi_t\|_{\LL_1(\IR ; \dot \B^{s}_{p , 1} (\IR^2_+))}\lesssim
\|\nabla E\wt h_1\|_{\LL_1(\IR ; \dot \B^{s}_{p , 1} (\IR^2_+))}
+\|\partial_t(E\wt h_1)\|_{\LL_1(\IR ; \dot \B^{s-1}_{p , 1} (\IR^2_+))}+\|\wt h_1\|_{\LL_1 (\IR ; \dot \B^{s+1-1/p}_{p , 1} (\partial\IR^n_+))}.}$$
As $\nabla\partial_2\partial_2\varphi=\nabla\varphi_t-\nabla\partial_1\partial_1\varphi,$
putting together with the previous substep gives
$$\nabla\partial_2\partial_2\varphi\in\LL_1(\IR ; \dot \B^{s+1}_{p , 1} (\IR^2_+)).$$
In the 2D case, one can thus conclude that there exists a divergence free vector-field  $w$ satisfying~\eqref{eq: BC} which has the regularity properties
$$
w_t \in \LL_1(\IR ; \dot \B^{s}_{p , 1} (\IR^2_+)) \andf \nabla w \in \LL_1 (\IR ; \dot \B^{s+1}_{p , 1} (\IR^2_+)).
$$
and fulfills 
$$\begin{aligned}
\|w_t\|_{ \LL_1(\IR ; \dot \B^{s}_{p , 1} (\IR^2_+))} \!+\!\|\nabla w&\|_{ \LL_1(\IR ; \dot \B^{s+1}_{p , 1} (\IR^2_+))}
\lesssim \|\nabla E\wt h\|_{\LL_1(\IR ; \dot \B^{s}_{p , 1} (\IR^2_+))}
+\|\partial_t(E\wt h)\|_{\LL_1(\IR ; \dot \B^{s-1}_{p , 1} (\IR^2_+))}\\
&\lesssim \|R_t,\nabla E h\|_{\LL_1(\IR ; \dot \B^{s}_{p , 1} (\IR^2_+))}
\!+\!\|g\|_{\LL_1(\IR ; \dot \B^{s+1}_{p , 1} (\IR^2_+))}
\!+\!\|\partial_t(Eh)\|_{\LL_1(\IR ; \dot \B^{s-1}_{p , 1} (\IR^2_+))}.
\end{aligned}$$
Let us shortly explain how to modify the above arguments if $n=3.$
Then, we define
$$\begin{aligned}
\zeta_1&:=\partial_t\phi_1-\Delta'\phi_1-\partial_2\partial_2\phi_1+\partial_1\partial_2\phi_2-E\wt h_2,\\
\zeta_2&:=\partial_t\phi_2-\Delta'\phi_2-\partial_1\partial_1\phi_2+\partial_1\partial_2\phi_1+E\wt h_1
\end{aligned}$$
so that, by construction, both $\zeta_1$ and $\zeta_2$ vanish at $\partial\IR_+^3.$
Furthermore
$$\begin{aligned}
\partial_t\zeta_1-\Delta\zeta_1&=\divergence(\nabla E\wt h_2)-\partial_t(E\wt h_2),\\
\partial_t\zeta_2-\Delta\zeta_2&=-\divergence(\nabla E\wt h_1)+\partial_t(E\wt h_1).\end{aligned}$$
{}From those relations, one can conclude as in the 2D case.
The higher dimensional case is similar.

\subsubsection*{Substep V} Construction of the boundary pressure.

It suffices to exhibit  a  function $\vartheta$  in   $\LL_1 (\IR_+  ; \dot \B^{s+1}_{p , 1} (\IR^n_+))$
fulfilling the desired boundary condition, namely 
\begin{align*} \vartheta|_{x_n = 0} = 2 \partial_n w_n - \wt h_n\end{align*}
and admitting an extension on $\IR^n_+$ with the desired structure and regularity. 
\medbreak
Since $\divergence w=0,$ the simplest choice is 
$$ E\vartheta=: -2\divergence' w' - E\wt h_{n},$$
so that
$$\partial_t(E\vartheta):=-2\divergence' w'_t -\partial_t(E\wt h_{n}).$$ 
That definition is linear with respect to the data and, according to the previous steps,
$$\begin{aligned}
\|E\vartheta\|_{\LL_1(\IR ; \dot \B^{s}_{p , 1} (\IR^n_+))}&\lesssim \|g,Eh\|_{\LL_1(\IR ; \dot \B^{s+1}_{p , 1} (\IR^n_+))},\\\|\partial_t(E\vartheta)\|_{\LL_1(\IR ; \dot \B^{s}_{p , 1} (\IR^n_+))}
&\lesssim \|R_t\|_{\LL_1(\IR ; \dot \B^{s}_{p , 1} (\IR^n_+))}
+\|\partial_t(Eh)\|_{\LL_1(\IR ; \dot \B^{s-1}_{p , 1} (\IR^n_+))}.\end{aligned}$$
Furthermore, putting together all the steps of the construction, we see that
$\partial_t(E\vartheta)$ possesses the desired structure.

\subsection*{Step 3: Removing the potential part of the (modified) source term}

Let us set $v=w+z$ and $P=\vartheta +Q.$ Then, $(z,Q)$ has to fulfill:
\begin{equation}\label{eq:DPG} \left\{
  \begin{aligned}
   \partial_t z - \divergence \IT (z , Q) &= \divergence \IT (w,\vartheta)-\partial_tw+ \wt f=:\check f \quad &\hbox{in }\ & \IR_+\times\IR^n_+, \\
   \divergence z&= 0 \quad &\hbox{in }\ & \IR_+\times\IR^n_+, \\
   \IT (z , Q) \cdot \e_n&= 0 \quad &\hbox{on }\ & \IR_+\times\partial\IR^n_+,\\
   v|_{t = 0} &= u_0  \quad &\hbox{in }\ & \IR^n_+.
  \end{aligned}
 \right. \end{equation}
 At this stage, one may apply the Da Prato -- Grisvard  result
 and get the desired regularity and estimates for $u_t,\nabla^2u,\nabla Q.$ 
 However, in doing that, we miss the  needed  information on $\partial_t(Q|_{\partial\IR_+^n}).$ 
 To achieve what we need, we first have  to remove the potential part of the right-hand side of the first equation, 
 solving
$$ \left\{  \begin{aligned}\Delta P_I&=\divergence\check f  \quad &\hbox{in }\ & \IR_+\times\IR^n_+, \\
P_I|_{\partial\IR^n_+}&=0 \quad &\hbox{on }\ & \IR_+\times\partial\IR^n_+.\end{aligned}\right.$$
Using an  antisymmetric/symmetric  extension for $\check f$ that is, setting 
$E_{a,s}\check f:=(E_a\check f',E_s\check f_n)$ so that 
$\divergence E_{a,s}\check f=E_a\divergence \check f$
as in  Lemma~1 in~\cite{Danchin_Mucha}, we obtain
a skewsymmetric solution of $\Delta P_I= \divergence E_{a,s}\check f$ on the whole space
which, after restriction to the half-space eventually yields 
$\nabla P_I\in \LL_1(\IR_+;\dot\B^s_{p,1}(\IR^n))$ with 
\begin{equation}\|\nabla P_I\|_{\LL_1(\IR_+;\dot\B^s_{p,1}(\IR^n_+))}
\lesssim \|\check f\|_{\LL_1(\IR_+;\dot\B^s_{p,1}(\IR^n_+))}.\end{equation}

\subsection*{Step 4: Back to the homogeneous Stokes system}

Let us set $Q=P_I+\wt P_B.$ Then $(z,\wt P_B)$  has to satisfy~\eqref{eq:DPG}
with source term $\bar f:= \check f-\nabla P_I$ instead of $\check f.$
  Da Prato and Grisvard theory (see Theorem~\ref{Thm: Da Prato - Grisvard for Stokes})  guarantees the existence of a solution $(z,\wt P_B)$ with 
$$
z\in\cC_b(\IR_+;\dot\B^s_{p,1}(\IR^n_+)),\qquad
\partial_tz,\: \nabla^2z,\: \nabla \wt P_B \in \LL_1(\IR_+;\dot\B^s_{p,1}(\IR^n_+)).$$
  The gain compared to the previous step is that, now, we have
 $\divergence \bar f=0$ so that  
 $$ \left\{  \begin{aligned}\Delta \wt P_B&=0  \quad &\hbox{in }\ & \IR_+\times\IR^n_+, \\
\wt P_B|_{\partial\IR^n_+}&=-2\divergence^\prime z^\prime  \quad &\hbox{on }\ & \IR_+\times\partial\IR^n_+.\end{aligned}\right.$$
Hence, if one extends $\wt P_B|_{\partial\IR^n_+}$ by 
$-2\divergence' z',$ then we have $\partial_t(E\wt P_B)= 
-2\divergence^\prime z_t^\prime$ so that 
both $E\wt P_B$ and its time derivative 
satisfy the desired regularity, bounds and structure.


\subsection*{Step 5: End of the proof}

Let us set
$$u:=W+w+z\andf P:=P_I+P_B\with P_B:=\vartheta+\wt P_B.$$
Putting all the previous steps together, 
we see that $\nabla^2u$ and $\partial_tu$ are in  $\LL_1(\IR_+;\dot\B^s_{p,1}(\IR^n_+))$
and fulfill the announced estimates, and
that $u\in\cC_b(\IR_+; \dot\B^s_{p,1}(\IR^n_+))$ (integrate
with respect to time and use the fact that $W|_{t=0}=w|_{t=0}=0$
owing to $g|_{t=0}=0$ and $h|_{t=0}=0$).

As regards the pressure, it is clear that $P_I$ fulfills what we want
(in particular it vanishes at the boundary) and 
that $(P_B)|_{\partial\IR_+^n}$ admits an extension $EP_B$ with the 
required structure and regularity. 
\end{proof}
\medbreak
The following lemma was decisive in Step 2. 

\begin{lemma}\label{eq:extheat} Consider the system 
\begin{equation}\label{eq:model}\left\{\begin{aligned}\partial_t\phi-\Delta\phi&= \divergence k^1 + m\divergence k^2&\hbox{in  }&\ \IR\times\IR^n_+,\\
\phi|_{\partial\IR^n_+}&=0&\hbox{on }&\ \IR\times\partial\IR^n_+,\end{aligned}\right.\end{equation}
with $m\in \LL_q(\IR;\dot\B^{n/p}_{p,1}(\IR^n_+)),$ $k^1$ in $\LL_1(\IR;\dot\B^s_{p,1}(\IR^n_+))$
and $k^2$ in $\LL_{q'}(\IR;\dot\B^s_{p,1}(\IR^n_+))$ for some $1<p<\infty,$ $-1+1/p <s <1/p$ and  $1\leq q\leq \infty.$

Then,  there exists a unique solution $\phi\in\cC_b(\IR;\dot \B^{s-1}_{p,1}(\IR^n_+))$  of~\eqref{eq:model} 
in the following cases:
\begin{itemize}
\item $m=0$ and $n\geq2$;
\item $m\not=0,$  $n\geq3$ and $(p,s)$ satisfy in addition 
 $n-1<p<\infty$  and $s>1-\min(n/p,n/p').$
 \end{itemize}
 Furthermore, in all situations, we have the following inequality:
 $$\|\nabla\phi\|_{\LL_1(\IR;\dot\B^s_{p,1}(\IR^n_+))}\lesssim
\|k^1\|_{\LL_1(\IR;\dot\B^s_{p,1}(\IR^n_+))}
+ \|m\|_{\LL_q(\IR;\dot\B^{n/p}_{p,1}(\IR^n_+))}\|k^2\|_{\LL_{q'}(\IR;\dot\B^s_{p,1}(\IR^n_+))}.$$
\end{lemma}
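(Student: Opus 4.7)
I would split the argument according to whether $m\equiv 0$ or not. In both cases the common strategy is to reduce the half-space problem to an equivalent heat equation on $\IR\times\IR^n$ via a suitable odd/even extension across $\{x_n=0\}$, and then to invoke $\LL_1$-in-time maximal regularity for the heat equation in homogeneous Besov spaces, itself a direct byproduct of the homogeneous Da Prato--Grisvard theorem (Theorem~\ref{Thm: Full Da Prato - Grisvard}) applied to $A=-\Delta$, whose interpolation scale is identified through the results of Chapter~\ref{Sec: The functional setting and basic interpolation results}.

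For the case $m\equiv 0$, I would extend $k^1=(k'^1,k^1_n)$ to $\IR^n$ by making each tangential component $k^1_j$ ($j<n$) odd in $x_n$ and the normal component $k^1_n$ even in $x_n$, so that the distributional divergence of the extension equals the odd-in-$x_n$ extension of $\divergence k^1$. Solving the $n$ componentwise scalar heat equations $(\partial_t-\Delta)K_j=\tilde k^1_j$ on $\IR\times\IR^n$ yields, by Da Prato--Grisvard with ground space $\dot\B^s_{p,1}(\IR^n)$, a vector field $K$ with $\nabla^2 K\in \LL_1(\IR;\dot\B^s_{p,1}(\IR^n))$. Then $\phi:=\divergence K$ solves $(\partial_t-\Delta)\phi=\divergence\tilde k^1$, is odd in $x_n$ by the chosen parity of the $K_j$'s (hence vanishes on $\{x_n=0\}$), and satisfies $\nabla\phi=\nabla\divergence K\in \LL_1(\IR;\dot\B^s_{p,1}(\IR^n))$; restriction to $\IR^n_+$ gives the desired solution with the announced bound.

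For the case $m\not\equiv 0$, I would use the Leibniz-type identity
\begin{equation*}
m\,\divergence k^2 \;=\; \divergence(m k^2)\;-\;\nabla m\cdot k^2
\end{equation*}
to rewrite the source as $\divergence(k^1+mk^2)-\nabla m\cdot k^2$. The divergence piece is handled by the previous case once $\|mk^2\|_{\LL_1(\IR;\dot\B^s_{p,1}(\IR^n_+))}$ is controlled by H\"older in $t$ (using $1/q+1/q'=1$) and the product law $\dot\B^{n/p}_{p,1}\cdot\dot\B^s_{p,1}\hookrightarrow \dot\B^s_{p,1}$, valid since $|s|<n/p$. The residual scalar source $F:=-\nabla m\cdot k^2$ is absorbed by solving $\partial_t\psi-\Delta\psi=F$ with $\psi|_{\partial\IR^n_+}=0$ via antisymmetric extension of $F$ across $\{x_n=0\}$; Da Prato--Grisvard with ground space $\dot\B^{s-1}_{p,1}(\IR^n)$ then delivers $\nabla^2\psi\in \LL_1(\IR;\dot\B^{s-1}_{p,1}(\IR^n))$, equivalent to $\nabla\psi\in \LL_1(\IR;\dot\B^s_{p,1}(\IR^n))$ by the gradient characterization of homogeneous Besov norms on $\IR^n$. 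The bound
\begin{equation*}
\|F\|_{\LL_1(\IR;\dot\B^{s-1}_{p,1}(\IR^n_+))}\lesssim \|m\|_{\LL_q(\IR;\dot\B^{n/p}_{p,1}(\IR^n_+))}\|k^2\|_{\LL_{q'}(\IR;\dot\B^s_{p,1}(\IR^n_+))}
\end{equation*}
follows from H\"older in time together with the product law $\dot\B^{n/p-1}_{p,1}\cdot\dot\B^s_{p,1}\hookrightarrow \dot\B^{s-1}_{p,1}$. Uniqueness is immediate from linearity and the derived estimate applied to the difference of two candidate solutions.

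\textbf{Main obstacle.} The sharpness of the product law $\dot\B^{n/p-1}_{p,1}\cdot\dot\B^s_{p,1}\hookrightarrow \dot\B^{s-1}_{p,1}$ is the delicate point. Via Bony's paraproduct decomposition $fg=T_fg+T_gf+R(f,g)$ with $f=\nabla m$ and $g=k^2$, the two paraproducts each require the lower bound $s>-n/p'$ to land in $\dot\B^{s-1}_{p,1}$, while the remainder $R(\nabla m,k^2)$ converges only when $(n/p-1)+s>0$, i.e.\ $s>1-n/p$; taken together these two constraints are exactly $s>1-\min(n/p,n/p')$. The additional hypothesis $p>n-1$ (equivalently $n/p'<1$) forces $n/p-1>0$, preventing the paraproducts from collapsing into the remainder term; it is precisely this threshold that the present method cannot cross when $n=2$, as noted in the discussion preceding the lemma.
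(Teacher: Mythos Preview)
Your overall plan—reduce to the whole space by odd/even reflection and invoke $\LL_1$-maximal regularity for the heat equation—matches the paper's strategy, and your treatment of the case $m\equiv 0$ is essentially equivalent to what the paper does (they solve the heat equation directly with source $\divergence E_{a,s}k^1$ rather than passing through an auxiliary $K$, but this is cosmetic).

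The case $m\not\equiv 0$, however, has a genuine gap. You propose to extend $F=-\nabla m\cdot k^2$ antisymmetrically across $\{x_n=0\}$ and then solve the heat equation on $\IR^n$. But $F$ lives in $\dot\B^{s-1}_{p,1}(\IR^n_+)$, and since $s<1/p$ one has $s-1<1/p-1=-1+1/p$, which is precisely \emph{below} the lower threshold for which the odd (or zero) extension is bounded from $\dot\B^{\sigma}_{p,1}(\IR^n_+)$ to $\dot\B^{\sigma}_{p,1}(\IR^n)$. So ``antisymmetric extension of $F$'' is not available in the required space. The paper avoids this by never extending the low-regularity product: it extends the \emph{factors} first, setting $E_s m\in\dot\B^{n/p}_{p,1}(\IR^n)$ and $E_{a,s}k^2\in\dot\B^{s}_{p,1}(\IR^n)$, and then forms the product $E_s m\cdot\divergence E_{a,s}k^2$ directly on $\IR^n$. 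This product is automatically odd in $x_n$, lies in $\dot\B^{s-1}_{p,1}(\IR^n)$ by the product law $\dot\B^{n/p}_{p,1}\cdot\dot\B^{s-1}_{p,1}\hookrightarrow\dot\B^{s-1}_{p,1}$ (this is where $s>1-\min(n/p,n/p')$ enters), and restricts to $m\,\divergence k^2$ on $\IR^n_+$. Your Leibniz detour is thus unnecessary; if you insist on it, you must likewise extend $m$ and $k^2$ separately before forming $\nabla m\cdot k^2$, not extend the product after the fact.

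Your explanation of why $p>n-1$ is needed is also incorrect: $p>n-1$ is neither equivalent to $n/p'<1$ nor does it force $n/p-1>0$ (take $p>n$). The actual reason is that the \emph{even} extension $E_s m$ is bounded on $\dot\B^{n/p}_{p,1}$ only when $n/p<1+1/p$, i.e., $p>n-1$. Finally, in your Bony analysis the lower bound $s>-n/p'$ is not what the paraproducts $T_{\nabla m}k^2$ and $T_{k^2}\nabla m$ require; both paraproducts go through without a lower bound on $s$ once you embed $\dot\B^{n/p-1}_{p,1}\hookrightarrow\dot\B^{-1}_{\infty,1}$ and $\dot\B^{s}_{p,1}\hookrightarrow\dot\B^{s-n/p}_{\infty,1}$. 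The constraint $s>1-\min(n/p,n/p')$ comes entirely from the remainder (the $n/p'$ branch appearing when $p<2$, not from the paraproducts).
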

\begin{proof}
We split $\phi$ into $\phi=\phi^1+\phi^2$ where $\phi^1$ corresponds to the source
term $\divergence k^1$, and $\phi^2$ to $m\divergence k^2,$ respectively.

To define  $\phi^1,$  we argue as in Lemma~1 of~\cite{Danchin_Mucha}, considering 
the antisymmetric/symmetric extension of $k^1,$ namely
$E_{a,s} k^1:=(E_a{k^1}',E_sk^1_n)$ so that $\divergence E_{a,s} k^1=E_a\divergence k^1.$
Note that $E_{a,s}k^1\in \LL_1(\IR;\dot\B^s_{p,1}(\IR^n))$ since $-1+1/p<s<1/p.$ 
Then, we solve 
$$\partial_t\check\phi^1-\Delta\check\phi^1=\divergence E_{a,s}k^1
\quad\hbox{in}\quad\IR\times\IR^n$$
and find a unique antisymmetric solution (as  the source term
is antisymmetric) $\check\phi^1$
in $\cC_b(\IR;\dot \B^{s-1}_{p,1}(\IR^n))$ satisfying in addition
$$\|\nabla\check\phi^1\|_{ \LL_1(\IR;\dot \B^s_{p,1}(\IR^n))}\lesssim 
\|E_{a,s}k^1\|_{ \LL_1(\IR;\dot \B^s_{p,1}(\IR^n))}\lesssim 
\|k^1\|_{ \LL_1(\IR;\dot \B^s_{p,1}(\IR^n_+))}.$$
Clearly, $\phi^1:=\check\phi^1|_{x_n>0}$ satisfies what we want, which completes the proof of the first case.
\medbreak
To complete the proof of the second case, we also need to construct $\phi_2.$ 
So we still consider $E_{a,s}k^2$ (that belongs to $ \LL_1(\IR;\dot\B^s_{p,1}(\IR^n))$),
and use, in addition, the \emph{symmetric} extension $E_s m$ of $m,$ the
regularity of which is conserved provided $n/p < 1+ 1/p.$
Obviously, $E_sm\,\divergence E_{a,s}k^2$ is antisymmetric and belongs
to $\LL_1(\IR;\dot\B^{s-1}_{p,1}(\IR^n))$ provided $s-1>-\min(n/p,n/p')$ and $s-1\leq n/p$
 (see~\cite[Lem.~A.5]{Danchin14})\footnote{Here again we use that $n \geq 3$.}. 
Hence, one can solve 
$$\partial_t\check\phi^2-\Delta\check\phi^2= E_s m\,\divergence E_{a,s}k^2
\quad\hbox{in}\quad\IR\times\IR^n$$
and get a skewsymmetric solution $\check\phi^2$ with $\nabla\check\phi^2$ 
in $ \LL_1(\IR;\dot\B^s_{p,1}(\IR^n)).$ Setting 
$\phi^2:=\check\phi^2|_{x_n>0},$ we eventually get 
a solution to 
$$
\partial_t\phi^2-\Delta\phi^2= m\divergence k^2\quad\hbox{in }\ \IR\times\IR^n_+
$$
vanishing at the boundary and satisfying the desired inequality. 
\end{proof}

\section{The fixed point procedures} 
The first step is to solve system~\eqref{Eq: Linearized2}. 
We aim at proving:
\begin{proposition}\label{p:fixedpoint} Assume that   $n-1<p<n.$ 
Let $v$ be  a  vector-field  in $\cC_b(\IR_+;\dot\B^{n/p-1}_{p,1}(\IR_+^n))$
such that $X_v$ is measure preserving
and that, for a small enough $c_0>0,$ 
\begin{equation}\label{eq:small2v} 
 \|\nabla  v \|_{\LL_1(\IR_+;\dot \B^{n / p }_{p , 1} (\IR^n_+))} \leq c_0.
\end{equation}
Then, for any data $v_0,$ $f_0,$ $g_0$ and $h_0$ satisfying the same conditions as the data of 
Proposition~\ref{p:fight}, 
there exists a unique solution $(u,P)$ in $\Xi_p$ to system~\eqref{Eq: Linearized2}
 and  we have for some constant $C$ depending only on $p,$
\begin{multline}\label{eq:Xi_p}  \Xi_p(u,P) \leq 
C\Bigl(\|v_0\|_{\dot \B^{n/p-1}_{p,1}(\IR^n_+)}+ \|f_0,\partial_tR_0,\nabla Eh_0\|_{ \LL_1(\IR_+; \dot \B^{n/p-1}_{p,1}(\IR_+^n))}\\
+\|g_0\|_{ \LL_1(\IR_+; \dot \B^{n/p}_{p,1}(\IR_+^n))}+\|\partial_t(Eh_0)\|_{ \LL_1(\IR_+; \dot \B^{n/p-2}_{p,1}(\IR_+^n))}\Bigr)\cdotp\end{multline}
In the particular case $g_0\equiv0,$ then  the flow defined from  $u$  by~\eqref{def:Xu} is measure preserving.
\end{proposition}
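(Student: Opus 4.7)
The plan is to solve~\eqref{Eq: Linearized2} by a Banach fixed point argument in the space $\Xi_p$ introduced in Section~\ref{ss:NL}. Given $v$ as in the statement and data $(v_0, f_0, g_0, h_0)$, I would define a map $\Phi : \Xi_p \to \Xi_p$ sending $(w,\vartheta)$ to the solution $(u,P)$ of the linear nonhomogeneous Stokes system~\eqref{Eq: Stokes general} with initial datum $v_0$ and right-hand sides
\begin{align*}
 f &:= f_0 + \divergence_v \IT_v(w,\vartheta) - \divergence \IT(w,\vartheta), \\
 g &:= g_0 - \divergence_v w + \divergence w, \\
 h &:= h_0 + \bigl(\IT(w,\vartheta)\e_n - \IT_v(w,\vartheta)\overline{n}_v\bigr)\big|_{\partial\IR^n_+},
\end{align*}
as delivered by Proposition~\ref{p:fight} with $s = n/p - 1$; the range $n-1 < p < n$ is precisely what makes this value of $s$ admissible there.

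The first task is to check that, for any $(w,\vartheta) \in \Xi_p$, the triple $(f,g,h)$ satisfies the structural assumptions of Proposition~\ref{p:fight}. The bound on $f$ in $\LL_1(\IR_+;\dot\B^{n/p-1}_{p,1}(\IR^n_+))$ is Lemma~\ref{Lem: Control of right-hand side momentum}; the decomposition $g=\divergence R$ with the required regularity of $R_t$ is Lemma~\ref{l:divW}; and the extension $Eh$, together with the delicate multilinear decomposition of $\partial_t(Eh)$ demanded in~\eqref{eq:decompo}, is produced by the computations of Section~\ref{ss:NL} through the explicit identities for $Eh_t^1$, $Eh_t^2$, $Eh_t^3$ combined with~\eqref{eq:Eh},~\eqref{eq:normalbis},~\eqref{eq:normalt}. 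Along the way, Lemma~\ref{l:nablau} converts $\|\nabla v\|_{\LL_1(\IR_+;\dot\B^{n/p}_{p,1})}$ into a quantity controlled by $\Xi_p(v,0)$, so that the smallness assumption~\eqref{eq:small2v} can be invoked.

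Combining those nonlinear bounds with the linear estimate~\eqref{eq:Stokes general} of Proposition~\ref{p:fight} yields
\begin{equation*}
 \Xi_p\bigl(\Phi(w,\vartheta)\bigr) \leq C_0\, D_0 + C_1 \Bigl(\int_0^\infty \|\nabla v\|_{\dot\B^{n/p}_{p,1}(\IR^n_+)}\,\d t\Bigr) \Xi_p(w,\vartheta),
\end{equation*}
where $D_0$ is the norm of $(v_0, f_0, g_0, h_0)$ appearing on the right-hand side of~\eqref{eq:Xi_p}. Because the maps $(w,\vartheta) \mapsto (f,g,h) - (f_0, g_0, h_0)$ are linear in $(w,\vartheta)$, the very same computation applied to the difference of two inputs yields a contraction estimate with constant $C_1 c_0$. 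Picking $c_0 \leq 1/(2C_1)$ in~\eqref{eq:small2v} turns $\Phi$ into a strict contraction on the closed ball of radius $2 C_0 D_0$ of $\Xi_p$; Banach's theorem then produces a unique fixed point $(u,P) \in \Xi_p$ solving~\eqref{Eq: Linearized2} and satisfying~\eqref{eq:Xi_p}. Uniqueness in $\Xi_p$ follows from the same contraction estimate.

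For the final assertion, I would observe that at the fixed point $(w,\vartheta)=(u,P)$ the second equation of~\eqref{Eq: Linearized2} collapses to $\divergence_v u = g_0$, whence $\divergence_v u = 0$ when $g_0 \equiv 0$. In the outer iteration used to solve~\eqref{Eq: Lin3} — in which Proposition~\ref{p:fixedpoint} is applied with $v$ successively set equal to the previously produced $u$ — the coincidence $v=u$ at the limit gives $\divergence_u u = A_u^{\top}:\nabla u = 0$, and the Liouville identity $\partial_t\det(DX_u) = \det(DX_u)\,\divergence_u u$ then forces $\det DX_u \equiv 1$. The main obstacle throughout is the first task above: faithfully tracking the multilinear structure~\eqref{eq:decompo} of $\partial_t(Eh)$ through all the nonlinearities in $(w,\vartheta)$ — this is exactly what dictates the intricate definition of $\Xi_p$ and explains the need for the refined version of Proposition~\ref{p:fight}, in which the pressure boundary trace is not merely in $\LL_1(\dot\B^{s+1-1/p}_{p,1})$ but admits a structural decomposition of its time derivative matching~\eqref{eq:decompo}.
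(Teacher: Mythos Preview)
Your approach is correct and essentially the same as the paper's: both combine the nonlinear estimates of Section~\ref{ss:NL} (Lemmas~\ref{Lem: Control of right-hand side momentum},~\ref{l:divW}, and the computations around~\eqref{eq:Eh}) with Proposition~\ref{p:fight} to set up a contraction. The paper in fact opens by saying the proof ``is based on the standard fixed point theorem'' but then carries out the Picard iteration explicitly, remarking that ``since the structure of the solution space $\Xi_p$ is rather complicated, it is more informative to do the proof `by hand'.'' Two places where your abstract presentation is more exposed than the paper's explicit one:

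First, invoking Banach's theorem directly requires $\Xi_p$ to be complete, which is not immediate given that the norm~\eqref{def:Xi_p} involves an infimum over admissible extensions $E\theta_B$ together with a structural decomposition of $\partial_t(E\theta_B)$. The paper sidesteps this: it shows the iterates are Cauchy in $\Xi_p$, extracts the limit first in the ordinary Besov pieces (which are complete), passes to the limit in the equations, and then uses that the extension operator of Proposition~\ref{p:fight} can be chosen \emph{linear and continuous} in the data to recover the full $\Xi_p$ structure of the limiting pressure. You should either verify completeness of $\Xi_p$ or follow this two-step passage to the limit.

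Second, your final paragraph argues the \emph{outer} fixed-point case (where eventually $v=u$), not the proposition's actual claim for a fixed $v\neq u$: from $\divergence_v u=0$ and $X_v$ measure preserving, the Liouville identity for $X_u$ does not directly yield $\det DX_u\equiv1$, since that identity involves $A_u$ rather than $A_v$. The paper does not spell this out either and simply defers to~\cite{DM-CPAM}.
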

\begin{proof} It is based on  the  standard fixed point theorem. 
However, since the structure of the solution space  $\Xi_p$ is rather 
complicated, it is more informative to do the proof `by hand'. 

\subsubsection*{Preliminary step}  We solve
 \begin{equation}\label{Eq: Linearized0}\left\{
  \begin{aligned}
   \partial_t u^0 - \divergence \IT (u^0 , P^0) &= f_0 \quad &\hbox{in }\ & \IR_+\times\IR^n_+, \\
   \divergence u^0 &= g_0 \quad &\hbox{in }\ & \IR_+\times\IR^n_+, \\
   \IT (u^0 , P^0) \cdot \e_n|_{\partial\IR^n_+}&= h_0 \quad &\hbox{on }\ & \IR_+\times\partial\IR^n_+,\\
   u^0|_{t = 0} &= v_0  \quad &\hbox{in }\ & \IR^n_+.
  \end{aligned}\right.\end{equation}
  As $n-1<p<n,$    Proposition~\ref{p:fight}  enables us to get a solution $(u^0,P^0)$
  in  the space $\Xi_p,$ that satisfies
  $$\displaylines{  \Xi_p(u^0,P^0)\leq CA_0  \with\cr
   A_0:=\|v_0\|_{ \dot \B^{n/p-1}_{p,1}(\IR_+^n)} 
+ \|f_0,\partial_tR_0,\nabla Eh_0\|_{ \LL_1(\IR_+; \dot \B^{n/p-1}_{p,1}(\IR_+^n))}\hfill\cr\hfill
+\|g_0\|_{ \LL_1(\IR_+; \dot \B^{n/p}_{p,1}(\IR_+^n))}+\|\partial_t(Eh_0)\|_{ \LL_1(\IR_+; \dot \B^{n/p-2}_{p,1}(\IR_+^n))}.}$$

  \subsubsection*{Generic step} Assuming that $(u^\ell,P^\ell)$ has 
  been constructed in $\Xi_p$ and satisfies~\eqref{eq:Xi_p}, we want to solve 
  \begin{equation}
\label{Eq: Linearizedn}
\left\{\begin{aligned}
 \!\partial_t u^{\ell+1} \!-\! \divergence \IT (u^{\ell+1} , P^{\ell+1}) &= f_0\!+\!\divergence_{v} \IT_{v} (u^\ell , P^\ell) - \divergence \IT (u^\ell , P^\ell) \  &\hbox{in }\ & \IR_+\times\IR^n_+ \\
 \divergence u^{\ell+1} &= g_0-\divergence_v u^\ell + \divergence  u^\ell \ &\hbox{in }\ & \IR_+\times\IR^n_+, \\
 \IT (u^{\ell+1} , P^{\ell+1}) \cdot \e_n &=  h_0+\IT (u^\ell, P^\ell) \cdot \e_n - \IT_{v} (u^\ell , P^\ell) 
 \overline{n}_{v}  \ &\hbox{on }\ & \IR_+\!\times\!\partial\IR^n_+,\\
 u^{\ell + 1}|_{t = 0} &= v_0 \ &\hbox{in }\ & \IR^n_+.
\end{aligned}\right.
\end{equation}
Let us denote by $f^\ell,$ $g^\ell$ and $h^\ell$ the second part of the right-hand sides of the first three
equations of~\eqref{Eq: Linearizedn}.
Taking advantage of Lemmas~\ref{Lem: Control of right-hand side momentum},~\ref{l:divW}, 
Inequality~\eqref{eq:Eh} and the computations that follow below this inequality, and of Lemma~\ref{l:nablau}, we see that
defining $R_t^\ell$ by $\partial_tg^\ell=\divergence R_t^\ell,$ 
 $$\begin{aligned}
 \|f^\ell\|_{\LL_1(\IR_+;\dot\B^{n/p-1}_{p,1}(\IR^n_+))}&\lesssim
 \|\nabla v\|_{\LL_1(\IR_+;\dot\B^{n/p}_{p,1}(\IR^n_+))}\:\Xi_p(u^\ell,P^\ell),\\
 \|g^\ell\|_{\LL_1(\IR_+;\dot\B^{n/p}_{p,1}(\IR^n_+))}&\lesssim
 \|\nabla v\|_{\LL_1(\IR_+;\dot\B^{n/p}_{p,1}(\IR^n_+))}\:\Xi_p(u^\ell,P^\ell),\\
 \|R^\ell_t\|_{\LL_1(\IR_+;\dot\B^{n/p-1}_{p,1}(\IR^n_+))}&\lesssim
 \|\nabla v\|_{\LL_1(\IR_+;\dot\B^{n/p}_{p,1}(\IR^n_+))}\:\Xi_p(u^\ell,P^\ell),\\
 \|Eh^\ell\|_{\LL_1(\IR_+;\dot\B^{n/p}_{p,1}(\IR^n_+))}&\lesssim \|\nabla v\|_{\LL_1(\IR_+;\dot\B^{n/p}_{p,1}(\IR^n_+))}\:\Xi_p(u^\ell,P^\ell),\\
 \|\partial_t(Eh^\ell)\|_{\LL_1(\IR_+;\dot\B^{n/p-2}_{p,1}(\IR^n_+))}&\lesssim \|\nabla v\|_{\LL_1(\IR_+;\dot\B^{n/p}_{p,1}(\IR^n_+))}\:\Xi_p(u^\ell,P^\ell).\end{aligned}$$
 Furthermore, as pointed out before below~\eqref{eq:Eh}, 
 the term of $\partial_t(Eh^\ell)$ has the  structure that is required in  Proposition~\ref{p:fight} (here we use the fact that $(u^\ell,P^\ell)$ is in $\Xi_p$).  
 Hence, applying that proposition provides us with a solution 
 $(u^{\ell+1},P^{\ell+1})$ in $\Xi_p$ such that, according to the above inequalities,
 $$\Xi_p(u^{\ell+1},P^{\ell+1}) \leq C\bigl( A_0
 +  \|\nabla v\|_{\LL_1(\IR_+;\dot\B^{n/p}_{p,1}(\IR^n_+))}\:\Xi_p(u^\ell,P^\ell)\bigr)\cdotp$$
 It is now clear that if $v$ is chosen so that 
 \begin{equation}\label{eq:smallv}
 2C \|\nabla v\|_{\LL_1(\IR_+;\dot\B^{n/p}_{p,1}(\IR^n_+))}\leq 1,\end{equation} 
 then we have 
 \begin{equation}\label{eq:uniform}
 \Xi_p(u^\ell,P^\ell)\leq 2C A_0\quad\hbox{for all }\ \ell\in\IN.
   \end{equation}
   \subsubsection*{Convergence of the sequence}
   We just have to observe that, for all $\ell\geq1,$ the couple $(u^{\ell+1}-u^\ell,P^{\ell+1}-P^\ell)$ satisfies
   system~\eqref{Eq: Linearizedn} with null initial data and 
   right-hand sides $f^\ell-f^{\ell-1},$ $g^\ell-g^{\ell-1}$ and $h^\ell-h^{\ell-1}.$ 
   Hence, Proposition~\ref{p:fight}  and hypothesis~\eqref{eq:smallv} guarantee  that 
   $$\Xi_p(u^{\ell+1}-u^\ell, P^{\ell+1}-P^\ell) \leq\frac12 \,   \Xi_p(u^{\ell}-u^{\ell-1}, P^{\ell}-P^{\ell-1}), $$
   and we thus have a Cauchy sequence. 
   \medbreak
  Using the standard completeness properties
  of the Besov spaces, this already ensures that $(u^\ell,P^\ell)$ has a limit $(u,P)$ fulfilling~\eqref{eq:regularity} with $s=n/p-1$ and   
   such that 
  $$  \|u\|_{\LL_\infty(\IR_+; \dot \B^{n/p-1}_{p,1}(\IR_+^n))}+\|u_t,\nabla^2u,\nabla P\|_{ \LL_1(\IR_+; \dot \B^{n/p-1}_{p,1}(\IR_+^n))}
  \leq 2CA_0. $$
  Since the above convergence also controls the boundary value of the pressure, 
   this regularity is enough to pass to the limit in  all 
   the equations of~\eqref{Eq: Linearizedn}, and to see that $(u,P)$ satisfies~\eqref{Eq: Linearized2}. In particular, as explained in, e.g.,~\cite{DM-CPAM}, 
   the flow $X_u$ associated to $u$ through~\eqref{def:Xu} is measure preserving. 
   As the extension operator $E$ of Proposition~\ref{p:fight} can be chosen linear
   and continuous, the pressure $P$ has the desired structure and $(u,P)$ is thus in $\Xi_p.$
   Finally, the reason why the flow associated to $u$ through~\eqref{def:Xu} is measure preserving is 
   explained in, e.g.,~\cite{DM-CPAM}.  
      \end{proof}
\bigbreak
We are now ready to prove the existence part of our main theorem. 
After recasting the system in Lagrangian coordinates. 
the problem amounts to finding a fixed point in the space $\Xi_p$ 
for the map $(v,Q)\mapsto (u,P)$ defined by system~\eqref{Eq: Lin3}.
To this end, we fix an initial data $v_0$ satisfying the condition of Theorem~\ref{Thm:NS3D}, 
then we argue by induction. 
The preliminary step is as before : we define $(u^0,P^0)$ to be the solution 
of~\eqref{Eq: Linearized0} with null source terms.
Then, once $(u^\ell,P^\ell)$ has been constructed in $\Xi_p,$ we 
set,  according to Proposition~\ref{p:fixedpoint}, $(u^{\ell+1},P^{\ell+1})$ to be the solution 
in $\Xi_p$ of 
  \begin{equation}
\label{Eq: Linearizednn}
\left\{\begin{aligned}
 \partial_t u^{\ell+1} - \divergence_{u^\ell} \IT_{u^\ell} (u^{\ell+1} , P^{\ell+1}) &= 0 \quad &\hbox{in }\ & \IR_+\times\IR^n_+ \\
 \divergence_{u^\ell}u^{\ell+1} &= 0\quad &\hbox{in }\ & \IR_+\times\IR^n_+, \\
 \IT_{u^\ell} (u^{\ell+1} , P^{\ell+1}) \cdot \e_n &= 0  \quad &\hbox{on }\ & \IR_+\times\partial\IR^n_+,\\
 u^{\ell+1}|_{t = 0} &= v_0 \quad &\hbox{in }\ & \IR^n_+.
\end{aligned}\right.
\end{equation}
Note that solving~\eqref{Eq: Linearizednn} requires that 
$$ \|u^\ell\|_{\LL_\infty(\IR_+; \dot \B^{n/p-1}_{p,1}(\IR_+^n))}+\|\nabla u^\ell\|_{\LL_1(\IR_+; \dot \B^{n/p}_{p,1}(\IR_+^n))}\leq c_0.$$
Given the definition of the norm in $\Xi_p,$ Lemma~\ref{l:nablau}
guarantees that there exists $c>0$ such that 
the above  condition is fulfilled whenever 
\begin{equation}\label{eq:smallun}\Xi_p(u^\ell,P^\ell)\leq c.\end{equation}
Assuming that it is the case, one can apply  Proposition~\ref{p:fixedpoint}
and get a unique solution $(u^{\ell+1},P^{\ell+1})$ to~\eqref{Eq: Linearizednn} such that 
 \begin{equation}\label{eq:boundn} \Xi_p(u^{\ell+1},P^{\ell+1}) \leq C\|v_0\|_{\dot \B^{n/p-1}_{p,1}(\IR^n_+)}.
 \end{equation}
 Hence, in order to be able to solve iteratively system~\eqref{Eq: Linearizednn}
 for all $\ell\in\IN,$ it suffices to assume that $v_0$ has been chosen so that
 \begin{equation}\label{eq:smallv0}  C\|v_0\|_{\dot \B^{n/p-1}_{p,1}(\IR^n_+)}\leq c.\end{equation}
 Then, we get a sequence of $\Xi_p$ that satisfies~\eqref{eq:boundn} for all $\ell\in\IN.$
 \medbreak
 To prove the convergence, we are going to show that $(u^\ell,P^\ell)_{\ell\in\IN}$ is a Cauchy 
 sequence of $\Xi_p.$ 
 Let $\du^\ell:=u^{\ell+1}-u^\ell$ and $\dP^\ell:=P^{\ell+1}-P^\ell.$ Then, we have
 $$
 \left\{\begin{aligned}
 \partial_t \du^{\ell} - \divergence_{u^\ell} \IT_{u^\ell} (\du^{\ell} , \dP^{\ell}) &=
 \df^\ell:=\bigl(\divergence_{u^{\ell-1}}\IT_{u^{\ell-1}}-\divergence_{u^\ell}\IT_{u^\ell}\bigr)(u^\ell,P^\ell) \  &\hbox{in }\ & \IR_+\times\IR^n_+ \\
 \divergence_{u^\ell}\du^{\ell} &= \dg^\ell:= \bigl(\divergence_{u^\ell}-\divergence_{u^{\ell-1}}\bigr)u^{\ell}\  &\hbox{in }\ & \IR_+\times\IR^n_+, \\
 \IT_{u^\ell} (\du^{\ell} , \dP^{\ell}) \cdot \e_n &= \dh^\ell:=\bigl(\IT_{u^\ell} -\IT_{u^{\ell-1}}\bigr) (u^{\ell} , P^{\ell}) \cdot \e_n   \  &\hbox{on }\ & \IR_+\!\times\!\partial\IR^n_+.
\end{aligned}\right.$$
We already know that $(u^\ell,P^\ell)$ is small in $\Xi_p$ for all $\ell\in\IN$ 
(in particular Conditions~\eqref{eq:small2v} and~\eqref{eq:smallD2v} are satisfied). 
Hence, in order to bound $(\du^\ell,\dP^\ell)$ in $\Xi_p$ by means of  Proposition~\ref{p:fixedpoint},
it suffices to show that $\df^\ell,$ $\dg^\ell$ and $\dh^\ell$  fulfill the conditions therein. 
Thanks to~\eqref{eq:lagop} and~\eqref{eq:Tlag}, we have
$$
\df^\ell=\divergence\Bigl(\bigl(A_{u^{\ell-1}} A_{u^{\ell-1}}^\top-A_{u^\ell} A_{u^\ell}^\top\bigr)\nabla u^\ell
+\bigl(A_{u^{\ell-1}}Du^\ell A_{u^{\ell-1}}-A_{u^\ell} Du^\ell A_{u^\ell}\bigr)\Bigr)
+\bigl(A_{u^\ell}-A_{u^{\ell-1}}\bigr)\cdot\nabla P^\ell.$$
Note that the bound~\eqref{eq:boundn} and the definition of $A_{u^\ell}$ in~\eqref{eq:Au} ensure that we have
$$\|A_{u^\ell}-\Id\|_{\LL_\infty(\IR_+;\dot\B^{n/p}_{p,1}(\IR_+^n))} \ll1.$$
Hence, one can take advantage of the the series expansion~\eqref{eq:Neumann-expansion} to bound
all the terms involving $A_{u^\ell}$ and $A_{u^{\ell-1}},$ and mimicking the proof of inequality~\eqref{Eq: Complete estimate of perturbed right-hand side}, 
we end up with 
$$\|\df^\ell \|_{\LL_1(\IR_+;\dot\B^{n/p-1}_{p,1}(\IR_+^n))}
\!\lesssim\! \|\nabla\du^{\ell-1} \|_{\LL_1(\IR_+;\dot\B^{n/p}_{p,1}(\IR_+^n))} 
\bigl(\|\nabla u^\ell \|_{\LL_1(\IR_+;\dot\B^{n/p}_{p,1}(\IR_+^n))} +\|\nabla\! P^\ell \|_{\LL_1(\IR_+;\dot\B^{n/p-1}_{p,1}(\IR_+^n))}\bigr)\cdotp$$
 Using~\eqref{eq:boundn}, we thus get 
 $$\|\df^\ell \|_{\LL_1(\IR_+;\dot\B^{n/p-1}_{p,1}(\IR_+^n))}\leq C \|v_0\|_{\dot \B^{n/p-1}_{p,1}(\IR^n_+)}\Xi_p(\du^{\ell-1},\dP^{\ell-1}).$$
Performing the (cumbersome and tedious) checks and estimates for $\dg^\ell$ and $\dh^\ell$  is left to the reader.
This is essentially a matter of repeating the computations 
 of Subsection~\ref{ss:NL}.

In the end, we get, if $c$ in~\eqref{eq:smallv0} is small enough, 
\begin{equation}\label{eq:cauchy}\Xi_p(\du^\ell,\dP^\ell)\leq\frac12\Xi_p(\du^{\ell-1},\dP^{\ell-1}),\qquad \ell\geq1.\end{equation}
Hence  $(u^\ell,P^\ell)_{\ell\in\IN}$ is a Cauchy 
 sequence of $\Xi_p,$ and one can conclude as in the proof of Proposition~\ref{p:fixedpoint}
 that it has  a limit $(u,P)$ in $\Xi_p,$ that is bounded by the right-hand side of~\eqref{eq:boundn}
 and satisfies system~\eqref{eq:FBNSLag}. 
 
 As for the proof of uniqueness, it is almost the same as the proof of inequality~\eqref{eq:cauchy}.

\chapter{The Lam\'e operator in the upper half-space}
\label{Sec: The Lame operator in the upper half-plane}

This chapter provides the basic properties of the Lam\'e operator in the upper half-plane that are needed to apply the Da Prato -- Grisvard theory of Chapter~\ref{Sec: Da Prato--Grisvard theorem}. To introduce the Lam\'e system, recall that  the viscous stress tensor  $\IS (v)$  is given by
\begin{align*}
 \IS (u) := \mu D(u) + \lambda \divergence(u) \Id \quad \text{with} \quad D(u) := \nabla u + [\nabla u]^{\top}.
\end{align*}
We make the ellipticity assumption
\begin{align}
\label{eq: ellipticity}
 \mu > 0 \quad \text{and} \quad 2 \mu + n \lambda > 0
\end{align}
and define the Lam\'e operator $L$ subject to homogeneous Neumann boundary conditions
\begin{align}
\label{Eq: Lame system}
\left\{
\begin{aligned}
 - \divergence(\IS (u)) &= f  && \text{in } \IR^n_+ \\
 \IS (u) \cdot \e_n &= 0 && \text{on } \partial \IR^n_+
 \end{aligned} \right.
\end{align}
via the form method. To this end, define the coefficients
\begin{align*}
 a_{i j}^{\alpha \beta} := \mu (\delta_{i j} \delta_{\alpha \beta} + \delta_{j \alpha} \delta_{i \beta}) + \lambda \delta_{i \alpha} \delta_{j \beta}
\end{align*}
and  the sesquilinear form
\begin{align*}
 \mathfrak{a} : \H^1 (\IR^n_+ ; \IC^n) \times \H^1 (\IR^n_+ ; \IC^n) \to \IC, \quad (u , v) \mapsto \sum_{\alpha , \beta = 1}^n \sum_{i , j = 1}^n \int_{\IR^n_+} a_{i j}^{\alpha \beta} \partial_j u^{\beta} \, \overline{\partial_i v^{\alpha}} \; \d x.
\end{align*}
For $f \in \LL_2 (\IR^n_+ ; \IC^n)$, an integration by parts shows that~\eqref{Eq: Lame system} is equivalent to the weak formulation
\begin{align*}
 \mathfrak{a} (u , v) = \int_{\IR^n_+} f \cdot \overline{v} \; \d x \qquad (v \in \H^1 (\IR^n_+ ; \IC^n)).
\end{align*}
Clearly, $\mathfrak{a}$ defines a symmetric and bounded sesquilinear form. We claim that the ellipticity assumption~\eqref{eq: ellipticity} implies the existence of a constant $C > 0$ such that the following G\r{a}rding inequality:
\begin{align}
\label{Eq: Garding}
 \mathfrak{a} (u , u) \geq C \| \nabla u \|_{\LL_2 (\IR^n_+ ; \IC^{n \times n})}^2 \qquad (u \in \H^1 (\IR^n_+ ; \IC^n)).
\end{align}
 Indeed, notice that first of all, the inequality $2 a b \leq a^2 + b^2$ implies that
\begin{align}
\label{Eq: Estimate of divergence}
 \lvert \divergence(u) \rvert^2 = \sum_{i , j = 1}^n \partial_j u^j \, \overline{\partial_i u^i} \leq n \sum_{j = 1}^n \lvert \partial_j u^j \rvert^2.
\end{align}
Second, let $0 \leq \mu^{\prime} < \mu$ be such that $2 \mu^{\prime} + n \lambda \geq 0$. Then, we find by virtue of Korn's inequality a constant $C > 0$ such that
\begin{align*}
 \mathfrak{a}(u , u) &= \sum_{\substack{i , j = 1 \\ \alpha , \beta = 1}}^n \int_{\IR^n_+} a_{i j}^{\alpha \beta} \partial_j u^{\beta} \, \overline{\partial_i u^{\alpha}} \, \d x \\
  &= \sum_{\substack{i , j = 1 \\ \alpha , \beta = 1}}^n \bigg( \int_{\IR^n_+} \mu \delta_{i j} \delta_{\alpha \beta} \partial_j u^{\beta} \, \overline{\partial_i u^{\alpha}} \; \d x + \int_{\IR^n_+} \mu \delta_{j \alpha} \delta_{i \beta} \partial_j u^{\beta} \, \overline{\partial_i u^{\alpha}} \, \d x + \int_{\IR^n_+} \lambda \delta_{i \alpha} \delta_{j \beta} \partial_j u^{\beta} \, \overline{\partial_i u^{\alpha}} \, \d x \bigg) \\
  &= \sum_{i , \alpha = 1}^n \int_{\IR^n_+} (\mu - \mu^{\prime}) \{\partial_i u^{\alpha} \, \overline{\partial_i u^{\alpha}} + \partial_{\alpha} u^i  \, \overline{\partial_i u^{\alpha}} \} + \mu^{\prime} \{ \partial_i u^{\alpha} \, \overline{\partial_i u^{\alpha}} + \partial_{\alpha} u^i  \, \overline{\partial_i u^{\alpha}} \} + \lambda \partial_{\alpha} u^{\alpha} \, \overline{\partial_i u^i} \, \d x \\
  &\geq C \int_{\IR^n_+} \lvert \nabla u \rvert^2 \, \d x + \sum_{i , \alpha = 1}^n \int_{\IR^n_+} \mu^{\prime} \{ \partial_i u^{\alpha} \, \overline{\partial_i u^{\alpha}} + \partial_{\alpha} u^i \, \overline{\partial_i u^{\alpha}} \} + \lambda \partial_{\alpha} u^{\alpha} \, \overline{\partial_i u^i} \, \d x.
\end{align*}
Finally,~\eqref{Eq: Estimate of divergence} implies that
\begin{align*}
 &\sum_{i , \alpha = 1}^n \Big( \mu^{\prime} \{ \partial_i u^{\alpha} \, \overline{\partial_i u^{\alpha}} + \partial_{\alpha} u^i \,  \overline{\partial_i u^{\alpha}} \} + \lambda \partial_{\alpha} u^{\alpha} \, \overline{\partial_i u^i} \Big) \\
 &\qquad = 2 \mu^{\prime} \sum_{i = 1}^n \lvert \partial_i u^i \rvert^2 + \mu^{\prime} \sum_{i = 1}^n \sum_{\substack{\alpha = 1 \\ \alpha \neq i}}^n \lvert \partial_i u^{\alpha} \rvert^2 + \mu^{\prime} \sum_{i = 1}^n \sum_{\substack{\alpha = 1 \\ \alpha \neq i}}^n \partial_{\alpha} u^i \, \overline{\partial_i u^{\alpha}} + \lambda \lvert \divergence(u) \rvert^2 \\
 &\qquad \geq \mu^{\prime} \sum_{i = 1}^n \sum_{\substack{\alpha = 1 \\ \alpha \neq i}}^n \lvert \partial_i u^{\alpha} \rvert^2 + \mu^{\prime} \sum_{i = 1}^n \sum_{\substack{\alpha = 1 \\ \alpha \neq i}}^n \partial_{\alpha} u^i \, \overline{\partial_i u^{\alpha}} + \Big(\frac{2 \mu^{\prime}}{n} + \lambda\Big) \lvert \divergence(u) \rvert^2 \\
 & \qquad \geq 0.
\end{align*}
Consequently, under condition~\eqref{eq: ellipticity} there exists a constant $C > 0$ such that~\eqref{Eq: Garding} holds. \medbreak
To proceed, we prove the validity of a Caccioppoli inequality for solutions that locally solve the homogeneous resolvent 
equation.

\begin{lemma}
\label{Lem: Caccioppoli}
Let $r > 0$ and $x_0 \in \overline{\IR^n_+}$. Let $\theta \in (0 , \pi)$ and $z \in \Sec_{\theta} = \{ z \in \IC \setminus \{0\} : \lvert \arg(z) \rvert < \theta \}$. Moreover, let $u \in \H^1_2 (B(x_0 , 2 r) \cap \overline{\IR^n_+} ; \IC^n)$ be such that
\begin{align}
\label{Eq: Variational}
 z \int_{\IR^n_+} u \cdot \overline{v} \; \d x + \mathfrak{a} (u , v) = 0
\end{align}
for all $v \in \H^1_2 (\IR^n_+ ; \IC^n)$ with $\supp (v) \subset B(x_0 , 2 r) \cap \overline{\IR^n_+}$. Then there exists a constant $C > 0$ such that
\begin{align}
\label{Eq: Caccioppoli}
 \lvert z \rvert \int_{B(x_0 , r) \cap \IR^n_+} \lvert u \rvert^2 \; \d x + \int_{B(x_0 , r) \cap \IR^n_+} \lvert \nabla u \rvert^2 \; \d x \leq \frac{C}{r^2} \int_{B(x_0 , 2 r) \cap \IR^n_+} \lvert u \rvert^2 \; \d x.
\end{align}
\end{lemma}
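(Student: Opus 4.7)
The plan is to prove~\eqref{Eq: Caccioppoli} by testing the variational identity against $v=\eta^2 u$ for a suitable spatial cutoff $\eta$, and then separately exploiting the real and imaginary parts of the resulting scalar identity, with the sectoriality of $z$ entering precisely at the point where the real part alone is not sign-definite.

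First I would fix $\eta\in \C_c^\infty(\IR^n)$ with $\eta\equiv 1$ on $B(x_0,r)$, $\supp\eta\subset B(x_0,2r)$, $0\leq \eta\leq 1$ and $\|\nabla\eta\|_{\LL_\infty}\leq C/r$. Since $u\in \H^1_2(B(x_0,2r)\cap \overline{\IR^n_+};\IC^n)$, the product $v:=\eta^2 u$ extends by zero to a legitimate test function in $\H^1_2(\IR^n_+;\IC^n)$ supported in $B(x_0,2r)\cap \overline{\IR^n_+}$. Plugging $v$ into~\eqref{Eq: Variational} and introducing the auxiliary function $w:=\eta u$ yields
$$z\|w\|_{\LL_2}^2 + \mathfrak{a}(u,\eta^2 u) = 0.$$
A direct Leibniz-rule computation, combined with the pointwise symmetry $a_{ij}^{\alpha\beta}=a_{ji}^{\beta\alpha}$ of the coefficients, reveals the identity $\mathfrak{a}(u,\eta^2 u) = \mathfrak{a}(w,w) - B + 2\ii J$, where
$$B := \sum a_{ij}^{\alpha\beta}\int u^\beta\,\overline{u^\alpha}\,\partial_i\eta\,\partial_j\eta\,\d x,\qquad J := \Im\sum a_{ij}^{\alpha\beta}\int \eta\,\partial_i\eta\,\partial_j u^\beta\,\overline{u^\alpha}\,\d x$$
are both real, with $|B|\leq C r^{-2}\|u\|_{\LL_2(\supp\eta)}^2$ and, thanks to $\eta\nabla u=\nabla w - u\otimes\nabla\eta$ together with Young's inequality, $|J|\leq \varepsilon\|\nabla w\|_{\LL_2}^2 + C_\varepsilon r^{-2}\|u\|_{\LL_2(\supp\eta)}^2$ for any $\varepsilon>0$.

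Since $\mathfrak{a}(w,w)$ and $\|w\|_{\LL_2}^2$ are real, taking separately the real and imaginary parts of the tested equation yields the pair
$$(\Re z)\|w\|_{\LL_2}^2 + \mathfrak{a}(w,w) = B,\qquad (\Im z)\|w\|_{\LL_2}^2 = -2J.$$
I would now extract $|z|\|w\|_{\LL_2}^2$ from these two identities by splitting on the sign of $\Re z$. Writing $z=\rho\,\e^{\ii\phi}$ with $\rho=|z|$ and $|\phi|<\theta<\pi$, in the regime $\cos\phi\geq 0$ both terms on the left of the first identity are nonnegative, and adding $|\Im z|\|w\|_{\LL_2}^2\leq 2|J|$ to it gives, thanks to $\cos\phi+|\sin\phi|\geq 1$, the bound $\rho\|w\|_{\LL_2}^2 + \mathfrak{a}(w,w)\leq |B|+2|J|$. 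If, on the contrary, $\cos\phi<0$, then $|\phi|\in(\pi/2,\theta)$ forces $|\sin\phi|\geq \sin(\pi-\theta)>0$; the imaginary part identity alone gives $\rho\|w\|_{\LL_2}^2\leq(2/\sin\theta)|J|$, and the real part identity then yields $\mathfrak{a}(w,w) = B+\rho|\cos\phi|\|w\|_{\LL_2}^2\leq |B|+(2/\sin\theta)|J|$, so that the same type of bound (with a constant depending on $\theta$) persists in this case.

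Finally, invoking the G\r{a}rding inequality~\eqref{Eq: Garding} applied to $w$ in the form $\mathfrak{a}(w,w)\geq c_0\|\nabla w\|_{\LL_2}^2$, and substituting the estimates on $|B|$ and $|J|$ with $\varepsilon$ chosen small enough to absorb the $\|\nabla w\|_{\LL_2}^2$ contribution coming from $J$, I would obtain
$$|z|\,\|w\|_{\LL_2}^2 + \|\nabla w\|_{\LL_2}^2 \leq \frac{C}{r^2}\,\|u\|_{\LL_2(B(x_0,2r)\cap\IR^n_+)}^2.$$
Since $\eta\equiv 1$ and $\nabla\eta=0$ on $B(x_0,r)\cap\IR^n_+$, one has $w=u$ and $\nabla w=\nabla u$ there, and~\eqref{Eq: Caccioppoli} follows. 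The main obstacle in this plan is precisely the case $\Re z<0$: the real-part identity alone carries no useful sign, and one has to rely on the quantitative lower bound $|\sin\phi|\geq \sin(\pi-\theta)>0$ delivered by the strict inclusion $\Sec_\theta\subsetneq \IC\setminus\{0\}$; the hypothesis $\theta<\pi$ thus enters in an essential way, with the constant $C$ in~\eqref{Eq: Caccioppoli} blowing up as $\theta\nearrow \pi$.
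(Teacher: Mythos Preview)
Your proof is correct and follows essentially the same approach as the paper: test the variational identity against $v=\eta^2 u$, rewrite everything in terms of $w=\eta u$, use the sectoriality of $z$ together with G\r{a}rding's inequality to extract $|z|\|w\|_{\LL_2}^2+\|\nabla w\|_{\LL_2}^2$, and absorb the error terms by Young's inequality. The only difference is cosmetic: the paper compresses the sectoriality step into the single inequality $|z|\int|u|^2\eta^2+\int|\nabla(u\eta)|^2\leq C\bigl|z\int|u|^2\eta^2+\mathfrak{a}(u\eta,u\eta)\bigr|$ (``elementary trigonometry''), whereas you unpack it into an explicit case split on the sign of $\Re z$, and you exploit the coefficient symmetry $a_{ij}^{\alpha\beta}=a_{ji}^{\beta\alpha}$ to obtain the cleaner decomposition $\mathfrak{a}(u,\eta^2u)=\mathfrak{a}(w,w)-B+2\ii J$ with real $B,J$, which the paper does not isolate.
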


\begin{proof}
The classical proof is as follows. Let $\eta : \IR^n \to \IR$ be a smooth function with $\eta \equiv 1$ in $B(x_0 , r)$, $\eta \equiv 0$ in $\IR^n \setminus B(x_0 , 2 r)$ and $\lvert \nabla \eta \rvert \leq C / r$. Note that $C > 0$ can be chosen to just depend on the dimension. Take $v := u \eta^2$ in~\eqref{Eq: Variational}. Then
\begin{align*}
 0 &= z \int_{\IR^n_+} \lvert u \rvert^2 \eta^2 \; \d x + \sum_{\substack{i , j = 1 \\ \alpha , \beta = 1}}^n \int_{\IR^n_+} a_{i j}^{\alpha \beta} \partial_j u^{\beta} \, \overline{\partial_i (u^{\alpha} \eta^2)} \; \d x \\
  &= z \int_{\IR^n_+} \lvert u \rvert^2 \eta^2 \; \d x + \sum_{\substack{i , j = 1 \\ \alpha , \beta = 1}}^n \int_{\IR^n_+} a_{i j}^{\alpha \beta} \partial_j u^{\beta} \, \overline{\partial_i (u^{\alpha} \eta)} \eta \; \d x + \sum_{\substack{i , j = 1 \\ \alpha , \beta = 1}}^n \int_{\IR^n_+} a_{i j}^{\alpha \beta} \partial_j u^{\beta} (\partial_i \eta) \, \overline{u^{\alpha}} \eta \; \d x \\
  &= z \int_{\IR^n_+} \lvert u \rvert^2 \eta^2 \; \d x + \sum_{\substack{i , j = 1 \\ \alpha , \beta = 1}}^n \int_{\IR^n_+} a_{i j}^{\alpha \beta} \partial_j (u^{\beta} \eta) \, \overline{\partial_i (u^{\alpha} \eta)} \; \d x + \sum_{\substack{i , j = 1 \\ \alpha , \beta = 1}}^n \int_{\IR^n_+} a_{i j}^{\alpha \beta} \partial_j (u^{\beta} \eta) (\partial_i \eta) \, \overline{u^{\alpha}} \; \d x \\
  &\qquad - \sum_{\substack{i , j = 1 \\ \alpha , \beta = 1}}^n \int_{\IR^n_+} a_{i j}^{\alpha \beta} (\partial_j \eta) (\partial_i \eta) u^{\beta} \, \overline{u^{\alpha}} \; \d x - \sum_{\substack{i , j = 1 \\ \alpha , \beta = 1}}^n \int_{\IR^n_+} a_{i j}^{\alpha \beta} (\partial_j \eta) \, \overline{\partial_i (u^{\alpha} \eta)} u^{\beta} \; \d x.
\end{align*}
By~\eqref{Eq: Garding}, we find by elementary trigonometry a constant $C > 0$ depending only on $\theta$ and on the constant in~\eqref{Eq: Garding} such that
\begin{align*}
 \lvert z \rvert \int_{\IR^n_+} \lvert u \rvert^2 \eta^2 \; \d x + \int_{\IR^n_+} \lvert \nabla (u \eta) \rvert^2 \; \d x \leq C \bigg\lvert z \int_{\IR^n_+} \lvert u \rvert^2 \eta^2 \; \d x + \int_{\IR^n_+} a_{i j}^{\alpha \beta} \partial_j (u^{\beta} \eta) \, \overline{\partial_i (u^{\alpha} \eta)} \; \d x \bigg \rvert \cdotp
\end{align*}
Thus, by the equality above, we conclude to the existence of a constant depending additionally on $\mu$ and $\lambda$ such that
\begin{align*}
 \lvert z \rvert \int_{\IR^n_+} \lvert u \rvert^2 \eta^2 \; \d x + \int_{\IR^n_+} \lvert \nabla (u \eta) \rvert^2 \; \d x \leq C \bigg( \int_{\IR^n_+} \lvert \nabla (u \eta) \rvert \lvert \nabla \eta u \rvert \; \d x + \int_{\IR^n_+} \lvert \nabla \eta \rvert^2 \lvert u \rvert^2 \; \d x \bigg).
\end{align*}
Use the inequality $2 a b \leq a^2 + b^2$ to absorb the term $\lvert \nabla (u \eta) \rvert$ from the right-hand side to the left-hand side. Use afterwards the properties of $\eta$ to conclude that~\eqref{Eq: Caccioppoli} holds.
\end{proof}

The following lemma is a higher-order version of Lemma~\ref{Lem: Caccioppoli}.

\begin{lemma}
\label{Lem: Higher-order Caccioppoli}
Let $r > 0$ and $x_0 \in \overline{\IR^n_+}$. Let $\theta \in (0 , \pi)$ and $z \in \Sec_{\theta} = \{ z \in \IC \setminus \{0\} : \lvert \arg(z) \rvert < \theta \}$. Let $u \in \H^1_2 (B(x_0 , 2 r) \cap \IR^n_+ ; \IC^n)$ be such that
\begin{align}
\label{Eq: Variational higher}
 z \int_{\IR^n_+} u \cdot \overline{v} \; \d x + \mathfrak{a} (u , v) = 0
\end{align}
for all $v \in \H^1_2 (\IR^n_+ ; \IC^n)$ with $\supp (v) \subset B(x_0 , 2 r) \cap \overline{\IR^n_+}$. Then $\partial^{\alpha} u \in \H^1_2 (B(x_0 , 4^{- \lvert \alpha \rvert} r) \cap \Omega ; \IC^n)$ for all multi-indices $\alpha \in \IN_0^{n - 1}$ and there exists a constant $C > 0$ such that
\begin{align}
\label{Eq: Caccioppoli higher order}
 \lvert z \rvert \int_{B(x_0 , 4^{- \lvert \alpha \rvert} r) \cap \IR^n_+} \lvert \partial^{\alpha} u \rvert^2 \; \d x + \int_{B(x_0 , 4^{- \lvert \alpha \rvert} r) \cap \IR^n_+} \lvert \nabla \partial^{\alpha} u \rvert^2 \; \d x \leq \frac{C}{r^{2 (\lvert \alpha \rvert + 1)}} \int_{B(x_0 , 2 r) \cap \IR^n_+} \lvert u \rvert^2 \; \d x.
\end{align}
\end{lemma}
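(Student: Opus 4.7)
My plan is to argue by induction on $|\alpha|$. The base case $|\alpha|=0$ is exactly Lemma~\ref{Lem: Caccioppoli}, so nothing further is needed. The decisive point for the inductive step is that the restriction $\alpha \in \IN_0^{n-1}$ means only \emph{tangential} differentiations occur: such derivatives preserve the half-space $\IR^n_+$ and its boundary, commute with the translation-invariant form $\mathfrak{a}$ (whose coefficients $a_{ij}^{\alpha\beta}$ are constants), and leave the weak Neumann condition encoded in $\mathfrak{a}$ invariant.

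For the inductive step, assuming the claim for multi-indices of length $k$, I would write $\alpha = \beta + \e_j$ with $|\beta|=k$ and $j \in \{1,\dots,n-1\}$, and work with tangential difference quotients
$$
 D_j^h w(x) := h^{-1}\bigl(w(x+h\e_j)-w(x)\bigr), \qquad 0 < |h| \ll r.
$$
Using the translation invariance of $\mathfrak{a}$ in the direction $\e_j$, one tests~\eqref{Eq: Variational higher} against $D_j^{-h} v$ for $v \in \H^1_2(\IR^n_+;\IC^n)$ with $\supp v \subset B(x_0, 2r - |h|) \cap \overline{\IR^n_+}$; the resulting discrete integration by parts shows that $D_j^h u$ itself satisfies~\eqref{Eq: Variational higher} on the half-ball of radius $2r-|h|$. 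The induction hypothesis then delivers, uniformly in $h$, an $\H^1_2$-bound on $\partial^\beta D_j^h u = D_j^h \partial^\beta u$ in the half-ball of radius $4^{-k}(r-|h|/2)$. Standard difference-quotient theory then yields, as $h \to 0$, that $\partial^\alpha u = \partial_j \partial^\beta u \in \H^1_2$ locally and that it satisfies a weak identity of the same form~\eqref{Eq: Variational higher}.

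Finally, I would apply Lemma~\ref{Lem: Caccioppoli} to $\partial^\alpha u$ with outer radius $2 \cdot 4^{-(k+1)}r$ and inner radius $4^{-(k+1)}r = 4^{-|\alpha|}r$, noting that the outer ball lies strictly inside the ball of radius $4^{-k}r$ where $\partial^\alpha u$ has just been controlled. This yields estimate~\eqref{Eq: Caccioppoli higher order}, with the constant $C r^{-2(|\alpha|+1)}$ arising from composing individual Caccioppoli constants across the $|\alpha|+1$ steps and from the geometric factors $4^{-|\alpha|}$.

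The main obstacle is justifying rigorously that tangential difference quotients of $u$ satisfy the same weak equation on a slightly shrunk half-ball while retaining the Neumann-type boundary condition hidden in $\mathfrak{a}$. The tangential character $j \le n-1$ is essential: a normal difference quotient would both violate the support condition in $\overline{\IR^n_+}$ and fail to commute with the trace operator on $\partial \IR^n_+$, so this scheme cannot recover $\partial_n u$. Everything else---bookkeeping the geometric sequence $4^{-|\alpha|}$ of radii and the accumulated constant $C r^{-2(|\alpha|+1)}$ through the induction---is routine.
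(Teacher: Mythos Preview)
Your proposal is correct and follows essentially the same approach as the paper: induction on $|\alpha|$ via tangential difference quotients, using that $D_j^h u$ (for $j\le n-1$) again satisfies the variational identity on a slightly smaller half-ball, then iterating Lemma~\ref{Lem: Caccioppoli}. The paper only writes out the step $|\alpha|=1$ explicitly and defers the rest to induction, while you sketch the full inductive scheme; the mechanism and the bookkeeping of radii $4^{-|\alpha|}r$ and powers $r^{-2(|\alpha|+1)}$ are identical.
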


\begin{proof}
We only prove the case $\lvert \alpha \rvert = 1$, the rest follows by induction. For $j = 1 , \cdots , n - 1$ and $h > 0$ denote $\delta_j^h g (x) = h^{-1} (g(x + h \e_j) - g(x))$ the difference quotient of a function $g$ in direction $\e_j$. \par
A short calculation shows, that for all $0 < \lvert h \rvert < r/2$ the function $\delta_j^h u$ solves
\begin{align*}
 z \int_{\IR^n_+} \delta_j^h u \cdot \overline{v} \; \d x + \mathfrak{a} (\delta_j^h u , v) = 0
\end{align*}
for all $v \in \H^1 (\IR^n_+ ; \IC^n)$ with $\supp(v) \subset B(x_0 , r/2) \cap \overline{\IR^n_+}$. Lemma~\ref{Lem: Caccioppoli} now yields that
\begin{align*}
 \lvert z \rvert \int_{B(x_0 , r/4) \cap \IR^n_+} \lvert \delta_j^h u \rvert^2 \; \d x + \int_{B(x_0 , r/4) \cap \IR^n_+} \lvert \nabla \delta_j^h u \rvert^2 \; \d x \leq \frac{C}{r^2} \int_{B(x_0 , r/2) \cap \IR^n_+} \lvert \delta_j^h u \rvert^2 \; \d x,
\end{align*}
with $C > 0$ being independent of $h$. The method of difference quotients now implies that
\begin{align*}
 \lvert z \rvert \int_{B(x_0 , r/4) \cap \IR^n_+} \lvert \partial_j u \rvert^2 \; \d x + \int_{B(x_0 , r/4) \cap \IR^n_+} \lvert \nabla \partial_j u \rvert^2 \; \d x \leq \frac{C}{r^2} \int_{B(x_0 , r) \cap \IR^n_+} \lvert \partial_j u \rvert^2 \; \d x.
\end{align*}
Employing Lemma~\ref{Lem: Caccioppoli} again delivers that
\begin{align*}
 \lvert z \rvert \int_{B(x_0 , r/4) \cap \IR^n_+} \lvert \partial_j u \rvert^2 \; \d x + \int_{B(x_0 , r/4) \cap \IR^n_+} \lvert \nabla \partial_j u \rvert^2 \; \d x \leq \frac{C}{r^4} \int_{B(x_0 , 2 r) \cap \IR^n_+} \lvert u \rvert^2 \; \d x. &\qedhere
\end{align*}
\end{proof}

In the following proposition, Lemma~\ref{Lem: Higher-order Caccioppoli} is combined with Sobolev's inequality in order to derive the validity of weak reverse H\"older estimates. We will see afterwards, that such estimates already imply optimal resolvent bounds for the Lam\'e system in $\LL_p$-spaces.

\begin{proposition}
\label{Prop: Weak reverse Holder}
Let $r > 0$ and $x_0 \in \overline{\IR^n_+}$. Let $\theta \in (0 , \pi)$ and $z \in \Sec_{\theta} = \{ z \in \IC \setminus \{0\} : \lvert \arg(z) \rvert < \theta \}$ and let $p > 2$. Let $k \in \IN$ be a number that satisfies
\begin{align*}
 \frac{1}{2} - \frac{1}{p} \leq \frac{k}{n - 1}\cdotp
\end{align*}
Let $u \in \H^1_2 (\IR^n_+ ; \IC^n)$ be such that
for all $v \in \H^1_2 (\IR^n_+ ; \IC^n)$ with $\supp (v) \subset B(x_0 , 2 \cdot 4^k \sqrt{n} r) \cap \overline{\IR^n_+},$ we have
\begin{align*}
 z \int_{\IR^n_+} u \cdot \overline{v} \; \d x + \mathfrak{a} (u , v) = 0.
\end{align*}
  Then, $u \in \LL_p (B(x_0 , r) \cap \IR^n_+ ; \IC^n)$ holds together with the weak reverse H\"older inequality
\begin{align*}
 \bigg( \frac{1}{r^n} \int_{B(x_0 , r) \cap \IR^n_+} \lvert u \rvert^p \; \d x \bigg)^{\frac{1}{p}} \leq C \bigg( \frac{1}{r^n} \int_{B(x_0 , 2 r) \cap \IR^n_+} \lvert u \rvert^2 \; \d x \bigg)^{\frac{1}{2}}.
\end{align*}
The constant $C > 0$ only depends on $n$, $\mu$, $\lambda$, $\theta$, $p$, and $k$.
\end{proposition}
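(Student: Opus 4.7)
My plan is to combine the higher-order Caccioppoli estimate of Lemma~\ref{Lem: Higher-order Caccioppoli} with an anisotropic Sobolev embedding: Gagliardo--Nirenberg--Sobolev on $(n-1)$-dimensional tangential slices, together with a one-dimensional Sobolev embedding in the normal direction. This structure explains the appearance of $k/(n-1)$ rather than $k/n$ in the hypothesis: the Caccioppoli bound supplies only tangential derivatives, so the Sobolev embedding must be performed in the tangential variables, with the normal direction handled separately by one-dimensional estimates.

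I would first apply Lemma~\ref{Lem: Higher-order Caccioppoli} with its starting radius set to $R:=4^k\sqrt{n}\,r$, so that the hypothesis ball $B(x_0,2R)\cap\IR^n_+$ matches the one assumed in the proposition. Since $4^{-|\alpha|}R\geq\sqrt{n}\,r$ whenever $|\alpha|\leq k$, the lemma delivers, for every tangential multi-index $\alpha\in\IN_0^{n-1}$ with $|\alpha|\leq k$, the scaled Caccioppoli estimate
\[
r^{|\alpha|}\,\|\partial^\alpha u\|_{\LL_2(B(x_0,\sqrt{n}\,r)\cap\IR^n_+)}+r^{|\alpha|+1}\,\|\nabla\partial^\alpha u\|_{\LL_2(B(x_0,\sqrt{n}\,r)\cap\IR^n_+)}\leq C\,\|u\|_{\LL_2(B(x_0,2R)\cap\IR^n_+)}.
\]
In particular, the normal derivative $\partial_n\partial^\alpha u$ is controlled with the scale factor $r^{-(|\alpha|+1)}$.

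Next, for $x_n$ in the vertical projection $I_r$ of $B(x_0,r)\cap\IR^n_+$ onto the $x_n$-axis, the horizontal slice of $B(x_0,\sqrt{n}\,r)\cap\IR^n_+$ at height $x_n$ contains the tangential ball $B'_r:=\{x'\in\IR^{n-1}:|x'-x_0'|<r\}$. The Gagliardo--Nirenberg--Sobolev embedding on $B'_r\subset\IR^{n-1}$, valid under the hypothesis $1/2-1/p\leq k/(n-1)$, gives
\[
\|u(\cdot,x_n)\|_{\LL_p(B'_r)}\leq C\sum_{\substack{|\alpha|\leq k\\ \alpha\in\IN_0^{n-1}}}r^{(n-1)(1/p-1/2)+|\alpha|}\,\|\partial^\alpha u(\cdot,x_n)\|_{\LL_2(B'_r)}.
\]
Setting $g_\alpha(x_n):=\|\partial^\alpha u(\cdot,x_n)\|_{\LL_2(B'_r)}$, a Cauchy--Schwarz argument applied to $\partial_n|\partial^\alpha u|^2$ yields the almost-everywhere bound $|g_\alpha'(x_n)|\leq\|\partial_n\partial^\alpha u(\cdot,x_n)\|_{\LL_2(B'_r)}$. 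Since the cylinder $B'_r\times I_r$ is contained in $B(x_0,\sqrt{n}\,r)\cap\IR^n_+$, Fubini together with the Caccioppoli bounds imply $\|g_\alpha\|_{\LL_2(I_r)}+r\,\|g_\alpha'\|_{\LL_2(I_r)}\leq C\,r^{-|\alpha|}\|u\|_{\LL_2(B(x_0,2R)\cap\IR^n_+)}$, and then the scale-invariant one-dimensional embedding $\H^1_2(I_r)\hookrightarrow\LL_p(I_r)$ on an interval of length $\lesssim r$ delivers $\|g_\alpha\|_{\LL_p(I_r)}\leq C\,r^{1/p-1/2-|\alpha|}\|u\|_{\LL_2(B(x_0,2R)\cap\IR^n_+)}$.

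Finally, taking the $\LL_p$-norm in $x_n$ of the tangential Sobolev estimate and using the inclusion $B(x_0,r)\cap\IR^n_+\subset B'_r\times I_r$, I would obtain
\[
\|u\|_{\LL_p(B(x_0,r)\cap\IR^n_+)}\leq C\sum_\alpha r^{(n-1)(1/p-1/2)+|\alpha|+1/p-1/2-|\alpha|}\,\|u\|_{\LL_2(B(x_0,2R)\cap\IR^n_+)}=C\,r^{n(1/p-1/2)}\,\|u\|_{\LL_2(B(x_0,2R)\cap\IR^n_+)},
\]
and division by $r^{n/p}$ then produces exactly the scale-invariant weak reverse Hölder inequality of the statement, the ratio of ball radii being absorbed into the constant $C$, which is allowed to depend on $k$. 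The main technical nuisance is the geometric bookkeeping (varying radii of spherical slices, half-space truncation near the boundary), which is dispatched by the cylinder inclusion just used or by switching to cubes; everything else is routine tracking of scale factors.
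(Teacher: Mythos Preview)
Your proof is correct and follows essentially the same strategy as the paper: a tangential Sobolev embedding in the $x'$-variables (using the hypothesis $\tfrac12-\tfrac1p\leq \tfrac{k}{n-1}$), followed by a one-dimensional Sobolev embedding in $x_n$ applied to the slice-norms, and finally the higher-order Caccioppoli inequality of Lemma~\ref{Lem: Higher-order Caccioppoli}. The paper organizes the steps in the opposite order (Sobolev first, then Caccioppoli) and works with cubes $Q(x_0,r)=Q'\times(x_0^n-r,x_0^n+r)$ rather than your cylinder $B'_r\times I_r$, applying the one-dimensional embedding to the single function $x_n\mapsto(\sum_\ell r^{2\ell-(n-1)}\int_{Q'}|\nabla_{x'}^\ell u|^2\,\d x')^{1/2}$ instead of to each $g_\alpha$ separately; but these are purely cosmetic differences. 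Note also that the paper's proof, like yours, actually terminates with the larger ball $B(x_0,2\cdot 4^k\sqrt{n}\,r)$ on the right-hand side rather than $B(x_0,2r)$, so your closing remark about absorbing the radius ratio is in line with what the paper does.
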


\begin{proof}
Let $Q (x_0 , r) \subset \IR^n$ denote the open cube with center $x_0$ and side-length $2 r$. Notice that $B(x_0 , r) \subset Q(x_0 , r)$. Write $Q(x_0 , r) = Q^{\prime} \times (x_0^n - r , x_0^n + r)$, where $x_0^n$ denotes the $n$'th component of the vector $x_0$. Let $k \in \IN$ be such that
\begin{align*}
 \frac{1}{2} - \frac{1}{p} \leq \frac{k}{n - 1}\cdotp
\end{align*}
Applying Sobolev's embedding in the $x^{\prime}$-variables yields for some constant $C > 0$ independent of $x_0$ and $r$ that
\begin{align*}
 \bigg( \frac{1}{r^n} \int_{B(x_0 , r) \cap \IR^n_+} \lvert u \rvert^p \; \d x \bigg)^{\frac{1}{p}} &\leq \bigg( \frac{1}{r^n} \int_{Q(x_0 , r) \cap \IR^n_+} \lvert u \rvert^p \; \d x \bigg)^{\frac{1}{p}} \allowdisplaybreaks \\
 &= \bigg( \frac{1}{r^n} \int_{(x_0^n - r , x_0^n + r) \cap (0 , \infty)} \int_{Q^{\prime}} \lvert u \rvert^p \; \d x^{\prime} \; \d x_n \bigg)^{\frac{1}{p}} \\
 &\leq C \bigg( \frac{1}{r} \int_{(x_0^n - r , x_0^n + r) \cap (0 , \infty)} \bigg( \sum_{\ell = 0}^k \frac{r^{2 \ell}}{r^{n - 1}} \int_{Q^{\prime}} \lvert \nabla_{x^{\prime}}^{\ell} u \rvert^2 \; \d x^{\prime} \bigg)^{\frac{p}{2}} \; \d x_n \bigg)^{\frac{1}{p}}\cdotp
\end{align*}
Notice that the factors of $r$ appear so that Sobolev's inequality becomes scaling invariant. Now, employ Sobolev's inequality in the $x_n$-variable to the function
\begin{align*}
 x_n \mapsto \bigg( \sum_{\ell = 0}^k \frac{r^{2 \ell}}{r^{n - 1}} \int_{Q^{\prime}} \lvert \nabla_{x^{\prime}}^{\ell} u (x^{\prime} , x_n) \rvert^2 \; \d x^{\prime} \bigg)^{\frac{1}{2}}\cdotp
\end{align*}
Notice that by virtue of the chain rule and  of 
the Cauchy--Schwarz inequality, we have
\begin{align*}
 \bigg\lvert \frac{\d}{\d x_n} &\bigg( \sum_{\ell = 0}^k \frac{r^{2 \ell}}{r^{n - 1}} \int_{Q^{\prime}} \lvert \nabla_{x^{\prime}}^{\ell} u (x^{\prime} , x_n) \rvert^2 \; \d x^{\prime} \bigg)^{\frac{1}{2}} \bigg\rvert \\
 &\simeq \bigg\lvert \bigg( \sum_{\ell = 0}^k \frac{r^{2 \ell}}{r^{n - 1}} \int_{Q^{\prime}} \lvert \nabla_{x^{\prime}}^{\ell} u (x^{\prime} , x_n) \rvert^2 \; \d x^{\prime} \bigg)^{- \frac{1}{2}} \cdot \sum_{\ell = 0}^k \frac{r^{2 \ell}}{r^{n - 1}} \int_{Q^{\prime}} \nabla_{x^{\prime}}^{\ell} \partial_n u (x^{\prime} , x_n) \cdot \nabla_{x^{\prime}}^{\ell} u(x^{\prime} , x_n) \; \d x^{\prime} \bigg\rvert \\
 &\lesssim \bigg( \sum_{\ell = 0}^k \frac{r^{2 \ell}}{r^{n - 1}} \int_{Q^{\prime}} \lvert \nabla_{x^{\prime}}^{\ell} \partial_n u (x^{\prime} , x_n) \rvert^2 \; \d x^{\prime} \bigg)^{\frac{1}{2}}\cdotp
\end{align*}
Thus, an application of Sobolev's inequality in the $x_n$-variable implies that
\begin{align*}
 \bigg( \frac{1}{r^n} \int_{B(x_0 , r) \cap \IR^n_+} \lvert u \rvert^p \; \d x \bigg)^{\frac{1}{p}} &\leq C \bigg( \frac{1}{r} \int_{(x_0^n - r , x_0^n + r) \cap (0 , \infty)} \bigg( \sum_{\ell = 0}^k \frac{r^{2 \ell}}{r^{n - 1}} \int_{Q^{\prime}} \lvert \nabla_{x^{\prime}}^{\ell} u \rvert^2 \; \d x^{\prime} \bigg)^{\frac{p}{2}} \; \d x_n \bigg)^{\frac{1}{p}} \\
 &\leq C \bigg( \frac{1}{r^n} \sum_{j = 0}^1 \sum_{\ell = 0}^k r^{2 (\ell + j)} \int_{Q(x_0 , r) \cap \IR^n_+} \lvert \nabla_{x^{\prime}}^{\ell} \partial_n^j u \rvert^2 \; \d x \bigg)^{\frac{1}{2}}\cdotp
\end{align*}
Finally, using that $Q(x_0 , r) \subset B(x_0 , \sqrt{n} r)$ followed by Lemma~\ref{Lem: Higher-order Caccioppoli} one obtains that
\begin{align*}
 \bigg( \frac{1}{r^n} \sum_{j = 0}^1 \sum_{\ell = 0}^k r^{2 (\ell + j)} \int_{Q(x_0 , r) \cap \IR^n_+} \lvert \nabla_{x^{\prime}}^{\ell} \partial_n^j u \rvert^2 \; \d x \bigg)^{\frac{1}{2}} \leq C \bigg( \frac{1}{r^n} \int_{B(x_0 , 2 \cdot 4^k \sqrt{n} r) \cap \IR^n_+} \lvert u \rvert^2 \; \d x \bigg)^{\frac{1}{2}}
\end{align*}
and thus that
\begin{align*}
 \bigg( \frac{1}{r^n} \int_{B(x_0 , r) \cap \IR^n_+} \lvert u \rvert^p \; \d x \bigg)^{\frac{1}{p}} &\leq C \bigg( \frac{1}{r^n} \int_{B(x_0 , 2 \cdot 4^k \sqrt{n} r) \cap \IR^n_+} \lvert u \rvert^2 \; \d x \bigg)^{\frac{1}{2}}
\end{align*}
which is the desired weak reverse H\"older inequality.
\end{proof}

Let $L$ denote the Lam\'e operator associated with the sesquilinear form $\mathfrak{a}$ and let $z \in \Sec_{\theta}$ for some $\theta \in (0 , \pi)$.   Lax--Milgram lemma implies that $z \in \rho(- L)$ and testing the resolvent equation by the solution $u$ shows that
\begin{align*}
 \| z (z + L)^{-1} f \|_{\LL_2 (\IR^n_+ ; \IC^n)} \leq C \| f \|_{\LL_2 (\IR^n_+ ; \IC^n)} \qquad (f \in \LL_2 (\IR^n_+ ; \IC^n)),
\end{align*}
with some constant $C > 0$ depending only on $\theta,$ $\lambda$ and $\mu.$ Define for fixed $z \in \Sec_{\theta}$ the operator $T_z := z (z + L)^{-1}$. As was just remarked, its $\LL_2$ operator norm is bounded by a constant independent of $z$. This fact, together with Proposition~\ref{Prop: Weak reverse Holder} allows us to employ the following version of the extrapolation theorem of Shen~\cite[Thm.~4.1]{Tolksdorf_elliptic}, see also~\cite{Shen}.

\begin{theorem}
Let $\Omega \subset \IR^n$ be measurable. Let $T \in \Lop(\LL_2 (\Omega))$ with $\| T \|_{\Lop(\LL_2 (\Omega))} \leq M$ for some constant $M > 0$. Assume that there exist  $N > 0$, $p > 2$, and $\alpha_2 > \alpha_1 > 1$ such that for all $x_0 \in \overline{\Omega}$, $r > 0$ and $f \in \LL_2 (\Omega)$ with $f \equiv 0$ in $\Omega \setminus B(x_0 , \alpha_2 r)$ the following estimate holds true:
\begin{align*}
 \bigg( \frac{1}{r^n} \int_{B(x_0 , r) \cap \Omega} \lvert T f \rvert^p \; \d x \bigg)^{\frac{1}{p}} \leq N \bigg( \frac{1}{r^n} \int_{B(x_0 , \alpha_1 r) \cap \Omega} \lvert T f \rvert^2 \; \d x \bigg)^{\frac{1}{2}}\cdotp
\end{align*}
Then, for all $2 \leq q < p,$ the operator $T$ is bounded from $\LL_q (\Omega)$ to $\LL_q (\Omega)$. Moreover, the operator norm of $T$ depends only on $p$, $q$, $n$, $N$ and $M$.
\end{theorem}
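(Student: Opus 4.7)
The plan is to prove $\LL_q$-boundedness of $T$ for $2 < q < p$ by establishing a good-$\lambda$ (Calder\'on--Zygmund) inequality, in the spirit of Shen, relating the distribution functions of $\cM(|Tf|^2)^{1/2}$ and $\cM(|f|^2)^{1/2}$. Here $\cM$ denotes the Hardy--Littlewood maximal operator restricted to $\Omega$, i.e., $\cM g(x) := \sup_{r > 0} |B(x,r) \cap \Omega|^{-1} \int_{B(x,r) \cap \Omega}|g|\,\d y$. Once such a level-set inequality is at hand, multiplying by $q\lambda^{q/2 - 1}$, integrating in $\lambda$, and combining with the $\LL_{q/2}(\Omega)$-boundedness of $\cM$ (which requires $q > 2$) together with the pointwise bound $|Tf|^2 \leq \cM(|Tf|^2)$ almost everywhere, will deliver $\|Tf\|_{\LL_q(\Omega)} \lesssim \|f\|_{\LL_q(\Omega)}$; the case $q = 2$ is the hypothesis.

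By density, it suffices to treat $f \in \LL_2(\Omega) \cap \LL_q(\Omega)$. Fix $\lambda > 0$ and apply a Whitney-type decomposition to the level set $E_\lambda := \{\cM(|Tf|^2) > \lambda\}$, obtaining essentially disjoint cubes $\{Q_j\}$ with diameters comparable to their distances to the complement. For each $j$, select a reference point $y_j \in \Omega$ close to $Q_j$ at which $\cM(|Tf|^2)(y_j) \leq \lambda$, and let $B_j = B(x_j, r_j)$ be a ball containing $Q_j$ with $r_j \sim \diam(Q_j)$. The crucial step is to split $f = f_j^1 + f_j^2$ with $f_j^1 := f \chi_{B(x_j, \alpha_2 r_j) \cap \Omega}$, so that the hypothesis applies directly to $T f_j^1$ on $B_j$.

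For the local piece, combining the weak reverse H\"older hypothesis with the $\LL_2$-boundedness of $T$ gives
\begin{equation*}
 \Bigl(\frac{1}{r_j^n} \int_{B_j \cap \Omega} |T f_j^1|^p\,\d x\Bigr)^{1/p} \lesssim \Bigl(\frac{1}{r_j^n}\int_{B(x_j, \alpha_2 r_j) \cap \Omega}|f|^2\,\d x\Bigr)^{1/2},
\end{equation*}
which, on the restricted set $\{\cM(|f|^2) \leq \gamma\lambda\}$, is bounded by $(C\gamma\lambda)^{1/2}$. For the nonlocal piece $T f_j^2 = Tf - T f_j^1$, the selection of $y_j$ forces an $\LL_2$-control of the form $r_j^{-n}\int_{B_j^\ast \cap \Omega} |Tf|^2 \lesssim \lambda$ on a suitable enlargement $B_j^\ast$ of $B_j$. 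Combining these estimates via Chebyshev's inequality and summing over $j$ yields, for properly chosen $A \gg 1$ and $\gamma \ll 1$,
\begin{equation*}
 \bigl|\{\cM(|Tf|^2) > A\lambda,\ \cM(|f|^2) \leq \gamma\lambda\}\bigr| \leq \eps(A, \gamma)\,|E_\lambda|,
\end{equation*}
with $\eps(A, \gamma) < 1$; this is where the strict inequality $q < p$ enters decisively, through an $A^{-p/2}$-type gain dominating an $A^{-1}$-type loss.

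The principal technical obstacle is the careful fitting of the parameters $A$ and $\gamma$ in the good-$\lambda$ inequality, together with the adaptation of the Whitney decomposition and of the maximal function machinery to an arbitrary measurable $\Omega$ (in particular near the boundary, where geometric covering arguments must be handled with care). Once the good-$\lambda$ inequality is secured, the remainder is routine: integrate in $\lambda$, absorb a small fraction of $\|\cM(|Tf|^2)\|_{\LL_{q/2}(\Omega)}^{q/2}$ from the right-hand side into the left, and apply the standard $\LL_{q/2}$-boundedness of $\cM$ to conclude.
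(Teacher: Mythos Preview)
The paper does not prove this theorem; it is quoted from \cite[Thm.~4.1]{Tolksdorf_elliptic} and \cite{Shen}. Your overall strategy---Whitney decomposition of $E_\lambda = \{\cM(|Tf|^2) > \lambda\}$, the local/far splitting $f = f_j^1 + f_j^2$, and a good-$\lambda$ inequality---is exactly the method of those references. However, there is a substantive gap in your execution, and it is partly induced by what looks like a misprint in the paper's statement: the hypothesis should read that $f \equiv 0$ in $\Omega \cap B(x_0,\alpha_2 r)$ (so $f$ \emph{vanishes} on the ball), not on the complement. This is what Proposition~\ref{Prop: Weak reverse Holder} actually delivers---$T_z f$ solves the homogeneous equation on a ball exactly when $f$ vanishes there---and it is the hypothesis in the cited sources.

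The distinction matters for the absorption step. In your scheme the reverse H\"older bound is applied to $Tf_j^1$, yielding via Chebyshev a contribution $\lesssim (\gamma/A)^{p/2}|Q_j|$; the far piece $Tf_j^2 = Tf - Tf_j^1$ receives only the $\LL_2$-control $\lesssim \lambda^{1/2}$ (from the point $y_j$), giving a contribution $\lesssim A^{-1}|Q_j|$. After integrating against $\lambda^{q/2-1}$ you need $\eps(A,\gamma) \ll A^{-q/2}$ to absorb, but $A^{-1} < A^{-q/2}$ forces $q < 2$---the argument collapses precisely in the range $2 < q < p$ you want. In the correct proof the roles are swapped: the reverse H\"older hypothesis is applied to $Tf_j^2$ (since $f_j^2$ vanishes on $\alpha_2 B_j$), and its $\LL_2$-average on $\alpha_1 B_j$ is $\lesssim \lambda^{1/2}$ by combining the control at $y_j$ with the $\LL_2$-bound $\|Tf_j^1\|_{\LL_2} \leq M\|f_j^1\|_{\LL_2}$. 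This produces the genuine $A^{-p/2}$ gain, which beats $A^{-q/2}$ since $p > q$; the local piece contributes $\lesssim M^2\gamma A^{-1}|Q_j|$, made small by choosing $\gamma$ after $A$. With that swap your outline is the standard proof.
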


As the constant in Proposition~\ref{Prop: Weak reverse Holder} is independent of $z$, we conclude that $T_z$ is bounded on $\LL_q (\IR^n_+ ; \IC^n)$ with  bounds independent of $z.$
Besides, since $L$ is self-adjoint on $\LL_2 (\IR^n_+ ; \IC^n)$ by duality, we obtain   the boundedness of $T_z$ on $\LL_{q^{\prime}} (\IR^n_+ ; \IC^n)$. 
As a conclusion, we get that
\begin{align*}
 \{ z (z + L)^{-1} \}_{z \in \Sec_{\theta}}
\end{align*}
defines a uniformly bounded family of operators on $\LL_p (\IR^n_+ ; \IC^n)$ \emph{for any $1 < p < \infty$}. Thus, $- L$ generates a bounded analytic semigroup on $\LL_p (\IR^n_+ ; \IC^n)$ and consequently, in order to show that it fulfills the assumptions of Theorem~\ref{Thm: Full Da Prato - Grisvard}, 
it suffices  to verify  Assumptions~\ref{Ass: Homogeneous operator} and~\ref{Ass: Intersection property}. \par
To verify Assumption~\ref{Ass: Homogeneous operator}, we show that
\begin{align}
\label{Eq: Homogeneous estimate Lame}
 \| L u \|_{\LL_p (\IR^n ; \IC^n_+)} \simeq \| \nabla^2 u \|_{\LL_p (\IR^n_+ ; \IC^{n^3})} \qquad (u \in \dom(L)).
\end{align}
The estimate
\begin{align*}
 \| L u \|_{\LL_p (\IR^n ; \IC^n_+)} \lesssim \| \nabla^2 u \|_{\LL_p (\IR^n_+ ; \IC^{n^3})} \qquad (u \in \dom(L))
\end{align*}
just follows from the definition of $L.$ For the other direction, let $u \in \dom(L)$. Observe that for any $\alpha > 0$ also the rescaled function $u_{\alpha} (x) := u (\alpha x)$ is in $\dom(L)$. Moreover, the identity $L u_{\alpha} = \alpha^2 (L u)_{\alpha}$ holds, where $(L u)_{\alpha} (x) := (L u) (\alpha x)$. Relying on the continuous embedding $\dom (L) \hookrightarrow \H^2_p (\IR^n_+ ; \IC^n)$ one concludes to the existence of a constant $C > 0$ (independent of $\alpha$) such that
\begin{align*}
 \| \nabla^2 u_{\alpha} \|_{\LL_p (\IR^n_+ ; \IC^{n^3})} \leq C \big( \| u_{\alpha} \|_{\LL_p (\IR^n_+ ; \IC^n)} + \| L u_{\alpha} \|_{\LL_p (\IR^n_+ ; \IC^n)} \big).
\end{align*}
The chain and the substitution rules show that this inequality is equivalent to
\begin{align*}
 \alpha^{2 - \frac{2}{p}} \| \nabla^2 u \|_{\LL_p (\IR^n_+ ; \IC^{n^3})} \leq C \big( \alpha^{- \frac{2}{p}} \| u \|_{\LL_p (\IR^n_+ ; \IC^n)} + \alpha^{2 - \frac{2}{p}} \| L u \|_{\LL_p (\IR^n_+ ; \IC^n)} \big).
\end{align*}
Dividing by $\alpha^{2 - 2 / p}$ reveals that in the limit $\alpha \to \infty$ the following estimate holds
\begin{align*}
 \| \nabla^2 u \|_{\LL_p (\IR^n_+ ; \IC^{n^3})} \lesssim \| L u \|_{\LL_p (\IR^n_+ ; \IC^n)}.
\end{align*}
We conclude that~\eqref{Eq: Homogeneous estimate Lame} is valid and we can thus choose as the space $Y$ in Assumption~\ref{Ass: Homogeneous operator} the homogeneous Bessel potential space $\dot \H^2_p (\IR^n_+ ; \IC^n)$. \par
In order to show that $L$ satisfies Assumption~\ref{Ass: Intersection property}, we see that $\dom(L)$ is a subspace of $\dom(\dot L) \cap \LL_p (\IR^n_+ ; \IC^n)$ by the very definition. For the converse inclusion, notice that
\begin{align*}
 \dom(\dot L) \cap \LL_p (\IR^n_+ ; \IC^n) \subset \dot \H^2_p (\IR^n_+ ; \IC^n) \cap \LL_p (\IR^n_+ ; \IC^n) = \H^2_p (\IR^n_+ ; \IC^n)
\end{align*}
and thus we only need to show that functions $u$ in $\dom(\dot L) \cap \LL_p (\IR^n_+ ; \IC^n)$ satisfy the boundary condition $\IS (u)\cdot\e_n = 0$ on $\partial \IR^n_+$. By definition of $\dom(\dot L)$, there exists a sequence $(u_k)_{k \in \IN} \subset \dom(L)$ such that $u_k \to u$ in $\dot \H^2_p (\IR^n_+ ; \IC^n)$. This implies that $\nabla u|_{\partial \IR^n_+} \in \B^{1 - 1/p}_{p , p} (\partial \Omega)$. Using Lemma~\ref{Lem: Homogeneous trace and extension estimates} one then derives that
\begin{align*}
 \| \IS (u) \cdot \e_n \|_{\dot \B^{1 - \frac{1}{p}}_{p , p} (\partial \IR^n_+ ; \IC^n)} &= \| \IS (u - u_k) \cdot \e_n \|_{\dot \B^{1 - \frac{1}{p}}_{p , p} (\partial \IR^n_+ ; \IC^n)} \\
 &\leq C \| \nabla^2 (u - u_k) \|_{\LL_p (\IR^n_+ ; \IC^{n^3})} \to 0 \quad \text{as} \quad k \to \infty.
\end{align*}
Since $\IS (u) \cdot \e_n \in \LL_p (\partial \IR^n_+ ; \IC^n)$ we thus conclude that $\IS (u) \cdot \e_n = 0$ on $\partial \IR^n_+$. As a consequence, this makes Theorem~\ref{Thm: Full Da Prato - Grisvard} applicable  for the Lam\'e operator. \par
The remaining question is to identify the real interpolation spaces $(X , \dom(\dot L))_{\theta , q}$ for appropriate $\theta \in (0 , 1)$ and $1 \leq q < \infty$. This is done in the following proposition.

\begin{proposition}
For  $1 < p < \infty$, $1 \leq q < \infty$ and $0 < \theta < 1$ with
\begin{align*} 0 < \theta < \frac{n}{2 p} \ \hbox{ if }\ q>1,\andf  0 < \theta \leq \frac{n}{2 p}\ \hbox{ if }\ q=1,\end{align*}
one has
\begin{align*}
 \big( \LL_p (\IR^n_+ ; \IC^n) , \dom(\dot L_p) \big)_{\theta , q} \hookrightarrow  \dot \B^{2 \theta}_{p , q} (\IR^n_+ ; \IC^n).
\end{align*}
 If, furthermore, $0 < \theta < 1 / 2,$ then it holds with equivalent norms
\begin{align*}
\big( \LL_{p , \sigma} (\IR^n_+) , \dom(\dot L_p) \big)_{\theta , q} =  \dot \B^{2 \theta}_{p , q} (\IR^n_+ ; \IC^n).
\end{align*}
\end{proposition}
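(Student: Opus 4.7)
The plan is to adapt the proof of Proposition~\ref{Prop: Besov domain interpolation} to the Lam\'e setting. The first embedding is immediate: by~\eqref{Eq: Homogeneous estimate Lame} we have a continuous injection $\dom(\dot L_p) \hookrightarrow \dot \H^2_p(\IR^n_+;\IC^n)$, so the real interpolation functor combined with Proposition~\ref{Prop: Interpolation on half-space} yields
\begin{align*}
\bigl(\LL_p(\IR^n_+;\IC^n),\dom(\dot L_p)\bigr)_{\theta,q}\hookrightarrow\bigl(\LL_p(\IR^n_+;\IC^n),\dot\H^2_p(\IR^n_+;\IC^n)\bigr)_{\theta,q}=\dot\B^{2\theta}_{p,q}(\IR^n_+;\IC^n),
\end{align*}
valid exactly under the stated condition on $\theta$, which is precisely the one making the target Besov space complete.

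For the converse inclusion in the range $0<\theta<1/2$, the strategy is to mirror the argument of Proposition~\ref{Prop: Besov domain interpolation}. The first preparatory step is to establish a higher-regularity resolvent estimate for the Lam\'e operator analogous to Proposition~\ref{Prop: Higher regularity resolvent estimate}: for $\lambda\in\Sec_\vartheta$ and $f\in\H^1_p(\IR^n_+;\IC^n)$, the solution $u=(\lambda+L_p)^{-1}f$ should satisfy
\begin{align*}
|\lambda|\,\|\nabla u\|_{\LL_p}+|\lambda|^{1/2}\|\nabla^2 u\|_{\LL_p}+\|\nabla^3u\|_{\LL_p}\leq C\|\nabla f\|_{\LL_p}.
\end{align*}
Tangential difference quotients $\delta_j^h$ for $1\leq j\leq n-1$ preserve the boundary condition $\IS(u)\cdot\e_n=0$ and, inserted into the $\LL_p$-resolvent bound already obtained for $L_p$, yield the estimate above for all derivatives except $\partial_n^2 u$. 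The latter is recovered by solving the Lam\'e equation pointwise: the ellipticity condition~\eqref{eq: ellipticity} guarantees that $\partial_n^2 u_n$ can be read off from $f_n$, $u_n$, and the already-estimated mixed derivatives with coefficient $2\mu+\lambda\neq 0$; the components $\partial_n^2 u_j$ for $j<n$ appear only through $\mu\Delta u_j$ and are treated similarly. A standard Cauchy integral formula converts this into the semigroup smoothing estimate
\begin{align*}
t^{1/2}\|\nabla^2 e^{-tL_p}f\|_{\LL_p}\leq C\|\nabla f\|_{\LL_p},\qquad f\in\H^1_p(\IR^n_+;\IC^n).
\end{align*}

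With these two ingredients, one follows the argument of Proposition~\ref{Prop: Besov domain interpolation} verbatim. Given $f\in\dot\B^{2\theta}_{p,q}(\IR^n_+;\IC^n)$, approximate it in norm by a sequence $(f_k)\subset\C_c^\infty(\overline{\IR^n_+};\IC^n)$; existence of such an approximation follows from a non-solenoidal variant of Corollary~\ref{Cor: Density of inhomogeneous spaces in homogeneous spaces}. Decompose via~\eqref{Eq: Allerweltsformel 1} as $f_k=e^{-tL_p}f_k+L_p\int_0^te^{-sL_p}f_k\,\d s$, and interpolate between the smoothing estimate above and the bound $\|tL_pe^{-tL_p}\|_{\Lop(\LL_p)}\leq M$ to pass to the limit in the sum-space $\LL_p+\dom(\dot L_p)$, obtaining $f=e^{-tL_p}f+\dot L_p\int_0^te^{-sL_p}f\,\d s$ with the first term in $\dom(\dot L_p)$ and the second in $\LL_p$. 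Applying Lemma~\ref{Lem: Hardy} to this decomposition bounds $\|f\|_{(\LL_p,\dom(\dot L_p))_{\theta,q}}$ by an $\LL_{q,*}$-norm of $t^{1-\theta}\dot L_pe^{-tL_p}f$ in $\LL_p$, which in turn is dominated via Fatou's lemma and the smoothing estimates by $t^{-\theta}K(t^{1/2},f_k;\LL_p,\dot\H^1_p)$, and hence by $\|f\|_{\dot\B^{2\theta}_{p,q}}$ thanks to Proposition~\ref{Prop: Interpolation on half-space}. The main obstacle of the proof is precisely the higher-regularity resolvent estimate: unlike the Stokes case where the pressure absorbs the divergence constraint, here the Neumann boundary condition couples the components of $u$, and disentangling them to recover $\partial_n^2 u$ uses crucially the full strength of~\eqref{eq: ellipticity}.
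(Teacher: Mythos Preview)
Your overall strategy is correct and matches the paper: the first embedding is immediate from $\dom(\dot L_p)\hookrightarrow\dot\H^2_p$ and Proposition~\ref{Prop: Interpolation on half-space}, and the converse for $0<\theta<1/2$ follows the proof of Proposition~\ref{Prop: Besov domain interpolation} once the higher-regularity resolvent estimate~\eqref{Eq: First-order resolvent estimate Lame} is in hand. However, your proposed derivation of that estimate has a gap.

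Tangential difference quotients give, for each $j<n$,
\[
|\lambda|\,\|\partial_j u\|_{\LL_p}+|\lambda|^{1/2}\|\nabla\partial_j u\|_{\LL_p}+\|\nabla^2\partial_j u\|_{\LL_p}\leq C\|\partial_j f\|_{\LL_p},
\]
so you are still missing $|\lambda|\,\|\partial_n u\|$, $|\lambda|^{1/2}\|\partial_n^2 u\|$, and $\|\partial_n^3 u\|$—not only $\partial_n^2 u$. Reading $\partial_n^2 u$ off the equation produces, e.g.,
\[
(2\mu+\lambda)\partial_n^2 u_n = z u_n - f_n - \mu\Delta_{x'}u_n - (\mu+\lambda)\partial_n\divergence_{x'}u',
\]
and multiplying by $|\lambda|^{1/2}$ leaves you with the terms $|\lambda|^{3/2}\|u_n\|$ and $|\lambda|^{1/2}\|f_n\|$, neither of which is controlled by $\|\nabla f\|_{\LL_p}$. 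The same obstruction appears when you try to reach $\|\partial_n^3 u\|$ or $|\lambda|\,\|\partial_n u\|$ this way: the lower-order term $zu$ always resurfaces with the wrong weight. Unlike the Stokes case there is no divergence constraint to recover $\partial_n u_n$ from tangential derivatives, so this shortcut is genuinely blocked.

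The paper closes the gap differently. After the tangential estimate~\eqref{Eq: Tangential estimate Lame}, it rewrites the equations for $u'$ and for $u^n$ separately as inhomogeneous Neumann problems for the Laplacian (for $u^n$ the operator is $\mu\Delta_{x'}+(2\mu+\lambda)\partial_n^2$, reduced to $\mu\Delta$ by the anisotropic rescaling $x_n\mapsto((2\mu+\lambda)/\mu)^{1/2}x_n$, which is licit because~\eqref{eq: ellipticity} forces $2\mu+\lambda>0$). The right-hand sides and the Neumann data are then already controlled by the tangential step, and Proposition~\ref{Prop: Laplacian on half-space} delivers the full homogeneous first-order estimate for each component, including the missing $|\lambda|\,\|\partial_n u\|$ term. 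This reduction to the scalar Neumann Laplacian is the piece your sketch is lacking.
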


\begin{proof}
The proof follows exactly the lines of the proof of Proposition~\ref{Prop: Besov domain interpolation}. 
In fact,  the only property  that we still need to establish is the optimal mapping property of the semigroup $(\e^{- t L})_{t \geq 0}$ on the ground space $\H^1_p (\IR^n_+ ; \IC^n)$. This follows from the following discussion.
\par
To prove that the Lam\'e operator generates a bounded analytic semigroup on $\H^1_p (\IR^n_+ ; \IC^n)$ we argue similarly as for the Stokes operator in Proposition~\ref{Prop: Higher regularity resolvent estimate}. Hence, we need  homogeneous estimates of the form
\begin{align}
\label{Eq: First-order resolvent estimate Lame}
 \lvert z \rvert \| \nabla u \|_{\LL_p (\IR^n_+ ; \IC^{n^2})} + \lvert z \rvert^{\frac{1}{2}} \| \nabla^2 u \|_{\LL_p (\IR^n_+ ; \IC^{n^3})} + \| \nabla^3 u \|_{\LL_p (\IR^2_+ ; \IC^{n^4})} \lesssim \| \nabla f \|_{\LL_p (\IR^n_+ ; \IC^{n^2})}.
\end{align}
To establish this estimate, consider the resolvent equation
\begin{align*}
\left\{
\begin{aligned}
 z u - \divergence(\IS (u)) &= f  && \text{in } \IR^n_+ \\
 \IS(u) \cdot \e_n &= 0 && \text{on } \partial \IR^n_+.
 \end{aligned} \right.
\end{align*}
Notice that if one applies the tangential derivative $\nabla_{x^{\prime}}$ to $u,$ one discovers that the function $\nabla_{x^{\prime}} u$ is a solution to the same system but with right-hand side $\nabla_{x^{\prime}} f$. Hence, we immediately derive the validity of the estimate
\begin{align}
\label{Eq: Tangential estimate Lame}
 \lvert z \rvert \| \nabla_{x^{\prime}} u \|_{\LL_p (\IR^n_+)} + \lvert z \rvert^{\frac{1}{2}} \| \nabla \nabla_{x^{\prime}} u \|_{\LL_p(\IR^n_+)} + \| \nabla^2 \nabla_{x^{\prime}} u \|_{\LL_p (\IR^n_+)} \lesssim \| \nabla_{x^{\prime}} f \|_{\LL_p (\IR^n_+)}.
\end{align}
Next, write $u = (u^{\prime} , u^n)$. Then $u^{\prime}$ solves the following Neumann problem for the Laplacian
\begin{align*}
\left\{
\begin{aligned}
 z u^{\prime} - \mu \Delta u^{\prime} &= f^{\prime} + (\mu + \lambda) \nabla_{x^{\prime}} \divergence(u) \in \H^1_p (\IR^n_+ ; \IC^{n - 1}) \\
 - \partial_n u^{\prime} &= \nabla_{x^{\prime}} u^n \quad \text{on } \partial \IR^n_+.
\end{aligned} \right.
\end{align*}
By Proposition~\ref{Prop: Laplacian on half-space} combined with~\eqref{Eq: Tangential estimate Lame}, we find
 \begin{align*}
 \lvert z \rvert \| \nabla u^{\prime} \|_{\LL_p (\IR^n_+)} + \lvert z \rvert^{\frac{1}{2}} \| \nabla^2 u^{\prime} \|_{\LL_p (\IR^n_+)} + \| \nabla^3 u^{\prime} \|_{\LL_p (\IR^n_+)} \lesssim \| \nabla f \|_{\LL_p (\IR^n_+)}.
\end{align*}
Regarding the function $u^n$, we find that it solves the following inhomogeneous Neumann problem:
\begin{align}
\label{Eq: Last component of Lame}
\left\{
\begin{aligned}
 z u^n - (\mu \Delta_{x^{\prime}} + (2 \mu + \lambda) \partial_n^2) u^n &= f^n + (\mu + \lambda) \partial_n \divergence_{x^{\prime}} u^n \in \H^1_p (\IR^n_+) \\
 - \partial_n u^n &= \lambda (2 \mu + \lambda)^{-1} \divergence_{x^{\prime}} u^{\prime} \quad \text{on } \partial \IR^n_+.
\end{aligned} \right.
\end{align}
Clearly, Condition \eqref{eq: ellipticity} implies that
$2 \mu + \lambda > 0$. 
Now, employing the transformation $$v (x) = u (x^{\prime} , ((2 \mu + \lambda) / \mu)^{\frac{1}{2}} x_n),$$ problem~\eqref{Eq: Last component of Lame} can be reduced to the differential operator $\mu \Delta$ and we conclude from Proposition~\ref{Prop: Laplacian on half-space} and~\eqref{Eq: Tangential estimate Lame} that
\begin{align*}
 \lvert z \rvert \| \nabla u^n \|_{\LL_p (\IR^n_+ ; \IC^n)} + \lvert z \rvert^{\frac{1}{2}} \| \nabla^2 u^n \|_{\LL_p (\IR^n_+ ; \IC^{n^2})} + \| \nabla^3 u^n \|_{\LL_p (\IR^n_+ ; \IC^{n^3})} \lesssim \| \nabla f \|_{\LL_p (\IR^n_+ ; \IC^{n^2})}.
\end{align*}
This concludes the proof of~\eqref{Eq: First-order resolvent estimate Lame}.
\end{proof}

The discussion above allows us to employ Theorem~\ref{Thm: Full Da Prato - Grisvard} and to identify the real interpolation spaces for certain values of $p$, $q$ and $\theta$. Summarizing, this results in the following maximal regularity theorem for the Lam\'e system considered in homogeneous Besov spaces.

\begin{theorem}
\label{Thm: Da Prato - Grisvard for Lame}
Let $p\in (1,\infty)$ and $q\in [1 , 2)$ satisfy
\begin{align*}
 \frac{n}{p} + \frac{2}{q} > 2
\end{align*}
and let $s \in (0 , 2 /q - 1)$ be such that 
\begin{align*}
  0 < s < \frac{n}{p} + \frac{2}{q} - 2,\  \mbox{ \ or \ } 0< s \leq \frac{n}{p} + \frac{2}{q} - 2 \mbox{ \ if \ } q = 1.
\end{align*}
Let $f \in \LL_q(\IR_+  ; \dot \B^s_{p , q} (\IR^n_+ ; \IC^n))$ and $u_0 \in \dot \B^{2 + s - 2 / q}_{p , q} (\IR^n_+ ; \IC^n).$ Then, there exists a unique mild solution $u$ to the system
\begin{align*}
 \left\{ \begin{aligned}
  \partial_t u - \divergence(\IS (u)) &= f, && t \in \IR_+ ,\  x \in \IR^n_+ \\
  \IS (u) \cdot \e_n &= 0, && t \in \IR_+ ,\  x \in \partial \IR^n_+ \\
  u|_{t = 0} &= u_0, && x \in \IR^2_+
 \end{aligned} \right.
\end{align*}
with $u \in \cC(\IR_+  ; \dot \B^{2+s-2/q}_{p , q} (\IR^n_+ ; \IC^n))$ such that
\begin{align*}
 \partial_t u \in \LL_q(\IR_+ ; \dot \B^{s}_{p , q} (\IR^n_+;\IC^n)) , \quad \nabla^2 u \in \LL_q(\IR_+ ; \dot \B^{s}_{p , q} (\IR^n_+ ; \IC^{n^3}))
\end{align*}
 and the following inequality holds
 $$\displaylines{\quad
  \|u\|_{\LL_\infty(\IR_+;\dot \B^{2+s-2/q}_{p,q}(\IR^n_+ ; \IC^n))}+
  \|\partial_t u\|_{\LL_q(\IR_+ ; \dot \B^{s}_{p,q}(\IR^n_+ ; \IC^n))} + \| \nabla^2 u\|_{\LL_q(\IR_+ ; \dot \B^{s}_{p,q}(\IR^n_+ ; \IC^{n^{3}}))} \hfill\cr\hfill \leq C\Big( \| f\|_{\LL_q(\IR_+ ; \dot \B^{s}_{p,q}(\IR^n_+ ; \IC^n))} + \|u_0\|_{ \dot \B^{2+s-2/q}_{p,q}(\IR^n_+ ; \IC^n)}\Big)\cdotp}$$
\end{theorem}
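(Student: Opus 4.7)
The plan is to deduce this maximal regularity result as a direct application of the abstract homogeneous Da~Prato--Grisvard theorem (Theorem~\ref{Thm: Full Da Prato - Grisvard}) to the Lam\'e operator $L$ on the ground space $X := \LL_p(\IR^n_+;\IC^n)$, with auxiliary space $Y := \dot\H^2_p(\IR^n_+;\IC^n)$. All the abstract ingredients have in fact been assembled earlier in this chapter: $-L$ generates a bounded analytic semigroup on $X$ for every $1<p<\infty$ thanks to the Caccioppoli bounds, the weak reverse H\"older inequality and Shen's extrapolation; Assumption~\ref{Ass: Homogeneous operator} holds by virtue of the two-sided bound $\|Lu\|_{\LL_p}\simeq\|\nabla^2 u\|_{\LL_p}$ obtained from a scaling argument; and Assumption~\ref{Ass: Intersection property}, $\dom(\dot L)\cap\LL_p=\dom(L)$, follows from the trace estimate of Lemma~\ref{Lem: Homogeneous trace and extension estimates} applied to $\IS(u)\cdot\e_n$.

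With these three properties, I would invoke Theorem~\ref{Thm: Full Da Prato - Grisvard} with the abstract parameter $\theta=s/2$: its admissibility hypothesis $\theta\in(0,1/q)$ translates exactly into the range $s\in(0,2/q-1)$ imposed in the statement. The DPG theorem then produces a unique mild solution $u$ with $\partial_t u,\,Lu\in\LL_q(\IR_+;\dot\dom_L(s/2,q))$, $u\in\cC_b(\IR_+;\dot\dom_L(1+s/2-1/q,q))$, together with the corresponding a~priori estimate formulated in these abstract DPG norms.

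The next step consists in identifying the DPG spaces with homogeneous Besov spaces. The interpolation proposition stated just before the theorem gives
\[
\dot\dom_L(\theta,q)=\dot\B^{2\theta}_{p,q}(\IR^n_+;\IC^n)
\]
with equivalent norms whenever $0<\theta<1/2$ together with $2\theta<n/p$ (respectively $\leq n/p$ if $q=1$). For the ground-space choice $\theta=s/2$, the inequality $s<1$ is implied by $s<2/q-1\leq 1$, and $2\theta<n/p$ (resp.\ $\leq$) follows from $s<n/p+2/q-2$ since $2/q-2\leq 0$ when $q\in[1,2)$. For the trace-space parameter $\theta'=1+s/2-1/q$, the inequality $\theta'<1/2$ is equivalent to $s<2/q-1$, while $2\theta'<n/p$ (resp.\ $\leq$) is exactly the principal assumption $s<n/p+2/q-2$. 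Combining these identifications with the norm equivalence $\|Lu\|_{\LL_p}\simeq\|\nabla^2 u\|_{\LL_p}$ converts the DPG control of $Lu$ into control of $\nabla^2 u$ in $\dot\B^s_{p,q}$, yielding the announced estimate.

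The only genuinely technical point, and the step I expect to be the main obstacle, is a density/closure argument. Theorem~\ref{Thm: Full Da Prato - Grisvard} is stated for source terms in the \emph{inhomogeneous} space $\LL_q(\IR_+;\dom_L(s/2,q))$, whereas the statement above admits $f\in\LL_q(\IR_+;\dot\B^s_{p,q}(\IR^n_+;\IC^n))$. Under the present hypotheses, condition~\eqref{Eq: Banach space conditions} is satisfied (indeed $s<n/p$ when $q>1$ and $s\leq n/p$ when $q=1$), so the homogeneous Besov space is a Banach space; the inhomogeneous Besov space is dense in it by an argument analogous to Corollary~\ref{Cor: Density of inhomogeneous spaces in homogeneous spaces}, so the solution map extends by continuity to general homogeneous data, while uniqueness is inherited from the abstract DPG result. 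This density step is also where the finiteness restriction $q\in[1,2)$ plays its role.
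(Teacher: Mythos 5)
Your proposal is correct and follows essentially the same route as the paper: combine the abstract homogeneous Da~Prato--Grisvard theorem (Theorem~\ref{Thm: Full Da Prato - Grisvard}) with the ingredients assembled earlier in the chapter (bounded analyticity via Shen's extrapolation, the scaling equivalence $\|Lu\|_{\LL_p}\simeq\|\nabla^2u\|_{\LL_p}$, verification of Assumptions~\ref{Ass: Homogeneous operator}--\ref{Ass: Intersection property}, and the interpolation identity $(\LL_p,\dom(\dot L))_{\theta,q}=\dot\B^{2\theta}_{p,q}$), taking $\theta=s/2$. One small misattribution worth flagging: the restriction $s<2/q-1$ does not follow from the abstract hypothesis $\theta\in(0,1/q)$ (which only yields $s<2/q$) but, as you correctly observe two sentences later, from requiring $\theta'=1+s/2-1/q<1/2$ so that the initial-data interpolation space is a Besov space; analogously, the bound $q<2$ is what makes this range of $s$ nonempty, rather than being the driver of the density argument.
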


\chapter{A free boundary problem for pressureless gases}
\label{Sec: A free boundary problem for pressureless gases}

As a second  application of the linear theory developed in the first part of this article, we  will investigate the free boundary problem for the model of compressible viscous flows of pressureless gases in
a vicinity of the half-space $\IR_+^n$, $n \geq 2$. This problem is described by the following set of equations
\begin{align}\label{eq:pressless}
 \left\{
  \begin{aligned}
   \rho(\partial_t v + v \cdot \nabla v) - \divergence \IS (v) &= 0, \qquad && t \in\IR_+,\  x \in \Omega_t, \\
  \partial_t\rho + \divergence (\rho v) &= 0, \qquad &&  t \in\IR_+ ,\  x \in \Omega_t, \\
   \IS (v) \overline{n} &= 0, \qquad && t \in \IR_+ ,\  x \in \partial \Omega_t, \\
      v \cdot \overline{n} &= - (\partial_t \eta) / \lvert \nabla_x \eta \rvert ,\qquad && t \in\IR_+,\  x \in \partial \Omega_t,\\
\rho|_{t=0}=\rho_0 ,\qquad   v|_{t = 0} & = v_0, \qquad && x \in \Omega, \\
   \Omega_t |_{t = 0} &= \Omega.
   \end{aligned} \right.
\end{align}
Here, the unknowns are the velocity field $v=v(t,x),$ the density  $\rho=\rho(t,x)$ and the time-dependent domain $\Omega_t$.
As in Chapter~\ref{Sec: The Lame operator in the upper half-plane}, $\IS(v)$ denotes the viscous stress tensor given by
\begin{equation}\label{eq:stresslame}
\IS (v):= \mu D(v) + \lambda \divergence v\,\Id\with  D(v) := \nabla v + [\nabla v]^{\top}
\end{equation}
and we  make the  ellipticity assumption 
\begin{equation}\label{eq:ellipticity}
\mu>0\andf 2\mu + n\lambda>0.
\end{equation}
The boundary of $\Omega_t$ is represented by some unknown function $\eta=\eta(t,x)$ through
$$
\partial \Omega_t=\bigl\{x\in\IR^n\,:\, \eta(t,x)=0\bigr\}\cdotp
$$ 
The outward unit normal vector to $\partial \Omega_t,$ 
denoted by  $\overline{n}$, is given by  $\overline{n} = \nabla_x \eta / \lvert \nabla_x \eta \rvert$. 
\medbreak
Our aim is to analyze the situation, where the initial domain $\Omega$ is close to the half-space $\IR^n_+$. By this, we mean that
there exists a function $h:\IR^{n-1} \to \IR$ with $h(0) = 0$ and
$\nabla_{z'}h\in \dot \B^{(n-1)/p}_{p,1}(\IR^{n-1})$ such that  
\begin{equation}\label{o1}
 \Omega=\bigl\{ z \in \IR^n: z_n>h(z') \bigr\}
\andf
 \|\nabla_{z'}h\|_{\dot \B^{(n-1)/p}_{p,1}(\IR^{n-1})} <c \ll 1.
\end{equation}
The chosen regularity for $h$ stems from the need to remain in the critical functional framework. This will eventually allow us to prove the desired $\LL_1$-in-time bounds on the velocity field.

\noindent
\section{The main result}

To solve system~\eqref{eq:pressless} we will perform a Lagrangian change of variables to rewrite the problem equivalently as a system of equations in a \emph{time independent domain}, the initial domain $\Omega$.
More precisely, we define the flow $X$ of $v$ to be the solution of the (integrated) ODE
\begin{equation}\label{eq:X1}
X(t,y)=y+\int_0^tv(\tau,X(\tau,y))\,\d\tau\end{equation}
and introduce the   `Lagrangian' density and velocity:
$$
q(t,y):=\rho(t,X(t,y))\andf u(t,y):=v(t,X(t,y)).
$$ 
Hence,~\eqref{eq:X1} yields
\begin{equation}\label{eq:X2}
X(t,y)=y+\int_0^tu(\tau,y)\,\d\tau.\end{equation}
As  explained in the appendix of~\cite{Danchin14}, the velocity equation of~\eqref{eq:pressless} is then reformulated as 
\begin{equation}\label{eq:presslag}
\left\{\begin{aligned}
q_t +q \divergence_u u &=0 && t \in\IR_+,  x \in \Omega, \\
\rho_0 \partial_t u + \divergence_u\IS_u(u)&=0,\qquad && t \in\IR_+,  x \in \Omega, \\
\IS_u(u)\cdot\overline n_u&=0,\qquad&&t\in\IR_+, x\in\partial\Omega, \\
q|_{t=0}=\rho_0, \qquad u|_{t=0}&=v_0 \qquad&& x\in \Omega, 
\end{aligned}\right.\end{equation}
with $\IS_u(w):= \mu(\nabla_vw+(\nabla_vw)^\top)+\lambda \divergence_vw\,\Id.$

For a  given time-dependent vector-field $u,$ the operators $\divergence_u$ and $\nabla_u$ are defined as follows: setting $A_u:=(DX_u)^{-1}$ with
 \begin{equation}\label{eq:DXv}
 DX_u(t,y)=\Id+\int_0^t Du(\tau,y)\,\d\tau \mbox{ \ and  \ }  J_u(t,y):=\det DX_u(t,y),
\end{equation}
we obtain 
$$\nabla_u:=A_u^\top\nabla_y\andf
\divergence_u:= A_u^\top:\nabla_y= J_u^{-1} \divergence_y(A_u J_u\cdot)=
J_u^{-1} \divergence_y(\adj(DX_u) \cdot),$$
where $\adj(DX_u)$ stands for the adjugate matrix of $DX_u$. 
\medbreak
We may compute the `Lagrangian density' $q$ through
$$J_u(t,y) q(t,y)= \rho_0(y). $$
Then system~\eqref{eq:presslag} describes the transformed equations multiplied by the Jacobian $J_u$.
Thanks to this observation we obtain a simpler form of this system. As in the classical setting,~\eqref{eq:presslag}  takes the form 
\begin{equation}
 q u_t - A_u^{\top} :\nabla_y \IS_u(u)=0.
\end{equation}

We will always work in a regime where $\nabla u$ is small in $\LL_1(\IR_+;\LL_\infty(\Omega))$.  Consequently, 
all the quantities like $A_u-\Id,$  $\adj(DX_u)-\Id$ and $J_u-1$ may be computed by Neumann series expansions and, at first order, we have for instance
$$
\adj(DX_u(t,y))-\Id\approx A_u(t,y) - \Id \approx \int_0^t Du(t^\prime,y) \,\d t^\prime\cdotp
$$
Roughly speaking, our approach will yield a global solution result within the $\LL_1$-setting provided 
$$\IS_u(w)-\IS(w) \approx  \Bigl(\int_0^t Du\,\d t^\prime\Bigr)\cdot \nabla w $$
and the outward unit normal vector $\overline n_u(t,\cdot)|_{\partial \Omega}$
to the moving boundary $X(t,\cdot)|_{\partial\Omega}$ is such that
$$|\overline{n}_u- n_y| \approx \Bigl|\int_0^t Du \, \d t'\Bigr|,$$ where $n_y$ denotes the outward unit normal vector to $\partial \Omega$.

The main result of this section is the following global existence result for system~\eqref{eq:presslag}.

\begin{theorem}\label{thm:lpressl}
Let $\Omega$ be a small perturbation of the half-space $\IR^n_+$ in the sense  of~\eqref{o1}, with $n-1 < p < n.$  
Assume that $v_0\in\dot \B^{n/p-1}_{p,1}(\Omega)$ and that $\rho_0\in\LL_\infty(\Omega)\cap\cM(\dot \B^{n/p-1}_{p,1}(\Omega))$ (where $\cM(X)$ stands for the multiplier space of $X$) 
such that 
\begin{equation}\label{eq:smalldata}
\|\rho_0-1\|_{\LL_\infty(\Omega)\cap\cM(\dot \B^{n/p-1}_{p,1}(\Omega))} + \|v_0\|_{\dot \B^{n/p-1}_{p,1}(\Omega)}\leq c \ll 1.
\end{equation}
Then, system~\eqref{eq:presslag} has a unique, global solution $u$  in the maximal regularity space $\E_p(\Omega)$  defined by 
$$\displaylines{u\in\cC_b(\IR_+;\dot \B^{n/p-1}_{p,1}(\Omega)),\quad
\partial_t u\in\LL_1(\IR_+;\dot \B^{n/p-1}_{p,1}(\Omega)),\quad
\nabla u\in\LL_1(\IR_+;\dot \B^{n/p}_{p,1}(\Omega))\cr\andf
Du|_{\partial\Omega}\in \LL_1\bigl(\IR_+;\dot \B^{\frac{n-1}p}_{p,1}(\partial\Omega)\bigr)\cap 
\dot\B^{\frac{n-1}{2p}}_{1,1}\bigl(\IR_+;\dot\B^{0}_{p,1}(\partial\Omega)\bigr)\cdotp}$$
Furthermore, there exists a constant $C>0$ such that
$$\|u\|_{\E_p(\Omega)}\leq C\|v_0\|_{\dot \B^{n/p-1}_{p,1}(\Omega)}.$$
\end{theorem}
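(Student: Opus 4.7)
The overall strategy mirrors the treatment of the Navier--Stokes free boundary problem in Chapter~\ref{Sec: The free boundary problem for incompressible fluids with infinite depth}, but with two new features: the variable coefficient $\rho_0$ multiplies $\partial_t u$, and there is no incompressibility constraint so the linear analysis rests on the Lam\'e theorem (Theorem~\ref{Thm:lameint}) rather than on the Stokes theorem. After flattening the boundary of $\Omega$ via a diffeomorphism built from $h$, one may assume that all estimates are carried out on $\IR^n_+$ modulo harmless lower order terms controlled by the smallness $\|\nabla_{z'} h\|_{\dot\B^{(n-1)/p}_{p,1}(\IR^{n-1})}\ll1$; the bent half-space version of Theorem~\ref{Thm:lameint}, announced in the remark following Theorem~\ref{thm:lpresslint}, is obtained by this standard change of variables together with a perturbation argument in the solution space $\E^s_p$. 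The mass equation reduces to the algebraic identity $q(t,X_u(t,y)) = \rho_0(y)/J_u(t,y)$, so that the only genuine PDE to solve is the momentum equation
\begin{equation*}
\rho_0\,\partial_t u - \divergence\IS(u) = \bigl(\divergence_u\IS_u(u)-\divergence\IS(u)\bigr),\qquad
\IS(u)\cdot n_y = \bigl(\IS(u)\cdot n_y-\IS_u(u)\cdot\overline n_u\bigr)\ \hbox{on }\partial\Omega,
\end{equation*}
supplemented with $u|_{t=0}=v_0$.

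The plan is then to run a contraction mapping argument in the ball $B_R\subset \E_p(\Omega)$ of radius $R=2C\|v_0\|_{\dot\B^{n/p-1}_{p,1}(\Omega)}$, with the map $\Phi:v\mapsto u$ defined by solving
\begin{equation*}
\left\{\begin{aligned}
\partial_t u - \divergence\IS(u) &= (1-\rho_0)\partial_t v + \divergence_v\IS_v(v)-\divergence\IS(v) && \text{in }\IR_+\times\Omega,\\
\IS(u)\cdot n_y &= \bigl(\IS(v)\cdot n_y - \IS_v(v)\cdot\overline n_v\bigr)\big|_{\partial\Omega}&& \text{on }\IR_+\times\partial\Omega,\\
u|_{t=0}&=v_0 && \text{in }\Omega,
\end{aligned}\right.
\end{equation*}
via Theorem~\ref{Thm:lameint} (in its bent half-space version) with $s=n/p-1$. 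To close the estimate one needs: (i) a multiplier bound of the form
\begin{equation*}
\|(1-\rho_0)\partial_t v\|_{\LL_1(\IR_+;\dot\B^{n/p-1}_{p,1}(\Omega))}\leq \|1-\rho_0\|_{\cM(\dot\B^{n/p-1}_{p,1}(\Omega))}\,\|\partial_t v\|_{\LL_1(\IR_+;\dot\B^{n/p-1}_{p,1}(\Omega))},
\end{equation*}
which is tailor-made for our hypothesis on $\rho_0$; (ii) the quadratic estimates of Section~\ref{ss:NL} (Lemmas~\ref{Lem: Control of right-hand side momentum},~\ref{l:divW} and the extension identities of Section~\ref{Sec: The normal vector}), which, thanks to the Neumann expansion~\eqref{eq:Neumann-expansion} and to $\dot\B^{n/p}_{p,1}$ being an algebra, yield
\begin{equation*}
\|\divergence_v\IS_v(v)-\divergence\IS(v)\|_{\LL_1(\dot\B^{n/p-1}_{p,1})}
\lesssim \|\nabla v\|_{\LL_1(\dot\B^{n/p}_{p,1})}\,\|\nabla v\|_{\LL_1(\dot\B^{n/p}_{p,1})};
\end{equation*}
(iii) an analogous control of the boundary defect in the anisotropic trace space $\dot\Y^{n/p-1}_p$, using the explicit extension $E\wt n_v$ of Section~\ref{Sec: The normal vector} together with its time derivative~\eqref{eq:normalt} and the structural decomposition of $\partial_t(E\IS_v(v)\cdot E\overline n_v)$ into terms of the form $\divergence k^1$ and $m\divergence k^2+m'\divergence k'^2$, so that Theorem~\ref{Thm:lameint} applies directly.

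Combining these three ingredients with the smallness~\eqref{eq:smalldata}, Lemma~\ref{l:nablau}, and the embedding $\dot\B^{n/p}_{p,1}\hookrightarrow\LL_\infty$ yields
\begin{equation*}
\|\Phi(v)\|_{\E_p(\Omega)} \leq C\|v_0\|_{\dot\B^{n/p-1}_{p,1}(\Omega)} + C\bigl(\|1-\rho_0\|_{\cM}+\|v\|_{\E_p(\Omega)}\bigr)\|v\|_{\E_p(\Omega)},
\end{equation*}
so that $\Phi$ stabilizes the ball $B_R$ for $c$ small enough. Repeating the same estimates on the difference $\Phi(v_1)-\Phi(v_2)$ (with all new nonlinear contributions being quadratic in the small norms) produces a contraction factor $\leq 1/2$ and gives a unique fixed point $u\in\E_p(\Omega)$, which is by construction the global solution. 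Returning to Eulerian coordinates via the flow $X_u$ defined in~\eqref{eq:X2} is legitimate since $\int_0^\infty\|\nabla u\|_{\LL_\infty}\,\d t<\infty$, and the density $\rho(t)=q(t,X_u^{-1}(t,\cdot))$ inherits the bound $\sup_t\|\rho-1\|_{\LL_\infty(\Omega_t)}\leq Cc$ from the relation $qJ_u=\rho_0$ and the Neumann expansion of $J_u^{-1}-1$.

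\textbf{Main obstacle.} The delicate point is the careful structural decomposition of $\partial_t$ of the boundary datum so as to apply Theorem~\ref{Thm:lameint}: unlike in the Stokes case of Chapter~\ref{Sec: The free boundary problem for incompressible fluids with infinite depth}, one has a genuine third component $\IS_v(v)\cdot\overline n_v$ involving $\divergence v$, whose time derivative must be split into the form~\eqref{eq:decompo} using the explicit extension of $\overline n_v$ and the algebra structure of $\dot\B^{n/p}_{p,1}$; this is the reason for the Lam\'e-specific trace space $\dot\Y^s_p$. The second delicate point is the multiplier assumption on $\rho_0$: the bound $\|1-\rho_0\|_{\cM(\dot\B^{n/p-1}_{p,1}(\Omega))}$ is exactly what is required (and is tight) to absorb the $(1-\rho_0)\partial_t u$ term into the left-hand side via the Da Prato--Grisvard estimate, since $\partial_t u$ lies in $\LL_1(\IR_+;\dot\B^{n/p-1}_{p,1}(\Omega))$ rather than in a Lebesgue space on which multiplication would be trivial.
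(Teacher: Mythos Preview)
Your overall architecture (fixed point in $\E_p(\Omega)$, quadratic nonlinear estimates, multiplier hypothesis on $\rho_0$ to absorb $(1-\rho_0)\partial_t v$) is right and matches the paper's strategy. The paper packages the density and bent-domain perturbations into a separate linear theorem (Theorem~\ref{th:lameomega}), so that the nonlinear fixed point solves $\rho_0\partial_t u-\divergence\IS(u)=f$ directly on $\Omega$; you instead fold $(1-\rho_0)\partial_t v$ and the flattening into the single fixed point. Both work, and your multiplier bound in (i) is exactly the mechanism.

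There is, however, a genuine gap in your ingredient (iii). Theorem~\ref{Thm:lameint} requires the boundary datum $g$ to lie in the trace space $\dot\Y^{n/p-1}_p$, which contains the time-Besov component $\dot\B^{(n-1)/(2p)}_{1,1}(\IR_+;\dot\B^0_{p,1}(\partial\Omega))$. You invoke the structural decomposition $\partial_t(Eh)=\divergence k^1+m\divergence k^2+m'\divergence k'^2$ from Chapter~\ref{Sec: The free boundary problem for incompressible fluids with infinite depth}, but that decomposition was a \emph{hypothesis} of Proposition~\ref{p:fight} for the Stokes problem, not something that feeds into Theorem~\ref{Thm:lameint}. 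The Lam\'e theorem takes $g\in\dot\Y^s_p$ as an input; the structural decomposition of $\partial_t(Eg)$ does not by itself yield membership in $\dot\B^{(n-1)/(2p)}_{1,1}(\IR_+;\dot\B^0_{p,1})$.

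The paper's actual mechanism for this step is different and is the key new ingredient of Chapter~\ref{Sec: A free boundary problem for pressureless gases}: one uses the product law of Proposition~\ref{prod-Bes2},
\[
\dot\W^1_1\bigl(\IR;\dot\B^{(n-1)/p}_{p,1}(\partial\Omega)\bigr)\cdot
\dot\B^{(n-1)/(2p)}_{1,1}\bigl(\IR;\dot\B^{0}_{p,1}(\partial\Omega)\bigr)\subset
\dot\B^{(n-1)/(2p)}_{1,1}\bigl(\IR;\dot\B^{0}_{p,1}(\partial\Omega)\bigr),
\]
applied with the first factor being $A_v-\Id$ (which lies in $\dot\W^1_1$ in time since $\partial_tA_v$ is controlled by $Dv$ via~\eqref{eq:dA}) and the second factor being $\nabla v|_{\partial\Omega}$, whose $\dot\B^{(n-1)/(2p)}_{1,1}(\IR_+;\dot\B^0_{p,1})$-norm is \emph{part of the definition} of $\|v\|_{\E_p(\Omega)}$. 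This is precisely why the solution space $\E_p(\Omega)$ carries the boundary-trace component $Du|_{\partial\Omega}\in\dot\B^{(n-1)/(2p)}_{1,1}(\IR_+;\dot\B^0_{p,1}(\partial\Omega))$: it closes the product estimate for $g$ in $\dot\Y^{n/p-1}_p$. A further point you do not address is that on the bent domain $\Omega$ the normal vector involves both $H$ and $X_v$, so the extension and estimate of $\bar n_v-\bar n$ require an argument beyond Section~\ref{Sec: The normal vector} (this is Lemma~\ref{Lem: Normal vector at Omega} in the paper).
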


Returning to Eulerian coordinates yields the following result. We note that in the case of a bent half-space $\Omega$ as in~\eqref{o1}, the sets $\Omega_t$ and $\partial \Omega_t$ are given for each $t>0$ by 
$$
\Omega_t=X_v(t,\Omega) \mbox{ and } \partial\Omega_t = X_v(t,\partial\Omega)
$$
and $X_v$ defined by 
$$
X_v(t,x) = x + \int_0^t v(\tau,X_v(\tau,x))d\tau
$$
is  required to satisfy 
$$
\nabla X_v - \Id \in \cC(\IR_+ ; \dot \B^{n/p}_{p , 1} (\Omega)).
$$
A global solution $(v,\rho, \Omega_t)$ to equation~\eqref{eq:pressless}  in the case of a 
bent half-space is then defined as in the definition formulated in the introduction. 

\begin{theorem}\label{Thm:pressl} Let $p\in(n-1,n)$, $\Omega$ as in~\eqref{o1}, 
 $v_0 \in \dot \B^{n/p-1}_{p,1}(\Omega)$ and $\rho_0 \in \LL^\infty(\Omega) \cap  
 \cM(\dot \B^{n/p-1}_{p,1}(\Omega))$, where $\cM(X)$ denotes the multiplier  space of $X$. 
 Then there exists a constant $c>0$ such that if  
$$
\|\rho_0-1\|_{\LL_\infty(\Omega)  \cap \cM(\dot \B^{n/p-1}_{p,1}(\Omega))}  
 +  \|v_0\|_{\dot \B^{n/p-1}_{p,1}(\Omega)}\leq c,
 $$
then system~\eqref{eq:pressless} admits a unique, global  solution  $(v,\rho,\Omega_t)$ and there exists a constant $C>0$ such that 
$$ 
\sup_{t\geq 0} \|\rho-1\|_{\LL_\infty(\Omega_t)}\leq Cc.
$$
\end{theorem}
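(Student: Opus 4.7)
The plan is to deduce Theorem~\ref{Thm:pressl} from its Lagrangian counterpart, Theorem~\ref{thm:lpressl}, by pulling the solution obtained there back to Eulerian coordinates. Concretely, I would first invoke Theorem~\ref{thm:lpressl} with the given data $(\rho_0, v_0)$ to produce a unique Lagrangian velocity $u \in \E_p(\Omega)$ satisfying $\|u\|_{\E_p(\Omega)} \leq C \|v_0\|_{\dot\B^{n/p-1}_{p,1}(\Omega)} \leq Cc$. The key qualitative output of Theorem~\ref{thm:lpressl} is the smallness of $\nabla u$ in $\LL_1(\IR_+;\dot\B^{n/p}_{p,1}(\Omega))$, which, combined with the critical embedding $\dot\B^{n/p}_{p,1}(\Omega) \hookrightarrow \cC_b(\Omega)$ and the Neumann-series expansion~\eqref{eq:Neumann-expansion}, will yield
\begin{equation*}
\sup_{t\geq 0}\bigl\|DX_u(t,\cdot)-\Id\bigr\|_{\dot\B^{n/p}_{p,1}(\Omega)} + \sup_{t\geq 0}\|A_u(t,\cdot)-\Id\|_{\dot\B^{n/p}_{p,1}(\Omega)} \leq C\int_0^\infty \|\nabla u(\tau)\|_{\dot\B^{n/p}_{p,1}(\Omega)}\,\d\tau \ll 1,
\end{equation*}
so that for each $t\geq 0$ the flow $X_u(t,\cdot)$ is a $\cC^1$-diffeomorphism from $\Omega$ onto its image $\Omega_t := X_u(t,\Omega)$, and $\nabla X_u - \Id \in \cC(\IR_+;\dot\B^{n/p}_{p,1}(\Omega))$ because $\partial_t(\nabla X_u) = \nabla u \in \LL_1(\IR_+;\dot\B^{n/p}_{p,1}(\Omega))$.

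Next I would set $v(t,x) := u(t, X_u^{-1}(t,x))$ and define the Eulerian density through $\rho(t,X_u(t,y)) := \rho_0(y)/J_u(t,y)$, where $J_u := \det DX_u$, and verify that the triplet $(v,\rho,\Omega_t)$ satisfies~\eqref{eq:pressless} almost everywhere. The momentum and continuity equations follow from the classical Lagrange-to-Euler calculus (as worked out in the appendix of~\cite{Danchin14}), exploiting the fact that multiplying~$(\ref{eq:presslag})_2$ by $J_u$ and pulling back through $X_u^{-1}$ produces precisely~$(\ref{eq:pressless})_1$, while the identity $q(t,y)J_u(t,y)=\rho_0(y)$ is the Lagrangian form of $\partial_t\rho+\divergence(\rho v)=0$. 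The stress-free boundary condition is transported since $\IS_u(u)\overline n_u$ vanishes on $\partial\Omega$ if and only if $\IS(v)\overline n$ vanishes on $\partial\Omega_t$, and the kinematic condition $v\cdot\overline n = -(\partial_t\eta)/|\nabla_x\eta|$ is automatic from the definition $\Omega_t = X_u(t,\Omega)$.

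The density bound follows from the pointwise identity $\rho(t,X_u(t,y)) - 1 = (\rho_0(y)-1)/J_u(t,y) + (1-J_u(t,y))/J_u(t,y)$, combined with the uniform bound $\|J_u - 1\|_{\LL_\infty(\IR_+\times\Omega)} \leq Cc$ (consequence of the above Neumann-series estimate and the embedding $\dot\B^{n/p}_{p,1} \hookrightarrow \LL_\infty$), the hypothesis $\|\rho_0 - 1\|_{\LL_\infty(\Omega)} \leq c$, and the fact that $1/J_u$ stays bounded by $1+Cc \leq 2$; this immediately yields $\sup_{t\geq 0}\|\rho-1\|_{\LL_\infty(\Omega_t)} \leq Cc$ as required. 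For uniqueness, given any other Eulerian solution $(\tilde v,\tilde \rho, \tilde\Omega_t)$ satisfying the bounds of Definition \ref{def:NS}-type stated before Theorem~\ref{Thm:pressl}, the regularity of $\tilde v$ suffices to construct its flow $X_{\tilde v}$, and pulling everything back produces a Lagrangian solution in $\E_p(\Omega)$ starting from the same data, so uniqueness in Theorem~\ref{thm:lpressl} forces $\tilde u = u$ and hence $(\tilde v,\tilde\rho,\tilde\Omega_t)=(v,\rho,\Omega_t)$.

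The main technical obstacle lies upstream, in establishing Theorem~\ref{thm:lpressl}, and within the present proof the only delicate point is checking that the transported density equation holds in the distributional sense one needs to make the Eulerian system meaningful: because $\rho_0$ is only assumed to lie in $\LL_\infty \cap \cM(\dot\B^{n/p-1}_{p,1}(\Omega))$ rather than in a Besov space, the product $\rho\,\partial_t v$ must be interpreted via the multiplier property, which is preserved by the bi-Lipschitz change of variables $X_u$ precisely because $DX_u - \Id$ is small in $\dot\B^{n/p}_{p,1}$. Once this multiplier regularity is transported, the rest of the argument is a routine postprocessing of the Lagrangian result.
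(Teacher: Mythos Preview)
Your proposal is correct and follows essentially the same route as the paper: invoke Theorem~\ref{thm:lpressl} to obtain the Lagrangian solution $u\in\E_p(\Omega)$, use the smallness of $\int_0^\infty\|\nabla u\|_{\dot\B^{n/p}_{p,1}}\,\d\tau$ to show that $X_u(t,\cdot)$ is a global $\cC^1$-diffeomorphism, then pull back to Eulerian variables via $v=u\circ X_u^{-1}$ and $\rho=(\rho_0/J_u)\circ X_u^{-1}$, transferring the Besov estimates through Proposition~\ref{Prop: Change of half} and Remark~\ref{Rem: Change of half}. Your treatment of the density bound and of uniqueness is in fact more explicit than the paper's Step~4, which handles these points very tersely and defers the change-of-variables calculus to \cite{DM-CPAM,Danchin14}; the one ingredient the paper spells out that you leave implicit is the observation $|X_u(t,y)-y|\leq Ct^{1/2}\|u\|_{\LL_2(\IR_+;\dot\B^{n/p}_{p,1})}$, which ensures $X_u(t,y)\to\infty$ as $y\to\infty$ and hence global (not just local) invertibility of the flow.
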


\begin{remark} As pointed out in~\cite{DM-CPAM},  the interest of assuming that $\rho_0$ belongs to $\cM(\dot \B^{n/p-1}_{p,1}(\Omega))$ lies in the fact that one may consider 
initial densities which are  not continuous along a $\cC^1$ interface. The smallness condition means, however,  that the jump across the interface is small. 
\end{remark}

The proof of the main result will strongly rely on the study of the linearization of~\eqref{eq:presslag} about $\IR^n_+$ that  
 will be performed in the next section. Then, by perturbation methods it will be extended to a rigid
domain $\Omega$.

\section{The boundary value problem for the Lam\'e system in the half-space}

This section is devoted to proving global-in-time a priori estimates 
for the solutions to the following linear initial boundary value problem:
\begin{equation}\label{eq:lame}
\left\{\begin{aligned} 
\partial_t u - \divergence\IS(u) &= f \quad &\hbox{in }\ & \IR_+\times\IR^n_+, \\
\IS(u) \e_n|_{\partial\IR^n_+}&= g \quad &\hbox{on }\ & \IR_+\times\partial\IR^n_+,\\
u|_{t = 0} &= u_0  \quad &\hbox{in }\ & \IR^n_+\end{aligned} \right.
\end{equation}
with $\IS$ defined as in~\eqref{eq:stresslame}.

We aim at proving the following result.

\begin{theorem}\label{Thm:lame} 
Let $1 < p < \infty$ and $0 <s<1/p$ with $s\leq n/p-1$.
Then, there exists a constant $C > 0$ such for all data 
$u_0\in  \dot \B^s_{p , 1} (\IR^{n}_+)$, $f\in \LL_1(\IR_+;\dot\B^s_{p , 1} (\IR^{n}_+))$ and   
\begin{align*}
 g \in \dot\Y^s_p:=\dot \B^{\frac{1}{2}(s+1-\frac1p)}_{1 , 1} (\IR_+ ; \dot \B^0_{p , 1} (\partial\IR^{n}_+)) \cap\dot \B^{\frac{1}{2}(1 - \frac{1}{p})}_{1 , 1} (\IR_+ ; \dot \B^s_{p , 1} (\partial\IR^{n}_+)) \cap \LL_1 (\IR_+ ; \dot \B^{s + 1 - \frac{1}{p}}_{p,1} (\partial\IR^{n}_+)),
\end{align*}
System~\eqref{eq:lame}  has 
a unique solution $u\in\cC_b(\IR_+;\dot\B^s_{p,1}(\IR^n_+))$
with \begin{multline}\label{est:lame} \|u\|_{\E^s_p}:=\| \partial_tu\|_{\LL_1 (\IR_+ ; \dot \B^s_{p , 1} (\IR^n_+))} 
+\|\nabla u \|_{\LL_1 (\IR_+ ; \dot \B^{s+1}_{p , 1} (\IR^n_+))} 
+\|u \|_{\LL_\infty (\IR_+ ; \dot \B^s_{p , 1} (\IR^n_+))} +
\|\nabla u|_{\partial\IR^n_+}\|_{\dot\Y^s_p}\\[5pt]
\leq C\bigl(\|u_0\|_{\dot \B^s_{p , 1} (\IR^{n}_+)}
+\|f \|_{\LL_1 (\IR_+ ; \dot \B^s_{p , 1} (\IR^n_+))} +
\| g \|_{\dot\Y^s_p}\bigr)\cdotp
\end{multline}
Furthermore, in the case $u_0\equiv0$ and $f\equiv0,$
the result holds for all $s\in(-1+1/p,1/p)$ such that
$s\leq n/p-1.$ 
\end{theorem}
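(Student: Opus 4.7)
The plan is to split the solution as $u=u_I+u_B$, where $u_I$ absorbs the initial datum and the source term (with zero boundary data) while $u_B$ handles the nonhomogeneous boundary data $g$ (with zero initial datum and source).

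For the construction of $u_I$ I would invoke directly the homogeneous Da Prato--Grisvard machinery of Theorem~\ref{Thm: Full Da Prato - Grisvard} applied to the Neumann Lam\'e operator $L_p$ on $\LL_p(\IR^n_+;\IC^n)$. All the prerequisites --- the facts that $-L_p$ generates a bounded analytic semigroup, $L_p$ is injective, $\|\nabla^2 v\|_{\LL_p}\simeq\|L_pv\|_{\LL_p}$ on $\dom(L_p)$, the identity $\dom(\dot L_p)\cap\LL_p=\dom(L_p)$, and the real interpolation identification $(\LL_p(\IR^n_+;\IC^n),\dom(\dot L_p))_{\theta,1}\simeq\dot\B^{2\theta}_{p,1}(\IR^n_+;\IC^n)$ --- were secured in Chapter~\ref{Sec: The Lame operator in the upper half-plane}. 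With $q=1$ and $\theta=s/2$, the range $0<s<1/p$ with $s\leq n/p-1$ fits the admissible range of Theorem~\ref{Thm: Full Da Prato - Grisvard}, producing $u_I$ with the interior bounds of~\eqref{est:lame}. A parabolic trace result (obtained by the anisotropic extension/restriction techniques of Chapter~\ref{Sec: The functional setting and basic interpolation results}) would then show that $\IS(u_I)\e_n|_{\partial\IR^n_+}\in\dot Y^s_p$ with norm controlled by $\|u_0\|_{\dot\B^s_{p,1}}+\|f\|_{\LL_1(\IR_+;\dot\B^s_{p,1})}$, reducing the theorem to the case $u_0=f=0$.

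For that reduced problem I would compute $u_B$ explicitly. Extending by zero for $t<0$ and taking the Fourier transform in the tangential-time variables $(t,x')$ with dual variables $(z,\xi')$, the equation in $x_n>0$ becomes an ODE system that decouples according to the two wave speeds of the Lam\'e operator: the bounded solutions are linear combinations of $\e^{-r(z,\xi')x_n}$ and $\e^{-R(z,\xi')x_n}$ with
\begin{align*}
r(z,\xi'):=\sqrt{\ii z/\mu+|\xi'|^2},\qquad R(z,\xi'):=\sqrt{\ii z/(2\mu+\lambda)+|\xi'|^2}
\end{align*}
(principal roots chosen with nonnegative real part). The boundary condition $\IS(u_B)\e_n|_{\partial\IR^n_+}=g$ yields a $2{\times}2$ algebraic system whose inversion produces explicit matrix-valued symbols $M(z,\xi'),N(z,\xi')$ of parabolic order $0$ (homogeneous under $(z,\xi')\mapsto(\sigma^2 z,\sigma\xi')$) such that
$$\widehat{u_B}(z,\xi',x_n)=M(z,\xi')\,\e^{-rx_n}\widehat g(z,\xi')+N(z,\xi')\,\e^{-Rx_n}\widehat g(z,\xi').$$

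The main obstacle is then to verify that this explicit representation obeys $\|u_B\|_{\E^s_p}\lesssim\|g\|_{\dot Y^s_p}$. I would carry out a dyadic Littlewood--Paley analysis in the parabolic scale on $(z,\xi')$: on each dyadic annulus the smoothness and parabolic homogeneity of $M,N,rM,RN$ supply Marcinkiewicz--Mikhlin-type multiplier bounds, while integration in $x_n$ (using $\int_0^\infty\e^{-(\Re r)x_n}\,\d x_n\simeq(\Re r)^{-1}$ and the analogue for $R$) produces the classical $1/p$-trace gain in the normal direction. The three summands intersected in $\dot Y^s_p$ correspond precisely to the three bookkeepings of parabolic regularity required: the piece $\LL_1(\IR_+;\dot\B^{s+1-1/p}_{p,1}(\partial\IR^n_+))$ controls $\nabla u_B$ in $\LL_1(\IR_+;\dot\B^{s+1}_{p,1})$, whereas the two mixed-norm pieces $\dot\B^{\tfrac12(s+1-1/p)}_{1,1}(\IR_+;\dot\B^{0}_{p,1})$ and $\dot\B^{\tfrac12(1-1/p)}_{1,1}(\IR_+;\dot\B^{s}_{p,1})$, born from the parabolic scaling (one time derivative counts as two spatial ones), control $\partial_tu_B$ in $\LL_1(\IR_+;\dot\B^s_{p,1})$ at the two homogeneity levels that arise when $\partial_t$ hits each of the two exponentials in $\widehat{u_B}$. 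Uniqueness follows by applying the homogeneous Da Prato--Grisvard bound to the difference of two solutions, which solves the fully homogeneous problem; the relaxed range $s\in(-1+1/p,1/p)$ in the case $u_0=f=0$ reflects the absence of any initial-trace compatibility at the boundary, a compatibility which would otherwise force $s>0$ so that the trace of $u_0$ on $\partial\IR^n_+$ be well defined.
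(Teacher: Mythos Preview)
Your overall decomposition $u=u_I+u_B$ and the use of Theorem~\ref{Thm: Da Prato - Grisvard for Lame} for the interior piece $u_I$ match the paper exactly (there the pieces are called $w$ and $v$). The trace estimate $\nabla u_I|_{\partial\IR^n_+}\in\dot\Y^s_p$ is not obtained by an abstract ``parabolic trace result'' but by the concrete interpolation inequality of Proposition~\ref{p:interpopopo} combined with the boundary condition $\IS(u_I)\e_n=0$, which converts the normal derivative $\partial_n u_I$ at the boundary into tangential derivatives; this is a point your sketch glosses over.

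The real divergence is in how the boundary piece is built. You propose to solve the \emph{full} Lam\'e system for $u_B$, producing two exponentials $\e^{-rx_n}$ and $\e^{-Rx_n}$ with coefficients $M,N$ that you describe as parabolic multipliers of order~$0$. This is not accurate: the Lopatinskii determinant of the full Lam\'e boundary system,
\[
\det(\cA)=(\ii\tau+2\mu|\xi'|^2)^2-4\mu^2|\xi'|^2\lambda_1\lambda_2,
\]
vanishes at $\tau=0$ (see the appendix of the paper, estimate~\eqref{eq:detA}), so that $M$ and $N$ are individually singular there. The solution $u_B$ is still well defined because the singularities of $M\e^{-rx_n}$ and $N\e^{-Rx_n}$ cancel (at $\tau=0$ the two roots coincide and the kernel of $\cA$ maps to zero in the solution), but your dyadic multiplier argument would have to track this cancellation explicitly, and your sketch does not. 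The paper sidesteps the whole issue: for the boundary piece it replaces the Lam\'e operator by the scalar operator $\partial_t-\tfrac{2\mu}{\nu}\Delta$, whose boundary determinant is simply $\mu\nu(\ii\tau+|\xi'|^2)$ and never degenerates. There is then only \emph{one} exponential and the symbol analysis (Propositions~\ref{p:m}, \ref{Prop: Multiplier norm}, \ref{Prop: Multiplier norm2}, and Lemma~\ref{p:mbis}) goes through cleanly. The error $\divergence\IS(v)-\partial_t v$ introduced by using the ``wrong'' interior operator is harmless: it is a zeroth-order expression in $\partial_t v,\nabla^2 v$ and is simply absorbed into the source term for the Da Prato--Grisvard piece.
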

\begin{remark}  The condition  $s>0$ for nonzero data $u_0$ and $f$  comes 
from the fact that Theorem~\ref{Thm: Full Da Prato - Grisvard} was applied to the Lam\'e operator on the ground space $X = \LL_p$ leading to positive regularity exponents. 
 Notice that in the situation of a positive regularity index $s$, the space in the middle in the in the definition of $\dot \Y_p^s$ is redundant in the sense that a slight generalization of the interpolation result
of Proposition~\ref{p:interpopopo} gives the middle space as an interpolation of the first and the last space. 

For similar reasons, in the case $s<0$ (that can be considered only if $u_0\equiv0$ and $f\equiv0$ so far),  
 one can keep only the second and last spaces in the definition of $\dot\Y_p^s.$ This exactly fits with 
 the corresponding statement in~\cite{OS}.
\end{remark}
In order to handle the case $u_0\equiv0$ and $f\equiv0$, let us start with the simpler case of the heat 
equation in the half-space with Neumann boundary conditions, namely 
\begin{align*}
 \left\{ \begin{aligned}
 \partial_t u - \Delta u &= 0 && \text{in } \IR_+ \times \IR^n_+ \\
  \partial_{x_n} u &= g && \text{on }  \IR_+\times \partial \IR^n_+\\
  u|_{t=0}&=0&&\text{in } \IR_+^n.
\end{aligned} \right.
\end{align*}
To determine a solution converging  to $0$ at infinity, we employ  the partial Fourier transform in $\IR_t \times \IR^{n - 1}_{x^{\prime}}$.
To this end, it is suitable to first extend  $u$ and $g$ by $0$  to the whole time axis $\IR$. Keeping the same notation for the two functions, we obtain 
\begin{align*}
 \left\{ \begin{aligned}
 \partial_t u - \Delta u &= 0 && \text{in } \IR \times \IR^n_+ \\
  \partial_{x_n} u &= g && \text{on } \IR \times \partial \IR^n_+.
\end{aligned} \right.
\end{align*}
In  Fourier space with respect to $t$ and $x',$  the first equation reads as  
\begin{align*}
\begin{aligned}
 i \tau \cF_{t , x^{\prime}} u + \lvert \xi^{\prime} \rvert^2 \cF_{t , x^{\prime}} u - \partial_{x_n}^2 \cF_{t , x^{\prime}} u = 0\quad\hbox{for }\  \tau \in \IR,\;  \xi^{\prime} \in \IR^{n - 1},\;  x_n \in \IR_+. 
\end{aligned}
\end{align*}
Hence, as we only look for solutions converging  to $0$ for $x_n\to+\infty,$ the function $\cF_{t , x^{\prime}} u$ is of the form
\begin{align*}
 \cF_{t , x^{\prime}} u (\tau,\xi',x_n)= A(\tau,\xi')\, \e^{- r x_n}, 
\end{align*}
where  $r$ is defined to be the square root of $ \ii \tau + \lvert \xi^{\prime} \rvert^2$ with non-negative real part. By elementary trigonometry, it follows that
\begin{equation}\label{eq:r}
\lvert \arg(r) \rvert \leq \frac{\pi}{4}, \qquad
 \Re(r) \leq \lvert r \rvert \leq \frac{\Re(r)}{\cos(\pi / 4)} \andf \lvert r \vert^2 = \lvert r^2 \rvert = \lvert \tau \rvert + \lvert \xi^{\prime} \rvert^2\cdotp
\end{equation}

Taking the derivative in direction $x_n$ and fitting with the boundary data yields
\begin{align*}
 u =- \cF^{-1}_{\tau , \xi^{\prime}} r^{-1} \e^{- r x_n} \cF_{t , x^{\prime}} g.
\end{align*}
To estimate $\partial_t u$ in $\LL_1 (\IR ; \dot \B^s_{p , 1} (\IR^n_+))$, we consider the extension 
\begin{align}\label{Eq: Solution formula}
 E u :=- \cF^{-1}_{\tau , \xi^{\prime}} r^{-1} \e^{- r \lvert x_n \rvert} \cF_{t , x^{\prime}} g \quad \text{for} \quad x_n \in \IR
\end{align}
of $u$ to the whole space. Our aim is  to estimate the time derivative of $Eu$ in $\LL_1 (\IR ; \dot \B^s_{p , 1} (\IR^n))$.

\subsection*{The tensored Littlewood--Paley decomposition}

Let  $\chi : \IR \to \IR$ be a smooth function supported in the ball $B(0 , 4 / 3)$ and having the value $1$ on $B(0 , 3 / 4)$. Set $\psi := \chi (\cdot / 2) - \chi (\cdot)$. 
With this definition we find that 
\begin{align}
\label{Eq: Support of cutoff}
 \supp (\psi) \subset \{ \xi \in \IR : 3 / 4 \leq \xi \leq 8 / 3 \}.
\end{align}
Furthermore, we have
\begin{align*}
 \sum_{k = - \infty}^{\infty} \psi(2^{- k} \lvert \xi^{\prime} \rvert) = 1 \quad \text{for all } \xi^{\prime} \in \IR^{n - 1} \setminus \{ 0 \}
\end{align*}
and
\begin{align*}
 \sum_{k = - \infty}^{\infty} \psi(2^{- k} \xi_n) = 1 \quad \text{for all } \xi_n \in \IR \setminus \{ 0 \}.
\end{align*}
Consequently, for $\xi^{\prime} \in \IR^{n - 1} \setminus \{ 0 \}$ and $\xi_n \in \IR \setminus \{ 0 \},$ we have
\begin{align*}
 1 
 &= \sum_{k = - \infty}^{\infty} \sum_{\ell = - \infty}^k \psi (2^{- k} \lvert \xi^{\prime} \rvert) \psi(2^{- \ell} \xi_n) + \sum_{k = - \infty}^{\infty} \sum_{\ell = k + 1}^{\infty} \psi (2^{- k} \lvert \xi^{\prime} \rvert) \psi(2^{- \ell} \xi_n).
\end{align*}
An application of Tonelli's theorem in the second series and some notational rephrasing in the first series yields
\begin{align*}
 1 &= \sum_{k = - \infty}^{\infty} \psi (2^{- k} \lvert \xi^{\prime} \rvert) \bigg( \sum_{\ell = - \infty}^0 \psi(2^{- \ell} \cdot) \bigg) (2^{- k} \xi_n) + \sum_{\ell = - \infty}^{\infty} \sum_{k = - \infty}^{\ell - 1} \psi (2^{- k} \lvert \xi^{\prime} \rvert) \psi(2^{- \ell} \xi_n) \\
 &= \sum_{k = - \infty}^{\infty} \psi (2^{- k} \lvert \xi^{\prime} \rvert) \bigg( \sum_{\ell = - \infty}^0 \psi(2^{- \ell} \cdot) \bigg) (2^{- k} \xi_n) + \sum_{\ell = - \infty}^{\infty} \psi(2^{- \ell} \xi_n) \bigg(\sum_{k = - \infty}^{0} \psi (2^{- k} \cdot) \bigg) (2^{- (\ell - 1)} \lvert \xi^{\prime} \rvert).
\end{align*}
Now, for $x \in \IR \setminus \{ 0 \}$ we have
\begin{align*}
 \sum_{k = - L}^0 \psi (2^{- k} x) &= \sum_{k = - L}^0 (\chi(2^{- k - 1} x) - \chi (2^{- k} x)) \\
 &= \chi(x / 2) - \chi(2^L x) \to \chi(x / 2) \quad \text{as} \quad L \to \infty.
\end{align*}
Thus, by continuity we  may extend $\sum_{k = - \infty}^{0} \psi (2^{- k} \cdot)$ to be the function $\chi(\cdot / 2)$ on \textit{all of} $\IR$ satisfying  for all $\xi \in \IR^n \setminus \{ 0 \}$ 
the identity
\begin{align}
\label{Eq: Tensored Littlewood-Paley decomposition}
\begin{aligned}
 1 &= \sum_{k = - \infty}^{\infty} \psi (2^{- k} \lvert \xi^{\prime} \rvert) \chi(2^{- (k + 1)} \xi_n) + \sum_{k = - \infty}^{\infty} \psi (2^{- k} \xi_n) \chi(2^{- k} \lvert \xi^{\prime} \rvert) \\
 &= \sum_{k = - \infty}^{\infty} \big\{ \psi (2^{- k} \lvert \xi^{\prime} \rvert) \chi(2^{- (k + 1)} \xi_n) + \psi (2^{- k} \xi_n) \chi(2^{- k} \lvert \xi^{\prime} \rvert) \big\}.
\end{aligned}
\end{align}
Defining the function
\begin{equation}\label{eq:Phi}
 \Phi (\xi) := \psi (\lvert \xi^{\prime} \rvert) \chi(\xi_n / 2) + \psi (\xi_n) \chi (\lvert \xi^{\prime} \rvert).
\end{equation}
the identity~\eqref{Eq: Tensored Littlewood-Paley decomposition} turns into
\begin{align*}
 1 = \sum_{k = - \infty}^{\infty} \Phi(2^{- k} \xi).
\end{align*}
Moreover, notice that $\Phi$ is smooth and that
\begin{align*}
 \supp(\Phi) & \subset \{ (\xi^{\prime} , \xi_n) \in \IR^n : 3 / 4 \leq \lvert \xi^{\prime} \rvert \leq 8 / 3 \text{ and } \lvert \xi_n \rvert \leq 8 / 3 \} \\
 \quad & \cup \{ (\xi^{\prime} , \xi_n) \in \IR^n : \lvert \xi^{\prime} \rvert \leq 4 / 3 \text{ and } 3 / 4 \leq \lvert \xi_n \rvert  \leq 8 / 3 \}.
\end{align*}
Hence,  the support of $\Phi$ has empty intersection
with the ball $B(0,3/4) \subset \IR^n$ and is bounded.
\smallbreak
Next, we associate to  any suitable function $A$ the multiplier operator $A(D_x):=\cF_\xi^{-1} A\cF_x$ and define the Besov space norm on $\IR^n$ of a function $f$  
as (note that the definition of Besov norms is independent of the choice of a Littlewood-Paley decomposition) 
\begin{align}\label{Eq: Besov space on IRn}
 \| f \|_{\dot \B^s_{p , q} (\IR^n)} := \big\| \big(2^{s k} \|\Phi (2^{- k}D_x)f \|_{\LL_p (\IR^n)}\big)_{k \in \IZ} \big\|_{\ell_q (\IZ)}.
\end{align}
Defining
$$\begin{aligned}
&\dot S_k^h:=\cF_{\xi'}^{-1} \chi(2^{-k}|\xi'|) \cF_{x'},
\quad &\dot S_k^v:=\cF_{\xi_n}^{-1} \chi(2^{-k}\xi_n) \cF_{x_n},\\
&\dot\Delta_k^h:=\cF_{\xi'}^{-1} \psi(2^{-k}|\xi'|) \cF_{x'},
\quad &\dot\Delta_k^v:=\cF_{\xi_n}^{-1} \psi(2^{-k}\xi_n) \cF_{x_n},\end{aligned}$$
relation~\eqref{eq:Phi}  translates after rescaling into 
\begin{equation}\label{eq:decompocompo}
\Phi(2^{-k}D_x)=\dot S_{k+1}^v\dot\Delta_k^h+\dot\Delta_k^v\dot S_k^h\quad\hbox{for all }\ k\in\IZ.
\end{equation}
Since we also need to perform the Littlewood-Paley decomposition with respect to the time variable, we further define  
$$
\dot\Delta_k^t:=\cF_{\tau}^{-1} \psi(2^{-k} \tau) \cF_{t}
\andf\dot S_k^t:=\cF_{\tau}^{-1} \chi(2^{-k} \tau) \cF_t. 
$$

\subsection*{Generalizing the symbol}

A glimpse onto~\eqref{Eq: Solution formula} reveals that an extension of the time derivative of $u$ is given by
\begin{align}\label{eq:Eu}
 \partial_t E u = E \partial_t u = -\cF_{\tau , \xi^{\prime}}^{-1} \ii \tau r^{-1} \e^{- r \lvert x_n \rvert} \cF_{t , x^{\prime}} g.
\end{align}
Observe that~\eqref{eq:Eu} may be rewritten as 
\begin{align}
\label{Eq: Fourier multiplier operator}
 E\partial_tu=m(D_{t,x'},x_n) g:= \cF_{\tau , \xi^{\prime}}^{-1} m (\tau , \xi^{\prime} , x_n) \cF_{t , x^{\prime}} g,
\end{align}
where the  symbol $m$ is given by
\begin{equation}\label{eq:m}
 m (\tau , \xi^{\prime} , x_n) =- \frac{\ii \tau}{r} \e^{- r \lvert x_n \rvert}.
\end{equation}
We now  perform estimates of $m(\tau,\xi^\prime,\cdot)$ in $\LL_p(\IR)$ and in $\dot\B^s_{p,1}(\IR).$

Proving an $\LL_p(\IR)$ estimate is straightforward:
taking advantage of~\eqref{eq:r}, we see that we have for some constant $c > 0,$ 
\begin{align*}
 \lvert \e^{- r \lvert x_n \rvert} \rvert \leq \e^{- c (\lvert \tau \rvert^{1 / 2} + \lvert \xi^{\prime} \rvert) \lvert x_n \rvert}.
\end{align*}
As a consequence, there exists  some constant $C > 0$ such that  for all $1 \leq p \leq \infty,$
\begin{align*}
 \| m(\tau,\xi^\prime,\cdot)  \|_{\LL_p(\IR)} \leq {C|\tau|}{(\lvert \tau \rvert^{1 / 2} + \lvert \xi^{\prime} \rvert)^{-1-1 / p}}.
\end{align*}
We claim that  for all $z\in\IC \setminus \{0\}$ 
with $|\arg z|\leq \pi/4$  and  for all $-1+1/p<s < 1 + {1}/{p},$ 
 the function  $x_n \mapsto \e^{- z\lvert x_n \rvert}$  lies in $\dot \B^s_{p , 1} (\IR)$
  and that there exists a constant $C$, depending only on $p,s$, such that 
  \begin{equation}\label{eq:ezxn}
  \| \e^{- z\lvert \cdot \rvert}\|_{\dot\B^s_{p,1}(\IR)}\leq C |z|^{s-1/p}.\end{equation}
Indeed, decomposing $z$ into $z=|z| \e^{\ii\theta},$ the well-known scaling properties
of the homogeneous Besov space $\dot\B^s_{p,1}(\IR)$ guarantee that 
  \begin{equation}\label{eq:ezxn1}\| \e^{- z\lvert \cdot \rvert}\|_{\dot\B^s_{p,1}(\IR)}\approx 
   |z|^{s-1/p} \| h_\theta\|_{\dot\B^s_{p,1}(\IR)}
   \with h_\theta(x_n):= \e^{- \e^{\ii\theta}\lvert x_n \rvert}.\end{equation}
It is obvious that $h_\theta$ is in $\LL_p(\IR)$ with norm 
that can be bounded independently of $\theta$ if $|\theta|\leq \pi/4,$ 
and the same property holds for   $h'_\theta,$ as we just have
$$h'_\theta(x_n) =-\sgn x_n\, \e^{\ii \theta} h_\theta(x_n).$$
Hence, interpolating yields
$$
\|h_\theta\|_{\dot\B^s_{p,1}(\IR)}\leq C_{s,p} \quad\hbox{for all }\ s\in(0,1).
$$
For $s\in(1,1+1/p),$  using the characterization of $\dot\B^{s-1}_{p,1}$
by finite differences allows to show that $h'_\theta$ is 
in  $\dot\B^{s-1}_{p,1}$ uniformly with respect to $\theta.$

Moreover, observe that $h_\theta$ is in $\LL_1(\IR)$ (uniformly in $\theta \in [-\pi/4,\pi/4]$)
and thus in $\dot\B^{-1+1/p}_{p,\infty}(\IR)$ by embedding. 
Putting all these results together and interpolating whenever it is needed, 
we conclude to the validity~\eqref{eq:ezxn} for $|z|=1$ and then to the general case, thanks to~\eqref{eq:ezxn1}.

Reverting to the definition of  our original symbol $m$ in~\eqref{eq:m}, we get, for a constant $C$ independent of $\tau$ and $\xi'$,
\begin{align*}
 \| m(\tau , \xi^{\prime} , \cdot) \|_{\dot \B^s_{p , 1} (\IR)} \leq C \lvert \tau \rvert 
  \big( \lvert \tau \rvert^{\frac{1}{2}} + \lvert \xi^{\prime} \rvert \big)^{s-1- \frac{1}{p}},\qquad
  -1+\frac 1p<s<1+\frac 1p\cdotp
\end{align*}
From explicit computation, we  get a  natural extension of the above estimates for $m$  derivatives with respect to $\tau$ and $\xi^{\prime}$,     
namely,  for all $\tau \in \IR \setminus \{ 0 \}$, $\xi^{\prime} \in \IR^{n - 1}$, $j \in \{0 , 1 , 2\}$, and $\alpha \in \IN_0^n$,
\begin{equation}\label{Eq: Lp norm of multiplier}
\Big\| \partial_{\xi^{\prime}}^{\alpha} \partial_{\tau}^j m (\tau , \xi^{\prime} ,\cdotp) \Big\|_{\LL^p_{x_n} (\IR)}\leq C \lvert \tau \rvert^{1-j }\lvert \xi^{\prime} \rvert^{\lvert \alpha \rvert}   
\big( \lvert \tau \rvert^{1 / 2} + \lvert \xi^{\prime} \rvert \big)^{-1-\frac{1}{p}-2|\alpha|},
\end{equation}
and, if  $-1+1/p<s<1+1/p,$ 
\begin{equation}\label{Eq: Besov norm of multiplier}
\Big\| \partial_{\xi^{\prime}}^{\alpha} \partial_{\tau}^j m (\tau , \xi^{\prime} , \cdotp) \Big\|_{\dot \B^s_{p , 1} (\IR)} \leq C  \lvert \tau \rvert^{1 - j} \lvert \xi^{\prime} \rvert^{\lvert \alpha \rvert } \big( \lvert \tau \rvert^{1 / 2} + \lvert \xi^{\prime} \rvert \big)^{s-1-\frac{1}{p}-2|\alpha|}. 
\end{equation}

\noindent
For any symbol  $m$ satisfying the above two properties,  we have  the following proposition.

\begin{proposition}\label{p:m}
Let $1 < p < \infty$ and $s \in \IR$. There exists a constant $C > 0$ such that for all
\begin{align*}
 g \in E\dot\Y_p^s:=\dot \B^{\frac{1}{2}(s+1 - \frac{1}{p})}_{1 , 1} (\IR ; \dot \B^0_{p , 1} (\IR^{n - 1})) \cap 
 \dot \B^{\frac{1}{2} - \frac{1}{2 p}}_{1 , 1} (\IR ; \dot \B^s_{p , 1} (\IR^{n - 1})) \cap \LL_1 (\IR ; \dot \B^{s + 1 - \frac{1}{p}}_{p,1} (\IR^{n - 1})),
\end{align*}
we have
\begin{align*}
 \| m(D_{t,x'},x_n) g \|_{\LL_1 (\IR ; \dot \B^s_{p , 1} (\IR^n))} \leq C \| g \|_{E\dot\Y^s_p}.
\end{align*}
\end{proposition}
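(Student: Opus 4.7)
\textbf{Proof plan for Proposition~\ref{p:m}.} The strategy is to reduce to a dyadic Fourier multiplier estimate in the tangential variables $(t,x')$, with the vertical variable $x_n$ absorbed as a parameter via the quantitative symbol bounds \eqref{Eq: Lp norm of multiplier} and \eqref{Eq: Besov norm of multiplier}. First I would write $g=\sum_{j,k\in\IZ} g_{j,k}$ with $g_{j,k}:=\dot\Delta_j^t\dot\Delta_k^h g$, so that, by linearity, $u=m(D_{t,x'},x_n)g=\sum_{j,k}u_{j,k}$ with $u_{j,k}:=m(D_{t,x'},x_n)g_{j,k}$. Each $u_{j,k}$ has $(\tau,\xi')$-Fourier support in the dyadic box $\{|\tau|\sim 2^j,\;|\xi'|\sim 2^k\}$. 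The goal reduces to bounding $\sum_{j,k}\|u_{j,k}\|_{\LL_1(\IR_t;\dot\B^s_{p,1}(\IR^n))}$ by $\|g\|_{E\dot\Y^s_p}$.

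The core estimate, to be established for each block, is
\begin{equation*}
\|u_{j,k}\|_{\LL_1(\IR_t;\dot\B^s_{p,1}(\IR^n))}\leq C\,2^j\bigl(2^{j/2}+2^k\bigr)^{s-1-\frac1p}\|g_{j,k}\|_{\LL_1(\IR_t;\LL_p(\IR^{n-1}))}.
\end{equation*}
To obtain this, I would use the tensored Littlewood--Paley decomposition \eqref{eq:decompocompo} to split $\|u_{j,k}\|_{\dot\B^s_{p,1}(\IR^n)}$ into a horizontal part ($\dot S_{\ell+1}^v\dot\Delta_\ell^h u_{j,k}$, nonzero only for $\ell\approx k$) and a vertical part ($\dot\Delta_\ell^v\dot S_\ell^h u_{j,k}$, nonzero only for $\ell\gtrsim k$). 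The horizontal contribution is bounded by $2^{ks}\|u_{j,k}\|_{\LL_1(\IR_t;\LL_p(\IR^n))}$, which in turn is controlled via Young's inequality in $(t,x')$: one checks that the convolution kernel of the dyadic-box Fourier multiplier $m(\tau,\xi',\cdot)\tilde\psi_j(\tau)\tilde\psi_k(|\xi'|)$, viewed as taking values in $\LL_p(\IR_{x_n})$, belongs to $\LL_1(\IR\times\IR^{n-1};\LL_p(\IR_{x_n}))$ with norm controlled by $\sup\|m(\tau,\xi',\cdot)\|_{\LL_p(\IR_{x_n})}\lesssim 2^j(2^{j/2}+2^k)^{-1-1/p}$ thanks to \eqref{Eq: Lp norm of multiplier}. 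For the vertical piece, I would similarly estimate each $\|\dot\Delta_\ell^v u_{j,k}\|_{\LL_1(\IR_t;\LL_p(\IR^n))}$ by the same Young-kernel argument but applied to the $\LL_p(\IR_{x_n})$-valued symbol $\dot\Delta_\ell^v m(\tau,\xi',\cdot)\tilde\psi_j\tilde\psi_k$; summing over $\ell\gtrsim k$ with the weight $2^{\ell s}$ then collapses the sum to $\sup\|m(\tau,\xi',\cdot)\|_{\dot\B^s_{p,1}(\IR_{x_n})}$, controlled via \eqref{Eq: Besov norm of multiplier}.

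The final step is to sum the block estimate over $(j,k)\in\IZ^2$. I would split into the two regimes according to which of $2^{j/2}$ or $2^k$ dominates $|r|\sim 2^{j/2}+2^k$. In the regime $2^{j/2}\geq 2^k$ one has $2^j(2^{j/2}+2^k)^{s-1-1/p}\approx 2^{j(s+1-1/p)/2}$, and summation in $(j,k)$ yields exactly the $\dot\B^{(s+1-1/p)/2}_{1,1}(\IR;\dot\B^0_{p,1}(\IR^{n-1}))$-component of $\|g\|_{E\dot\Y^s_p}$. In the regime $2^{j/2}<2^k$ one has $2^j(2^{j/2}+2^k)^{s-1-1/p}\approx 2^j\cdot 2^{k(s-1-1/p)}$; the elementary inequality $2^{j(1+1/p)/2}\leq 2^{k(1+1/p)}$ (valid in this regime) then gives $2^j\cdot 2^{k(s-1-1/p)}\leq 2^{j(1-1/p)/2}\cdot 2^{ks}$, and the corresponding sum is controlled by the $\dot\B^{(1-1/p)/2}_{1,1}(\IR;\dot\B^s_{p,1}(\IR^{n-1}))$-component of $\|g\|_{E\dot\Y^s_p}$.

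The main obstacle will be to rigorously justify the vector-valued Mikhlin/Young bound at the level of each dyadic block. Since $\dot\B^s_{p,1}(\IR_{x_n})$ is not UMD, one cannot invoke Weis-type multiplier theorems; instead, I will exploit the compact Fourier support of $\tilde\psi_j\tilde\psi_k$ to verify directly that the scaled $(\tau,\xi')$-derivatives of $m$ satisfy symbol bounds of the form $\lvert\partial_\tau^a\partial_{\xi'}^\alpha m(\tau,\xi',\cdot)\rvert_X\leq C\,2^{-ja}\,2^{-k|\alpha|}\sup\lVert m\rVert_X$ (for either $X=\LL_p(\IR_{x_n})$ or $X=\dot\B^s_{p,1}(\IR_{x_n})$), a consequence of \eqref{Eq: Lp norm of multiplier}--\eqref{Eq: Besov norm of multiplier}, and then estimate the $X$-valued kernel by a standard stationary-phase/integration-by-parts argument to obtain an $\LL_1(\IR\times\IR^{n-1};X)$-bound by $\sup\lVert m(\tau,\xi',\cdot)\rVert_X$. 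Once this kernel estimate is in place, Banach-valued Young's convolution inequality yields the block bound cleanly.
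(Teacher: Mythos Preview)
Your strategy is essentially the paper's: localize in time and tangential frequencies, bound the kernel in $\LL_1(\IR\times\IR^{n-1};X)$ (with $X=\LL_p(\IR_{x_n})$ or $X=\dot\B^s_{p,1}(\IR_{x_n})$) by the four--region integration--by--parts argument you describe, then apply the $X$--valued Young inequality. This kernel step is precisely what the paper isolates as Propositions~\ref{Prop: Multiplier norm} and~\ref{Prop: Multiplier norm2}, so your ``main obstacle'' is well identified and your plan to resolve it matches the paper.

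One inaccuracy: your single \emph{core block estimate}
\[
\|u_{j,k}\|_{\LL_1(\IR;\dot\B^s_{p,1}(\IR^n))}\le C\,2^j\bigl(2^{j/2}+2^k\bigr)^{s-1-\frac1p}\|g_{j,k}\|_{\LL_1(\IR;\LL_p(\IR^{n-1}))}
\]
is not correct for $s<0$. The horizontal piece gives $2^{ks}\cdot 2^j(2^{j/2}+2^k)^{-1-1/p}$ and the vertical piece gives $2^j(2^{j/2}+2^k)^{s-1-1/p}$; for $s<0$ and $2^k\ll 2^{j/2}$ the horizontal contribution is strictly larger (the ratio is $(2^k/(2^{j/2}+2^k))^s\gg 1$). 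The paper avoids this by keeping the two pieces separate throughout (the $M$--part and the $Y$--part), never merging them into a single block bound. The fix in your framework is immediate: carry the two bounds separately; the horizontal one still sums, in the regime $2^{j/2}\ge 2^k$, to $\|g\|_{\dot\B^{(1-1/p)/2}_{1,1}(\IR;\dot\B^s_{p,1})}$.

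One genuine organizational difference worth noting. You localize $g$ in \emph{both} time and tangential frequencies from the start; the paper instead applies the output Besov norm first and only then decomposes $g$. As a consequence, in the low--time--frequency regime ($\ell\le 2k$ in the paper's indexing, your $j/2<k$), the paper refrains from putting $\dot\Delta^t_\ell$ on $g$, sums $\ell$ geometrically, and lands on the $\LL_1(\IR;\dot\B^{s+1-1/p}_{p,1})$ component of $E\dot\Y^s_p$. Your route, keeping $g_{j,k}$ doubly localized and using $2^{j(1+1/p)/2}\le 2^{k(1+1/p)}$, lands instead on the $\dot\B^{(1-1/p)/2}_{1,1}(\IR;\dot\B^s_{p,1})$ component. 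So carried out correctly your argument controls $\|m(D_{t,x'},x_n)g\|$ by only the two time--Besov components of $E\dot\Y^s_p$, never invoking $\LL_1(\IR;\dot\B^{s+1-1/p}_{p,1})$. This is consistent with (indeed a mild refinement of) the statement, and dovetails with the remark following Theorem~\ref{Thm:lame} about redundancy among the three norms.
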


\begin{proof}
Let $v:= m(D_{t,x'},x_n) g.$ We start by applying for fixed $t \in \IR$ the Besov space norm defined by~\eqref{Eq: Besov space on IRn}  to $v(t)$.  Using~\eqref{eq:decompocompo}, this yields
\begin{align*}
 \| v(t) \|_{\dot \B^s_{p , 1} (\IR^n)} &= \sum_{k = - \infty}^{\infty} 2^{s k} \| \Phi(2^{-k}D_x)v(t) \|_{\LL_p (\IR^n)} \\
 &\leq \sum_{k = - \infty}^{\infty} 2^{s k}\|\dot S^v_{k+1}\dot\Delta_k^h\:
 m(D_{t,x'},x_n)g \|_{\LL_p (\IR^n)}  + 
   \sum_{k = - \infty}^{\infty} 2^{s k}\|\dot S^h_{k}\dot\Delta_k^v\:
 m(D_{t,x'},x_n)g \|_{\LL_p (\IR^n)} 
    \\
 &=: M (t) + Y (t).
\end{align*}
Next, we decompose $g$ by using Littlewood-Paley decompositions~\eqref{Eq: Besov space on IRn} in the time  and $x^\prime$ variables $(\ell,j)$. This results in
\begin{eqnarray}\label{eq:M} M(t) &\!\!\!\leq\!\!\!& \sum_{k = - \infty}^{\infty} \sum_{j = - \infty}^{\infty} \sum_{\ell = - \infty}^{\infty} 2^{s k}  \| \dot\Delta^h_j \dot\Delta^t_\ell \dot\Delta_k^h\dot S_{k+1}^v
  m(D_{t,x'},x_n)g \|_{\LL_p (\IR^n)},\\\label{eq:Y}
 Y(t) &\!\!\!\leq\!\!\!& \sum_{k = - \infty}^{\infty} \sum_{j = - \infty}^{\infty} \sum_{\ell = - \infty}^{\infty} 2^{s k} 
 \| \dot\Delta_j^h \dot\Delta_\ell^t\dot S_k^h\dot\Delta_k^v m(D_{t,x'},x_n)g \|_{\LL_p (\IR^n)}.\end{eqnarray}
To  handle  the latter two terms, we will use the facts that  
\begin{equation}\label{eq:LP1}
\dot\Delta^h_j\dot\Delta^h_k=  0 \quad \text{for all} \quad \lvert k - j \rvert \geq 2
\end{equation}
and that
\begin{equation}\label{eq:LP2}
\dot\Delta^h_j\dot S_k^h=0 \quad \text{for all} \quad j \geq k + 1.
\end{equation}
Proving  the proposition reduces to  deriving estimates of the form
\begin{align*}
 \int_{\IR} M(t) \; \d t \leq C \| g \|_{E\dot\Y^s_p} \andf \int_{\IR} Y(t) \; \d t \leq C \| g \|_{E\dot\Y^s_p}.
\end{align*}
Let us first consider $M.$ In light of~\eqref{eq:LP1}, it follows that $M(t)\leq M_1(t)+M_2(t)$ with 
  $$  \begin{aligned}
  M_1(t)&:=  \sum_{k = - \infty}^{\infty} \sum_{|j-k|\leq1} \sum_{\ell = - \infty}^{2k} 2^{s k}  \| \dot\Delta^h_j \dot\Delta^t_\ell\dot\Delta_k^h \dot S_{k+1}^v  m(D_{t,x'},x_n)g \|_{\LL_p (\IR^n)}\\
 \andf
  M_2(t)&:=  \sum_{k = - \infty}^{\infty} \sum_{|j-k|\leq1} \sum_{\ell = 2k+1}^{\infty} 2^{s k} 
   \| \dot\Delta^h_j \dot\Delta^t_\ell\dot\Delta_k^h \dot S_{k+1}^vm(D_{t,x'},x_n)g \|_{\LL_p (\IR^n)}.
   \end{aligned}$$
Write the first  expression in terms of convolutions with respect to $(t,x^\prime)$  as
$$
  \dot\Delta^h_j \dot\Delta^t_\ell\dot\Delta_k^h \dot S_{k+1}^v
  m(D_{t,x'},x_n)g  = \cF^{-1}_{\tau , \xi^{\prime}} \psi (2^{- k} \lvert \xi^{\prime} \rvert) \psi(2^{- \ell} \tau) \cF_{\xi_n}^{-1} \chi (2^{- (k + 1)} \xi_n) \cF_{x_n} m(\tau , \xi^{\prime} , x_n) *_{t , x^{\prime}}  \dot\Delta_j^h g.$$
  For $M_2,$ we further  introduce  the function 
\begin{equation}\label{eq:theta}
\theta(t):=\psi(t/2)+\psi(t)+\psi(2t)
\end{equation}
and observe  that  $\theta\psi=\psi,$ 
so that one can shift $\dot\Delta_\ell^t$ onto $g$ as follows:
$$
\displaylines{\quad \dot\Delta^h_j \dot\Delta^t_\ell \dot\Delta_k^h\dot S_{k+1}^v  m(D_{t,x'},x_n)g \hfill\cr\hfill
= \cF^{-1}_{\tau , \xi^{\prime}} \psi (2^{- k} \lvert \xi^{\prime} \rvert) \theta(2^{-\ell} \tau) \cF_{\xi_n}^{-1} \chi (2^{- (k + 1)} \xi_n) \cF_{x_n} m(\tau , \xi^{\prime} , x_n)  *_{t , x^{\prime}}\dot
\Delta_\ell^t\dot\Delta_j^h g.\quad}
$$
Let $G$ denote a function depending on the variables $t$, $x^{\prime}$ and $x_n$ and let $H$ denote a function depending on the variables $t$ and $x^{\prime}$. Then, Minkowski's inequality 
and Young's inequality for convolutions imply
\begin{align*}
 \| G *_{t , x^{\prime}} H \|_{\LL_p (\IR^n)} \leq \| \| G \|_{\LL^p_{x_n} (\IR)} *_{t , x^{\prime}} \lvert H \rvert \|_{\LL^p_{x^{\prime}} (\IR^{n - 1})} \leq \| G \|_{\LL^1_{x^{\prime}} (\IR^{n - 1} ; \LL^p_{x_n} (\IR))} *_t \| H \|_{\LL^p_{x^{\prime}} (\IR^{n - 1})}.
\end{align*}
Integrating this inequality and employing Young's inequality for convolutions again delivers
\begin{align}
\label{Eq: Multiple Young}
 \| G *_{t , x^{\prime}} H \|_{\LL_1(\IR ; \LL_p (\IR^n))} \leq \| G \|_{\LL^1_{t , x^{\prime}} (\IR \times \IR^{n - 1} ; \LL^p_{x_n} (\IR))} \| H \|_{\LL^1_t (\IR ; \LL^p_{x^{\prime}} (\IR^{n - 1}))}.
\end{align}
Applying~\eqref{Eq: Multiple Young} to the expressions of $M_1$ and $M_2$ leads to
$$\displaylines{
 \int_{\IR} M_1(t) \; \d t\leq \sum_{k = - \infty}^{\infty} \sum_{\lvert j - k \rvert \leq 1} 
  \sum_{\ell = - \infty}^{2 k}2^{s k} \hfill\cr\hfill \cdot\| \cF^{-1}_{\tau , \xi^{\prime}} \psi (2^{- k} \lvert \xi^{\prime} \rvert) \psi(2^{- \ell} \tau) \cF_{\xi_n}^{-1} \chi (2^{- (k + 1)} \xi_n) \cF_{x_n} m(\tau , \xi^{\prime} , x_n) \|_{\LL^1_{t , x^{\prime}} (\IR \times \IR^{n - 1} ; \LL^p_{x_n} (\IR))}
  \| \dot\Delta_j^h g\|_{\LL^1_t (\IR ; \LL^p_{x^{\prime}} (\IR^{n - 1}))}}$$
  $$\displaylines{\andf \int_{\IR} M_2(t) \; \d t  \leq  \sum_{k = - \infty}^{\infty} \sum_{\lvert j - k \rvert \leq 1} 
\sum_{\ell = 2 k + 1}^{\infty}2^{sk}\hfill\cr\hfill\cdot
 \| \cF^{-1}_{\tau, \xi^{\prime}} \psi (2^{- k} \lvert \xi^{\prime} \rvert) \theta(2^{-\ell} \tau) \cF_{\xi_n}^{-1} \chi (2^{- (k + 1)} \xi_n) \cF_{\!x_n} m(\tau , \xi^{\prime} , x_n)\|_{\LL^1_{t , x^{\prime}} (\IR \times \IR^{n - 1} ; \LL^p_{x_n} (\IR))}
 \| \Delta_\ell^t \dot\Delta_j^h g\|_{\LL^1_t (\IR ; \LL^p_{x^{\prime}} (\IR^{n - 1}))}.}$$
At this point we already suspect that the series for $\ell \leq 2 k$ is estimated by $g$ in $\LL_1 (\IR ; \dot \B^{s + 1 - 1 / p}_{p , 1} (\IR^{n - 1}))$ and that the series for 
$\ell \geq 2 k + 1$ is eventually estimated by $g$ in $\dot \B^{1 / 2 - 1 / (2 p)}_{1 , 1} (\IR ; \dot \B^s_{p , 1} (\IR^{n - 1}))$. To confirm our guess, we have to estimate for $\phi\in\{\psi,\theta\},$ 
\begin{align}
\label{Eq: L1 for M}
 \| \cF^{-1}_{\tau , \xi^{\prime}} \psi (2^{- k} \lvert \xi^{\prime} \rvert) \phi(2^{- \ell} \tau) \cF_{\xi_n}^{-1} \chi (2^{- (k + 1)} \xi_n) \cF_{x_n} m(\tau , \xi^{\prime} , x_n) \|_{\LL^1_{t , x^{\prime}} (\IR \times \IR^{n - 1} ; \LL^p_{x_n} (\IR))}.
\end{align}
Given the relation between  $\theta$ and $\psi$, it suffices to prove the following result. 

\begin{proposition}\label{Prop: Multiplier norm}  For $k,\ell  \in \IZ$  and  $(t,x',x_n)\in\IR\times\IR^{n-1}\times\IR$ set
$$
M_{k,\ell}(t,x',x_n):=  \cF^{-1}_{\tau , \xi^{\prime}}\Bigl( \psi (2^{- k} \lvert \xi^{\prime} \rvert) \psi(2^{- \ell} \tau) \cF_{\xi_n}^{-1} \bigl(\chi (2^{- (k + 1)} \xi_n) \cF_{x_n} m(\tau , \xi^{\prime} ,\cdot)
\bigr)(x_n)\Bigr)(t,x')\cdotp
$$
Then, there exists a constant $C > 0$ such that 
\begin{align*}
  \|M_{k,\ell} \|_{\LL^1_{t , x^{\prime}} (\IR \times \IR^{n - 1} ; \LL^p_{x_n} (\IR))} \leq C 2^{\ell} \big( 2^{\frac{\ell}{2}} + 2^k \big)^{-1- \frac{1}{p}}.
\end{align*}
\end{proposition}

\begin{proof} Given $\lambda, \mu > 0$, we use the decomposition
\begin{align*}
    \|M_{k,\ell} \|_{\LL^1_{t , x^{\prime}} (\IR \times \IR^{n - 1} ; \LL^p_{x_n} (\IR))}  
 &=  \int_{\{ \lvert t \rvert < \mu \}} \int_{\{ \lvert x^{\prime} \rvert < \lambda \}}
   \|M_{k,\ell}(t,x^\prime,\cdot)  \|_{\LL^p_{x_n} (\IR)}  \, \d x^{\prime} \; \d t \\
 &+ \int_{\{ \lvert t \rvert \geq \mu \}} \int_{\{ \lvert x^{\prime} \rvert < \lambda \}}
    \|M_{k,\ell}(t,x^\prime,\cdot)  \|_{\LL^p_{x_n} (\IR)}  \, \d x^{\prime} \; \d t  \\
 &+ \int_{\{ \lvert t \rvert < \mu \}} \int_{\{ \lvert x^{\prime} \rvert \geq \lambda \}} 
    \|M_{k,\ell}(t,x^\prime,\cdot)  \|_{\LL^p_{x_n} (\IR)} \,\d x^{\prime} \; \d t \\
 &+ \int_{\{ \lvert t \rvert \geq \mu \}} \int_{\{ \lvert x^{\prime} \rvert \geq \lambda \}}
    \|M_{k,\ell}(t,x^\prime,\cdot)  \|_{\LL^p_{x_n} (\IR)}\, \d x^{\prime} \; \d t \\
 &=: \mathrm{I} + \mathrm{II} + \mathrm{III} + \mathrm{IV}.
\end{align*}
 By definition of the Fourier transform, we have for all  $(t,x',x_n)\in\IR\times\IR^{n-1}\times\IR,$ 
$$
M_{k,\ell}(t,x',x_n)=(2\pi)^{-n}\int_{\IR} \int_{\IR^{n - 1}} 
 \e^{\ii t \tau} \e^{\ii x^{\prime} \cdot \xi^{\prime}} \psi (2^{- k} \lvert \xi^{\prime} \rvert) \psi(2^{- \ell} \tau)  \dot S_{k+1}^v m(\tau , \xi^{\prime} , x_n) \; \d \xi^{\prime} \; \d \tau.
 $$ 
To estimate the  term $\mathrm{I}$, use Minkowski's inequality to pull the integration over $x_n$ into the two innermost integrals. Further, since the modulus is then applied to the exponential functions, which have modulus one, the dependence of the integrands on $t$ and $x^{\prime}$ vanishes. This introduces the measures of the sets $\{ \lvert t \rvert < \mu \}$ and $\{ \lvert x^{\prime} \rvert < \lambda \}$ as factors. Altogether, we find
\begin{align*}
 \mathrm{I} \leq C \mu \lambda^{n - 1} \int_{\IR} \int_{\IR^{n - 1}} \lvert \psi (2^{- k} \lvert \xi^{\prime} \rvert) \psi(2^{- \ell} \tau) \rvert \|  \dot S_{k+1}^v m(\tau , \xi^{\prime} , x_n) \|_{\LL^p_{x_n} (\IR)} \; \d \xi^{\prime} \; \d \tau.
\end{align*}
Since $\dot S_{k+1}^v$ defines an $\LL_p$-bounded operator with a bound independent of $k$, the conditions~\eqref{Eq: Support of cutoff} and~\eqref{Eq: Lp norm of multiplier} deliver
\begin{align*}
 \mathrm{I} \leq C \mu \lambda^{n - 1} 2^{2\ell} 2^{(n - 1) k} \big( 2^{\frac{\ell}{2}} + 2^k \big)^{- 1 - \frac{1}{p}}.
\end{align*}

To estimate the second term, we use the identity
\begin{equation}\label{eq:t}
 \e^{2 \pi \ii t \tau} = - \frac{1}{4 \pi^2 t^2} \frac{\d^2}{\d \tau^2} \e^{2 \pi \ii t \tau}
\end{equation}
and perform an integration by parts in the $\tau$ variable. 
Applying afterwards again Minkowski's inequality and using the fact  that $\dot S_{k+1}^v$ defines an $\LL_p$-bounded operator with a bound independent of $k$ yields
\begin{align*}
 \mathrm{II} \leq C \mu^{-1} \lambda^{n - 1} \int_{\IR} \int_{\IR^{n - 1}} \lvert \psi(2^{- k} \lvert \xi^{\prime} \rvert) \rvert \big\| \partial_{\tau}^2 \{ \psi(2^{- \ell} \tau) 
m (\tau , \xi^{\prime} , x_n) \} \big\|_{\LL^p_{x_n} (\IR)} \; \d \xi^{\prime} \; \d \tau.
\end{align*}
Employing finally  the product rule, followed by~\eqref{Eq: Lp norm of multiplier} and~\eqref{Eq: Support of cutoff} results in
\begin{align*}
 \mathrm{II} \leq C \mu^{-1} \lambda^{n - 1} 2^{(n - 1) k} \big( 2^{\frac{\ell}{2}} + 2^k \big)^{- 1 - \frac{1}{p}}.
\end{align*}

For the third term, we use the identity
\begin{equation}\label{eq:xp}
 \e^{2 \pi \ii x^{\prime} \cdot \xi^{\prime}} = \frac1{(2 \pi)^{2 N} \lvert x^{\prime} \rvert^{2 N}}
 (- \Delta_{\xi^{\prime}})^N  \e^{\ii x^{\prime} \cdot \xi^{\prime}}.
\end{equation}
Take $N := \lceil n / 2 \rceil$. 
Integrating by parts with respect to $\xi^{\prime}$ and using 
 Minkowski's inequality together with the $\LL_p$-boundedness of $\dot S_{k+1}^v$ yields
\begin{align*}
 \mathrm{III} \leq C \mu \lambda^{n - 1 - 2 N} \int_{\IR} \int_{\IR^{n - 1}} \lvert \psi (2^{- \ell} \tau) \rvert \| \Delta_{\xi^{\prime}}^N \{ \psi(2^{- k} \lvert \xi^{\prime} \rvert) m (\tau , \xi^{\prime} , x_n) \} \|_{\LL^p_{x_n} (\IR)} \; \d \xi^{\prime} \; \d \tau.
\end{align*}
By virtue of the product rule combined with~\eqref{Eq: Lp norm of multiplier} and~\eqref{Eq: Support of cutoff} we finally find
\begin{align*}
 \mathrm{III} \leq C \mu \lambda^{n - 1 - 2 N} 2^{2\ell} 2^{(n - 1) k} 2^{- 2 N k} \big( 2^{\frac{\ell}{2}} + 2^k \big)^{-1- \frac{1}{p}}.
\end{align*}

\indent For the  last term, we combine the strategies carried out for the second and third term leading to the estimate
\begin{align*}
 \mathrm{IV} \leq C \mu^{-1} \lambda^{n - 1 - 2 N} \int_{\IR} \int_{\IR^{n - 1}} \big\| (\Delta_{\xi^{\prime}})^N \partial_{\tau}^2 \{ \psi(2^{- \ell} \tau) \psi (2^{- k} \lvert \xi^{\prime} \rvert) m (\tau , \xi^{\prime} , x_n) \} \big\|_{\LL^p_{x_n} (\IR)} \; \d \xi^{\prime} \; \d \tau.
\end{align*}
The Leibniz rule together with~\eqref{Eq: Lp norm of multiplier} implies
\begin{align*}
 &\big\| (\Delta_{\xi^{\prime}})^N \partial_{\tau}^2 \{ \psi(2^{- \ell} \tau) \psi (2^{- k} \lvert \xi^{\prime} \rvert) m (\tau , \xi^{\prime} , x_n) \} \big\|_{\LL^p_{x_n} (\IR)} \\
 &\leq \sum_{j = 0}^2 \sum_{\substack{\alpha , \beta \in \IN_0^n \\ \lvert \alpha \rvert + \lvert \beta \rvert = 2 N}} C_{\alpha , \beta , j} \Big\| \frac{\d^{2 - j}}{\d \tau^{2 - j}} \psi(2^{- \ell} \tau) \partial_{\xi^{\prime}}^{\beta} \psi (2^{- k} \lvert \xi^{\prime} \rvert) \partial_{\xi^{\prime}}^{\alpha} \partial_{\tau}^j m (\tau , \xi^{\prime} , x_n) \Big\|_{\LL^p_{x_n} (\IR)} \\
 &= \sum_{j = 0}^2 \sum_{\substack{\alpha , \beta \in \IN_0^n \\ \lvert \alpha \rvert + \lvert \beta \rvert = 2 N}} C_{\alpha , \beta , j} 2^{- (2 - j) \ell} 2^{- \lvert \beta \rvert k} \Big\lvert \Big(\frac{\d^{2 - j}}{\d \tau^{2 - j}} \psi \Big)(2^{- \ell} \tau) \Big\rvert \lvert (\partial_{\xi^{\prime}}^{\beta} \psi) (2^{- k} \lvert \xi^{\prime} \rvert) \rvert \big\| \partial_{\xi^{\prime}}^{\alpha} \partial_{\tau}^j m (\tau , \xi^{\prime} , x_n) \big\|_{\LL^p_{x_n} (\IR)} \\
 &\lesssim\sum_{j = 0}^2\!\! \sum_{\substack{\alpha , \beta \in \IN_0^n \\ \lvert \alpha \rvert \!+\! \lvert \beta \rvert = 2 N}}\!\!\!\!\! C_{\alpha , \beta , j} 2^{- (2 - j) \ell} 2^{- \lvert \beta \rvert k} \Big\lvert \Big(\frac{\d^{2 - j}}{\d \tau^{2 - j}} \psi \Big)(2^{- \ell} \tau) \Big\rvert \lvert (\partial_{\xi^{\prime}}^{\beta} \psi) (2^{- k} \lvert \xi^{\prime} \rvert) \rvert \lvert \tau \rvert^{1 - j} \lvert \xi^{\prime} \rvert^{\lvert \alpha \rvert} \big( \lvert \tau \rvert^{\frac{1}{2}}\! +\! \lvert \xi^{\prime} \rvert \big)^{- 1 - \frac{1}{p} - 2 \lvert \alpha \rvert}.
\end{align*}
Employing~\eqref{Eq: Support of cutoff} then delivers
\begin{align*}
 \mathrm{IV} \leq C \mu^{-1} \lambda^{n - 1 - 2 N}  2^{(n - 1) k} 2^{- 2 N k} \big( 2^{\frac{\ell}{2}} + 2^k \big)^{-1- \frac{1}{p}}.
\end{align*}

\indent To conclude the estimate of $M_{k , \ell}(t)$, notice first that regarding the variable $\mu$, the sum of the terms $\mathrm{I}$ and $\mathrm{II}$ obeys an estimate of the form
\begin{align*}
 \mathrm{I} + \mathrm{II} \leq C \lambda^{n - 1} 2^{(n - 1) k} \big( 2^{\frac{\ell}{2}} + 2^k \big)^{- 1 - \frac{1}{p}} \big( 2^{2\ell} \mu + \mu^{-1} \big).
\end{align*}
Minimizing the function
\begin{align*}
 (0 , \infty) \ni \mu \mapsto 2^{2\ell} \mu +  \mu^{-1}
\end{align*}
delivers the choice $\mu = 2^{- \ell}$.
Inserting this into the estimates of the terms $\mathrm{I}$, $\mathrm{II}$, $\mathrm{III}$ and $\mathrm{IV}$ yields
\begin{align*}
 \mathrm{I} + \mathrm{II} + \mathrm{III} + \mathrm{IV} \leq C 2^{(n - 1) k} 2^{\ell} \big( 2^{\frac{\ell}{2}} + 2^k \big)^{- 1 - \frac{1}{p}} \lambda^{n - 1} \bigl(1+  
 \lambda^{- 2 N}2^{- 2 N k} \big)\cdotp
\end{align*}
Minimizing the function
\begin{align*}
 (0 , \infty) \ni \lambda \mapsto \lambda^{n - 1} + 2^{- 2 N k} \lambda^{n - 1 - 2 N}
\end{align*}
yields
\begin{align*}
 \lambda := \bigg( \frac{2 N + 1 - n}{n - 1} \bigg)^{\frac{1}{2 N}} 2^{- k},
\end{align*}
Inserting this choice in the estimate of the sum of all four terms finally gives the estimate
\begin{align*}
 \mathrm{I} + \mathrm{II} + \mathrm{III} + \mathrm{IV} \leq C 2^{\ell} \big( 2^{\frac{\ell}{2}} + 2^k \big)^{-1- \frac{1}{p}}. &\qedhere
\end{align*}
\end{proof}

Having at hand Proposition~\ref{Prop: Multiplier norm}, it is easy to  bound the $\LL_1$-norm of $M(t)$. Indeed, by virtue of the inequalities given in ~\eqref{Eq: Multiple Young}, 
we obtain after an application of Proposition~\ref{Prop: Multiplier norm} 
$$\begin{aligned}
 \int_{\IR} M(t) \; \d t &\leq C \sum_{k = - \infty}^{\infty} 2^{s k} \big( \sum_{\ell = - \infty}^{2 k} 2^{\ell} \big( 2^{\frac{\ell}{2}} + 2^k \big)^{-1- \frac{1}{p}}
  \|\dot\Delta_k^h g \|_{\LL^1_t (\IR ; \LL^p_{x^{\prime}} (\IR^{n - 1}))} \\
 &\hspace{3cm} + \sum_{\ell = 2 k + 1}^{\infty} 2^{\ell} \big( 2^{\frac{\ell}{2}} + 2^k \big)^{- 1- \frac{1}{p}} 
 \|\dot\Delta_\ell^t\dot\Delta_k^h g\|_{\LL^1_t (\IR ; \LL^p_{x^{\prime}} (\IR^{n - 1}))} \big) \\
 &\leq C \sum_{k = - \infty}^{\infty} 2^{s k} \big( 2^{k (1 - \frac{1}{p})} \|\dot\Delta_k^h g \|_{\LL^1_t (\IR ; \LL^p_{x^{\prime}} (\IR^{n - 1}))} \\
 &\hspace{3cm} + \sum_{\ell = 2 k + 1}^{\infty} 
 2^{\ell (\frac{1}{2} - \frac{1}{2 p})}
 \|\dot\Delta_\ell^t\dot\Delta_k^h g \|_{\LL^1_t (\IR ; \LL^p_{x^{\prime}} (\IR^{n - 1}))} \big) \cdotp
 \end{aligned}
$$
Hence, 
\begin{equation}\label{eq:MM}
 \int_{\IR} M(t) \; \d t\leq C\bigl(\| g \|_{\LL_1 (\IR ; \dot \B^{s + 1 - \frac{1}{p}}_{p,1} (\IR^{n - 1}))} + \| g \|_{\dot \B^{\frac{1}{2} - \frac{1}{2 p}}_{1 , 1} (\IR ; \dot \B^s_{p , 1} (\IR^{n - 1}))}\bigr)\cdotp
\end{equation}
To complete the proof of Proposition~\ref{p:m}, we need  to bound $\int_{\IR} Y(t) \; \d t$.  Recall that~\eqref{eq:LP2} ensures that 
the series over $j$ in~\eqref{eq:Y}  only runs from $- \infty$ to $k + 1$. Since, moreover,  $\dot S_k^h$ is a bounded operator on $\LL^p_{x^{\prime}} (\IR^{n - 1})$
(with norm independent of $p$ and $k$),   we have  $Y(t)\leq C\bigl( Y_1(t)+Y_2(t)\bigr)$
 with   
 \begin{align*}
 &Y_1(t) := \sum_{k = - \infty}^{\infty} \sum_{j = - \infty}^{k + 1} \sum_{\ell = - \infty}^{2j} 2^{s k}
  \|\dot\Delta_j^h\dot\Delta_\ell^t  \dot\Delta_k^v \, m(D_{t,x'},x_n)g\|_{\LL_p (\IR^n)} \\
  \andf& Y_2(t) := \sum_{k = - \infty}^{\infty} \sum_{j = - \infty}^{k + 1} 
  \sum_{\ell = 2 j + 1}^{\infty} 2^{s k}  \|\dot\Delta_j^h\dot\Delta_\ell^t  \dot\Delta_k^v \, m(D_{t,x'},x_n)g\|_{\LL_p (\IR^n)}.
  \end{align*}
  Now, recalling~\eqref{eq:theta} we write 
  $$\begin{aligned}\dot\Delta_j^h\dot\Delta_\ell^t  \dot\Delta_k^v \, m(D_{t,x'},x_n)g
  &=  \cF^{-1}_{\tau , \xi^{\prime}} \theta (2^{- j} \lvert \xi^{\prime} \rvert) \psi(2^{- \ell} \tau) \cF_{\xi_n}^{-1} \psi (2^{- k} \xi_n) \cF_{x_n} m(\tau , \xi^{\prime} , x_n)  *_{t , x^{\prime}}\dot\Delta_j^h g\\
 & = \cF^{-1}_{\tau , \xi^{\prime}} \theta (2^{- j} \lvert \xi^{\prime} \rvert) \theta(2^{- \ell} \tau) \cF_{\xi_n}^{-1} \psi (2^{- k} \xi_n) \cF_{x_n} m(\tau , \xi^{\prime} , x_n)  *_{t , x^{\prime}} \dot\Delta_j^h\dot\Delta_\ell^t g,
  \end{aligned}$$
and deduce by  virtue  of~\eqref{Eq: Multiple Young} and the definition of $\dot\Delta_k^v$ that 
\begin{align*}
 \int_{\IR} Y_1(t)\, \d t \leq \sum_{k = - \infty}^{\infty}  \sum_{j = - \infty}^{k + 1} \sum_{\ell = - \infty}^{2 j}
 2^{s k}  \| \cF^{-1}_{\tau , \xi^{\prime}} \theta (2^{- j} \lvert \xi^{\prime} \rvert) \psi(2^{- \ell} \tau) 
  \dot\Delta_k^vm(\tau , &\xi^{\prime}, x_n)\|_{\LL^1_{t , x^{\prime}} (\IR \times \IR^{n - 1} ; \LL^p_{x_n} (\IR))} 
\\
&\cdot \| \dot\Delta_j^h g \|_{\LL^1_t (\IR ; \LL^p_{x^{\prime}} (\IR^{n - 1}))} \\
\int_{\IR} Y_2(t)  \,\d t \leq
  \sum_{k = - \infty}^{\infty}  \sum_{j = - \infty}^{k + 1} 
  \sum_{\ell = 2 j + 1}^{\infty} 2^{s k}
   \| \cF^{-1}_{\tau , \xi^{\prime}} \theta (2^{- j} \lvert \xi^{\prime} \rvert) \theta(2^{- \ell} \tau) \dot\Delta_k^vm(\tau,& \xi^{\prime} , x_n) \|_{\LL^1_{t , x^{\prime}} (\IR \times \IR^{n - 1} ; \LL^p_{x_n} (\IR))}\\
  &\cdot \| \dot\Delta_\ell^t\dot\Delta_j^h g \|_{\LL^1_t (\IR ; \LL^p_{x^{\prime}} (\IR^{n - 1}))}.
\end{align*}
By virtue of Fubini's theorem, these estimates might also be written as
\begin{align*}
 \int_{\IR}Y_1(t)\, \d t \leq   \sum_{j = - \infty}^{\infty}  \sum_{\ell = - \infty}^{2 j} \sum_{k = j - 1}^{\infty}2^{s k} \| \cF^{-1}_{\tau , \xi^{\prime}} \theta (2^{- j} \lvert \xi^{\prime} \rvert) \psi(2^{- \ell} \tau) 
  \dot\Delta_k^v m(\tau ,& \xi^{\prime} , x_n)\|_{\LL^1_{t , x^{\prime}} (\IR \times \IR^{n - 1} ; \LL^p_{x_n} (\IR))} \\&\cdot \| \dot\Delta_j^h g \|_{\LL^1_t (\IR ; \LL^p_{x^{\prime}} (\IR^{n - 1}))} \\
\int_{\IR} Y_2(t)  \,\d t \leq
  \sum_{j = - \infty}^{\infty}  \sum_{\ell = 2 j+1}^{\infty} \sum_{k = j - 1}^{\infty}
 2^{s k}
   \|\cF^{-1}_{\tau , \xi^{\prime}} \theta (2^{- j} \lvert \xi^{\prime} \rvert) \theta(2^{- \ell} \tau) 
  \dot\Delta_k^v m(\tau ,& \xi^{\prime} , x_n) \|_{\LL^1_{t , x^{\prime}} (\IR \times \IR^{n - 1} ; \LL^p_{x_n} (\IR))}\\
  &\cdot \| \dot\Delta_\ell^t\dot\Delta_j^h g \|_{\LL^1_t (\IR ; \LL^p_{x^{\prime}} (\IR^{n - 1}))}.
\end{align*}
Thus, a bound on the $\LL_1$-norm of $Y(t)$ would follow, if we are able to prove the following result:

\begin{proposition}\label{Prop: Multiplier norm2} 
For all $j, k,\ell \in \IZ$  and $(t,x',x_n)\in\IR\times\IR^{n-1}\times\IR$ let
$$M_{j,k,\ell}(t,x',x_n):=  \cF^{-1}_{\tau , \xi^{\prime}} \psi (2^{- j} \lvert \xi^{\prime} \rvert) \psi(2^{- \ell} \tau) \dot\Delta_k^v m(\tau , \xi^{\prime} , x_n).$$
Then there  exists a constant $C > 0$ such that for all $j,\ell  \in \IZ$ 
\begin{equation}\label{Eq: Sufficient bound}
  \sum_{k = j - 1}^{\infty} 2^{s k}
    \|M_{j,k,\ell} \|_{\LL^1_{t , x^{\prime}} (\IR \times \IR^{n - 1} ; \LL^p_{x_n} (\IR))}
   \leq  C 2^{\ell}  \big( 2^{\frac{\ell}{2}} + 2^j \big)^{s-1- \frac{1}{p}}.
\end{equation}
\end{proposition}
Indeed, the above estimate  would give us
$$ \int_{\IR} Y_1(t) \; \d t \leq C \sum_{j = - \infty}^{\infty} 
\sum_{\ell = - \infty}^{2 j} 2^{\ell} \big( 2^{\frac{\ell}{2}} + 2^j \big)^{s-1- \frac{1}{p}} \| \dot\Delta_j^h g \|_{\LL^1_t (\IR ; \LL^p_{x^{\prime}} (\IR^{n - 1}))}.$$
Since $2^j \leq 2^{\ell / 2} + 2^j \leq 2^{j + 1}$ for $\ell \leq 2 j$ and $\sum_{\ell = - \infty}^{2 j} 2^{\ell} \sim 2^{2 j}$, one easily deduces that 
\begin{equation}\label{eq:Y1} \int_{\IR} Y_1(t) \; \d t \leq C \| g \|_{\LL^1 (\IR ; \dot \B^{s + 1 - \frac{1}{p}}_{p , 1} (\IR^{n - 1}))}.\end{equation}
Likewise, plugging~\eqref{Eq: Sufficient bound} in the inequality for $Y_2$ gives us
$$ \int_{\IR} Y_2(t) \; \d t \leq C \sum_{j = - \infty}^{\infty}
  \sum_{\ell = 2 j + 1}^{\infty}  2^{\ell}\big( 2^{\frac{\ell}{2}} + 2^j \big)^{s-1- \frac{1}{p}} 
 \| \dot\Delta_\ell^t\dot\Delta_j^h  g \|_{\LL^1_t (\IR ; \LL^p_{x^{\prime}} (\IR^{n - 1}))}.$$
Since $2^{\ell / 2} \leq 2^{\ell / 2} + 2^j$ and $s < 1 + 1 / p$, we  conclude that 
 \begin{equation}\label{eq:Y2} \int_{\IR} Y_2(t) \; \d t \leq C  \| g \|_{\dot \B^{\frac{1}{2}(s+1-\frac1p)}_{1 , 1} (\IR ; \dot \B^0_{p , 1} (\IR^{n - 1}))}.\end{equation}

\begin{proof}[Proof of Proposition~\ref{Prop: Multiplier norm2}]
This is a mere adaptation of the proof of Proposition~\ref{Prop: Multiplier norm}.
 For all $\lambda , \mu > 0$, 
we write 
\begin{align*}
    \|M_{j,k,\ell} \|_{\LL^1_{t , x^{\prime}} (\IR \times \IR^{n - 1} ; \LL^p_{x_n} (\IR))}  
 &=  \int_{\{ \lvert t \rvert < \mu \}} \int_{\{ \lvert x^{\prime} \rvert < \lambda \}}
   \|M_{j,k,\ell}(t,x^\prime,\cdot)  \|_{\LL^p_{x_n} (\IR)}  \, \d x^{\prime} \; \d t \\
 &+ \int_{\{ \lvert t \rvert \geq \mu \}} \int_{\{ \lvert x^{\prime} \rvert < \lambda \}}
    \|M_{j,k,\ell}(t,x^\prime,\cdot)  \|_{\LL^p_{x_n} (\IR)}  \, \d x^{\prime} \; \d t  \\
 &+ \int_{\{ \lvert t \rvert < \mu \}} \int_{\{ \lvert x^{\prime} \rvert \geq \lambda \}} 
    \|M_{j,k,\ell}(t,x^\prime,\cdot)  \|_{\LL^p_{x_n} (\IR)} \,\d x^{\prime} \; \d t \\
 &+ \int_{\{ \lvert t \rvert \geq \mu \}} \int_{\{ \lvert x^{\prime} \rvert \geq \lambda \}}
    \|M_{j,k,\ell}(t,x^\prime,\cdot)  \|_{\LL^p_{x_n} (\IR)}\, \d x^{\prime} \; \d t \\
  &=: \mathrm{I}_k + \mathrm{II}_k + \mathrm{III}_k + \mathrm{IV}_k.
\end{align*}
 By definition of the Fourier transform, 
 we discover that for all  $(t,x',x_n)\in\IR\times\IR^{n-1}\times\IR,$ 
$$M_{j,k,\ell}(t,x',x_n)= (2\pi)^{-n}\int_{\IR} \int_{\IR^{n - 1}} 
 \e^{\ii t \tau} \e^{\ii x^{\prime} \cdot \xi^{\prime}} \psi (2^{- j} \lvert \xi^{\prime} \rvert) \psi(2^{- \ell} \tau) \dot\Delta_k^v m(\tau , \xi^{\prime} , x_n) \; \d \xi^{\prime} \; \d \tau.
 $$

To estimate $\mathrm{I}_k,$ use Minkowski's inequality to pull the integration over $x_n$ into the two innermost integrals. Further, since the modulus is then applied to the exponential functions, which have modulus one, the dependence of the integrands on $t$ and $x^{\prime}$ vanishes. This introduces the measures of the sets $\{ \lvert t \rvert < \mu \}$ and $\{ \lvert x^{\prime} \rvert < \lambda \}$ as factors. Altogether, we find
\begin{align*}
 \mathrm{I}_k \leq C \mu \lambda^{n - 1} \int_{\IR} \int_{\IR^{n - 1}} \lvert \psi (2^{- j} \lvert \xi^{\prime} \rvert) \psi(2^{- \ell} \tau) \rvert \| \dot\Delta_k^v m(\tau , \xi^{\prime} , \cdot) \|_{\LL^p_{x_n} (\IR)} \; \d \xi^{\prime} \; \d \tau.
\end{align*}
Multiplication with $2^{s k}$ and summation over $k \geq j - 1$ delivers by using properties~\eqref{Eq: Support of cutoff} and~\eqref{Eq: Besov norm of multiplier}
\begin{align*}
 \sum_{k = j - 1}^{\infty} 2^{s k} \mathrm{I}_k \leq C \mu \lambda^{n - 1} 2^{2 \ell} 2^{(n - 1) j} \big( 2^{\frac{\ell}{2}} + 2^j \big)^{s-1- \frac{1}{p}}.
\end{align*}

To estimate the second term, use identity~\eqref{eq:t}
and perform an integration by parts in the $\tau$ variable. Apply afterwards again Minkowski's inequality, multiply both sides of the inequality by $2^{s k}$ and sum over $k \geq j - 1$. This yields
\begin{align*}
 \sum_{k = j - 1}^{\infty} 2^{s k} \mathrm{II}_k \leq C \mu^{-1} \lambda^{n - 1} \int_{\IR} \int_{\IR^{n - 1}} \lvert \psi(2^{- j} \lvert \xi^{\prime} \rvert) \rvert \big\| \partial_{\tau}^2 \{ \psi(2^{- \ell} \tau) m (\tau , \xi^{\prime} , x_n) \} \big\|_{\dot \B^s_{p , 1} (\IR)} \; \d \xi^{\prime} \; \d \tau.
\end{align*}
Employing now the product rule, followed by~\eqref{Eq: Besov norm of multiplier} and~\eqref{Eq: Support of cutoff} results in
\begin{align*}
 \sum_{k = j - 1}^{\infty} 2^{s k} \mathrm{II}_k \leq C \mu^{-1} \lambda^{n - 1} 2^{(n - 1) j}  \big( 2^{\frac{\ell}{2}} + 2^j \big)^{s-1- \frac{1}{p}}.
\end{align*}

For the third term, use~\eqref{eq:xp} with $N := \lceil n / 2 \rceil$. Perform several integrations by parts with respect to the Fourier variable $\xi^{\prime}$ and then use Minkowski's inequality, multiply both sides of the inequality by $2^{s k}$ and sum over $k \geq j - 1$ to deduce
\begin{align*}
 \sum_{k = j - 1}^{\infty} 2^{s k} \mathrm{III}_k \leq C \mu \lambda^{n - 1 - 2 N} \int_{\IR} \int_{\IR^{n - 1}} \lvert \psi (2^{- \ell} \tau) \rvert \| \Delta_{\xi^{\prime}}^N \{ \psi(2^{- j} \lvert \xi^{\prime} \rvert) m (\tau , \xi^{\prime} , x_n) \} \|_{\dot \B^s_{p , 1} (\IR)} \; \d \xi^{\prime} \; \d \tau.
\end{align*}
In light of Leibniz' rule combined with~\eqref{Eq: Lp norm of multiplier} and~\eqref{Eq: Support of cutoff} one finally finds
\begin{align*}
 \sum_{k = j - 1}^{\infty} 2^{s k} \mathrm{III}_k \leq C \mu \lambda^{n - 1 - 2 N} 2^{2\ell} 2^{(n - 1) j} 2^{- 2 N j}  \big( 2^{\frac{\ell}{2}} + 2^j \big)^{s-1- \frac{1}{p}}.
\end{align*}

For the last term, we combine the strategies carried out for the second and third terms leading to the estimate
\begin{align*}
 \sum_{k = j - 1}^{\infty} 2^{s k} \mathrm{IV}_k \leq C \mu^{-1} \lambda^{n - 1 - 2 N} \int_{\IR} \int_{\IR^{n - 1}} \big\| (\Delta_{\xi^{\prime}})^N \partial_{\tau}^2 \{ \psi(2^{- \ell} \tau) \psi (2^{- j} \lvert \xi^{\prime} \rvert) m (\tau , \xi^{\prime} , x_n) \} \big\|_{\dot \B^s_{p , 1} (\IR)} \; \d \xi^{\prime} \; \d \tau.
\end{align*}
The product rule implies
\begin{align*}
 &\big\| (\Delta_{\xi^{\prime}})^N \partial_{\tau}^2 \{ \psi(2^{- \ell} \tau) \psi (2^{- k} \lvert \xi^{\prime} \rvert) m (\tau , \xi^{\prime} , x_n) \} \big\|_{\dot \B^s_{p , 1} (\IR)} \\
 &\leq \sum_{j = 0}^2 \sum_{\substack{\alpha , \beta \in \IN_0^n \\ \lvert \alpha \rvert + \lvert \beta \rvert = 2 N}} C_{\alpha , \beta , j} \Big\| \frac{\d^{2 - j}}{\d \tau^{2 - j}} \psi(2^{- \ell} \tau) \partial_{\xi^{\prime}}^{\beta} \psi (2^{- j} \lvert \xi^{\prime} \rvert) \partial_{\xi^{\prime}}^{\alpha} \partial_{\tau}^j m (\tau , \xi^{\prime} , x_n) \Big\|_{\dot \B^s_{p , 1} (\IR)} \\
 &= \sum_{j = 0}^2 \sum_{\substack{\alpha , \beta \in \IN_0^n \\ \lvert \alpha \rvert + \lvert \beta \rvert = 2 N}} C_{\alpha , \beta , j} 2^{- (2 - j) \ell} 2^{- \lvert \beta \rvert j} \Big\lvert \Big(\frac{\d^{2 - j}}{\d \tau^{2 - j}} \psi \Big)(2^{- \ell} \tau) \Big\rvert \lvert (\partial_{\xi^{\prime}}^{\beta} \psi) (2^{- j} \lvert \xi^{\prime} \rvert) \rvert \big\| \partial_{\xi^{\prime}}^{\alpha} \partial_{\tau}^j m (\tau , \xi^{\prime} , x_n) \big\|_{\dot \B^s_{p , 1} (\IR)} \\
 &\!\lesssim \!\sum_{j = 0}^2\!\! \sum_{\substack{\alpha , \beta \in \IN_0^n \\ \lvert \alpha \rvert \!+\! \lvert \beta \rvert = 2 N}}\!\!\!\!\! C_{\alpha , \beta , j} 2^{- (2 - j) \ell} 2^{- \lvert \beta \rvert j} \Big\lvert \Big(\frac{\d^{2 - j}}{\d \tau^{2 - j}} \psi \Big)(2^{- \ell} \tau) \Big\rvert \lvert (\partial_{\xi^{\prime}}^{\beta} \psi) (2^{- j} \lvert \xi^{\prime} \rvert) \rvert \lvert \tau \rvert^{1 - j} \lvert \xi^{\prime} \rvert^{\lvert \alpha \rvert} \big( \lvert \tau \rvert^{\frac{1}{2}} \!+\! \lvert \xi^{\prime} \rvert \big)^{s - 1 - \frac{1}{p} - 2 \lvert \alpha \rvert}.
\end{align*}
Employing~\eqref{Eq: Support of cutoff} then delivers
\begin{align*}
 \sum_{k = j - 1}^{\infty} 2^{s k} \mathrm{IV}_k \leq C \mu^{-1} \lambda^{n - 1 - 2 N} 
 2^{(n - 1) j} 2^{- 2 N j}  \big( 2^{\frac{\ell}{2}} + 2^j \big)^{s-1- \frac{1}{p}}.
\end{align*}

\indent To conclude the estimate take (as for bounding $M$)
  $\mu := 2^{- \ell}$ and insert  this into the estimates of the terms $\mathrm{I}_k$, $\mathrm{II}_k$, $\mathrm{III}_k$ and $\mathrm{IV}_k.$  We end up with 
\begin{align*}
 \sum_{k = j - 1}^{\infty} \big( \mathrm{I}_k + \mathrm{II}_k + \mathrm{III}_k + \mathrm{IV}_k \big) \leq C 2^{(n - 1) j} 2^{\ell}\big( 2^{\frac{\ell}{2}} + 2^j \big)^{s-1- \frac{1}{p}} \lambda^{n - 1} 
 \bigl(1+\lambda^{-2N}2^{- 2jN}\bigr)\cdotp
 \end{align*}
 Then, taking $\lambda=2^{-j},$ one readily gets
\begin{align*}
 \sum_{k = j - 1}^{\infty} \big( \mathrm{I}_k + \mathrm{II}_k + \mathrm{III}_k + \mathrm{IV}_k \big) \leq C 2^{\ell} \big( 2^{\frac{\ell}{2}} + 2^j \big)^{s-1- \frac{1}{p}},
\end{align*}
which completes the proof of Proposition~\ref{Prop: Multiplier norm2}. 
\end{proof}
Then, putting together~\eqref{eq:MM},~\eqref{eq:Y1}
and~\eqref{eq:Y2}  completes the proof of Proposition~\ref{p:m}.
\end{proof}

\section{Maximal regularity for the Lam\'e system in a half-space}
 
Our strategy for proving  Theorem~\ref{Thm:lame} is to decompose the sought solution $u$ into $v+w$ with $v$ satisfying 

\begin{align}\label{eq:lame1}
\left\{\begin{aligned}
\partial_tv-\frac{2\mu}\nu\Delta v & = 0 && \mbox{ in}\quad \IR_+\times\IR_+^n,\\
\IS(v)\cdot \e_n                    & = g && \mbox{ at}\quad\IR_+\times\partial\IR_+^n,\\ 
v|_{t=0}                            & = 0 && \mbox{ in}\quad\IR_+^n
\end{aligned}\right.
\end{align}
and, thus, $w$ is a solution to 
 \begin{align}\label{eq:lame2}\left\{\begin{aligned}
\partial_t w-\divergence\IS(w)&=f+\divergence\IS(v)- \partial_tv && \mbox{ in}\quad \IR_+\times\IR_+^n,\\
\IS(w)\cdot \e_n&=0 && \mbox{ at}\quad\IR_+\times\partial\IR_+^n,\\
w|_{t=0}&=u_0 && \mbox{ in}\quad\IR_+^n.
\end{aligned}\right.\end{align}
Solving~\eqref{eq:lame1} requires two steps:  
first, we remove the boundary  data by  constructing a suitable  extension 
of $g$ over the whole $\IR\times\IR^n_+,$
and use Proposition~\ref{p:m} to  estimate  $\partial_tv,$
then we estimate  $\nabla v$  in $\IR^n_+$ and 
at the boundary. 
Finally, solving~\eqref{eq:lame2} and getting the desired estimates will
rely on results from Chapters~\ref{Sec: The functional setting and basic interpolation results}
and~\ref{Sec: The Lame operator in the upper half-plane}. 
\medbreak
In order to solve  system~\eqref{eq:lame1}, we extend
$g$ by $0$ on negative times, and  consider 
\begin{align*}\left\{\begin{aligned}
\partial_t v-\frac{2\mu}\nu\Delta v&=0 &&\mbox{ in}\quad \IR\times\IR_+^n,\\
\IS(v)\cdot \e_n&=g && \mbox{ at}\quad\IR\times\partial\IR_+^n.
\end{aligned}\right.\end{align*}
For expository, let us first look at the case $n=2.$  Then, denoting 
 $\nu:=\lambda+2\mu,$  the boundary condition translates into
\begin{align}\label{eq:BC}\left\{
\begin{aligned}
\mu(\partial_2v^1+\partial_1v^2) &=g^1,\\
\lambda \partial_1 v^1+\nu\partial_2 v^2 &=g^2.\end{aligned}\right.\end{align}
Defining $\wh v:=\cF_{t,x}v,$ we look at $v$ under the form 
$$\wh v^1(\tau,\xi_1,x_2)= \e^{-rx_2} \wh v^1(\tau,\xi_1,0)\andf
\wh v^2(\tau,\xi_1,x_2)= \e^{-rx_2} \wh v^2(\tau,\xi_1,0)$$
with  $r^2=\ii \tau +2\mu \xi_1^2/\nu$ and $|\arg (r)|\leq\pi/4,$ 
so that~\eqref{eq:BC} implies:
$$
\begin{pmatrix} -\mu r & \ii\mu\xi_1\\
\ii\lambda \xi_1&-\nu r\end{pmatrix}
\begin{pmatrix}  \wh v^1(\tau,\xi_1,0)\\  \wh v^2(\tau,\xi_1,0)\end{pmatrix}
=\begin{pmatrix} \wh g^1(\tau,\xi_1)\\\wh g^2(\tau,\xi_1)\end{pmatrix}\cdotp
$$
 The determinant $\Delta$ of the matrix does not vanish for $(\tau,\xi_1)\not=(0,0)$ since
 $$ \Delta= \mu\nu r^2+\lambda\mu\xi_1^2=\mu\nu(\ii\tau+\xi_1^2).$$
 So,  we eventually find
 $$\begin{pmatrix}  \wh v^1(\tau,\xi_1,0)\\  \wh v^2(\tau,\xi_1,0)\end{pmatrix}=-\frac1{\ii\tau +\xi_1^2}\begin{pmatrix} \frac r\mu& \frac \ii\nu\xi_1\\
 \ii\frac{\lambda}{\mu\nu}\xi_1& \frac r\nu\end{pmatrix} \begin{pmatrix} \wh g^1(\tau,\xi_1)\\\wh g^2(\tau,\xi_1)\end{pmatrix},  $$
 which implies 
  \begin{equation}\label{eq:formula}
  \begin{pmatrix}  \wh{\partial_tv^1}(\tau,\xi_1,x_2)\\  \wh{\partial_tv^2}(\tau,\xi_1,x_2)\end{pmatrix}=-\frac{\ii\tau \e^{-rx_2}}{\ii\tau +\xi_1^2}\begin{pmatrix} \frac r\mu& \frac \ii\nu\xi_1\\
 \ii\frac{\lambda}{\mu\nu}\xi_1& \frac r\nu\end{pmatrix} \begin{pmatrix} \wh g^1(\tau,\xi_1)\\\wh g^2(\tau,\xi_1)\end{pmatrix}\cdotp\end{equation}
 A similar formula may be derived in the $n$-dimensional case. Indeed, the boundary conditions 
 now translate into  
 \begin{align}\label{eq:BCn}\left\{
\begin{aligned}
\mu(\partial_nv'+\nabla'v^n) &=g',\\
\lambda\divergence' v'+\nu\partial_n v^n &=g^n.\end{aligned}\right.\end{align}
 Hence,  we look at $\wh v$ under the form 
$$\wh v(\tau,\xi',x_n)= \e^{-rx_n} \wh v(\tau,\xi',0),$$  the boundary conditions now imply (with $r^2 = \ii \tau + 2 \mu \lvert \xi^{\prime} \rvert^2 / \nu$) that
$$\begin{pmatrix} -\mu r\, \Id_{n-1} & \ii\mu\xi^{\prime}\\
\ii\lambda(\xi')^\top &-\nu r\end{pmatrix}
\begin{pmatrix}  \wh v'(\tau,\xi',0)\\  \wh v^n(\tau,\xi',0)\end{pmatrix}
=\begin{pmatrix} \wh g'(\tau,\xi')\\\wh g^n(\tau,\xi')\end{pmatrix}\cdotp$$
The determinant  of the above matrix is 
  $$\Delta_n= (-1)^n\mu^{n-1}\nu r^{n-2} (\ii\tau+|\xi'|^2),$$
  which allows to obtain a formula similar to~\eqref{eq:formula}. 
  The important conclusion is that all the multipliers appearing  in the right-hand side 
  of~\eqref{eq:formula} or in its $n$-dimensional generalization  
  fulfill~\eqref{Eq: Lp norm of multiplier} and~\eqref{Eq: Besov norm of multiplier}.
   Hence, we may control $\partial_tv$  in   $\LL_1(\IR_+;\dot\B^s_{p,1} (\IR^n_+))$ 
   according to Proposition~\ref{p:m}, and  get
\begin{equation}\label{eq:lame0}
\|\partial_t v\|_{\LL_1(\IR;\dot\B^s_{p,1}(\IR^n_+))} \lesssim \|g\|_{E\dot\Y^s_p}.\end{equation}

In the case $s+1\leq n/p,$ we claim that
\begin{equation}\label{eq:Dvlame}
\| \nabla_x v\|_{\LL_1(\IR;\dot\B^{s+1}_{p,1}(\IR^n_+))} \lesssim \|g\|_{E\dot\Y^s_p}.\end{equation}
To prove our claim, we look  at $v(t,\cdot)$   (for each time) as the solution to  
\begin{align}\label{eq:v}
\left\{\begin{aligned}
2\mu\Delta v&=\nu\partial_t v && \mbox{ in}\quad  \IR_+^n,\\
\IS(v)\cdot \e_n&=g && \mbox{ at}\quad\partial\IR_+^n.
\end{aligned}\right.
\end{align}
We split $v$ into $v=z+\wt z$ where $\wt z$  is a suitable solution of
\begin{align}\label{eq:zt}\left\{\begin{aligned}
2\mu\Delta \wt z&=\nu\partial_t v && \mbox{ in}\quad \IR_+^n,\\
\partial_n\wt z&=0 && \mbox{ at}\quad \partial\IR_+^n
\end{aligned}\right.\end{align}
and 
\begin{align}\label{eq:z}\left\{\begin{aligned}
\Delta z&=0&&\mbox{ in}\quad \IR_+^n,\\
\IS(z)\cdot \e_n&= k:=g-\IS(\wt z)\cdot \e_n&&\mbox{ at}\quad\partial\IR_+^n.
\end{aligned}\right.\end{align}
In order to solve system~\eqref{eq:zt}, it suffices to extend $\partial_tv$ by symmetry 
to $\IR^n$ and, naming $E\partial_tv$ that extension,    to solve 
$$2\mu\Delta \wt z=\nu E\partial_t v\quad\hbox{in }\  \IR^n.$$
Since we are only interested by the construction of the gradient,  one may just  set
\begin{equation}\label{eq:wtilde}
\nabla \wt z:=-\frac{\nu}{2\mu}\nabla(-\Delta)^{-1}E\partial_tv.\end{equation}
Since  $\nabla(-\Delta)^{-1}$ is a multiplier homogeneous of degree $-1$, it  
maps $\dot\B^s_{p,1}(\IR^n)$ to   $\dot\B^{s+1}_{p,1}(\IR^n)$ whenever $s+1\leq n/p$, 
and we get $\nabla\wt z\in\LL_1(\IR_+;\dot\B^{s+1}_{p,1}(\IR^n))$ as well as 
\begin{equation}\label{eq:lame4}
\|\nabla \wt z\|_{\LL_1(\IR_+;\dot\B^{s+1}_{p,1}(\IR^n))}
\lesssim \|E\partial_tv\|_{\LL_1(\IR_+;\dot\B^{s}_{p,1}(\IR^n))}.
\end{equation}
Of course, the restriction of $\nabla\wt z$ to  $\IR^n_+$ satisfies the same estimate
and $\nabla\wt z$ is skewsymmetric; hence the boundary condition of~\eqref{eq:zt} is fulfilled.

Next, we solve~\eqref{eq:z} explicitly by going to the Fourier side  with respect to the horizontal variable  only (the time variable is fixed throughout and omitted). 
Since we want $z$ to  converge  to $0$ at infinity, we have:
\begin{equation}\label{eq:lame5}
\cF_{x'}z(\xi',x_n)  = \e^{-|\xi'|x_n}\, \cF_{x'}z(\xi',0),\quad \xi'\in\IR^{n-1},\: x_n\geq0,
\end{equation}
and the boundary condition of~\eqref{eq:z} tells us that the vector $\wh A$ has to satisfy:
\begin{equation}\label{eq:casn}\begin{pmatrix} -\mu \lvert \xi' \rvert \Id_{n - 1} &\ii \mu\xi^{\prime}\\
\ii\lambda (\xi')^\top&-\nu \lvert \xi' \rvert \end{pmatrix}\begin{pmatrix} \cF_{x'}z^{\prime}(\xi',0)\\\cF_{x'}z^n(\xi',0)\end{pmatrix} = \begin{pmatrix}\cF_{x'} k'(\xi',0)\\ \cF_{x'}k^n(\xi',0)\end{pmatrix}\cdotp \end{equation}
Let us first consider the two-dimensional case. 
Inverting the matrix (which is always possible for $\lambda+\mu\not=0$) and plugging in~\eqref{eq:lame5}, we get
\begin{equation}\label{eq:lame3}\cF_{x_1}z(\xi_1,x_2)=
-\frac{\e^{-|\xi_1| x_2}}{2 \xi_1^2 \mu(\lambda+\mu)} 
\begin{pmatrix} \nu \lvert \xi_1 \rvert&\ii\mu \xi_1\\
\ii\lambda \xi_1&\mu \lvert \xi_1 \rvert\end{pmatrix}\begin{pmatrix} \cF_{x_1}k^1(\xi_1,0)\\  \cF_{x_1} k^2(\xi_1,0)\end{pmatrix}\cdotp\end{equation}
{}Notice that~\eqref{eq:lame3} implies that
$$\begin{aligned}
 \partial_1z (\xi_1 , x_2) &=
-\cF_{\xi_1}^{-1} \frac{\ii \e^{-|\xi_1| x_2}}{2 \mu(\lambda+\mu)} 
\begin{pmatrix} \nu\sgn(\xi_1)&\ii\mu \\
\ii\lambda &\mu\sgn(\xi_1)\end{pmatrix}\begin{pmatrix} \cF_{x_1}k^1(\xi_1,0)\\  \cF_{x_1} k^2(\xi_1,0)\end{pmatrix}\\
\andf \partial_2z (x_1 , x_2) &=
\cF_{\xi_1}^{-1} \frac{\e^{-|\xi_1| x_2}}{2 \mu(\lambda+\mu)} 
\begin{pmatrix} \nu &\ii\mu\sgn(\xi_1)\\
\ii\lambda \sgn(\xi_1)&\mu \end{pmatrix}\begin{pmatrix} \cF_{x_1}k^1(\xi_1,0)\\  \cF_{x_1} k^2(\xi_1,0)\end{pmatrix}\cdotp
\end{aligned}$$
In the $n$-dimensional case, the adjugate matrix of the matrix
$A(\xi)$
appearing in~\eqref{eq:casn} is a homogeneous polynomial 
of degree $n-1$ with respect to $\xi'$ and $|\xi'|,$ 
and $\det A(\xi)=2(-1)^n\mu^{n-1}(\lambda+\mu)|\xi'|^n.$
Hence we get
$$\begin{aligned}
 \nabla' z' (x' , x_n) &=
\cF_{\xi'}^{-1} \frac{\ii\xi' \e^{-|\xi'| x_n}}{2(-1)^n \mu^{n-1}(\lambda+\mu)|\xi'|^n} 
\adj(A(\xi'))\begin{pmatrix} \cF_{x'}k'(\xi',0)\\  
\cF_{x'} k^n(\xi',0)\end{pmatrix}\\
 \andf\partial_nz' (x' , x_n) &=-
\cF_{\xi'}^{-1} \frac{\xi' \e^{-|\xi'| x_n}}{2(-1)^n \mu^{n-1}(\lambda+\mu)|\xi'|^n} 
\adj(A(\xi'))\begin{pmatrix} \cF_{x'}k'(\xi',0)\\  
\cF_{x'} k^n(\xi',0)\end{pmatrix}\cdotp
\end{aligned}$$
Note that in all the above formulae appears a  homogeneous multiplier of order zero  times $\e^{- \lvert \xi' \rvert x_n}.$
Hence,~\cite[Lemma 2]{Danchin_Mucha}  ensures that, provided $0<s+1\leq n/p,$ 
$$\|\nabla  z\|_{\LL_1(\IR_+;\dot\B^{s+1}_{p,1}(\IR^n_+))}\lesssim
\|k\|_{\LL_1(\IR_+;\dot\B^{s+1-1/p}_{p,1}(\partial\IR_+^n))}.$$
Hence, since $k$ is defined as $g - \IS (\wt z) \e_n$, we find by virtue of~\eqref{eq:lame4} and of the trace theorem that
\begin{equation}\label{eq:lame6}
\|\nabla  z\|_{\LL_1(\IR_+;\dot\B^{s+1}_{p,1}(\IR^n_+))}\lesssim
\|g\|_{\LL_1(\IR_+;\dot\B^{s+1-1/p}_{p,1}(\partial\IR_+^n))} + 
\|\partial_t v\|_{\LL_1(\IR_+;\dot\B^{s}_{p,1}(\IR_+^n))}.
\end{equation}
Combining this with~\eqref{eq:lame0} yields~\eqref{eq:Dvlame}. 
\medbreak
Our last task is to bound  $\nabla_{x}v|_{\partial\IR^n_+}$ in $E\dot\Y^s_p$. To this end, we observe that, in dimension 
$n=2,$ we have by construction, 
   \begin{equation}\label{eq:Db}
   \begin{pmatrix}  \cF_{t,x_1}(\partial_1 v^1)(\tau,\xi_1,0)\\  
   \cF_{t,x_1}(\partial_1 v^2)(\tau,\xi_1,0)\end{pmatrix}=-\frac{\ii \xi_1}{\ii\tau +\xi_1^2}\begin{pmatrix} \frac r\mu& \frac \ii\nu\xi_1\\
 \ii\frac{\lambda}{\mu\nu}\xi_1& \frac r\nu\end{pmatrix} \begin{pmatrix} \wh g^1(\tau,\xi_1,0)\\\wh g^2(\tau,\xi_1,0)\end{pmatrix}\cdotp\end{equation}
A similar formula holds in general dimension: 
 $$
   \begin{pmatrix}  \cF_{t,x'}(\nabla_{x'} v^{\prime})(\tau,\xi',0)\\  
   \cF_{t,x'}(\nabla_{x'} v^n)(\tau,\xi',0)\end{pmatrix}=
   \frac{\ii\xi'}{(-1)^n\mu^{n-1}\nu r^{n-2}(\ii\tau+|\xi'|^2)}\adj\begin{pmatrix}
   -\mu r\Id_{n-1}&\ii\mu\xi'\\ \ii\lambda(\xi')^{\top}&-\nu r
   \end{pmatrix}
    \begin{pmatrix} \wh g'(\tau,\xi',0)\\\wh g^n(\tau,\xi',0)\end{pmatrix}\cdotp$$
    We observe that in all cases, $\nabla_{x'}v$ at the boundary is the outcome of a smooth multiplier $m$ 
    satisfying 
    condition~\eqref{eq:mm} below, acting on $g.$
    Hence, to conclude, it suffices to apply
the following result:
\begin{lemma}\label{p:mbis}
Let  $1 < p < \infty$, $s_1,s_2 \in \IR$ and $m$ be a smooth symbol on $(\IR\times\IR^{n-1})\setminus\{(0,0)\}$ such that 
\begin{equation}\label{eq:mm}
|\partial_\tau^\alpha\partial_{\xi'}^\beta m(\tau,\xi')|\leq C_{\alpha,\beta}\, |r|^{-2\alpha-|\beta|}
\quad\hbox{for }\ \alpha=0,1,2\ \hbox{ and all }\  \beta\in\IN^{n-1}_0.\end{equation}
If $s_1<1$ and $s_2<n/p$,  
then the multiplier $m(D_{t,x'})$ is continuous on $\dot\B^{s_1}_{1,1}(\IR;\dot\B^{s_2}_{p,1}(\IR^{n-1})).$
\end{lemma}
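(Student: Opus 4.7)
The plan is to mimic the analyses carried out in Propositions~\ref{Prop: Multiplier norm} and~\ref{Prop: Multiplier norm2}, but in a cleaner form since here there is no vertical variable $x_n$ and no vertical cutoff $\chi(2^{-k}\xi_n)$ to handle. First I would apply to $m(D_{t,x'})g$ the Littlewood--Paley decompositions $(\dot\Delta^t_\ell)_{\ell\in\IZ}$ in $t$ and $(\dot\Delta^h_j)_{j\in\IZ}$ in $x'$, and use Fubini to rewrite
$$\|m(D_{t,x'})g\|_{\dot\B^{s_1}_{1,1}(\IR;\dot\B^{s_2}_{p,1}(\IR^{n-1}))}
=\sum_{\ell,j\in\IZ}2^{s_1\ell}2^{s_2j}\|\dot\Delta^t_\ell\dot\Delta^h_j m(D_{t,x'})g\|_{\LL^1_t\LL^p_{x'}}.$$
Introducing the  fattened cutoff $\theta$ defined in~\eqref{eq:theta} (which satisfies $\theta\,\psi=\psi$), and
$$K_{\ell,j}:=\cF^{-1}_{\tau,\xi'}\bigl(\theta(2^{-\ell}\tau)\,\theta(2^{-j}|\xi'|)\,m(\tau,\xi')\bigr),$$
the spectral localisation allows one to write $\dot\Delta^t_\ell\dot\Delta^h_j m(D_{t,x'})g=K_{\ell,j}*_{t,x'}(\dot\Delta^t_\ell\dot\Delta^h_j g)$. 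A two-step application of Young's convolution inequality (first in $x'$ with Minkowski, then in $t$) gives
$$\|K_{\ell,j}*_{t,x'}f\|_{\LL^1_t\LL^p_{x'}}\leq\|K_{\ell,j}\|_{\LL^1_{t,x'}}\,\|f\|_{\LL^1_t\LL^p_{x'}},$$
so that the lemma reduces to establishing the \emph{uniform} kernel bound
\begin{equation}\label{eq:plankernel}
\|K_{\ell,j}\|_{\LL^1_{t,x'}}\leq C\quad\text{for some $C$ independent of }\ell,j\in\IZ.
\end{equation}

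To establish~\eqref{eq:plankernel}, I would reproduce the four-region splitting of Propositions~\ref{Prop: Multiplier norm}--\ref{Prop: Multiplier norm2}. For $\mu,\lambda>0$ set
$$\|K_{\ell,j}\|_{\LL^1_{t,x'}}=\mathrm{I}+\mathrm{II}+\mathrm{III}+\mathrm{IV}$$
according to the four regions $\{|t|<\mu\}\times\{|x'|<\lambda\}$, $\{|t|\geq\mu\}\times\{|x'|<\lambda\}$, $\{|t|<\mu\}\times\{|x'|\geq\lambda\}$, and $\{|t|\geq\mu\}\times\{|x'|\geq\lambda\}$. On $\mathrm{I}$, apply Minkowski's inequality and use that the exponentials have modulus one together with the $O(2^\ell\cdot 2^{(n-1)j})$ volume of the spectral support. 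On $\mathrm{II}$ and $\mathrm{IV}$, integrate twice by parts in $\tau$ via identity~\eqref{eq:t}; on $\mathrm{III}$ and $\mathrm{IV}$, integrate $2N$ times in $\xi'$, with $2N>n-1$, via identity~\eqref{eq:xp}. Leibniz' rule, the support of the cutoffs and hypothesis~\eqref{eq:mm} (keeping in mind that $|r|^2=|\tau|+|\xi'|^2\sim 2^\ell+2^{2j}$ on the support) yield
$$|\partial_\tau^\alpha\partial_{\xi'}^\beta[\theta(2^{-\ell}\tau)\,\theta(2^{-j}|\xi'|)\,m(\tau,\xi')]|\lesssim 2^{-\alpha\ell}\,2^{-|\beta|j}\quad\text{for }\alpha\in\{0,1,2\},\ \beta\in\IN_0^{n-1},$$
from which one derives, after carrying out all four estimates,
$$\mathrm{I}+\mathrm{II}+\mathrm{III}+\mathrm{IV}\lesssim 2^\ell\,2^{(n-1)j}\bigl(\mu+2^{-2\ell}\mu^{-1}\bigr)\bigl(\lambda^{n-1}+2^{-2Nj}\lambda^{n-1-2N}\bigr)\cdotp$$
Minimising each factor separately yields the canonical choice $\mu=2^{-\ell}$, $\lambda=2^{-j}$, which produces the required uniform bound~\eqref{eq:plankernel}; plugging it back into the Young step and summing the resulting geometric-type series in $\ell,j$ gives exactly $\|m(D_{t,x'})g\|_{\dot\B^{s_1}_{1,1}(\dot\B^{s_2}_{p,1})}\lesssim\|g\|_{\dot\B^{s_1}_{1,1}(\dot\B^{s_2}_{p,1})}$.

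The conditions $s_1<1$ and $s_2<n/p$ appear only to guarantee that the target Besov spaces fulfil the Banach-space criterion~\eqref{Eq: Banach space conditions} (applied on $\IR$ with $p=q=1$ for the outer space, and on $\IR^{n-1}$ for the inner one), so that the formal Littlewood--Paley decomposition used at the start of the argument is legitimate and that all the limits in $\cS'_h$ are well defined. I expect the main obstacle not to be conceptual but rather bookkeeping: one must track the parabolic scaling $|r|^2=|\tau|+|\xi'|^2$ through the product rule, so that both high-frequency regimes $2^\ell\gg 2^{2j}$ and $2^\ell\ll 2^{2j}$ yield the same effective bound $2^{-\alpha\ell-|\beta|j}$ for the localised symbol. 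This, however, is exactly the same bookkeeping already carried out in the proofs of Propositions~\ref{Prop: Multiplier norm} and~\ref{Prop: Multiplier norm2}; here it is actually easier, since the absence of the variable $x_n$ removes the third factor $\cF_{\xi_n}^{-1}\psi(2^{-k}\xi_n)\cF_{x_n}$ from the symbol, and no $\LL^p_{x_n}$-norm needs to be estimated.
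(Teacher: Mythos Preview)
Your proposal is correct and follows essentially the same approach as the paper: reduce via Young's inequality to the uniform kernel bound $\sup_{\ell,j}\|K_{\ell,j}\|_{\LL^1_{t,x'}}<\infty$, then establish the latter by the four-region splitting with integrations by parts in $\tau$ (twice) and $\xi'$ ($2N$ times, $2N>n-1$), exploiting~\eqref{eq:mm} and choosing $\mu=2^{-\ell}$, $\lambda=2^{-j}$. Two cosmetic remarks: after the uniform bound there is no ``geometric-type series'' to sum---the double sum is already the Besov norm of $g$; and the paper's proof makes no explicit use of the hypotheses $s_1<1$, $s_2<n/p$ (your interpretation via~\eqref{Eq: Banach space conditions} is plausible, though note that for $\dot\B^{s_2}_{p,1}(\IR^{n-1})$ that criterion would actually give $s_2\le(n-1)/p$, which is stricter than what is stated).
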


\begin{proof}
Let $v:= m(D_{t,x'}) g.$ Then
$$
 \| v\|_{\dot\B^{s_1}_{1,1}(\IR;\dot\B^{s_2}_{p,1}(\IR^{n-1}))}= \sum_{\ell=-\infty}^\infty\sum_{j = - \infty}^{\infty} 2^{s_1\ell} 2^{s_2 j} \|\dot\Delta_\ell^t\dot\Delta_j^hv \|_{\LL_1(\IR;\LL_p (\IR^{n-1}))}.
 $$
Introducing again $\theta:=\psi(\cdot/2)+\psi +\psi(2\cdot)$ and using~\eqref{eq:LP1}, we see that for all $(\ell,k)\in\IZ^2,$ we have
 $$ 
\dot\Delta_\ell^t\dot\Delta_j^hv= M_{j,\ell}*_{t,x'} \dot\Delta_\ell^t\dot\Delta_j^h g
\!\!\with\!\! M_{j,\ell}(t,x):=\int_{\IR\times\IR^{n-1}}\e^{\ii t\tau} \e^{\ii x'\cdot\xi'} \theta(2^{-\ell}\tau)\theta(2^{-j}|\xi'|)
m(\tau,\xi')\frac{\,\d\tau \,\d\xi'}{(2\pi)^n}\cdotp
$$
Hence, the desired conclusion follows from 
\begin{equation}\label{eq:Mjl} 
\sup_{(j,\ell)\in\IZ^2} \|M_{j,\ell}\|_{\LL_1(\IR\times\IR^{n-1})}<\infty.
\end{equation}
To establish~\eqref{eq:Mjl}, we mimic  the  proof of  Proposition~\ref{Prop: Multiplier norm}:
 we split the integral defining the term $\|M_{j,\ell}\|_{\LL_1(\IR\times\IR^{n-1})}$ into
 $$
 \int_{|t|\leq 2^{-\ell}}\int_{|x'|\leq 2^{-j}}|M_{j,\ell}|\,\d t\,\d x'+ \int_{|t|\leq 2^{-\ell}}\int_{|x'|>2^{-j}}\dots
 + \int_{|t|>2^{-\ell}}\int_{|x'|\leq 2^{-j}}\dots+ \int_{|t|>2^{-\ell}}\int_{|x'|>2^{-j}}\dots
 $$
  Denoting by ${\mathrm I},$ $\mathrm{II},$ $\mathrm{III}$ and $\mathrm{IV}$ the corresponding terms, and adapting the strategy used
 in the proof of  Proposition~\ref{Prop: Multiplier norm}, we get for ${\mathrm I}$:
 $$ |{\mathrm I}|\lesssim 2^{-\ell} 2^{-j(n-1)} \int_{\IR}\int_{\IR^{n-1}} |\theta(2^{-\ell}\tau)\theta(2^{-j}|\xi'|)|
 \,\d\tau \,\d\xi'\lesssim 1.$$
 To handle $\mathrm{II},$ we use~\eqref{eq:t} and get, after integrating by parts
 $$|\mathrm{II}|\lesssim 2^{-j(n-1)}\int_{|t|>2^{-\ell}} 
  \int_{\IR}\int_{\IR^{n-1}}\bigl|\partial^2_\tau\bigl(\theta(2^{-\ell}\tau)\theta(2^{-j}|\xi'|)m(\tau,\xi')\bigr) \bigr|
   \,\d\tau \,\d\xi'\frac{\d t}{t^2}\cdotp$$
   Using  the chain rule and~\eqref{eq:mm} with $\alpha=0,1,2$ and $\beta=0,$ we get
    $$|\mathrm{II}|\lesssim 2^{-j(n-1)}2^{\ell} \, 2^\ell 2^{j(n-1)} 2^{- 2 \ell}\lesssim1.$$
  To bound $\mathrm{III},$ we use~\eqref{eq:xp} with $2N>n-1$:
   $$|\mathrm{III}|\lesssim 2^{-\ell} \int_{|x'|>2^{-j}} 
  \int_{\IR}\int_{\IR^{n-1}}\bigl| (\Delta_{\xi'})^N\bigl(\theta(2^{-\ell}\tau)\theta(2^{-j}|\xi'|)m(\tau,\xi')\bigr) \bigr|
   \,\d\tau \,\d\xi'\frac{\d t}{|x'|^{2N}},$$
and, thanks to~\eqref{eq:mm} with $\alpha=0$ and $|\beta|\leq2N,$ we get
  $$|\mathrm{III}|\lesssim  2^{-\ell} 2^{j(2N-n+1)} 2^\ell 2^{j(n-1)}
   2^{-2N j}\lesssim1.$$
 Finally, to bound  $\mathrm{IV},$ we combine the two methods and eventually get
 for $2N>n-1,$ 
  $$ |\mathrm{IV}|\lesssim 2^\ell 2^{j(2N-n+1)} 2^\ell 2^{j(n-1)} 2^{- 2 \ell} 2^{- 2 N j}
  \lesssim1.$$ 
  Putting together the four inequalities completes the proof of~\eqref{eq:Mjl}, and thus of the lemma. 
 \end{proof}
As all the coefficients of the matrix in the right-hand side of~\eqref{eq:Db} (extended to the $n$-dimensional case) fulfill~\eqref{eq:mm}, applying the above proposition to  $\nabla_{x'}v|_{\partial\IR^n_+}$ gives
 $$\|\nabla_{x'}v|_{\partial\IR^n_+}\|_{E\dot\Y^s_p}\lesssim \|g \|_{E\dot Y^s_p}.$$
 Furthermore, on $\partial \IR^n_+$ we have
 $$ \partial_nv'=\mu^{-1}g'-D_{x'}v^n\andf
 \partial_nv^n=\nu^{-1}g^n-\lambda\nu^{-1}\divergence_{x'}v'.$$
 Hence $\partial_nv|_{\partial\IR^n_+}$ also satisfies the desired property, and  
 we eventually have:    
 $$   \|\nabla_{x}v|_{\partial\IR^n_+}\|_{E\dot\Y^s_p}\lesssim \|g\|_{E\dot\Y^s_p}.$$
 The last step is  to set $u=v+w$ with $w$ solving 
 System~\eqref{eq:lame2}. 
Our  previous  computations  ensure that 
the source term of the first equation is in $\LL_1(\IR_+;\dot\B^s_{p,1} (\IR^n_+)).$ 
So the problem reduces to checking that the Lam\'e operator fulfills
the Da Prato -- Grisvard conditions,  which was 
established in Theorem~\ref{Thm: Da Prato - Grisvard for Lame}, 
under the (optimal) condition~\eqref{eq:ellipticity}.
{}From it, provided $0<s\leq n/p$ and $0<s<1,$ 
we get $\partial_tw\in \LL_1(\IR_+;\dot\B^{s}_{p,1}(\IR_+^n))$ and 
$$\|\nabla^2w, \partial_tw\|_{\LL_1(\IR_+;\dot\B^{s}_{p,1}(\IR_+^n))} \lesssim 
\|f , \divergence\IS(v)- \partial_tv\|_{\LL_1(\IR_+;\dot\B^{s}_{p,1}(\IR_+^n))} + \| u_0 \|_{\dot \B^s_{p , 1} (\IR^n_+)}. $$
Hence,  owing to Corollary 
\ref{Cor: Comparison of gradients}, if, in addition,
$s\leq n/p-1$ then we may write:
\begin{align}
\label{Eq: Gradient of w}
\|\nabla w\|_{\LL_1(\IR_+;\dot\B^{s+1}_{p,1}(\IR_+^n))} \lesssim 
\| f , \divergence\IS(v)- \partial_tv\|_{\LL_1(\IR_+;\dot\B^{s}_{p,1}(\IR_+^n))} + \| u_0 \|_{\dot \B^s_{p , 1} (\IR^n_+)}.
\end{align}
Hence, by the trace theorem, as $s+1>1/p,$
\begin{equation}\label{eq:tracew}
\|\nabla w|_{\partial\IR^n_+}\|_{\LL_1(\IR_+;\dot\B^{s+1-\frac1p}_{p,1}(\partial\IR_+^n))} \lesssim 
\|f , \divergence\IS(v)- \partial_tv\|_{\LL_1(\IR_+;\dot\B^{s}_{p,1}(\IR_+^n))} + \| u_0 \|_{\dot \B^s_{p , 1} (\IR^n_+)}. 
\end{equation}
To end the proof of  Theorem~\ref{Thm:lame}, 
there only remains to show that 
$$\nabla_xw|_{\partial\IR_+^n}\in\dot\B^{\frac12(s+1-\frac1p)}_{1,1}(\IR_+;\dot\B^{0}_{p,1}(\partial\IR^n_+))\quad
\andf
\nabla_xw|_{\partial\IR_+^n}\in\dot\B^{\frac12(1-\frac1p)}_{1,1}(\IR_+;\dot\B^{s}_{p,1}(\partial\IR^n_+))
\ \hbox{ if }\ s<0.$$
So far, we know that 
\begin{equation}\label{eq:sofarweknow}
w\in\dot\W^1_1(\IR_+;\dot\B^s_{p,1}(\IR^n_+))\andf 
\nabla w\in\LL_1(\IR_+;\dot\B^{s+1}_{p,1}(\IR_+^n)).\end{equation}
Using  Lemma~\ref{Lem: Extension operators} and Proposition~\ref{Prop: Proper boundedness extension operators}
to extend $w$ on $\IR^n$ then, extending also $w$ to negative times by, e.g., even reflection, 
we get an extended vector-field $Ew$ on $\IR\times\IR^n$  that satisfies
\begin{equation}\label{eq:Ewt}\|(Ew)_t\|_{\LL_1(\IR;\dot\B^s_{p,1}(\IR^n))}\lesssim \|w_t\|_{\LL_1(\IR_+;\dot\B^s_{p,1}(\IR^n_+))}\!\andf\! 
\|\nabla Ew\|_{\LL_1(\IR;\dot\B^{s+1}_{p,1}(\IR^n))}\lesssim \|\nabla w\|_{\LL_1(\IR_+;\dot\B^{s+1}_{p,1}(\IR^n_+))}.\end{equation}
Now, Proposition~\ref{p:interpopopo} guarantees that 
$$\|Ew\|_{\dot \B^{\frac12}_{1,1}(\IR;\dot\B^{s+1}_{p,1}(\IR^n))}\lesssim
\|(Ew)_t\|_{\LL_1(\IR;\dot\B^s_{p,1}(\IR^n))}^{\frac12} 
\|\nabla Ew\|_{\LL_1(\IR;\dot\B^{s+1}_{p,1}(\IR^n))}^{\frac12},$$
whence, back to $w$ and using~\eqref{eq:Ewt}, 
\begin{align}\label{Eq: Interpolation inequality Besov}
\|w\|_{\dot \B^{\frac12}_{1,1}(\IR_+;\dot\B^{s+1}_{p,1}(\IR^n_+))}\lesssim
\|w_t\|_{\LL_1(\IR_+;\dot\B^s_{p,1}(\IR^n_+))}^{\frac12}
\|\nabla w\|_{\LL_1(\IR_+;\dot\B^{s+1}_{p,1}(\IR^n_+))}^{\frac12}.\end{align}
Now, by the trace theorem (observe that $\frac1p< s+1\leq \frac{n}{p}$), 
$$w|_{\partial\IR_+^n} \in \dot\B^{\frac12}_{1,1}(\IR_+;\dot\B^{s+1-\frac1p}_{p,1}(\partial\IR^n_+)),
$$
and thus 
\begin{equation}\label{eq:www1}
\nabla_{x'}(w|_{\partial\IR_+^n}) \in \dot\B^{\frac12}_{1,1}(\IR_+;\dot\B^{s-\frac1p}_{p,1}(\partial\IR^n_+)).
\end{equation}
Notice that~\eqref{Eq: Interpolation inequality Besov} combined with~\eqref{eq:sofarweknow} implies that $w \in \LL_2 (\IR_+;\dot\B^{s+1}_{p,1}(\IR^n_+))$. Thus, for almost every $t > 0$ the function $w(t)$ lies in $\dot\B^{s+1}_{p,1}(\IR^n_+)$. Moreover,~\eqref{Eq: Gradient of w} implies that for almost every $t > 0$ we have $\nabla_{x^{\prime}} w (t) \in \dot \B^{s + 1}_{p , 1} (\IR^n_+)$. Employing Lemma~\ref{Lem: Trace and gradient} below, we find that
\begin{align*}
 (\nabla_{x^{\prime}} w)|_{\partial \IR^n_+} = \nabla_{x^{\prime}} (w|_{\partial \IR^n_+})
\end{align*}
so that we simply write $\nabla_{x'}w|_{\partial\IR_+^n}$ in the following.
\smallbreak
Finally, since we know from~\eqref{eq:tracew} that 
\begin{equation}\label{eq:www2}
(\nabla_{x'}w)|_{\partial\IR_+^n} \in \LL_1(\IR_+;\dot\B^{s+1-\frac1p}_{p,1}(\partial\IR^n_+)),
\end{equation}
 interpolating  between~\eqref{eq:www1} and~\eqref{eq:www2}
 (here we use an obvious generalization of Proposition~\ref{p:interpopopo} and 
 notice that both $(s+1-1/p)/2$ and $(1-1/p)/2$ are in $(0,1/2)$ for $-1+1/p<s<1/p$) yields 
\begin{equation}\label{eq:Dxprime}
\nabla_{x'}w|_{\partial\IR_+^n}\in\dot\B^{\frac12(s+1-\frac1p)}_{1,1}(\IR_+;\dot\B^{0}_{p,1}(\partial\IR^n_+))\andf \nabla_{x'}w|_{\partial\IR_+^n}\in\dot\B^{\frac12(1-\frac1p)}_{1,1}(\IR_+;\dot\B^{s}_{p,1}(\partial\IR^n_+)).\end{equation}
The boundary condition  $\IS(w)\cdot \e_n=0$  ensures that 
$$\partial_{x_n}w'=-\nabla_{x'}w^n\andf \nu\partial_{x_n}w^n=-\lambda{\rm div}_{x'}w'.$$
Hence~\eqref{eq:Dxprime} also holds for $\partial_{x_n}w,$ which  completes the proof of Theorem~\ref{Thm:lame}.\qed

\begin{lemma}
\label{Lem: Trace and gradient}
Let $1 \leq p < \infty$ and $1 / p < \theta , \phi \leq n / p$. Then for all $w \in \dot \B^{\theta}_{p , 1} (\IR^n_+)$ with the property that $\nabla_{x^{\prime}} w \in \dot \B^{\phi}_{p , 1} (\IR^n_+)$ it holds
\begin{align*}
 (\nabla_{x^{\prime}} w)|_{\partial \IR^n_+} = \nabla_{x^{\prime}} (w|_{\partial \IR^n_+}).
\end{align*}
\end{lemma}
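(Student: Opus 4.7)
\medskip

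The plan is to reduce the statement to the case of functions that are smooth in the tangential variables by a tangential mollification argument. Fix a standard mollifier $\rho \in \cC_c^{\infty}(\IR^{n-1})$ with $\int \rho = 1$, and set $\rho_\eps(x') = \eps^{-(n-1)} \rho(x'/\eps)$. Using Lemma~\ref{Lem: Extension operators} and Proposition~\ref{Prop: Proper boundedness extension operators} with $m$ chosen large enough for both parameters, pick an extension operator $E$ that is simultaneously bounded $\dot\B^\theta_{p,1}(\IR^n_+) \to \dot\B^\theta_{p,1}(\IR^n)$ and $\dot\B^\phi_{p,1}(\IR^n_+) \to \dot\B^\phi_{p,1}(\IR^n)$. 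Since $E$ is defined by a weighted sum of reflections/dilations acting only in the $x_n$ variable, it commutes with tangential convolutions and tangential derivatives. Define
\begin{align*}
W_\eps := \rho_\eps *_{x'} Ew \in \cS'(\IR^n),
\end{align*}
so that $W_\eps \in \cC^\infty(\IR^{n-1})$ in the tangential variables for each $x_n$, and moreover
\begin{align*}
\nabla_{x'} W_\eps = \rho_\eps *_{x'} \nabla_{x'} Ew = \rho_\eps *_{x'} E(\nabla_{x'} w).
\end{align*}

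The next step is to prove that $W_\eps \to Ew$ in $\dot\B^\theta_{p,1}(\IR^n)$ and $\nabla_{x'} W_\eps \to E(\nabla_{x'} w)$ in $\dot\B^\phi_{p,1}(\IR^n)$ as $\eps \to 0$. On the Fourier side, the multiplier associated with $\rho_\eps *_{x'}$ is $\widehat{\rho}(\eps \xi')$, which is uniformly bounded and tends to $1$ as $\eps \to 0$. Applying this to each dyadic block $\dot\Delta_k Ew$, whose Fourier support lies in $|\xi| \sim 2^k$, we obtain $\|\dot\Delta_k(W_\eps - Ew)\|_{\LL_p(\IR^n)} \to 0$ for every $k \in \IZ$, with a uniform control $\|\dot\Delta_k W_\eps\|_{\LL_p} \lesssim \|\dot\Delta_k Ew\|_{\LL_p}$. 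Dominated convergence applied to the sum $\sum_k 2^{k\theta}\|\dot\Delta_k(W_\eps - Ew)\|_{\LL_p}$ (which is controlled by $2\|Ew\|_{\dot\B^\theta_{p,1}(\IR^n)}$) then yields convergence in $\dot\B^\theta_{p,1}(\IR^n)$, and the same argument applied to $E(\nabla_{x'} w)$ yields convergence in $\dot\B^\phi_{p,1}(\IR^n)$.

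For each fixed $\eps > 0$, the function $W_\eps$ is smooth in the tangential directions, and it still belongs to $\dot\B^\theta_{p,1}(\IR^n)$, which under the assumption $\theta > 1/p$ allows the trace at $x_n = 0$ to be taken and gives an element of $\dot\B^{\theta-1/p}_{p,1}(\partial\IR^n_+)$; since $W_\eps$ is smooth in $x'$, tangential differentiation manifestly commutes with this trace. Therefore,
\begin{align*}
(\nabla_{x'} W_\eps)|_{\partial\IR^n_+} = \nabla_{x'}\bigl(W_\eps|_{\partial\IR^n_+}\bigr) \qquad \text{in}\quad \cD'(\IR^{n-1}).
\end{align*}
Using the continuity of the trace operator $\dot\B^\theta_{p,1}(\IR^n_+) \to \dot\B^{\theta-1/p}_{p,1}(\partial\IR^n_+)$ (see Lemma~\ref{Lem: Homogeneous trace and extension estimates} and its vector-valued/higher-regularity analogue) together with the analogous estimate in the $\phi$-scale, the left-hand side converges to $(\nabla_{x'} w)|_{\partial\IR^n_+}$ in $\dot\B^{\phi-1/p}_{p,1}(\partial\IR^n_+)$, while the right-hand side converges to $\nabla_{x'}(w|_{\partial\IR^n_+})$ in $\cD'(\IR^{n-1})$ by continuity of $\nabla_{x'}$ on distributions. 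The equality therefore passes to the limit.

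The main obstacle is the justification of the two strong convergences $W_\eps \to Ew$ and $\nabla_{x'} W_\eps \to E(\nabla_{x'} w)$ in the respective homogeneous Besov spaces; once this is settled via the dominated-convergence argument on the Littlewood--Paley blocks sketched above, everything else reduces to continuity of the trace and gradient operators. The hypotheses $\theta, \phi > 1/p$ are used to make the traces meaningful, and the upper bounds $\theta,\phi \leq n/p$ guarantee that the homogeneous Besov spaces involved are Banach (via~\eqref{Eq: Banach space conditions}), legitimizing the norm convergence above.
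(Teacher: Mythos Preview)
Your approach via tangential mollification is close in spirit to the paper's, but the step you call ``manifest'' --- that tangential differentiation commutes with the trace for $W_\eps$ --- is exactly the point at issue and cannot be asserted from smoothness in $x'$ alone. The function $W_\eps$ is smooth in $x'$, but its trace at $x_n=0$ is still only defined through the abstract Besov-space trace operator, not as a classical pointwise restriction; nothing in your argument explains why $T(\partial_{x_j} W_\eps)=\partial_{x_j}(TW_\eps)$ follows from $x'$-smoothness. To close this gap you need one more step: observe that the tangential difference quotients $\delta_j^h W_\eps=(\delta_j^h\rho_\eps)*_{x'}Ew$ converge to $\partial_{x_j}W_\eps$ \emph{in the norm of} $\dot\B^\theta_{p,1}(\IR^n)$ (because $\delta_j^h\rho_\eps\to\partial_{x_j}\rho_\eps$ in $\LL_1(\IR^{n-1})$), and then use continuity and tangential translation-invariance of the trace to conclude $T(\partial_{x_j}W_\eps)=\lim_h \delta_j^h(TW_\eps)=\partial_{x_j}(TW_\eps)$. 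With this addition your argument goes through.

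The paper takes a route that sidesteps this issue entirely: instead of mollifying only in $x'$, it approximates $Ew$ by Schwartz functions $\varphi_k$ via the spectral truncation plus mollification of \cite[Prop.~2.27]{Bahouri_Chemin_Danchin}. Since that construction commutes with $\nabla_{x'}$, the \emph{same} sequence satisfies both $\varphi_k\to Ew$ in $\dot\B^\theta_{p,1}$ and $\nabla_{x'}\varphi_k\to E(\nabla_{x'}w)$ in $\dot\B^\phi_{p,1}$. For genuinely Schwartz $\varphi_k$ the identity $(\nabla_{x'}\varphi_k)|_{x_n=0}=\nabla_{x'}(\varphi_k|_{x_n=0})$ is truly trivial (classical restriction of a smooth function), so only the continuity of the trace in the two scales is needed to pass to the limit. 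The paper's choice thus buys a shorter argument; your mollification buys a more explicit approximation but forces you to supply the difference-quotient step above.
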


\begin{proof}
First of all, we remark that by the extension results proven in Proposition~\ref{Prop: Proper boundedness extension operators} and by the proof of~\cite[Prop.~2.27]{Bahouri_Chemin_Danchin} there exists a sequence $(\varphi_k)_{k \in \IN}$ of Schwartz functions such that
\begin{align*}
 \varphi_k|_{\IR^n_+} \to w \quad \text{in} \quad &\dot \B^{\theta}_{p , 1} (\IR^n_+) \quad \text{as} \quad k \to \infty \\
 \nabla_{x^{\prime}} \varphi_k|_{\IR^n_+} \to \nabla_{x^{\prime}} w \quad \text{in} \quad &\dot \B^{\phi}_{p , 1} (\IR^n_+) \quad \text{as} \quad k \to \infty.
\end{align*}
Those functions obviously satisfy
\begin{align*}
 (\nabla_{x^{\prime}} \varphi_k)|_{\partial \IR^n_+} = \nabla_{x^{\prime}} (\varphi_k|_{\partial \IR^n_+}).
\end{align*}
By the conditions imposed on $\theta$ and $\phi$, we can take the limit $k \to \infty$ and find that $(\nabla_{x^{\prime}} w)|_{\partial \IR^n_+}$ and $\nabla_{x^{\prime}} (w|_{\partial \IR^n_+})$ coincide as elements in $\cS_h^{\prime} (\IR^{n - 1})$.
\end{proof}


\section{The Lam\'e system in a perturbed half-space}

Having Theorem~\ref{Thm:lame} for the half-space at hand, 
we now consider the case where the fluid domain $\Omega$
is  a  small perturbation of $\IR^n_+$ (such a generalization is consistent 
with the study of the full stability issue of the nonlinear system~\eqref{eq:pressless}).
More precisely, we assume that $\Omega$ satisfies~\eqref{o1}.
It follows that there exists  a continuous function
$H:\overline{\IR^n_+} \to \overline\Omega$
such that $H$ is a diffeomorphism  
from $\IR^n_+$ to $\Omega,$
\begin{equation}\label{eq:H}
H(\partial \IR^n_+)=\partial \Omega,\quad \nabla H \in
\dot \B^{n/p}_{p,1}(\IR^n_+)\andf\|\nabla H- \Id\|_{\dot \B^{n/p}_{p,1}(\IR^n_+)} \leq Cc.\end{equation}
To construct $H,$ we set 
 $H(z)=z+Eh(z)$ for $z\in \overline{\IR^n_+}$, where $Eh$ is an extension of $h$
 such that $\nabla Eh \in \dot \B^{n/p}_{p,1}(\IR^n_+)$.
 For $Eh,$ one may 
 use the following harmonic extension:
 $$ Eh(z',z_n):=\cF_{\xi'}^{-1}\Bigl(e^{-|\xi'|z_n}
 \cF_{z'}h\Bigr)(z',z_n),\quad z'\in\IR^{n-1},\: z_n\geq0.$$
 Indeed, we then have
 $$\nabla_{z'}Eh=\cF^{-1}_{\xi'}\Bigl(e^{-|\xi'|z_n}
 \cF_{z'}\nabla_{z'}h\Bigr)\andf
  \partial_{z_n}Eh=i\cF^{-1}_{\xi'}\Bigl(e^{-|\xi'|z_n}
 \frac{\xi'}{|\xi'|}\cF_{z'}\nabla_{z'}h\Bigr) $$
 so that, in light of~\cite[Lemma 2]{Danchin_Mucha}, we have
 $$ \|\nabla Eh\|_{\dot \B^{n/p}_{p,1}(\IR^n_+)}
 \lesssim  \|\nabla_{z'}h\|_{\dot \B^{(n-1)/p}_{p,1}(\IR^{n-1})}\leq Cc.$$
 Then,  we consider the following linear system:
 \begin{align}\label{eq:lameomega}
 \left\{\begin{aligned}
   \rho u_t - \divergence \IS(u) &= f && \mbox{ in} \quad \IR_+ \times \Omega,\\
   \IS(u)\cdot {\bar n} &= g && \mbox{ at} \quad \IR_+ \times \partial \Omega,\\
   u|_{t=0}&= u_0 && \mbox{ in} \quad \Omega,
   \end{aligned}\right.
 \end{align}
 where $\rho$ is a given 
 time independent positive function on $\Omega,$ and $\bar n$ stands for the unit outward normal vector  at $\partial\Omega.$
\medbreak
To formulate the perturbed maximal regularity result, let us introduce the maximal regularity space $\E_p$ and the boundary data space $\dot\Y_p$ pertaining to a domain $\Xi$
of $\IR^n$ as follows:
\begin{align*}
 \dot\Y_p (\partial\Xi) := \dot \B^{\frac{n-1}{2 p}}_{1 , 1} (\IR_+ ; \dot \B^0_{p , 1} (\partial \Xi)) \cap \LL_1 (\IR_+ ; \dot \B^{\frac{n-1}{p}}_{p , 1} (\partial \Xi))\andf
\end{align*}
$$\displaylines{
 \E_p(\Xi)\hfill\cr\hfill  := \{ u \in \cC_b(\IR_+ ; \dot \B^{\frac{n}{p} - 1}_{p , 1} (\Xi)): \nabla u \in \LL_1 (\IR_+ ; \dot \B^{\frac{n}{p}}_{p , 1} (\Xi)) , \, \partial_t u \in \LL_1 (\IR_+ ; \dot \B^{\frac{n}{p} - 1}_{p , 1} (\Xi)) \!\!\andf\!\!\nabla u|_{\partial \Xi} \in \dot\Y_p (\partial\Xi) \}}$$
endowed with the canonical norm. If $\Xi$ is a bent half-space, the properties of $H$ given by~\eqref{eq:H} 
combined with Proposition~\ref{Prop: Change of half} and Lemma~\ref{Lem: Extension of diffeomorphism} 
allow to define the function spaces
$\dot \B^s_{p , 1} (\Xi)$ as well as $\dot \B^s_{p , 1} (\partial \Xi)$ via a pullback of the diffeomorphism $H$ over $\IR_+^n$ and $\IR^{n-1}$, namely
$$
d \in \dot \B^s_{p , 1} (\Xi) \mbox{ \ iff \ }
H^*d = d \circ H \in \dot \B^s_{p , 1} (\IR_+^n)
$$
and
$$
d \in  \dot \B^s_{p , 1} (\partial \Xi) 
\mbox{ \ iff \ } 
H^*d = d \circ H \in \dot \B^s_{p , 1} (\partial \IR^n_+).
$$

In this section we prove the following result.
\begin{theorem}\label{th:lameomega}
Let $n-1 < p < n$ and let $\Omega$ be a small perturbation of $\IR^n_+$ in the sense of~\eqref{o1}. Let  $\rho \in \mathcal{M}(\dot \B^{n/p-1}_{p,1}(\Omega)\cap \LL_\infty(\Omega))$
be  such that
 \begin{equation}\label{eq:smallrho}
  \|\rho-1\|_{\mathcal{M}(\dot \B^{n/p-1}_{p,1}(\Omega)\cap \LL_\infty(\Omega))} \leq c \ll 1.
 \end{equation}
Furthermore, let $f\in \LL_1(\IR_+;\dot \B^{n/p - 1}_{p,1}(\Omega))$, 
$g\in \dot\Y_p (\partial\Omega)$ and $u_0 \in \dot 
\B^{n/p-1}_{p,1}(\Omega)$.
\smallbreak
Then, there exists a unique global solution $u \in \E_p (\Omega)$ to system~\eqref{eq:lameomega}. Furthermore, 
we have
\begin{align*}
 \|u \|_{\E_p (\Omega)} \leq C\big( \|f\|_{\LL_1(\IR_+;\dot \B^{n/p - 1}_{p,1}(\Omega))}+ 
\|g\|_{\dot\Y_p (\partial\Omega)}+ \|u_0\|_{ \dot 
\B^{n/p-1}_{p,1}(\Omega)}\big)\cdotp
\end{align*}
\end{theorem}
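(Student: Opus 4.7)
\medskip

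\noindent\textbf{Proof plan.} The strategy is to flatten the bent half-space by the diffeomorphism $H$ of~\eqref{eq:H}, rewrite~\eqref{eq:lameomega} as a perturbation of the flat Lam\'e system on $\IR^n_+$ with unit density, and then close a contraction argument in $\E_p(\IR^n_+)$ whose linear engine is Theorem~\ref{Thm:lame} with $s=n/p-1$. Set $v(t,z):=u(t,H(z))$, $\tilde f(t,z):=f(t,H(z))$, $\tilde\rho(z):=\rho(H(z))$, and let $\tilde g$ denote the pullback of $g$ under $H|_{\partial\IR^n_+}$. By the chain rule and the identities $\IS(u)\circ H=\IS(v)+\cR_1(DH-\Id,\nabla v)$ and $\divergence\IS(u)\circ H=\divergence\IS(v)+\cR_2(DH-\Id,D^2H;\nabla v,\nabla^2 v)$, where $\cR_1,\cR_2$ are bilinear in their arguments, the system for $v$ reads
\begin{equation*}
\left\{\begin{aligned}
\partial_tv-\divergence\IS(v) &= \tilde f+\cF(v) &&\hbox{in}\ \IR_+\times\IR^n_+,\\
\IS(v)\cdot\e_n &= \tilde g+\cG(v) &&\hbox{on}\ \IR_+\times\partial\IR^n_+,\\
v|_{t=0} &= u_0\circ H &&\hbox{in}\ \IR^n_+,
\end{aligned}\right.
\end{equation*}
where $\cF(v)$ gathers the terms $(1-\tilde\rho)\partial_tv$ together with $\cR_2$ and the commutator between $\divergence$ and the pullback, and $\cG(v)$ gathers the boundary perturbations produced by $\cR_1|_{\partial}$ and by $\e_n-H^*\bar n$ on $\partial\IR^n_+$.

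\medskip

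The next step is to verify that the map $\Phi:v\mapsto w$, where $w$ solves the flat Lam\'e problem with data $(u_0\circ H,\tilde f+\cF(v),\tilde g+\cG(v))$, leaves invariant and contracts a ball of $\E_p(\IR^n_+)$. By Theorem~\ref{Thm:lame} applied with $s=n/p-1$ (which satisfies $0<s<1/p$ and $s\leq n/p-1$ under the hypothesis $n-1<p<n$), $\Phi$ is well defined and bounded provided each perturbation can be controlled by a small constant times $\|v\|_{\E_p(\IR^n_+)}$. For the interior perturbation this reduces to three facts: (i) multiplication by $\tilde\rho-1$ is small on $\LL_1(\IR_+;\dot\B^{n/p-1}_{p,1})$ by hypothesis~\eqref{eq:smallrho} together with Proposition~\ref{Prop: Change of half}; (ii) the coefficient $DH-\Id$ belongs to $\dot\B^{n/p}_{p,1}(\IR^n_+)$ with small norm by~\eqref{eq:H}, and $\dot\B^{n/p}_{p,1}$ is an algebra acting by multiplication on $\dot\B^{n/p-1}_{p,1}$ (hence on the $\nabla^2 v$ and $\nabla v$ terms appearing in $\cR_2$); (iii) the bilinear terms involving $D^2H$ are handled by the product law $\dot\B^{n/p-1}_{p,1}\cdot\dot\B^{n/p-1}_{p,1}\hookrightarrow\dot\B^{n/p-1}_{p,1}$ valid for $p<n$.

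\medskip

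The main obstacle is the boundary perturbation $\cG(v)$, which must be shown to lie in $\dot\Y_p(\partial\IR^n_+)$ with a small norm. Here the structure of $\dot\Y_p$ matters: the two components $\LL_1(\IR_+;\dot\B^{(n-1)/p}_{p,1}(\partial\IR^n_+))$ and $\dot\B^{(n-1)/(2p)}_{1,1}(\IR_+;\dot\B^0_{p,1}(\partial\IR^n_+))$ must both absorb products of $\nabla v|_{\partial\IR^n_+}$ (respectively $v|_{\partial\IR^n_+}$) against $DH-\Id$ and $\e_n-H^*\bar n$ restricted to the boundary. Since $\nabla H-\Id|_{\partial\IR^n_+}\in\dot\B^{(n-1)/p}_{p,1}(\partial\IR^n_+)$ with small norm by trace, the first component is controlled by an algebra-type product law on the boundary; the second component is handled by combining the tangential product law with the embedding $\dot\B^{1/2}_{1,1}(\IR_+;\dot\B^{n/p-1/p}_{p,1})\hookrightarrow\dot\B^{(n-1)/(2p)}_{1,1}(\IR_+;\dot\B^0_{p,1})$, which follows from interpolation in the style of Proposition~\ref{p:interpopopo} combined with the trace inequality used already in the proof of Theorem~\ref{Thm:lame}.

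\medskip

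Once the perturbation estimates
$$\|\cF(v)\|_{\LL_1(\IR_+;\dot\B^{n/p-1}_{p,1}(\IR^n_+))}+\|\cG(v)\|_{\dot\Y_p(\partial\IR^n_+)}\leq C c\,\|v\|_{\E_p(\IR^n_+)}$$
are established (with $c$ the smallness parameter from~\eqref{o1} and~\eqref{eq:smallrho}), Theorem~\ref{Thm:lame} gives
$$\|w\|_{\E_p(\IR^n_+)}\leq C\bigl(\|u_0\circ H\|_{\dot\B^{n/p-1}_{p,1}(\IR^n_+)}+\|\tilde f\|_{\LL_1(\IR_+;\dot\B^{n/p-1}_{p,1}(\IR^n_+))}+\|\tilde g\|_{\dot\Y_p(\partial\IR^n_+)}+Cc\,\|v\|_{\E_p(\IR^n_+)}\bigr),$$
and choosing $c$ small enough so that $C^2 c<1/2$ makes $\Phi$ a contraction on a suitable closed ball. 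Uniqueness follows by applying the same estimate to the difference of two solutions, and transporting the unique fixed point $v$ back to $\Omega$ via $H^{-1}$ yields the announced solution $u\in\E_p(\Omega)$ with the stated bound, using Proposition~\ref{Prop: Change of half} and Lemma~\ref{Lem: Extension of diffeomorphism} to identify function spaces on $\Omega$ and $\partial\Omega$ with their pullbacks.
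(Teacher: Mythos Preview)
Your approach is essentially the same as the paper's: flatten via $H$, rewrite the system as a perturbation of the flat Lam\'e problem with unit density, and run a Banach fixed point argument in $\E_p(\IR^n_+)$ using Theorem~\ref{Thm:lame} as the linear engine. Two points deserve correction, though neither is a structural gap.

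First, the product law in your item~(iii) is misstated. The claim $\dot\B^{n/p-1}_{p,1}\cdot\dot\B^{n/p-1}_{p,1}\hookrightarrow\dot\B^{n/p-1}_{p,1}$ is false (Proposition~\ref{prod-Bes} gives a target space with a smaller Lebesgue exponent). What actually appears in the commutator is a product of $\nabla A\in\dot\B^{n/p-1}_{p,1}$ (equivalently $D^2H$) with $\nabla v\in\LL_1(\IR_+;\dot\B^{n/p}_{p,1})$, and the correct law $\dot\B^{n/p}_{p,1}\cdot\dot\B^{n/p-1}_{p,1}\hookrightarrow\dot\B^{n/p-1}_{p,1}$ handles this. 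The paper sidesteps the issue by writing everything through $A=(DH)^{-1}$ and using only that $A-\Id\in\dot\B^{n/p}_{p,1}$ is small.

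Second, your treatment of the $\dot\B^{(n-1)/(2p)}_{1,1}(\IR_+;\dot\B^0_{p,1})$ component of $\cG(v)$ via an ``embedding $\dot\B^{1/2}_{1,1}(\IR_+;\dot\B^{(n-1)/p}_{p,1})\hookrightarrow\dot\B^{(n-1)/(2p)}_{1,1}(\IR_+;\dot\B^0_{p,1})$'' is not correct as written. The paper's route is simpler: since the coefficients $A-\Id$ and $\bar\nu+\e_n$ are \emph{time-independent}, the product law of Proposition~\ref{prod-Bes2} (or rather its proof, which bounds $T_uv$ and $R(u,v)$ by $\|u\|_{\LL_\infty(\IR;\dot\B^{(n-1)/p}_{p,1})}\|v\|_{\dot\B^s_{1,1}(\IR;\dot\B^0_{p,1})}$) applies directly with $\|u\|_{\LL_\infty}$ equal to the spatial norm, yielding $\|(A-\Id)\nabla v|_{\partial}\|_{\dot\B^{(n-1)/(2p)}_{1,1}(\IR_+;\dot\B^0_{p,1})}\lesssim\|A-\Id\|_{\dot\B^{(n-1)/p}_{p,1}(\partial\IR^n_+)}\|\nabla v|_{\partial}\|_{\dot\Y_p}$.
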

\begin{proof}
 At the beginning we define $z=H^{-1}(x)$ 
 and $\bar\nu(z)=\bar n(x),$
 then  set
 $$  w(t,z):=u(t,x),\quad\! \bar\rho(z):=\rho(x),
  \quad\! \bar u_0(z):=u_0(x),
\!\quad   \bar f(t,z):= f(t,x)\!\andf\! 
\bar g(t,z):=g(t,x)$$
so that, denoting  $A (z) := (D H (z))^{-1},$ 
the vector-field $w$ has to satisfy:
\begin{align}\label{eq:lame0-1}\left\{
\begin{aligned}
  \bar \rho w_t - A^{\top} : \nabla_z 
  (\mu(A^{\top} \nabla_z w)+ \mu(A^{\top} \nabla_z w)^{\top} + 
  \lambda A^{\top}:\nabla_z w \Id)&=\bar f && \mbox{ in} \quad \IR_+\times \IR^n_+,\\[5pt]
  (\mu(A^{\top} \nabla_z w)+ \mu(A^{\top} \nabla_z w)^{\top} + \lambda A^{\top}:\nabla_z w \Id)\cdot \bar\nu&=\bar g && \mbox{ at} \quad \IR_+ \times \partial \IR^n_+,\\[5pt]
  w|_{t=0}&=\bar u_0 && \mbox{ in} \quad \IR^n_+.
\end{aligned}\right.
\end{align}
Before solving the above system, two observations are in order. 

First,  the smallness condition in~\eqref{eq:H} 
and the fact that $\dot \B^{n/p}_{p,1}(\IR_+^n)$ is a Banach algebra allow  us to compute  $A-\Id$ by means of the 
following Neumann series expansion:
$$A-\Id =\sum_{k=1}^\infty (-1)^k\bigl(\nabla H-\Id\bigr)^k.$$
Hence we have $A-\Id$ in  $\dot \B^{n/p}_{p,1}(\IR_+^n)$ with 
the estimate
\begin{equation}\label{eq:smallA}
\|A-\Id\|_{\dot \B^{n/p}_{p,1}(\IR_+^n)}\leq Cc.
\end{equation}
Second, in the new coordinates system, the normal vector $\bar\nu$ at the boundary becomes
$$\bar\nu(z',0)=\frac{(D_{z'}h(z'),-1)}{\sqrt{1+|D_{z'}h(z')|^2}}\cdotp$$
Hence, using Taylor series expansion, the fact 
that $\dot\B^{(n-1)/p}_{p,1}(\IR^{n-1})$ is a Banach algebra, and taking advantage of~\eqref{o1}, we get 
$$\|\bar\nu+\e_n\|_{\dot\B^{(n-1)/p}_{p,1}(\IR^{n-1})}
\lesssim\|h^\prime\|_{\dot\B^{(n-1)/p}_{p,1}(\IR^{n-1})}\leq Cc.$$
To solve system~\eqref{eq:lame0-1},  we plan to apply the Banach fixed point theorem to the mapping
\begin{align}
\label{Eq: Linear contraction}
 \Phi : \E_p (\IR^n_+) \to \E_p (\IR^n_+), \quad v \mapsto u,
\end{align}
where $u$ is the unique solution to the following system
\begin{align}\label{eq:lame00}
\left\{\begin{aligned}
u_t -\divergence_z \IS(u)&=\bar f+F(v)   && \mbox{ in} \quad \IR_+\times \IR^n_+,\\
\IS(u)\cdot (-\e_n)|_{z_n=0} &= \bar g +G(v)   && \mbox{ at} \quad \IR_+ \times \partial \IR^n_+,\\
  u|_{t=0}&=\bar u_0 && \mbox{ in} \quad \IR^n_+
\end{aligned} \right.
\end{align}
with
$$
 \begin{array}{l}
F(v)=(1-\overline{\rho})v_t -\divergence_z \IS(v)  + A^{\top} : \nabla_z 
  \left(\mu(A^{\top} \nabla_z v)+ \mu(A^{\top} \nabla_z v)^{\top} +   \lambda A^{\top}:\nabla_z v \Id\right),\\[5pt]
G(v)= -\IS(v)\cdot \e_n - (\mu(A^{\top} \nabla_z v)+ \mu(A^{\top} \nabla_z v)^{\top} + \lambda A^{\top}:\nabla_z v \Id)\cdot \bar\nu.
 \end{array}$$
To show that the mapping $\Phi$ is indeed well-defined, we first have to check that $F(v)$ and $G(v)$ are, for any $v \in \E_p (\IR^n_+)$,  admissible data with regard to Theorem~\ref{Thm:lame}. In order to 
treat $F (v)$ we note that the  product rule formulated in~\cite[Lem.~A.5]{Danchin14} or 
in Proposition~\ref{prod-Bes} yields if $1<p<2n$ 
$$\displaylines{
 \|   A^{\top} : \nabla_z 
  \left(\mu(A^{\top} \nabla_z v)+ \mu(A^{\top} \nabla_z v)^{\top} +   \lambda A^{\top}:\nabla_z v \Id\right)
 -\divergence_z \IS(v) 
 \|_{\LL_1(\IR_+; \dot \B^{n/p-1}_{p,1}(\IR^n_+))} 
 \hfill\cr\hfill\leq C\|\Id-A\|_{\dot \B^{n/p}_{p,1}(\IR^n_+)}
 \|\nabla v\|_{\LL_1(\IR^n_+;\dot \B^{n/p}_{p,1}(\IR^n_+))}.}$$
Hence, 
\begin{multline}\label{eq:F}
 \|F(v)\|_{\LL_1(\IR_+; \dot \B^{n/p-1}_{p,1}(\IR^n_+))} \\\leq 
 \|1-\overline{\rho}\|_{\mathcal{M} (\dot \B^{n/p-1}_{p,1}(\IR^n_+))}\|v_t\|_{ \LL_1(\IR_+; \dot \B^{n/p-1}_{p,1}(\IR^n_+))}+ C\|\Id-A\|_{\dot \B^{n/p}_{p,1}(\IR^n_+)} \|\nabla v\|_{\LL_1(\IR_+;\dot \B^{n/p}_{p,1}(\IR^n_+))}.
\end{multline}
Let us observe that, owing to \eqref{eq:smallrho} and Proposition~\ref{Prop: Change of half}, we have
\begin{equation}\label{eq:smallrhob}
 \|1-\overline{\rho}\|_{\mathcal{M}(\dot\B^{n/p-1}_{p,1}(\IR^n_+))}\leq Cc.
 \end{equation}
 Indeed, we may write by the definition of multiplier spaces:
 $$\begin{aligned}
 \|1-{\rho}\|_{\mathcal{M} (\dot \B^{n/p-1}_{p,1}(\Omega))}&=\sup_{\|a\|_{\dot\B^{n/p-1}_{p,1}(\Omega)}\leq1}
 \|(1-\rho)a\|_{\dot\B^{n/p-1}_{p,1}(\Omega)}\\
 &=\sup_{\|a\|_{\dot\B^{n/p-1}_{p,1}(\Omega)}\leq1}
 \|((1-\bar\rho)\bar a)\circ H^{-1}\|_{\dot\B^{n/p-1}_{p,1}(\Omega)}\\
 &\simeq \sup_{\|\bar a\|_{\dot\B^{n/p-1}_{p,1}(\Omega)}\leq1}
 \|(1-\bar\rho)\bar a\|_{\dot\B^{n/p-1}_{p,1}(\IR^n_+)}\\
 &= \|1-\overline{\rho}\|_{\mathcal{M} (\dot \B^{n/p-1}_{p,1}(\IR^n_+))}.
\end{aligned}
$$
Similarly, since here $A$ is time-independent, we find
arguing as for bounding the boundary terms in the
next section that
\begin{equation}\label{eq:G}
 \|G(v)\|_{\dot\Y_p (\partial\IR^n_+)} \leq C\big( 
\|(\Id-A)|_{z_n=0}\|_{\dot \B^{(n - 1)/p}_{p,1}(\partial \IR^n_+)} 
\|\nabla v|_{z_n=0}\|_{\dot\Y_p (\partial\IR^n_+)}\big)\cdotp
\end{equation}
Now, owing to Theorem~\ref{Thm:lame} (here we need
$n-1<p<n$), we have for some constant $C,$
$$ \|u\|_{\E_p (\IR^n_+)} \leq C\bigl(\|\bar f+F(v)\|_{\LL_1(\IR_+;\dot \B^{n/p}_{p,1}(\IR^n_+))}+ 
\|\overline{g}+G(v)\|_{\dot\Y_p (\partial\IR^n_+)} + \|\overline{u_0}\|_{ \dot 
\B^{n/p-1}_{p,1}(\IR^n_+)}\bigr)\cdotp$$
Hence, putting together with~\eqref{eq:smallrhob},~\eqref{eq:smallA},~\eqref{eq:F} and~\eqref{eq:G}, 
we find that 
\begin{align*}
 \|u\|_{\E_p (\IR^n_+)} \leq C\bigl(\|\bar f\|_{\LL_1(\IR_+;\dot \B^{n/p}_{p,1}(\IR^n_+))}+ 
\|\overline{g}\|_{\dot\Y_p (\partial\IR^n_+)} + \|\overline{u_0}\|_{ \dot 
\B^{n/p-1}_{p,1}(\IR^n_+)} + c \| v \|_{\E_p (\IR^n_+)}\bigr)\cdotp
\end{align*}
In particular, the mapping $\Phi$ is well-defined and a self-map on $\E_p(\IR^n_+).$
\smallbreak
Now, to show that~\eqref{eq:lame00} has a fixed point, it remains to establish that $\Phi$ is a strict contraction. To this end, let $u^0$ be the solution to~\eqref{eq:lame00} with data determined by $v^0$ and let $u^1$ correspond to the data determined by $v^1$ (we keep the same initial 
data $\bar u_0$ and source terms $\bar f$ and $\bar g$ for the two solutions). 
Introduce the abbreviations $\du := u^0-u^1 \andf \dv := v^0-v^1.$ Then,  since $F$ and $G$ are linear, we have
\begin{align}\label{eq:lame01}
 \left\{ \begin{aligned} 
\du_t -\divergence_z \IS(\du)&=F(\dv)   && \mbox{ in} \quad \IR_+\times \IR^n_+,\\
\IS(\du)\cdot (-\e_n)|_{z_n=0} &= G(\dv)   && \mbox{ at} \quad \IR_+ \times \partial \IR^n_+,\\
 \du|_{t=0}&= 0 && \mbox{ in} \quad \IR^n_+.
 \end{aligned} \right.
\end{align}
Then, thanks to \eqref{eq:F} and \eqref{eq:G}, we discover that
\begin{align*}
\|\du\|_{\E_p (\IR^n_+)} \leq Cc \|\dv\|_{\E_p (\IR^n_+)}.
\end{align*}
Thus, taking $c$ such that $Cc<1$ we get that the map defined by~\eqref{Eq: Linear contraction} is a strict contraction and consequently, there exists a unique fixed point to~\eqref{eq:lame00}.
We conclude to the existence of a unique solution $w$ in the set $\E_p (\IR^n_+)$ to system ~\eqref{eq:lame0-1}. 

Since $H:\IR^n_+\to\Omega$ is a diffeomorphism
with $\nabla H\in\dot\B^{n/p}_{p,1}(\IR_+^n),$ we employ 
the product law in~\cite[Lem.~A.5]{Danchin14} together with 
Proposition~\ref{Prop: Change of half} to return to the original system~\eqref{eq:lameomega} and obtain a unique solution in the space $\E_p (\Omega),$ satisfying the desired bound.
\end{proof}

\section{Proof of the global existence result}

The proof is based on the Banach fixed point theorem in the solution space $\E_p$ that has been introduced in the assertion  of  Theorem~\ref{thm:lpressl},
after going to Lagrangian coordinates. In order to solve~\eqref{eq:presslag}, we consider for all time-dependent vector fields $v$ in $\E_p,$ the solution  $u$ to
\begin{align*}
\left\{\begin{aligned} 
\rho_0 \partial_t u - \divergence\IS(u) &= f:= \divergence_v\IS_v(v)-\divergence\IS(v) &&\mbox{ in}\quad  \IR_+\times \Omega, \\
\IS(u) \cdot \bar n|_{\partial\Omega}&= g:=\bigl(\IS(v) \cdot \bar n-\IS_v(v) \cdot \bar n_v\bigr)|_{\partial\Omega}
  &&\mbox{ on}\quad  \IR_+\times\partial\Omega,\\
u|_{t = 0} &= u_0   &&\mbox{ in}\quad \Omega.\end{aligned} \right.
\end{align*}
We claim that  the  map $\Phi: v\mapsto u$ is well-defined on all small enough balls of $\E_p,$ and admits a fixed point. 

\subsubsection*{Step 1. Stability of a small (closed) ball  of $\E_p$ by $\Phi$}

Let $v$ belong to a closed ball $\bar B_{\E_p}(0,R)$ of $\E_p.$ 
We assume that $R>0$ is chosen so small that the following inequality is satisfied (by embedding):
 \begin{equation}\label{eq:smallDv}
\int_{\IR_+} \|Dv\|_{\dot\B^{\frac np}_{p,1}(\Omega)}\,\d\tau\leq c\ll1.
\end{equation} 
By Theorem~\ref{th:lameomega} and remembering  that $\rho_0$ fulfills~\eqref{eq:smalldata}, we get 
\begin{equation}\label{eq:uEp}
\|u\|_{\E_p}\leq C\Bigl(\|u_0\|_{\dot \B^{\frac np-1}_{p,1}(\Omega)} +\|f\|_{\LL_1(\IR_+;\dot \B^{\frac np-1}_{p,1}(\Omega))}
+ \|g\|_{\LL_1(\IR_+;\dot \B^{(n-1)/p}_{p,1}(\partial\Omega))\cap \dot \B^{(n-1)/(2p)}_{1,1}(\IR_+;\dot \B^{0}_{p,1}(\partial\Omega))}\Bigr)\cdotp
\end{equation}
Hence, it only remains  to establish estimates for $f$ in $\LL_1(\IR_+;\dot \B^{n/p-1}_{p,1}(\Omega))$ 
and for $g$ in $\dot\Y_p(\partial\Omega).$
Bounding $f$ mainly stems from the inequality (requiring~\eqref{eq:smallDv}):
\begin{equation}\label{eq:Av}
\|A_v-\Id\|_{\LL_\infty(\IR_+;\dot \B^{n/p}_{p,1}(\Omega))}
+\|\adj(DX_v)-\Id\|_{\LL_{\infty}(\IR_+;\dot \B^{n/p}_{p,1}(\Omega))}\leq 
2\|Dv\|_{\LL_1(\IR_+;\dot \B^{n/p}_{p,1}(\Omega))}.
\end{equation}
The latter inequality  relies on the fact that  $\dot \B^{n/p}_{p,1}(\Omega)$ is an algebra, and on Neumann series expansions like, e.g., 
\begin{equation}\label{eq:Av1} 
A_v(t)-\Id =\sum_{k\geq1} (-1)^k\Bigl(\int_0^t Dv\,\d \tau\Bigr)^k.\end{equation}
Now, we observe that $f$ may be decomposed as
$$f=\mu\divergence\Bigl(\!(\adj(DX_v)A_v^\top-\Id)\nabla v
+Dv(A_v(\adj(DX_v))^\top-\Id)\!\Bigr) +
\lambda\divergence\Bigl(\!(\adj(DX_v)A_v^\top-\Id):\nabla v\!\Bigr)\cdotp$$

Hence, using the fact that the operator $\divergence$ maps $\dot\B^{n/p}_{p,1}(\Omega)$ into  $\dot\B^{n/p-1}_{p,1}(\Omega)$ and the stability of  $\dot \B^{n/p}_{p,1}(\Omega)$ under 
products, we conclude that 
$$
\|f\|_{\LL_1(\IR_+;\dot \B^{n/p-1}_{p,1}(\Omega))}\leq C
 \|Dv\|_{\LL_1(\IR_+;\dot \B^{n/p}_{p,1}(\Omega))}^2.
$$
 Next,  in order to  bound $g$ in $\LL_1(\IR_+;\dot \B^{(n-1)/p}_{p,1}(\partial\Omega)),$ it suffices to consider its expression as the trace on $\partial\Omega$ of
 a function in  $\LL_1(\IR_+;\dot \B^{n/p}_{p,1}(\Omega)).$ 
 More precisely, on $\partial \Omega$, we have 
\begin{align*} g&= \bigl(\IS_v (v) - \IS (v)\bigr) \cdot \e_n + \bigl(\IS(v) - \IS_v (v)\bigr) \cdot (\bar n + \e_n) + \IS_v (v) \cdot (\bar n - \bar n_v) \\
 &= \Bigl(\mu\bigl((A_v^\top-\Id)\cdot\nabla v+Dv\cdot(A_v-\Id)\bigr) 
 +\lambda (A_v^\top-\Id):\nabla v \Bigr) \cdot \e_n \\
 &\qquad + \Bigl(\mu\bigl((\Id - A_v^\top)\cdot\nabla v+Dv\cdot(\Id - A_v)\bigr) 
 +\lambda (\Id - A_v^\top):\nabla v \Bigr) \cdot (\bar n + \e_n) + \IS_v(v) \cdot (\bar n - \bar n_v) \\
 &= g_1 + g_2 + g_3.
\end{align*}
Notice that all terms except,  $\wt n := \bar n + \e_n$ and $\wt n_v := \bar n_v - \bar n,$ admit a canonical extension to all of $\Omega$. Following the proof of Lemma~\ref{Lem: The normal vector} word by word, but by replacing the transformation $X_v$ by $H$, readily yields that for some small constant $c > 0,$ we have
 \begin{align}
 \label{Eq: Normal vector of Omega}
 \| E (\bar n + \e_n) \|_{\dot \B^{n/p}_{p , 1} (\Omega)} \leq c.
 \end{align}
 To extend $\wt n_v$ appropriately to $\Omega$, we need the following lemma.
 
 \begin{lemma}
 \label{Lem: Normal vector at Omega}
 The function $\wt n_v$ has an extension $E \wt n_v$ to all of $\IR_+ \times \Omega$ that satisfies for some constant $C > 0$
 \begin{equation}\label{eq:tildenv}
 \|E\wt n_v\|_{\LL_\infty(\IR_+;\dot\B^{n/p}_{p,1}(\Omega))}\leq C\|Dv\|_{\LL_1(\IR_+;\dot \B^{n/p}_{p,1}(\Omega))}.\end{equation}
 \end{lemma}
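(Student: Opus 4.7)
The plan is to adapt the construction carried out in Section~\ref{Sec: The normal vector} for the flat half-space to the present bent geometry, using the diffeomorphism $H:\IR^n_+\to\Omega$ introduced in~\eqref{eq:H} as a coordinate chart. Parameterize $\partial\Omega$ by $z^\prime\mapsto H(z^\prime,0),$ so that, under the Lagrangian flow, the deformed boundary $X_v(t,\partial\Omega)$ is parameterized by $z^\prime\mapsto X_v(t,H(z^\prime,0)).$ The corresponding tangent vectors
\begin{align*}
T_i(t,z^\prime):= \partial_{z_i} H(z^\prime,0)+\Bigl(\int_0^t Dv(\tau,H(z^\prime,0))\,\d\tau\Bigr)\partial_{z_i} H(z^\prime,0),\quad i=1,\dots,n-1,
\end{align*}
are just the analogues of those appearing in Section~\ref{Sec: The normal vector}, with $\partial_{z_i}H(\cdot,0)$ playing the role of the standard unit vector $\e_i$ previously used. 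The unit normal $\bar n_v$ is then (up to sign) the normalized generalized cross product $T_1\wedge\dotsm\wedge T_{n-1}/|T_1\wedge\dotsm\wedge T_{n-1}|,$ and setting $v=0$ recovers $\bar n.$

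To obtain the extension on the whole of $\Omega,$ I would first move the problem to the flat reference configuration: define for $z\in\IR^n_+$ the vector fields
\begin{align*}
V_i(t,z):=\int_0^t Dv(\tau,H(z))\,\d\tau\cdot\partial_{z_i} H(z),\quad i=1,\dots,n-1,
\end{align*}
which are defined on all of $\IR^n_+,$ not merely on $\partial\IR^n_+.$ Taking the generalized cross product $\Omega_v:=(\partial_{z_1}H+V_1)\wedge\dotsm\wedge(\partial_{z_{n-1}}H+V_{n-1})$ and expanding multilinearly as in Subsection~\ref{Sec: The normal vector} gives
\begin{align*}
\Omega_v = \partial_{z_1}H\wedge\dotsm\wedge\partial_{z_{n-1}}H + R_v,
\end{align*}
where $R_v$ is a finite sum of terms each containing at least one factor $V_{i}.$ Normalizing by $|\Omega_v|^{-1}$ via the Neumann expansion of $(1+x)^{-1/2},$ legitimate thanks to the smallness of both $\nabla H-\Id$ (cf.~\eqref{eq:H}) and $\int_0^t Dv\,\d\tau$ (cf.~\eqref{eq:smallDv}) in the algebra $\dot\B^{n/p}_{p,1}(\IR^n_+),$ yields an explicit expression $E^\flat\wt n_v$ on $\IR^n_+$ whose $t=0$ value agrees with $-\e_n-E^\flat\wt n|_{t=0}$ (the flat part being absorbed into $E\wt n$ as in~\eqref{Eq: Normal vector of Omega}). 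Pushing forward by $H$ then produces $E\wt n_v:=(E^\flat\wt n_v)\circ H^{-1}$ on $\Omega,$ which by construction agrees with $\wt n_v$ at the boundary.

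For the estimate, I would rely on two ingredients: the equivalence of norms provided by Proposition~\ref{Prop: Change of half} (which allows to transfer the $\dot\B^{n/p}_{p,1}$ bound from $\IR^n_+$ to $\Omega$ using the diffeomorphism $H$), and the stability of $\dot\B^{n/p}_{p,1}(\IR^n_+)$ under products. Every term in the expansion of $E^\flat\wt n_v$ contains at least one factor of type $\int_0^t Dv(\tau,H(\cdot))\,\d\tau,$ which is bounded in $\LL_\infty(\IR_+;\dot\B^{n/p}_{p,1}(\IR^n_+))$ by $\|Dv\|_{\LL_1(\IR_+;\dot\B^{n/p}_{p,1}(\Omega))},$ while the remaining factors consist of $\partial_{z_i}H$ and powers of $\nabla H-\Id$ which are bounded uniformly in $\dot\B^{n/p}_{p,1}(\IR^n_+)$ by~\eqref{eq:H}; the Neumann series arising from the inverse square root converges for the same reason. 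Summing all contributions yields~\eqref{eq:tildenv}.

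The main obstacle will be bookkeeping rather than analysis: organizing the multilinear expansion of $\Omega_v$ and of $|\Omega_v|^{-1}$ so that it is transparent that at least one factor $\int_0^t Dv\,\d\tau$ survives in every term (thereby giving the linear smallness in $\|Dv\|_{\LL_1(\IR_+;\dot\B^{n/p}_{p,1}(\Omega))}$) without producing unwanted $\LL_\infty$-in-time contributions from $v$ itself. Once this is formalized, the estimate follows from repeated use of the algebra property together with Proposition~\ref{Prop: Change of half}, exactly as in the flat case treated in Section~\ref{Sec: The normal vector}.
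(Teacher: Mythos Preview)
Your approach is essentially the same as the paper's: parameterize $\partial\Omega_t$ via $z^\prime\mapsto X_v(t,H(z^\prime,0))$, express $\bar n_v$ as a normalized generalized cross product of the tangent vectors $\partial_j(X_v\circ H)=(DX_v\circ H)\,\partial_j H$, expand multilinearly so that every term in $\bar n_v-\bar n$ picks up at least one factor $DX_v-\Id=\int_0^t Dv\,\d\tau$, and conclude by the algebra property of $\dot\B^{n/p}_{p,1}$ together with the smallness of $\nabla H-\Id$. The paper carries out the algebra directly on $\Omega$ (observing that $DX_v$ is globally defined there and pulling $\partial_j H$ along), whereas you work on $\IR^n_+$ and push forward via $H$ using Proposition~\ref{Prop: Change of half}; these are equivalent. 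One small wording issue: your sentence about the $t=0$ value agreeing with ``$-\e_n-E^\flat\wt n|_{t=0}$'' is garbled --- what you want is simply that $E^\flat\wt n_v|_{t=0}=0$, since $\bar n_v|_{t=0}=\bar n$.
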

 
 \begin{proof}
By definition of  $H$ and of the Lagrangian coordinates transform, the set $\Omega_t$ is given as the image $X_v \circ H(\IR^n_+)$. In this case the tangent space at $\partial \Omega_t$ is spanned by the vectors $\partial_j (X_v \circ H)$ for $j = 1 , \dots , n - 1$. Thus, the outward unit normal vector to $\partial \Omega_t$ is given by
\begin{align*}
 \bar n_v &= - \frac{\partial_1 (X_v \circ H) \times \dots \times \partial_{n - 1} (X_v \circ H)}{\lvert \partial_1 (X_v \circ H) \times \dots \times \partial_{n - 1} (X_v \circ H)\rvert} \\
 &= - \frac{([D X_v - \Id] \partial_1 H + \partial_1 H) \times \dots \times ([D X_v - \Id] \partial_{n - 1} H + \partial_{n - 1} H)}{\lvert ([D X_v - \Id] \partial_1 H + \partial_1 H) \times \dots \times ([D X_v - \Id] \partial_{n - 1} H + \partial_{n - 1} H) \rvert} \\
 &=: - \frac{\partial_1 H \times \dots \times \partial_{n - 1} H + V}{\lvert \partial_1 H \times \dots \times \partial_{n - 1} H + V \rvert}\cdot
\end{align*}
First of all, we see that each of these terms is the restriction of functions that are defined on $\Omega$. This canonically defines an extension of $\bar n_v$ to all of $\Omega$. Second, observe that each summand that constitutes $V$ is of the form
\begin{align*}
 w_1 \times \dots \times w_{n - 1}
\end{align*}
and each $w_j$ is either given by
\begin{align*}
 w_j = \partial_j H = (\partial_j H - \e_j) + \e_j \quad \text{or} \quad w_j = [D X_v - \Id] \partial_j H = [D X_v - \Id] (\partial_j H - \e_j) + [D X_v - \Id] \e_j.
\end{align*}
Moreover, there has to be at least one $j = 1 , \dots , n - 1$ such that
\begin{align*}
 w_j = [D X_v - \Id] (\partial_j H - \e_j) + [D X_v - \Id] \e_j.
\end{align*}
Due to~\eqref{eq:H} and~\eqref{eq:DXv}, $V$ has an estimate of the form
\begin{align*}
 \| V \|_{\LL_{\infty} (\IR_+ ; \dot \B^{n / p}_{p , 1} (\Omega))} \leq C \| D v \|_{\LL_1 (\IR_+ ; \dot \B^{n / p}_{p , 1} (\Omega))}.
\end{align*}
Using Taylor expansion of the function $z \mapsto 1 / (\lvert \partial_1 H \times \dots \times \partial_{n - 1} H \rvert^2 + z)^{1/2}$ around $x = 0$ shows that
\begin{align*}
 E(\bar n_v - \bar n) &= \sum_{k = 0}^{\infty} a_k \frac{(2 (\partial_1 H \times \dots \times \partial_{n - 1} H) \cdot V + \lvert V \rvert^2)^k}{\lvert \partial_1 H \times \dots \times \partial_{n - 1} H \rvert^{k + 1}} V \\
 &= \sum_{k = 1}^{\infty} a_k \frac{(2 (\partial_1 H \times \dots \times \partial_{n - 1} H) \cdot V + \lvert V \rvert^2)^k}{\lvert \partial_1 H \times \dots \times \partial_{n - 1} H \rvert^{k + 1}} \partial_1 H \times \dots \times \partial_{n - 1} H
\end{align*}
for appropriate values of $a_k$. Replacing each $\partial_j H$ by $(\partial_j H - \e_j) + \e_j$, using Taylor expansion of the function $z \mapsto 1 / (1 + z)^{1 / 2}$ together with~\eqref{eq:H} and the fact that $\dot \B^{n / p}_{p , 1}$ is an algebra finally delivers that
\begin{align*}
 \| E (\bar n_v - \bar n) \|_{\LL_{\infty} (\IR_+ ; \dot \B^{n / p}_{p , 1} (\Omega))} \leq C \| D v \|_{\LL_1 (\IR_+ ; \dot \B^{n / p}_{p , 1} (\Omega))}. &\qedhere
\end{align*}
 \end{proof}
 
Making use of Lemma~\ref{Lem: Normal vector at Omega}, of~\eqref{eq:Av} and~\eqref{Eq: Normal vector of Omega}, and using also the stability of   $\dot \B^{n/p}_{p,1}(\Omega)$ under products and, we find that 
$$\|g\|_{\LL_1(\IR_+;\dot \B^{n/p}_{p,1}(\Omega))}
 \lesssim  \|Dv\|_{\LL_1(\IR_+;\dot \B^{n / p}_{p,1}(\Omega))}^2.$$ Hence, in light of the trace theorem, 
\begin{equation}
\|g\|_{\LL_1(\IR_+;\dot \B^{(n-1)/p}_{p,1}(\partial\Omega))}\lesssim 
  \|Dv\|_{\LL_1(\IR_+;\dot \B^{n/p}_{p,1}(\Omega))}^2.\end{equation}
  In order 
   to  bound $g$ in $\dot\B^{(n-1)/(2p)}_{1,1}(\IR_+;\dot\B^{0}_{p,1}(\partial\Omega)),$
   we need to use the fact that  the numerical product of functions is continuous
   from $$\dot \W^1_1(\IR_+;\dot\B^{(n-1)/p}_{p,1}(\partial\Omega))\times
   \dot\B^{(n-1)/(2p)}_{1,1}(\IR_+;\dot\B^{0}_{p,1}(\partial\Omega))\to\dot\B^{(n-1)/(2p)}_{1,1}(\IR_+;\dot\B^{0}_{p,1}(\partial\Omega)).$$
This fact is  deduced from the corresponding product law on $\IR\times\IR^{n - 1}$, which can be found in Proposition~\ref{prod-Bes2}, and by noting that  the classes of functions under consideration may be extended by an even reflection 
 on the whole real line for the time variable.
    
 Hence, we may write 
 \begin{equation}\label{eq:wtf}
 \|(A_v^\top-\Id)\cdot\nabla v\|_{\dot\B^{\frac{n-1}{2p}}_{1,1}(\IR_+;\dot\B^{0}_{p,1}(\partial\Omega))}
 \lesssim \|A_v^\top-\Id\|_{\dot \W^1_1(\IR_+;\dot\B^{\frac{n-1}p}_{p,1}(\partial\Omega))}
 \|\nabla v\|_{\dot\B^{\frac{n-1}{2p}}_{1,1}(\IR_+;\dot\B^{0}_{p,1}(\partial\Omega))}.
 \end{equation}
 Differentiating~\eqref{eq:Av1} yields 
 \begin{equation}\label{eq:dA}\partial_t(A_v(t)-\Id) =\sum_{k\geq1} (-1)^k
 \sum_{j=0}^{k-1}\Bigl(\int_0^t Dv\,\d \tau\Bigr)^{k-1-j}\cdot Dv(t)\cdot \Bigl(\int_0^t Dv\,\d \tau\Bigr)^j.
 \end{equation}
 Hence, thanks to~\eqref{eq:smallDv} and the trace theorem, we have
 $$
 \|\partial_t(A_v(t)-\Id)\|_{\LL_1(\IR_+;\dot\B^{\frac{n-1}p}_{p,1}(\partial\Omega))}
\lesssim  \|\partial_t(A_v(t)-\Id)\|_{\LL_1(\IR_+;\dot\B^{\frac np}_{p,1}(\Omega))}
\lesssim \|Dv\|_{\LL_1(\IR_+;\dot\B^{\frac np}_{p,1}(\Omega))}.
$$ 
 Using also~\eqref{eq:Av}, then reverting to~\eqref{eq:wtf}, we end up with 
 $$ \|(A_v^\top-\Id)\cdot\nabla v\|_{\dot\B^{\frac{n-1}{2p}}_{1,1}(\IR_+;\dot\B^{0}_{p,1}(\partial\Omega))}
 \lesssim \|\nabla v\|_{\LL_1(\IR_+;\dot\B^{\frac np}_{p,1}(\Omega))} \|\nabla v\|_{\dot\B^{\frac{n-1}{2p}}_{1,1}(\IR_+;\dot\B^{0}_{p,1}(\partial\Omega))}.$$
 This readily yields a bound on $g_1$ and similarly one derives a bound on $g_3$ so that one eventually gets
 $$
 \|g_1 , g_3\|_{\dot\B^{\frac{n-1}{2p}}_{1,1}(\IR_+;\dot\B^{0}_{p,1}(\partial\Omega))}
  \lesssim  \|\nabla v\|_{\LL_1(\IR_+;\dot\B^{\frac np}_{p,1}(\Omega))} \|\nabla v\|_{\dot\B^{\frac{n-1}{2p}}_{1,1}(\IR_+;\dot\B^{0}_{p,1}(\partial\Omega))}.$$
The term $g_2$ is estimated by virtue of~\cite[Lem.~A.5]{Danchin14} as
\begin{align*}
 &\|g_2\|_{\dot\B^{\frac{n-1}{2p}}_{1,1}(\IR_+;\dot\B^{0}_{p,1}(\partial\Omega))} \\
 &\lesssim \| \bar n + \e_n \|_{\dot \B^{\frac{n-1}p}_{p , 1} (\partial \Omega)} \| \mu\bigl((\Id - A_v^\top)\cdot\nabla v+Dv\cdot(\Id - A_v)\bigr) 
 +\lambda (\Id - A_v^\top):\nabla v \|_{\dot\B^{\frac{n-1}{2p}}_{1,1}(\IR_+;\dot\B^{0}_{p,1}(\partial\Omega))} \\
 &\leq Cc \|\nabla v\|_{\LL_1(\IR_+;\dot\B^{\frac np}_{p,1}(\Omega))} \|\nabla v\|_{\dot\B^{\frac{n-1}{2p}}_{1,1}(\IR_+;\dot\B^{0}_{p,1}(\partial\Omega))}.
\end{align*}
Summarizing, inserting all the above estimates into~\eqref{eq:uEp}, we get 
$$\|u\|_{\E_p}\leq C\bigl(\|u_0\|_{\dot \B^{n/p-1}_{p,1}(\Omega)}
+\|v\|_{\E_p}^2\bigr).$$
Therefore, taking $R=2Cc,$ then  assuming that  $1 + 2 C R < 2$ yields $u\in \bar B_{\E_p}(0,R)$. 
\medbreak
\subsubsection*{Step 2. Properties of contraction}
Consider $v_1$ and $v_2$ in $\E_p.$ Our aim is
to bound $\|u_2-u_1\|_{\E_p}$ with $u_i:=\Phi(v_i).$
Using the short notation $\divergence_i:=\divergence_{v_i}$ and so on, 
we see that the function 
$\du:=u_2-u_1$ satisfies
\begin{align*}
\left\{\begin{aligned} \rho_0\partial_t\du - \divergence\IS(\du) &= \df:= \divergence_{2}\IS_{2}(v_2)-
\divergence_{1}\IS_{1}(v_1)
 &&\mbox{ in}\quad \IR_+\times\Omega, \\
\IS(\du) \cdot \bar n|_{\partial\Omega}&= \dg:=\bigl((\IS(v_2)-\IS_{2}(v_1)) \cdot \bar n\\
&\hspace{2cm}+\IS_{v_1}(v_1)\cdot\bar n_{1}-\IS_{v_2}(v_2)\cdot\bar n_{2}\bigr)|_{\partial\Omega}
 && \mbox{ on}\quad \IR_+\times\partial\Omega,\\
\du|_{t = 0} &= 0  &&\hbox{ in}\quad \Omega.\end{aligned} \right.
\end{align*}
Thanks to Theorem~\ref{th:lameomega} and to the fact  
that $\rho_0$ fulfills~\eqref{eq:smalldata}, we see that 
\begin{equation}\label{eq:du}\|\du\|_{\E_p}\leq C\Bigl(\|\df\|_{\LL_1(\IR_+;\dot \B^{n/p-1}_{p,1}(\Omega))}
+ \|\dg\|_{\LL_1(\IR_+;\dot \B^{(n-1)/p}_{p,1}(\partial\Omega))\cap \dot \B^{(n-1)/(2p)}_{1,1}(\IR_+;\dot \B^{0}_{p,1}(\partial\Omega))}\Bigr)\cdotp\end{equation}
We compute
$$\displaylines{
\df=\mu\divergence\Bigl(\bigl(\adj(DX_{2})A_2^\top-\adj(DX_1)A_1^\top\bigr)\cdot\nabla v_1
+\bigl(\bigl(\adj(DX_{2})A_2^\top-\adj(DX_1)A_1^\top\bigr)\cdot\nabla v_1\bigr)^\top
\hfill\cr\hfill+\adj(DX_2)A_2^\top\cdot\nabla\dv+\bigl(\adj(DX_2)A_2^\top\cdot\nabla\dv\bigr)^\top\Bigr)
\hfill\cr\hfill
+\lambda\divergence\Bigl(\bigl(\adj(DX_2)A_2^\top-\adj(DX_1)A_1^\top\bigr):\nabla v_1
+\adj(DX_2):\nabla\dv\Bigr)\cdotp}
$$
Since
$$
A_2(t)-A_1(t) = \sum_{k=1}^\infty(-1)^k \sum_{j=0}^{k-1} \Bigl(\int_0^t Dv_2\,\d\tau\Bigr)^j
\Bigl(\int_0^t D\dv\,\d\tau\Bigr)  \Bigl(\int_0^t Dv_1\,\d\tau\Bigr)^{k-1-j},
$$
we get thanks to the stability of $\dot\B^{n/p}_{p,1}(\Omega)$ under products and 
to~\eqref{eq:smallDv},
$$\|A_2-A_1\|_{\LL_{\infty}(\IR_+;\dot \B^{n/p}_{p,1}(\Omega))}
\lesssim
 \|D\dv\|_{\LL_1(\IR_+;\dot\B^{n/p}_{p,1}(\Omega))}.$$
The term $\adj(DX_2)-\adj(DX_1)$ may be bounded similarly (see the appendix of~\cite{Danchin14} for details), and using 
once more the  the stability of $\dot\B^{n/p}_{p,1}(\Omega)$ under products
eventually yields
$$\|\df\|_{\LL_1(\IR_+;\dot \B^{n/p-1}_{p,1}(\Omega))}\lesssim  \bigl(\|Dv_1\|_{\LL_1(\IR_+;\dot\B^{n/p}_{p,1}(\Omega))}+
 \|Dv_2\|_{\LL_1(\IR_+;\dot\B^{n/p}_{p,1}(\Omega))}\bigr)
 \|D\dv\|_{\LL_1(\IR_+;\dot\B^{n/p}_{p,1}(\Omega))}.$$
 In order to bound  the boundary term $\dg,$ we use the decomposition
 \begin{align*}
  \dg &= (\IS_{v_1} (v_1) - \IS_{v_2} (v_1)) \cdot \wt n_2 + (\IS_{v_1} (\dv) - \IS (\dv)) \cdot \e_2 + (\IS (\dv) - \IS_{v_2} (\dv)) \cdot \wt n_2 \\
  &\qquad + (\IS_{v_2} (v_2) - \IS_{v_1} (v_2)) \cdot \e_2 + \IS_{v_1} (v_1) \cdot (\wt n_1 - \wt n_2) \\
  &= \Bigl(\mu\bigl((A_1^\top-A_2^{\top})\cdot\nabla v_1 + Dv_1 \cdot(A_1-A_2)\bigr)
 +\lambda (A_1^\top-A_2^{\top}):\nabla v_1 \Bigr) \cdot \wt n_2 \\
 &\qquad + \Bigl(\mu\bigl((A_1^\top-\Id)\cdot\nabla \dv+D\dv\cdot(A_1-\Id)\bigr)
 +\lambda (A_1^\top-\Id):\nabla \dv \Bigr) \cdot \e_2 \\
 &\qquad + \Bigl(\mu\bigl((\Id-A_2^\top)\cdot\nabla \dv+D\dv\cdot(\Id - A_2)\bigr)
 +\lambda (\Id - A_2^\top):\nabla \dv \Bigr) \cdot \wt n_2 \\
 &\qquad + \Bigl(\mu\bigl((A_2^\top-A_1^{\top})\cdot\nabla v_2 + Dv_2 \cdot(A_2-A_1)\bigr)
 +\lambda (A_2^\top-A_1^{\top}):\nabla v_2 \Bigr) \cdot \e_2 \\
 &\qquad +\IS_{v_1} (v_1) \cdot (\wt n_1 - \wt n_2).
 \end{align*}
 Using  the expression of $\partial_t(A_2-A_1)$  computed from~\eqref{eq:dA}
 (and a variation of it) as well  as the product laws and trace results that 
 have been used in Step 1, we end  up with
 $$\|\dg\|_{\LL_1(\IR_+;\dot \B^{(n-1)/p}_{p,1}(\partial\Omega))\cap
 \dot\B^{(n-1)/(2p)}_{1,1}(\IR_+;\dot\B^{0}_{p,1}(\partial\Omega))}
  \lesssim  (\|v_1\|_{\E_p}+\|v_2\|_{\E_p})
 \|\dv\|_{\E_p}.$$
 Plugging all this in~\eqref{eq:du}, we find  for some constant $C$
 (that may be chosen larger than that of the first step, with no loss of generality):
 $$\|\du\|_{\E_p}\leq C (\|v_1\|_{\E_p}+\|v_2\|_{\E_p})
 \|\dv\|_{\E_p}.$$
  Hence, if $R$ is such that $2CR<1,$ then the map $\Phi$ is contractive, 
 and we deduce the existence of a unique fixed point
 $v$ of $\Phi$ in the ball $\bar B_{\E_p}(0,R).$ 
 \medbreak
\subsubsection*{Step 3. Uniqueness}
Consider two solutions $u_1$ and $u_2$  in $\E_p$ to~\eqref{eq:presslag} 
corresponding to the same data $(\rho_0,v_0).$ 
Then, we have $u_1=\Phi(u_1)$ and $u_2=\Phi(u_2),$ and Step~2 may be carried out
on any interval $[0,T]$ such that 
$$\int_0^T\|\nabla u_1\|_{\dot\B^{n/p}_{p,1}(\Omega)}\,\d t
+\int_0^T\|\nabla u_2\|_{\dot\B^{n/p}_{p,1}(\Omega)}\,\d t\leq c\ll1.$$
Hence, for any $t\in[0,T],$ we have (with obvious notation):
 $$\|\du\|_{\E_p(t)}\leq C (\|u_1\|_{\E_p(t)}+\|u_2\|_{\E_p(t)})
 \|\du\|_{\E_p(t)} \leq  C (2\|u_1\|_{\E_p(t)}+\|\du\|_{\E_p(t)})
 \|\du\|_{\E_p(t)}.$$
 Since the function $t\mapsto \|\du\|_{\E_p(t)}$ is continuous and
 vanishes at $0$ and because one can assume with no loss of generality
 that $u_1$ is the small solution constructed in steps 1-2, 
 we get uniqueness on some nontrivial time interval $[0,T_0].$
 Then, repeating the argument, one gets  uniqueness for all time. 
 The details are left to the reader.

\medbreak
\subsubsection*{Step 4. Back to Eulerian coordinates}

After having constructed a  global and small solution $u\in\E_p$  to~\eqref{eq:presslag} (with corresponding density $q=\rho_0 J_u^{-1}$), 
we now define the `Eulerian'  density and velocity by 
$$\rho(t,x):= q(t,X^{-1}(t,x))\andf v(t,x):= u(t,X^{-1}(t,x))$$
with $X^{-1}(t,\cdot)$ being the inverse diffeomorphism of 
$X(t,\cdot)$ defined by~\eqref{eq:X2}. 

Our construction ensures that $X(t,\cdot)$ is defined  and $\cC^1$ for all $t\geq0.$ 
The reason why $X(t,\cdot)$  is invertible is guaranteed by the fact that 
we also know that $\|\nabla X(t)-\Id\|_{\LL^\infty}$ is small and that 
$X(t,y)$ tends to infinity as $y$ goes to infinity.  Indeed, observe that
$$|X(t,y)-y |\leq t^{1/2} \|u(t,\cdot)\|_{\LL_2(0,t;\LL_\infty(\Omega))}\leq 
Ct^{1/2} \|u\|_{\LL_2(\IR_+;\dot\B^{n/p}_{p,1}(\Omega))}.$$ 
Hence, the relations between operators $\nabla_v$ and $\nabla_y$ are 
justified, and $(\rho,v)$ is thus a solution of~\eqref{eq:pressless} with 
$\Omega_t:= X(t,\Omega)$ and $\partial\Omega_t= X(t,\partial\Omega).$ 
The product law proven in~\cite[Lem.~A.5]{Danchin14} together with Proposition~\ref{Prop: Change of half} and Remark~\ref{Rem: Change of half} allow to transfer the estimates so that $v$ satisfies~\eqref{Eq: Solution estimate pressureless}. See also the explanations in the appendices of~\cite{DM-CPAM} and~\cite{Danchin14}. 
\qed

\chapter{Appendix}

For the reader's convenience, we prove here some continuity results for  product laws  in Besov spaces,  
an interpolation result, 
a solution formula for the Lam\'e system and a higher order resolvent  estimate for the Neumann Laplacian.  

\section{Product estimates}

We here prove two product estimates that have been used 
in our study of free boundary problems.
\begin{proposition}\label{prod-Bes} Let $(s_1,s_2)\in\IR^2$ and $(p,q,r)\in[1,\infty]^3$ satisfy
$$s_2\leq s_1\leq \min\biggl(\frac np,\frac nq\biggr),\quad
s_1+s_2> 0,\quad\frac1p+\frac1q\leq1\andf
\frac1r=\frac1p+\frac1q-\frac{s_1}n\cdotp$$
Then
\begin{equation}\label{eq:prod1} \dot \B^{s_1}_{p , 1}(\IR^n_+) \cdot \dot \B^{s_2}_{q , 1}(\IR^n_+) \subset \dot \B^{s_2}_{r , 1}(\IR^n_+).\end{equation}
If in the limit case $s_1+s_2=0$, 
then
\begin{equation}\label{eq:prod2} \dot \B^{s_1}_{p , 1}(\IR^n_+) \cdot \dot \B^{s_2}_{q , 1}(\IR^n_+) \subset \dot \B^{s_2}_{r , \infty}(\IR^n_+).\end{equation}
\end{proposition}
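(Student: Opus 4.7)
The strategy is to reduce the half-space estimate to the corresponding product law on the whole space $\IR^n$ via extension operators from Chapter~\ref{Sec: The functional setting and basic interpolation results}, and then to exploit Bony's paraproduct decomposition on $\IR^n$.

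First, I would extend $f \in \dot\B^{s_1}_{p,1}(\IR^n_+)$ and $g \in \dot\B^{s_2}_{q,1}(\IR^n_+)$ to functions $F \in \dot\B^{s_1}_{p,1}(\IR^n)$ and $G \in \dot\B^{s_2}_{q,1}(\IR^n)$ with comparable norms. The hypothesis $s_1+s_2\geq 0$ together with $s_2\leq s_1$ forces $s_1>0$, which together with $s_1\leq n/p$ places $s_1$ in the admissible range of Proposition~\ref{Prop: Proper boundedness extension operators}; for $g$, one uses extension by zero when $s_2\in(-1+1/q,1/q)$ and a higher-order reflection otherwise. Since the $\dot\B^s_{p,q}(\IR^n_+)$ norm is defined by restriction, one has $\|fg\|_{\dot\B^{s_2}_{r,1}(\IR^n_+)}\leq \|FG\|_{\dot\B^{s_2}_{r,1}(\IR^n)}$, so the task reduces to proving the product law on $\IR^n$.

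On $\IR^n$, I would apply Bony's decomposition $FG=T_F G + T_G F + R(F,G)$, with paraproducts $T_F G:=\sum_j \dot S_{j-1}F\,\dot\Delta_j G$, $T_G F:=\sum_j \dot S_{j-1}G\,\dot\Delta_j F$, and remainder $R(F,G):=\sum_j \dot\Delta_j F\,\widetilde{\dot\Delta}_j G$ with $\widetilde{\dot\Delta}_j := \dot\Delta_{j-1}+\dot\Delta_j+\dot\Delta_{j+1}$. Each dyadic block in a paraproduct is spectrally localized in an annulus of size $2^j$, so $\ell^1$-summation of $(2^{j s_2}\|\cdot\|_{\LL_r})_j$ directly recognizes the $\dot\B^{s_2}_{r,1}$ norm. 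Combining H\"older's inequality with Bernstein's inequality, the condition $s_1\leq n/p$ yields the embedding $\dot\B^{s_1}_{p,1}(\IR^n)\hookrightarrow \LL_{p_*}(\IR^n)$ with $1/p_*=1/p-s_1/n$ (and likewise for $q$); the prescribed relation $1/r=1/p+1/q-s_1/n$ is exactly what makes the exponent count consistent with the H\"older pairing of a low-frequency factor in $\LL_{p_*}$ (or $\LL_\infty$ in the endpoint case) and a high-frequency factor in $\LL_q$, respectively $\LL_p$. Summation over $j$ then closes the paraproduct estimates.

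The delicate step, and the source of the dichotomy between $\ell^1$ and $\ell^\infty$ in the target space, is the remainder. Each block $\dot\Delta_j F\cdot\widetilde{\dot\Delta}_j G$ is supported only in a ball of radius $\sim 2^j$, not an annulus, so one must regroup and use a Bernstein step to enlarge the integrability up to $\LL_r$ before summing. The resulting series is majorized schematically by $\sum_j 2^{j(s_1+s_2)}\bigl(2^{-js_1}\|\dot\Delta_j F\|_{\LL_p}\bigr)\bigl(2^{js_2}\|\widetilde{\dot\Delta}_j G\|_{\LL_q}\bigr)$, which by Young's inequality for sequences lies in $\ell^1$ precisely when $s_1+s_2>0$, and only in $\ell^\infty$ at the critical threshold $s_1+s_2=0$; this accounts for the weakening to $\dot\B^{s_2}_{r,\infty}(\IR^n_+)$ in~\eqref{eq:prod2}. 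A final restriction to $\IR^n_+$ completes the argument; the main technical care lies in the interplay between Bernstein's inequality and the choice of auxiliary exponents, particularly at the endpoints $s_1=n/p$ or $s_1=n/q$ where one has to pass through $\LL_\infty$ rather than $\LL_{p_*}$ or $\LL_{q_*}$.
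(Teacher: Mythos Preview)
Your approach is essentially the paper's: reduce to $\IR^n$ and apply Bony's decomposition, handling $T_uv$, $T_vu$ via the Besov--Sobolev embeddings (equivalently Bernstein) and the remainder via the convolution inequality that distinguishes $s_1+s_2>0$ from $s_1+s_2=0$. The one place you overcomplicate things is the reduction step: since $\dot\B^s_{p,q}(\IR^n_+)$ is \emph{defined} by restriction with the quotient norm (Definition~\ref{Def: Spaces on the half-space}), any $f,g$ on $\IR^n_+$ already come with whole-space extensions $F,G$ of comparable norm, and $(FG)|_{\IR^n_+}=fg$ gives the half-space estimate for free---no need to invoke Proposition~\ref{Prop: Proper boundedness extension operators}, which as you note does not obviously cover $s_2\leq -1+1/q$.
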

\begin{proof} Owing to our definition of Besov spaces by restriction, it suffices to prove the statement in $\IR^n.$ 
In this setting, the result is a consequence 
of Bony's decomposition and of the continuity results for the remainder and paraproduct operators. 
More precisely, denote by $R$ and $T,$ respectively, these operators, and  consider
 $u\in \dot \B^{s_1}_{p,1}(\IR^n)$ and $v\in\dot \B^{s_2}_{q,1}(\IR^n).$  Then, Bony's decomposition (first introduced in~\cite{bony}) 
 of the product $uv$ reads  
 \begin{equation}\label{eq:Bony}uv=T_uv+T_vu+R(u,v).\end{equation}
In order to bound the first term, one can use the 
 embedding  $\dot \B^{s_1}_{p,1}(\IR^n)\hookrightarrow \dot \B^{0}_{p^*,1}(\IR^n)$
with $1/{p^*}=1/p-{s_1}/n,$ and the fact that
 $T:\dot \B^{0}_{p^*,1}(\IR^n)\times \dot \B^{s_2}_{q,1}(\IR^n)\to \dot \B^{s_2}_{r,1}(\IR^n)$. 
 Hence, $T_uv$  is in  $\dot \B^{s_2}_{r,1}(\IR^n)$.

For proving that $T_vu$  is in  $\dot \B^{s_2}_{r,1}(\IR^n),$ we combine the embedding
 $ \dot \B^{s_2}_{q,1}(\IR^n)\hookrightarrow \dot \B^{s_2-s_1}_{q^*,1}(\IR^n)$ with 
 $1/q^*=1/q-s_1/n$ and the fact that, since $s_2-s_1\leq0$ and $1/q^*+1/p=1/r,$ 
  we have  $T:  \dot \B^{s_2-s_1}_{q^*,1}(\IR^n)\times  \dot \B^{s_1}_{p,1}(\IR^n)\to  \dot \B^{s_2}_{r,1}(\IR^n).$
 
 Let us finally prove that $R:  \dot \B^{s_1}_{p , 1}(\IR^n)\times \dot \B^{s_2}_{q , 1}(\IR^n) \to  \dot \B^{s_2}_{r , 1}(\IR^n).$
  We start with the observation that owing to the spectral 
 localization properties of $\ddj,$ we have for all $k\in\IZ,$
 $$ \ddk R(u,v)= \sum_{j\geq k-3} \ddk(\ddj u\,\wt\Delta_jv)\with
 \wt\Delta_j :=\dot\Delta_{j-1}+\ddj + \dot\Delta_{j+1}.$$
 Let us define $m$ by $1/m=1/p+1/q.$ 
 According  to Bernstein's inequality 
 we see  that
 $$ \| \ddk(\ddj u\,\wt\Delta_jv)\|_{\LL_r(\IR^n)}\lesssim 2^{nk(\frac1m-\frac1r)} \| \ddk(\ddj u\,\wt\Delta_j v)\|_{\LL_m(\IR^n)}.$$
 Therefore, using H\"older inequality and 
 the definition of $r$ and of $m,$ we get
 $$\sum_{j \geq k-3} 2^{ks_2}\|\ddk(\ddj u\,\wt\Delta_j  v)\|_{\LL_r(\IR^n)}\lesssim \sum_{j\geq k-3}
 2^{(k-j)(s_1+s_2)}\, 2^{js_1}\|\ddj u\|_{\LL_p(\IR^n)}\, 
 2^{js_2}\|\wt\Delta_ju\|_{\LL_q(\IR^n)}.$$ 
 Now, if  $s_1+s_2>0,$ then a standard convolution inequality for series allows to complete 
 the proof.  The borderline case $s_1+s_2=0$ 
 just follows from H\"older inequality for series, 
 but only allows us to control the $\sup$ norm of the left-hand side of the above inequality. 
 Consequently, one ends up with  a Besov space with last index $\infty$ in that case. 
 \end{proof} 

\begin{proposition}\label{prod-Bes2}
Let $m\in\IN,$ $1< p<\infty$ and $0<s<1.$ 
Then, we have the following continuous product law:
$$\dot\W^1_1(\IR;\dot\B^{\frac mp}_{p,1}(\IR^m))\cdotp
\dot\B^s_{1,1}(\IR;\dot\B^0_{p,1}(\IR^m))\subset
\dot\B^s_{1,1}(\IR;\dot\B^0_{p,1}(\IR^m)).$$
\end{proposition}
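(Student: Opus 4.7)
My plan is to combine a spatial product law in $\IR^m$ (obtained via Bony decomposition in the space variable) with a second Bony decomposition in the time variable. Writing $X := \dot\B^{m/p}_{p,1}(\IR^m)$ and $Y := \dot\B^0_{p,1}(\IR^m)$ for brevity, the first step will be to establish the pointwise-in-time estimate
$$\|u(t)v(t)\|_Y \leq C\,\|u(t)\|_X\,\|v(t)\|_Y,$$
valid for all $1<p<\infty$. For $p\ge 2$ this is a direct consequence of Proposition~\ref{prod-Bes} with $s_1=m/p$, $s_2=0$, $q=p$; for $1<p<2$, one carries out Bony's decomposition in space and handles the remainder by a two-sided Bernstein argument on the product (using $L^p$ and $L^{p'}$) to get the needed decay in the high-frequency index.

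The second step is a time Bony decomposition $uv = T^t_u v + T^t_v u + R^t(u,v)$ built from the Littlewood--Paley operators $\dot\Delta^t_k$ and $\dot S^t_k$ in the time variable, and I will estimate each piece in $\dot\B^s_{1,1}(\IR;Y)$ separately. For $T^t_u v$, I will use that $\|\dot S^t_{k-1}u\|_{L^\infty_tX}\le C\|u\|_{L^\infty_tX}$ (with the latter finite because $\dot\W^1_1(\IR;X)$ embeds into $L^\infty_tX$ after suitable normalization), combine with the pointwise spatial product law, and exploit the spectral localization of $\dot\Delta^t_k v$ to conclude $\|T^t_u v\|_{\dot\B^s_{1,1}(\IR;Y)}\lesssim \|u\|_{L^\infty_tX}\,\|v\|_{\dot\B^s_{1,1}(\IR;Y)}$. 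For $T^t_v u$, I will use the reverse time Bernstein inequality $\|\dot\Delta^t_k u\|_{L^1_tX}\le C\,2^{-k}\|\dot\Delta^t_k u'\|_{L^1_tX}\le C\,2^{-k}\|u'\|_{L^1_tX}$ together with the forward Bernstein bound $\|\dot S^t_{k-1}v\|_{L^\infty_tY}\le\sum_{j<k-1}C\,2^{j}\|\dot\Delta^t_j v\|_{L^1_tY}$; summing $2^{ks}$ against the resulting expression and swapping the order produces a geometric series whose convergence relies precisely on $s<1$. For $R^t(u,v)=\sum_k \dot\Delta^t_k u\cdot\widetilde\Delta^t_k v$, the time-frequency support of each summand lies in a ball of radius $\sim 2^k$, so only indices $k\ge j-C$ contribute to $\dot\Delta^t_j R^t$; combining the uniform bound $\|\dot\Delta^t_k u\|_{L^\infty_tX}\lesssim\|u'\|_{L^1_tX}$ (obtained via one reverse and one direct Bernstein step in time) with the spatial product law applied to $\|\dot\Delta^t_k u(t)\cdot\widetilde\Delta^t_k v(t)\|_Y$ and then summing the resulting convolution against $2^{js}$ gives a finite bound, convergence being ensured exactly by $s>0$.

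The main obstacle will be the remainder $R^t(u,v)$: since we have only $L^1$-in-time control of both $u'$ and $v$ (with no better integrability from $\dot\B^s_{1,1}$), one cannot estimate $\|\dot\Delta^t_k u\cdot\widetilde\Delta^t_k v\|_{L^1_tY}$ by a simple Hölder split, and it is crucial that $\|\dot\Delta^t_k u\|_{L^\infty_tX}$ be bounded \emph{uniformly in $k$} rather than with an exponentially growing prefactor. This uniform bound is what allows the $k$-summation over $k\ge j-C$, weighted by $2^{js}$, to converge when $s>0$. A secondary technical point is the pointwise spatial product law in the low regime $1<p<2$, which is not directly covered by the statement of Proposition~\ref{prod-Bes}; I will handle it by a separate Bony-in-space argument on $\IR^m$ that plays the $L^p$ and $L^{p'}$ endpoints of Bernstein against each other on the remainder term to recover the missing decay. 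Once these pieces are in place, adding the three estimates yields the desired product law
$$\|uv\|_{\dot\B^s_{1,1}(\IR;Y)} \le C\bigl(\|u\|_{L^\infty_tX}+\|u'\|_{L^1_tX}\bigr)\|v\|_{\dot\B^s_{1,1}(\IR;Y)},$$
which is exactly the claim of the proposition.
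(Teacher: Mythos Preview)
Your proposal is correct and follows essentially the same route as the paper: a Bony decomposition in the time variable combined with the spatial product law $\dot\B^{m/p}_{p,1}(\IR^m)\cdot\dot\B^0_{p,1}(\IR^m)\subset\dot\B^0_{p,1}(\IR^m)$, with the three pieces $T^t_uv$, $T^t_vu$, $R^t(u,v)$ handled exactly as you describe and the conditions $s<1$ and $s>0$ entering at the same places. The paper dispatches your ``secondary technical point'' (the spatial product law for $1<p<2$) by citing \cite[Lem.~A.5]{Danchin14} rather than redoing the Bony-in-space argument, and for the remainder it bounds $\|\dot\Delta^t_{j'}u\|_{\LL_\infty(\IR;X)}$ simply by $\|u\|_{\LL_\infty(\IR;X)}\le\|\partial_tu\|_{\LL_1(\IR;X)}$ (via the fundamental theorem of calculus) rather than via your reverse-then-direct Bernstein chain, but these are cosmetic differences.
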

\begin{proof} It is only a matter of using Bony's decomposition
with respect to the time variable and
the fact that, as a consequence of~\cite[Lem.~A.5]{Danchin14}, we have:
\begin{equation}\label{eq:prod3}
 \dot \B^{\frac mp}_{p , 1}(\IR^m) \cdot \dot \B^{0}_{p , 1}(\IR^m) \subset \dot \B^{0}_{p , 1}(\IR^m).
\end{equation}
Now, we are going to bound the three terms
of~\eqref{eq:Bony} separately where, here, $R$ and $T$
designate the remainder and paraproduct operators 
\emph{with respect to the time variable only}. 
In what follows, we shall use the notation 
$\ddk^t$ and $\dot S_k^t$ of the previous chapter 
to denote the Littlewood-Paley truncation operators
pertaining to the time variable. 

Let us first bound $T_uv.$ We have, by definition of 
$\dot\B^s_{1,1}(\IR;\dot\B^0_{p,1}(\IR^m)),$
$$
\|T_uv\|_{\dot\B^s_{1,1}(\IR;\dot\B^0_{p,1}(\IR^m))}
=\sum_j2^{js} \|\ddj^t(T_uv)\|_{\LL_1(\IR;\dot\B^0_{p,1}(\IR^m))}
\simeq \sum_j 2^{js}
\|\dot S_{j-1}^t u\: \ddj^t v\|_{\LL_1(\IR;\dot\B^0_{p,1}(\IR^m))}.
$$
Hence, using~\eqref{eq:prod3} and H\"older's inequality, 
$$\begin{aligned}
\|T_uv\|_{\dot\B^s_{1,1}(\IR;\dot\B^0_{p,1}(\IR^m))}&\lesssim
\sum_j 2^{js}
\|\dot S_{j-1}^t u\|_{\LL_\infty(\IR;\dot\B^{\frac mp}_{p,1}(\IR^m))}
\|\ddj^t v\|_{\LL_1(\IR;\dot \B^0_{p,1}(\IR^m))}\\
&\lesssim
\sum_j
\|u\|_{\LL_\infty(\IR;\dot\B^{\frac mp}_{p,1}(\IR^m))}\:
2^{js}\|\ddj^t v\|_{\LL_1(\IR;\dot\B^0_{p,1}(\IR^m))}\\
&=\|u\|_{\LL_\infty(\IR;\dot\B^{\frac mp}_{p,1}(\IR^m))}
\|v\|_{\B_{1,1}^s(\IR;\dot\B^0_{p,1}(\IR^m))}.\end{aligned}
$$
For the symmetric term, $T_vu,$ we proceed as follows
(using Bernstein's inequality in the second line): 
$$\begin{aligned}
\|T_vu\|_{\dot\B^s_{1,1}(\IR;\dot\B^0_{p,1}(\IR^m))}
&\simeq \sum_j 2^{js}
\|\dot S_{j-1}^t v\: \ddj^t u\|_{\LL_1(\IR;\dot\B^0_{p,1}(\IR^m))}\\
&\lesssim\sum_j \bigl(2^{j(s-1)}\|\dot S_{j-1}^tv\|_{\LL_\infty(\IR;\dot\B^{0}_{p,1}(\IR^m))}\bigr)
\bigl(\|\ddj^t \partial_tu\|_{\LL_1(\IR;\dot\B^{\frac mp}_{p,1}(\IR^m))}\bigr)\\
&\lesssim \biggl(\sum_j2^{j(s-1)}\|\dot S_{j-1}^tv\|_{\LL_\infty(\IR;\dot\B^{0}_{p,1}(\IR^m))}\biggr)
\biggl(\sup_j \|\ddj^t\partial_t u \|_{\LL_1(\IR;\dot\B^{\frac mp}_{p,1}(\IR^m))}\biggr)\cdotp
\end{aligned}
$$
Note that since $s-1<0,$ Young's  inequality for series,
the fact that $S_{j-1}^tv=\sum_{j'\leq j-2}\dot\Delta^t_{j'}v$
and Bernstein's inequality with respect to $t$
allow to get
$$\begin{aligned}
\sum_j2^{j(s-1)}\|\dot S_{j-1}^tv\|_{\LL_\infty(\IR;\dot\B^{0}_{p,1}(\IR^m))}
&\leq \sum_j\sum_{j'\leq j-2} 
2^{(j-j')(s-1)} 2^{j'(s-1)}
\|\dot\Delta_{j'}^tv\|_{\LL_\infty(\IR;\dot\B^{0}_{p,1}(\IR^m))}\\
&\lesssim \sum_{j''} 
2^{j''(s-1)}
\|\dot\Delta_{j''}^tv\|_{\LL_\infty(\IR;\dot\B^{0}_{p,1}(\IR^m))}\\
&\lesssim \sum_{j''} 
2^{j''s}
\|\dot\Delta_{j''}^tv\|_{\LL_1(\IR;\dot\B^{0}_{p,1}(\IR^m))}.
\end{aligned}$$
Hence, 
$$\|T_vu\|_{\dot\B^s_{1,1}(\IR;\dot\B^0_{p,1}(\IR^m))}\lesssim
\|v\|_{\dot \B_{1,1}^s(\IR;\dot\B^0_{p,1}(\IR^m))}
\|\partial_t u \|_{\LL_1(\IR;\dot\B^{\frac mp}_{p,1}(\IR^m))}.
$$
Finally, for the remainder, we may write 
$$\|R(u,v)\|_{\dot\B^s_{1,1}(\IR;\dot\B^0_{p,1}(\IR^m))}
=\sum_j2^{js}\biggl\|
\sum_{j'\geq j-3} \ddj^t(\dot\Delta_{j'}^tu\:\wt\Delta_{j'}^tv)
\biggr\|_{\LL_1(\IR;\dot\B^0_{p,1}(\IR^m))}. 
$$
Hence, using~\eqref{eq:prod3} and H\"older's inequality, 
$$\|R(u,v)\|_{\dot\B^s_{1,1}(\IR;\dot\B^0_{p,1}(\IR^m))}
\lesssim \sum_j\sum_{j'\geq j-3} 2^{(j-j')s}
\|\dot\Delta_{j'}^tu\|_{\LL_\infty(\IR;\dot\B^{\frac mp}_{p,1}(\IR^m))}
\: 2^{j's}\|\wt\Delta_{j'}^tv\|_{\LL_1(\IR;\dot\B^0_{p,1}(\IR^m))}.$$
Since $s>0,$ one can use Young's inequality for series to conclude 
that 
$$\|R(u,v)\|_{\dot\B^s_{1,1}(\IR;\dot\B^0_{p,1}(\IR^m))}
\lesssim\|u\|_{\LL_\infty(\IR;\dot\B^{\frac mp}_{p,1}(\IR^m))}
\|v\|_{\dot \B_{1,1}^s(\IR;\dot\B^0_{p,1}(\IR^m))}.$$
Then, observing that, in light of the fundamental theorem of calculus, we have
$$\|u\|_{\LL_\infty(\IR;\dot\B^{\frac mp}_{p,1}(\IR^m))}\leq \|\partial_t u \|_{\LL_1(\IR;\dot\B^{\frac mp}_{p,1}(\IR^m))},$$
one can complete  the proof. 
\end{proof}


\section{An interpolation result}

The following statement 
combined with a more classical trace result
has been used in Chapter~\ref{Sec: A free boundary problem for pressureless gases},
to ensure that the boundary values of the velocity 
have the required regularity.
\begin{proposition}\label{p:interpopopo}
Let $\alpha\in(0,1).$ Then, there exists a constant
$C_\alpha$ such that the following a priori
estimate holds true for all $1\leq p\leq\infty$ 
and $s\in\IR$:
$$\|z\|_{\dot\B^\alpha_{1,1}(\IR;\dot\B^{s+2-2\alpha}_{p,1}(\IR^n))}
\leq C_\alpha \|\partial_tz\|_{\LL_1(\IR;\dot \B^s_{p,1}(\IR^n))}^\alpha \|\nabla_xz\|_{\LL_1(\IR;\dot \B^{s+1}_{p,1}(\IR^n))}^{1-\alpha}.$$ 
\end{proposition}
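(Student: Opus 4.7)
The plan is to reduce the estimate to dyadic Littlewood--Paley blocks in both time and space, and to carefully sum over the time index \emph{before} invoking H\"older in the spatial index. First, by definition,
$$\|z\|_{\dot\B^\alpha_{1,1}(\IR;\dot\B^{s+2-2\alpha}_{p,1}(\IR^n))} = \sum_{\ell,k\in\IZ} 2^{\alpha\ell+(s+2-2\alpha)k}\,\|\dot\Delta_\ell^t\dot\Delta_k z\|_{\LL_1(\IR;\LL_p(\IR^n))},$$
and I would produce two competing bounds for each dyadic block: an inverse Bernstein inequality in time yields $\|\dot\Delta_\ell^t g\|_{\LL_1(\IR;Y)}\lesssim 2^{-\ell}\|\dot\Delta_\ell^t \partial_t g\|_{\LL_1(\IR;Y)}$ for any Banach space $Y$, and the analogous one in space yields $\|\dot\Delta_k f\|_{\LL_p(\IR^n)}\lesssim 2^{-k}\|\dot\Delta_k\nabla_x f\|_{\LL_p(\IR^n)}$. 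Combining these with the fact that $\dot\Delta_\ell^t$ is bounded on $\LL_1(\IR;Y)$ uniformly in $\ell$ (by Young's convolution inequality, with operator norm $\|\cF^{-1}\psi\|_{\LL_1(\IR)}$), I obtain
$$\|\dot\Delta_\ell^t\dot\Delta_k z\|_{\LL_1(\IR;\LL_p)}\leq C\,\min\bigl(2^{-\ell}\tilde a_k,\,2^{-k}\tilde b_k\bigr),$$
with $\tilde a_k := \|\dot\Delta_k\partial_t z\|_{\LL_1(\IR;\LL_p)}$ and $\tilde b_k := \|\dot\Delta_k\nabla_x z\|_{\LL_1(\IR;\LL_p)}$. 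The crucial feature is that both majorants are now \emph{independent of} $\ell$.

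Next, I would perform the $\ell$-sum explicitly for each fixed $k$, by splitting at the threshold $\ell^*\simeq k+\log_2(\tilde a_k/\tilde b_k)$ where the two bounds coincide. For $\ell\geq\ell^*$ I use $2^{-\ell}\tilde a_k$, which produces a geometric series in $\ell$ with ratio $2^{\alpha-1}<1$; for $\ell<\ell^*$ I use $2^{-k}\tilde b_k$, producing a geometric series with ratio $2^{-\alpha}<1$. Both pieces contribute, up to a constant $C_\alpha$ depending only on $\alpha$, a factor of order $2^{(\alpha-1)k}\tilde a_k^\alpha\tilde b_k^{1-\alpha}$, so that
$$\sum_{\ell\in\IZ}2^{\alpha\ell}\,\|\dot\Delta_\ell^t\dot\Delta_k z\|_{\LL_1(\IR;\LL_p)}\leq C_\alpha\,2^{(\alpha-1)k}\,\tilde a_k^{\alpha}\,\tilde b_k^{1-\alpha}.$$
Substituting back and regrouping the powers of $2^k$ yields
$$\|z\|_{\dot\B^\alpha_{1,1}(\IR;\dot\B^{s+2-2\alpha}_{p,1})}\leq C_\alpha\sum_{k\in\IZ}\bigl(2^{sk}\tilde a_k\bigr)^\alpha\bigl(2^{(s+1)k}\tilde b_k\bigr)^{1-\alpha},$$
and the proof concludes by H\"older's inequality for series with exponents $1/\alpha$ and $1/(1-\alpha)$, which produces exactly $\|\partial_t z\|_{\LL_1(\IR;\dot\B^s_{p,1})}^\alpha\|\nabla_x z\|_{\LL_1(\IR;\dot\B^{s+1}_{p,1})}^{1-\alpha}$ on the right-hand side. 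The degenerate situations $\tilde a_k=0$ or $\tilde b_k=0$ are trivially handled by continuity.

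The main obstacle is to resist applying H\"older jointly in $(\ell,k)$ from the outset: such an approach would lead to quantities of the form $\sum_{\ell,k}2^{sk}\tilde a_k$, that is $\dot\B^0_{1,1}(\IR;\dot\B^s_{p,1})$-in-time norms on $\partial_t z$, which are strictly stronger than the $\LL_1(\IR;\dot\B^s_{p,1})$-norm at our disposal and are generally infinite. The remedy is precisely the observation made in the first step that the uniform $\LL_1(\IR)$-boundedness of $\dot\Delta_\ell^t$ delivers $\ell$-independent majorants $\tilde a_k,\tilde b_k$; this allows the $\ell$-sum to be reduced to two explicit geometric series and to be carried out \emph{before} any H\"older-type inequality is invoked on the remaining spatial sum.
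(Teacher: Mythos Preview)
Your proof is correct and follows essentially the same strategy as the paper: for each spatial block $\dot\Delta_k$ you bound the $\ell$-sum by splitting at an optimal threshold, using the reverse Bernstein inequality in time for high $\ell$ and uniform $\LL_1$-boundedness of $\dot\Delta_\ell^t$ for low $\ell$, then finish with H\"older for series in $k$. The only cosmetic difference is that you apply the spatial reverse Bernstein inequality \emph{before} the $\ell$-optimization (producing the bound $2^{-k}\tilde b_k$), whereas the paper uses $\|\dot\Delta_k^x z\|_{\LL_1(\IR;\LL_p)}$ directly for low $\ell$ and only converts it to $2^{-k}\|\dot\Delta_k^x\nabla_x z\|_{\LL_1(\IR;\LL_p)}$ at the very end; this reordering is immaterial.
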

\begin{proof}
We have to bound the following quantity:
$$I:= \|z\|_{\dot\B^\alpha_{1,1}(\IR;\dot\B^{s+2-2\alpha}_{p,1}(\IR^n))}
=\sum_{k\in\IZ} 2^{k(s+2-2\alpha)}I_k\with
I_k:=\sum_{\ell\in\IZ}2^{\alpha\ell}\|\dot\Delta_\ell^t\dot\Delta_k^xz\|_{\LL_1(\IR;\LL_p(\IR^n))}.$$
Let us fix  $k\in\IZ.$ In order 
to bound $I_k,$ we use the
low frequency/high frequency decomposition
$I_k=I_{k,N} + I_k^N$ with 
$$I_{k,N}:= \sum_{\ell\leq N}2^{\alpha\ell}\|\dot\Delta_\ell^t\dot\Delta_k^xz\|_{\LL_1(\IR;\LL_p(\IR^n))}\andf 
 I_k^N:=\sum_{\ell>N} 2^{\alpha\ell}
\|\dot\Delta_\ell^t\dot\Delta_k^xz\|_{\LL_1(\IR;\LL_p(\IR^n))}.$$
Since $\dot\Delta_\ell^t$ maps $\LL_1(\IR;\LL_p(\IR^n))$
to itself with a norm independent of $j$ (and of $p$),
we have
$$I_{k,N}\leq \biggl(\sum_{\ell\leq N} 2^{\alpha\ell}\biggr)
\sup_{\ell\leq N} \|\dot\Delta_\ell^t\dot\Delta_k^xz\|_{\LL_1(\IR;\LL_p(\IR^n))}\leq C_\alpha 2^{\alpha N} 
\|\dot\Delta_k^xz\|_{\LL_1(\IR;\LL_p(\IR^n))}.
$$
Likewise, using the (reverse) Bernstein inequality
with respect to the time variable for getting the last inequality below, we discover that
$$I_k^N\leq \biggl(\sum_{\ell > N} 2^{(\alpha-1)\ell}\biggr)
\sup_{\ell>N} 2^\ell\|\dot\Delta_\ell^t\dot\Delta_k^xz\|_{\LL_1(\IR;\LL_p(\IR^n))}\leq C_\alpha 2^{(\alpha-1) N} 
\|\dot\Delta_k^x\partial_tz\|_{\LL_1(\IR;\LL_p(\IR^n))}.
$$
Hence, choosing the `best' $N\in\IZ,$
that is the integer part of 
$$\log_2\bigl(\|\dot\Delta_k^x\partial_tz\|_{\LL_1(\IR;\LL_p(\IR^n))}/\|\dot\Delta_k^xz\|_{\LL_1(\IR;\LL_p(\IR^n))}\bigr),$$ we conclude that 
$$I_k\leq C_\alpha \|\dot\Delta_k^x\partial_tz\|_{\LL_1(\IR;\LL_p(\IR^n))}^\alpha
\|\dot\Delta_k^x z\|_{\LL_1(\IR;\LL_p(\IR^n))}^{1-\alpha}.$$
Now, plugging this inequality in the definition of $I$ yields
$$
I\leq C_\alpha\sum_{k\in\IZ}
\bigl(2^{ks}\|\dot\Delta_k^x\partial_tz\|_{\LL_1(\IR;\LL_p(\IR^n))}\bigr)^\alpha
\bigl(2^{k(s+2)}\|\dot\Delta_k^x z\|_{\LL_1(\IR;\LL_p(\IR^n))}\bigr)^{1-\alpha}.
$$
At this stage, we apply the reverse Bernstein inequality
with respect to the space variable, followed by H\"older's for series 
inequality so as to get:
$$\begin{aligned}I&\leq C_\alpha 
\sum_{k\in\IZ}
\bigl(2^{ks}\|\dot\Delta_k^x\partial_tz\|_{\LL_1(\IR;\LL_p(\IR^n))}\bigr)^\alpha
\bigl(2^{k(s+1)}\|\dot\Delta_k^x\nabla_x z\|_{\LL_1(\IR;\LL_p(\IR^n))}\bigr)^{1-\alpha}\\
&\leq C_\alpha\biggl(\int_\IR 
\sum_{k\in\IZ}
2^{ks}\|\dot\Delta_k^x\partial_tz\|_{\LL_p(\IR^n)} \, \d t
\biggr)^\alpha
\biggl(\int_\IR\sum_{k\in\IZ} 2^{k(s+1)}\|\dot\Delta_k^x\nabla_x z\|_{\LL_1(\IR;\LL_p(\IR^n))} \, \d t\biggr)^{1-\alpha}.
\end{aligned}$$
This is exactly what we wanted to prove.
\end{proof}

\section{Regularity properties of diffeomorphisms}

In this section we discuss regularity properties of changes of variables in homogeneous Besov spaces. The following proposition builds the foundation of this discussion and is proven in~\cite[Prop.~A.1]{Danchin14}.

\begin{proposition}
\label{Prop: Change of variables whole space}
Let $X$ be a globally bi-Lipschitz diffeomorphism of $\IR^n$, $1 \leq p < \infty$ and $-\min(n/p,n/p')< s \leq n / p$. If 
\begin{align*}
D X - \Id \in \dot \B^{n / p}_{p , 1} (\IR^n ; \IR^{n \times n}) \quad \text{and} \quad D X^{-1} - \Id \in \dot \B^{n / p}_{p , 1} (\IR^n ; \IR^{n \times n}),
\end{align*}
then $u \mapsto u \circ X$ is a self-map over $\dot \B^s_{p , 1} (\IR^n).$ Furthermore, there exists $C > 0$ such that for all $u \in \dot \B^s_{p , 1} (\IR^n)$ it holds
\begin{align*}
 C^{-1} \| u \|_{\dot \B^s_{p , 1} (\IR^n)} \leq \| u \circ X \|_{\dot \B^s_{p , 1} (\IR^n)} \leq C \| u \|_{\dot \B^s_{p , 1} (\IR^n)}.
\end{align*}
\end{proposition}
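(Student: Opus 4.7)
The plan is to reduce to the upper estimate and then propagate regularity from the $\LL_p$-case in both directions along the scale. Since the hypotheses on $X$ and $X^{-1}$ are symmetric, it suffices to prove $\|u\circ X\|_{\dot\B^s_{p,1}}\leq C\|u\|_{\dot\B^s_{p,1}}$; the reverse bound then follows by applying this inequality to $v:=u\circ X$ and the diffeomorphism $X^{-1}$. The base case $s=0$ is an immediate consequence of the bi-Lipschitz hypothesis: the Jacobian $J_X$ is essentially bounded above and below, so a change of variable gives $\|u\circ X\|_{\LL_p}\leq\|J_X^{-1}\|_{\LL_\infty}^{1/p}\|u\|_{\LL_p}$. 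The requirement that $u\circ X\in\cS_h'(\IR^n)$ is preserved by the bi-Lipschitz substitution, since the equivalent criterion $\|\dot S_j(u\circ X)\|_{\LL_\infty}\to 0$ as $j\to-\infty$ transfers through the change of variables via Bernstein's inequality and the $\LL_p$-bound just proved.

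For $0<s\leq n/p$, I would argue by induction on $\lfloor s\rfloor$. In the sub-unit range $s\in(0,1)$, use the finite-difference characterization $\|u\|_{\dot\B^s_{p,1}}\simeq\int_{\IR^n}|h|^{-s}\|\Delta_h u\|_{\LL_p}\,\d h/|h|^n$ together with the bound $|X(x+h)-X(x)|\leq L|h|$ from the global Lipschitz property: after a bi-Lipschitz substitution, $\|\Delta_h(u\circ X)\|_{\LL_p}$ is dominated by $\sup_{|k|\leq L|h|}\|\Delta_k u\|_{\LL_p}$ up to a maximal/averaging step. For $s\in(1,n/p]$, differentiate: $\nabla(u\circ X)=(DX)^\top\cdot(\nabla u\circ X)$. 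Split $DX=\Id+(DX-\Id)$; the leading part $\nabla u\circ X$ is controlled in $\dot\B^{s-1}_{p,1}$ by the induction hypothesis applied at regularity $s-1\in(0,n/p-1]$ (to the components of $\nabla u$), while the perturbation $(DX-\Id)\cdot(\nabla u\circ X)$ is handled by Proposition~\ref{prod-Bes} with the choice $(s_1,s_2)=(n/p,s-1)$, whose conditions $s_2\leq s_1\leq n/p$ and $s_1+s_2>0$ are all met in this range. One then recovers $u\circ X\in\dot\B^s_{p,1}$ from $\nabla(u\circ X)\in\dot\B^{s-1}_{p,1}$ via the whole-space analogue of Corollary~\ref{Cor: Comparison of gradients}.

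For $-\min(n/p,n/p')<s<0$, proceed by duality with $\dot\B^{-s}_{p',\infty}$. For $\phi\in\cS_c(\IR^n)$ the change of variable formula gives
\[
\langle u\circ X,\phi\rangle=\int_{\IR^n} u(y)\,(\phi\circ X^{-1})(y)\,J_{X^{-1}}(y)\,\d y,
\]
so it suffices to show that $\phi\mapsto (\phi\circ X^{-1})\,J_{X^{-1}}$ is bounded on $\dot\B^{-s}_{p',\infty}$. Expanding the determinant, $J_{X^{-1}}-1$ is a polynomial with no constant term in the entries of $DX^{-1}-\Id\in\dot\B^{n/p}_{p,1}$; using the algebra property of this critical space together with the bi-Lipschitz bound $J_{X^{-1}}\in\LL_\infty$, one obtains $J_{X^{-1}}-1\in\dot\B^{n/p}_{p,1}\cap\LL_\infty$. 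Multiplication by $J_{X^{-1}}$ is then bounded on $\dot\B^{-s}_{p',\infty}$ by a product estimate on the dual scale, and the positive-regularity step already established, applied to $X^{-1}$ at regularity $-s\leq\min(n/p,n/p')\leq n/p'$, provides the needed bound on $\phi\mapsto\phi\circ X^{-1}$.

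The main technical obstacle is the bookkeeping of the product estimates at the endpoint $s=n/p$: the product law just barely holds, because $\dot\B^{n/p}_{p,1}$ is a Banach algebra, which crucially uses the last index being $1$; this is precisely why the statement is restricted to $q=1$. The asymmetric lower bound $-\min(n/p,n/p')$ encodes the dual form of this endpoint constraint. A secondary difficulty is verifying, at each step of the induction and of the duality argument, that the composed object retains the $\cS_h'(\IR^n)$ low-frequency decay, which is particularly delicate when multiplying by $J_{X^{-1}}$ in the negative-regularity step.
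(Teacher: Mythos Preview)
The paper does not prove this proposition: it is stated with the sentence ``is proven in~\cite[Prop.~A.1]{Danchin14}'' and no argument is supplied. So there is no in-paper proof to compare against; your sketch is therefore compared to the cited reference only in spirit.

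Your outline follows the standard route used in~\cite{Danchin14}: the $\LL_p$ base case via change of variable, the finite-difference characterization for $s\in(0,1)$, the chain rule combined with the product law $\dot\B^{n/p}_{p,1}\cdot\dot\B^{s-1}_{p,1}\hookrightarrow\dot\B^{s-1}_{p,1}$ and an induction on $\lfloor s\rfloor$ for larger positive $s$, and duality for negative $s$. This is essentially the same scheme as in the reference, and the key mechanism you identify (the algebra property of $\dot\B^{n/p}_{p,1}$ absorbing the factor $DX-\Id$) is exactly the right one.

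Two points would need tightening in a full write-up. First, the duality step asks for boundedness of $\phi\mapsto(\phi\circ X^{-1})J_{X^{-1}}$ on $\dot\B^{-s}_{p',\infty}$, while your positive-regularity statement is proven only for third index $1$; you would close this either by observing that the finite-difference and product arguments carry over verbatim to $\dot\B^{-s}_{p',\infty}$, or by real interpolation between two values of the smoothness index. Second, the integer value $s=1$ (and more generally integer $s$) falls between your open intervals; it is recovered by the chain-rule step applied once and the $s=0$ case, or by interpolation. Neither is a genuine obstruction.
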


\begin{remark}
An analysis of the proof of the proposition in~\cite[Prop.~A.1]{Danchin14} reveals that $C$ can be chosen to be uniform in the class of all bi-Lipschitz diffeomorphisms that satisfy for some $M > 0$
\begin{align*}
 \| D X - \Id \|_{\dot \B^{n / p}_{p , 1} (\IR^n ; \IR^{n \times n})} \leq M \quad \text{and} \quad \| D X^{-1} - \Id \|_{\dot \B^{n / p}_{p , 1} (\IR^n ; \IR^{n \times n})} \leq M.
\end{align*}
Note that the  continuous embedding from $\dot \B^{n/p}_{p , 1} (\IR^n ; \IR^{n \times n})$ into $\LL_{\infty} (\IR^n ; \IR^{n \times n})$ implies that for some constant $C > 0$ depending only on $n$ and $p$ one has that
\begin{align*}
 \| D X \|_{\LL_{\infty} (\IR^n ; \IR^{n \times n})} \leq 1 + C M \quad \text{and} \quad \| D X^{-1} \|_{\LL_{\infty} (\IR^n ; \IR^{n \times n})} \leq 1 + C M.
\end{align*}
In particular, this implies that the bi-Lipschitz constant of $X$ is bounded by $1 + C M$.
\end{remark}

The goal of this section is to transfer this result to an appropriate class of diffeomorphisms that map from the half-space $\IR^n_+$ onto a perturbation $\Omega$ of the half-space. As a preparation, we study diffeomorphisms of a certain structure, that allows to extend them to diffeomorphisms of $\IR^n$.

\begin{lemma} 
\label{Lem: Extension of diffeomorphism}
Let $1 < p < \infty$. There exists a constant $c > 0$ depending only on $n$ and $p$ such that any bi-Lipschitz diffeomorphism $X : \IR^n_+ \to \Omega$ onto some domain $\Omega \subset \IR^n$ of the form
\begin{align*}
 X(x) = x + g(x)
\end{align*}
satisfying
\begin{align*}
 \| D g \|_{\dot \B^{n / p}_{p , 1} (\IR^n_+ ; \IR^{n \times n})} \leq c
\end{align*}
can be extended to a bi-Lipschitz diffeomorphism $\wt X : \IR^n \to \IR^n$ such that
\begin{align*}
 \| D \wt X - \Id \|_{\dot \B^{n / p}_{p , 1} (\IR^n ; \IR^{n \times n})} \leq C \| D g \|_{\dot \B^{n / p}_{p , 1} (\IR^n_+ ; \IR^{n \times n})}
\end{align*}
and
\begin{align*}
 \| D \wt X^{-1} - \Id \|_{\dot \B^{n / p}_{p , 1} (\IR^n ; \IR^{n \times n})} \leq C \| D g \|_{\dot \B^{n / p}_{p , 1} (\IR^n_+ ; \IR^{n \times n})}.
\end{align*}
\end{lemma}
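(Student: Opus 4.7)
The plan is to build $\wt X$ as the identity plus a suitable extension of $g$, and then establish the Besov control on both $\wt X$ and its inverse. First, I would invoke Proposition~\ref{Prop: Proper boundedness extension operators} (applied component-wise to $g$ with regularity index $s = n/p+1$ and $m \in \IN$ taken large enough so that $-1+1/p < n/p+1 < m + 1 + 1/p$) to produce an extension $\wt g : \IR^n \to \IR^n$ with $\wt g|_{\IR^n_+} = g$ and
$$ \| D\wt g \|_{\dot \B^{n/p}_{p,1}(\IR^n;\IR^{n\times n})} \leq C \| Dg \|_{\dot \B^{n/p}_{p,1}(\IR^n_+;\IR^{n\times n})} \leq C c. $$
Since the embedding $\dot \B^{n/p}_{p,1}(\IR^n) \hookrightarrow \LL_\infty(\IR^n)$ holds by~\eqref{Eq: Banach space conditions}, the pointwise bound $\|D\wt g\|_{\LL_\infty} \leq C c$ is automatic, so $\wt g$ is globally Lipschitz with small constant.

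Next, I would set $\wt X := \id + \wt g$, which by construction restricts to $X$ on $\IR^n_+$ and for $c$ small enough satisfies $|\wt X(x) - \wt X(y)| \geq (1 - Cc) |x-y|$. This gives injectivity directly. For surjectivity, for any fixed $y \in \IR^n$ the map $\Phi_y(x) := y - \wt g(x)$ is a contraction on $\IR^n$ with constant $Cc < 1$, so the Banach fixed point theorem yields a unique $x$ with $\wt X(x) = y$, defining $\wt X^{-1}$ pointwise and as a Lipschitz map. The bound $\| D\wt X - \Id \|_{\dot \B^{n/p}_{p,1}} = \|D\wt g\|_{\dot \B^{n/p}_{p,1}} \leq C\|Dg\|_{\dot \B^{n/p}_{p,1}(\IR^n_+)}$ is then immediate.

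The main obstacle lies in obtaining the analogous Besov estimate for $D\wt X^{-1} - \Id$. Starting from the chain-rule identity
$$ D\wt X^{-1}(y) = \bigl(\Id + D\wt g(\wt X^{-1}(y))\bigr)^{-1}, $$
the Neumann series yields
$$ D\wt X^{-1} - \Id = \sum_{k \geq 1} (-1)^k \bigl(D\wt g \circ \wt X^{-1}\bigr)^k, $$
and since $\dot \B^{n/p}_{p,1}(\IR^n)$ is a Banach algebra, the desired bound reduces to
$$ \| D\wt g \circ \wt X^{-1} \|_{\dot \B^{n/p}_{p,1}(\IR^n)} \leq C \| D\wt g \|_{\dot \B^{n/p}_{p,1}(\IR^n)}. $$
This composition estimate is exactly what Proposition~\ref{Prop: Change of variables whole space} delivers, but it requires as hypothesis that both $D\wt X - \Id$ and $D\wt X^{-1} - \Id$ lie in $\dot \B^{n/p}_{p,1}$ with controlled norms—precisely what we are trying to prove for the inverse. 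This apparent circularity is the crux of the argument.

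To break it, I would run a Banach fixed point in the right function space rather than only pointwise: consider the closed ball
$$ \mathcal{B}_M := \{ Y : \IR^n \to \IR^n \mid Y - \id \in \dot \B^{n/p+1}_{p,1}(\IR^n;\IR^n),\ \|D(Y - \id)\|_{\dot \B^{n/p}_{p,1}} \leq M \} $$
for a small $M$ comparable to $\|D\wt g\|_{\dot \B^{n/p}_{p,1}}$, and show that $\mathcal{T}(Y) := \id - \wt g \circ Y$ maps $\mathcal{B}_M$ into itself and is a strict contraction in a suitable weaker metric. Each $Y \in \mathcal{B}_M$ is itself a bi-Lipschitz perturbation of identity with both $DY - \Id$ and $DY^{-1} - \Id$ small in $\dot \B^{n/p}_{p,1}$ (the latter obtained by a finite Neumann iteration within the ball, which is self-consistent for small $M$), allowing one to legitimately apply Proposition~\ref{Prop: Change of variables whole space} at each step. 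The unique fixed point coincides with the previously constructed pointwise inverse $\wt X^{-1}$, and the membership $\wt X^{-1} - \id \in \mathcal{B}_{CM}$ combined with the above Neumann representation yields the desired estimate
$$ \| D\wt X^{-1} - \Id \|_{\dot \B^{n/p}_{p,1}(\IR^n;\IR^{n\times n})} \leq C \| Dg \|_{\dot \B^{n/p}_{p,1}(\IR^n_+;\IR^{n\times n})}, $$
completing the proof.
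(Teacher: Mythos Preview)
Your first step contains a genuine gap. You invoke Proposition~\ref{Prop: Proper boundedness extension operators} on $g$ at regularity index $s = n/p + 1$, checking only the range $-1 + 1/p < s < m + 1 + 1/p$. But that proposition also requires~\eqref{Eq: Banach space conditions}, namely $s \leq n/p$ when $q = 1$; the index $n/p + 1$ fails this for every choice of $m$. Moreover you are not even given $g \in \dot \B^{n/p+1}_{p,1}(\IR^n_+)$---only $Dg \in \dot \B^{n/p}_{p,1}(\IR^n_+)$---and Corollary~\ref{Cor: Comparison of gradients} cannot recover membership at index $n/p + 1$ since it requires the target index to be at most $n/p$.

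The paper avoids this by never leaving the index $n/p$. It extends each component $g_\ell$ via the explicit higher-order reflection operator $E$ of Lemma~\ref{Lem: Extension operators} (with $m = n + 1$) and then computes $D(Eg)$ directly: tangential derivatives commute with $E$, while $\partial_n E g_\ell = E' \partial_n g_\ell$ for a modified reflection operator $E'$ with coefficients $-\alpha_j/(j+1)$. Both $E$ and $E'$ act boundedly $\dot \B^{n/p}_{p,1}(\IR^n_+) \to \dot \B^{n/p}_{p,1}(\IR^n)$ (an index at which~\eqref{Eq: Banach space conditions} does hold), yielding $\|D E g\|_{\dot \B^{n/p}_{p,1}(\IR^n)} \leq C \|Dg\|_{\dot \B^{n/p}_{p,1}(\IR^n_+)}$ without ever placing $g$ itself in a homogeneous Besov space.

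On the inverse estimate, your circularity concern is well taken. The paper writes the Neumann series as $\sum_{k \geq 1}(-1)^k(DEg)^k$ and appeals to the algebra property of $\dot \B^{n/p}_{p,1}$, which strictly speaking yields $(D\wt X)^{-1} - \Id$ rather than $D(\wt X^{-1}) - \Id$; the composition with $\wt X^{-1}$ is treated implicitly. Your function-space fixed point would certainly close this gap, though it is heavier machinery than the paper brings to bear.
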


\begin{proof}
Let $E$ denote an extension operator with parameter $m = n + 1$ that was constructed in Lemma~\ref{Lem: Extension operators}. Notice that $E$ is constructed in such a way, that it maps $\cC^m$-functions to $\cC^m$-functions. \par
Let $\ell = 1 , \dots , n$ and let $g_{\ell}$ denote the $\ell$-th component of $g$. Define 
\begin{align*}
 \wt X_{\ell} (x) := x_{\ell} + E g_{\ell} (x) = \begin{cases} x + g_{\ell} (x), & x \in \IR^n_+ \\
 x + \sum_{j = 0}^m \alpha_j g_{\ell} \big( x^{\prime} , - \frac{x_n}{j + 1} \big), & x \in \IR^n \setminus \IR^n_+.\end{cases}
\end{align*}
Recall that the numbers $\alpha_j$ are chosen such that for each $\iota = 0 , \dots , m$ it holds
\begin{align*}
 \sum_{j = 0}^m \Big( - \frac{1}{j + 1} \Big)^{\iota} \alpha_j = 1.
\end{align*}
{}From its definition, it is clear  that $E$ commutes with all the tangential derivatives $\partial_k,$   $k = 1 , \dots , n - 1$. Moreover,
\begin{align*}
 \partial_n E g_{\ell} (x) = \partial_n g_{\ell} (x) \chi_{\IR^n_+} - \sum_{j = 0}^m \frac{\alpha_j}{j + 1} (\partial_n g_{\ell}) \Big( x^{\prime} , - \frac{x_n}{j + 1} \Big) \chi_{\IR^n \setminus \IR^n_+} =: E^{\prime} \partial_n g_{\ell} (x).
\end{align*}
By the condition imposed on the numbers $\alpha_j$, the operator $E^{\prime}$ still maps $\cC^{m - 1}$-functions on $\IR^n_+$ to $\cC^{m - 1}$-functions on $\IR^n$. Thus, one may argue as in the proof of Proposition~\ref{Prop: Proper boundedness extension operators} to conclude that $E^{\prime}$ acts as a bounded operator from $\dot \B^{n / p}_{p , 1} (\IR^n_+)$ to $\dot \B^{n / p}_{p , 1} (\IR^n)$. Thus, there exists a constant $C_1 > 0$ depending only on the boundedness constants of $E$ as well as $E^{\prime}$ such that
\begin{align*}
 \| D E g_{\ell} \|_{\dot \B^{n / p}_{p , 1} (\IR^n ; \IR^n)} \leq C_1 \| D g_{\ell} \|_{\dot \B^{n / p}_{p , 1} (\IR^n_+ ; \IR^n)}.
\end{align*}
With $C_2 > 0$ denoting the embedding constant of $\dot \B^{n / p}_{p , 1} (\IR^n ; \IR^n)$ into $\LL_{\infty} (\IR^n ; \IR^n)$ we conclude that if $c \leq C_1 C_2 / (2 n)$ then the function $E g_{\ell}$ satisfies
\begin{align*}
 \| D E g_{\ell} \|_{\LL_{\infty} (\IR^n ; \IR^n)} \leq \frac{1}{2n}\cdotp
\end{align*}
So, altogether one finds that  $E g$ defines a strict contraction on $\IR^n$ (with Lipschitz constant less than $1/2$), which already ensures that $\wt X$ is injective. Indeed, $\wt X (x) = \wt X (y)$ implies that
\begin{align*}
 x - y = E g (y) - E g (x) \quad \text{so that} \quad \lvert x - y \rvert = \lvert E g (y) - E g (x) \rvert \leq \frac{\lvert x - y \rvert}{2}\cdotp
\end{align*}
Moreover, the strict contractivity implies that $\wt X$ maps $\IR^n$ onto $\IR^n$. To see the ontoness, let $y \in \IR^n$ and notice that $\wt X$ maps to $y$ if and only if the function
\begin{align*}
 h : \IR^n \to \IR^n, \quad x \mapsto y - E g(x)
\end{align*}
has a fixed point. Since $E g$ is a strict contraction this fixed point exists by Banach's fixed point theorem. Finally, the inverse function theorem implies that $\wt X$ is a $\cC^1$-diffeomorphism and a Neumann series argument shows that
\begin{align*}
 D \wt X^{-1} - \Id = \sum_{k = 1}^{\infty} (-1)^k (D E g)^k
\end{align*}
and thus that
\begin{align*}
 \| D \wt X^{-1} - \Id \|_{\dot \B^{n / p}_{p , 1} (\IR^n ; \IR^{n \times n})} \leq C \| D g \|_{\dot \B^{n / p}_{p , 1} (\IR^n_+ ; \IR^{n \times n})},
\end{align*}
since $\dot \B^{n / p}_{p , 1} (\IR^n ; \IR^{n \times n})$ is an algebra. In particular, $\wt X$ is a bi-Lipschitz diffeomorphism.
\end{proof}

The following proposition is the counterpart of Proposition~\ref{Prop: Change of variables whole space} for bi-Lipschitz diffeomorphisms from $\IR^n_+$ onto some open set $\Omega$ of an appropriate structure.

\begin{proposition}\label{Prop: Change of half}
Let $1< p < \infty$,  $-\min(n/p,n/p')< s \leq n / p,$ and let $c > 0$ denote the constant from Lemma~\ref{Lem: Extension of diffeomorphism}. If $X : \IR^n_+ \to \Omega$ denotes a bi-Lipschitz diffeomorphism onto some domain $\Omega \subset \IR^n$ of the form
\begin{align*}
 X(x) = x + g(x)
\end{align*}
satisfying
\begin{align}
\label{Eq: Uniform bi-Lipschitz}
 \| D g \|_{\dot \B^{n / p}_{p , 1} (\IR^n_+ ; \IR^{n \times n})} \leq c,
\end{align}
then there exists a constant $C > 0$, depending only on $n$, $p$, $s$ and $c$ such that for all $v \in \dot \B^s_{p , 1} (\Omega)$ one has
\begin{align*}
 C^{-1} \| v \|_{\dot \B^s_{p , 1} (\Omega)} \leq \| v \circ X \|_{\dot \B^s_{p , 1} (\IR^n_+)} \leq C \| v \|_{\dot \B^s_{p , 1} (\Omega)}.
\end{align*}
\end{proposition}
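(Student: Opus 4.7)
The plan is to reduce the half-space statement to the whole-space result of Proposition~\ref{Prop: Change of variables whole space} by means of the global extension provided by Lemma~\ref{Lem: Extension of diffeomorphism}. Since the smallness constant $c$ of the proposition is the one furnished by that lemma, we obtain a bi-Lipschitz diffeomorphism $\wt X : \IR^n \to \IR^n$ with $\wt X|_{\IR^n_+} = X$ (the extension is of the form $\wt X(x) = x + Eg(x)$ with $Eg = g$ on $\IR^n_+$) and satisfying
\begin{align*}
 \| D \wt X - \Id \|_{\dot \B^{n/p}_{p , 1} (\IR^n ; \IR^{n \times n})} + \| D \wt X^{-1} - \Id \|_{\dot \B^{n/p}_{p , 1} (\IR^n ; \IR^{n \times n})} \leq C \| Dg \|_{\dot \B^{n/p}_{p , 1} (\IR^n_+ ; \IR^{n \times n})}.
\end{align*}
As $\wt X$ is a bijection of $\IR^n$ and $X(\IR^n_+) = \Omega$, we automatically have $\wt X(\IR^n \setminus \IR^n_+) = \IR^n \setminus \Omega$ and in particular $\wt X^{-1}(\Omega) = \IR^n_+$. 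Proposition~\ref{Prop: Change of variables whole space} then applies to $\wt X$ and $\wt X^{-1}$ on the whole space for the given range of $s$.

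For the upper bound, let $v \in \dot \B^s_{p , 1} (\Omega)$ and pick an arbitrary extension $V \in \dot \B^s_{p , 1} (\IR^n)$ with $V|_{\Omega} = v$. By Proposition~\ref{Prop: Change of variables whole space} applied to $\wt X$, the composition $V \circ \wt X$ lies in $\dot \B^s_{p , 1} (\IR^n)$ with $\| V \circ \wt X \|_{\dot \B^s_{p , 1} (\IR^n)} \leq C \| V \|_{\dot \B^s_{p , 1} (\IR^n)}$. Since $\wt X|_{\IR^n_+} = X$ maps into $\Omega$, the restriction $(V \circ \wt X)|_{\IR^n_+}$ coincides with $v \circ X$, so by Definition~\ref{Def: Spaces on the half-space} (quotient norm)
\begin{align*}
 \| v \circ X \|_{\dot \B^s_{p , 1} (\IR^n_+)} \leq \| V \circ \wt X \|_{\dot \B^s_{p , 1} (\IR^n)} \leq C \| V \|_{\dot \B^s_{p , 1} (\IR^n)}.
\end{align*}
Taking the infimum over all such $V$ yields $\| v \circ X \|_{\dot \B^s_{p , 1} (\IR^n_+)} \leq C \| v \|_{\dot \B^s_{p , 1} (\Omega)}$.

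The reverse inequality follows by the symmetric argument: given $u := v \circ X \in \dot \B^s_{p , 1} (\IR^n_+)$, take an extension $U \in \dot \B^s_{p , 1} (\IR^n)$ of $u$ and consider $U \circ \wt X^{-1}$, which lies in $\dot \B^s_{p , 1} (\IR^n)$ with controlled norm by Proposition~\ref{Prop: Change of variables whole space} applied to $\wt X^{-1}$. Using $\wt X^{-1}(\Omega) = \IR^n_+$ and $U|_{\IR^n_+} = u = v \circ X$, one checks that $(U \circ \wt X^{-1})|_{\Omega} = v$, whence $\| v \|_{\dot \B^s_{p , 1} (\Omega)} \leq C \| U \|_{\dot \B^s_{p , 1} (\IR^n)}$. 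Passing to the infimum over extensions $U$ of $u$ completes the proof.

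The only point that requires a little care is the bookkeeping between extensions and restrictions (i.e., verifying that $\wt X$ really swaps the pair $(\IR^n_+, \Omega)$ with $(\IR^n \setminus \IR^n_+, \IR^n \setminus \Omega)$ so that restriction and composition commute correctly); this is a direct consequence of the explicit form $\wt X = \mathrm{Id} + Eg$ given by Lemma~\ref{Lem: Extension of diffeomorphism}. Beyond this, the proof is a routine chase through the definitions and offers no substantial obstacle.
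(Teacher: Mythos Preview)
Your proof is correct and follows essentially the same route as the paper: extend $X$ to a global diffeomorphism $\wt X$ via Lemma~\ref{Lem: Extension of diffeomorphism}, apply the whole-space result Proposition~\ref{Prop: Change of variables whole space}, and pass to the quotient norms on $\IR^n_+$ and $\Omega$ by taking infima over extensions, with the reverse inequality obtained symmetrically through $\wt X^{-1}$. The paper's proof is slightly terser about the bookkeeping you spell out (that $\wt X$ swaps $\IR^n_+$ with $\Omega$), but the argument is the same.
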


\begin{proof}
Let $v \in \dot \B^s_{p , 1} (\Omega)$ and let $u := v \circ X$. Let $\wt X$ denote the extended bi-Lipschitz diffeomorphism of $X$ on $\IR^n$ that is constructed in Lemma~\ref{Lem: Extension of diffeomorphism} and let $V \in \dot \B^s_{p , 1} (\IR^n)$ denote an arbitrary extension of $v$ to $\IR^n$. Then $U := V \circ \wt X$ is an extension of $u$ to $\IR^n$. Thus,
\begin{align*}
 \| u \|_{\dot \B^s_{p , 1} (\IR^n_+)} \leq \| U \|_{\dot \B^s_{p , 1} (\IR^n)} \leq C \| V \|_{\dot \B^s_{p , 1} (\IR^n)},
\end{align*}
where in the second inequality we used Proposition~\ref{Prop: Change of variables whole space}. As the extension $V$ of $v$ was arbitrary, we can take the infimum and obtain
\begin{align*}
 \| u \|_{\dot \B^s_{p , 1} (\IR^n_+)} \leq C \| v \|_{\dot \B^s_{p , 1} (\Omega)}.
\end{align*}
The same argument but by taking $v := u \circ X^{-1}$ for some $u \in \dot \B^s_{p , 1} (\IR^n_+)$ yields that
\begin{align*}
 \| v \|_{\dot \B^s_{p , 1} (\Omega)} \leq C \| u \|_{\dot \B^s_{p , 1} (\IR^n_+)}. &\qedhere
\end{align*}
\end{proof}

\begin{remark}
\label{Rem: Change of half}
If $X : \IR^n_+ \to \Omega_t$, $t \geq 0$, is a family of bi-Lipschitz diffeomorphisms with $X(x) = x + g_t$ and $g_t$ satisfying~\eqref{Eq: Uniform bi-Lipschitz} uniformly with respect to $t$, then
\begin{align*}
 C^{-1} \| v \|_{\dot \B^s_{p , 1} (\Omega_t)} \leq \| v \circ X_t \|_{\dot \B^s_{p , 1} (\IR^n_+)} \leq C \| v \|_{\dot \B^s_{p , 1} (\Omega_t)}
\end{align*}
holds with a constant $C > 0$ \textit{independent of $t$}.
\end{remark}

\section{Explicit solution formula for the Lam\'e system in the half-plane}

We start from~\eqref{eq:lame} and 
take the Fourier transform with respect to the tangential and time directions: 
we name 
$$
 v:=\cF_{t,x_1}u.$$
Then, denoting $\nu=\lambda+2\mu,$  the first line of~\eqref{eq:lame} with homogeneous right-hand side
in the case $n=2$ reads
\begin{equation}\label{eq:blame}
 \begin{array}{l}
  \ii\tau v^1+\nu\xi_1^2 v^1 - \mu \partial_2^2 v^1 -(\mu+\lambda) \ii \xi_1 \partial_2 v^2=0,\\[5pt]
  \ii\tau v^2 + \mu \xi_1^2 v^2 - \nu \partial_2^2 v^2 -(\mu+\lambda) \ii \xi_1 \partial_2 v^1=0.
 \end{array}
\end{equation}
We  look at the system in the following matrix form:
\begin{align*}
 \frac{\d}{\d x_2} \begin{pmatrix}
   v^1\\
   v^2\\
   \partial_2 v^1 \\
   \partial_2 v^2
  \end{pmatrix}
  = \begin{pmatrix}
 0 & 0 & 1 & 0 \\
 0 & 0 & 0 & 1 \\
 r^2_1 & 0 & 0 & -\ii\xi_1 a \\
 0 & r^2_2 & - \ii \xi_1 b & 0 
\end{pmatrix}
\begin{pmatrix}
 v^1\\
   v^2\\
   \partial_2 v^1 \\
   \partial_2 v^2
\end{pmatrix}
\end{align*}
where 
\begin{equation}
 r^2_1=\frac{1}{\mu} (\ii\tau + \nu\xi_1^2), \qquad r^2_2=\frac{1}{\nu}
 (\ii\tau + \mu \xi_1^2), \qquad a=\frac{\mu+\lambda}{\mu}, \qquad b=\frac{\mu+\lambda}\nu\cdotp
\end{equation}
Except in the particular case  $\lambda+\mu=0$
(that will not be treated here since we assume that 
$\lambda+\mu>0$) or $\tau=0,$ the above matrix has four distinct 
 eigenvalues $\pm\lambda_1$ and $\pm\lambda_2,$ given by 
\begin{equation}
 \lambda^2_1= \frac{\ii}{\nu} \tau + \xi_1^2\andf
 \lambda^2_2=\frac{\ii}{\mu} \tau +\xi_1^2.
\end{equation}
We agree that $\lambda_1$ and $\lambda_2$ have non-negative real parts, 
and we only consider solutions that vanish at $x_2\to +\infty.$ 
Hence, those solutions  are  valued in the sum  of eigenspaces for 
 $-\lambda_1$ and $- \lambda_2.$
 
 A highly informative observation is that the corresponding eigenvectors 
 are determined by the 
system for the vorticity and divergence, and are thus  quite fast to compute.
To proceed, let us first determine  an eigenvector  $L_1=(l_1,l_2,l_3,l_4)$
for $-\lambda_1.$  Then we have to solve
\begin{equation}
 l_1 \lambda_1 + l_3=0, \qquad l_2 \lambda_1 + l_4=0, \qquad l_1r^2_1 + l_3 \lambda_1 - \ii \xi_1 a l_4=0.
\end{equation}
Noting that
$$
r^2_1-\lambda_1^2=\frac{\ii\tau}{\mu}+\frac{\nu}{\mu}\xi_1^2 - \frac{\ii\tau}{\nu} - \xi_1^2= \frac{\mu+\lambda}{\mu\nu} \ii\tau+\frac{\mu+\lambda}{\mu}\xi_1^2=a\lambda^2_1,
$$
we find that one can take 
\begin{equation}
 L_1=(\ii\xi_1,-\lambda_1,-\ii\xi_1 \lambda_1,\lambda^2_1).
\end{equation}
For the second eigenvector, we use 
\begin{equation}
 r^2_2-\lambda_2^2= -b \lambda_2^2,
\end{equation}
and find that 
\begin{equation}
 L_2=(\lambda_2,\ii\xi_1,-\lambda^2_2,-\ii\xi_1 \lambda_2).
\end{equation}

So the general form of the solutions we are looking for reads
\begin{align}
 \begin{pmatrix}
   v^1\\
   v^2\\
   \partial_2 v^1 \\
   \partial_2 v^2
  \end{pmatrix} = 
\begin{pmatrix}
 \ii\xi_1 \\
 -\lambda_1 \\
 -\ii\xi_1 \lambda_1 \\
 \lambda^2_1
\end{pmatrix}
 \phi^1 \e^{-\lambda_1 x_2} +
\begin{pmatrix}
 \lambda_2 \\
 \ii\xi_1 \\
 -\lambda^2_2 \\
 -\ii\xi_1 \lambda_2
\end{pmatrix}
\phi^2 \e^{-\lambda_2 x_2}.
\end{align}
Note  that the first  part is curl-free, while  the second one is divergence-free. 
\medbreak
Plugging that ansatz in~\eqref{eq:blame} the boundary equation in~\eqref{eq:lame} yields
\begin{align*}
 \cA \begin{pmatrix} \phi^1  \\ \phi^2 \end{pmatrix} := \begin{pmatrix} - 2 \ii \xi_1 \lambda_1 \mu & - \mu (\xi_1^2 + \lambda_2^2) \\ \nu\lambda_1^2 - \lambda \xi_1^2 & - 2 \mu \ii \xi_1 \lambda_2 \end{pmatrix} \begin{pmatrix} \phi^1 \\ \phi^2 \end{pmatrix} = \begin{pmatrix} \cF g_1 \\ \cF g_2 \end{pmatrix}\cdotp
\end{align*}
Thus, the solution $u = (u^1 , u^2)$ to the Lam\'e system is formally given by
\begin{align*}
 \begin{pmatrix} u^1 \\ u^2 \end{pmatrix} = \cF^{-1}_{\tau , \xi_1} \bigg[ \frac{\cB}{\det(\cA)} \bigg] \begin{pmatrix} \cF g_1 \\ \cF g_2 \end{pmatrix},
\end{align*}
with $\cB$ being given by
\begin{align*}
 \cB = \begin{pmatrix} 2 \mu \xi_1^2 \lambda_2 \e^{- \lambda_1 x_2} + \lambda_2 (\lambda \xi_1^2 - \nu \lambda_1^2) \e^{- \lambda_2 x_2} & \ii \mu \xi_1 (\xi_1^2 + \lambda_2^2) \e^{- \lambda_1 x_2} - 2 \mu \ii \xi_1 \lambda_1 \lambda_2 \e^{- \lambda_2 x_2} \\ \ii \xi_1 (\lambda \xi_1^2 - \nu \lambda_1^2) \e^{- \lambda_2 x_2} + 2 \mu \ii \xi_1 \lambda_1 \lambda_2 \e^{- \lambda_1 x_2} & 2 \mu \xi_1^2 \lambda_1 \e^{- \lambda_2 x_2} - \mu \lambda_1 (\xi_1^2 + \lambda_2^2) \e^{- \lambda_1 x_2} \end{pmatrix}\cdotp
\end{align*}
To investigate the determinant of the matrix $\cA$ one calculates 
\begin{align*}
 \det (\cA) = \mu (\xi_1^2 + \lambda_2^2) \big( \nu\lambda_1^2 - \lambda \xi_1^2 \big) - 4 \mu^2 \xi_1^2 \lambda_1 \lambda_2.
\end{align*}
Observe that
\begin{align*}
 \nu\lambda_1^2 - \lambda \xi_1^2 = \ii \tau + 2 \mu \xi_1^2= \mu(\xi_1^2 + \lambda_2^2).
\end{align*}
Hence
$$ \det (\cA) = (\ii \tau + 2 \mu \xi_1^2)^2 - 4 \mu^2 \xi_1^2 \lambda_1 \lambda_2.
 $$
 We have to keep in mind that, by definition and elementary trigonometry, both
 $\arg(\lambda_1)$ and $\arg(\lambda_2)$ are in $[-\pi/4,\pi/4].$

\begin{proposition} The determinant of $\cA$ vanishes if and only if $\tau=0,$ and 
there exist two positive real numbers $c<C$ 
 such that, for all $(\tau,\xi_1)\in\IR^2,$ we have
\begin{equation}\label{eq:detA}
c\,|\tau|\,\sqrt{\tau^2+\mu^2\xi_1^4}\leq |\det (\cA)| \leq 
C (\tau^2+\mu^2\xi_1^4).\end{equation}
\end{proposition}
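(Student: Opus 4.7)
The approach will combine three ingredients: a direct triangle-inequality argument for the upper bound, an anisotropic homogeneity reduction to a compact set, and a first-order Taylor expansion of $\det\cA$ at $\tau=0$ to handle the only degenerate frequencies. The upper bound is the easy half: from $|\lambda_j|^2=|\lambda_j^2|\leq C(|\tau|+\xi_1^2)$ one gets $|\lambda_1\lambda_2|\leq C(|\tau|+\xi_1^2),$ and combining with $|i\tau+2\mu\xi_1^2|^2\leq C(\tau^2+\xi_1^4)$ and the elementary bound $|\tau|\xi_1^2\lesssim \tau^2+\mu^2\xi_1^4$ (AM--GM) yields the right-hand inequality in~\eqref{eq:detA} directly.

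The reduction step rests on the observation that under the anisotropic dilation $(\tau,\xi_1)\mapsto(t^2\tau,t\xi_1),$ $t>0,$ both characteristic roots satisfy $\lambda_j\mapsto t\lambda_j,$ so the three quantities appearing in~\eqref{eq:detA}, namely $\det\cA,$ $|\tau|\sqrt{\tau^2+\mu^2\xi_1^4}$ and $\tau^2+\mu^2\xi_1^4,$ are all homogeneous of degree $4.$ It is therefore enough to prove both bounds on the compact curve $K:=\{(\tau,\xi_1):\tau^2+\mu^2\xi_1^4=1\};$ on $K$ the upper bound is already settled, and the lower bound becomes the statement $|\det\cA|\geq c|\tau|$ on $K.$

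At $\tau=0$ one has $\lambda_1=\lambda_2=|\xi_1|$ and $\det\cA=(2\mu\xi_1^2)^2-4\mu^2\xi_1^4=0,$ so the vanishing points of $\det\cA$ on $K$ are exactly $(0,\pm\mu^{-1/2}).$ The key computation is a Taylor expansion near those points: writing $\lambda_j=|\xi_1|+\frac{i\tau}{2\alpha_j|\xi_1|}+O(\tau^2)$ with $\alpha_1=\nu,$ $\alpha_2=\mu$ and inserting into $\det\cA=(i\tau+2\mu\xi_1^2)^2-4\mu^2\xi_1^2\lambda_1\lambda_2,$ the $\xi_1^4$ and $\tau^2$ contributions cancel and one finds
\[
\det\cA=2i\mu\xi_1^2\cdot\frac{\lambda+\mu}{\lambda+2\mu}\,\tau+O(\tau^2)\qquad(\tau\to 0,\ \xi_1\neq 0).
\]
The 2D ellipticity assumption $\mu>0,$ $2\mu+2\lambda>0$ is exactly $\lambda+\mu>0,$ so the linear coefficient is non-zero. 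Consequently, $|\det\cA|/|\tau|$ extends continuously to $K$ with positive limit $2(\lambda+\mu)/(\lambda+2\mu)$ at the two critical points, and the simple zero of $\det\cA$ there confirms that $\tau=0$ is necessary for vanishing.

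The main obstacle is then to verify non-vanishing of $\det\cA$ on $K\setminus\{\tau=0\},$ i.e., the Lopatinskii--Shapiro condition for the Lamé system with Neumann boundary conditions. On the axis $\xi_1=0$ a direct computation gives $\det\cA=-\tau^2\neq 0;$ elsewhere, I plan to argue by matching real and imaginary parts in the hypothetical equation $(i\tau+2\mu\xi_1^2)^2=4\mu^2\xi_1^2\lambda_1\lambda_2,$ exploiting the sign constraints $\Re\lambda_j>0$ and $\Im\lambda_j$ of the same sign as $\tau$ (which follow from the branch choice of the square root and $\lambda_j^2\in\xi_1^2+i\mathbb{R}),$ to derive a contradiction under $\mu,\lambda+\mu>0.$ Once non-vanishing is established, $|\det\cA|/|\tau|$ is a strictly positive continuous function on the compact set $K,$ hence bounded below by some $c>0,$ and homogeneity promotes this to the lower bound in~\eqref{eq:detA}. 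The subtle point is to make the real/imaginary-part argument clean; if it turns out cumbersome, an alternative is to view $\det\cA$ as (a rescaling of) the Rayleigh determinant evaluated on the parabolic ray $i\tau/\xi_1^2\in i\mathbb{R},$ for which non-existence of surface modes in this dissipative regime can be read off directly.
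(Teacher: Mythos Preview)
Your overall strategy—anisotropic homogeneity to reduce to a compact curve, Taylor expansion at $\tau=0$ to identify a simple zero, then a continuity-on-compactness argument for the ratio $|\det\cA|/|\tau|$—is correct and matches the paper's in spirit. The paper also rescales to a compact set and separates into a regime away from $\tau=0$ (compactness plus non-vanishing) and a small-$\tau$ regime. Your Taylor expansion $\det\cA = 2i\mu\xi_1^2\frac{\lambda+\mu}{\lambda+2\mu}\,\tau + O(\tau^2)$ is correct and is a clean way to handle the near-degenerate points; the paper obtains the same simple-zero conclusion by an algebraic trick instead, rewriting $\det\cA = \bigl((i\tau+2\mu\xi_1^2)^4 - 16\mu^4\xi_1^4\lambda_1^2\lambda_2^2\bigr)/\bigl((i\tau+2\mu\xi_1^2)^2 + 4\mu^2\xi_1^2\lambda_1\lambda_2\bigr)$ and factoring $\tau$ explicitly from the polynomial numerator.

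The one place your proposal is genuinely incomplete is the Lopatinskii--Shapiro non-vanishing for $\tau\neq 0$. Your plan to match real and imaginary parts directly in $(i\tau+2\mu\xi_1^2)^2 = 4\mu^2\xi_1^2\lambda_1\lambda_2$ using the sign constraints on $\Re\lambda_j,\Im\lambda_j$ does not close by itself: for large $|\tau|/\xi_1^2$ both sides have arguments approaching $\pi$ and neither real nor imaginary part gives an immediate contradiction. The device that makes the algebra tractable—and which the paper uses—is to \emph{square} both sides first, so that $\lambda_1^2\lambda_2^2 = (\xi_1^2 + i\tau/\nu)(\xi_1^2 + i\tau/\mu)$ becomes polynomial. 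Expanding and separating real and imaginary parts then yields two algebraic relations that force either $\tau=0$ or $\mu/\nu = 11/7$, the latter being ruled out by $\lambda+\mu>0$ (which gives $\mu/\nu<1$). With this step in hand, your continuity argument on $K$ goes through exactly as you describe.
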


\begin{proof}
Ruling out the obvious case $\tau=0,$ let us 
set $r=(\tau^2+\mu^2\xi_1^4)^{1/4},$ $\zeta_0:=\tau/r^2$
and $\zeta:=\xi_1/{r}.$ Then, we have
$$
 \det (\cA) = r^4\Bigl((\ii \zeta_0 + 2 \mu \zeta^2)^2 - 4 \mu^2 \zeta^2 \mu_1 \mu_2\Bigr)
 \quad\hbox{with}\quad \ \mu_1^2:= \ii\nu^{-1}\zeta_0+\zeta^2
 \quad\hbox{and}\quad
 \mu_2^2:=\ii\mu^{-1}\zeta_0+\zeta^2.$$
 It is obvious that the function  $(\zeta_0,\zeta)\mapsto (\ii \zeta_0 + 2 \mu \zeta^2)^2
  - 4 \mu^2 \zeta^2 \mu_1 \mu_2$ is  bounded (by an absolute constant)  on the 
  set  $\bigl\{\zeta_0^2+\mu^2\zeta^4=1\bigr\},$  and 
  we thus get the right inequality of~\eqref{eq:detA}. 
  Next,  let us   prove  that  $$ \det (\cA)  =0\ \Longrightarrow \ \tau=0.$$
Indeed, if $ \det (\cA) $ vanishes then 
 $$
 (\ii \zeta_0 + 2 \mu \zeta^2)^2=4 \mu^2 \zeta^2 \mu_1 \mu_2.
 $$
 Let us square both sides and compute (using the value of $\mu_1^2\mu_2^2$):
 $$
 \zeta_0^4-8\ii\mu\zeta_0^3\zeta^2-24\mu^2\zeta_0^2\zeta^4+32\ii\mu^3\zeta_0\zeta^6
 +16\mu^4\zeta^8=16\mu^4\zeta^8+16\ii\mu^4(\mu^{-1}+\nu^{-1})\zeta_0\zeta^6
-16\mu^3\nu^{-1}\zeta_0^2\zeta^4.$$
 Identifying real and imaginary parts, we eventually  find that either $\zeta_0=0,$ or
$$\zeta_0\not=0\andf 2\bigl(1-\mu/\nu)\mu^2\zeta^4=\zeta_0^2=8(3-2\mu/\nu)\mu^2\zeta^4.$$
This may occur only  if $\mu/\nu=11/7,$ but as $\lambda+\mu>0$ and $\mu > 0$, this cannot happen.  
\medbreak
From this and the compactness of the set $\bigl\{\zeta_0^2+\mu^2\zeta^4=1,\ 
|\zeta_0|\geq \mu |\zeta|^2 / 10 \bigr\},$
 we readily deduce that, whenever (say) $|\tau|\geq \mu|\xi_1|^2/10,$ 
 the left inequality of~\eqref{eq:detA} is satisfied. 
 \medbreak
 To handle the case  $|\tau|\leq \mu|\xi_1|^2/10$ and $(\tau,\xi_1)\not=(0,0),$  
 one can notice that 
 $$ \det (\cA) = \frac{(\ii \tau + 2 \mu \xi_1^2)^4 - 16 \mu^4 \xi_1^4 \lambda_1^2 \lambda_2^2}
 {(\ii \tau + 2 \mu \xi_1^2)^2 +  4 \mu^2 \xi_1^2 \lambda_1 \lambda_2}\cdotp $$
 As   $|\tau|\leq \mu|\xi_1|^2/10,$ the denominator may be bounded  by $C\mu^2\xi_1^4.$
 Moreover, we have 
 $$ (\ii \tau + 2 \mu \xi_1^2)^4 - 16 \mu^4 \xi_1^4 \lambda_1^2 \lambda_2^2
 =\biggl(\tau^3-8\ii\mu\tau^2\xi_1^2+8\mu^2\Bigl(\frac{2\mu}\nu-3\Bigr)\tau\xi_1^4
 -16\ii\mu^3\Bigl(3+\frac\mu\nu\Bigr)\xi_1^6\biggr)\tau.$$
   Hence there exists $c>0$ such that 
   $$   \bigl| (\ii \tau + 2 \mu \xi_1^2)^4 - 16 \mu^4 \xi_1^4 \lambda_1^2 \lambda_2^2\bigr|\geq 
   c\mu^3 |\tau|\,|\xi_1|^6.$$
   This completes  the proof of the left inequality of~\eqref{eq:detA}.
\end{proof}

\section{Resolvent estimates for the Neumann Laplacian}

In this section we provide the proof of the higher regularity estimates for the resolvent problem of the Neumann Laplacian that are needed in Section~\ref{Sec: The functional setting and basic interpolation results}. We start with the well-known whole space result.


\begin{lemma}
\label{Lem: Laplacian on the whole space}
Let $n \geq 1$, $1 < p < \infty$, and $\theta \in (0 , \pi)$. For all $\lambda \in \Sec_{\theta}$ and $f \in \LL_p (\IR^n),$ there exists a unique solution $u \in \H^2_p (\IR^n)$ to $\lambda u - \Delta u = f$. Furthermore, there exists a constant $C > 0$ that is independent of $\lambda$, $u$, and $f$ such that
\begin{align*}
 \lvert \lambda \rvert \| u \|_{\LL_p (\IR^n)} + \lvert \lambda \rvert^{\frac{1}{2}} \| \nabla u \|_{\LL_p (\IR^n ; \IC^n)} + \| \nabla^2 u \|_{\LL_p (\IR^n ; \IC^{n^2})} \leq C \| f \|_{\LL_p (\IR^n)}.
\end{align*}
\end{lemma}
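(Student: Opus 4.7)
My plan is to apply the Fourier transform and deduce the estimates from Mikhlin's multiplier theorem. First I will take the Fourier transform of the equation $\lambda u - \Delta u = f$ to obtain the algebraic relation $(\lambda + \lvert\xi\rvert^2)\,\cF u(\xi) = \cF f(\xi)$. The essential ingredient will be the pointwise resolvent bound
$$|\lambda + \lvert\xi\rvert^2| \geq c_\theta(|\lambda| + \lvert\xi\rvert^2), \qquad \lambda \in \Sec_\theta,\; \xi \in \IR^n,$$
with $c_\theta > 0$ depending only on the opening $\theta < \pi$. This follows from elementary trigonometry: any $\lambda \in \Sec_\theta$ lies at angular distance at least $\pi - \theta > 0$ from the negative real axis, while $\lvert\xi\rvert^2 \in [0,\infty)$, so $\lambda$ and $\lvert\xi\rvert^2$ cannot cancel. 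In particular $\lambda + \lvert\xi\rvert^2 \neq 0$, which allows me to define $\cF u := \cF f/(\lambda + \lvert\xi\rvert^2)$ and yields uniqueness of the solution in $\H^2_p(\IR^n)$ at once, by comparing Fourier transforms.

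For existence and for the three quantitative bounds, I will write the three left-hand side quantities as Fourier multiplier operators acting on $f$, with respective symbols
$$m_0(\xi) := \frac{\lambda}{\lambda + \lvert\xi\rvert^2}, \quad m_j^{(1)}(\xi) := \frac{|\lambda|^{1/2}\,\ii\xi_j}{\lambda + \lvert\xi\rvert^2}, \quad m_{jk}^{(2)}(\xi) := \frac{-\xi_j\xi_k}{\lambda + \lvert\xi\rvert^2}\cdotp$$
A direct chain-rule computation combined with the key resolvent inequality will show that each of these symbols satisfies Mikhlin's condition
$$\lvert\xi\rvert^{|\alpha|}\,|\partial_\xi^\alpha m(\xi)| \leq C_\alpha, \qquad \xi \in \IR^n \setminus \{0\},$$
with constants $C_\alpha$ that are \emph{uniform in} $\lambda \in \Sec_\theta$. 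For instance, differentiating $m_{jk}^{(2)}$ produces a finite sum of terms of the form $\lvert\xi\rvert^{2 - j_1}/(\lambda + \lvert\xi\rvert^2)^{j_2}$ with $j_2 \leq j_1 \leq |\alpha| + 2$, each bounded by a constant multiple of $\lvert\xi\rvert^{-|\alpha|}$ thanks to the pointwise bound.

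Finally, Mikhlin's multiplier theorem will deliver the $\LL_p$-boundedness of each of the three associated operators with operator norms independent of $\lambda$; summing the three resulting inequalities then gives the claimed estimate and in particular confirms that $u \in \H^2_p(\IR^n)$. I do not anticipate any serious obstacle: the only point requiring care is the uniformity of the Mikhlin constants in the spectral parameter, which is entirely ensured by the elementary pointwise bound $|\lambda + \lvert\xi\rvert^2| \gtrsim_\theta |\lambda| + \lvert\xi\rvert^2$.
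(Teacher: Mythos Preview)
Your proposal is correct and is precisely the standard argument the paper has in mind: the lemma is stated there as ``well-known'' without proof, and the sentence immediately following it (``then again Marcinkiewicz's theorem implies the following corollary'') makes clear that the intended justification is exactly the Fourier multiplier argument you outline.
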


If $f \in \H^k_p (\IR^n)$ with $k \in \IN$, then again Marcinkiewicz's theorem implies the following corollary.  


\begin{corollary}
\label{Cor: Laplacian on whole space}
Let $n \geq 1$, $1 < p < \infty$, $k \in \IN_0$, and $\theta \in (0 , \pi)$. For all $\lambda \in \Sec_{\theta}$ and  $f \in \H^k_p (\IR^n),$ there exists a unique solution $u \in \H^{k + 2}_p (\IR^n)$ to $\lambda u - \Delta u = f$. Furthermore, there exists a constant $C > 0$ that is independent of $\lambda$, $u$, and $f$ such that for all $\alpha \in \IN_0^n$ with $\lvert \alpha \rvert \leq k,$ we have 
\begin{align*}
 \lvert \lambda \rvert \| \partial^{\alpha} u \|_{\LL_p (\IR^n)} + \lvert \lambda \rvert^{\frac{1}{2}} \| \nabla \partial^{\alpha} u \|_{\LL_p (\IR^n ; \IC^n)} + \| \nabla^2 \partial^{\alpha} u \|_{\LL_p (\IR^n ; \IC^{n^2})} \leq C \| \partial^{\alpha} f \|_{\LL_p (\IR^n)}.
\end{align*}
\end{corollary}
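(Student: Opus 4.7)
The plan is to reduce the higher-regularity statement to the base case $k=0$ (already furnished by Lemma~\ref{Lem: Laplacian on the whole space}) by exploiting the fact that constant-coefficient differential operators commute with partial derivatives on the whole space. I would argue by induction on $k$. The base case $k=0$ is exactly the content of the preceding lemma; for existence and uniqueness of $u \in \H^{k+2}_p(\IR^n)$, I will then use the base case at the $k$-th step to bootstrap regularity.

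For the inductive step, assume the statement for some $k\in\IN_0$, and let $f \in \H^{k+1}_p(\IR^n)$. Since $\H^{k+1}_p(\IR^n) \hookrightarrow \H^k_p(\IR^n)$, the inductive hypothesis produces a unique $u \in \H^{k+2}_p(\IR^n)$ with $\lambda u - \Delta u = f$. For each $j=1,\ldots,n$, the function $\partial_j u$ lies in $\H^{k+1}_p(\IR^n) \subset \LL_p(\IR^n)$ and, by differentiating the equation in the distributional sense, solves $\lambda (\partial_j u) - \Delta(\partial_j u) = \partial_j f \in \H^k_p(\IR^n)$. The inductive hypothesis applied to $\partial_j u$ then yields $\partial_j u \in \H^{k+2}_p(\IR^n)$, hence $u \in \H^{k+3}_p(\IR^n)$, completing the regularity part.

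For the estimate, the key observation is the same commutativity: if $|\alpha|\leq k$, then $v:=\partial^\alpha u \in \H^2_p(\IR^n)$ satisfies $\lambda v - \Delta v = \partial^\alpha f \in \LL_p(\IR^n)$. Applying the base estimate of Lemma~\ref{Lem: Laplacian on the whole space} to the pair $(v, \partial^\alpha f)$ immediately gives
\begin{align*}
\lvert\lambda\rvert \|\partial^\alpha u\|_{\LL_p(\IR^n)} + \lvert\lambda\rvert^{1/2}\|\nabla \partial^\alpha u\|_{\LL_p(\IR^n;\IC^n)} + \|\nabla^2 \partial^\alpha u\|_{\LL_p(\IR^n;\IC^{n^2})} \leq C\|\partial^\alpha f\|_{\LL_p(\IR^n)},
\end{align*}
with $C>0$ independent of $\lambda$, $\alpha$, $u$, and $f$, since the constant in Lemma~\ref{Lem: Laplacian on the whole space} depends only on $n$, $p$, and $\theta$.

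I do not anticipate any serious obstacle: everything is either an immediate consequence of Lemma~\ref{Lem: Laplacian on the whole space} or a routine bootstrap via commutation of partial derivatives with the constant-coefficient operator $\lambda - \Delta$. The only point requiring a small amount of care is the justification that $\partial^\alpha u$ genuinely solves the same resolvent equation with right-hand side $\partial^\alpha f$ in a strong enough sense to apply Lemma~\ref{Lem: Laplacian on the whole space}; this is handled by the inductive argument above, which first secures $u \in \H^{k+2}_p(\IR^n)$ before differentiating $k$ times.
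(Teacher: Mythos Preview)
Your proposal is correct and takes essentially the same approach as the paper: both exploit that partial derivatives commute with the constant-coefficient Fourier multiplier $(\lambda - \Delta)^{-1}$, reducing everything to the base case of Lemma~\ref{Lem: Laplacian on the whole space}. The paper phrases this in one line as ``again Marcinkiewicz's theorem implies the following corollary,'' while you spell out the bootstrap explicitly; the content is the same.
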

Let us now consider  the Neumann Laplacian on the half-space supplemented with nonzero  Neumann data, namely
\begin{align}\label{Eq: Poisson problem half-space}
 \left\{
\begin{aligned}
 \lambda v - \Delta v &= f && \text{in } \IR^n_+ \\
 \partial_n v &= G|_{\partial \IR^n_+} && \text{on } \partial \IR^n_+.
\end{aligned}
\right.
\end{align}
It is well known that Lemma~\ref{Lem: Laplacian on the whole space} may be adapted to~\eqref{Eq: Poisson problem half-space}.
Our aim is to prove a higher regularity result in the spirit of Corollary 
\ref{Cor: Laplacian on whole space}:
\begin{proposition}
\label{Prop: Laplacian on half-space}
Let $n \geq 1$, $1 < p < \infty$, and $\theta \in (0 , \pi)$. For all $\lambda \in \Sec_{\theta},$  $f \in \H^1_p (\IR^n_+)$ and $G\in \H^2_p (\IR^n_+),$ there exists a unique solution $v \in \H^3_p (\IR^n)$ to~\eqref{Eq: Poisson problem half-space}.
Furthermore, there exists a constant $C > 0$ that is independent of $\lambda$, $v$, $f$ and $G,$ such that
$$\displaylines{\quad
 \lvert \lambda \rvert \| \nabla v \|_{\LL_p (\IR^n_+ ; \IC^n)} + \lvert \lambda \rvert^{\frac{1}{2}} \| \nabla^2 v \|_{\LL_p (\IR^n_+ ; \IC^{n^2})} + \| \nabla^3 v \|_{\LL_p (\IR^n_+ ; \IC^{n^3})} \hfill\cr\hfill \leq C \Big( \| \nabla f \|_{\LL_p (\IR^n_+ ; \IC^n)} + \lvert \lambda \rvert \| G \|_{\LL_p (\IR^n_+)} + \lvert \lambda \rvert^{1 / 2} \| \nabla G \|_{\LL_p(\IR^n_+ ; \IC^n)} + \| \nabla^2 G \|_{\LL_p (\IR^n_+ ; \IC^{n^2})} \Big)\cdotp\quad}$$
\end{proposition}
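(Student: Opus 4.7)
The strategy is to lift the whole-space higher-regularity bound of Corollary~\ref{Cor: Laplacian on whole space} to the half-space by the standard device of tangential differentiation, and to recover the purely normal third derivative through a Dirichlet-resolvent argument applied to $\partial_n v$. Existence and uniqueness of a solution $v\in\H^2_p(\IR^n_+)$ to~\eqref{Eq: Poisson problem half-space} are provided by the classical Neumann-Laplacian $\H^2_p$-maximal regularity theory on the half-space; it thus suffices to establish the announced \emph{a priori} estimate for smooth data, after which a density argument in the data space $\H^1_p(\IR^n_+)\times\H^2_p(\IR^n_+)$ promotes the regularity of $v$ to $\H^3_p(\IR^n_+)$.

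For each index $j\in\{1,\dots,n-1\}$, the tangential derivative $\partial_j$ commutes with $\lambda-\Delta$ and with the normal trace, so $\partial_j v$ solves the same Neumann problem~\eqref{Eq: Poisson problem half-space} with right-hand side $\partial_j f\in\LL_p(\IR^n_+)$ and boundary value $(\partial_j G)|_{\partial \IR^n_+}$. The $\H^2_p$-maximal regularity estimate for that problem, with the boundary datum lifted by $\partial_j G\in\H^1_p(\IR^n_+)$ itself, reads
\begin{align*}
 |\lambda|\,\|\partial_j v\|_{\LL_p} + |\lambda|^{1/2}\|\nabla\partial_j v\|_{\LL_p} + \|\nabla^2\partial_j v\|_{\LL_p} \lesssim \|\partial_j f\|_{\LL_p} + |\lambda|^{1/2}\|\partial_j G\|_{\LL_p} + \|\nabla\partial_j G\|_{\LL_p}.
\end{align*}
Summing over $j=1,\dots,n-1$ controls every third (and lower) order derivative of $v$ involving at least one tangential differentiation, with precisely the weights demanded by the proposition, and leaves only the three purely normal quantities $|\lambda|\,\|\partial_n v\|_{\LL_p}$, $|\lambda|^{1/2}\|\partial_n^2 v\|_{\LL_p}$ and $\|\partial_n^3 v\|_{\LL_p}$ to be estimated.

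To capture those, I would differentiate the equation $\lambda v-\Delta v=f$ in the normal direction, which gives $(\lambda-\Delta)\partial_n v=\partial_n f$ on $\IR^n_+$ while \emph{turning the original Neumann condition into a Dirichlet condition} $\partial_n v|_{\partial\IR^n_+}=G|_{\partial\IR^n_+}$ on $\partial_n v$. Writing $\partial_n v=w+G$, the remainder $w$ solves the homogeneous Dirichlet resolvent problem
\begin{align*}
 (\lambda-\Delta)w = \partial_n f - \lambda G + \Delta G \in \LL_p(\IR^n_+), \qquad w|_{\partial\IR^n_+}=0,
\end{align*}
which, by odd reflection of $w$ and of its source across $\partial\IR^n_+$ (the vanishing trace making this compatible with $\H^2_p$-regularity on the whole space), reduces to a resolvent equation for the Laplacian on $\IR^n$ to which Lemma~\ref{Lem: Laplacian on the whole space} applies. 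It yields
\begin{align*}
 |\lambda|\,\|w\|_{\LL_p} + |\lambda|^{1/2}\|\nabla w\|_{\LL_p} + \|\nabla^2 w\|_{\LL_p} \lesssim \|\partial_n f\|_{\LL_p} + |\lambda|\,\|G\|_{\LL_p} + \|\nabla^2 G\|_{\LL_p},
\end{align*}
and recombining with $G$ through $\partial_n v=w+G$ furnishes the three missing bounds with exactly the right-hand side announced in the proposition (the term $|\lambda|^{1/2}\|\nabla G\|_{\LL_p}$ being picked up when writing $|\lambda|^{1/2}\|\nabla\partial_n v\|_{\LL_p}\leq|\lambda|^{1/2}(\|\nabla w\|_{\LL_p}+\|\nabla G\|_{\LL_p})$).

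The main technical point is the interplay between the two steps: tangential differentiation in the first step is responsible for every derivative involving at least one $\partial_j$ with $j<n$, while the second step exploits the crucial observation that differentiating the Neumann condition once in the normal direction produces a Dirichlet condition, which by odd reflection is the only mechanism in this proof that requires a boundary-value argument. Once this a~priori estimate has been obtained for smooth data, the density argument mentioned above closes the proof of existence, uniqueness, and the claimed bound.
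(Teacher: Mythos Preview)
Your approach is correct but takes a genuinely different route from the paper's proof. The paper first removes $f$ by even reflection and Corollary~\ref{Cor: Laplacian on whole space} (as you do implicitly by invoking the base $\H^2_p$ theory), but then attacks the remaining boundary problem via the \emph{explicit Fourier solution formula} $u=-\cF_{\xi'}^{-1}[B^{-1}\e^{-Bx_n}\cF_{y'}h]$, rewrites it by the fundamental theorem of calculus as a superposition of multiplier operators acting on the extension $H\in\H^2_p(\IR^n_+)$ of the boundary datum, and then invokes the Shibata--Shimizu multiplier classes ${\bf M}_{s,1,\theta}$ (Lemmas~5.4 and~5.6 of~\cite{Shibata_Shimizu}) to obtain each piece of the estimate. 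No tangential differentiation or reflection argument appears.

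Your argument, by contrast, bootstraps from the \emph{base} $\H^2_p$ Neumann estimate: tangential derivatives $\partial_j v$ solve the same Neumann problem with differentiated data, while $\partial_n v$ inherits a \emph{Dirichlet} condition and can be handled by odd reflection and Lemma~\ref{Lem: Laplacian on the whole space}. This is more elementary---no multiplier-class machinery is required---and has the pleasant feature of explaining transparently why one gains exactly one order of regularity. The trade-off is that you take the inhomogeneous $\H^2_p$ Neumann estimate (in the homogeneous form $\lvert\lambda\rvert\|u\|_{\LL_p}+\lvert\lambda\rvert^{1/2}\|\nabla u\|_{\LL_p}+\|\nabla^2 u\|_{\LL_p}\lesssim\|f\|_{\LL_p}+\lvert\lambda\rvert^{1/2}\|\tilde G\|_{\LL_p}+\|\nabla\tilde G\|_{\LL_p}$) as a black box; this is indeed classical, but establishing it already requires essentially the same type of analysis the paper carries out directly at the $\H^3_p$ level. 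The paper's approach is thus more self-contained, while yours is structurally cleaner once the base case is granted.
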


\begin{proof}
We reduce the problem to the case  $f = 0$, by extending $f$ to a function $Ef \in \H^1_p (\IR^n)$ by an even reflection. Note that the reflection operator satisfies the homogeneous estimate
\begin{align*}
 \| \nabla E f \|_{\LL_p (\IR^n ; \IC^n)} \leq C \| \nabla f \|_{\LL_p (\IR^n_+ ; \IC^n)} \qquad (f \in \H^1_p (\IR^n)).
\end{align*}
Now, let $U \in \H^3_p (\IR^n)$ be the solution to $\lambda U - \Delta U = E f$ provided by Corollary~\ref{Cor: Laplacian on whole space}. Notice that $\nabla U \in \H^2_p (\IR^n)$ and define $H := \partial_n U - G$. Then, denoting by  $h$  the restriction of $H$ to $\partial \IR^n_+,$
 the solution $v$ to~\eqref{Eq: Poisson problem half-space} is given by $v = U - u$, where $u$ is the solution to
\begin{align}\label{Eq: Neumann problem half-space}
 \left\{
\begin{aligned}
 \lambda u - \Delta u &= 0 && \text{in } \IR^n_+ \\
 \partial_n u &= h && \text{on } \partial \IR^n_+.
\end{aligned}
\right.
\end{align}
 Thus, the investigation of problem~\eqref{Eq: Poisson problem half-space} is reduced to~\eqref{Eq: Neumann problem half-space}, with data $h$ being the restriction of a function $H \in \H^2_p (\IR^n_+)$.  Applying the Fourier transform with respect to the first $(n - 1)$ variables shows that the solution $u$ to~\eqref{Eq: Neumann problem half-space} is given by
\begin{align*}
 u(x) = - \cF_{\xi^{\prime}}^{-1} \bigg[ \frac{\e^{- B x_n}}{B} [\cF_{y^{\prime}} h(y^{\prime})] (\xi^{\prime}) \bigg] (x^{\prime})
  \with B^2 := \lambda + |\xi'|^2\quad\hbox{such that}\ \Re B>0.
\end{align*}
We find that
$$
 \partial_n u = \cF_{\xi^{\prime}}^{-1} \big[ \e^{- B x_n} [\cF_{y^{\prime}} h]\big]
\andf\partial_j u = - \cF_{\xi^{\prime}}^{-1} \bigg[ \frac{\e^{- B x_n}}{B} [\cF_{y^{\prime}} \partial_{y_j} h] \bigg] 
\ \hbox{ for }\ j \in \{ 1 , \dots , n - 1 \}.$$
 On the right-hand sides of these formulas, we use the fundamental theorem of calculus together with the fact that $h$ is the restriction to $\partial \IR^n_+$ of the function $H$ to write
\begin{align}
\label{Eq: Representation of solution of Laplacian}
\begin{aligned}
u(x) &= \int_0^{\infty} \frac{\d}{\d y_n} \cF_{\xi^{\prime}}^{-1} \bigg[\frac{\e^{- B (x_n + y_n)}}{B} [\cF_{y^{\prime}} H(y^{\prime} , y_n)] (\xi^{\prime}) \bigg] (x^{\prime}) \; \d y_n \\
 &= - \int_0^{\infty} \cF_{\xi^{\prime}}^{-1} \Big[ \e^{- B (x_n + y_n)} [\cF_{y^{\prime}} H(y^{\prime} , y_n)] (\xi^{\prime}) \Big] (x^{\prime}) \; \d y_n \\
 &\hspace{3cm}+ \int_0^{\infty} \cF_{\xi^{\prime}}^{-1} \bigg[ \frac{\e^{- B (x_n + y_n)}}{B} [\cF_{y^{\prime}} \partial_{y_n} H(y^{\prime} , y_n)] (\xi^{\prime}) \bigg] (x^{\prime}) \; \d y_n.
\end{aligned}
\end{align}
This implies that  for $j \in \{ 1 , \dots , n - 1 \},$ 
$$\displaylines{ \partial_j u(x) = - \int_0^{\infty} \cF_{\xi^{\prime}}^{-1} \Big[ \e^{- B (x_n + y_n)} [\cF_{y^{\prime}} \partial_{y_j} H(y^{\prime} , y_n)] (\xi^{\prime}) \Big] (x^{\prime}) \; \d y_n \hfill\cr\hfill
+ \int_0^{\infty} \cF_{\xi^{\prime}}^{-1} \bigg[ \frac{\e^{- B (x_n + y_n)}}{B} [\cF_{y^{\prime}} \partial_{y_j} \partial_{y_n} H(y^{\prime} , y_n)] (\xi^{\prime}) \bigg] (x^{\prime}) \; \d y_n.}$$
Using that $B^2= \lambda+|\xi'|^2,$ we get
\begin{equation}\label{Eq: Representation of tangential derivative}
 \partial_j u(x) =: \lambda \cdot \mathrm{I}_{\lambda} (H) + \mathrm{II}_{\lambda} (\Delta_{y^{\prime}} H) + \mathrm{III}_{\lambda} (\partial_{y_j} \partial_{y_n} H)
\end{equation}
with
$$\begin{aligned} \mathrm{I}_{\lambda} (H) &:= - \int_0^{\infty} \cF_{\xi^{\prime}}^{-1} \bigg[ \ii \xi_j \frac{\e^{- B (x_n + y_n)}}{B^2} [\cF_{y^{\prime}} H(y^{\prime} , y_n)] (\xi^{\prime}) \bigg] (x^{\prime}) \; \d y_n, \\
 \mathrm{II}_{\lambda} (\Delta_{y^{\prime}} H)&:= \int_0^{\infty} \cF_{\xi^{\prime}}^{-1} \bigg[ \ii \xi_j \frac{\e^{- B (x_n + y_n)}}{B^2} [\cF_{y^{\prime}} \Delta_{y^{\prime}} H(y^{\prime} , y_n)] (\xi^{\prime}) \bigg] (x^{\prime}) \; \d y_n \\
 \andf  \mathrm{III}_{\lambda} (\partial_{y_j} \partial_{y_n} H)&:=\int_0^{\infty} \cF_{\xi^{\prime}}^{-1} \bigg[ \frac{\e^{- B (x_n + y_n)}}{B} [\cF_{y^{\prime}} \partial_{y_j} \partial_{y_n} H(y^{\prime} , y_n)] (\xi^{\prime}) \bigg] (x^{\prime}) \; \d y_n.
\end{aligned}
$$
Analogously, we have
$$ \begin{aligned}\partial_n u(x) &:=  \lambda \cdot \mathrm{IV}_{\lambda} (H) + \mathrm{V}_{\lambda} (\Delta_{y^{\prime}} H) + \mathrm{VI}_{\lambda} (\partial_{y_n} H)\\
\with  \mathrm{IV}_{\lambda} (H)&:= \int_0^{\infty} \cF_{\xi^{\prime}}^{-1} \bigg[ \frac{\e^{- B (x_n + y_n)}}{B} [\cF_{y^{\prime}} H(y^{\prime} , y_n)] (\xi^{\prime}) \bigg] (x^{\prime}) \; \d y_n, \\
 \mathrm{V}_{\lambda} (\Delta_{y^{\prime}} H) &:= - \int_0^{\infty} \cF_{\xi^{\prime}}^{-1} \bigg[ \frac{\e^{- B (x_n + y_n)}}{B} [\cF_{y^{\prime}} \Delta_{y^{\prime}} H(y^{\prime} , y_n)] (\xi^{\prime}) \bigg] (x^{\prime}) \; \d y_n \\
  \andf\mathrm{VI}_{\lambda} (\partial_{y_n} H)&:=- \int_0^{\infty} \cF_{\xi^{\prime}}^{-1} \bigg[ \e^{- B (x_n + y_n)} [\cF_{y^{\prime}} \partial_{y_n} H(y^{\prime} , y_n)] (\xi^{\prime}) \bigg] (x^{\prime}) \; \d y_n.\end{aligned}$$
To proceed we have to prove $\LL_p$  estimates for  $\mathrm{I}_{\lambda}$, $\mathrm{II}_{\lambda}$, $\mathrm{III}_{\lambda}$, $\mathrm{IV}_{\lambda}$, $\mathrm{V}_{\lambda}$, and $\mathrm{VI}_{\lambda}$. \par
\smallbreak
To do so, introduce the following classes of multipliers. Let $\theta \in (\pi / 2 , \pi)$. 
The  imaginary part of  elements $\lambda \in \Sec_{\theta}$ is denoted by $\tau$. For $s \in \IR$ we say that $m : \Sec_{\theta} \times \IR^{n - 1} \setminus \{ 0 \} \to \IC$ is a multiplier of order $s$ and of type $1$ if $m$ is smooth with respect to $\tau=\Im\lambda$ and $\xi^{\prime}$ (with $\lambda\in\Sec_{\theta}$ and $\xi^{\prime}\in\IR^{n-1}$), and if there exist constants $C_1 , C_2 > 0$ such that for all $\lambda \in \Sec_{\theta}$, $\xi^{\prime} \in \IR^{n - 1} \setminus \{ 0 \}$ and $\alpha \in \IN_0^{n - 1},$ it holds
\begin{align*}
 \big\lvert \partial_{\xi^{\prime}}^{\alpha} m (\lambda , \xi^{\prime}) \big\rvert \leq C_1 (\lvert \lambda \rvert^{1 / 2} + |\xi'|)^{s - \lvert \alpha \rvert} \quad \text{and} \quad \big\lvert \partial_{\xi^{\prime}}^{\alpha} \tau \partial_{\tau} m (\lambda , \xi^{\prime}) \big\rvert \leq C_2 (\lvert \lambda \rvert^{1 / 2} + |\xi'|)^{s - \lvert \alpha \rvert}.
\end{align*}
In this case, we write $m \in {\bf M}_{s , 1 , \theta},$ see~\cite[Sec.~5]{Shibata_Shimizu} for more information. It is proven in~\cite[Lem.~5.2]{Shibata_Shimizu} that $B^s \in {\bf M}_{s , 1 , \theta}$ for all $s \in \IR$. Beside, a simple calculation shows that for $r \geq 0,$
\begin{align*}
 (\lambda , \xi^{\prime}) \mapsto \xi_j \in {\bf M}_{1 , 1 , \theta} \quad \text{and} \quad (\lambda , \xi^{\prime}) \mapsto \lvert \lambda \rvert^r \in {\bf M}_{2r , 1 , \theta} \quad \text{and} \quad (\lambda , \xi^{\prime}) \mapsto 1 \in {\bf M}_{0 , 1 , \theta}.
\end{align*}
 The multiplication rule~\cite[Lem.~5.1]{Shibata_Shimizu} shows that $\xi_j B^s \in {\bf M}_{s + 1 , 1 , \theta}$ for $s \in \IR$ and $\lvert \lambda \rvert^r B^s \in {\bf M}_{2r + s, 1 , \theta}$. This property permits to apply~\cite[Lem.~5.6]{Shibata_Shimizu} to conclude that there exists a constant $C > 0$ independent of $\lambda$ and $H$ such that
 $$\displaylines{
 \lvert \lambda \rvert \| \mathrm{I}_{\lambda} (H) \|_{\LL_p (\IR^n_+)} + \lvert \lambda \rvert^{1 / 2} \| \nabla \mathrm{I}_{\lambda} (H) \|_{\LL_p (\IR^n_+ ; \IC^n)} + \| \nabla^2 \mathrm{I}_{\lambda} (H) \|_{\LL_p (\IR^n_+ ; \IC^{n^2})} \leq C \| H \|_{\LL_p (\IR^n_+)}, \cr
 \lvert \lambda \rvert \| \mathrm{II}_{\lambda} (\Delta_{y^{\prime}} H) \|_{\LL_p (\IR^n_+)} +  \lvert \lambda \rvert^{1 / 2} \|\nabla \mathrm{II}_{\lambda} (\Delta_{y^{\prime}} H) \|_{\LL_p (\IR^n_+ ; \IC^n)} + \| \nabla^2 \mathrm{II}_{\lambda} (\Delta_{y^{\prime}} H) \|_{\LL_p (\IR^n_+ ; \IC^{n^2})} \cr
\hfill \leq C \| \Delta_{y^{\prime}} H \|_{\LL_p (\IR^n_+)}\cr
 \hbox{and }\ 
 \lvert \lambda \rvert \| \mathrm{III}_{\lambda} (\partial_{y_j} \partial_{y_n} H) \|_{\LL_p (\IR^n_+)} + \lvert \lambda \rvert^{1 / 2} \| \nabla \mathrm{III}_{\lambda} (\partial_{y_j} \partial_{y_n} H) \|_{\LL_p (\IR^n_+ ; \IC^n)} + \| \nabla^2 \mathrm{III}_{\lambda} (\partial_{y_j} \partial_{y_n} H) \|_{\LL_p (\IR^n_+ ; \IC^{n^2})} \cr
\hfill \leq C \| \partial_{y_j} \partial_{y_n} H \|_{\LL_p (\IR^n_+)}.}$$
 Combining this with~\eqref{Eq: Representation of tangential derivative} shows that for all $j \in \{ 1 , \dots , n - 1 \}$
\begin{multline}
\label{Eq: Tangential estimate}
 \lvert \lambda \rvert \| \partial_j u \|_{\LL_p (\IR^n_+)} + \lvert \lambda \rvert^{1 / 2} \| \nabla \partial_j u \|_{\LL_p (\IR^n_+ ; \IC^n)}+ \| \nabla^2 \partial_j u \|_{\LL_p (\IR^n_+ ; \IC^{n^2})} \\
 \leq C \Big( \lvert \lambda \rvert \| H \|_{\LL_p (\IR^n_+)} + \| \nabla^2 H \|_{\LL_p (\IR^n_+ ; \IC^{n^2})} \Big)\cdotp
\end{multline}
To complete the proof, only bounds on $\lvert \lambda \rvert \partial_n u$, $\lvert \lambda \rvert^{1 / 2} \partial_n \partial_n u$, and $\partial_n \partial_n \partial_n u$ have to be established. Now,  since
\begin{align*}
 \partial_n \partial_n \partial_n u = \lambda \partial_n u - \Delta_{y^{\prime}} \partial_n u,
\end{align*}
the bound on $\partial_n \partial_n \partial_n u$ follows from that on $\lvert \lambda \rvert \partial_n u$. This latter bound stems  from~\cite[Lem.~5.4]{Shibata_Shimizu} since $1 , \lvert \lambda \rvert B^{-1} \in {\bf M}_{0 , 1 , \theta}$. Indeed, this lemma provides the estimates
\begin{align*}
 \lvert \lambda \rvert \| \mathrm{IV}_{\lambda} (H) \|_{\LL_p (\IR^n_+)} &\leq C \| H \|_{\LL_p (\IR^n_+)}\\
  \lvert \lambda \rvert \| \mathrm{V}_{\lambda} (\Delta_{y^{\prime}} H) \|_{\LL_p (\IR^n_+)} &\leq C \| \Delta_{y^{\prime}} H \|_{\LL_p (\IR^n_+)} \\
 \lvert \lambda \rvert^{1 / 2} \| \mathrm{VI}_{\lambda} (\partial_{y_n} H) \|_{\LL_p (\IR^n_+)} &\leq C \| \partial_{y_n} H \|_{\LL_p (\IR^n_+)}.
\end{align*}
Altogether, this yields
\begin{align*}
 \lvert \lambda \rvert \| \partial_n u \|_{\LL_p (\IR^n_+)} \leq C \Big( \lvert \lambda \rvert \| H \|_{\LL_p (\IR^n_+)} + \lvert \lambda \rvert^{1 / 2} \| \nabla H \|_{\LL_p (\IR^n_+ ; \IC^n)} + \| \nabla^2 H \|_{\LL_p (\IR^n_+ ; \IC^{n^2})} \Big)\cdotp
\end{align*}
The only term that remains is $\lvert \lambda \rvert^{1 / 2} \partial_n \partial_n u$. This can be written as
\begin{align*}
 \lvert \lambda \rvert^{1 / 2} \partial_n \partial_n u = \lambda \lvert \lambda \rvert^{1 / 2} u - \lvert \lambda \rvert^{1 / 2} \Delta_{y^{\prime}} u.
\end{align*}
Since the term $\lvert \lambda \rvert^{1 / 2} \Delta_{y^{\prime}} u$ can already be controlled by~\eqref{Eq: Tangential estimate} it remains to estimate $\lambda \lvert \lambda \rvert^{1 / 2} u$. By virtue of~\eqref{Eq: Representation of solution of Laplacian} the same arguments that were used before result in the estimate
\begin{align*}
 \lvert \lambda \rvert^{1 / 2} \| u \|_{\LL_p (\IR^n_+)} \leq C \Big( \| H \|_{\LL_p (\IR^n_+)} + \lvert \lambda \rvert^{- 1 / 2} \| \nabla H \|_{\LL_p (\IR^n_+ ; \IC^n)} \Big)\cdotp
\end{align*}
Altogether, we obtain
\begin{align*}
 \lvert \lambda \rvert^{1 / 2} \| \partial_n \partial_n u \|_{\LL_p (\IR^n_+)} \leq C \Big( \lvert \lambda \rvert \| H \|_{\LL_p (\IR^n_+)} + \lvert \lambda \rvert^{1 / 2} \| \nabla H \|_{\LL_p (\IR^n_+ ; \IC^n)} + \| \nabla^2 H \|_{\LL_p (\IR^n_+ ; \IC^{n^2})} \Big)\cdotp
\end{align*}
This concludes the proof.
\end{proof}

\end{document}